\def\A{{\mathcal A}}
\def\B{{\mathcal B}}       
\def\Co{{\mathcal C}}      
\def\D{{\mathcal D}}
\def\F{{\mathcal F}}
\def\H{{\mathcal H}}       
\def\K{{\mathcal K}}       
\def\L{{\mathscr L}}
\def\M{{\mathcal M}}       
\def\R{{\mathcal R}}       
\def\T{{\mathcal T}}
\def\V{{\mathcal V}}
\def\VF{{\mathscr V}}      
\def\W{{\mathscr W}}       
\def\dvol{dv}              
\def\Rt{{\tilde{R}}}       
\def\J{{\sf J}}            
\def\Rho{{\sf P}}          
\def\U{{\mathcal U}}       
\def\X{{\mathfrak X}}
\def\C{{\mathscr C}}       
\def\r{{\mathbb{R}}}       
\def\c{{\mathbb{C}}}       
\def\N{{\mathbb{N}}}       
\def\h{{\mathbb H}}        
\def\s{{\mathbb S}}        
\def\f{\frac{n}{2}}
\def\fr{\frac{n-1}{2}}
\def\st{\stackrel{\text{def}}{=}}
\newcommand{\tr}{\operatorname{tr}}
\newcommand{\Tr}{\operatorname{Tr}}
\newcommand{\id}{\operatorname{id}}
\newcommand{\Ric}{\operatorname{Ric}}
\newcommand{\scal}{\operatorname{scal}}
\newcommand{\res}{\operatorname{res}}
\newcommand{\Hess}{\operatorname{Hess}}
\newcommand{\SD}{\slashed{\nabla}}
\newcommand{\FP}{\operatorname{FP}}
\newtheorem{thm}{Theorem}[section]
\newtheorem{lemm}{Lemma}[section]
\newtheorem{corr}{Corollary}[section]
\newtheorem{defn}{Definition}[section]
\newtheorem{prop}{Proposition}[section]
\newtheorem{rem}{Remark}[section]
\newtheorem{conj}{Conjecture}[section]
\numberwithin{equation}{section} \numberwithin{table}{subsection}
\title[Heat kernels, ambient metrics and conformal invariants]
{Heat kernel expansions, ambient metrics and conformal invariants}
\author{Andreas Juhl}
\address{University Uppsala, Department of Mathematics,
P.O. Box 480, S-75106 Uppsala}
\email{andreasj@math.uu.se}
\address{Humboldt-Universit\"at, Institut f\"ur Mathematik,
Unter den Linden, D-10099 Berlin}
\email{ajuhl@math.hu-berlin.de}
\begin{document}

\begin{abstract} The conformal powers of the Laplacian of a Riemannian
metric which are known as the GJMS-operators admit a combinatorial
description in terms of the Taylor coefficients of a natural second-order
one-parameter family $\H(r;g)$ of self-adjoint elliptic differential
operators. $\H(r;g)$ is a non-Laplace-type perturbation of the conformal
Laplacian $P_2(g) = \H(0;g)$. It is defined in terms of the metric $g$ and
covariant derivatives of the curvature of $g$. We study the heat kernel
coefficients $a_{2k}(r;g)$ of $\H(r;g)$ on closed manifolds. We prove
general structural results for the heat kernel coefficients $a_{2k}(r;g)$
and derive explicit formulas for $a_0(r)$ and $a_2(r)$ in terms of
renormalized volume coefficients. The Taylor coefficients of $a_{2k}(r;g)$
(as functions of $r$) interpolate between the renormalized volume
coefficients of a metric $g$ ($k=0$) and the heat kernel coefficients of
the conformal Laplacian of $g$ ($r=0$). Although $\H(r;g)$ is not
conformally covariant, there is a beautiful formula for the conformal
variation of the trace of its heat kernel. Its proof rests on the
combinatorial relations between Taylor coefficients of $\H(r;g)$ and
GJMS-operators. As a consequence, we give a heat equation proof of the
conformal transformation law of the integrated renormalized volume
coefficients. By refining these arguments, we also give a heat equation
proof of the conformal transformation law of the renormalized volume
coefficients itself. The Taylor coefficients of $a_2(r)$ define a sequence
of higher-order Riemannian curvature functionals with extremal properties
at Einstein metrics which are analogous to those of integrated
renormalized volume coefficients. Among the various additional results the
reader finds a Polyakov-type formula for the renormalized volume of a
Poincar\'e-Einstein metric in terms of $Q$-curvature of its conformal
infinity and additional holographic terms.

The present study of relations between spectral theoretic heat kernel
coefficients and geometric quantities like renormalized volume
coefficients is stimulated by the holographic perspective of the
AdS/CFT-duality (proposing relations between functional determinants and
renormalized volumes).
\end{abstract}

\subjclass[2010]{Primary 35K08 53A30 58J35 58J50; Secondary 35Q76
53C25 58J50}

\maketitle

\centerline \today

\footnotetext{The work was supported by the University of Uppsala and the
Collaborative Research Centre 647 ``Space-Time-Matter'' of the German
Research Foundation at Humboldt-University Berlin.}

\tableofcontents

\section{Introduction and formulation of the main results}\label{intro}

Since the pioneering work \cite{MP}, enormous progress has been made in
understanding the heat kernel of elliptic self-adjoint differential
operators on manifolds. In this context, the so-called Laplace-type
operators are best understood. The small time asymptotic expansion of the
restriction of the heat kernel to the diagonal defines the so-called
(local) heat kernel coefficients. For geometric operators, these are of
fundamental interest since they bear geometric information. In fact, heat
kernel coefficients of Laplace-type operators play a central role in one
approach to the index theorem (\cite{G-book}, \cite{BGV}) and the heat
kernel coefficients of the Laplace-Beltrami operator and the
Hodge-Laplacian on forms contain basic information on the relation between
spectral theory and geometry (\cite{Ch}). In addition, heat kernel
coefficients of various operators are of interest in quantum field theory
(see \cite{BD}, \cite{A-book} and the review \cite{V}).

An obstacle in analyzing the properties of heat kernel coefficients is
their quickly growing complexity (as a function of their order). However,
at least in low-order cases, there are well-known explicit formulas in
terms of local invariants (see \cite{BGM}, \cite{MC-S}, \cite{G-spec} and
\cite{G-book}). Note also that Weingart \cite{Wein} found very remarkable
closed combinatorial formulas of heat kernel coefficients of arbitrary
order.

The asymptotic expansion of the heat kernel is closely related to the
notion of the (zeta-regularized) functional determinant. For a
self-adjoint Laplace-type operator on a closed manifold, it gives the
infinite product of its (non-zero) real eigenvalues a sense. The notion
was introduced by Ray and Singer \cite{RS} in order to define an analytic
analog of Reidemeister torsion. In addition, standard constructions in
quantum field theory \cite{H} lead to the consideration of functional
determinants of differential operators (acting on various types of
fields). Of particular interest are operators which are determined by a
background metric and which are covariant with respect to conformal
changes of the metric. Although the functional determinant of such
operators in general is invariant under conformal changes, there are
remarkable instances where the violation of its conformal invariance can
be described in terms of {\em local} quantities. For instance, let
$\Delta_g = - \delta_g d$ be the non-positive Laplacian of $g$\footnote{We
use the convention that $-\Delta = \delta d$ is non-negative. In
particular, the Laplacian $\Delta$ of the Euclidean space $\r^n$ is the
sum of the second-order partial derivatives.} The transformation law
\begin{equation}\label{lap-2}
e^{2\varphi} \Delta_{\hat{g}} = \Delta_g, \quad \hat{g} =
e^{2\varphi}g, \; \varphi \in C^\infty(M)
\end{equation}
for the Laplacian of a closed surface $(M^2,g)$ implies that the behaviour
of its functional determinant $\det(-\Delta_g)$ under conformal changes $g
\to \hat{g}$ of the metric is governed by the Polyakov formula
\begin{equation}\label{pol-det-2}
\log \left(\frac{\det(-\Delta_{\hat{g}})}{\det(-\Delta_g)}\right) = -
\frac{1}{4\pi} \int_M \varphi (a_2(g) \dvol_g + a_2(\hat{g})
\dvol_{\hat{g}})
\end{equation}
(if $g$ and $\hat{g}$ have the same volume).\footnote{For the definition
of the determinant see Section \ref{CV-zeta}.} Here the quantity $a_2(g) =
\scal(g)/6$ is the second heat kernel coefficient of the operator
$\Delta_g$. By the transformation formula
$$
e^{2\varphi} K(e^{2\varphi}g) = K(g) - \Delta_g (\varphi)
$$
for the Gauss curvature $K(g) = \scal(g)/2$, formula
\eqref{pol-det-2} is equivalent to the more familiar version
\cite{OPS}
$$
\log \left(\frac{\det(-\Delta_{\hat{g}})}{\det(-\Delta_g)}\right) = -
\frac{1}{12 \pi} \int_M (2 \varphi K(g) + |d\varphi|_g^2) \dvol_g.
$$
Polyakov's formula is the key to the study of extremal properties of
the functional determinant of the Laplacian of surfaces \cite{OPS}.
For a mathematical discussion of some of the related constructions
of string theory see \cite{AJPS}.

In higher dimensions, \eqref{lap-2} generalizes to the conformal
covariance
$$
e^{(\f+1)\varphi} P_2(\hat{g}) = P_2(g) e^{(\f-1)\varphi}, \;
\hat{g} = e^{2\varphi}g, \; \varphi \in C^\infty(M)
$$
of the conformal Laplacian
\begin{equation}\label{CL-def}
P_2(g) \st \Delta_g - \left(\f\!-\!1\right) \J(g), \quad \J(g) =
\frac{\scal(g)}{2(n\!-\!1)}.
\end{equation}
On closed manifolds $(M^n,g)$ of even dimension $n$, an
infinitesimal version of \eqref{pol-det-2} for the functional
determinant of $P_2(g)$ states that\footnote{In order to simplify
the formulation, we assume that $P_2(g)$ has only positive
eigenvalues.}
\begin{equation}\label{pol-inf}
(d/d\varepsilon)|_{\varepsilon = 0} (\log \det
(-P_2(e^{2\varepsilon\varphi}g)) = -2 \int_M \varphi a_n(g) \dvol_g,
\end{equation}
where the conformal anomaly $a_n \in C^\infty(M^n)$ is the {\em critical}
heat kernel coefficient of the conformal Laplacian (see
\eqref{heat-expansion-CL}).\footnote{Here we refer to the coefficient of
$t^0$ in the heat kernel expansion as to the critical heat kernel
coefficient. Later we shall change the convention and denote the product
of this coefficient with the factor $(4\pi)^\f$ by $a_n$.} The derivation
of a full analog of Polyakov's formula \eqref{pol-det-2} then requires
finding a conformal primitive of the right-hand side of the latter
formula. This is a delicate matter of substantial interest (see
\cite{sharp}). It motivated Branson to introduce the notion of
$Q$-curvature $Q_n$, and experimental evidence in low-order cases led him
to suspect that in some sense the critical $Q$-curvature $Q_n$ plays the
role of a {\em main part} of the critical heat kernel coefficient
(universally also for other conformally covariant operators)
\cite{origin}. The functional $g \mapsto a_n(g)$ is a local Riemannian
invariant of weight $n$ with the property that its total integral on
closed manifolds is conformally invariant (\cite{BO-index}, \cite{PR}).
This is one of the basic consequences of the conformal covariance of $P_2$
for its heat kernel coefficients. Although this property already has the
strong implication that $a_n(g)$ is a linear combination of the Pfaffian
of $g$, a local conformal invariant and a divergence (by the
Deser-Schwimmer decomposition proved in \cite{alex}), the full structure
of its heat kernel coefficients is far from being well-understood. In
particular, the explicit form of the mentioned decomposition of the
critical heat kernel coefficients is not known, and the general principles
which may connect heat kernel coefficients of $P_2$ (and other conformally
covariant operators) and $Q$-curvatures have not yet been found. For a
brief introduction to the numerous aspects of $Q$-curvature see
\cite{what}.

In recent years, speculations on dalities between gauge field theories and
theories of gravity (as the so-called AdS/CFT-correspondence and versions
of it)\footnote{Although that subject has its own huge literature, we only
refer to Witten's seminal work \cite{witt}.} led to the introduction of
the notion of renormalized volume (see \cite{HS} and \cite{G-vol}). The
definition of the renormalized volume rests on the notion of
Poincar\'e-Einstein metrics in the sense of Fefferman and Graham
(\cite{FG-Cartan}, \cite{FG-final}). In some respects, the behaviour of
the renormalized volume functional resembles that of the functional
determinant. In fact, in even dimension its conformal variation is given
by an anomaly $v_n$. Like $a_n$, the critical renormalized volume
coefficient $g \mapsto v_n(g)$ is a scalar curvature invariant of weight
$n$ whose total integral on closed manifolds is a global conformal
invariant. Moreover, in odd dimensions, both the functional determinant of
the conformal Laplacian and the renormalized volume are conformally
invariant. The AdS/CFT-correspondence actually involves (in specific
cases) relations between (critical) heat kernel coefficients and
(critical) renormalized volume coefficients; one of these relations was
confirmed in \cite{HS} (for details see also Section 6.15 of
\cite{juhl-book}). For recent progress on the extremal properties of the
renormalized volume functional we refer to \cite{GMS} and \cite{CFG}.

These developments motivate the study of relations among the elements of
the three series of scalar local Riemannian invariants which (for even
$n$) are given by
\begin{itemize}
\item the heat kernel coefficients $a_2, a_4, \dots, a_n$ of the
conformal Laplacian,\footnote{Here one may also consider heat kernel
coefficients of other conformally covariant operators.}
\item the Branson $Q$-curvatures $Q_2, Q_4, \dots, Q_n$,
\item the renormalized volume coefficients $v_2, v_4, \dots, v_n$.
\end{itemize}

A remarkable common property of the corresponding integral functionals is
their conformal variational formula
\begin{equation}\label{common}
\left(\int_{M^n} F_{2k}(g) \dvol_g \right)^\bullet[\varphi] =
(n\!-\!2k) \int_{M^n} \varphi F_{2k}(g) \dvol_g
\end{equation}
on closed manifolds. Here and in the following we use the notation
$$
\F(g)^\bullet[\varphi] = (d/d\varepsilon)|_0 (\F(e^{2\varepsilon
\varphi}g))
$$
for the conformal variation of a scalar-valued functional $\F$. For the
respective proofs see \cite{BO-index}, \cite{PR} (for $F_{2k} = a_{2k}$),
\cite{sharp} (for $F_{2k} = Q_{2k}$) and \cite{CF}, \cite{G-vol} (for
$F_{2k} = v_{2k}$). Equation \eqref{common} implies that (for even $n$)
the total integral of $F_n$ is a global conformal invariant. Hence the
Deser-Schwimmer decomposition (proved in \cite{alex}) shows that each of
the quantities $a_n$, $Q_n$ and $v_n$ decomposes as a linear combination
of the Pfaffian, local conformal invariants and a divergence.

As explained above, the relation between heat kernel coefficients and
$Q$-curvatures is expected to be important for generalizations of
Polyakov's formula, and the relation of heat kernel coefficients to
renormalized volume coefficients plays a central role in tests of
gauge-gravity dualities.

On the other hand, recent years have seen substantial progress in
understanding the relations between $Q$-curvatures $Q_{2N}$ and
renormalized volume coefficients $v_{2N}$. First of all, there is a
natural decomposition of the critical $Q$-curvature $Q_n$ (in even
dimension) as a linear combination of $v_n$ and a sequence of differential
operators acting on lower-order renormalized volume coefficients
\cite{GJ-holo}. That result extends a result of \cite{GZ} and has natural
generalizations to all $Q$-curvatures \cite{holo-II}. Next, the sequence
of $Q$-curvatures turned out to have a recursive structure which allows to
determine any $Q$-curvature in terms of renormalized volume coefficients
and GJMS-operators acting on respective lower-order $Q$-curvatures
(\cite{Q-recursive}).

We continue with the description of the content of the present paper. The
central point is that we consider the heat kernel of the conformal
Laplacian from a new perspective. Here we develop only some of the aspects
of this point of view, and expect that further investigations will be
fruitful. The new perspective is inspired by the interactions of the
theories mentioned above. More precisely, we use Poincar\'e-Einstein
metrics to introduce a one-parameter family $\H(r;g)$ of self-adjoint
elliptic operators so that
$$
\H(0;g) = P_2(g).
$$
We regard $\H(r;g)$ as a perturbation of the conformal Laplacian. Through
the heat kernel coefficients of the family $\H(r;g)$ we embed the heat
kernel coefficients of $P_2$ into one-parameter families of local
Riemannian invariants. Although the operator $\H(r;g)$ for $r \ne 0$ is
not conformally covariant, its behaviour resembles that of a conformally
covariant one. An important observation is that in the theory of the heat
kernel of $\H(r;g)$ the variables $r$ and $t$ play similar roles. In
particular, that will allow us to give a heat equation proof of the
conformal variational properties of the renormalized volume coefficients.

We continue with the formulation of the main results. We start with the
definition of the holographic Laplacian $\H(r;g)$. On any Riemannian
manifold $(M,g)$ of dimension $n \ge 3$, there is a sequence $P_2(g),
P_4(g), \dots$ of higher-order generalizations of the conformal Laplacian
which are known as the GJMS-operators \cite{GJMS}. These operators are
conformally covariant in the sense that
$$
e^{(\f+N)\varphi} P_{2N}(e^{2\varphi} g) = P_{2N}(g)
e^{(\f-N)\varphi}
$$
for all $\varphi \in C^\infty(M)$, and can be viewed as corrections of
powers of the Laplacian $\Delta_g$ by lower-order terms which, as for
$P_2$, are determined by the covariant derivatives of the curvature of the
metric $g$. The second operator in the sequence is the well-known Paneitz
operator \cite{pan}\footnote{Paneitz discovered that operator in general
dimensions in 1983. Around the same time, this operator in dimension $n=4$
independently appeared in different contexts in \cite{FT}, \cite{Rieg} and
\cite{ES}.}
\begin{equation}\label{pan}
P_4 = \Delta^2 + \delta ((n\!-\!2) \J g - 4 \Rho) d +
\left(\f\!-\!2\right)\left(\f \J^2 - 2|\Rho|^2 - \Delta \J\right),
\end{equation}
where
$$
\Rho = \frac{1}{n\!-\!2}\left(\Ric - \frac{\scal}{2(n\!-\!1)}\right)
$$
is the Schouten tensor of $g$ and $\J$ its trace. The construction of
these operators in \cite{GJMS} works for all orders on manifolds of odd
dimension but is restricted to orders not exceeding the dimension of the
underlying manifold if the dimension is even.

For $N \ge 3$, the structure of these conformally covariant powers of the
Laplacian is quite complicated. However, in the recent work \cite{juhl-ex}
we revealed new structures in the sequence of these operators. In
particular, we found that any GJMS-operator $P_{2N}(g)$ can be written as
a linear combination
$$
P_{2N} = \sum_{|I|=N} n_I \M_{2I}
$$
of compositions of certain natural self-adjoint {\em building-block}
operators $\M_{2N}(g)$ with $\M_2 = P_2$ (see Section \ref{basics}). One
of the main results of \cite{juhl-ex} (see Theorem \ref{base}) is a
description of these building-block operators. It states that the
generating function
\begin{equation*}
\sum_{N \ge 1} \M_{2N}(g) \frac{1}{(N-1)!^2}
\left(\frac{r^2}{4}\right)^{N-1}
\end{equation*}
of the building-block operators coincides with the Schr\"{o}dinger-type
operator
\begin{equation}\label{sch}
\H(r;g) \st - \delta (g(r)^{-1} d) + \U(r;g)
\end{equation}
with the potential
\begin{equation}\label{potential}
\U(r;g) \st - w(r)^{-1} \left(\partial^2/\partial r^2 - (n-1) r^{-1}
\partial/\partial r - \delta (g(r)^{-1} d) \right)(w(r)).
\end{equation}
In particular, all building-block operators are only second-order. In
these definitions, $\delta: \Omega^1(M) \to C^\infty(M)$ denotes the
adjoint of $d: C^\infty(M) \to \Omega^1(M)$ with respect to the Hodge
scalar products defined by $g$; on vector fields it corresponds to the
(negative) divergence with respect to $g$. $g(r)$ is a smooth even
one-parameter family of metrics on $M$ so that $g(0)=g$ and
$$
g_+ = r^{-2}(dr^2 + g(r))
$$
is an asymptotic Einstein metric in the sense that the tensor
$$
\Ric(g_+) + n g_+
$$
vanishes asymptotically at $r=0$. In \eqref{sch} and \eqref{potential},
$g(r)$ is regarded as an endomorphism on one-forms using $g$. The metric
$g_+$ is a Poincar\'e-Einstein metric in the sense of \cite{FG-final}. In
odd dimension $n$, all coefficients in the formal {\em even} power series
expansion
$$
g(r) = g + r^2 g_{(2)} + r^4 g_{(4)} + \cdots + r^{n-1} g_{(n-1)} +
\cdots
$$
at $r=0$ are determined by $g$. In even dimension $n$, the situation
is more subtle. For general metrics $g$, the coefficients $g_{(2)},
\dots, g_{(n-2)}$ in the even expansion
$$
g(r) = g + r^2 g_{(2)} + \cdots + r^{n-2} g_{(n-2)} + \cdots
$$
are determined by $g$. However, there is an obstruction to finding a
solution to the order $n$. More precisely, the trace of the Taylor
coefficient of $r^n$ is still determined by $g$, but the determination of
the trace-free part is obstructed by the so-called obstruction tensor. For
some special metrics, more information on these expansions is available.
In particular, for conformally flat metrics in dimension $n \ge 4$, the
expansion of $g(r)$ terminates at $r^4$. Moreover, for conformally
Einstein metrics $g$ (in even dimension $n \ge 4$), the family $g(r)$ is
uniquely determined (although explicit formulas are not known except for
Einstein metrics). For more details we refer to Section \ref{basics}.

The function $w(r) \in C^\infty(M)$ in \eqref{potential} is defined
as the square root
\begin{equation}\label{square-root}
w(r) \st \sqrt{v(r)}
\end{equation}
of
\begin{equation}\label{volume}
v(r) \st \sqrt{\det g(r)}/\sqrt{\det g} \in C^\infty(M).
\end{equation}
It also plays an important role in structural formulas for
$Q$-curvatures (see \eqref{Q-rec}).

The power series coefficients $v_{2k}$ of $v(r)$ are the renormalized
volume coefficients. For general metrics, they are well-defined for all $k
\ge 0$ in odd dimensions and for $2k \le n$ in even
dimension.\footnote{$g_{(n)}$ contributes to $v_n$ only through its
uniquely determined $g$-trace.} However, for locally conformally flat and
conformally Einstein metrics, all coefficients $v_{2k}$ are well-defined
also in even dimension $n \ge 4$. In all cases, the coefficients $v_{2k}$
are scalar Riemannian invariants of $g$ of weight $2k$.

Now, for general metrics $g$, we define the operator $\H(r;g)$ by the
formula \eqref{sch} using an even Poincar\'e-Einstein metric relative to
$g$. Then the second-order and potential parts of the Taylor coefficient
of $r^{2N}$ of $\H(r;g)$ involve the tensors $g_{(2j)}$ up to $g_{(2N)}$
and $g_{(2N+2)}$, respectively. Here the the potential part depends on the
coefficient $g_{(2N+2)}$ only through $v_{2N+2}$. In particular, for even
$n$, the Taylor coefficients of $\H(r;g)$ up to $r^{n-2}$ are uniquely
determined by the metric $g$. In the discussions below, the parts of
$\H(r;g)$ which are not uniquely determined by $g$ will play no role.
Similarly, for odd $n$, the full formal Taylor expansion of $\H(r;g)$ at
$r=0$ is uniquely determined by $g$. For more details we refer to Section
\ref{basics}.

We regard the second-order operator $\H(r;g)$ as a {\em canonical
geometric} differential operator defined by a Riemannian metric. Since its
definition involves concepts on a space of one higher dimension including
the concept of renormalized volume (or holographic) coefficients
$v_{2k}(g)$, we refer to it as to the {\em holographic} Laplacian of $g$.
We emphasize, that $\H(r;g)$ is not a Laplace-type operator or generalized
Laplacian (with respect to $g$) in the sense of \cite{G-book} and
\cite{BGV}, respectively.

In these terms, the main result of \cite{juhl-ex} can be rephrased by
saying that the holographic Laplacian {\em coincides} with the generating
function of the building-block operators of the GJMS-operators.

In the first section of Section \ref{app}, we display explicit formulas
for the first three terms in the power series expansion of the holographic
Laplacian (as a function of $r^2$) regarded as a perturbation of the
conformal Laplacian $P_2$. In particular, the first-order perturbation is
given by the operator
$$
P_2 + \M_4 \frac{r^2}{4} = P_2 - \delta (\Rho d) r^2 +
(-\J^2\!-\!(n\!-\!4)|\Rho|^2\!+\!\Delta \J) \frac{r^2}{4}.
$$
We also note that, for an Einstein metric $g$, the operator
$\H(r;g)$ takes the particularly simple form
\begin{equation}\label{hol-ein}
(1-\lambda r^2/4)^{-2} P_2(g), \; \lambda = \scal(g)/n(n\!-\!1)
\end{equation}
(see Section 11.2 of \cite{juhl-ex}).

In \cite{FG-J}, Fefferman and Graham revealed the relation of the operator
\eqref{sch} to the Laplacian of the ambient metric and used it for an
alternative proof of the results in \cite{juhl-ex}. For more details we
refer to Section \ref{basics}.

The building-block operators $\M_{2N}(g)$ are natural in $g$ but not
covariant under conformal changes of the metric (except for $N=1$).
However, for constant conformal changes $g \mapsto \lambda^2 g$, i.e., for
rescalings of $g$, they obey the transformation law
$$
\lambda^{2N} \M_{2N}(\lambda^2 g) = \M_{2N}(g), \lambda \ne 0.
$$
These relations imply
\begin{equation}\label{CCC}
\lambda^2 \H(r;\lambda^2 g) = \H(r/\lambda;g), \; \lambda \ne 0.
\end{equation}
Under general conformal changes, the behaviour of the building block
operators is much more complicated. But it is remarkable that the
conformal variation of any $\M_{2N}$ can be described only in terms of
respective lower-order building-block operators. The following result is a
consequence of such a description. It is the first main result of the
present paper and constitutes the basis of the whole work.

\begin{thm}[\bf Conformal variation of $\H(r;g)$]\label{conform-H} For
any metric $g$ and any $\varphi \in C^\infty(M)$, we have
\begin{multline}\label{base-H}
(\partial/\partial t)|_{t=0} \left( e^{(\f+1)t\varphi} \H(r;e^{2t
\varphi} g) e^{-(\f-1)t\varphi} \right) \\ = - \frac{1}{2} r
(\partial/\partial r)(\varphi \H(r;g) + \H(r;g) \varphi) -
[\H(r;g),[\K(r;g),\varphi]]
\end{multline}
for sufficiently small $r$. Here
$$
\K(r;g) \st \sum_{N \ge 1} \M_{2N}(g) \frac{1}{N!(N\!-\!1)!}
\left(\frac{r^2}{4}\right)^N = \frac{1}{2} \int_0^r s \H(s;g) ds.
$$
On the right-hand side of \eqref{base-H}, $\varphi$ is regarded as a
multiplication operator.
\end{thm}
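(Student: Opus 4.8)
The plan is to reduce the claimed identity to a statement about the building-block operators $\M_{2N}$ and then to the combinatorial relations between the $\M_{2N}$ and the GJMS-operators established in \cite{juhl-ex}. Expand both sides of \eqref{base-H} as formal power series in $r^2$. Writing $\H(r;g) = \sum_{N\ge 1}\M_{2N}(g)\, c_N r^{2N-2}$ with $c_N = 1/((N-1)!^2 4^{N-1})$, the left-hand side becomes $\sum_{N\ge 1} c_N r^{2N-2}\,\dot\M_{2N}(g)[\varphi]$, where
$$
\dot\M_{2N}(g)[\varphi] \st (\partial/\partial t)|_{t=0}\left( e^{(\f+N)t\varphi}\M_{2N}(e^{2t\varphi}g)e^{-(\f-N)t\varphi}\right),
$$
using that $\M_{2N}$ has the conformal bidegree of $P_{2N}$. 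Thus the theorem is equivalent to a closed formula for each $\dot\M_{2N}(g)[\varphi]$ in terms of the lower-order $\M_{2j}$, $j\le N$, namely the one obtained by matching powers of $r^2$ on the right-hand side. The point alluded to in the text — "the conformal variation of any $\M_{2N}$ can be described only in terms of respective lower-order building-block operators" — is exactly such a formula, and I expect the bulk of the argument to be its derivation.

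To obtain that formula, I would exploit the conformal covariance of the GJMS-operators together with the expansion $P_{2N} = \sum_{|I|=N} n_I \M_{2I}$ (compositions of the $\M_{2j}$). Differentiating the covariance relation $e^{(\f+N)\varphi}P_{2N}(e^{2\varphi}g)e^{-(\f-N)\varphi} = P_{2N}(g)$ at $\varphi = 0$ gives $\sum_{|I|=N} n_I\,\dot{(\M_{2I})}[\varphi] = 0$, where the variation of a composite is computed by the Leibniz rule, each factor $\M_{2j}$ being conjugated by the weights appropriate to its position in the string (so that the conjugating exponentials telescope). This yields a triangular linear system for the $\dot\M_{2N}[\varphi]$: the top term is $\dot\M_{2N}[\varphi]$ itself (from $I = (N)$, with $n_{(N)}\ne 0$), and all other contributions involve only $\M_{2j}$ and $\dot\M_{2j}[\varphi]$ with $j < N$. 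Solving recursively, and repackaging using the generating-function identities for the $n_I$ from \cite{juhl-ex} (the same identities that produce the Schrödinger form \eqref{sch}), should collapse the answer into the compact double commutator $-[\H,[\K,\varphi]]$ plus the Euler-type term $-\frac12 r\partial_r(\varphi\H + \H\varphi)$. The appearance of $\K = \frac12\int_0^r s\,\H(s;g)\,ds$ is natural here: the kernel $s/2$ is precisely the weight that turns the $N$-th coefficient $c_N/N$ into $1/(N!(N-1)!4^N)$.

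An alternative route, likely cleaner, is to use the Fefferman--Graham description \cite{FG-J} of $\H(r;g)$ via the ambient Laplacian. There the conformal factor $e^{2\varphi}g$ corresponds to a diffeomorphism of the ambient (or Poincaré-Einstein) space fixing the conformal infinity, and $\partial_t|_0$ of the conjugated operator is the Lie derivative of the ambient Laplacian along the generating vector field of that diffeomorphism. The two terms on the right of \eqref{base-H} should then emerge as, respectively, the radial (dilation) part and the tangential part of this Lie derivative, with $\K$ arising as the potential-theoretic primitive that integrates the radial flow. The scaling identity \eqref{CCC}, $\lambda^2\H(r;\lambda^2 g) = \H(r/\lambda;g)$, is the $\varphi = \mathrm{const}$ case and fixes the normalization of the $r\partial_r$ term; I would use it as a consistency check throughout.

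The main obstacle is the combinatorics: controlling the Leibniz-rule expansion of $\dot{(\M_{2I})}[\varphi]$ over all multi-indices $I$ with $|I| = N$, keeping track of the position-dependent weights in each composition, and then resumming against the coefficients $n_I$ to recognize the double commutator. I expect this to require the full strength of the generating-identity apparatus of \cite{juhl-ex}; once the single-variation formula for $\dot\M_{2N}[\varphi]$ is in hand, assembling \eqref{base-H} is a matter of multiplying by $c_N r^{2N-2}$ and summing, which is routine.
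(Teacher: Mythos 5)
Your overall strategy --- expand in powers of $r^2$, reduce \eqref{base-H} to a conformal variational formula for each building block $\M_{2N}$, and derive that formula from the conformal covariance of the GJMS-operators together with the combinatorics relating the $\M$'s and the $P$'s --- is in fact the paper's route (Theorem \ref{M-var} is exactly the per-$N$ statement). But your reduction step contains a concrete error. With $\dot{\M}_{2N}[\varphi]$ defined through the weights $\f\pm N$, the left-hand side of \eqref{base-H} is \emph{not} $\sum_N c_N r^{2N-2}\dot{\M}_{2N}[\varphi]$: since \eqref{base-H} carries the weights $\f\pm 1$, the $N$-th coefficient acquires the extra conjugation $e^{-(N-1)t\varphi}(\cdot)\,e^{-(N-1)t\varphi}$, so it equals $\dot{\M}_{2N}[\varphi]-(N\!-\!1)(\varphi\M_{2N}+\M_{2N}\varphi)$ up to the normalization $c_N$, and it is precisely these anticommutator defects which sum to the Euler term $-\tfrac12 r\partial_r(\varphi\H+\H\varphi)$. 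Consequently the per-$N$ identity you propose to prove (``the one obtained by matching powers of $r^2$ on the right-hand side'') would contain that anticommutator term, and such an identity is false: for constant $\varphi$ the rescaling law $\lambda^{2N}\M_{2N}(\lambda^2g)=\M_{2N}(g)$ forces $\dot{\M}_{2N}[\varphi]=0$, while $(N\!-\!1)(\varphi\M_{2N}+\M_{2N}\varphi)\neq 0$. The correct target is \eqref{M-CV}, which contains only double commutators; the Euler term does not come from the variations of the $\M_{2N}$ at all but from the weight mismatch in assembling $\H(r)$, so your expectation that it will ``collapse'' out of the recursion cannot be realized.

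Second, the heart of the matter --- the identity \eqref{M-CV} itself --- is not established. Your scheme via $P_{2N}=\sum_{|I|=N}n_I\M_{2I}$ misfires at the first step: the position-dependent conjugations in a composition $\M_{2I_1}\cdots\M_{2I_r}$ do not reduce each factor to its natural weights $\f\pm I_j$; the telescoping choice leaves weight mismatches at every junction and endpoint, so differentiating the covariance relation produces, besides the natural variations $\dot{\M}_{2I_j}[\varphi]$, a host of terms with $\varphi$ inserted between factors and coefficients given by those mismatches --- your displayed relation $\sum_I n_I\,\dot{(\M_{2I})}[\varphi]=0$ suppresses exactly these. One can still organize a triangular system (the composition $I=(N)$ contributes $\dot{\M}_{2N}[\varphi]$ with coefficient $1$), but showing that its recursive solution resums to $-\sum_j\binom{N-1}{j-1}^2(N\!-\!j)[\M_{2j},[\M_{2N-2j},\varphi]]$ is the entire combinatorial content of the theorem, and you leave it as an expectation; the ambient-metric alternative is likewise only heuristic. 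The paper avoids the inversion and the recursion altogether: it uses the \emph{other} expansion $\M_{2N}=\sum_{|I|=N}m_I P_{2I}$ of \eqref{M-def}, writes both sides of \eqref{M-CV} as linear combinations of words $P_{2I_1}\cdots\varphi\cdots P_{2I_r}$, and checks the resulting coefficient identities (such as \eqref{c-last}) directly from the closed formula \eqref{m-form} for the $m_I$, where everything reduces to an elementary telescoping identity. As it stands, your proposal identifies the right framework but neither reduces the theorem to the correct per-$N$ statement nor proves that statement.
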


Here we use the following conventions. For odd $n \ge 3$, the
identity \eqref{base-H} is to be interpreted as a relation of formal
power series in $r$. For even $n \ge 4$ and general metrics, it
asserts the equality of the power series expansions of both sides of
\eqref{base-H} up to $r^{n-2}$. However, for locally conformally
flat metrics and conformally Einstein metrics in even dimension $n
\ge 4$, the equality of formal power series is valid without
restrictions on the powers of $r^2$. Similar interpretations will be
used in what follows without mentioning.

Theorem \ref{conform-H} obviously generalizes the infinitesimal
version
$$
(d/dt)|_{t=0} (e^{2\lambda t} \H(r;e^{2\lambda t}g)) = (d/dt)|_{t=0}
(\H(r e^{-\lambda t};g)), \; \lambda \in \r
$$
of \eqref{CCC}. The restriction of the right-hand side of
\eqref{base-H} to $r=0$ vanishes. Thus, in view of $\M_2 = P_2$,
Equation \eqref{base-H} also generalizes the infinitesimal conformal
covariance of the conformal Laplacian. In particular, the conformal
weights $\f+1$ and $\f-1$ in \eqref{base-H} are the same as for the
conformal Laplacian.

Now we consider spectral invariants for the holographic Laplacian.
For this purpose, we assume from now on that the underlying manifold
$M$ is closed. For sufficiently small $r$, $g(r)$ is a Riemannian
metric and the holographic Laplacian is elliptic.

The following result is a conformal variational formula for the
trace of the heat kernel of the (negative) holographic Laplacian. It
is a consequence of the variational formula in Theorem
\ref{conform-H}.

\begin{thm}[\bf Conformal variation of trace of heat kernel of $\H(r;g)$]
\label{conform-heat} For any metric $g$ on a closed manifold $M$, we
have
\begin{multline}\label{base-heat}
(\partial/\partial\varepsilon)|_{\varepsilon=0}
(\Tr(\exp(t\H(r;e^{2\varepsilon \varphi}g)))) \\ = -2t
(\partial/\partial t)(\Tr(\varphi e^{t\H(r;g)})) - r t \frac{1}{2}
\Tr \left((\varphi \dot{\H}(r;g) + \dot{\H}(r;g) \varphi)
e^{t\H(r;g)}\right)
\end{multline}
for sufficiently small $r$ and all $\varphi \in C^\infty(M)$. Here
traces are taken in $L^2(M,g)$ and the dot denotes the derivative
with respect to $r$.
\end{thm}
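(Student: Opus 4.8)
The plan is to differentiate the trace $\Tr(\exp(t\H(r;e^{2\varepsilon\varphi}g)))$ with respect to $\varepsilon$ at $\varepsilon=0$ and reduce everything to the operator identity of Theorem~\ref{conform-H}. First I would write $\H_\varepsilon \st e^{(\f+1)\varepsilon\varphi}\H(r;e^{2\varepsilon\varphi}g)e^{-(\f-1)\varepsilon\varphi}$. Since conjugation by the invertible multiplication operator $e^{(\f+1)\varepsilon\varphi}$ (with the compensating factor on the right) does not change the spectrum — more precisely $\exp(t\H(r;e^{2\varepsilon\varphi}g))$ and $\exp(t\H_\varepsilon)$ are related by a similarity transformation, hence have the same trace — we have $\Tr(\exp(t\H(r;e^{2\varepsilon\varphi}g)))=\Tr(\exp(t\H_\varepsilon))$. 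Strictly speaking one should check that $\H_\varepsilon$ arises from $\H(r;e^{2\varepsilon\varphi}g)$ by conjugation with a bounded invertible operator with bounded inverse, which is immediate here since $\varphi\in C^\infty(M)$ on a closed manifold; alternatively one notes directly that $\Tr(e^{tA})$ is invariant under $A\mapsto SAS^{-1}$. So it suffices to compute $(\partial/\partial\varepsilon)|_0\Tr(\exp(t\H_\varepsilon))$.

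Next I would use Duhamel's formula together with the cyclicity of the trace to get the standard identity
\begin{equation}
(\partial/\partial\varepsilon)|_0 \Tr(\exp(t\H_\varepsilon)) = t\,\Tr\!\left(\dot{\H}_0\, e^{t\H_0}\right),
\end{equation}
where $\dot{\H}_0 = (\partial/\partial\varepsilon)|_0 \H_\varepsilon$ and $\H_0 = \H(r;g)$. This step is routine but deserves a line of justification: on a closed manifold $\H(r;g)$ is self-adjoint and elliptic with discrete spectrum bounded above, so $e^{t\H(r;g)}$ is trace-class for $t>0$, the family $\varepsilon\mapsto\H_\varepsilon$ is a smooth family of such operators with common domain, and differentiating the exponential is legitimate with the derivative given by the Duhamel expression $\int_0^t e^{(t-s)\H_0}\dot{\H}_0 e^{s\H_0}\,ds$, whose trace collapses to $t\,\Tr(\dot{\H}_0 e^{t\H_0})$ by cyclicity.

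Now I would substitute the right-hand side of \eqref{base-H}, i.e.
\begin{equation}
\dot{\H}_0 = -\tfrac{1}{2}r(\partial/\partial r)\big(\varphi\H(r;g)+\H(r;g)\varphi\big) - [\H(r;g),[\K(r;g),\varphi]],
\end{equation}
into $t\,\Tr(\dot{\H}_0 e^{t\H_0})$ and handle the two pieces separately. The commutator term contributes $-t\,\Tr\big([\H_0,[\K(r;g),\varphi]]e^{t\H_0}\big)$, which vanishes: $\Tr([\H_0,X]e^{t\H_0}) = \Tr(\H_0 X e^{t\H_0}) - \Tr(X\H_0 e^{t\H_0}) = 0$ since $\H_0$ commutes with $e^{t\H_0}$ and the trace is cyclic (with $X=[\K(r;g),\varphi]$ a differential operator, so all products are trace-class against $e^{t\H_0}$). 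For the first term, write $\partial_r(\varphi\H+\H\varphi) = \varphi\dot\H + \dot\H\varphi$ (here $\dot{}$ is $\partial/\partial r$, and $\partial_r\varphi=0$), so that piece of $\dot{\H}_0$ equals $-\tfrac12 r(\varphi\dot\H(r;g)+\dot\H(r;g)\varphi)$, giving the second term on the right of \eqref{base-heat} directly. It remains to show the leftover is accounted for — but there is none beyond what I have described; rather, I need to recognize that the first term on the right of \eqref{base-heat}, namely $-2t(\partial/\partial t)\Tr(\varphi e^{t\H_0})$, equals... wait: $(\partial/\partial t)\Tr(\varphi e^{t\H_0}) = \Tr(\varphi\H_0 e^{t\H_0})$, so $-2t(\partial/\partial t)\Tr(\varphi e^{t\H_0}) = -2t\,\Tr(\varphi\H_0 e^{t\H_0})$. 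The point is that the "$r$-scaling" term $-\tfrac12 r\partial_r(\cdots)$ in \eqref{base-H} must, under the trace, partly convert into this $t$-derivative term. I would therefore regroup: inside $\partial_r$ there is also the implicit understanding that this mirrors a $t$-derivative; concretely one uses that $r\partial_r$ and $t\partial_t$ act compatibly on $\Tr(e^{t\H(r;g)})$-type expressions because of the homogeneity \eqref{CCC} of $\H$, or more simply one expands $-\tfrac12 r\partial_r(\varphi\H+\H\varphi)$ carefully, tracks which portion reproduces $-2t\partial_t\Tr(\varphi e^{t\H_0})$ and which remains as the explicit $\dot\H$-term. The main obstacle, and the step I would spend the most care on, is precisely this bookkeeping: matching the $-\tfrac12 r\partial_r$ operator term against the sum of a $t\partial_t$-trace term and an explicit $r\dot\H$-trace term, using integration by parts in $r$ under the trace and the identity $\Tr(\varphi\H_0 e^{t\H_0}) = \partial_t\Tr(\varphi e^{t\H_0})$; the commutator term dropping out is easy, and the Duhamel/trace-class analytic justifications are standard on a closed manifold.
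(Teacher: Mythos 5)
There is a genuine gap, and it sits in your very first step. The relation between $\H(r;e^{2\varepsilon\varphi}g)$ and your $\H_\varepsilon = e^{(\f+1)\varepsilon\varphi}\H(r;e^{2\varepsilon\varphi}g)e^{-(\f-1)\varepsilon\varphi}$ is \emph{not} a similarity transformation: the left factor $e^{(\f+1)\varepsilon\varphi}$ and the right factor $e^{-(\f-1)\varepsilon\varphi}$ are not inverse to each other (the conformal weights $\f+1$ and $\f-1$ differ). In fact $\H_\varepsilon$ is similar to $e^{2\varepsilon\varphi}\H(r;e^{2\varepsilon\varphi}g)$, so $\Tr(\exp(t\H_\varepsilon))=\Tr\big(\exp(t\,e^{2\varepsilon\varphi}\H(r;e^{2\varepsilon\varphi}g))\big)$, which differs from the trace you want; differentiating the discrepancy at $\varepsilon=0$ gives exactly $2t\,\Tr(\varphi\H(r;g)e^{t\H(r;g)})=2t(\partial/\partial t)\Tr(\varphi e^{t\H(r;g)})$ — precisely the term you then find yourself unable to produce. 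Your attempted repair cannot work either: as you note yourself, $\partial_r(\varphi\H+\H\varphi)=\varphi\dot\H+\dot\H\varphi$ since $\varphi$ is $r$-independent, so the $-\tfrac12 r\partial_r$ piece of \eqref{base-H} yields exactly the second term of \eqref{base-heat} and nothing more; there is no hidden $t\partial_t$ contribution to be extracted from it, and the homogeneity \eqref{CCC} does not supply one for non-constant $\varphi$.

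The correct route (the paper's) is to apply the Duhamel/Ray--Singer identity directly to the \emph{unconjugated} family, $(\partial/\partial\varepsilon)|_0\Tr(\exp(t\H(r;e^{2\varepsilon\varphi}g)))=t\,\Tr\big((\partial/\partial\varepsilon)|_0\H(r;e^{2\varepsilon\varphi}g)\,e^{t\H(r;g)}\big)$, and then to solve \eqref{base-H} for $(\partial/\partial\varepsilon)|_0\H(r;e^{2\varepsilon\varphi}g)$: it equals $-(\f+1)\varphi\H+(\f-1)\H\varphi$ plus the right-hand side of \eqref{base-H}. Writing $-(\f+1)\varphi\H+(\f-1)\H\varphi=-2\varphi\H+(\f-1)[\H,\varphi]$, the single commutator and the double commutator both vanish under the trace by cyclicity (this part of your argument is fine), $-2t\,\Tr(\varphi\H e^{t\H})=-2t(\partial/\partial t)\Tr(\varphi e^{t\H})$ produces the first term of \eqref{base-heat}, and $-\tfrac{rt}{2}\Tr((\varphi\dot\H+\dot\H\varphi)e^{t\H})$ is the second. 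In short: the $-2t\partial_t$ term comes from the conformal weight factors you discarded with the false trace-invariance claim, not from the $r$-derivative term. (One should also keep the paper's caveat that for even $n$ and general metrics all identities hold as power series in $r$ only up to order $r^{n-2}$.)
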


Two comments on this result are in order. First, the relation
\eqref{base-heat} is to be interpreted as an identity of formal power
series in $r$ (up to $r^{n-2}$ for even $n$ and general metrics). Second,
it is one of the basic features of Theorem \ref{conform-heat} that the
massive double-commutator term on the right-hand side of \eqref{base-H}
does {\em not} contribute to \eqref{base-heat}. This follows from the
observation that for any differential operator $\K$ on $M$ we have
$$
\Tr ([\H,\K] e^{t\H}) = \Tr (\H \K e^{t\H} - \K \H e^{t\H}) = \Tr
(\K e^{t\H} \H)- \Tr(\K \H e^{t\H}) = 0
$$
by the cyclicity of the trace and the fact that $\H$ commutes with
$e^{t\H}$. It is actually this result which makes the trace of the
heat kernel of $\H(r;g)$ a manageable object. On the other hand, the
discussion of the local variational formula for the renormalized
volume coefficients in Section \ref{CV-v-local} will illustrate the
significance of the double-commutator term in \eqref{base-H}.

By the ellipticity of $\H(r;g)$ (for sufficiently small $r$), the
trace of $\exp(t\H(r;g))$ has an asymptotic expansion
\begin{equation}\label{heat-expansion-H}
\Tr (\exp (t\H(r;g)) \sim (4 \pi t)^{-\f} \sum_{j \ge 0} t^j \int_M
a_{2j}(r;g) \dvol_g, \; t \to 0
\end{equation}
which generalizes the asymptotic expansion
\begin{equation}\label{heat-expansion-CL}
\Tr (\exp(tP_2(g)) \sim (4 \pi t)^{-\f} \sum_{j \ge 0} t^j \int_M
a_{2j}(g) \dvol_g, \; t \to 0
\end{equation}
of the trace of the heat kernel of the conformal Laplacian. Indeed, the
heat kernel coefficients $a_{2j}(r;g)$ satisfy the restriction property
$a_{2j}(0;g) = a_{2j}(g)$. Theorem \ref{structure} implies that the
functionals $g \mapsto a_{2k}(r;g)$ are one-parameter families of local
Riemannian invariants of $g$. Let $a_{(2j,2k)}(g)$ be the coefficients in
the Taylor expansion of $a_{2k}(r;g)$:\footnote{For even $n$ and general
metrics $g$, the operator $\H(r;g)$ is uniquely defined only up to order
$r^{n-2}$. Hence, in this case, the expansions of $a_{2j}(r;g)$ are
well-defined also only up to order $r^{n-2}$. However, they are
well-defined to all orders for conformally Einstein metrics.}
\begin{equation}\label{r-expansion-a}
a_{2j}(r;g) \sim \sum_{k \ge 0} a_{(2j,2k)}(g) r^{2k}, \; r \to 0.
\end{equation}

For constant conformal changes $g \mapsto \lambda^2 g$, i.e., for
rescalings of the metric, the relation \eqref{CCC} implies that
\begin{equation}\label{hc-hom}
a_{2j}(r;\lambda^2g) = \lambda^{-2j} a_{2j}(r/\lambda;g), \; \lambda
\ne 0.
\end{equation}
Hence
$$
a_{(2j,2k)}(\lambda^2 g) = \lambda^{-2j-2k} a_{(2j,2k)}(g), \;
\lambda \ne 0.
$$
In other words, the invariant $a_{(2j,2k)}(g) \in C^\infty(M)$ is of
weight $2j+2k$.

In the following, the coefficients $a_{(2j,n-2j)}$ (for even $n$)
will be called {\em critical} heat kernel coefficients of $\H(r)$.
Since these coefficients satisfy
\begin{equation}\label{hom-crit}
a_{(2j,n-2j)}(\lambda^2g) = \lambda^{-n} a_{(2j,n-2j)}(g),
\end{equation}
their total integrals are scale-invariant.

We note that the explicit formula \eqref{hol-ein} for the
holographic Laplacian $\H(r;g)$ of an Einstein metric $g$ shows that
for such a metric the heat kernel coefficients $a_{2j}(r;g)$ are
given by the relation
\begin{equation}\label{hc-einstein}
a_{2j}(r;g) = (1-\lambda r^2/4)^{n-2j} a_{2j}(g).
\end{equation}
For general metrics, the dependence of the coefficients $a_{2j}(r;g)$ on
$r$ is significantly more complicated, however.

We continue with the discussion of consequences of Theorem
\ref{conform-heat} for the behaviour of the heat kernel coefficients of
$\H(r;g)$ under general conformal changes of the metric. Due to the
overall factor $r$ in the second term on right-hand side of
\eqref{base-heat}, this equation specializes for $r=0$ to the conformal
variational formula
$$
(\partial/\partial\varepsilon)|_{\varepsilon=0} (\Tr(\exp(t
P_2(e^{2\varepsilon \varphi}g)))) = - 2t (\partial/\partial t)
(\Tr(\varphi e^{t P_2(g)}))
$$
for the trace of the heat kernel of the conformal Laplacian. In
\cite{BO-index} and \cite{PR} it is shown that the latter formula implies
the conformal variational formulas
\begin{equation}
\left(\int_{M^n} a_{2j}(g) \dvol_g \right)^\bullet[\varphi] =
(n\!-\!2j) \int_{M^n} \varphi a_{2j}(g) \dvol_g.
\end{equation}
In particular, for even $n$, the functional
\begin{equation}\label{index}
g \mapsto \int_{M^n} a_n(g) \dvol_g
\end{equation}
is a global conformal invariant. Since the integrand $a_n$ is the constant
term in the expansion \eqref{heat-expansion-CL}, the integral
\eqref{index} is often referred to as a conformal index \cite{BO-index}.
The second term on the right-hand side of \eqref{base-heat} contributes
only to the conformal transformation laws of the {\em non-constant} Taylor
coefficients of $a_{2j}(r;g)$ with respect to $r$. By the cyclicity of the
trace, it equals
\begin{equation}\label{bulk-term}
-rt \frac{1}{2} \Tr \left( \varphi \left( \dot{\H}(r;g) e^{t
\H(r;g)} + e^{t \H(r;g)} \dot{\H}(r;g) \right) \right).
\end{equation}
If $\H(r;g)$ and $\dot{\H}(r;g)$ commute, this sum can be written in the
form
\begin{equation}\label{r-contribution}
-r(\partial/\partial r) (\Tr (\varphi e^{t\H(r;g)})).
\end{equation}
The similarity of both contributions
$$
-2t(\partial/\partial t) (\Tr(\varphi e^{t\H(r;g)})) \quad
\mbox{and} \quad -r (\partial/\partial r) (\Tr (\varphi
e^{t\H(r;g)}))
$$
suggests to consider $r^2$ as a second time variable. But since $\H(r)$
and $\dot{\H}(r)$ do not commute in general, the right-hand side of
\eqref{base-heat} is the sum of these two contributions and a contribution
given by the difference of \eqref{bulk-term} and \eqref{r-contribution}.
The additional term can be described in terms of the restriction to the
diagonal of the kernel of the operator \label{refo-dc}
\begin{equation}\label{C-term}
\C(t;r) = \frac{1}{2\pi i} \int_\Gamma
[\R(\lambda),[\R(\lambda),\dot{\H}(r)]] e^{-t\lambda} d\lambda
\end{equation}
(Lemma \ref{double-key}), where $\R(\lambda) = \R(r;\lambda)$
denotes the resolvent of $-\H(r)$. Theorem \ref{B} below is
motivated by the analysis of this term.

For special metrics, the operator $\C(t;r)$ may vanish. For instance,
\eqref{hol-ein} implies that for an Einstein metric $g$ the families
$\H(r;g)$ and $\dot{\H}(r;g)$ are proportional by a factor which only
depends on $r$. Hence, for such metrics, the term \eqref{C-term} vanishes
and, by combining Theorem \ref{conform-heat} with the above arguments, we
obtain the following result.

\begin{prop}\label{critical-even} On closed Einstein manifolds $(M^n,g)$
of even dimension $n$, we have
\begin{equation}\label{CV-a}
\left(\int_{M^n} a_{(2j,2k)}(g) \dvol_g \right)^\bullet[\varphi] =
(n\!-\!2j\!-\!2k) \int_{M^n} \varphi a_{(2j,2k)}(g) \dvol_g.
\end{equation}
In particular, the total integrals
\begin{equation}\label{CV-a-c}
\int_{M^n} a_{(2j,n-2j)} \dvol
\end{equation}
of the critical heat kernel coefficients are critical at Einstein
metrics in their conformal class.
\end{prop}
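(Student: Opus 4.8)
The plan is to substitute the small-time heat expansions into the variational identity of Theorem~\ref{conform-heat}, specialised to Einstein $g$, and then to compare coefficients first in powers of $t$ and afterwards in powers of $r$.

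First I would use the explicit formula \eqref{hol-ein}, which for Einstein $g$ gives $\H(r;g) = (1-\lambda r^2/4)^{-2}P_2(g)$; hence $\H(r;g)$ and $\dot{\H}(r;g)$ differ by a scalar function of $r$ and in particular commute. As noted in the paragraph preceding the statement, this makes the operator \eqref{C-term} vanish, so the last summand on the right of \eqref{base-heat} may be replaced by the expression \eqref{r-contribution}. Thus, for Einstein $g$, Theorem~\ref{conform-heat} reduces to
\[
(\partial/\partial\varepsilon)|_{\varepsilon=0}\big(\Tr(\exp(t\H(r;e^{2\varepsilon\varphi}g)))\big) = -2t(\partial/\partial t)\big(\Tr(\varphi e^{t\H(r;g)})\big) - r(\partial/\partial r)\big(\Tr(\varphi e^{t\H(r;g)})\big).
\]
Since $e^{2\varepsilon\varphi}g$ is conformally Einstein, by the conventions stated after Theorem~\ref{conform-H} all the $r$-expansions below are valid without truncation.

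Next I would feed in the asymptotics. By ellipticity of $\H(r;g)$, the on-diagonal heat coefficients are the local Riemannian invariants $a_{2j}(r;g)$ of Theorem~\ref{structure}, and inserting the multiplication operator $\varphi$ simply multiplies them pointwise, so that
\[
\Tr(\varphi e^{t\H(r;g)}) \sim (4\pi t)^{-\f}\sum_{j\ge0} t^j\int_M \varphi\,a_{2j}(r;g)\,\dvol_g, \qquad t\to0.
\]
Likewise \eqref{heat-expansion-H} holds verbatim with $g$ replaced by $e^{2\varepsilon\varphi}g$ (the prefactor $(4\pi t)^{-\f}$ does not see the conformal factor), and all these expansions may be differentiated term by term in $\varepsilon$, $t$ and $r$. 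Applying $-2t(\partial/\partial t)$ to the generic term $(4\pi t)^{-\f}t^j$ produces the numerical factor $n-2j$, while $-r(\partial/\partial r)$ acts only inside the $r$-dependent coefficients; comparing the coefficients of $(4\pi t)^{-\f}t^j$ on the two sides then yields, as an identity of power series in $r$,
\[
\left(\int_M a_{2j}(r;g)\,\dvol_g\right)^\bullet[\varphi] = (n-2j)\int_M\varphi\,a_{2j}(r;g)\,\dvol_g - r(\partial/\partial r)\int_M\varphi\,a_{2j}(r;g)\,\dvol_g.
\]

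Finally I would insert the Taylor expansion \eqref{r-expansion-a}, $a_{2j}(r;g)\sim\sum_{k\ge0}a_{(2j,2k)}(g)r^{2k}$. Because $r(\partial/\partial r)$ multiplies $r^{2k}$ by $2k$, comparing the coefficients of $r^{2k}$ gives
\[
\left(\int_{M^n}a_{(2j,2k)}(g)\,\dvol_g\right)^\bullet[\varphi] = (n-2j-2k)\int_{M^n}\varphi\,a_{(2j,2k)}(g)\,\dvol_g,
\]
which is \eqref{CV-a}; taking $2k=n-2j$ makes the prefactor vanish, so the total integrals \eqref{CV-a-c} are critical at Einstein metrics in their conformal class. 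The single step that genuinely uses the Einstein hypothesis — and hence the main obstacle to a general statement — is the first one: only when $\H(r;g)$ and $\dot{\H}(r;g)$ commute does the double-commutator operator \eqref{C-term} drop out, leaving a right-hand side that is an exact combination of $-2t(\partial/\partial t)$ and $-r(\partial/\partial r)$ applied to $\Tr(\varphi e^{t\H(r;g)})$; for a general metric that residual term survives. Everything else is the routine bookkeeping of differentiating an asymptotic expansion in its parameters.
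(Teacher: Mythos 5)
Your argument is correct and follows essentially the same route as the paper: Einstein metrics make $\dot{\H}(r;g)$ an $r$-dependent multiple of $\H(r;g)$, so the double-commutator term \eqref{C-term} vanishes and the second term of \eqref{base-heat} becomes \eqref{r-contribution} (this is the content of Lemma \ref{double-key} and \eqref{base-red}), after which comparing coefficients of $t^j$ and $r^{2k}$ in the heat expansion gives exactly \eqref{conf-hc} with $c_{2j}=0$ and hence \eqref{CV-a}, as in Section \ref{dc-term} and Proposition \ref{crit-van}. The only difference is cosmetic: the paper first derives the general formula \eqref{conf-hc} with the correction term $c_{2j}$ and then specializes to Einstein metrics, whereas you specialize first.
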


It is natural to regard Proposition \ref{critical-even} as a result
on the two-parameter {\em spectral zeta function}
\begin{equation}\label{zeta-two}
\zeta(r;s) \st \sum_k \frac{1}{\lambda_k(r)^s}, \quad \Re(s) > \f
\end{equation}
of $-\H(g;r)$. In the latter definition, we assume that the eigenvalues
$\lambda_k(r)$ of $-\H(g;r)$ are positive (for sufficiently small $r$).
The function $s \mapsto \zeta(r;s)$ admits a meromorphic continuation to
$\c$ with simple poles in $s \in \{n/2, n/2-1,\dots,1\}$. Moreover, the
corresponding residue in $n/2-k$ is a constant multiple of the total
integral of $a_{2k}(g;r)$. Now the conformal variation of $\zeta(r;s)$ at
an Einstein metric is given by the formula
\begin{equation}\label{zeta-E}
\zeta(r;s)^\bullet[\varphi] = (2s\!-\!r\partial/\partial r)
\zeta(\varphi;r;s),
\end{equation}
where the local zeta function $\zeta(\varphi;r;s)$ is defined by
$$
\zeta(\varphi;r;s) = \frac{1}{\Gamma(s)} \int_0^\infty t^{s-1} \Tr
(\varphi e^{t \H(r)}) dt, \quad \Re(s) > n/2.
$$
Like $\zeta(r;s)$, the local zeta function $\zeta(\varphi;r;s)$ admits a
meromorphic continuation to $\c$ and the relation \eqref{zeta-E} is valid
as an identity of meromorphic functions. For more details we refer to
Section \ref{CV-zeta}.

In particular, expanding the resulting relation for residues in the
variable $r$ for even $n$ yields the relations \eqref{CV-a} for
$j=0,1,\dots,n/2-1$. Similarly, the relations \eqref{CV-a} for $j \ge n/2$
are obtained by expanding the values of both sides of \eqref{zeta-E} at $s
= n/2-j \in -\N_0$.

The formula \eqref{zeta-E} also has an interesting consequence for odd
$n$. Let $\zeta_{2k}(s)$ be the coefficients in the expansion of
$\zeta(r;s)$ in the variable $r$ (as in \eqref{zeta-expand}). Now, for odd
$n$, $\zeta(r;s)$ is regular at $s \in \N$. The functionals
$$
g \mapsto \zeta_{2k}(k;g)
$$
are scale-invariant and \eqref{zeta-E} implies that all zeta values
$$
\zeta_{2k}(k) \quad \mbox{for $k \in \{0,1,\dots,(n-1)/2,\dots\}$}
$$
are {\em critical} at Einstein metrics in their conformal class.

In the variational formula \eqref{base-heat}, the contribution by
\eqref{C-term} vanishes for $r=0$ for all metrics (due to the
overall $r$-factor). Thus, the same arguments prove the conformal
invariance of the total integral of $a_n=a_{(n,0)}$ for general
metrics.

For general metrics, the term \eqref{C-term} does {\em not} contribute to
the leading heat kernel coefficient $a_0(r)$ of $\H(r)$. More precisely,
we shall prove the relation
\begin{equation}\label{base-0}
a_0(r) = v(r)
\end{equation}
(Lemma \ref{top}). The latter result also follows from the well-known
general results in Chapter II of \cite{shubin} and \cite{DG}. The main
point here is that by combining the relation \eqref{base-0} with the above
arguments concerning the behaviour of the total integrals of $a_0(r)$
naturally leads to a heat equation proof of the following variational
formula.

\begin{thm}[\cite{CF}, \cite{G-ext}]\label{conf-reno} For any metric
$g$ on a closed manifold $M$ of dimension $n$ and $2k \le n$ if $n$
is even, we have
\begin{equation}\label{CF-CT}
\left(\int_M v_{2k}(g) \dvol_g \right)^\bullet[\varphi] = (n\!-\!2k)
\int_M \varphi v_{2k}(g) \dvol_g
\end{equation}
for all $\varphi \in C^\infty(M)$.
\end{thm}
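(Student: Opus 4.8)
The plan is to give a heat-equation proof by extracting the leading ($j=0$) heat coefficient on both sides of the conformal variational identity \eqref{base-heat} of Theorem \ref{conform-heat}, using the identification $a_0(r;g)=v(r;g)$ of Lemma \ref{top} (equation \eqref{base-0}). Here $r$ is treated as a formal parameter; in odd dimension the argument gives \eqref{CF-CT} for all $k$, and in even dimension it gives it for $2k<n$, the classical critical case $2k=n$ then following by a limiting argument in the dimension (or, for locally conformally flat and conformally Einstein metrics, directly, since there \eqref{base-heat} holds to all orders).

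Both sides of \eqref{base-heat} are, for each power of $r$, functions of $t$ with small-$t$ expansions of the form $(4\pi t)^{-\f}\sum_{j\ge 0}t^j(\cdots)$; I would match the coefficients of the $j=0$ term, the common prefactor $(4\pi t)^{-\f}$ cancelling. By \eqref{heat-expansion-H} applied to the metric $e^{2\varepsilon\varphi}g$ and $a_0=v$, the $j=0$ coefficient of the left-hand side of \eqref{base-heat} is $\big(\int_M v(r;g)\,\dvol_g\big)^\bullet[\varphi]$. For the right-hand side I would use the standard localized expansion $\Tr(\varphi e^{t\H(r;g)})\sim(4\pi t)^{-\f}\sum_{j\ge 0}t^j\int_M\varphi\,a_{2j}(r;g)\,\dvol_g$. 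Since $-2t\,\partial_t$ multiplies the $j=0$ term of such an expansion by $n$, the first term $-2t(\partial/\partial t)(\Tr(\varphi e^{t\H(r;g)}))$ on the right of \eqref{base-heat} contributes $n\int_M\varphi\,v(r;g)\,\dvol_g$ at order $j=0$.

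The second term on the right of \eqref{base-heat} equals, by cyclicity of the trace, the expression \eqref{bulk-term}, which by Lemma \ref{double-key} is the sum of \eqref{r-contribution}$\,=-r(\partial/\partial r)(\Tr(\varphi e^{t\H(r;g)}))$ and a correction proportional to $\Tr(\varphi\,\C(t;r))$, with $\C(t;r)$ as in \eqref{C-term}. The term \eqref{r-contribution} contributes $-r\,\partial_r\!\int_M\varphi\,v(r;g)\,\dvol_g$ at order $j=0$. The one step needing genuine care is that the correction does not contribute there, i.e.\ that $\C(t;r)$ is invisible at the $a_0$-level. For this I would use the resolvent identities $[\R(\lambda),B]=\R(\lambda)[\H,B]\R(\lambda)$ and $[\H,\R(\lambda)]=0$ to collapse the double commutator to $[\R,[\R,\dot{\H}]]=\R^2[\H,[\H,\dot{\H}]]\R^2$, i.e.\ four resolvent factors around the order-four operator $[\H,[\H,\dot{\H}]]$; the resulting contour integral then has a small-$t$ expansion beginning at order $t^{-\f+1}$ (four resolvent factors gain a factor $t^3$ over $\Tr(e^{t\H})\sim t^{-\f}$, while the order-four insertion costs $t^2$), so $\Tr(\varphi\,\C(t;r))$ carries no $t^{-\f}$ term.

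Matching the $j=0$ coefficients in \eqref{base-heat} then gives the identity of formal power series in $r$
\[
\Big(\int_M v(r;g)\,\dvol_g\Big)^\bullet[\varphi]=n\int_M\varphi\,v(r;g)\,\dvol_g-r\,\frac{\partial}{\partial r}\int_M\varphi\,v(r;g)\,\dvol_g.
\]
Writing $v(r;g)=\sum_{k\ge 0}v_{2k}(g)\,r^{2k}$ and using that $r\,\partial_r$ multiplies the coefficient of $r^{2k}$ by $2k$, the coefficient of $r^{2k}$ on the right-hand side is $(n-2k)\int_M\varphi\,v_{2k}(g)\,\dvol_g$, while on the left it is $\big(\int_M v_{2k}(g)\,\dvol_g\big)^\bullet[\varphi]$ — which is exactly \eqref{CF-CT}. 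The main obstacle is the symbol/resolvent bookkeeping that shows $\C(t;r)$ is subleading at order $t^{-\f}$; everything else reduces to the standard calculus of small-$t$ heat trace asymptotics. (As the surrounding text stresses, the double-commutator term is genuinely active in the finer, local version of the transformation law, but it is invisible to the total integral.)
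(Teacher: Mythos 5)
Your proof is correct and takes essentially the same route as the paper's own argument in Section \ref{proof-A}: rewrite the $r$-term of \eqref{base-heat} via cyclicity and Lemma \ref{double-key} as $-r\partial_r\Tr(\varphi e^{t\H(r)})$ plus the correction $\Tr(\varphi\,\C(t;r))$, observe (as the paper does via the order $-4$ symbol count and Lemma \ref{homogen}, equivalently your $t^{-\f+1}$ counting for $\R^2[\H,[\H,\dot{\H}]]\R^2$) that this correction has no $t^{-\f}$-coefficient, i.e. $c_0=0$, and then match the leading coefficient using $a_0(r)=v(r)$ from Lemma \ref{top} and expand in $r$. The only point where you go beyond the paper's argument is the critical case $2k=n$ for general metrics in even dimension, which the paper itself excludes from this heat-equation proof (recovering it only later via Theorems \ref{A} and \ref{A-fine}), so your sketched dimensional-continuation remark is an optional add-on rather than a defect.
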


In particular, for closed manifolds $M$ of even dimension $n$, the
total integral of $v_n$ is conformally invariant. We also recall
that this integral is proportional to the total integral of the
critical $Q$-curvature (see \cite{GZ}, \cite{GJ-holo},
\cite{juhl-book}). The quantity $v_n$ is the conformal anomaly of
the renormalized volume \cite{G-vol}, \cite{CFG}.

A somewhat subtle point in the heat equation proof of Theorem
\ref{conf-reno} is that, for general metrics in even dimension $n$, it
does not cover the total integral of the critical coefficient $v_n$. The
reason is that for general metrics the holographic Laplacian is only
uniquely defined up to order $r^{n-2}$.

The heat equation proof of Theorem \ref{conf-reno} is an application of
the algebraic relation between the operator $\H(r;g)$ and the
GJMS-operators (Theorem \ref{base}). It completely differs from earlier
arguments in \cite{CF} and \cite{G-ext}.

For locally conformally flat metrics on manifolds of even dimension
$n$, the coefficient $v_n$ is a multiple of the integrand in the
Chern-Gauss-Bonnet theorem, i.e., of the Euler form (or Pfaffian)
(see \cite{GJ-holo}). Hence, for such metrics, the integrated {\em
leading} heat kernel coefficient of $\H(r)$ spectrally detects the
Euler characteristic of the underlying manifold in its Taylor
coefficient of $r^n$.

We continue with a discussion of the {\em sub-leading} heat kernel
coefficient $a_2(r)$ of $\H(r)$. In order to describe the conformal
variation of its total integral, we introduce the {\em corrected}
sub-leading heat kernel coefficient
\begin{equation}\label{Lambda}
\Lambda(r) =  a_2(r) - (\dot{w}(r))^2 \in C^\infty(M).
\end{equation}
The function $\Lambda(r)$ has an even expansion $\Lambda_0 +
\Lambda_2 r^2 + \Lambda_4 r^4 + \cdots$. Note that $\Lambda_0 =
a_2$. In these terms, we state our second main result.

\begin{thm}\label{A} For any metric $g$ on a closed manifold $M$ of
dimension $n$ and $2k \le n-2$ if $n$ is even, we have
\begin{equation}\label{CT-Lambda}
\left(\int_M \Lambda_{2k}(g) \dvol_g \right)^\bullet[\varphi] =
(n\!-\!2\!-\!2k) \int_M \varphi \Lambda_{2k}(g) \dvol_g
\end{equation}
for all $\varphi \in C^\infty(M)$. In particular, for even $n$, the
functional
$$
\L_{n-2}: g \mapsto \int_M \Lambda_{n-2}(g) \dvol_g
$$
is conformally invariant, i.e., $\L_{n-2}(e^{2\varphi}g) =
\L_{n-2}(g)$.
\end{thm}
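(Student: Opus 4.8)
\emph{Strategy.} The plan is to rerun the heat-equation argument behind Theorem~\ref{conform-heat} one order higher in the small-time expansion than in the proof of Theorem~\ref{conf-reno}, and then to check that the genuine defect produced by the massive double-commutator term in \eqref{base-H} is compensated \emph{exactly} by the correction $(\dot w(r))^2$ in \eqref{Lambda}.

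\emph{Step 1: reduction.} Starting from \eqref{base-heat} I would rewrite its ``bulk'' term by means of Lemma~\ref{double-key}: representing $e^{t\H(r)}$ as a contour integral of the resolvent $\R(r;\lambda)$ of $-\H(r)$ and differentiating once in $\lambda$ and once in $r$ gives $\tfrac t2(\dot\H(r)e^{t\H(r)}+e^{t\H(r)}\dot\H(r))=\partial_r e^{t\H(r)}+\tfrac12\C(t;r)$, so that by cyclicity of the trace \eqref{base-heat} becomes
\[
\bigl(\Tr(e^{t\H(r;e^{2\varepsilon\varphi}g)})\bigr)^\bullet[\varphi]=-2t\,\partial_t\Tr(\varphi e^{t\H(r)})-r\,\partial_r\Tr(\varphi e^{t\H(r)})-\tfrac r2\Tr(\varphi\,\C(t;r)).
\]
Inserting the short-time expansions of the four traces --- each carries the prefactor $(4\pi t)^{-\f}$, preserved by $-2t\partial_t$ and $-r\partial_r$, and $-2t\partial_t$ produces the factor $n-2j$ at order $t^j$ --- and comparing the coefficients of $(4\pi t)^{-\f}t$, using $a_{2j}(0;g)=a_{2j}(g)$ and the fact recorded before Lemma~\ref{top} that $\C(t;r)$ does not contribute to the leading heat coefficient, I obtain
\[
\Bigl(\int_M a_2(r;g)\,\dvol_g\Bigr)^\bullet[\varphi]=(n\!-\!2)\!\int_M\!\varphi\,a_2(r;g)\,\dvol_g-r\,\partial_r\!\int_M\!\varphi\,a_2(r;g)\,\dvol_g-\tfrac r2\!\int_M\!\varphi\,c_2(r;g)\,\dvol_g,
\]
where $c_2(r;g)$ is the sub-leading heat coefficient of $\C(t;r)$: it is even in $r$, a one-parameter family of local Riemannian invariants (Theorem~\ref{structure}), and satisfies $\int_M c_2(r;g)\,\dvol_g=0$ since $\Tr(\C(t;r))=0$ by cyclicity.

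\emph{Step 2: the key identity and conclusion.} The crux is to prove that $\int_M(\dot w(r))^2\,\dvol_g$ obeys \emph{the same} variational formula, with the same $c_2(r;g)$:
\[
\Bigl(\int_M(\dot w(r))^2\,\dvol_g\Bigr)^\bullet[\varphi]=(n\!-\!2)\!\int_M\!\varphi\,(\dot w(r))^2\,\dvol_g-r\,\partial_r\!\int_M\!\varphi\,(\dot w(r))^2\,\dvol_g-\tfrac r2\!\int_M\!\varphi\,c_2(r;g)\,\dvol_g.
\]
Subtracting this from the formula of Step~1 cancels the $c_2$-term and leaves $(\int_M\Lambda(r)\,\dvol_g)^\bullet[\varphi]=(n\!-\!2)\int_M\varphi\,\Lambda(r)\,\dvol_g-r\,\partial_r\int_M\varphi\,\Lambda(r)\,\dvol_g$. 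Since $\H(r;g)$ is even in $r$, so are $a_2(r;g)$ and --- because $w(r)$ is even --- $\Lambda(r)$; comparing the coefficients of $r^{2k}$, on which $r\partial_r$ acts as multiplication by $2k$, gives \eqref{CT-Lambda}, the case $2k=n-2$ being the conformal invariance of $\L_{n-2}$. For even $n$ the restriction $2k\le n-2$ is forced because $\H(r;g)$, hence $a_2(r;g)$, is determined only modulo $r^{n-2}$.

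\emph{Step 3: proof of the key identity (the main obstacle).} I would establish the Step~2 identity by computing its two sides and matching. On one side, $\int_M(\dot w(r))^2\,\dvol_g=\tfrac14\int_M(\dot v(r))^2v(r)^{-1}\,\dvol_g$, whose conformal variation is governed by the conformal transformation law of the renormalized volume density $v(r;g)=w(r;g)^2$ --- equivalently, by the diffeomorphism identifying the Poincar\'e-Einstein metrics of $g$ and of $e^{2\varphi}g$ near the boundary, i.e.\ by the local refinement of Theorem~\ref{conf-reno}. On the other side, $\C(t;r)=t(\dot\H(r)e^{t\H(r)}+e^{t\H(r)}\dot\H(r))-2\,\partial_r e^{t\H(r)}$, and since on the diagonal the kernels of $\dot\H(r)e^{t\H(r)}$ and $e^{t\H(r)}\dot\H(r)$ coincide, $c_2(r;g)$ is computed from the Minakshisundaram-Pleijel parametrix of $\H(r)$ --- most conveniently after conjugating $\H(r)$ into the genuinely Laplace-type operator $w(r)^{-1}\H(r)w(r)$ on $(M,g(r))$ --- together with the explicit form of $\dot\H(r)=\partial_r\H(r)$; the outcome is a universal polynomial in $g$, the tensors $g_{(2j)}$, and the $r$-derivatives of $w(r)$. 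Matching the two expressions is where I expect the real difficulty to lie. As partial checks: for constant $\varphi$ both sides of the Step~2 identity reduce, via \eqref{CCC} and $\int_M c_2(r;g)\,\dvol_g=0$, to the same scaling relation; and at an Einstein metric $\dot\H(r)$ is a scalar multiple of $\H(r)$, so $\C(t;r)\equiv0$, $c_2(r;g)\equiv0$, and the identity collapses to the rescaling relations \eqref{hc-einstein} for $a_2(r;g)$ and for $v(r;g)$.
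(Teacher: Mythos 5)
Your Steps 1 and 2 reproduce exactly the paper's architecture: the reduction of \eqref{base-heat} via Lemma \ref{double-key} to the variational formula \eqref{conf-hc} with the defect term $-\tfrac r2\int_M\varphi\,c_2(r)\,\dvol$, followed by the observation that Theorem \ref{A} drops out once one shows that $\int_M(\dot w(r))^2\dvol$ satisfies the \emph{same} formula with the \emph{same} defect. So the strategy is correct and is the paper's own.

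The genuine gap is Step 3, which you yourself flag: the key identity is asserted but not proved, and the two ingredients that make the matching work in the paper are absent from your sketch. First, you never produce (or even conjecture) a closed formula for $c_2(r)$. In the paper this is Theorem \ref{B}: since the double commutator $[\R(\lambda),[\R(\lambda),\dot{\H}(r)]]$ has order $-4$, only its principal symbol enters $c_2$, and by Lemma \ref{DPB} that symbol is the double Poisson bracket $\{\H(r),\{\H(r),\dot{\H}(r)\}\}$ of principal symbols; the resulting Gaussian moment integral, evaluated in normal coordinates for $g(r)$, collapses to $-v(r)\Delta_{g(r)}((\dot v/v)(r))$ only because of the identity $\delta_{g(r)}(\dot g(r))=d\,\tr_{g(r)}(\dot g(r))$ (Lemma \ref{d-div}), which is precisely where the Poincar\'e--Einstein equation enters, through $\bar{\Rho}=-\tfrac1{2r}\dot g(r)$ for $\bar g=dr^2+g(r)$. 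Your plan to read off $c_2$ ``from the parametrix together with the explicit form of $\dot{\H}(r)$'' does not identify this mechanism, and without it there is no reason the answer should be expressible purely through $v(r)$, which is exactly what the cancellation against $(\dot w(r))^2$ requires. Second, on the other side you appeal to ``the conformal transformation law of $v(r)$'', but what is needed is the local formula $v(r)^\bullet[\varphi]=-r\dot v(r)\varphi-\delta(L(r)\,d\varphi)$ with $L(r)=v(r)\int_0^r s\,g(s)^{-1}ds$ (Theorem \ref{RVC-CV}), and deriving from it the $\varphi$-localized variation of $\int_M(\dot w(r))^2\dvol$ is a genuine computation (the paper's Theorem \ref{var}), which uses the conjugation identity $v(r)\circ\delta_{g(r)}=\delta_g\circ v(r)\,g(r)^{-1}$ (Lemma \ref{conjugate}) to produce exactly the term $\tfrac r2\int_M\varphi\,v(r)\Delta_{g(r)}((\dot v/v)(r))\dvol$ that cancels the defect. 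Your partial checks (constant $\varphi$, Einstein metrics) are consistent but do not substitute for these two computations; note also a minor slip: $c_2(r)$ is odd, not even, in $r$ (as $\dot{\H}(r)$ is odd; cf.\ \eqref{taylor-c}), only $r\,c_2(r)$ is even. Alternatively, the gap can be closed by the explicit evaluation $a_2(r)=-\tfrac13\bigl(\ddot v(r)-(\f-1)r^{-1}\dot v(r)\bigr)+(\dot w(r))^2$ (Theorem \ref{A-fine}), which together with Theorem \ref{conf-reno} yields Theorem \ref{A} directly.
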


Theorem \ref{A} shows how to extract from the heat kernel
coefficient $a_2(r)$ of $\H(r)$ a Riemannian curvature integral
which, under conformal changes of the metric, behaves similarly as
the total integrals of the renormalized volume coefficients $v_{2k}$
and the heat kernel coefficients $a_{2k}$ of the conformal
Laplacian. In particular, the conformal invariance of $\L_{n-2}$ is
an analog of the conformal invariance of the total integrals of
$a_n$ (conformal index) and $v_n$. Theorem \ref{A-fine} will
actually explain this result by an independent calculation of the
invariants $\Lambda_{2k}$.

The proof of Theorem \ref{A} rests on Theorem \ref{conform-heat} and
a certain integral formula. In order to state that formula, we
introduce some additional notation. For any $\Phi \in
C^\infty(T^*M)$ which is polynomial on the fibers of $T^*M$, we set
$$
\left\langle \Phi \right\rangle \st \left(\int_{\r^n} \Phi
e^{-\H(r)} d\xi \right) \big/ \left(\int_{\r^n} e^{-\H(r)}
d\xi\right) \in C^\infty(M),
$$
where the quadratic form
\begin{equation}\label{principal}
\H(r) = \H(r)(x,\xi) = \sum_{i,j} g(r,x)_{ij}^{-1} \xi_i \xi_j \in
C^\infty(T^*M)
\end{equation}
is the principal symbol of the (negative) holographic Laplacian,
i.e., of $-\Delta_{g(r)}$, in local coordinates.\footnote{It is easy
to verify that $\left\langle\Phi\right\rangle$ is well-defined,
i.e., independent of the choice of local coordinates.} In these
terms, we prove the following result.

\begin{thm}\label{B} For any metric $g$, we have
\begin{equation}\label{e-value}
\frac{1}{3!} \left\langle \left
\{\H(r),\left\{\H(r),\dot{\H}(r)\right\} \right\} \right\rangle = -
\Delta_{g(r)}((\dot{v}/v)(r))
\end{equation}
for sufficiently small $r$ as an identity of functions on $M$. Here
the brackets denote the Poisson brackets of the standard symplectic
structure on $T^*M$.
\end{thm}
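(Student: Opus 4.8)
\section*{Proof proposal for Theorem~\ref{B}}

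The plan is to compute both sides covariantly with respect to the metric $\gamma:=g(r)$ and then to invoke the Fefferman--Graham momentum constraint. Throughout I fix a small $r$, write $\gamma=g(r)$, $\dot\gamma=\partial_r g(r)$, $G=g(r)^{-1}$ (a symmetric $2$-vector) and $\dot G=\partial_r(g(r)^{-1})=-\gamma^{-1}\dot\gamma\,\gamma^{-1}$, and let $\nabla,\operatorname{div},\Delta,\operatorname{tr}$ be taken with respect to $\gamma$. Then $\H(r)(x,\xi)=G^{ij}\xi_i\xi_j$ and $\dot\H(r)(x,\xi)=\dot G^{ij}\xi_i\xi_j$. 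Since \eqref{e-value} is an identity of functions on $M$, it suffices to prove it pointwise.

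First I would establish a covariant formula for the Poisson bracket $\{\H(r),\cdot\}$ on symbols of symmetric tensor fields: for a symmetric $k$-vector $T$ with symbol $\sigma(T)(x,\xi)=T^{i_1\cdots i_k}\xi_{i_1}\cdots\xi_{i_k}$ one has $\{\H(r),\sigma(T)\}=2\sigma(\nabla^{\!\sharp}T)$, where $(\nabla^{\!\sharp}T)^{i_0 i_1\cdots i_k}:=\gamma^{i_0 j}\nabla_j T^{i_1\cdots i_k}$ (totally symmetrized over upper indices, which is automatic after contraction with $\xi^{\otimes(k+1)}$). This is proved either by a direct expansion, in which the Christoffel symbols produced by rewriting $\partial_j T$ as $\nabla_j T$ cancel exactly against those hidden in $\partial_j G^{ab}$ (using $\nabla\gamma=0$), or conceptually: $X_{\H(r)}$ is twice the geodesic spray of $\gamma$, along its flow $\xi$ is parallel and $\dot x=2\xi^{\sharp}$, so $\{\H(r),\sigma(T)\}=\frac{d}{ds}\sigma(T)(x(s),\xi(s))=2\bigl\langle(\nabla T)(\xi^{\sharp}),\xi^{\otimes k}\bigr\rangle$. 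Applying this twice to $\dot\H(r)=\sigma(\dot G)$ gives
\begin{equation*}
\{\H(r),\{\H(r),\dot\H(r)\}\}(x,\xi)=4\,(\nabla_e\nabla_f\dot G^{cd})\,\xi^e\xi^f\xi_c\xi_d,\qquad \xi^e:=\gamma^{ej}\xi_j,
\end{equation*}
with \emph{no} curvature term, since covariant derivatives are never commuted.

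Next I would carry out the Gaussian average. As $e^{-\H(r)}=e^{-G^{ij}\xi_i\xi_j}$ is a centred Gaussian in $\xi$ with $\langle\xi_i\xi_j\rangle=\tfrac12\gamma_{ij}$, Wick's theorem gives $\langle\xi^e\xi^f\xi_c\xi_d\rangle=\tfrac14(\gamma^{ef}\gamma_{cd}+\delta^e_c\delta^f_d+\delta^e_d\delta^f_c)$, whence, using that $\dot G$ is symmetric,
\begin{equation*}
\bigl\langle\{\H(r),\{\H(r),\dot\H(r)\}\}\bigr\rangle=\Delta(\operatorname{tr}\dot G)+2\,\operatorname{div}\operatorname{div}\dot G.
\end{equation*}
The remaining input is the Fefferman--Graham momentum constraint: the $(0,i)$-component of $\Ric(g_+)+n g_+=0$, equivalently the contracted Codazzi equation of the level sets of $r$ in $dr^2+g(r)$, reads $\operatorname{div}_{g(r)}\dot g(r)=d\bigl(\operatorname{tr}_{g(r)}\dot g(r)\bigr)$ (see \cite{FG-final}); applying $\operatorname{div}$ once more gives $\operatorname{div}\operatorname{div}\dot\gamma=\Delta(\operatorname{tr}\dot\gamma)$, and translating via $\dot\gamma=-\gamma\dot G\gamma$ and $\nabla\gamma=0$ this becomes $\operatorname{div}\operatorname{div}\dot G=\Delta(\operatorname{tr}\dot G)$ (valid to the same order in $r$ as $\H(r)$, consistent with the conventions fixed for \eqref{e-value}). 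Substituting, $\bigl\langle\{\H,\{\H,\dot\H\}\}\bigr\rangle=3\,\Delta(\operatorname{tr}\dot G)$, hence $\tfrac{1}{3!}\bigl\langle\cdots\bigr\rangle=\tfrac12\Delta(\operatorname{tr}\dot G)$. Finally $\dot v/v=\partial_r\log v=\tfrac12\operatorname{tr}_{g(r)}\dot g(r)=\tfrac12\operatorname{tr}\dot\gamma=-\tfrac12\operatorname{tr}\dot G$, so $\tfrac12\Delta(\operatorname{tr}\dot G)=-\Delta_{g(r)}(\dot v/v)$, which is \eqref{e-value}.

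The main obstacle is the first step: getting the covariant formula for $\{\H(r),\cdot\}$ and its iterate exactly right, in particular that the curvature terms generated by $\partial^2 G$ are fully absorbed by the covariant reorganization and leave no residue after the Gaussian average. A second, essential point is recognizing that the seemingly redundant $\operatorname{div}\operatorname{div}\dot G$ term collapses to $\tfrac12\Delta(\operatorname{tr}\dot G)$ only by virtue of the momentum constraint — so \eqref{e-value} genuinely encodes the Einstein equation for $g_+$ rather than pure symbol calculus — and one must keep the $r$-orders under control in even dimensions so that the constraint is used only where $g(r)$, and hence $\H(r)$, is well defined.
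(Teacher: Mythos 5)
Your argument is correct and reaches \eqref{e-value} by the same overall strategy as the paper: compute the double Poisson bracket, average against the Gaussian $e^{-\H(r)}$ using the quartic Wick moments, and then collapse the double divergence via the identity $\delta_{g(r)}(\dot g(r))=d\,\tr_{g(r)}(\dot g(r))$ --- your ``momentum constraint'' is exactly the paper's Lemma \ref{d-div}, proved there from $\bar{\Rho}=-\tfrac{1}{2r}\dot g(r)$ and the contracted Bianchi identity --- together with $\tr_{g(r)}(\dot g(r))=2(\dot v/v)(r)$. Where you genuinely differ is in the evaluation of the double bracket itself: the paper works in normal coordinates for $g(r)$ at a point, expands everything in partial derivatives, and must then check both that the curvature terms produced by $\partial^2(g(r)^{-1})$ integrate to zero by Bianchi-type symmetries and that the conversion of $\partial^2(\dot g(r)^{-1})$ into invariant operators (its Lemmas \ref{fund}--\ref{sum-2}) leaves no curvature residue; your covariant geodesic-flow identity $\{\H(r),\sigma(T)\}=\pm 2\sigma(\nabla^{\sharp}T)$ gives $\{\H,\{\H,\dot\H\}\}=4\,\nabla_e\nabla_f\dot G^{cd}\,\xi^e\xi^f\xi_c\xi_d$ in one stroke, so the curvature bookkeeping disappears and the Wick computation yields $\Delta(\tr\dot G)+2\operatorname{div}\operatorname{div}\dot G$ directly; the translation between $\dot G$ and $\dot g(r)$ via $\nabla g(r)=0$ is sound, and your remark about controlling the order in $r$ for even $n$ matches the paper's conventions. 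The only blemish is cosmetic: with the paper's sign convention \eqref{Poisson}, $\{\H(r),\cdot\}$ is \emph{minus} the derivative along the cogeodesic flow, but since the bracket is applied twice this sign is immaterial to the final identity.
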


On the other hand, the following variational formula yields a local
conformal primitive of the ``expectation value'' in Theorem \ref{B}.

\begin{thm}\label{var} For any metric $g$ on a closed manifold $M$ of
dimension $n$, we have
\begin{multline*}
\left(\int_M (\dot{w}(r))^2 \dvol \right)^\bullet[\varphi] =
(n\!-\!2\!-\!r \partial/\partial r) \int_M \varphi (\dot{w}(r))^2
\dvol + r \frac{1}{2} \int_M \varphi v(r) \Delta_{g(r)}
((\dot{v}/v)(r)) \dvol
\end{multline*}
for sufficiently small $r$ and all $\varphi \in C^\infty(M)$.
\end{thm}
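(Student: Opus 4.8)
The plan is to reduce the whole statement to the (local) conformal transformation law of the renormalized volume function $v(r;g)$, and then to perform two integrations by parts on the closed manifold $M$. Since $w(r)=\sqrt{v(r)}$, we have $(\dot w(r))^2=\dot v(r)^2/(4v(r))$. Writing $v^\bullet(r)$ for the function $(d/d\varepsilon)|_0\,v(r;e^{2\varepsilon\varphi}g)$ on $M$, and using $(\dvol_g)^\bullet[\varphi]=n\varphi\,\dvol_g$ together with $\partial_\varepsilon \dot v=\partial_r v^\bullet$ (justified by smoothness), the chain rule gives
\begin{equation*}
\left(\int_M (\dot w(r))^2\,\dvol_g\right)^\bullet[\varphi]
= \int_M\left(\frac{\dot v\,\partial_r v^\bullet}{2v}-\frac{\dot v^2\,v^\bullet}{4v^2}\right)\dvol_g
+ n\int_M \varphi\,\frac{\dot v^2}{4v}\,\dvol_g .
\end{equation*}
So everything is reduced to a computation with $v^\bullet$.

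The key input is the conformal transformation law of $v$. On the Poincar\'e--Einstein space $g_+=r^{-2}(dr^2+g(r))$, let $\hat r=e^{\omega}r$ be the special defining function of $\hat g=e^{2\varepsilon\varphi}g$; the normalization $|d\log\hat r|^2_{g_+}=1$ is the eikonal equation $2\partial_r\omega+r(|\partial_r\omega|^2+|d_x\omega|^2_{g(r)})=0$ with $\omega|_{r=0}=\varepsilon\varphi$, whose linearization forces $\partial_r\omega=O(\varepsilon^2)$, so that $\hat r=e^{\varepsilon\varphi(x)}r+O(\varepsilon^2)$; the integral curves of $\nabla_{g_+}\log\hat r=\nabla_{g_+}\log r+\varepsilon\nabla_{g_+}\varphi+O(\varepsilon^2)$ then produce the tangential change of coordinates $\hat x=x-\varepsilon\int_0^r s\,\nabla_{g(s)}\varphi\,ds+O(\varepsilon^2)$. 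Substituting these into the invariance $r^{-n-1}v(r;g)\,dr\wedge\dvol_g=\dvol_{g_+}=\hat r^{-n-1}v(\hat r;\hat g)\,d\hat r\wedge\dvol_{\hat g}$ of the bulk volume form and comparing the coefficients of $\varepsilon$ yields
\begin{equation*}
v^\bullet(r)=-r\varphi\,\dot v(r)+v(r)\operatorname{div}_{g(r)}(\beta(r)),\qquad \beta(r):=\int_0^r s\,\nabla_{g(s)}\varphi\,ds .
\end{equation*}
(This local transformation law is recorded in Section~\ref{basics}; the derivation above is only a reminder of its origin.)

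Inserting this law into the displayed identity splits the right-hand side into a ``$\beta$-free'' part and a ``$\beta$-part''. The $\beta$-free part --- the contribution of $-r\varphi\dot v$ inside $v^\bullet$ together with the term $n\varphi\dot v^2/(4v)$ --- collapses, using only the identity $\partial_r(\dot v^2/(4v))=\dot v\ddot v/(2v)-\dot v^3/(4v^2)$, to exactly $(n\!-\!2)\int_M\varphi(\dot w)^2\dvol_g-r\,\partial/\partial r\int_M\varphi(\dot w)^2\dvol_g$, i.e. to $(n\!-\!2\!-\!r\,\partial/\partial r)\int_M\varphi(\dot w)^2\dvol_g$; this is a short calculation with no hidden subtlety. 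For the $\beta$-part one uses the commutation identity $\partial_r\operatorname{div}_{g(r)}(V)=\operatorname{div}_{g(r)}(\dot V)+V(\dot v/v)$ (which follows from $\partial_r\sqrt{\det g(r)}=(\dot v/v)\sqrt{\det g(r)}$, with $V(f)=V^i\partial_i f$); applied to $V=\beta(r)$, for which $\dot V=r\nabla_{g(r)}\varphi$, it gives $\partial_r\operatorname{div}_{g(r)}(\beta(r))=r\Delta_{g(r)}\varphi+\beta(r)(\dot v/v)$. Rewriting every integral against $\dvol_{g(r)}=v\,\dvol_g$ and putting $h=\dot v/v$, the $r\Delta_{g(r)}\varphi$ piece contributes $\tfrac r2\int_M h\,\Delta_{g(r)}\varphi\,\dvol_{g(r)}=\tfrac r2\int_M\varphi\,\Delta_{g(r)}(h)\,\dvol_{g(r)}=\tfrac r2\int_M\varphi\,v\,\Delta_{g(r)}(\dot v/v)\,\dvol_g$, by self-adjointness of $\Delta_{g(r)}$ on $L^2(M,g(r))$ --- precisely the extra term in the asserted formula (and, in view of Theorem~\ref{B}, the announced conformal primitive of the expectation value there). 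The remaining $\beta$-pieces are $\tfrac14\int_M h^2\operatorname{div}_{g(r)}(\beta(r))\,\dvol_{g(r)}$ and $\tfrac12\int_M h\,\beta(r)(h)\,\dvol_{g(r)}=\tfrac14\int_M \beta(r)(h^2)\,\dvol_{g(r)}=-\tfrac14\int_M h^2\operatorname{div}_{g(r)}(\beta(r))\,\dvol_{g(r)}$, and they cancel. Adding the two parts gives the assertion.

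I expect the main obstacle to be setting up the conformal transformation law of $v(r;g)$ correctly, in particular the tangential-shift vector field $\beta(r)$: it rests on how the special defining function and the normal form of $g_+$ change under a conformal rescaling of the boundary metric (the eikonal equation and the flow of $\nabla_{g_+}\log\hat r$), and on a careful accounting of the Jacobian factors relating the two normal-form coordinate systems on the bulk. Once that law is in hand, the remainder --- the split into a $\beta$-free and a $\beta$-dependent part, two integrations by parts on the closed $M$, and the self-adjointness of $\Delta_{g(r)}$, with all $\beta$-terms except the Laplacian one cancelling in pairs --- is routine bookkeeping. As elsewhere in the paper, for general metrics in even dimension $n$ the identity is to be read as an identity of power series in $r$ up to order $r^{n-2}$.
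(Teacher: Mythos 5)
Your proof is correct and follows essentially the same route as the paper: the key input is the local variational formula for $v(r)$ (Theorem \ref{RVC-CV} in Section \ref{prime}, not Section \ref{basics}, which is exactly your law $v^\bullet=-r\varphi\dot v(r)+v(r)\,\mathrm{div}_{g(r)}\beta(r)$ in disguise), after which one differentiates $\dot v^2/(4v)$, isolates the scaling term, and integrates by parts on the closed manifold. Your reformulation via $g(r)$-divergences and the commutation identity for $\partial_r\,\mathrm{div}_{g(r)}$ is just a repackaging of the paper's computation with $L(r)$, $\dot L(r)$ and the conjugation formula of Lemma \ref{conjugate}; the cancellation of the non-Laplacian terms and the self-adjointness step producing $\tfrac r2\int_M\varphi\,v(r)\Delta_{g(r)}((\dot v/v)(r))\,\dvol$ are identical in substance.
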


Theorem \ref{var} is a consequence of a variational formula for
$v(r)$ (Theorem \ref{RVC-CV}) which constitutes a {\em local}
refinement of Theorem \ref{conf-reno}.

Next, we formulate a structural result for the invariants
$a_{2k}(r)$. For this purpose, we first write $\H(r;g)$ in the form
\begin{equation}\label{LT}
\H(r;g) = \Delta_{g(r)} - (d \log v(r),d)_{g(r)} + \U(r;g).
\end{equation}
It relates the non-Laplace-type operator $\H(r;g)$ for the metric
$g$ to a Laplace-type operator for the metric $g(r)$. Indeed, we
identify the latter operator with an operator of the form
$$
\tr_{g(r)}(\nabla^{v(r)} \circ \nabla^{v(r)}) + \mbox{endomorphism}
$$
on the sections of the trivial line bundle $\L = M \times \c$
equipped with the connection $\nabla^{v(r)}$ with connection
one-form
\begin{equation}\label{nabla-v}
\omega (r) = - \frac{1}{2} d\log v(r) \in \Omega^1(M)
\end{equation}
for the trivialization by the section $s_0 = 1$.

For arbitrary $E \in C^\infty(M)$ and $\omega \in \Omega^1(M)$, let
$a_{2k}(L)$ be the $k^{th}$ heat kernel coefficient of the Laplace
type operator $L = \tr_g(\nabla \circ \nabla) + E$ on the sections
of the line bundle $\L = M \times \c$ equipped with the connection
$\nabla^{\omega}$ with connection one-form $\omega$. Then
$a_{2k}(L)$ is an invariant non-commutative polynomial
$$
a_{2k}(R,\Omega,E)
$$
of order $2k$ in the covariant derivatives of the curvature of the
Levi-Civita connection of $g$, of the curvature $\Omega = d\omega$
and of the endomorphism $E$ (see Chapter 4 of \cite{G-book}).

\begin{thm}\label{structure} The heat kernel coefficients $a_{2k}(r)$
have the form
\begin{equation}\label{a-reduced}
a_{2k}(r) = a_{2k}(R(r),0,E(r)) v(r),
\end{equation}
where $R(r) = R(g(r))$ denotes the curvature of the Levi-Civita
connection of $g(r)$ and the endomorphism $E(r)$ is given by the
function
\begin{equation}\label{E-potential}
E(r) = -w(r)^{-1} \left(\frac{\partial^2}{\partial r^2} -
\frac{n\!-\!1}{r} \frac{\partial}{\partial r} \right)(w(r)).
\end{equation}
\end{thm}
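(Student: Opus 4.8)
The plan is to reduce the non-Laplace-type operator $\H(r;g)$ to a genuine Laplace-type operator on the trivial line bundle $\L = M \times \c$ for the metric $g(r)$, and then apply the standard local expansion of the heat kernel of a Laplace-type operator. The first step is to take seriously the rewriting \eqref{LT}, namely
$$
\H(r;g) = \Delta_{g(r)} - (d \log v(r), d)_{g(r)} + \U(r;g).
$$
I would verify that the first two terms together equal $\tr_{g(r)}(\nabla^{v(r)} \circ \nabla^{v(r)})$, where $\nabla^{v(r)}$ has connection one-form $\omega(r) = -\tfrac{1}{2} d\log v(r)$ in the trivialization $s_0 = 1$. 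This is a direct computation: for a connection with one-form $\omega$ on a line bundle, $\tr_g(\nabla^\omega \circ \nabla^\omega) f = \Delta_g f + 2(\omega, df)_g + (\delta_g \omega + |\omega|_g^2) f$ applied to $f = f \cdot s_0$; plugging in $\omega = -\tfrac12 d\log v(r)$ reproduces $\Delta_{g(r)} - (d\log v(r), d)_{g(r)}$ up to a zeroth-order term $-\tfrac12(\delta_{g(r)} d\log v(r)) + \tfrac14 |d\log v(r)|^2_{g(r)}$, which I absorb into the endomorphism. So $\H(r;g) = \tr_{g(r)}(\nabla^{v(r)} \circ \nabla^{v(r)}) + E(r)$ for an endomorphism $E(r) \in C^\infty(M)$ that collects $\U(r;g)$ and the leftover zeroth-order piece. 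The curvature of $\nabla^{v(r)}$ is $\Omega(r) = d\omega(r) = -\tfrac12 d(d\log v(r)) = 0$, since it is an exact (hence closed) form on a line bundle; this is why the middle argument of $a_{2k}$ in \eqref{a-reduced} is $0$.

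The second step is to identify $E(r)$ with the function in \eqref{E-potential}. Combining the leftover zeroth-order term from the rewriting with the explicit potential $\U(r;g)$ from \eqref{potential}, I would compute the sum and show it collapses to $-w(r)^{-1}\big(\partial^2/\partial r^2 - (n-1)r^{-1}\partial/\partial r\big)(w(r))$. Recalling \eqref{potential},
$$
\U(r;g) = -w(r)^{-1}\left(\partial^2/\partial r^2 - (n-1)r^{-1}\partial/\partial r - \delta(g(r)^{-1} d)\right)(w(r)),
$$
the $\delta(g(r)^{-1}d)$-term inside, namely $w(r)^{-1}\Delta_{g(r)}(w(r))$ (note $-\delta(g(r)^{-1}d) = \Delta_{g(r)}$ acting on functions, per the sign conventions), should exactly cancel the leftover term $-\tfrac12 \delta_{g(r)}d\log v(r) + \tfrac14|d\log v(r)|^2_{g(r)}$ coming from completing the connection Laplacian. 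This cancellation is a short identity once one writes $w = \sqrt{v}$ and expands $w^{-1}\Delta_{g(r)} w$ in terms of $v$; it is the computational heart of the argument and the step I expect to be the main obstacle — not because it is deep, but because keeping the sign conventions for $\delta$, $\Delta$, and the Hodge inner products consistent with the paper's footnoted conventions requires care. What remains after cancellation is precisely $-w(r)^{-1}(\partial^2/\partial r^2 - (n-1)r^{-1}\partial/\partial r)(w(r))$, which is \eqref{E-potential}.

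The third and final step is to invoke the classical heat kernel expansion for a self-adjoint Laplace-type operator $L = \tr_{g(r)}(\nabla^{v(r)} \circ \nabla^{v(r)}) + E(r)$ on the line bundle $\L$ over $(M, g(r))$, as in Chapter 4 of \cite{G-book}. That theory gives, for the heat kernel restricted to the diagonal, local coefficients which are universal invariant polynomials $a_{2k}(R, \Omega, E)$ in the covariant derivatives of the Riemann curvature $R$ of the Levi-Civita connection of $g(r)$, the bundle curvature $\Omega = \Omega(r) = 0$, and the endomorphism $E = E(r)$. The subtlety is that the heat trace expansion \eqref{heat-expansion-H} is taken with respect to $L^2(M,g)$ and the volume form $\dvol_g$, whereas the Laplace-type theory naturally produces the expansion with respect to $\dvol_{g(r)} = v(r)\,\dvol_g$. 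Hence the local coefficient $a_{2k}(r;g)$ appearing in \eqref{heat-expansion-H} is the Gilkey coefficient $a_{2k}(R(r), 0, E(r))$ computed for $g(r)$, multiplied by the density ratio $v(r)$; that is,
$$
a_{2k}(r;g) = a_{2k}(R(r), 0, E(r))\, v(r),
$$
which is \eqref{a-reduced}. I would spell out this change-of-density step carefully — writing $\Tr(e^{t\H(r;g)})$ in $L^2(M,g)$ as $\Tr$ in $L^2(M, g(r))$ conjugated by multiplication by $v(r)^{\pm 1/2}$, which is a unitary equivalence that leaves the trace and hence the heat coefficients unchanged, while the Gilkey expansion in $L^2(M, g(r))$ integrates against $\dvol_{g(r)}$ — to conclude that the integrand against $\dvol_g$ picks up exactly the factor $v(r)$. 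This completes the proof.
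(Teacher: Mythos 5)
Your route is the same as the paper's own proof in Section \ref{structure-gen}: rewrite $\H(r;g)$ via \eqref{LT} as a Laplace-type operator for $g(r)$ acting on the trivial line bundle with the flat connection $\nabla^{v(r)}$ (flat because $\omega(r)$ is exact), show the endomorphism collapses to \eqref{E-potential}, and then apply Gilkey's invariance theory (Lemma 4.8.6 of \cite{G-book}), with the overall factor $v(r)$ accounting for the trace being taken against $\dvol_g$ rather than $\dvol_{g(r)} = v(r)\dvol_g$. That is exactly the paper's argument, including the identification of $E(r)$ (the paper's Lemma \ref{E-final}).

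One concrete correction to your Step 2, however: the parenthetical identity ``$-\delta(g(r)^{-1}d) = \Delta_{g(r)}$ acting on functions'' is false, because $\delta$ in \eqref{sch} and \eqref{potential} is the adjoint with respect to $g$, not $g(r)$. The correct statement is \eqref{diff} (equivalently Lemma \ref{conjugate}): $-\delta(g(r)^{-1}du) = \Delta_{g(r)}u - (d\log v(r),du)_{g(r)}$ — indeed this extra first-order term is precisely what makes your Step 1 rewriting \eqref{LT} true in the first place, so your parenthetical contradicts your own starting point. The discrepancy matters for the cancellation you rely on: evaluated on $w$, the missing piece contributes $(d\log v(r),dw)_{g(r)}/w = \tfrac12|d\log v(r)|^2_{g(r)}$, and without it the sum of tangential terms (the paper's \eqref{cancel}) does not vanish but leaves a multiple of $|d\log v(r)|^2_{g(r)}$, so $E(r)$ would not reduce to \eqref{E-potential}. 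With the corrected identity the cancellation closes exactly as in Lemma \ref{E-final}, and the remainder of your argument — vanishing bundle curvature, Gilkey's structure theorem for Laplace-type operators, and the change-of-density factor $v(r)$ — coincides with the paper's proof.
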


Note that the exactness of the connection one-form $d\log v$ implies
that the curvature of $\nabla^v$ vanishes. We also emphasize that
the potential $E(r)$ does not contain derivatives of the function
$w(r)$ along $M$. By Theorem \ref{structure}, explicit formulas for
low-order heat kernel coefficients of Laplace-type operators imply
formulas for low-order coefficients $a_{2k}(r)$.

In particular, Theorem \ref{structure} generalizes \eqref{base-0}
(by using $a_0=1$). Similarly, using
$$
a_2 = \frac{\scal}{6} + E,
$$
we obtain the following result for the sub-leading heat kernel
coefficient $a_2(r)$.

\begin{thm}\label{A-fine} For any metric, the sub-leading heat kernel
coefficient of the holographic Laplacian $\H(r)$ is given by
the formula
\begin{equation}\label{a2-diff}
a_2(r) = - \frac{1}{3} \left( \ddot{v}(r) - \left(\f\!-\!1\right)
r^{-1} \dot{v}(r) \right) + (\dot{w}(r))^2.
\end{equation}
Equivalently, the power series coefficients of the corrected heat
kernel coefficient
$$
\Lambda(r) = a_2(r) - (\dot{w}(r))^2 = \sum_{k \ge 0} \Lambda_{2k}
r^{2k}
$$
satisfy the relations
\begin{equation}\label{L-v}
\Lambda_{2k-2} = \frac{1}{3} k (n\!-\!4k) v_{2k}, \; k \ge 1.
\end{equation}
In particular, we have
$$
\L_{n-2}(g) = - \frac{n^2}{6} \int_{M^n} v_n(g) \dvol_g.
$$
\end{thm}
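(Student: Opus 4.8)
The plan is to deduce Theorem~\ref{A-fine} from Theorem~\ref{structure}, the classical formula for the sub-leading heat coefficient of a line-bundle Laplace-type operator, and the scalar trace of the Poincar\'e--Einstein equations; everything beyond that is bookkeeping. First I would invoke Theorem~\ref{structure} with $k=1$. For a Laplace-type operator $L=\tr_g(\nabla\circ\nabla)+E$ on the trivial line bundle one has $a_2(L)=\scal(g)/6+E$: the curvature $\Omega$ of the connection does not enter, since a scalar Riemannian invariant of weight $2$ cannot depend linearly on a $2$-form. Since moreover $\Omega=d\omega(r)$ vanishes here, Theorem~\ref{structure} gives, with $E(r)$ as in \eqref{E-potential},
\begin{equation*}
a_2(r)=\left(\frac{\scal(g(r))}{6}+E(r)\right)v(r).
\end{equation*}

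Next I would rewrite the two summands in terms of $r$-derivatives of $v$. Using $w(r)=\sqrt{v(r)}$, formula \eqref{E-potential} gives $E(r)v(r)=-w(r)\bigl(\ddot w(r)-\tfrac{n-1}{r}\dot w(r)\bigr)$, and the elementary identities $w\dot w=\tfrac12\dot v$, $w\ddot w=\tfrac12\ddot v-\dot w^2$ turn this into $E(r)v(r)=-\tfrac12\ddot v+(\dot w)^2+\tfrac{n-1}{2r}\dot v$. For the remaining summand I would use the scalar consequence of the Poincar\'e--Einstein equation $\Ric(g_+)+ng_+=0$ for $g_+=r^{-2}(dr^2+g(r))$: tracing it (equivalently, combining $\scal(g_+)=-n(n+1)$ with the warped-product form of $g_+$ recalled in Section~\ref{basics}) yields
\begin{equation}\label{scal-v}
\scal(g(r))\,v(r)=\ddot v(r)-\frac{2n-1}{r}\dot v(r),
\end{equation}
as a formal power series in odd dimension and up to order $r^{n-2}$ in even dimension; one checks it directly for Einstein metrics from $g(r)=(1-\lambda r^2/4)^2 g$, $v(r)=(1-\lambda r^2/4)^n$, and at order $r^0$ it reduces to the standard identity $v_2=-\tfrac12\J$. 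Substituting \eqref{scal-v} and adding, the $\ddot v$-terms combine with coefficient $\tfrac16-\tfrac12=-\tfrac13$ and the $r^{-1}\dot v$-terms with coefficient $-\tfrac{2n-1}{6}+\tfrac{n-1}{2}=\tfrac13(\tfrac n2-1)$, which is exactly \eqref{a2-diff}.

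The remaining assertions are purely formal. From \eqref{a2-diff} one has $\Lambda(r)=a_2(r)-(\dot w(r))^2=-\tfrac13\bigl(\ddot v-(\tfrac n2-1)r^{-1}\dot v\bigr)$; inserting $v(r)=\sum_{k\ge0}v_{2k}r^{2k}$ with $v_0=1$ and reading off the coefficient of $r^{2k-2}$ gives $\Lambda_{2k-2}=\tfrac13k(n-4k)v_{2k}$, which is \eqref{L-v}. Specializing to $k=n/2$ yields $\Lambda_{n-2}=-\tfrac{n^2}{6}v_n$, and integration gives $\L_{n-2}(g)=-\tfrac{n^2}{6}\int_{M^n}v_n(g)\,\dvol_g$; this is legitimate in even dimension because $v_n$ depends on $g_{(n)}$ only through its uniquely determined $g$-trace, so $\H(r;g)$, hence $a_2(r)$ and $\Lambda(r)$, are well-defined up to order $r^{n-2}$.

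The main obstacle — the only step that is not immediate once Theorem~\ref{structure} is granted — is establishing \eqref{scal-v} with the stated constants and to the correct order in $r$ (in even dimension one must track that $\H(r;g)$, and therefore $a_2(r)$, is defined only modulo $r^{n-2}$). With \eqref{scal-v} in hand, Theorem~\ref{A-fine} is just the classical $a_2$-formula for Laplace-type operators combined with elementary manipulations of $w=\sqrt v$.
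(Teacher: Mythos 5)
Your argument is correct and is essentially the paper's own route: the paper's alternative proof in Section \ref{structure-gen} likewise reduces to Gilkey's formula $a_2(L)=\scal/6+E$ for the Laplace-type operator $\Delta_{g(r)}-(d\log v(r),d)_{g(r)}+\U(r)$ (Theorem \ref{structure}, Lemma \ref{E-final}), and both proofs in Sections \ref{fine} and \ref{structure-gen} finish exactly as you do, via the traced tangential Einstein equation \eqref{Ricci} giving $\scal(g(r))v(r)=\ddot v(r)-(2n\!-\!1)r^{-1}\dot v(r)$ (the paper's \eqref{scal}, your \eqref{scal-v}) and the same elementary manipulations of $w=\sqrt v$.
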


Some comments on Theorem \ref{A-fine} and its proof are in order.

We recall that for general metrics the relation \eqref{a2-diff} is
to be understood as an identity of formal power series (up to
$r^{n-2}$ for even $n$). However, for locally conformally flat
metrics in dimension $n \ge 3$, we have
$$
g(r) = g - r^2 \Rho + r^4/4 \Rho^2 \quad \mbox{and} \quad v(r) =
\det (1-r^2/2\Rho),
$$
and $\H(r)$ is well-defined as an analytic function in $r$. In that
case, the relation \eqref{a2-diff} is a formula for the analytic
function $a_2(r)$ in terms of $v(r)$.

If $g$ is Einstein, then $v(r)=(1-cr^2)^n$ for some constant $c$ so
that $\J = 2cn$. In this case, Theorem \ref{A-fine} yields
$$
a_2(r) = - \frac{1}{3} c n (n\!-\!4) (1-cr^2)^{n-2} = - \frac{1}{6}
(n\!-\!4) \J (1-cr^2)^{n-2}.
$$
In view of $a_2 = - \frac{1}{6} (n\!-\!4) \J$, this fits with the
consequence $a_2(r) = (1-cr^2)^{n-2} a_2$ of \eqref{hc-einstein}.

The power series coefficients $\omega_{2k}$ of $(\dot{w}(r))^2$ are
polynomials of degree $k$ (without a linear term) in the
coefficients $v_2,v_4,\dots$. For instance, we have $\omega_2 =
v_2^2$ and $\omega_4 = 4 v_2 v_4 - v_2^3$. Thus, the power series of
$a_2(r)$ starts with
$$
\frac{n\!-\!4}{3} v_2 + r^2 \left(\frac{2(n\!-\!8)}{3} v_4 + v_2^2
\right) + r^4 \left((n\!-\!12) v_6 + 4 v_2 v_4 - v_2^3 \right) +
\cdots.
$$

It seems remarkable that the sub-leading heat kernel coefficient
$a_2(r)$ can be described in terms of renormalized volume
coefficients only. In view of $a_0(r) = v(r)$, the relation
\eqref{a2-diff} actually may be regarded as a relation between the
heat kernel coefficients $a_2(r)$ and $a_0(r)$. The structure of
$a_4(r)$ is substantially more complicated, however. In fact, $a_4 =
a_4(0)$ contains a contribution by the norm of the Weyl tensor (see
Section \ref{coeff-L}) and the Taylor coefficients of $a_4(r)$
cannot be written in terms of renormalized volume coefficients only
(see Proposition \ref{a42-final}).

Combining Theorem \ref{A-fine} with Theorem \ref{A} leads to a
second heat kernel proof of the variational formula for integrated
renormalized volume coefficients (Theorem \ref{conf-reno}); that
proof also covers the conformal invariance of the total integral of
$v_n$ for even $n$.

Of course, combining Theorem \ref{A-fine} with Theorem \ref{conf-reno},
yields a second proof of Theorem \ref{A}. Nevertheless, the arguments
leading to Theorem \ref{A} {\em without} invoking an explicit formula for
$a_2(r)$ do not lose their significance since they suggest the existence
(and ways of finding) of analogs of Theorem \ref{A} for the heat kernel
coefficients $a_{2k}$ for $k \ge 2$ (for more details we refer to Section
\ref{open}).

In addition to the above argument, we shall give a proof of Theorem
\ref{A-fine} which only rests on the direct evaluation of the algorithm
for the calculation of the heat kernel coefficients (described in Section
\ref{AE}).

Now we turn to the formulation of the last main result of the paper. For
closed manifolds $M$, the total integrals of the heat kernel coefficients
$a_{2k}(r)$ are spectral invariants. For $k=0$, the Taylor coefficients of
this invariant are given by the total integrals of the renormalized volume
coefficients. These functionals are variational \cite{CF} and have
interesting extremal properties at Einstein metrics \cite{GuLi},
\cite{CFG}. From that perspective, Theorem \ref{A-fine} suggests to study
the extremal properties of the Taylor coefficients $\W_{2k}$ of the
functional
\begin{equation}\label{WF}
\W(r) \st \int_{M^n} (\dot{w}(r))^2 \dvol,
\end{equation}
i.e., of
$$
\W_{2k} = \int_{M^n} \omega_{2k} \dvol
$$
at Einstein metrics. In view of $\omega_2 = v_2^2$, the functionals
$\W_{2k}$ are generalizations of the quadratic curvature functional
$\int_M \scal^2 \dvol$. The local Riemannian invariant $\omega_{2k}$
is a polynomial of degree $2k$ in the curvature with the property
that $\omega_{2k}(\lambda^2 g) = \lambda^{-2k-2} \omega_{2k}(g)$ for
$\lambda \ne 0$. Hence $\W_{2k}$ is homogeneous of degree $n-2-2k$.
Although the invariants $\omega_{2k}$ become very complicated for
large $k$, we prove the following result.

\begin{thm}\label{ext} Let $(M^n,g)$ be a closed unit volume
Einstein manifold of dimension $n \ge 4$. Assume that $\scal(g)
> 0$ and let $2k \le n\!-\!2$. Then the restriction of the functional
$(-1)^k \W_{2k}$ to the set $[g]_1$ of unit volume metrics conformal
to $g$ has a strict local maximum at $g$ unless $(M^n,g)$ is
isometric to a (rescaled) round sphere.
\end{thm}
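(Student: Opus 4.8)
The plan is to reduce the problem to a second variation (Hessian) computation for the functional $(-1)^k\W_{2k}$ restricted to the conformal class $[g]_1$ at an Einstein metric $g$. First I would recall from Theorem \ref{A-fine} that $\omega_{2k}$ is a polynomial (without linear term) in the renormalized volume coefficients $v_2,\dots,v_{2k+2}$; in fact $(\dot{w}(r))^2 = \tfrac14 (\dot v(r))^2/v(r)$, so $\W(r) = \tfrac14\int_M (\dot v(r))^2/v(r)\,\dvol$. The key simplification is that on an Einstein metric the family $g(r)$ is explicit, $v(r) = (1-cr^2)^n$ with $c = \scal(g)/(2n(n-1)) = \J/(2n) > 0$, so all the $v_{2k}$ at $g$ itself are known constants times powers of $c$. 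The real content is the behaviour of $v_{2k}$ under a conformal perturbation $g_\varepsilon = e^{2\varepsilon\varphi}g$ with $\int_M \varphi\,\dvol_g = 0$ (the unit-volume constraint to first order), computed to second order in $\varepsilon$.

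Next I would set up the second variation. Write $\W_{2k}(\varepsilon) = \int_M \omega_{2k}(g_\varepsilon)\,\dvol_{g_\varepsilon}$. Since $g$ is a critical point of each of these functionals within $[g]_1$ — this follows from Theorem \ref{conf-reno} (the conformal variation of $\int v_{2k}$ vanishes in directions orthogonal to constants when $n\neq 2k$, hence also for polynomial combinations after using the unit-volume normalization) together with the homogeneity already noted — only the Hessian matters. The strategy is to express the Hessian of $\W_{2k}$ at $g$ as a quadratic form in $\varphi$ of the schematic shape $\int_M \varphi\, \mathcal{D}_k(g)\,\varphi\,\dvol_g$ for some self-adjoint differential operator $\mathcal{D}_k(g)$ built from the linearizations of the $v_{2j}$'s. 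On an Einstein background these linearizations are known: the first conformal variation of $v(r)$ is controlled by operators closely related to the GJMS-operators/the $\H(r)$-family (Theorem \ref{RVC-CV} and Theorem \ref{var}), and on an Einstein metric these all diagonalize simultaneously on the eigenspaces of $-\Delta_g$. So I would diagonalize: decompose $\varphi = \sum_\mu \varphi_\mu$ into $-\Delta_g$-eigenfunctions with eigenvalues $\mu \ge 0$ (with $\mu = 0$ excluded by the mean-zero constraint), and reduce the Hessian to $\sum_\mu c(\mu,n,k,\scal)\,\|\varphi_\mu\|^2$.

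The crux is then to show that the sign of the coefficient $c(\mu,n,k,\scal)$ is $(-1)^{k+1}$ (so that $(-1)^k\W_{2k}$ has a maximum) for every admissible eigenvalue $\mu > 0$, with equality forcing $\mu = \mu_1 = \scal/(n-1)$ — the first nonzero eigenvalue of $-\Delta$ on the round sphere by Obata — which is exactly the degenerate direction corresponding to conformal diffeomorphisms of the sphere, accounting for the "unless isometric to a round sphere" clause. Concretely I expect $c(\mu,n,k,\scal)$ to factor, after pulling out an overall positive constant depending on $c>0$, as a product involving a factor like $(\mu - \mu_1)$ times a polynomial in $\mu$ that is manifestly one-signed on $[\mu_1,\infty)$ (or on the discrete spectrum), using $\mu \ge \mu_1$ on any closed Einstein manifold with $\scal > 0$ (Lichnerowicz–Obata). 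I would verify the sign and the one-signedness by writing the Hessian of $\int (\dot v(r))^2/v(r)$ directly: the linearization of $\dot v(r)$ under $g\mapsto e^{2\varepsilon\varphi}g$ at an Einstein metric produces, by \eqref{zeta-E}-type reasoning specialized to $a_0 = v$ and Theorem \ref{var}, terms in which $\Delta_{g(r)}$ and multiplication by $\varphi$ appear, and on the Einstein metric $g(r)$ is a constant conformal rescaling of $g$, so everything is polynomial in $\mu$ and in $c$.

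The main obstacle, I expect, is the bookkeeping needed to extract the precise coefficient $c(\mu,n,k,\scal)$ and to prove its one-signedness for all $k$ with $2k \le n-2$ simultaneously — i.e., turning the "very complicated for large $k$" invariants $\omega_{2k}$ into a closed, manifestly-signed expression. The saving grace is that we never need $\omega_{2k}$ for a general metric: on the Einstein background the generating function $\dot w(r) = \tfrac12 \dot v(r)/\sqrt{v(r)}$ and its first conformal variation are elementary because $v(r) = (1-cr^2)^n$, so the Hessian of $\W(r)$ at $g$ should come out as an explicit rational function of $r^2$, $\mu$, $c$, $n$, whose Taylor coefficients in $r^2$ are the Hessians of the $\W_{2k}$; reading off the sign of each coefficient then reduces to an elementary (if slightly tedious) analysis of that rational function, with the round-sphere degeneracy visible as the vanishing at $\mu = \mu_1$. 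I would finish by invoking the Lichnerowicz–Obata eigenvalue estimate to conclude the strictness of the maximum off the round sphere and, on the round sphere, identifying the kernel of the Hessian with the space spanned by first spherical harmonics (the conformal Killing directions), which are exactly the directions along which $\W_{2k}$ is constant within $[g]_1$.
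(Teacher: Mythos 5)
Your proposal is correct and follows essentially the same route as the paper: the paper (Theorem \ref{ext-w-subcrit} for $2k<n\!-\!2$, together with Proposition \ref{second-hom} and Theorem \ref{extremal-crit} for the critical case $2k=n\!-\!2$) likewise computes the second conformal variation of the generating function $\W(r)$ at an Einstein metric from Theorems \ref{RVC-CV} and \ref{var}, using $v(r)=(1\!-\!cr^2)^n$ with $c=\scal(g)/(4n(n\!-\!1))$ and $L(r)=\tfrac{r^2}{2}(1\!-\!cr^2)^{n-1}$, reads off the $r^{2k}$-coefficients, and obtains per-eigenmode coefficients proportional to $(4cn+\Delta)$ and $\Delta(4cn+\Delta)$, exactly the factorization through $(\mu-\mu_1)$ with $\mu_1=4cn=\scal/(n\!-\!1)$ that you predict, concluded by Obata's theorem and the identification of the spherical degeneracy with first spherical harmonics. (Only a cosmetic slip: your displayed constant should be $c=\scal(g)/(4n(n\!-\!1))=\J/(2n)$, which is the value you in fact use.)
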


For the proof of Theorem \ref{ext} we combine the conformal
variational formula for $v(r)$ (Theorem \ref{RVC-CV}) with a
well-known result of Obata \cite{obata}.

Let $n \ge 4$ be even. Then the special case $2k=n\!-\!2$ of Theorem
\ref{ext} implies a corresponding result for the total integral
$\int_M a_{(2,n-2)} \dvol$ by using Theorem \ref{A-fine} and the
conformal invariance of $\int_M v_n \dvol$. On the other hand, the
total integral of $a_{(2,n-2)}$ has a local minimum at Einstein
metrics $g$ with $\scal(g) < 0$. For the details and further results
in this direction we refer to Section \ref{extremal}.

For the readers convenience, we close this section with a review of the
content of the paper. In Section \ref{basics}, we review the theory of
building-block operators from \cite{juhl-ex} and \cite{FG-J}. Section
\ref{variation} contains the proof of the basic conformal variational
formula for the trace of the heat kernel of $\H(r;g)$ (Theorem
\ref{conform-heat}). In Section \ref{AE}, we recall Gilkey's
pseudo-differential algorithm which yields the asymptotic expansion of the
trace of the heat kernel. In Section \ref{dc-term}, we describe the effect
of isolating the contribution \eqref{r-contribution} on the right-hand
side of the variational formula in Theorem \ref{conform-heat}. Section
\ref{proof-A} then contains the heat equation proof of Theorem
\ref{conf-reno}. The discussion of the sub-leading heat kernel coefficient
$a_2(r;g)$ starts in Section \ref{proof-B}. Here we prove Theorem \ref{B}.
In Section \ref{prime}, we determine a local conformal primitive of the
correction term for $a_2(r;g)$ found in Theorem \ref{B}. We combine these
results in Section \ref{proof-AA} to complete the proof of Theorem
\ref{A}. Section \ref{fine} contains the details of the derivation of the
closed formula for $a_2(r;g)$ given in Theorem \ref{A-fine}. In Section
\ref{structure-gen} we return to the discussion of the structure of the
coefficients $a_{2k}(r)$. Here the main idea is to regard $\H(r;g)$ as a
Laplace-type operator with respect to the metric $g(r)$ and to combine
this with structural results for Laplace-type operators. This yields a
proof of Theorem \ref{structure} and a second proof of Theorem
\ref{A-fine}. Section \ref{extremal} is devoted to the study of extremal
properties of various curvature integrals arising from heat kernel
coefficients. The first set of such integrals are defined by the Taylor
coefficients of $a_0(r)$. These are given by the renormalized volume
coefficients $v_{2k}$ and we review and reprove corresponding results of
\cite{CFG}. The remainder of this section contains a proof of Theorem
\ref{ext}. In Section \ref{open}, we collect various additional results
and indicate some open problems. In particular, in Section \ref{CV-zeta}
we introduce the two-parameter spectral zeta function \eqref{zeta-two} and
prove the variational formula \eqref{zeta-E}, and in Section
\ref{CV-v-local} we outline a heat equation proof of the conformal
transformation rule of the non-integrated renormalized volume
coefficients. The final section of the paper is a long appendix which
contains background information and complementary material. In particular,
we display the first few terms in the expansion of the holographic
Laplacian, and discuss in detail the heat kernel coefficients $a_2$, $a_4$
and $a_6$ of the conformal Laplacian. In this connection, we give a new
derivation of an explicit formula for $a_6$ (for locally conformally flat
metrics) which does not depend on Gilkey's formula for $a_6$, and provide
a direct proof of the relation between the local conformal invariant $a_6$
in dimension $n=8$ and the formal heat kernel coefficient $\tilde{a}_6$ of
the ambient Laplacian $\tilde{\Delta}$. In addition, we derive a
Polyakov-type formula for the renormalized volume $\V(g_+;g)$ of a
Poincar\'e-Einstein metric $g_+$ (Theorem \ref{PV-holo}). For even
dimensional conformal infinity, this result expresses the behaviour of the
renormalized volume under conformal changes of the metric $g$ in terms of
a $Q$-curvature term and some holographic correction terms. The formula is
a consequence of the results in \cite{GJ-holo} and \cite{holo-II} and
should be regarded as an analog of Branson's (conjectural) Polyakov-type
formula for the determinant of $P_2$ with an analogous main part in terms
of the critical $Q$-curvature $Q_n$. Finally, we determine the coefficient
$a_{(4,2)}$ of $r^2$ in the expansion of $a_4(r)$ in terms of Riemannian
invariants of $g$ (Proposition \ref{a42-final}). For conformally flat
metrics, the results in Lemma \ref{R-expansion}, Lemma \ref{E-final} and
\eqref{scal} actually allow to derive a formula for the full expansion of
$a_4(r)$ in terms of $\Rho$ (at least in principle). Moreover, we apply
these results to the study of the Hessian of the integral of $a_{(4,2)}$
in the critical dimension $n=6$.

The present paper touches a large variety of topics like heat
kernels, spectral theory, conformal differential geometry, AdS/CFT
duality. Each of these has its own huge literature. We minimize the
number of references by citing only items of immediate relevance.

Some of the results of this paper were presented at the conference {``The
Interaction of Geometry and Representation Theory''} at ESI (Vienna) in
2012, during the program {``Conformal Geometry and Geometric PDE's''} at
CRM (Barcelona) in 2013, at the Winter School {``Geometry and Physics''}
2014 (Srni) and at the conference {``Lorentzian and Conformal Geometry''}
(Greifswald) in 2014. Finally, I am grateful to A. Tseytlin for pointing
out the reference \cite{FT}.

\section{GJMS-operators and the holographic Laplacian}\label{basics}

In the present section, we describe the relation between the holographic
Laplacian $\H(r;g)$ and the GJMS-operators $P_{2N}(g)$. The following
material is based on \cite{juhl-ex} and \cite{FG-J}.

We start with the definition of the concepts involved. For a given
Riemannian manifold $(M,g)$ of dimension $n \ge 3$, a Poincar\'e-Einstein
metric $g_+$ in normal form relative to $g$ is a metric $g_+$ on $M \times
(0,\varepsilon)$ of the form
$$
g_+ = r^{-2}(dr^2 + g(r))
$$
with a smooth one-parameter family $g(r)$ of metrics on $M$ so that
$g(0)=g$, and for which $\Ric(g_+) + ng_+$ vanishes on $M$ in the
following asymptotic sense. We require that $\Ric(g_+) + ng_+ =
O(r^\infty)$ if $n$ is odd and $\Ric(g_+) + ng_+ = O(r^{n-2})$ and
the tangential trace of $r^{2-n}(\Ric(g_+)+ng_+)$ vanishes at $r=0$
if $n$ is even.

The Poincar\'e-Einstein metric $g_+$ associated to $g$ is closely
related to an ambient metric associated to $g$. In normal form, this
is the metric
$$
\tilde{g} = 2\rho dt^2 + 2tdtd\rho + t^2 {\bf g}(\rho)
$$
on $\r_+ \times M \times (-\varepsilon,\varepsilon)$ with ${\bf
g}(-r^2/2)=g(r)$.

As explained in Section \ref{intro}, $g(r)$ has a uniquely
determined formal {\em even} power series for odd $n$ and a uniquely
determined {\em even} power series up to $r^{n-2}$ for even $n$. In
the latter case, also the trace of the coefficient of $r^n$ is
determined by $g$. For general metrics, the Taylor series of $g(r)$
starts with
\begin{equation}\label{g-exp-4}
g(r) = g - r^2 \Rho + r^4/4 (\Rho^2 - \B/(n\!-\!4)) + \cdots,
\end{equation}
where $\B$ is the Bach tensor of $g$ (Chapter 3 of \cite{FG-final}
or Section 6.9 of \cite{juhl-book}). This formula also shows that in
dimension $4$ the obstruction tensor is given by $\B$.

For some exceptional classes of metrics $g$, there are closed
formulas for $g(r)$. For instance, for a locally conformally flat
metric $g$, we have
$$
g(r) = g - r^2 \Rho + r^4/4 \Rho^2.
$$
The same formula holds true for an Einstein metric $g$. In that
case, it further simplifies to
$$
g(r) = (1\!-\!\lambda r^2)^2 g, \quad \lambda = \frac{\scal(g)}{4
n(n\!-\!1)}.
$$
For the corresponding details we refer to Chapter 7 of
\cite{FG-final} or Section 6.14 and Section 6.16 in
\cite{juhl-book}. Now we define
$$
{\bf v}(\rho) = \sqrt{\det {\bf g}(\rho)}/ \sqrt{\det {\bf g}(0)}
\quad \mbox{and} \quad v(r) = \sqrt{\det g(r)}/\sqrt{\det g(0)}.
$$
Then ${\bf v}(-r^2/2) = v(r)$. The power series coefficients
$v_{2k}$ of $v(r)$ are the renormalized volume (or holographic)
coefficients \cite{G-vol}. We also set
$$
{\bf w}(\rho) = \sqrt{{\bf v}(\rho)} \quad \mbox{and} \quad w(r) =
\sqrt{v(r)}.
$$
Let $w_{2k}$ be the power series coefficients of $w(r)$.

The original definition of the GJMS-operators $P_{2N}$ \cite{GJMS}
defines these operators through the composition
\begin{equation}\label{GJMS-def}
P_{2N}(u) = \tilde{\Delta}^N (t^{N-\f} \tilde{u})|_{\rho=0, t=1}, \;
u \in C^\infty(M).
\end{equation}
Here $\tilde{\Delta}$ denotes the Laplace-Beltrami operator of the
ambient metric $\tilde{g}$ and $\tilde{u} \in C^\infty(M \times
(-\varepsilon,\varepsilon))$ is an arbitrary extension of $u$:
$\tilde{u}(x,0) = u(x)$. That construction is well-defined (for
general metrics) under the assumption $2N \le n/2$ for even $n$ and
$N \ge 1$ for odd $n$. Moreover, it is well-defined for all $N \ge
1$ for locally conformally flat metrics and conformally Einstein
metrics in all dimensions $n \ge 3$.

Next, we recall the definition of the building-block operators
$\M_{2N}$ in terms of GJMS-operators.

For this purpose, we shall use the following combinatorial
conventions. A sequence $I=(I_1,\dots,I_r)$ of natural numbers $I_j
\ge 1$ will be regarded as a composition of the natural number
$|I|=I_1+I_2+\cdots+I_r$. In other words, compositions are
partitions in which the order of the summands is considered. $|I|$
is called the size of $I$. To any composition $I=(I_1,\dots,I_r)$,
we associate the multiplicities
\begin{equation}\label{m-form}
m_I = - (-1)^r |I|! \, (|I|\!-\!1)! \prod_{j=1}^r \frac{1}{I_j! \,
(I_j\!-\!1)!} \prod_{j=1}^{r-1} \frac{1}{I_j \!+\! I_{j+1}}
\end{equation}
and
\begin{equation*}
n_I = \prod_{j=1}^r \binom{\sum_{k \le j} I_k -1}{I_j-1}
\binom{\sum_{k \ge j} I_k -1}{I_j-1}.
\end{equation*}
Note that $m_{(N)} = n_{(N)} = 1$ for all $N \ge 1$ and
$n_{(1,\dots,1)} = 1$.

Now, for any composition $I=(I_1,\dots,I_r)$, we set $P_{2I} \st
P_{2I_1} \circ \cdots \circ P_{2I_r}$. Then we define
\begin{equation}\label{M-def}
\M_{2N} \st \sum_{|I|=N} m_I P_{2I} = P_{2N} + \mbox{compositions of
lower-order GJMS-operators}.
\end{equation}
Note that $\M_2 = P_2$. By an inversion formula (Theorem 2.1 in
\cite{juhl-ex}), these definitions are equivalent to the basic
representation formulas
\begin{equation}\label{basic-PM}
P_{2N} = \sum_{|I|=N} n_I \M_{2I} = \M_2^N + \mbox{compositions of
$< N$ building-block operators}
\end{equation}
for GJMS-operators in terms of building-block operators. Here we use
the convention $\M_{2I} \st \M_{2I_1} \circ \cdots \circ \M_{2I_r}$.
In these constructions, the integer $N$ is subject to the usual
condition $2N \le n$ for even $n$.

The following result provides a formula for the building-block
operators.

\begin{thm}[\cite{juhl-ex}]\label{base} The generating function
\begin{equation}\label{GF}
\sum_{N \ge 1} \M_{2N}(g) \frac{1}{(N\!-\!1)!^2} \left(
\frac{r^2}{4} \right)^{N-1}
\end{equation}
of the building-block operators coincides with the operator
$\H(r;g)$.
\end{thm}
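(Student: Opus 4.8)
The plan is to establish Theorem \ref{base} by showing that both the generating function \eqref{GF} and the operator $\H(r;g)$ satisfy the same recursion (or the same defining identity) arising from the ambient/Poincar\'e-Einstein construction, so that a term-by-term comparison of their $r^{2N}$-coefficients forces equality. First I would recall the precise relation between the GJMS-operators and the Laplacian $\tilde\Delta$ of the ambient metric via \eqref{GJMS-def}, and translate it into the Poincar\'e-Einstein picture: writing $\Delta_{g_+}$ for the Laplacian of $g_+ = r^{-2}(dr^2+g(r))$ acting on densities of appropriate weight, the operators $P_{2N}$ are read off from the asymptotic solution operator of the eigenvalue equation $(\Delta_{g_+}+s(n-s))u=0$ (the scattering/Graham-Zworski picture), or equivalently from powers of a conjugated ambient Laplacian. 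The key computational object is the conjugated operator on $M\times(0,\varepsilon)$: one checks that $\Delta_{g_+}$, after the standard conjugation by powers of $r$ that removes the density weights, becomes
\[
r^2\partial_r^2 - (n-1)r\,\partial_r + r^2\,\Delta_{g(r)} + (\text{first-order terms in }\partial_r),
\]
and that the $w(r)$-twist in \eqref{potential} is exactly the gauge transformation that kills the first-order $\partial_r$-term coming from $\partial_r\log v(r)$, leaving a Schr\"odinger-type operator $-\partial_r^2 + (n-1)r^{-1}\partial_r + \delta(g(r)^{-1}d) + \U(r;g)$ whose potential is \eqref{potential}. This identifies $\H(r;g)$ as (a normalization of) the radial part of the ambient/Poincar\'e-Einstein Laplacian.

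Next I would extract the recursion satisfied by the building-block operators. The representation formulas \eqref{M-def}, \eqref{basic-PM} together with the explicit multiplicities $m_I$, $n_I$ show that the generating function $\sum_N \M_{2N} z^{N-1}/(N-1)!^2$ (with $z=r^2/4$) is characterized by a functional/differential equation in $z$ that mirrors the way $P_{2N}$ is built from $\tilde\Delta^N$; concretely, the hypergeometric-type weights $\prod 1/(I_j!(I_j-1)!)$ and the ``glue factor'' $\prod 1/(I_j+I_{j+1})$ in \eqref{m-form} are precisely what one obtains by iterating the radial operator above and solving the Poincar\'e-Einstein equations order by order. So the strategy is: (i) expand the conjugated ambient Laplacian in powers of $r^2$, using the asymptotic expansion $g(r)=g+r^2g_{(2)}+\cdots$ and the induced expansions of $v(r)$, $w(r)$; (ii) apply it $N$ times as in \eqref{GJMS-def} and track the combinatorics of which tensors $g_{(2j)}$ enter with which coefficients; (iii) match the resulting coefficient of $r^{2(N-1)}$ against $\M_{2N}/(N-1)!^2$, i.e. against $\sum_{|I|=N} m_I P_{2I}/(N-1)!^2$, using the inversion formula between the $m_I$ and $n_I$ from Theorem 2.1 of \cite{juhl-ex}. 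Since the left side of \eqref{GF} is manifestly a second-order operator in the $M$-directions (it is literally $-\delta(g(r)^{-1}d)+\U(r;g)$), the content of the theorem is that this second-order operator, when expanded, reproduces the a priori very-high-order combinations $P_{2I}$ with the stated multiplicities.

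The main obstacle I expect is the combinatorial bookkeeping in step (iii): proving that the coefficients produced by iterating the radial ambient Laplacian agree with the multiplicities $m_I$ of \eqref{m-form} for every composition $I$. This is genuinely the heart of \cite{juhl-ex}, and a self-contained treatment would require either (a) the generating-function identity for the $m_I$ — showing that $\sum_I m_I x^I$ satisfies the same algebraic relation as the formal symbol of the conjugated ambient Laplacian — or (b) Fefferman-Graham's ambient-metric argument from \cite{FG-J}, which bypasses the explicit $m_I$ by working directly with $\tilde\Delta$ and its commutator with the Euler field, and then reads off \eqref{GF} from the eigenfunction expansion. In writing the proof I would follow route (b) where possible — invoke the ambient Laplacian identity $\tilde\Delta(t^{\mu}f) = t^{\mu-2}(\text{radial operator})f$ and the commutation $[\tilde\Delta, t\partial_t] $-type relations — and defer the purely combinatorial equivalence of the two normalizations ($m_I\leftrightarrow n_I$, and both to the ambient iteration) to the cited results. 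A secondary technical point, needing care, is the even-dimensional case: the ambient and Poincar\'e-Einstein expansions are only determined up to order $r^{n-2}$ (the obstruction tensor), so the identity \eqref{GF} must be stated and proved only as an equality of the truncated power series to that order, exactly as flagged after Theorem \ref{conform-H}; for conformally Einstein or locally conformally flat $g$ the expansions terminate or are globally determined, and the identity then holds without truncation.
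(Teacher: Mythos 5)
Your outline follows essentially the same route the paper itself takes for Theorem \ref{base}: the paper does not reprove it but cites \cite{juhl-ex} (whose proof rests on residue families) and sketches the Fefferman--Graham alternative \cite{FG-J}, namely the conjugation $\tilde{\Delta}_{\bf w} = {\bf w}\circ\tilde{\Delta}\circ{\bf w}^{-1}$ with the identity \eqref{con-ambient} (equivalently Proposition \ref{conjugate-PE}) and the evaluation of \eqref{GJMS-rev}, which is precisely your route (b), including your correct caveat that for even $n$ the identity only holds as a truncated power series up to $r^{n-2}$. Like the paper, you defer the combinatorial matching of the multiplicities $m_I$, $n_I$ with the iterated radial operator to the cited works, so your proposal is consistent with the paper's treatment (modulo a harmless sign slip in your display of the conjugated operator, whose tangential part is $-\delta(g(r)^{-1}d)+\U(r;g)$).
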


The relation \eqref{GF} is to be interpreted as an equality of
formal power series (up to $r^{n-2}$ for even $n$). Theorem
\ref{base} and \eqref{M-def} show that, by comparing coefficients of
power series in the equality of \eqref{GF} and $\H(r;g)$, one
obtains formulas for all GJMS-operators $P_{2N}$ in terms of the
Taylor coefficients of $g(r)$ and $v(r)$.

The proof of Theorem \ref{base} in \cite{juhl-ex} rests on the
theory of residue families (as introduced in \cite{juhl-book}).

In \cite{FG-J}, Fefferman and Graham gave an alternative proof of
these formulas for the operators $P_{2N}$. The idea of that proof is
the following. Instead of appealing to the inversion formula of
\cite{juhl-ex}, {\em define} the operators $\M_{2N}(g)$ by the
identity
$$
\H(r;g) = \sum_{N \ge 1} \M_{2N}(g) \frac{1}{(N\!-\!1)!^2} \left(
\frac{r^2}{4}\right)^{N-1}
$$
(with $\H(r;g)$ given by \eqref{sch} and \eqref{potential}). The
starting point of the proof is a formula which relates a conjugate
of the Laplacian of the ambient metric of $g$ to $\H(r;g)$. In fact,
let
$$
\tilde{\Delta}_{\bf w} \st {\bf w} \circ \tilde{\Delta} \circ {\bf
w}^{-1}.
$$
Then a calculation shows that, for any $\lambda \in \r$,
\begin{equation}\label{con-ambient}
\tilde{\Delta}_{\bf w} (t^\lambda \tilde{u}) = t^{\lambda-2}
\left(-2\rho (\partial/\partial \rho)^2 + (2\lambda\!+\!n\!-\!2)
\partial/\partial \rho + \tilde{\H}(\rho) \right) \tilde{u},
\end{equation}
where $\tilde{\H}(-r^2/2) = \H(r)$. Now the defining equation
\eqref{GJMS-def} is easily seen to be equivalent to
\begin{equation}\label{GJMS-rev}
P_{2N}(u) = \tilde{\Delta}_{\bf w}^N (t^{N-\f} \tilde{u})
|_{\rho=0,t=1}.
\end{equation}
The evaluation of \eqref{GJMS-rev} then yields a formula for
$P_{2N}$ in terms of the power series coefficients of
$\tilde{\H}(\rho)$, and it turns out that the results of both
methods coincide.

Note that there is an analog of the conjugation formula
\eqref{con-ambient} for the Laplacian of the Poincar\'e-Einstein
metric.

\begin{prop}\label{conjugate-PE} For $u \in C^\infty(M \times
(0,\varepsilon))$, we have
\begin{equation}\label{con-PE}
w \Delta_{g_+}(w^{-1} u) = r^2 \frac{\partial^2 u}{\partial r^2} -
(n\!-\!1) r \frac{\partial u}{\partial r} + r^2 \H(r;g) u.
\end{equation}
\end{prop}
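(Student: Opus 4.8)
The plan is to establish \eqref{con-PE} by a straightforward computation in two steps: first compute the Laplacian of $g_+$ in the coordinates $(r,x)$, then conjugate it by $w(r)$. Note that the asserted identity is a purely differential relation for the given one-parameter family $g(r)$ — the Einstein property of $g_+$ plays no role — so the usual formal versus convergent distinctions do not enter; it is an identity wherever $g(r)$ is a metric.

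For the first step I would write $g_+ = r^{-2}\bar{g}$ with $\bar{g} = dr^2 + g(r)$ on $M \times (0,\varepsilon)$ and apply the conformal change formula $\Delta_{e^{2\phi}\bar{g}} = e^{-2\phi}\bigl(\Delta_{\bar{g}} + (m\!-\!2)(d\phi, d\,\cdot\,)_{\bar{g}}\bigr)$ in dimension $m = n+1$ with $e^{2\phi} = r^{-2}$. Since $\bar{g}$ has the warped form $dr^2 + g(r)$ one has $\Delta_{\bar{g}} = \partial^2/\partial r^2 + \tfrac12\tr(g(r)^{-1}\dot{g}(r))\,\partial/\partial r + \Delta_{g(r)}$, and $\tfrac12\tr(g(r)^{-1}\dot{g}(r)) = (\dot{v}/v)(r)$. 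This gives
\[
\Delta_{g_+} = r^2\,\partial^2/\partial r^2 - (n\!-\!1)r\,\partial/\partial r + r^2 (\dot{v}/v)(r)\,\partial/\partial r + r^2 \Delta_{g(r)}.
\]

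For the second step I would conjugate by $w = w(r)$ separately in the radial and in the tangential directions. In the radial variable the elementary identities $w\,(\partial^2/\partial r^2)(w^{-1}u) = \partial^2 u/\partial r^2 - 2(\dot{w}/w)\,\partial u/\partial r + \bigl(2(\dot{w}/w)^2 - \ddot{w}/w\bigr)u$ and $w\,(\partial/\partial r)(w^{-1}u) = \partial u/\partial r - (\dot{w}/w)u$, together with $\dot{v}/v = 2\dot{w}/w$ (since $v = w^2$), make the first-order radial corrections cancel the $(\dot{v}/v)\,\partial/\partial r$ term exactly, so the radial part reduces to $r^2\,\partial^2/\partial r^2 - (n\!-\!1)r\,\partial/\partial r$ plus the scalar leftover $r^2 E(r)$ with $E(r)$ the function of \eqref{E-potential}. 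In the tangential variables I would use the conjugation identity $w\Delta_{g(r)}(w^{-1}u) = \Delta_{g(r)}u - 2(d\log w, du)_{g(r)} - \bigl(\Delta_{g(r)}\log w - |d\log w|_{g(r)}^2\bigr)u$; by $\log v = 2\log w$ and the relation $\Delta_{g(r)} - (d\log v, d)_{g(r)} = -\delta(g(r)^{-1}d)$ (obtained by comparing \eqref{sch} and \eqref{LT}) the leading two terms combine into $-\delta(g(r)^{-1}du)$, leaving the scalar leftover $-\bigl(\Delta_{g(r)}\log w - |d\log w|_{g(r)}^2\bigr)u$.

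Collecting, $w\Delta_{g_+}(w^{-1}u)$ equals $r^2\,\partial^2 u/\partial r^2 - (n\!-\!1)r\,\partial u/\partial r - r^2\delta(g(r)^{-1}du) + r^2\bigl(E(r) - \Delta_{g(r)}\log w + |d\log w|_{g(r)}^2\bigr)u$, so it only remains to identify $E(r) - \Delta_{g(r)}\log w + |d\log w|_{g(r)}^2$ with the potential $\U(r;g)$; this is the one step that is not purely mechanical. I would read \eqref{potential} as $\U(r;g) = E(r) + w^{-1}\delta(g(r)^{-1}d)(w)$, rewrite $w^{-1}\delta(g(r)^{-1}d)(w) = -w^{-1}\Delta_{g(r)}w + w^{-1}(d\log v, dw)_{g(r)}$ by the same relation between $\delta$ and $\Delta_{g(r)}$, and then use $w^{-1}\Delta_{g(r)}w = \Delta_{g(r)}\log w + |d\log w|_{g(r)}^2$ and $w^{-1}(d\log v, dw)_{g(r)} = (d\log v, d\log w)_{g(r)} = 2|d\log w|_{g(r)}^2$; the two expressions then coincide and \eqref{con-PE} follows. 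One could instead deduce \eqref{con-PE} from the ambient conjugation formula \eqref{con-ambient} by realising $g_+$ inside the ambient space, but the computation is no shorter, and in either route the matching of the scalar leftover with $\U(r;g)$ is the crux.
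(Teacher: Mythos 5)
Your computation is correct: the paper states Proposition \ref{conjugate-PE} without proof (presenting it as the analog of \eqref{con-ambient}, for which only ``a calculation shows'' is offered), and the direct verification you give — warped-product Laplacian of $dr^2+g(r)$, conformal rescaling by $r^{-2}$ in dimension $n+1$, then conjugation by $w$ split into radial and tangential parts — is exactly the calculation left implicit. The crux step, identifying the scalar leftover $E(r)-\Delta_{g(r)}\log w+|d\log w|^2_{g(r)}$ with $\U(r;g)$, is precisely the cancellation the paper proves later as Lemma \ref{E-final} (equivalently it follows from \eqref{diff} together with $\log v=2\log w$), so your argument agrees with the paper's own machinery.
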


The relation \eqref{con-PE} shows that the holographic Laplacian
$\H(r;g)$ is given by the {\em tangential part} of the conjugation
of the Laplacian $\Delta_{g_+}$ by $w(r)$.

An important effect of the conjugation by ${\bf w}$ (in
\eqref{con-ambient}) and $w$ (in \eqref{con-PE}) is that the
coefficients of the respective first-order partial derivative with
respect to $\rho$ and $r$ are constant on $M$.\footnote{The role of
the square-root of $v(r)$ resembles the role of the square-root of
the determinant $j(x)$ of the Jacobian of the exponential map in the
theory of the heat equation (see Chapter 2 of \cite{BGV}).}

Finally, we briefly discuss Branson's $Q$-curvatures \cite{sharp}.
First of all, we recall that the $Q$-curvatures arise through the
zeroth order terms of the GJMS-operators. More precisely, for even
$n$ and $2N < n$, the local Riemannian invariant $Q_{2N}$ is defined
by
$$
P_{2N}(1) = \left(\f\!-\!N\right)(-1)^N Q_{2N}.
$$
Similarly, $Q_{2N}$ is defined for odd $n$ and all $N \ge 1$. In
even dimension $n$, the {\em critical} $Q$-curvature $Q_n$ is
defined by an argument of analytic continuation in $n$. The
functional $g \mapsto Q_{2N}(g)$ is a local Riemannian invariant of
weight $2N$. $Q$-curvatures (of low-order) play an important role in
geometric analysis. Among the important properties of $Q$-curvature,
we emphasize that, in even dimension $n$, the critical $Q$-curvature
$Q_n$ satisfies the conformal transformation law
$$
e^{n\varphi} Q_n(e^{2\varphi}g) = Q_n(g) + (-1)^\f P_n(g)(\varphi).
$$
It is this result which implies that the total integral of $Q_n$ on
closed manifolds is a global conformal invariant. Note that the
first two $Q$-curvatures (in general dimension) are given by $Q_2 =
\J$ and $Q_4 = \f \J^2 - 2 |\Rho|^2 - \Delta \J$ (see \eqref{pan}).
For $N \ge 3$, $Q_{2N}$ is much more complicated, and the following
recent result on its recursive structure \cite{Q-recursive} may be
used to derive explicit formulas in terms of $g$. For $N \ge 1$ and
$2N \le n$ (if $n$ is even), we have
\begin{equation}\label{Q-rec}
(-1)^N Q_{2N} = - \sum_{|I|+a=N, \, a < N} m_{(I,a)} (-1)^a
P_{2I}(Q_{2a}) + N!(N\!-\!1)! 2^{2N} w_{2N}
\end{equation}
where for any composition $I=(I_1,\dots,I_r)$ and $a \in \N$, the
notation $(I,a)$ means the composition $(I_1,\dots,I_r,a)$. An
alternative proof of this formula was given in \cite{FG-J}. We
recall that the contributions $w_{2N}$ are the power series
coefficients of the square-root $w(r)$ of $v(r)$. For more details
on $Q$-curvatures we refer to \cite{juhl-book}, \cite{GJ-holo} and
\cite{holo-II}.

\section{Conformal variation}\label{variation}

In the present section we prove Theorem \ref{conform-H} and Theorem
\ref{conform-heat}. The proof of Theorem \ref{conform-H} rests on the
identification of $\H(r;g)$ with the generating function \eqref{GF}
(Theorem \ref{base}) and a formula for the conformal variation of the
building-block operators $\M_{2N}$ found in \cite{juhl-power}. We first
recall the formulation and the proof of that result.

For $N \ge 2$, the operators $\M_{2N}$ are not conformally
covariant. However, the conformal variation of any building-block
operator $\M_{2N}$ is given by an expression in terms of respective
lower-order building-block operators.

\begin{thm}\label{M-var} Let $N \ge 1$ for $n$ odd and $2N \le n$
for $n$ even. Then, for all metrics and all $\varphi \in
C^\infty(M)$,\footnote{For $N =1$, the right-hand side of
\eqref{M-CV} is defined to be $0$.}
\begin{multline}\label{M-CV}
(d/dt)|_{t=0} \big(e^{(\f+N)t\varphi} \M_{2N}(e^{2t\varphi}g) e^{-(\f-N)t\varphi}\big) \\
= - \sum_{j=1}^{N-1} \binom{N\!-\!1}{j\!-\!1}^2 (N\!-\!j)
\left[\M_{2j}(g),[\M_{2N-2j}(g),\varphi]\right].
\end{multline}
\end{thm}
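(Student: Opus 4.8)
The plan is to prove Theorem \ref{M-var} by reducing it to Theorem \ref{conform-H}, using the fact that the generating function of the $\M_{2N}$ is exactly $\H(r;g)$ (Theorem \ref{base}), and then matching coefficients of powers of $r^2$. Concretely, I would first substitute into the right-hand side of \eqref{base-H} the expansions
$$
\H(r;g) = \sum_{N\ge 1} \M_{2N}(g)\frac{1}{(N\!-\!1)!^2}\Bigl(\frac{r^2}{4}\Bigr)^{N-1},
\qquad
\K(r;g) = \sum_{N\ge 1}\M_{2N}(g)\frac{1}{N!(N\!-\!1)!}\Bigl(\frac{r^2}{4}\Bigr)^{N}
$$
and on the left-hand side use
$$
e^{(\f+1)t\varphi}\H(r;e^{2t\varphi}g)e^{-(\f-1)t\varphi}
= \sum_{N\ge 1} e^{(\f+1)t\varphi}\M_{2N}(e^{2t\varphi}g)e^{-(\f-1)t\varphi}\frac{1}{(N\!-\!1)!^2}\Bigl(\frac{r^2}{4}\Bigr)^{N-1}.
$$
Applying $(\partial/\partial t)|_{t=0}$ to the left-hand side produces $\sum_N [\text{LHS of }\eqref{M-CV}]\,\frac{1}{(N\!-\!1)!^2}(r^2/4)^{N-1}$ — except that the conformal weights in \eqref{base-H} are $\f+1,\f-1$ rather than $\f+N,\f-N$, so I first need to account for this discrepancy. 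The point is that the extra conjugation factor $e^{\pm(N-1)t\varphi}$ differs from $1$ to first order in $t$ by $\pm(N-1)\varphi$, contributing the commutator terms $(N\!-\!1)[\varphi,\M_{2N}]$ which I expect to be precisely cancelled (after resummation) by the $-\tfrac12 r(\partial/\partial r)(\varphi\H + \H\varphi)$ term on the right — this is really just the anti-commutator $\{\varphi, \cdot\}$ versus the weight shift, and one checks $-\tfrac12 r\partial_r$ acting on $(r^2/4)^{N-1}$ gives $-(N-1)$.

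After this bookkeeping, the substantive content is the double-commutator term $-[\H(r;g),[\K(r;g),\varphi]]$. Expanding,
$$
-[\H(r;g),[\K(r;g),\varphi]] = -\sum_{j,a\ge 1} \frac{1}{(j\!-\!1)!^2}\frac{1}{a!(a\!-\!1)!}\Bigl(\frac{r^2}{4}\Bigr)^{j-1}\Bigl(\frac{r^2}{4}\Bigr)^{a}\bigl[\M_{2j},[\M_{2a},\varphi]\bigr].
$$
Setting $N = j + a$ so that the coefficient of $(r^2/4)^{N-1}$ comes from pairs with $j + a = N$, $a\ge 1$, i.e. $j = N-a$ running over $1,\dots,N-1$, the coefficient is
$$
-\frac{1}{(N\!-\!1)!^2}\sum_{a=1}^{N-1} \frac{(N\!-\!1)!^2}{(N\!-\!a\!-\!1)!^2\, a!\,(a\!-\!1)!}\bigl[\M_{2(N-a)},[\M_{2a},\varphi]\bigr].
$$
So I must identify the combinatorial factor $\frac{(N-1)!^2}{(N-a-1)!^2\,a!\,(a-1)!}$ with $\binom{N-1}{a-1}^2(N-a)$ from \eqref{M-CV} (after re-indexing $j\leftrightarrow a$, and noting $[\M_{2(N-a)},[\M_{2a},\varphi]]$ agrees with the stated summand by symmetry of the double bracket structure, or by simply matching termwise). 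Indeed $\binom{N-1}{a-1}^2(N-a) = \frac{(N-1)!^2}{(a-1)!^2(N-a)!^2}(N-a) = \frac{(N-1)!^2}{(a-1)!^2(N-a)!(N-a-1)!}$; this does not literally equal the factor above, which signals that one of the two $\M$-factors must be symmetrized, or that the $-\tfrac12 r\partial_r$ term contributes an additional commutator piece. So the real check is that, after also expanding $-\tfrac12 r\partial_r(\varphi\H+\H\varphi)$ — which beyond the weight-shift cancellation also contains no commutators, being an anticommutator — the remaining identity is purely the double-commutator matching, and I would verify the combinatorics carefully, using $\tbinom{N-1}{a-1} = \tbinom{N-1}{N-a}$ and the half-integer gymnastics, possibly splitting $[\M_{2j},[\M_{2a},\varphi]]$ symmetrically in $j,a$ to land on the symmetric form $\tbinom{N-1}{j-1}^2(N-j)$.

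The main obstacle I anticipate is precisely this combinatorial reconciliation: making sure the weight-shift contributions from the $e^{\pm(N-1)t\varphi}$ factors are exactly absorbed by the $-\tfrac12 r\partial_r$ anticommutator term, leaving a clean identity whose double-commutator coefficients match $\binom{N-1}{j-1}^2(N-j)$. A clean way to organize this is to work entirely at the level of generating functions: rewrite \eqref{base-H} as a statement about $F(r;t) := e^{(\f+1)t\varphi}\H(r;e^{2t\varphi}g)e^{-(\f-1)t\varphi}$, observe that $G(r;t) := e^{(\f+?)t\varphi}(\cdots)$ with the "correct" weights is related to $F$ by an explicit conjugation by $e^{\pm t\varphi}\cdot(r\partial_r$-type operator$)$, differentiate at $t=0$, and read off coefficients. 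Since Theorem \ref{M-var} is being \emph{recalled} from \cite{juhl-power} and used to \emph{prove} Theorem \ref{conform-H}, in the logic of this paper the implication actually runs the other way — so in the author's exposition the proof of \eqref{M-CV} is self-contained (via residue families or the ambient metric) and Theorem \ref{conform-H} is the consequence; but the generating-function identity above shows the two statements are equivalent, so either direction is a valid proof strategy, and I would present the generating-function equivalence as the conceptual core, relegating the coefficient bookkeeping to a short computation.
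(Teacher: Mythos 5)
Your proposal does not actually prove Theorem \ref{M-var}; it only shows that Theorem \ref{M-var} is equivalent (after coefficient bookkeeping in the generating variable $r^2$) to Theorem \ref{conform-H}. That equivalence is exactly what the paper itself establishes in Section \ref{variation} when it derives Theorem \ref{conform-H} \emph{from} Theorem \ref{M-var} via the rescaling $\tilde{\M}_{2N}=\M_{2N}/(N\!-\!1)!^2$ and the resummation of \eqref{alm}. Since Theorem \ref{conform-H} has no independent proof available (the paper's only argument for it uses \eqref{M-CV}, and your closing appeal to "residue families or the ambient metric" concerns Theorem \ref{base}, i.e.\ the identification of $\H(r;g)$ with the generating function, not the conformal variation), running the implication in your direction is circular within this framework. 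The genuine content of Theorem \ref{M-var} is never engaged in your proposal: the paper's proof starts from the conformal covariance \eqref{conf-cov} of the GJMS-operators, inserts the definition $\M_{2N}=\sum_{|I|=N}m_I P_{2I}$, and then verifies, term by term for each composition $I$, the combinatorial identities \eqref{c-last} for the multiplicities $m_I$ given by \eqref{m-form} (the key step being the elementary identity $I_1(I_1+I_2)+\cdots+(I_1+\cdots+I_{r-1})(I_{r-1}+I_r)=(I_1+\cdots+I_r)(I_1+\cdots+I_{r-1})$). None of this appears in your argument, so the mathematical core is missing.

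A secondary point: the "mismatch" you flag in the coefficient comparison is an indexing slip, not a real obstruction. In \eqref{M-CV} the summand with index $j$ is $\binom{N-1}{j-1}^2(N\!-\!j)\,[\M_{2j},[\M_{2N-2j},\varphi]]$, so the term $[\M_{2(N-a)},[\M_{2a},\varphi]]$ arising from your expansion of $[\H,[\K,\varphi]]$ (outer factor from $\H$, inner from $\K$) corresponds to $j=N\!-\!a$, whose coefficient is $\binom{N-1}{a}^2 a=\frac{(N-1)!^2}{a!\,(a-1)!\,(N-a-1)!^2}$ --- which is literally the factor you computed. No symmetrization and no extra commutator from the $-\tfrac12 r\partial_r$ anticommutator term is needed. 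So the bookkeeping in your equivalence does close, but even with it closed, the proposal reduces the theorem to a statement the paper proves only by means of the theorem itself, and therefore it does not constitute a proof.
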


\begin{proof} The assertion follows from the conformal covariance
\begin{equation}\label{conf-cov}
e^{(\f+N)\varphi} \circ P_{2N}(e^{2\varphi}g) = P_{2N}(g) \circ
e^{(\f-N)\varphi}
\end{equation}
of the GJMS-operators. The right-hand side of \eqref{M-CV} is a
weighted sum of terms of the form
\begin{equation*}
\M_{2j} \circ \M_{2N-2j} \circ \varphi - \M_{2j} \circ \varphi \circ
\M_{2N-2j} - \M_{2N-2j} \circ \varphi \circ \M_{2j} + \varphi \circ
\M_{2N-2j} \circ \M_{2j}.
\end{equation*}
By \eqref{M-def}, on the left-hand side of \eqref{M-CV} the term
$P_{2I}$ with $I=(I_1,I_2,\dots,I_r)$ contributes a constant
multiple of
\begin{multline}\label{lhs}
(N\!-\!I_1) \varphi \circ P_{2I} - (I_1\!+\!I_2) P_{2I_1} \circ
\varphi \circ P_{2I_2} \cdots P_{2I_r} \\
- \cdots - (I_{r-1}\!+\!I_r) P_{2I_1} \cdots P_{2I_{r-1}} \circ
\varphi \circ P_{2I_r} + (N\!-\!I_r) P_{2I} \circ \varphi.
\end{multline}
For the term $P_{2I} \circ \varphi$, the claim is
\begin{multline}\label{c-last}
-(N\!-\!I_r) m_I = \binom{N\!-\!1}{I_1\!-\!1}^2
(N\!-\!I_1) \; m_{(I_1)} m_{(I_2,\dots,I_r)} \\
+ \binom{N\!-\!1}{I_1\!+\!I_2\!-\!1}^2
(N\!-\!I_1\!-\!I_2) \; m_{(I_1,I_2)} m_{(I_3,\dots,I_r)} \\
+ \cdots + \binom{N\!-\!1}{I_1\!+\!I_2\!+ \cdots +\!I_{r-1}\!-\!1}^2
(N\!-\!I_1\!-\!I_2-\cdots-I_{r-1}) \; m_{(I_1,I_2,\dots,I_{r-1})}
m_{(I_r)}.
\end{multline}
In order to prove this identity, we use the explicit formula for the
coefficients $m_I$ (see \eqref{m-form}) to write the terms in the
sum as multiples of $m_I$. We find
\begin{multline}\label{comb}
-\frac{1}{N} \Big[ I_1(I_1+I_2) + (I_1+I_2)(I_2+I_3) +
(I_1+I_2+I_3)(I_3+I_4) \\ + \cdots +
(I_1+I_2+\cdots+I_{r-1})(I_{r-1}+I_r)\Big] m_I.
\end{multline}
Now the relation
\begin{equation*}
I_1(I_1+I_2) + \cdots + (I_1+I_2+\cdots+I_{r-1})(I_{r-1}+I_r)  =
(I_1+\cdots+I_r)(I_1+\cdots+I_{r-1})
\end{equation*}
(which easily follows by induction) implies that in \eqref{comb} the
sum in brackets equals $N(N-I_r)$ if $|I|=N$. Thus, \eqref{comb}
equals $-(N-I_r)m_I$. This proves the assertion.

Next, for the term $\varphi \circ P_{2I}$ with $|I|=N$, the claim is
\begin{multline*}
-(N\!-\!I_1) m_I = \binom{N\!-\!1}{I_2\!+\cdots+\!I_r\!-\!1}^2
(N\!-\!I_2\!-\cdots-I_r) \; m_{(I_1)} m_{(I_2,I_2,\dots,I_{r})} \\
+ \cdots + \binom{N\!-\!1}{I_r\!-\!1}^2 (N\!-\!I_r) \;
m_{(I_1,\dots,I_{r-1})} m_{(I_r)}.
\end{multline*}
This identity follows by applying \eqref{c-last} to the inverse
composition $I^{-1}$ of $I$ and using the relations $m_{I^{-1}} =
m_I$ for all compositions $I$.

It remains to prove the corresponding identities for the
coefficients of the terms
$$
P_{2I_1} \cdots P_{2I_a} \circ \varphi \circ P_{2I_{a+1}} \dots
P_{2I_r}.
$$
In that case, the claim is
\begin{multline*}
-(I_a+I_{a+1}) m_I = \Big[
\binom{N\!-\!1}{I_1\!+\cdots+\!I_a\!-\!1}^2
(N\!-\!I_1\!-\cdots-I_a) \\
+ \binom{N\!-\!1}{I_{a+1}\!+\cdots+\!I_r\!-\!1}^2
(N\!-\!I_{a+1}\!-\cdots-I_r) \Big] m_{(I_1,\dots,I_a)}
m_{(I_{a+1},\dots,I_r)}.
\end{multline*}
By \eqref{m-form}, the right-hand side reduces to
$$
-\frac{1}{N} (I_a+I_{a+1}) [(I_1+\cdots+I_a) + (N-I_1-\cdots-I_a)]
m_I,
$$
i.e., to $-(I_a + I_{a+1)}) m_I$. This completes the proof.
\end{proof}

For locally conformally flat metrics, Theorem \ref{M-var} extends to
even $n$ and all $N \ge 1$.

\smallskip

We continue with the {\bf proof of Theorem \ref{conform-H}}.

We rewrite \eqref{M-CV} as
\begin{equation}\label{rew}
(d/dt)|_{t=0} \big(e^{(\f+N)t\varphi}
\tilde{\M}_{2N}(e^{2t\varphi}g) e^{-(\f-N)t\varphi} \big) = -
\sum_{a=1}^{N-1} \frac{1}{a}
\left[\tilde{\M}_{2N-2a}(g),[\tilde{\M}_{2a}(g),\varphi]\right]
\end{equation}
with
$$
\tilde{\M}_{2N} \st \frac{\M_{2N}}{(N\!-\!1)!^2}.
$$
We recall that for $N=1$ the sum on the right-hand side of
\eqref{rew} is to be understood as $0$ (by definition). Now we
combine the relation
\begin{multline*}
(d/dt)|_{t=0} \left( e^{(\f+1)t\varphi} \H(r;e^{2t \varphi} g)
e^{-(\f-1)t\varphi} \right) \\
= (d/dt)|_{t=0} \left( \sum_{N \ge 1} e^{-(N-1)t\varphi}
\left(e^{(\f+N) t\varphi} \tilde{\M}_{2N}(e^{2t\varphi}g)
e^{-(\f-N)t\varphi}\right) e^{-(N-1)t\varphi}
\left(\frac{r^2}{4}\right)^{N-1}\right)
\end{multline*}
with \eqref{rew} and find
\begin{multline}\label{alm}
(d/dt)|_{t=0} \left( e^{(\f+1)t\varphi} \H(r;e^{2t \varphi} g)
e^{-(\f-1)t\varphi} \right) \\
= \sum_{N\ge 1} \left(-(N\!-\!1) \varphi \tilde{\M}_{2N}(g) -
(N\!-\!1) \tilde{\M}_{2N}(g) \varphi \right) \left(\frac{r^2}{4}\right)^{N-1} \\
- \sum_{N \ge 1} \left( \sum_{a=1}^{N-1} \frac{1}{a}
\left[\tilde{\M}_{2N-2a}(g),[\tilde{\M}_{2a}(g),\varphi]\right]
\right) \left(\frac{r^2}{4}\right)^{N-1}.
\end{multline}
But the sums on the right-hand side of \eqref{alm} can be rewritten
as
$$
- \frac{1}{2} r (\partial/\partial r) (\varphi \H(r;g) + \H(r;g)
\varphi) - [\H(r;g),[\K(r;g),\varphi]]
$$
with
$$
\K(r;g) = \sum_{N \ge 1} \tilde{\M}_{2N}(g) \frac{1}{N}
\left(\frac{r^2}{4}\right)^N = \sum_{N \ge 1} \M_{2N}(g) \frac{1}{N!
(N\!-\!1)!} \left(\frac{r^2}{4}\right)^N.
$$
This completes the proof of Theorem \ref{conform-H}. \hfill
$\square$ \smallskip

We recall that, for even $n$, Theorem \ref{conform-H} asserts the
equality of two power series in $r$ up to $r^{n-2}$. This
interpretation also applies to the above arguments.

The above proof shows that the first term on the right-hand side of
\eqref{base-H} is caused by the differences of the conformal weights
in the respective variational formulas of the operators $\M_{2N}$
and $\H(r)$.

We finish the discussion of Theorem \ref{conform-H} with an {\bf
independent proof} in the special case of conformal changes of the
round metric on the spheres $\s^n$ which arise from the conformal
action of the group $G=SO(1,n\!+\!1)$. We recall that $G$ acts on
the round metric $g_0$ by conformal diffeomorphisms, i.e., we have
\begin{equation}
\gamma_*(g_0) = e^{2\varphi_\gamma} g_0
\end{equation}
for all $\gamma \in G$ and certain functions $\varphi_\gamma \in
C^\infty(M)$. In particular, we obtain $\gamma_*(\dvol) =
e^{n\varphi_\gamma} \dvol$ for the volume form $\dvol$ of the round
metric. For any $\lambda \in \c$, the map
$$
\pi_\lambda: \gamma \mapsto
\left(\frac{\gamma_*(\dvol)}{\dvol}\right)^\frac{\lambda}{n} \gamma_*(u) =
e^{\lambda \varphi_\gamma} \gamma_*(u)
$$
defines a principal series representation of $G$ on
$C^\infty(\s^n)$. Using
$$
P_2(e^{2\varphi_\gamma} g_0) = P_2 (\gamma_*(g_0)) = \gamma_* P_2(g_0)
\gamma^*,
$$
the conformal covariance
$$
e^{(\f+1)\varphi} \circ P_2(e^{2\varphi}g) = P_2(g) \circ
e^{(\f-1)\varphi}, \; \varphi \in C^\infty(M^n)
$$
of the conformal Laplacian $P_2$ implies the intertwining relation
\begin{equation}\label{intertwine}
\pi_{\f+1}(\gamma) \circ P_2(g_0) = P_2(g_0) \circ \pi_{\f-1}(\gamma), \;
\gamma \in G
\end{equation}
for $P_2(g_0)$ on $\s^n$. For more details see \cite{juhl-book}. We
also recall that
\begin{equation}\label{special-H}
\H(r;g_0) = (1\!-\!r^2/4)^{-2} P_2(g_0)
\end{equation}
(see \cite{juhl-ex}). Now let $\gamma_t \in G$ be a one-parameter family
with $\gamma_0=e$ and let $e^{2\varphi_t}$ be the corresponding conformal
factor. By \eqref{intertwine}, the variation
$$
(d/dt)|_{t=0}(\pi_{\f+1}(\gamma_t) P_2(g_0) \pi_{\f-1}(\gamma_t^{-1}))
$$
vanishes. Hence, using \eqref{special-H}, we obtain
\begin{align*}
& (d/dt)|_0 (e^{(\f+1)t \psi} \H(r;e^{2t\psi} g_0) e^{-(\f-1)t\psi}) \\
& = (d/dt)|_0 (e^{(\f+1) \varphi_t} (\gamma_t)_* \H(r;g_0) \gamma_t^* e^{-(\f-1)\varphi_t}) \\
& = (1\!-\!r^2/4)^{-2} (d/dt)|_0 (e^{(\f+1)\varphi_t} (\gamma_t)_*
P_2(g_0) \gamma_t^* e^{-(\f-1) \varphi_t}) \\
& = (1\!-\!r^2/4)^{-2} (d/dt)|_0 (\pi_{\f+1}(\gamma_t) P_2(g_0)
\pi_{\f-1}(\gamma_t^{-1})) \\
& = 0
\end{align*}
for $\psi = (d/dt)|_{t=0}(\varphi_t)$. Therefore, Theorem
\ref{conform-H} states that the sum
\begin{multline*}
- \frac{1}{2} (r \partial/\partial r) ((1\!-\!r^2/4)^{-2})
(\psi P_2(g_0) + P_2(g_0) \psi) \\
-(1\!-\!r^2/4)^{-2} \int_0^r s(1\!-\!s^2/4)^{-2} ds \;
[P_2(g_0),[P_2(g_0),\psi]]
\end{multline*}
vanishes, too. A calculation shows that the latter sum coincides
with the product of
$$
-\frac{r^2}{4} (1\!-\!r^2/4)^{-3}
$$
and the operator
\begin{equation}\label{rhs-sphere}
2 (\psi P_2(g_0) + P_2(g_0) \psi) + [P_2(g_0),[P_2(g_0),\psi]].
\end{equation}
In order to verify the vanishing of \eqref{rhs-sphere}, we note that
the operator $\M_4 = P_4 - P_2^2$ satisfies the conformal
variational formula
$$
(d/dt)|_{t=0} (e^{(\f+1)t\varphi} \M_4(e^{2t\varphi}g)
e^{-(\f-1)t\varphi}) = - (\varphi \M_4(g) + \M_4(g) \varphi) -
[P_2(g),[P_2(g),\varphi]]
$$
for all metrics $g$ and all $\varphi \in C^\infty(M)$.\footnote{This is a
consequence of the special case $N=2$ of Theorem \ref{M-var}.} We consider
this relation for the round sphere and conformal changes which are induced
by the action of one-parameter groups $\gamma_t \in G$ with $\gamma_0 =
e$. Using $\M_4 (g_0) = 2 P_2(g_0)$,\footnote{This formula for $\M_4(g_0)$
is a consequence of the product formula $P_4(g_0)= P_2(g_0) (P_2(g_0)
\!+\! 2)$.} we find
$$
0 = - 2 (\psi P_2(g_0) + P_2(g_0) \psi) -
[P_2(g_0),[P_2(g_0),\psi]].
$$
This proves the vanishing of \eqref{rhs-sphere} and henceforth
confirms Theorem \ref{conform-H} in this special case.\footnote{A
more direct proof of the vanishing of \eqref{rhs-sphere} follows
from the fact that $\psi$ is an eigenfunction of $\Delta$ for the
smallest non-zero eigenvalue: $\Delta \psi = -n \psi$ and the
relation $n \Hess(\psi) = g \Delta \psi$ \cite{obata}.}

Now we are ready to complete the {\bf proof of Theorem
\ref{conform-heat}}. A generalization of a formula of Ray and Singer
(see Proposition 3.5 in \cite{BO-index} or Corollary 2.50 in
\cite{BGV}) shows that
$$
(\partial/\partial\varepsilon)|_{\varepsilon=0} (\Tr(\exp(t
\H(r;e^{2\varepsilon \varphi} g)))) = t \Tr((\partial/\partial
\varepsilon)|_{\varepsilon=0} (\H(r;e^{2\varepsilon \varphi}g))
\exp(t \H(r;g))).
$$
Now Theorem \ref{conform-H} implies that
\begin{multline}\label{almost-0}
(\partial/\partial\varepsilon)|_{\varepsilon=0}
\left(\H(r;e^{2\varepsilon \varphi} g)\right) +
\left(\f\!+\!1\right) \varphi \H(r;g)
- \left(\f\!-\!1\right) \H(r;g) \varphi \\
= - r \frac{1}{2} (\varphi \dot{\H}(r;g) + \dot{\H}(r;g) \varphi)-
[\H(r;g),[\K(r;g),\varphi]].
\end{multline}
Hence we obtain
\begin{multline}\label{almost}
(\partial/\partial \varepsilon)|_{\varepsilon=0}
(\Tr(\exp(t \H(r;e^{2\varepsilon \varphi} g)))) \\
= t \Tr \left(\left[-2\varphi \H(r;g) - r \frac{1}{2} (\varphi
\dot{\H}(r;g) + \dot{\H}(r;g)\varphi) \right] e^{t \H(r;g)} \right)\\
+ t \Tr \left(\left[\left(\f\!-\!1\right) [\H(r;g),\varphi] -
[\H(r;g),[\K(r;g),\varphi]]\right] e^{t\H(r;g)} \right).
\end{multline}
By the cyclicity of the trace, the terms which involve the
commutators with $\H(r;g)$ vanish. Therefore, the right-hand side of
\eqref{almost} simplifies to
\begin{multline*}
-2t \Tr (\varphi \H(r;g) e^{t \H(r;g)}) - r t \frac{1}{2} \Tr
((\varphi \dot{\H}(r;g) + \dot{\H}(r;g)\varphi) e^{t\H(r;g)}) \\
= -2 t(\partial/\partial t) (\Tr (\varphi e^{t \H(r;g)})) - r t
\frac{1}{2} \Tr ((\varphi \dot{\H}(r;g) + \dot{\H}(r;g)\varphi)
e^{t\H(r;g)}).
\end{multline*}
This completes the proof. \hfill $\square$ \smallskip

\begin{rem} For even $n$, the arguments of the above proof are to be
interpreted in the following sense. In this case, Theorem
\ref{conform-H} yields an equality of power series up to $r^{n-2}$.
It implies the equality of the corresponding finite Taylor series of
both sides of \eqref{almost-0}. In turn, this shows the equality of
the finite Taylor expansions up to $r^{n-2}$ of both sides of
\eqref{almost}. The latter argument rests on the repeated
application of the obvious identity $T_N (p \circ q) = T_N (T_N(p)
\circ q)$ for the finite Taylor series $T_N(\cdot)$ up to $r^N$ of
smooth one-parameter families of operators.
\end{rem}

The operator $\H(r;g)$ is defined through a Poincar\'e-Einstein
metric $g_+$ associated to $g$. The Poincar\'e-Einstein metrics
$g_+$ and $\hat{g}_+$ of the conformally equivalent metrics $g$ and
$\hat{g}= e^{2\varphi}g$ are related by
$$
\hat{g}_+ = \kappa^* (g_+)
$$
with a diffeomorphism $\kappa$ of a neighbourhood $[0,\varepsilon)
\times M$ of $M$ which restricts to the identity for $r=0$. It
follows that the left-hand side of \eqref{base-heat} can be
expressed in terms of $g_+$ and $\kappa$. However, from that
perspective, it is surprising that this yields a result as stated in
Theorem \ref{conform-heat}.

\section{Asymptotic expansions}\label{AE}

For small $r$, the operator $\H(r)$ is a self-adjoint elliptic
differential operator. By the compactness of $M$, it has a discrete
spectrum consisting of real eigenvalues $\lambda_k$ of finite
multiplicity. Moreover, we find a constant $\varepsilon > 0$ so that
for $r \in [-\varepsilon,\varepsilon]$, the spectrum of $-\H(r)$ is
uniformly bounded below by a constant $L$. We choose some open cone
$C \subset \c$ of small slope about the ray
$$
\{x \in \r \,|\, x \ge \min(L,0) \}.
$$
The cone $C$ contains the spectrum of $-\H(r)$ for $|r| \le
\varepsilon$. Let $\Gamma$ be the boundary of $C$ and $\Omega$ its
complement. We orient $\Gamma$ so that the spectrum of $-\H(r)$ is
on the right-hand side.

The analytic arguments rest on a theory of pseudo-differential
operators depending on parameters.

\begin{defn}\label{symbol} For $m \in \r$, we let $S^m(U)$ be the
space of symbols $p_m(x,\xi,\lambda;r)$ with $(x,\xi) \in T^*(U)$,
$U \subset \r^n$ open, $\lambda \in \Omega$ and $r \in
[-\varepsilon,\varepsilon]$ depending on the parameters $\lambda$
and $r$ so that
\begin{itemize}
\item [(i)] $p_m(x,\xi,\lambda;r)$ is smooth in the variables
$(x,\xi,r)$,
\item [(ii)] $p_m(x,\xi,\lambda;r)$ is holomorphic in $\lambda \in
\Omega$,
\item [(iii)] for all multi-indices $\alpha, \beta$ and
$\gamma,\delta$, there is a constant
$c_{\alpha,\beta,\gamma,\delta}$ so that
$$
|\partial_x^\alpha \partial_\xi^\beta \partial_\lambda^\gamma
\partial_r^\delta p_m(x,\xi,\lambda;r)| \le c_{\alpha,\beta,\gamma,\delta}
(1+|\xi|^2 + |\lambda|)^{\frac{m-|\beta|}{2} - \gamma}.
$$
\end{itemize}
\end{defn}

We call a symbol $p_m \in S^m$ {\em homogeneous} of order $m \in \r$
if
$$
p_m(x,s\xi,s^2\lambda;r) = s^m p_m(x,\xi,\lambda;r), \; s \ge 1.
$$

\begin{lemm}\label{homogen} If $p_m \in S^m$ is homogeneous of
order $m$, then
\begin{equation}\label{integral-hom} \int_{\r^n}
\int_\Gamma p_m(x,\xi,\lambda;r) e^{-t\lambda} d\lambda d\xi =
t^{-\frac{n+m}{2}-1} \int_{\r^n} \int_\Gamma p_m(x,\xi,\lambda;r)
e^{-\lambda} d\lambda d\xi
\end{equation}
for small $t$.
\end{lemm}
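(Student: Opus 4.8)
Looking at Lemma 4.2 (the scaling lemma for homogeneous symbols), the plan is straightforward: it's a change of variables in the combined contour-and-fiber integral.

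\medskip

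\textbf{Plan of proof.} The idea is to perform the substitution $\lambda \mapsto t\lambda$, $\xi \mapsto \sqrt{t}\,\xi$ in the left-hand side of \eqref{integral-hom} and exploit the homogeneity hypothesis $p_m(x,s\xi,s^2\lambda;r) = s^m p_m(x,\xi,\lambda;r)$ for $s \ge 1$. First I would fix $t$ with $0 < t \le 1$ and write $s = t^{-1/2} \ge 1$. In the inner contour integral over $\Gamma$, substitute $\lambda = t\mu$; since $\Gamma$ is (a piece of) a ray through the origin with small slope and the cone $C$ is invariant under positive scaling, the rescaled contour $t\Gamma$ coincides with $\Gamma$ as an oriented curve, so no boundary terms appear and the orientation is preserved. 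This turns $e^{-t\lambda}\,d\lambda$ into $e^{-t^2\mu}\,t\,d\mu$; one must be slightly careful here — the cleaner substitution is $\lambda = t^{-1}\cdot(\text{something})$, so let me instead substitute directly $\lambda \to \lambda/t$ is wrong dimensionally. The correct move: in $\int_\Gamma p_m(x,\xi,\lambda;r)e^{-t\lambda}\,d\lambda$, put $\lambda = \tilde\lambda / t$... no. Let me restate: substitute $\lambda = t^{-1}\nu$ is also not homogeneity-compatible with $\xi$. The substitution that works is $\xi = t^{-1/2}\eta$ and $\lambda = t^{-1}\nu$, with $s = t^{-1/2}$, using that then $p_m(x,\xi,\lambda;r) = p_m(x, t^{-1/2}\eta, t^{-1}\nu; r) = t^{-m/2} p_m(x,\eta,\nu;r)$. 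The Jacobian from $d\xi = t^{-n/2}\,d\eta$ and $d\lambda = t^{-1}\,d\nu$, together with $e^{-t\lambda} = e^{-\nu}$, and the invariance of $\Gamma$ under $\nu \mapsto t^{-1}\nu$ (positive scaling), yields the overall factor $t^{-m/2}\cdot t^{-n/2}\cdot t^{-1} = t^{-(n+m)/2 - 1}$, which is exactly the claimed prefactor.

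\medskip

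\textbf{Key steps in order.} (i) Justify that the double integral converges absolutely for small $t > 0$: by Definition \ref{symbol}(iii) with $m$ replaced by the homogeneity degree, $|p_m| \lesssim (1 + |\xi|^2 + |\lambda|)^{m/2}$ on $\Gamma$, and along $\Gamma$ one has $\Re(\lambda) \gtrsim |\lambda| - O(1)$ because $\Gamma$ is a cone of small slope about a ray in the right half-plane (after translating past $\min(L,0)$); hence $e^{-t\lambda}$ provides exponential decay in $|\lambda|$ and polynomial decay is enough in $\xi$ after integrating $\lambda$ first, or one simply notes absolute convergence of the $(\xi,\lambda)$-integral directly. (ii) Perform the joint substitution $\xi = t^{-1/2}\eta$, $\lambda = t^{-1}\nu$. (iii) Use scaling-invariance of the contour $\Gamma$ and of its orientation (the cone $C$ and its boundary are invariant under multiplication by positive reals, a point already built into the setup in Section \ref{AE}). (iv) Apply homogeneity to pull out $t^{-m/2}$, collect the Jacobian factors, and read off \eqref{integral-hom}.

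\medskip

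\textbf{Main obstacle.} The only genuinely delicate point is the contour manipulation: one must check that rescaling $\lambda$ by the positive factor $t^{-1}$ maps $\Gamma$ to itself as an \emph{oriented} curve and introduces no arc-at-infinity or arc-near-origin contributions. This is where the precise choice of $\Gamma$ in Section \ref{AE} — the boundary of an open cone $C$ of small slope about the ray $\{x \ge \min(L,0)\}$ — matters; strictly the cone's apex is at $\min(L,0)$ rather than at $0$, so $\Gamma$ is only \emph{asymptotically} scale-invariant, and one handles this either by first deforming $\Gamma$ to a genuinely conical contour through the origin (permissible since $-\H(r)$ has spectrum uniformly bounded below, so a neighbourhood of the relevant half-line is resolvent-regular and the integrand is holomorphic there) or by absorbing the bounded discrepancy into an estimate that vanishes in the homogeneous scaling. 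Everything else is a bookkeeping computation with Jacobians and the homogeneity identity, and I would not belabor it.
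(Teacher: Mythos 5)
Your proposal is correct and follows essentially the same route as the paper: a joint scaling substitution $\xi\mapsto t^{-1/2}\xi$, $\lambda\mapsto t^{-1}\lambda$, the homogeneity identity with $s=t^{-1/2}\ge 1$, and Cauchy's theorem to account for the fact that $\Gamma$ is not exactly scale-invariant. The paper handles that last point slightly more cleanly than your suggested fix: instead of deforming $\Gamma$ to a conical contour through the origin (which for $\xi$ near $0$ brushes against the singularities of the resolvent symbols at $\lambda=p_2(x,\xi)$), it first deforms $\Gamma$ outward to $t^{-1}\Gamma$ for $0<t<1$, which lies in the complement of the cone $C$ where holomorphy is guaranteed by the symbol class, and then the substitution $\lambda\mapsto t^{-1}\lambda$ brings the contour back to $\Gamma$.
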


\begin{proof} For $0 < t < 1$, the curve $t^{-1}\Gamma$ is contained in the
complement of the cone $C$. By Cauchy's theorem, the integral on the
left-hand side of \eqref{integral-hom} equals
$$
\int_{\r^n} \int_{t^{-1}\Gamma} p_m(x,\xi,\lambda;r) e^{-t\lambda}
d\lambda d\xi.
$$
Now we replace $\xi$ by $t^{-1/2} \xi$ and $\lambda$ by $t^{-1}
\lambda$. Then the integral equals
$$
t^{-\f-1} \int_{\r^n} \int_\Gamma p_m(x,t^{-1/2}\xi,t^{-1}\lambda;r)
e^{-\lambda} d\lambda d\xi.
$$
Therefore, the assertion follows from the homogeneity
$$
p_m(x,s\xi,s^2\lambda;r) = s^m p_m(x,\xi,\lambda;r), \; s \ge 1.
$$
The proof is complete.
\end{proof}

Note that for $m=-4$, the exponent of $t$ on the right-hand side of
\eqref{integral-hom} is $-\f+1$. That case will be relevant in
Section \ref{dc-term}.

Now Theorem 3.3 of \cite{BO-index} implies

\begin{thm}\label{hk-expansion} Let $K(x,y,t;r)$ be the kernel of
$e^{t\H(r)}$, i.e.,
$$
e^{t \H(r)}(u)(x) = \int_M K(x,y,t;r) u(y) \dvol, \; u \in C^\infty(M).
$$
Then $K(x,x,t;r)$ has an asymptotic expansion
\begin{equation}\label{kernel}
K(x,x,t;r) \sim (4 \pi t)^{-\f} \sum_{j \ge 0} t^j a_{2j}(x;r), \; t
\to 0
\end{equation}
with $a_{2j}(x;r) \in C^\infty (M \times
(-\varepsilon,\varepsilon))$. In particular,
\begin{equation}\label{heat-kernel-as}
\Tr (e^{t\H(r)}) \sim (4\pi t)^{-\f} \sum_{j \ge 0} t^j \int_M a_{2j}(x;r)
\dvol,
\end{equation}
where the trace is taken in $L^2(M,g)$. The coefficients
$a_{2j}(x;r)$ are polynomials in the jets of the symbol of
$\H(r;g)$. Moreover, the left-hand side of \eqref{heat-kernel-as} is
smooth in $r$ and its asymptotic expansion can be differentiated
with respect to $r$ term by term.
\end{thm}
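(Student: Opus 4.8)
The plan is to run the standard resolvent parametrix construction for the heat semigroup, carrying the parameter $r$ through as one of the symbol variables, and to read off the expansion from the contour-integral representation
\[
e^{t\H(r)} = \frac{1}{2\pi i}\int_\Gamma e^{-t\lambda}\,\R(r;\lambda)\,d\lambda ,
\]
valid for small $t>0$ with $\Gamma=\partial C$ oriented as above. First I would invert the full symbol of $-\H(r)-\lambda$. Writing the principal symbol of $-\H(r)$ as the quadratic form $\H(r)(x,\xi)$ of \eqref{principal} (plus a first-order and a zeroth-order term coming from $\delta$ and $\U(r;g)$), I set $q_{-2}(x,\xi,\lambda;r)=(\H(r)(x,\xi)-\lambda)^{-1}$; since $\H(r)(x,\xi)\ge 0$ and $\lambda\in\Omega$ is kept off the non-negative reals by the slope of $C$, one has $|\H(r)(x,\xi)-\lambda|\gtrsim|\xi|^2+|\lambda|$, so $q_{-2}\in S^{-2}$ is homogeneous of order $-2$. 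Solving the triangular system produced by the symbol-composition formula then yields correction terms $q_{-2-j}\in S^{-2-j}$, each homogeneous of order $-2-j$. The point to verify is that, because $g(r)$ is a smooth one-parameter family of metrics and the potential $\U(r;g)$ in \eqref{potential} depends smoothly on $r$, the full symbol of $\H(r;g)$ is smooth in $(x,\xi,r)$; hence every $q_{-2-j}$ meets conditions (i)--(iii) of Definition \ref{symbol}, uniformly for $r\in[-\varepsilon,\varepsilon]$.

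Next I would assemble the parametrix $Q_N(\lambda;r)=\operatorname{Op}\!\big(\sum_{j=0}^{N}q_{-2-j}\big)$ and check, in the usual way, that $(-\H(r)-\lambda)\,Q_N(\lambda;r)-\id$ has symbol in $S^{-N-1}$; composing with the true resolvent $\R(r;\lambda)$, whose $L^2$ and Sobolev operator norms are $O((1+|\lambda|)^{-1})$ uniformly in $r$ by ellipticity and the uniform spectral lower bound, one finds that $\R(r;\lambda)-Q_N(\lambda;r)$ has a smooth kernel whose diagonal decays in $|\lambda|$ rapidly enough (for $N$ large, depending on $M$) that its contribution to the contour integral is $O(t^{M+1-\f})$. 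Applying Lemma \ref{homogen} to each homogeneous piece $q_{-2-j}$ then gives, for every $M$,
\[
K(x,x,t;r) = (4\pi t)^{-\f}\sum_{j=0}^{M} t^{j}\,a_{2j}(x;r) + O\big(t^{M+1-\f}\big),
\]
with $a_{2j}(x;r)=(4\pi)^{\f}\,\frac{1}{2\pi i}\int_{\r^n}\!\int_\Gamma q_{-2-2j}(x,\xi,\lambda;r)\,e^{-\lambda}\,d\lambda\,d\xi$; the odd-order symbols drop out because they are odd in $\xi$, so only the even coefficients $a_{2j}$ appear. Integrating the diagonal over $(M,\dvol_g)$ yields \eqref{heat-kernel-as}. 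Polynomiality in the jets of the symbol of $\H(r;g)$ is then immediate: each $q_{-2-j}$ is obtained from $q_{-2}$ by finitely many algebraic operations and $x$-, $\xi$-differentiations involving the symbol of $\H(r)$, and the $\xi$- and $\lambda$-integrations leave only universal numerical constants times polynomials in the Taylor coefficients (in $x$) of that symbol.

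For the last assertion I would observe that the whole construction is stable under $\partial_r$: if $p\in S^m$ then $\partial_r p\in S^m$ by the estimates (iii) of Definition \ref{symbol}, so $\partial_r^k q_{-2-j}\in S^{-2-j}$, and the parametrix, the remainder bounds, and hence the asymptotic expansion of $\Tr(e^{t\H(r)})$ may all be differentiated term by term in $r$. In particular $\Tr(e^{t\H(r)})$ is smooth in $r$ and its expansion is differentiable with respect to $r$ term by term. Equivalently, one may simply invoke Theorem 3.3 of \cite{BO-index} for the family $\{-\H(r)\}$, the variable $r$ being carried along as a symbol parameter throughout its proof.

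The main obstacle, to the extent there is one, is purely a matter of uniformity: arranging the resolvent estimates and the parametrix remainder bounds so that they hold uniformly over the compact interval $r\in[-\varepsilon,\varepsilon]$ and survive differentiation in $r$. Since $\H(r;g)$ depends smoothly on $r$ and $[-\varepsilon,\varepsilon]$ is compact, this is bookkeeping within the parameter-dependent calculus of Definition \ref{symbol} rather than a genuine analytic difficulty; the construction of the symbols $q_{-2-j}$ and the algebra of the composition formula are exactly as in the classical ($r$-independent) case.
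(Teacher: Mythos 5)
Your proposal is correct and follows essentially the same route as the paper: the paper's proof simply invokes Theorem 3.3 of \cite{BO-index} (the parameter-uniform refinement of Gilkey's parametrix construction) and then recalls exactly the algorithm you describe — the recursively defined homogeneous symbols $r_{-2-j}$, the contour-integral representation, Lemma \ref{homogen} to extract the powers of $t$, and the vanishing of the odd-order contributions. Your added remarks on uniformity and $\partial_r$-stability of the symbol classes are precisely the content that \cite{BO-index} supplies, so nothing is missing.
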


In order to simplify the notation, we suppressed the dependence of
$K(x,y,t;r)$ and $a_{2j}(x;r)$ on the metric. We shall use this
convention throughout.

\begin{proof} It suffices to apply the general results of \cite{BO-index}.
For more details we refer to \cite{BO-index}. The proof of Theorem
3.3 in \cite{BO-index} is a refined version of Gilkey's proof in
\cite{G-book} which establishes the uniformity of the arguments in
an external parameter. Here we only recall the main ideas and the
resulting algorithm for the calculation of the heat kernel
coefficients. The operator $e^{t\H(r)}$ is an operator with smooth
kernel. One can approximate that kernel arbitrarily well by the
smooth kernel of a pseudo-differential operator so that the
asymptotic expansions of the restriction of both kernels to the
diagonal have the same coefficients. We choose local coordinates and
write the symbol of the operator $-\H(r)$ as
$$
\sigma(-\H(r)) = p_2(r) + p_1(r) + p_0(r)
$$
with homogeneous $p_j(r)$. Then we define
$$
r_{-2}(x,\xi,\lambda;r) = (p_2(x,\xi;r)-\lambda)^{-1} =
(\H(r)(x,\xi) - \lambda)^{-1}
$$
and
$$
r_{-2-j} = - r_{-2} \left( \sum_{a > -2-j \atop a+b-|\alpha|=-j}
\frac{1}{\alpha!} \partial_\xi^\alpha (p_b) D_x^\alpha (r_a) \right)
$$
for $j \ge 1$. Here $D_x = i^{-1} \partial_x$. Then $r_{-2-j} \in
S^{-2-j}$ is homogeneous of order $-2-j$. The definitions have the
consequence that the pseudo-differential operator $\R_N(\lambda;r)$
with symbol $r_{-2}(\lambda;r) + \cdots + r_{-2-N}(\lambda;r)$
satisfies
$$
\sigma((-\H(r)-\lambda) \R_N(\lambda;r) - 1) \in S^{-N-1}.
$$
The operators $\R_N(\lambda;r)$ serve as approximations of the
resolvent $\R(\lambda;r)$ of $-\H(r)$. The operators
$$
E_N(t;r) = \frac{1}{2\pi i} \int_\Gamma \R_N(\lambda;r) e^{-\lambda
t} d\lambda
$$
have smooth kernels $K_N(x,y,t;r)$ in the sense that
$$
E_N(t;r)(u)(x) = \int K_N(x,y,t;r) u(y) dy
$$
and their restriction to the diagonal are given by
$$
K_N(x,x,t;r) = (2\pi)^{-n} \frac{1}{2\pi i} \int_{\r^n} \int_\Gamma
(r_{-2} + \cdots + r_{-2-N})(x,\xi,\lambda;r) e^{-\lambda t}
d\lambda d\xi.
$$
Lemma \ref{homogen} shows that
$$
K_N(x,x,t;r) = \sum_{j=0}^N t^{-\frac{n-j}{2}} a_j(x;r) \sqrt{\det
g}
$$
with
\begin{equation}\label{algo}
a_j(x;r) \sqrt{\det g} = (2\pi)^{-n} \frac{1}{2\pi i} \int_{\r^n}
\int_\Gamma r_{-2-j}(x,\xi,\lambda;r) e^{-\lambda} d\lambda d\xi.
\end{equation}
For odd $j$, the integrals in \eqref{algo} vanish. This implies the
existence of the desired asymptotic expansion. \end{proof}

The latter proof also yields an algorithm for the calculation of the
heat kernel coefficients.

\begin{corr}\label{lead} The coefficients $a_{2j}(x;r)$ in
\eqref{heat-kernel-as} are given by the formula
\begin{equation}\label{heat-algo}
\pi^\f a_{2j}(x;r) \sqrt{\det g} = \frac{1}{2\pi i} \int_{\r^n}
\int_\Gamma r_{-2-2j}(x,\xi,\lambda;r) e^{-\lambda} d\lambda d\xi.
\end{equation}
\end{corr}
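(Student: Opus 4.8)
The plan is to read off Corollary \ref{lead} directly from the construction carried out in the proof of Theorem \ref{hk-expansion}; the only analytic input needed — that the diagonal asymptotics of the pseudodifferential approximations agree with those of the genuine heat kernel — has already been invoked there (Theorem 3.3 of \cite{BO-index}). Indeed, the substantive work is done: the approximations $E_N(t;r)$ to $e^{t\H(r)}$ were built with symbols $r_{-2}(\lambda;r) + \cdots + r_{-2-N}(\lambda;r)$, their restriction to the diagonal was computed via Lemma \ref{homogen} as
$$
K_N(x,x,t;r) = \sum_{j=0}^N t^{-\frac{n-j}{2}}\, a_j(x;r)\sqrt{\det g}, \qquad a_j(x;r)\sqrt{\det g} = (2\pi)^{-n}\,\frac{1}{2\pi i}\int_{\r^n}\int_\Gamma r_{-2-j}\,e^{-\lambda}\,d\lambda\,d\xi ,
$$
and it was noted that these integrals vanish for odd $j$. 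What remains is purely a matter of re-indexing and matching normalizations.

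First I would replace $j$ by $2j$ and drop the vanishing odd terms, obtaining $K_N(x,x,t;r) = \sum_{2j\le N} t^{j-\f}\, a_{2j}(x;r)\sqrt{\det g}$, where the bare coefficient density $a_{2j}(x;r)\sqrt{\det g}$ is the displayed integral with $r_{-2-j}$ replaced by $r_{-2-2j}$. Next I would compare with the asymptotic expansion \eqref{kernel}. Here one must keep two conventions in mind: the pseudodifferential kernel $K_N$ is taken with respect to Lebesgue measure in the chosen coordinates, so it approximates the diagonal of the kernel of $e^{t\H(r)}$ with respect to $dx$, which equals $K(x,x,t;r)\sqrt{\det g}$; and the coefficients $a_{2j}(x;r)$ in \eqref{kernel}--\eqref{heat-kernel-as} are, by the convention announced in the introduction, normalized so as to absorb the prefactor, i.e. $K(x,x,t;r) \sim (4\pi t)^{-\f}\sum_j t^j a_{2j}(x;r) = (4\pi)^{-\f}\sum_j t^{j-\f} a_{2j}(x;r)$. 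Multiplying the last expansion by $\sqrt{\det g}$ and matching the coefficients of $t^{j-\f}$ gives
$$
(4\pi)^{-\f}\,a_{2j}(x;r)\sqrt{\det g} = (2\pi)^{-n}\,\frac{1}{2\pi i}\int_{\r^n}\int_\Gamma r_{-2-2j}(x,\xi,\lambda;r)\,e^{-\lambda}\,d\lambda\,d\xi ,
$$
and, since $(4\pi)^\f(2\pi)^{-n} = \pi^{-\f}$, this is precisely \eqref{heat-algo}.

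There is no genuine obstacle here: Corollary \ref{lead} is an extraction of formula \eqref{algo} with the bookkeeping of constants made explicit. If anything deserves care, it is exactly that bookkeeping — the factor $(2\pi)^{-n}$ from the inverse Fourier transform in the symbol calculus, the density $\sqrt{\det g}$ relating Lebesgue and Riemannian measure, and the $(4\pi)^\f$ built into the conventions \eqref{kernel}--\eqref{heat-kernel-as}. Everything else — the existence of the expansion, the polynomial dependence of the coefficients on the jets of the symbol of $\H(r;g)$, and the agreement of the diagonal asymptotics of $K_N$ with those of the true heat kernel — is inherited verbatim from the proof of Theorem \ref{hk-expansion}.
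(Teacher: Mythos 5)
Your proposal is correct and follows exactly the route the paper takes: Corollary \ref{lead} is read off from formula \eqref{algo} in the proof of Theorem \ref{hk-expansion}, dropping the vanishing odd terms and matching the normalizations ($\sqrt{\det g}$ from the change of measure, $(2\pi)^{-n}$ from the symbol calculus, and $(4\pi)^{\f}$ from the convention in \eqref{kernel}, with $(4\pi)^{\f}(2\pi)^{-n}=\pi^{-\f}$). Your bookkeeping of these constants is accurate, so nothing is missing.
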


Now the power series expansions
\begin{equation*}
a_{2j}(x;r) \sim \sum_{k \ge 0} a_{(2j,2k)} r^{2k}
\end{equation*}
define scalar Riemannian invariants $a_{(2j,2k)} \in C^\infty(M)$.

Finally, we note that an easy calculation in polar coordinates for
$\r^n$ shows that \eqref{heat-algo} can be rewritten in the form
\begin{align*}
\pi^\f a_{2j}(x;r) \sqrt{\det g} & = \frac{1}{2\pi i} \int_{S^{n-1}}
\int_0^\infty t^{n-1-2j} \left( \int_\Gamma
r_{-2-2j} (x,\xi,t^{-2} \lambda;r) e^{-\lambda} d\lambda \right) dt d\sigma(\xi) \\
& = \frac{1}{2\pi i} \Gamma\left(\f\!-\!j\right) \frac{1}{2}
\int_{S^{n-1}} \left(\int_\Gamma r_{-2-2j}(x,\xi,\lambda;r)
\lambda^{j-\f} d\lambda \right) d\sigma(\xi) \\
& = \Gamma\left(\f\!-\!j\right) \frac{1}{2} \int_{S^{n-1}}
r_{-n}^{(2j)}(x,\xi;r) d\sigma(\xi),
\end{align*}
where
$$
r_{-n}^{(2j)}(x,\xi;r) \st \frac{1}{2\pi i} \int_\Gamma
r_{-2-2j}(x,\xi,\lambda;r) \lambda^{j-\f} d\lambda
$$
is homogeneous of order $-n$ in $\xi$. These formulas are equivalent
to the formulas for the residues of the spectral zeta-function at
$s=\f-j$ (see \cite{shubin}, Theorem 13.1).

\section{The double-commutator term}\label{dc-term}

In the present section, we discuss the influence of the term
\begin{equation}\label{CT}
-rt \frac{1}{2} \Tr \left((\varphi \dot{\H}(r) + \dot{\H}(r)\varphi)
e^{t\H(r)} \right)
\end{equation}
in \eqref{base-heat} on the leading and sub-leading heat kernel
coefficients of $\H(r)$. By the cyclicity of the trace, the term
\eqref{CT} equals
\begin{equation}\label{CT2}
- rt \frac{1}{2} \Tr \left( \varphi (\dot{\H}(r) e^{t \H(r)} + e^{t
\H(r)} \dot{\H}(r)) \right).
\end{equation}
The further evaluation of this terms rest on the following result.
Let $\R(r;\lambda)$ be the resolvent of $-\H(r)$.\footnote{In the
following, we shall suppress the dependence of the resolvent on
$r$.}

\begin{lemm}\label{double-key} We have
\begin{multline}\label{key-form}
(\partial/\partial r)(e^{t\H(r)}) - t \frac{1}{2} \left(\dot{\H}(r)
e^{t\H(r)} + e^{t\H(r)} \dot{\H}(r)\right) \\ = - \frac{1}{2}
\frac{1}{2\pi i}\int_\Gamma [\R(\lambda),[\R(\lambda),\dot{\H}(r)]]
e^{-t\lambda} d\lambda.
\end{multline}
\end{lemm}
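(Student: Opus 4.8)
The plan is to express both sides of \eqref{key-form} via the holomorphic functional calculus and reduce the identity to an algebraic manipulation of resolvents inside the contour integral. Recall that
$$
e^{t\H(r)} = \frac{1}{2\pi i} \int_\Gamma \R(\lambda) e^{-t\lambda}\, d\lambda,
$$
where $\R(\lambda) = (-\H(r)-\lambda)^{-1}$, so that $\H(r) = -\lambda - \R(\lambda)^{-1}$ on the range of $\R(\lambda)$. First I would differentiate this representation with respect to $r$. Since $\Gamma$ can be chosen independent of $r$ (it bounds a cone containing the spectrum of $-\H(r)$ uniformly for $|r|\le\varepsilon$), only the integrand depends on $r$, and the standard resolvent identity $(\partial/\partial r)\R(\lambda) = \R(\lambda)\dot{\H}(r)\R(\lambda)$ gives
$$
(\partial/\partial r)(e^{t\H(r)}) = \frac{1}{2\pi i}\int_\Gamma \R(\lambda)\dot{\H}(r)\R(\lambda)\, e^{-t\lambda}\, d\lambda.
$$

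Next I would rewrite the symmetrized term. Using $\R(\lambda)\dot{\H}(r)\R(\lambda) = \dot{\H}(r)\R(\lambda)^2 + [\R(\lambda),\dot{\H}(r)]\R(\lambda)$ and, symmetrically, $\R(\lambda)\dot{\H}(r)\R(\lambda) = \R(\lambda)^2\dot{\H}(r) - \R(\lambda)[\R(\lambda),\dot{\H}(r)]$, averaging the two yields
$$
\R(\lambda)\dot{\H}(r)\R(\lambda) = \tfrac12\bigl(\dot{\H}(r)\R(\lambda)^2 + \R(\lambda)^2\dot{\H}(r)\bigr) + \tfrac12\bigl([\R(\lambda),\dot{\H}(r)]\R(\lambda) - \R(\lambda)[\R(\lambda),\dot{\H}(r)]\bigr),
$$
and the last bracket is precisely $-\tfrac12[\R(\lambda),[\R(\lambda),\dot{\H}(r)]]$. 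Now observe that $\R(\lambda)^2 = (\partial/\partial\lambda)\R(\lambda)$, so integration by parts in $\lambda$ along the closed contour $\Gamma$ gives
$$
\frac{1}{2\pi i}\int_\Gamma \R(\lambda)^2 e^{-t\lambda}\, d\lambda = t\,\frac{1}{2\pi i}\int_\Gamma \R(\lambda) e^{-t\lambda}\, d\lambda = t\, e^{t\H(r)};
$$
the boundary contribution vanishes because $\Gamma$ is closed (a contour around the spectral ray) and the integrand decays. Substituting this into the averaged expression converts the symmetric part into $\tfrac{t}{2}\bigl(\dot{\H}(r)e^{t\H(r)} + e^{t\H(r)}\dot{\H}(r)\bigr)$, and moving it to the left-hand side leaves exactly the double-commutator integral on the right. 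This completes the identity.

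The main obstacle is analytic rather than algebraic: one must justify that all the contour integrals converge in an appropriate operator topology (trace class or at least bounded-operator norm after composing with a sufficiently negative power of $\H(r)$), that differentiation under the integral sign in $r$ is legitimate, and that the integration by parts producing the boundary-term cancellation is valid on the unbounded contour $\Gamma$. These points are handled by the estimates in Definition \ref{symbol} and the parameter-dependent pseudodifferential calculus of \cite{BO-index} recalled in Section \ref{AE}: the symbol bounds give the decay of $\R(\lambda)$ and its derivatives in $\lambda$ needed for absolute convergence and for discarding boundary terms, and they also legitimize differentiation in $r$ since $\dot{\H}(r)$ is again a differential operator whose order is controlled. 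With those standard facts in hand, the formal manipulation above is rigorous.
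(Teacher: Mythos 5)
Your argument is correct and is essentially the paper's proof: both differentiate the contour-integral representation of $e^{t\H(r)}$ using $\dot{\R}(\lambda)=\R(\lambda)\dot{\H}(r)\R(\lambda)$, convert the symmetrized term via $\partial_\lambda \R(\lambda)=\R(\lambda)^2$ and integration by parts in $\lambda$, and recombine into the double commutator. The only cosmetic slip is calling $\Gamma$ a closed contour — it is the unbounded boundary of the cone, and the boundary terms vanish because of the decay of $e^{-t\lambda}$ and of the resolvent along $\Gamma$, which you do invoke.
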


\begin{proof} Differentiating the relation
$$
(-\H(r)\!-\!\lambda) \circ \R(\lambda) = 1
$$
with respect to $r$, yields
$$
-\dot{\H}(r) \circ \R(\lambda) + (-\H(r)\!-\!\lambda) \circ
\dot{\R}(\lambda) = 0,
$$
i.e.,
$$
\dot{\R}(\lambda) = \R(\lambda) \circ \dot{\H}(r) \circ \R(\lambda).
$$
Hence the definition
$$
e^{t\H(r)} = \frac{1}{2 \pi i} \int_\Gamma e^{- t\lambda}
\R(\lambda) d\lambda
$$
implies
$$
(\partial/\partial r)(e^{t\H(r)}) = \frac{1}{2\pi i} \int_\Gamma
e^{-t \lambda} (\partial/\partial r)(\R(\lambda)) d\lambda =
\frac{1}{2\pi i} \int_\Gamma e^{-t \lambda} \R(\lambda) \dot{\H}(r)
\R(\lambda) d \lambda.
$$
On the other hand, by partial integration, we find that
$$
t \left(\dot{\H}(r) e^{t\H(r)} + e^{t\H(r)} \dot{\H}(r)\right)
$$
equals
\begin{align*}
& - \frac{1}{2\pi i} \int_\Gamma (\partial /\partial \lambda)
(e^{-t\lambda}) (\dot{\H}(r) \R(\lambda) + \R(\lambda) \dot{\H}(r)) d\lambda \\
& = \frac{1}{2\pi i} \int_\Gamma e^{-t\lambda} \left(\dot{\H}(r)
\partial \R(\lambda)/ \partial \lambda +
\partial \R(\lambda)/\partial \lambda \dot{\H}(r) \right) d\lambda \\
& = \frac{1}{2\pi i} \int_\Gamma e^{-t\lambda} (\dot{\H}(r)
\R^2(\lambda) + \R^2(\lambda)\dot{\H}(r)) d\lambda.
\end{align*}
Thus, the left-hand side of \eqref{key-form} equals
\begin{align*}
& \frac{1}{2\pi i} \int_\Gamma e^{-t \lambda} \left(\R(\lambda)
\dot{\H}(r) \R(\lambda) - \frac{1}{2} \left(\dot{\H}(r)
\R^2(\lambda) + \R^2(\lambda) \dot{\H}(r) \right) \right) d\lambda \\
& = - \frac{1}{2} \frac{1}{2\pi i} \int_\Gamma e^{-t \lambda}
[\R(\lambda),[\R(\lambda),\dot{\H}(r)]] d\lambda.
\end{align*}
The proof is complete. \end{proof}

Lemma \ref{double-key} is an identity of operators with smooth
kernels. By taking traces, it implies the relation
\begin{multline}\label{base-red}
-t\frac{1}{2} \Tr \left(\varphi (\dot{\H}(r) e^{t\H(r)} + e^{t\H(r)}
\dot{\H}(r)) \right) = -(\partial/\partial r)(\Tr(\varphi
e^{t\H(r)})) - \frac{1}{2} \Tr (\varphi \C(t;r)),
\end{multline}
where
\begin{equation}\label{dc}
\C(t;r) = \frac{1}{2\pi i} \int_\Gamma
[\R(\lambda),[\R(\lambda),\dot{\H}(r)]] e^{-t\lambda} d\lambda.
\end{equation}
$\C(t;r)$ is an operator with smooth kernel. Let $\C(x,y,t;r)$ be
its kernel. Note that \eqref{base-red} for $\varphi = 1$ and the
cyclicity of the trace show that
$$
\int_M \C(x,x,t;r) \dvol_g = 0,
$$
i.e., the function $x \mapsto \C(x,x,t;r)$ is a total divergence
(see also Remark \ref{div-quick}).

Now we combine the variational formula in Theorem \ref{conform-heat}
with \eqref{base-red} and find
\begin{multline*}
(\Tr(\exp (t \H(r;g))))^\bullet[\varphi] = -2t(\partial/\partial t)
(\Tr(\varphi e^{t\H(r;g)})) \\ -r(\partial/\partial r) (\Tr(\varphi
e^{t\H(r;g)})) - r \frac{1}{2} \int_M \varphi(x) \C(x,x,t;r)\dvol_g.
\end{multline*}
By comparing coefficients in the small $t$ expansion, we obtain the
conformal variational formulas
\begin{equation}\label{conf-hc}
\left(\int_M a_{2j}(x;r) \dvol \right)^\bullet[\varphi] =
(n\!-\!2j\!-\!r \partial/\partial r) \int_M \varphi a_{2j}(x;r)
\dvol -r \frac{1}{2} \int_M \varphi c_{2j}(x;r) \dvol,
\end{equation}
where
\begin{equation}\label{corr-as}
\C(x,x,t;r) \sim (4\pi t)^{-\f} \sum_{j \ge 1} t^j c_{2j}(x;r), \; t
\to 0
\end{equation}
is the asymptotic expansion of the restriction of the kernel $\C$ to
the diagonal. By the above arguments, the functions $c_{2j}(x;r)$
are total divergencies on $M$. Thus, by expansion in $r$, we have
proved the following result. Let
\begin{equation}\label{taylor-c}
c_{2j}(r) \sim \sum_{k \ge 1} c_{(2j,2k)} r^{2k-1}, \; c_{(2j,2k)}
\in C^\infty(M).
\end{equation}

\begin{prop}\label{crit-van} For closed manifolds $M$ of even dimension $n$
and $j=0,\dots,n/2$, the conformal variation
$$
\left(\int_M a_{(2j,n-2j)} \dvol \right)^\bullet[\varphi]
$$
vanishes at $g$ iff $c_{(2j,n-2j)}=0$ at $g$.
\end{prop}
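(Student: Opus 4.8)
The plan is to extract the Taylor coefficient of $r^{n-2j}$ from both sides of the conformal variational formula \eqref{conf-hc}. Recall that \eqref{conf-hc} arises (via Lemma~\ref{double-key} and Theorem~\ref{conform-heat}) as an identity of asymptotic power series in $r$ — valid up to order $r^{n-2}$ for general metrics in even dimension $n$, and to all orders for locally conformally flat or conformally Einstein metrics. By the last sentence of Theorem~\ref{hk-expansion} the heat kernel expansion may be differentiated in $r$ term by term, so I may substitute the expansions \eqref{r-expansion-a} and \eqref{taylor-c} into \eqref{conf-hc} and compare coefficients of equal powers of $r$.

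First I would handle the term $(n\!-\!2j\!-\!r\partial/\partial r)\int_M \varphi\, a_{2j}(x;r)\dvol$. Since this equals $\sum_{k\ge 0}(n\!-\!2j\!-\!2k)\big(\int_M \varphi\, a_{(2j,2k)}\dvol\big) r^{2k}$, the scalar factor $n\!-\!2j\!-\!2k$ vanishes precisely at the critical order $2k = n\!-\!2j$, so this entire term contributes nothing to the coefficient of $r^{n-2j}$. On the other hand, by the degree shift built into \eqref{taylor-c} one has $r\, c_{2j}(r) \sim \sum_{k\ge 1} c_{(2j,2k)}\, r^{2k}$, so the term $-\tfrac{1}{2} r \int_M \varphi\, c_{2j}(x;r)\dvol$ contributes $-\tfrac{1}{2}\int_M \varphi\, c_{(2j,n-2j)}\dvol$ to the coefficient of $r^{n-2j}$ (taking $k = n/2 - j$). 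Comparing with the left-hand side of \eqref{conf-hc} gives the identity
\[
\Big(\int_M a_{(2j,n-2j)}(g)\,\dvol\Big)^\bullet[\varphi] \;=\; -\frac{1}{2}\int_M \varphi\, c_{(2j,n-2j)}(g)\,\dvol ,
\]
valid for all $\varphi\in C^\infty(M)$. The right-hand side is a linear functional of $\varphi$; it vanishes identically if and only if the smooth function $c_{(2j,n-2j)}$ vanishes pointwise on $M$, which is exactly the asserted equivalence. (Consistently with \eqref{base-red} at $\varphi = 1$, the function $c_{(2j,n-2j)}$ is a total divergence, so its integral vanishes in any case; the content of the proposition is the pointwise statement.)

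Two endpoints require a word. For $j = n/2$ the critical power is $r^0$, and since $c_n(r) = O(r)$ the symbol ``$c_{(n,0)}$'' is read as zero; the identity above then reduces to the unconditional conformal invariance of the conformal index $\int_M a_n\dvol$, the overall factor $r$ in \eqref{base-heat} already suppressing the $r=0$ contribution of \eqref{C-term}. For $j=0$ the critical power is $r^n$; here one uses that the double commutator in \eqref{C-term} lowers the order of the symbol to $-4$, so that the diagonal expansion \eqref{corr-as} of $\C$ begins only at order $t^1$ (cf.\ the case $m=-4$ discussed after Lemma~\ref{homogen}), i.e.\ $c_0\equiv 0$, and the statement reduces to the conformal invariance of $\int_M v_n\dvol$; for general metrics in even dimension this last case carries the usual locally conformally flat or conformally Einstein proviso under which \eqref{conf-hc} holds to all orders in $r$.

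I do not expect a genuine analytic obstacle: the result is essentially the critical Taylor coefficient of an identity established upstream. The only care needed is bookkeeping — tracking the degree shift of \eqref{taylor-c} and invoking the term-by-term differentiability in Theorem~\ref{hk-expansion} to license the coefficientwise comparison. The one conceptual point, already in hand, is the exact cancellation at the critical order between the $(n\!-\!2j)$ and $r\partial/\partial r$ pieces, which leaves the double-commutator correction $c_{(2j,n-2j)}$ as the sole obstruction to conformal invariance of $\int_M a_{(2j,n-2j)}\dvol$.
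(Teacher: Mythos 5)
Your argument is correct and is essentially the paper's own proof: Proposition \ref{crit-van} is obtained there precisely by expanding \eqref{conf-hc} in $r$, where at the critical order $2k=n-2j$ the scaling factor $n-2j-2k$ kills the first term and leaves $-\tfrac{1}{2}\int_M \varphi\, c_{(2j,n-2j)}\dvol$ as the only contribution, so that vanishing of the variation for all $\varphi$ is equivalent to the pointwise vanishing of $c_{(2j,n-2j)}$. Your treatment of the endpoints $j=n/2$ (the expansion \eqref{taylor-c} starts at $r^1$, so the conformal index case is unconditional) and $j=0$ (using $c_0=0$, with the usual proviso on the order $r^n$ for general metrics in even dimension) matches the surrounding discussion in the paper.
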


Since $\C$ vanishes for Einstein metrics, the latter result proves
the second part of Proposition \ref{critical-even}.

Moreover, a further set of metrics with vanishing $\C$ is provided
by metrics of the form $g = g_1 + g_2$ on product spaces $M_1^p
\times M_2^q$ with Einstein metrics $g_i$ on the factors so that
$$
\frac{\scal(g_1)}{p(p\!-\!1)} = - \frac{\scal(g_2)}{q(q\!-\!1)}.
$$
In fact, the latter vanishing result is a consequence of
\eqref{hol-spec}.

We also emphasize that the expansion \eqref{corr-as} does not
contain a contribution by $t^{-\f}$. In order to prove the claim, we
approximate the resolvent $\R(\lambda)$ by a pseudo-differential
operator (as in Section \ref{AE}). The coefficients of the expansion
are given in terms of the symbol of the resulting
pseudo-differential integrand. Since the double-commutator integrand
yields a pseudo-differential operator of order $-4$, the vanishing
of $c_0$ follows from Lemma \ref{homogen}.

The proof of Theorem \ref{A} requires to find the divergence term
$c_2(x;r)$. It is given in terms of the principal symbol of the
(approximation of the) double-commutator
\begin{equation}\label{dcc}
[\R(\lambda),[\R(\lambda),\dot{\H}(r)]].
\end{equation}
More precisely, we have
\begin{equation}\label{c2}
c_2(x;r) = \pi^{-\f} \frac{1}{2\pi i} \int_{\r^n} \left( \int_\Gamma
\sigma_{-4}([\R(\lambda),[\R(\lambda),\dot{\H}(r)]])(x,\xi)
e^{-\lambda} d\lambda \right) d\xi / \sqrt{\det g}.
\end{equation}
Here we abuse notation by identifying the resolvent with its
pseudo-differential approximations.

Now we recall (see \cite{G-book} or \cite{shubin}) that the
principal symbol of the commutator $[A,B]$ of two classical
pseudo-differential operators $A$ and $B$ with respective principal
symbols $\sigma_A$ and $\sigma_B$ is given by
$$
i \left\{ \sigma_A,\sigma_B \right\}.
$$
Here for $f,g \in C^\infty(T^*M)$, their Poisson bracket $\{f,g\}$
is given in local coordinates by
\begin{equation}\label{Poisson}
\{ f,g \} = \sum_i \frac{\partial f}{\partial x_i} \frac{\partial
g}{\partial \xi_i} - \frac{\partial f}{\partial \xi_i}
\frac{\partial g}{\partial x_i}.
\end{equation}

It follows that the principal symbol of the double-commutator
\eqref{dcc} is given by the {\em negative} of the double Poisson
bracket of the principal symbols of the operators involved. For its
calculation we apply the following result.

\begin{lemm}\label{DPB} For functions $G,H \in C^\infty(T^*M)$
so that $G$ does not vanish on $T^*M$, we have
$$
\left\{ 1/G, \left\{ 1/G, H \right\} \right\} = G^{-4}
\left\{G,\left\{G,H\right\}\right\}.
$$
\end{lemm}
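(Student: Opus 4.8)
The plan is to prove the identity $\{1/G,\{1/G,H\}\} = G^{-4}\{G,\{G,H\}\}$ by a direct computation with the Leibniz rule for the Poisson bracket, exploiting the fact that $\{1/G,\cdot\}$ can be rewritten in terms of $\{G,\cdot\}$ through the chain rule $\{1/G,F\} = -G^{-2}\{G,F\}$ (valid wherever $G\neq 0$). First I would establish this elementary identity: writing $1/G$ as a function of $G$, one has $\partial(1/G)/\partial x_i = -G^{-2}\partial G/\partial x_i$ and similarly for $\partial/\partial\xi_i$, so \eqref{Poisson} gives $\{1/G,F\} = -G^{-2}\{G,F\}$ for any $F\in C^\infty(T^*M)$.

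Next I would apply this twice. Using the first-level identity with $F = \{1/G,H\} = -G^{-2}\{G,H\}$, we get
\begin{equation*}
\{1/G,\{1/G,H\}\} = -G^{-2}\{G,\,-G^{-2}\{G,H\}\} = G^{-2}\{G,\,G^{-2}\{G,H\}\}.
\end{equation*}
Now expand the inner bracket by the Leibniz rule $\{G,AB\} = A\{G,B\} + B\{G,A\}$ with $A = G^{-2}$ and $B = \{G,H\}$:
\begin{equation*}
\{G,\,G^{-2}\{G,H\}\} = G^{-2}\{G,\{G,H\}\} + \{G,H\}\{G,G^{-2}\}.
\end{equation*}
The key observation is that $\{G,G^{-2}\} = -2G^{-3}\{G,G\} = 0$, since the Poisson bracket is antisymmetric and hence $\{G,G\}=0$. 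Therefore the second term drops out and we are left with $\{G,\,G^{-2}\{G,H\}\} = G^{-2}\{G,\{G,H\}\}$, which upon multiplying by the outer $G^{-2}$ yields $G^{-4}\{G,\{G,H\}\}$, as claimed.

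There is really no serious obstacle here; the only point requiring a moment's care is the vanishing of $\{G,G^{-2}\}$, which is what makes the "cross term" from the Leibniz expansion disappear and produces the clean power $G^{-4}$ rather than a more complicated expression involving derivatives of $G$. Everything is local and algebraic, so the computation is coordinate-independent once one checks that the Poisson bracket \eqref{Poisson} and the chain/Leibniz rules are invariant under change of canonical coordinates on $T^*M$ (which is standard). The hypothesis $G\neq 0$ on $T^*M$ is used only to make sense of $1/G$ and its derivatives; in the application $G = \H(r)(x,\xi)$ is the principal symbol of $-\Delta_{g(r)}$, which is positive definite and hence nonvanishing away from the zero section, so the identity applies on the relevant domain.
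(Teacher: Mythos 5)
Your argument is correct: the chain-rule identity $\{1/G,F\}=-G^{-2}\{G,F\}$, applied twice, together with the Leibniz rule and the vanishing of $\{G,G^{-2}\}=-2G^{-3}\{G,G\}=0$, gives exactly $\{1/G,\{1/G,H\}\}=G^{-4}\{G,\{G,H\}\}$, and no hypotheses beyond $G\neq 0$ are needed. This is the ``direct calculation'' that the paper asserts but does not write out; what the paper actually displays is a different route, namely the exact operator identity $[P^{-1},[P^{-1},Q]]=P^{-2}[P,[P,Q]]P^{-2}$ for pseudo-differential operators $P,Q$ with principal symbols $G,H$, from which the Poisson-bracket identity follows by comparing principal symbols (using that the principal symbol of a commutator is $i$ times the Poisson bracket of the symbols). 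Your computation is more elementary and self-contained, and it proves the statement for arbitrary smooth $G,H$ with $G$ nonvanishing rather than only for the homogeneous polynomial symbols to which the symbol-calculus argument directly applies; the paper's operator argument, on the other hand, is what also yields the quick alternative proof of $\Tr\C(t;r)=0$ in Remark \ref{div-quick}, which is the bonus one gets from phrasing the identity at the operator level.
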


\begin{proof} The result follows by direct calculation. For homogeneous
polynomials $G$ and $H$ as in the assertion, the identity follows by
comparing principal symbols in the operator identity
$$
\left[P^{-1},\left[P^{-1},Q\right]\right] = P^{-2} [P,[P,Q]] P^{-2}
$$
for pseudo-differential operators $P$ and $Q$ with the respective
principal symbols $H$ and $G$.
\end{proof}

\begin{rem}\label{div-quick} The operator-argument in the proof of
Lemma \ref{DPB} also yields a quick alternative proof of the fact
that
$$
\Tr \C(t;r) = 0.
$$
Indeed, the relation
$$
[\R(\lambda),[\R(\lambda),\dot{\H}]] = \R(\lambda)^2
[-\H\!-\!\lambda,[-\H\!-\!\lambda,\dot{\H}]] \R(\lambda)^2 =
\R(\lambda)^2 [\H,[\H,\dot{\H}]] \R(\lambda)^2
$$
implies
$$
\Tr \C(t;r) = \frac{1}{2\pi i} \Tr \left( \int_\Gamma \R(\lambda)^4
[\H,[\H,\dot{\H}]] e^{-t\lambda} d\lambda \right).
$$
By Cauchy's theorem, this integral is a multiple of
$$
\Tr \left( [\H,[\H,\dot{\H}]] e^{t\H}\right) = 0.
$$
\end{rem}

Now we apply Lemma \ref{DPB} to the symbols $G =
\H(r)(x,\xi)-\lambda$ and $H=\dot{\H}(r)(x,\xi)$.

The principal symbol of the operator
$[\R(\lambda),[\R(\lambda),\dot{\H}(r)]]$ is given by the double
Poisson bracket
$$
\{(\H(r)(x,\xi)-\lambda)^{-1},\{(\H(r)(x,\xi)-\lambda)^{-1},
\dot{\H}(r)(x,\xi)\}\};
$$
recall that the operator $-\H(r)$ has principal symbol
$\H(r)(x,\xi)$. Lemma \ref{DPB} shows that this function equals
\begin{align*}
& (\H(r)(x,\xi)\!-\!\lambda)^{-4}
\{\H(r)(x,\xi)\!-\!\lambda,\{\H(r)(x,\xi)\!-\!\lambda, \dot{\H}(r)(x,\xi)\}\} \\
& = (\H(r)(x,\xi)\!-\!\lambda)^{-4} \{\H(r)(x,\xi),\{\H(r)(x,\xi),
\dot{\H}(r)(x,\xi)\}\}.
\end{align*}
Hence
$$
c_2(x) = \pi^{-\f} \frac{1}{2\pi i} \int_{\r^n} \left( \int_\Gamma
\frac{\{\H(r)(x,\xi),\{\H(r)(x,\xi),
\dot{\H}(r)(x,\xi)\}\}}{(\H(r)(x,\xi)\!-\!\lambda)^4} e^{-\lambda}
d\lambda \right) d\xi / \sqrt{\det g}.
$$
By Cauchy's formula, the latter equation is equivalent to
\begin{equation}\label{c2-inter}
c_2(x) \sqrt{\det g} = \pi^{-\f} \frac{1}{3!} \int_{\r^n}
\{\H(r)(x,\xi),\{\H(r)(x,\xi), \dot{\H}(r)(x,\xi)\}\}
e^{-\H(r)(x,\xi)} d\xi.
\end{equation}
The integral in \eqref{c2-inter} will be calculated in Section
\ref{proof-B}.

\section{Proof of Theorem \ref{conf-reno}}\label{proof-A}

In order to prove Theorem \ref{conf-reno}, we first determine the
leading heat kernel coefficient of the holographic Laplacian.

\begin{lemm}\label{top} The leading heat kernel coefficient
of $\H(r)$ satisfies
\begin{equation}\label{top-heat}
a_0 (r) = v(r).
\end{equation}
\end{lemm}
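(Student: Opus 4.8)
The plan is to read off $a_0(r)$ directly from the pseudo-differential algorithm recalled in Section \ref{AE}, exploiting that the leading heat kernel coefficient of a second-order elliptic operator depends only on its principal symbol. By \eqref{LT}, the principal symbol of $-\H(r;g)$ is $\H(r)(x,\xi) = \sum_{i,j} g(r,x)_{ij}^{-1}\xi_i\xi_j$, i.e.\ it coincides with the principal symbol of the Laplace--Beltrami operator $-\Delta_{g(r)}$; the first-order term $-(d\log v(r),d)_{g(r)}$ and the potential $\U(r;g)$ in \eqref{LT} affect only the subleading homogeneous parts $p_1(r)$, $p_0(r)$ of the symbol, hence only $r_{-3}, r_{-4},\dots$ in the recursion of Theorem \ref{hk-expansion}, and not $r_{-2}$.

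First I would apply Corollary \ref{lead} with $j=0$, which gives
$$
\pi^\f\, a_0(x;r)\sqrt{\det g} \;=\; \frac{1}{2\pi i}\int_{\r^n}\!\int_\Gamma r_{-2}(x,\xi,\lambda;r)\, e^{-\lambda}\, d\lambda\, d\xi,\qquad r_{-2}=\big(\H(r)(x,\xi)-\lambda\big)^{-1}.
$$
Next I would evaluate the $\lambda$-integral by Cauchy's theorem: for small $r$ and $\xi\neq 0$ one has $\H(r)(x,\xi)>0$, and with the orientation of $\Gamma$ fixed in Section \ref{AE} (so that $\Tr(e^{t\H(r)})$ is positive for small $t$) one obtains
$$
\frac{1}{2\pi i}\int_\Gamma \big(\H(r)(x,\xi)-\lambda\big)^{-1} e^{-\lambda}\, d\lambda \;=\; e^{-\H(r)(x,\xi)}.
$$
The remaining integral over $\xi\in\r^n$ is the standard Gaussian integral for the positive definite quadratic form with matrix $(g(r,x)^{ij})$, whose determinant is $1/\det g(r)$:
$$
\int_{\r^n} e^{-\sum_{i,j} g(r,x)_{ij}^{-1}\xi_i\xi_j}\, d\xi \;=\; \pi^\f\,\sqrt{\det g(r)}.
$$
Combining the three displays yields $\pi^\f a_0(x;r)\sqrt{\det g} = \pi^\f\sqrt{\det g(r)}$, i.e.\ $a_0(x;r) = \sqrt{\det g(r)}/\sqrt{\det g} = v(r)$ by \eqref{volume}.

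I do not expect a genuine obstacle here. The only points requiring care are (i) checking that the subleading parts of $\H(r;g)$ in \eqref{LT} really do not enter $r_{-2}$, which is immediate from the recursion for $r_{-2-j}$ in the proof of Theorem \ref{hk-expansion}, and (ii) the bookkeeping of measures, since the heat trace in \eqref{heat-expansion-H} is normalized by the \emph{fixed background} volume form $\dvol_g$ rather than by $\dvol_{g(r)}$; it is precisely this mismatch that produces the density factor $v(r)$, the intrinsic leading coefficient of $-\Delta_{g(r)}$ with respect to $g(r)$ being $1$. As remarked after \eqref{base-0}, the statement also follows from the general formula for the leading heat coefficient of an elliptic operator (Chapter II of \cite{shubin}, or \cite{DG}), but the computation above keeps the argument self-contained within the framework of Section \ref{AE}.
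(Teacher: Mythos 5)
Your proposal is correct and follows essentially the same route as the paper's proof: apply Corollary \ref{lead} for $j=0$ (where $r_{-2}=(\H(r)(x,\xi)-\lambda)^{-1}$ depends only on the principal symbol), evaluate the contour integral by Cauchy's formula to get $e^{-\H(r)(x,\xi)}$, and compute the Gaussian integral \eqref{gaussian} to obtain $\pi^{\f}\sqrt{\det g(r)}$, whence $a_0(r)=v(r)$. Your added remarks on why the lower-order terms of $\H(r;g)$ do not enter $r_{-2}$ and on the normalization by $\dvol_g$ are accurate but only make explicit what the paper's argument uses implicitly.
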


\begin{proof} By Corollary \ref{lead}, the leading coefficient in
\eqref{kernel} is given by
$$
\pi^{-\f} \int_{\r^n} \int_\Gamma (\H(r)(x,\xi)-\lambda)^{-1}
e^{-\lambda} d\lambda d\xi / \sqrt{\det{g}}.
$$
By Cauchy's formula, the latter integral equals
$$
\pi^{-\f} \int_{\r^n} e^{-\H(r)(x,\xi)} d \xi / \sqrt{\det{g}}.
$$
But for the Gaussian integral we obtain\footnote{see (22) on page 15
of \cite{zee}}
\begin{equation}\label{gaussian}
\int_{\r^n} e^{-\H(r)(x,\xi)} d\xi = 2^{-\f} (2\pi)^\f
(\sqrt{\det{g(r)^{-1}}})^{-1} = \pi^\f \sqrt{\det g(r)}.
\end{equation}
Thus, we finally arrive at
$$
a_0(r) = \sqrt{\det g(r)}/ \sqrt{\det g} = v(r).
$$
The proof is complete.
\end{proof}

\begin{rem} The coefficient
$$
(4\pi)^{-\f} \int_{M^n} v(r) \dvol_g
$$
of $t^{-\f}$ can be rewritten as the symplectic volume
$$
(2\pi)^{-n} \frac{1}{2} \Gamma \left(\f\right) \int_{\H(r)=1} dx
d\xi,
$$
of the hypersurface $\{(x,\xi) \in T^*M \,|\, \H(r)(x,\xi)=1\}$ in
$T^*M$. In fact, an easy calculation in local coordinates shows that
$$
2 \int_{\r^n} e^{-\H(r)(x,\xi)} d\xi = \Gamma \left(\f\right)
\int_{\H(r)(x,\xi) = 1} d\sigma(\xi).
$$
Comparison with \eqref{gaussian} yields
$$
\Gamma \left(\f\right) \int_{\H(r)(x,\xi)=1} d\sigma(\xi) = 2
\pi^{\f} \sqrt{\det (g(r))}.
$$
Hence
$$
\frac{1}{2} (2\pi)^{-n} \Gamma\left(\f\right) \int_{\H(r)(x,\xi)=1}
d\sigma(\xi) = (4\pi)^{-\f} \sqrt{\det (g(r))}
$$
and the assertion follows. This observation shows that Lemma
\ref{top} is a special case of the formula for the leading
coefficient in the asymptotic expansion of the trace of the heat
kernel for general elliptic self-adjoint operators given in
Corollary 2.2' of \cite{DG}.
\end{rem}

Now we complete the {\bf proof of Theorem \ref{conf-reno}}. By
\eqref{conf-hc} and $c_0 = 0$, we have
\begin{equation*}
\left(\int_M a_0(x;r) \dvol_g \right)^\bullet[\varphi] \\
= (n\!-\!r\partial/\partial r) \int_M \varphi a_0(x;r) \dvol_g.
\end{equation*}
Thus, by expansion in $r$, Lemma \ref{top} implies
\begin{equation*}
\left(\int_M v_{2k} \dvol_g \right)^\bullet[\varphi] \\
= (n\!-\!2k) \int_M \varphi v_{2k} \dvol_g.
\end{equation*}
This completes the proof. \hfill$\square$
\medskip

Some comments on the above proof of Theorem \ref{conf-reno} are in order.
In the mathematical literature, the variational formula \eqref{CF-CT}
first appeared in \cite{CF} (see (23)). The proof of Chang and Fang rests
on a description of the infinitesimal variation of the Fefferman-Graham
diffeomorphism of the bulk-space which is associated to a conformal change
of the metric on the boundary. Graham's proof in \cite{G-ext} may be
considered also as a version of that argument. The present proof
completely differs from these arguments. It replaces the consideration of
the diffeomorphisms by the conformal variational formula in Theorem
\ref{conform-H} which in turn is based on the identification of the
holographic Laplacian with the generating function of the building-block
operators. It seems difficult to prove Theorem \ref{conform-H} by using
the Fefferman-Graham diffeomorphism of the bulk-space.

\section{The Gaussian integral}\label{proof-B}

The present section is devoted to a proof of Theorem \ref{B}.

We start by fixing conventions concerning curvature and normal
coordinates. For a given metric $g$, let
$$
R(X,Y)(Z) = \nabla^g_X \nabla^g_Y (Z) - \nabla^g_Y \nabla^g_X (Z) -
\nabla^g_{[X,Y]}(Z)
$$
for vector fields $X,Y,Z \in \X(M)$ and set
$$
R(\partial_i,\partial_j)(\partial_k) = \sum_{l} R_{ijk}{}^l
\partial_l.
$$
for a local frame $\{\partial_i\}$ defined by local coordinates
$\left\{x_1,\dots,x_n\right\}$. Then in normal coordinates
$\left\{x_1, \dots, x_n\right\}$ for $g$ around $m$, we have the
Taylor expansion
\begin{equation}\label{normal}
g_{ij}(x) = g_{ij}(m) + \frac{1}{3} \sum_{r,s} x_r x_s R_{risj}(m) +
\cdots
\end{equation}
near $m$ with $g_{ij}(m) = \delta_{ij}$. In particular, all
first-order derivatives $\partial_k (g_{ij})$ vanish at $m$ and
\begin{equation}\label{normal-der}
\partial_a \partial_b (g_{ij})(m) = \frac{1}{3}(R_{aibj}(m) + R_{ajbi}(m)).
\end{equation}
For a proof of \eqref{normal} see \cite{G-GP}, Lemma 1.11.4 or
\cite{BGV}, Proposition 1.28. Finally, we define the Weyl tensor $W$
by
\begin{equation}\label{R-W}
R = W - (\Rho \owedge g).
\end{equation}
Here $a \owedge b$ denotes the Kulkarni-Nomizu product of the symmetric
bilinear forms $a$ and $b$ which is defined by
\begin{equation}\label{KN-product}
(a \owedge b)_{ijkl} \st a_{ik} b_{jl} - a_{jk} b_{il} + a_{jl}
b_{ik} - a_{il} b_{jk}.
\end{equation}

In the first step of the proof of Theorem \ref{B}, we make explicit
the double Poisson bracket defining the integrand. We use local
coordinates $\{x_i,\xi_i\}$. Then, using \eqref{Poisson}, the
integrand is given by the sum of
\begin{multline}\label{DP-1}
\sum_{a,b,c,d} \sum_{\alpha,\beta,\gamma,\delta}
\frac{\partial}{\partial x_b} \left(g(r)^{-1}_{cd}\right) \xi_c
\xi_d \Big[\frac{\partial}{\partial x_a} \left(g(r)^{-1}_{\alpha
\beta}\right) \dot{g}(r)^{-1}_{\gamma \delta}
\frac{\partial}{\partial \xi_b} \left(\xi_\alpha \xi_\beta
\frac{\partial}{\partial \xi_a}(\xi_\gamma \xi_\delta)\right) \\ -
g(r)^{-1}_{\alpha \beta} \frac{\partial}{\partial x_a}
\left(\dot{g}(r)^{-1}_{\gamma \delta}\right)
\frac{\partial}{\partial \xi_b} \left(\frac{\partial}{\partial
\xi_a} (\xi_\alpha \xi_\beta) \xi_\gamma \xi_\delta \right)\Big]
\end{multline}
and
\begin{multline}\label{DP-2}
-\sum_{a,b,c,d} \sum_{\alpha,\beta,\gamma,\delta} g(r)^{-1}_{cd}
\frac{\partial}{\partial \xi_b} (\xi_c \xi_d) \Big[
\frac{\partial}{\partial x_b} \left(\frac{\partial}{\partial x_a}
\left(g(r)^{-1}_{\alpha \beta}\right) \dot{g}(r)^{-1}_{\gamma
\delta} \right) \xi_\alpha \xi_\beta \frac{\partial}{\partial
\xi_a}(\xi_\gamma \xi_\delta) \\ - \frac{\partial}{\partial x_b}
\left( g(r)^{-1}_{\alpha \beta} \frac{\partial}{\partial x_a} \left(
\dot{g}(r)^{-1}_{\gamma \delta} \right)\right)
\frac{\partial}{\partial \xi_a} (\xi_\alpha \xi_\beta) \xi_\gamma
\xi_\delta \Big].
\end{multline}
Since the left-hand side of \eqref{e-value} does not depend on the
choice of coordinates, it suffices to prove that relation for any
fixed $m \in M$, sufficiently small $r \ge 0$ and normal coordinates
around $m$ for the metric $g(r)$ on $M$. Then the sum \eqref{DP-1}
vanishes and the sum \eqref{DP-2} simplifies to
\begin{multline}\label{integrand}
- \sum_{a,b,c,d} \sum_{\alpha,\beta,\gamma,\delta} \delta_{cd}
\xi_\alpha \xi_\beta \frac{\partial}{\partial \xi_b}(\xi_c \xi_d)
\frac{\partial}{\partial \xi_a}(\xi_\gamma \xi_\delta) \left[
\frac{\partial^2}{\partial x_b \partial x_a}
\left(g(r)^{-1}_{\alpha \beta}\right) \dot{g}(r)^{-1}_{\gamma \delta}\right] \\
+ \sum_{a,b,c,d} \sum_{\alpha,\beta,\gamma,\delta} \delta_{cd}
\delta_{\alpha \beta} \xi_\gamma \xi_\delta \frac{\partial}{\partial
\xi_b}(\xi_c \xi_d) \frac{\partial}{\partial \xi_a}(\xi_\alpha
\xi_\beta) \left[\frac{\partial^2}{\partial x_a \partial
x_b}(\dot{g}(r)^{-1}_{\gamma\delta})\right].
\end{multline}
The first sum can be expressed in terms of curvature. In fact, by
\eqref{normal-der} it equals
\begin{multline*}
- \sum_{a,b} \sum_{\alpha,\beta,\gamma} \xi_\alpha \xi_\beta
\frac{\partial}{\partial \xi_b}(|\xi|^2) \frac{\partial}{\partial
\xi_a}(\xi_\gamma \xi_\delta) \left[ \frac{\partial^2}{\partial x_a
\partial x_b} \left(g(r)^{-1}_{\alpha \beta} \right)
\dot{g}(r)^{-1}_{\gamma a} \right] \\
= - 4 \sum_{a,b} \sum_{\alpha,\beta,\gamma} \xi_\alpha \xi_\beta
\xi_\gamma \xi_b \frac{\partial^2}{\partial x_a \partial x_b}
\left( g(r)^{-1}_{\alpha \beta} \right) \dot{g}(r)^{-1}_{\gamma a} \\
= \frac{4}{3} \sum_{a,b} \sum_{\alpha,\beta,\gamma} \xi_\alpha
\xi_\beta \xi_\gamma \xi_b (R_{a \alpha b \beta} + R_{a \beta b
\alpha} ) \dot{g}(r)^{-1}_{\gamma a}.
\end{multline*}
Now after integration these terms sum up to $0$. In fact, by the
well-known formula\footnote{see (24) on page 15 of \cite{zee} or
(83) on page 259 of \cite{Fo}.}
\begin{equation}\label{quartic}
\int_{\r^n} (\xi_a \xi_b \xi_c \xi_d) e^{-|\xi|^2} d\xi \big/ \int_{\r^n}
e^{-|\xi|^2} d\xi = \frac{1}{4} (\delta_{ab} \delta_{cd} + \delta_{ac}
\delta_{bd} + \delta_{ad} \delta_{bc}),
\end{equation}
the integral is a multiple of
\begin{align*}
& \sum_{a,b} \sum_{\alpha,\beta,\gamma} (\delta_{\alpha \beta}
\delta_{\gamma b} + \delta_{\alpha \gamma} \delta_{\beta b} +
\delta_{\alpha b} \delta_{\beta \gamma}) (R_{a\alpha b \beta} + R_{a
\beta b \alpha}) \dot{g}(r)^{-1}_{\gamma a} \\
& = \sum_{a,\gamma} \sum_{\alpha} (R_{a \alpha \gamma \alpha} + R_{a
\alpha \gamma \alpha}) \dot{g}(r)^{-1}_{\gamma a} \\
& + \sum_{a,\gamma} \sum_{b} (R_{\alpha \gamma b b} + R_{a b b
\gamma}) \dot{g}(r)^{-1}_{\gamma a} \\
& + \sum_{a,\gamma} \sum_{b} (R_{a b b \gamma} + R_{a \gamma bb})
\dot{g}(r)^{-1}_{\gamma a} \\
& = 0.
\end{align*}

It remains to evaluate the terms which involve the second-order
derivatives of $\dot{g}(r)$. The following results will play a
central role in this evaluation.\footnote{In what follows, $\delta$
also denotes the divergence on symmetric bilinear forms.}

\begin{lemm}\label{d-div} We have
\begin{equation}\label{higher}
\delta_{g(r)}(\dot{g}(r)) =  d \tr_{g(r)}(\dot{g}(r))
\end{equation}
as an identity of one-parameter families of $1$-forms on $M$.
\end{lemm}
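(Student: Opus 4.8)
\emph{Proof strategy.} The plan is to recognise \eqref{higher} as one of the Einstein equations for the Poincar\'e--Einstein metric $g_+ = r^{-2}(dr^2 + g(r))$ that is used to define $\H(r;g)$ --- namely the momentum (contracted Gauss--Codazzi) constraint attached to the foliation of the bulk by the level sets $\{r = \mathrm{const}\}$. Since $\Ric(g_+) + n g_+$ vanishes (to all orders for odd $n$, to the order dictated by the usual conventions for even $n$, and honestly for conformally flat and conformally Einstein $g$), each component of this tensor vanishes, and Lemma \ref{d-div} is precisely the vanishing of its mixed component in the coordinate frame $(\partial_r,\partial_1,\dots,\partial_n)$.

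First I would write out that mixed component via the Gauss--Codazzi relations for the foliation by level sets of $r$. The unit normal is $r\partial_r$, and the level sets carry the induced metric $r^{-2}g(r)$, whose Levi-Civita connection coincides with that of $g(r)$ because the conformal factor $r^{-2}$ is constant along $M$. A short computation gives the second fundamental form $k_{ij} = \tfrac12\,(r\partial_r)\!\left(r^{-2}g(r)_{ij}\right) = \tfrac{1}{2r}\bigl(\dot g(r)_{ij} - 2r^{-1}g(r)_{ij}\bigr)$. The contracted Codazzi identity then identifies $\Ric(g_+)(r\partial_r,\partial_i)$ with the $g(r)$-divergence of the $(0,2)$-tensor $k - (\tr_{g(r)} k)\,g(r)$; since $g(r)$ is parallel for its own connection and the purely metric pieces of this tensor have coefficients constant along $M$, those pieces drop out under the divergence and one is left --- up to a nonzero scalar factor --- with $\nabla^j\dot g(r)_{ij} - \nabla_i\tr_{g(r)}\dot g(r)$. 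As $g_+(r\partial_r,\partial_i)=0$, imposing $\Ric(g_+) + n g_+ = 0$ forces this expression to vanish, which is exactly \eqref{higher}. Alternatively, one may quote the momentum constraint directly from the Fefferman--Graham formalism (\cite{FG-final}; see also \cite{G-vol}).

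Two checks round off the picture. At lowest order in $r$ one has $\dot g(r) = -2r\,\Rho + O(r^3)$, so \eqref{higher} reduces to $\delta_g\Rho = d\J$, i.e.\ to the contracted second Bianchi identity; this is a convenient sanity check, and both sides of \eqref{higher} vanish trivially at $r=0$. Moreover $\tr_{g(r)}\dot g(r) = \partial_r\log\det g(r) = 2\dot v(r)/v(r)$, so the right-hand side of \eqref{higher} equals $2\,d(\dot v/v)(r)$ --- the form in which the lemma enters the proof of Theorem \ref{B}. (For conformally flat $g$ one could instead verify \eqref{higher} by a direct computation from $g(r) = g - r^2\Rho + \tfrac{r^4}{4}\Rho^2$ using the vanishing of the Cotton tensor, but the Gauss--Codazzi argument is shorter and uniform in $g$.) The step I expect to be most delicate is the bookkeeping: keeping the warped-product structure, the constant conformal rescaling of the induced metric and the sign conventions straight, and confirming that the momentum constraint is part of the asymptotic Einstein system to the order in $r$ being claimed.
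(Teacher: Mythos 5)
Your proof is correct, but it follows a genuinely different route from the one in the paper. The paper quotes the formula $\bar{\Rho} = -\tfrac{1}{2r}\dot{g}(r)$ for the Schouten tensor of the straightened metric $\bar{g} = dr^2 + g(r)$ (Lemma 7.1 of \cite{juhl-ex}, itself a consequence of the Einstein condition on $g_+$) and then evaluates the contracted second Bianchi identity $\delta_{\bar{g}}(\bar{\Rho}) = d\bar{\J}$ on vectors tangent to $M$, checking that the tangential part of $\delta_{\bar{g}}$ applied to $-\tfrac{1}{2r}\dot{g}(r)$ reduces to $-\tfrac{1}{2r}\,\delta_{g(r)}(\dot{g}(r))$; no curvature components of $g_+$ are computed. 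You instead identify \eqref{higher} directly with the mixed (normal--tangential) component of $\Ric(g_+) + n g_+ = 0$ via the contracted Gauss--Codazzi constraint for the foliation by level sets of $r$, and your bookkeeping is right: with unit normal $r\partial_r$, induced metric $r^{-2}g(r)$ (whose connection is that of $g(r)$), and $k = \tfrac{1}{2r}\dot{g}(r) - r^{-2}g(r)$, the pure-metric pieces are parallel and the constraint reduces, up to the factor $\tfrac{r}{2}$, to $\delta_{g(r)}(\dot{g}(r)) - d\tr_{g(r)}(\dot{g}(r))$, which must vanish since $g_+(r\partial_r,\partial_i)=0$. In effect your argument observes that the lemma \emph{is} one of the Fefferman--Graham equations (the companion of \eqref{Ricci}; cf.\ Chapter 3 of \cite{FG-final}), whereas the paper recovers it from the tangential data via the Bianchi identity; the paper's route buys a very short proof given the quoted Schouten formula, while yours is self-contained modulo the standard Codazzi identity and makes transparent exactly which component of the Einstein system is being used. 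The caveat you flag — that for even $n$ and general metrics the Einstein condition, hence \eqref{higher}, holds only to the order fixed by the paper's conventions (formal power series up to $r^{n-2}$, with no restriction for odd $n$, locally conformally flat or conformally Einstein metrics) — applies equally to the paper's proof, since the formula $\bar{\Rho} = -\tfrac{1}{2r}\dot{g}(r)$ is valid in the same asymptotic sense, so there is no loss relative to the original argument. Your two sanity checks (reduction to $\delta_g \Rho = d\J$ at lowest order, and $\tr_{g(r)}(\dot g(r)) = 2(\dot v/v)(r)$ linking the lemma to the form used in the proof of Theorem \ref{B}) are both accurate.
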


\begin{proof} Let $\bar{\Rho}$ be the Schouten tensor of the metric
$\bar{g} = dr^2 + g(r)$. Then
\begin{equation}\label{Sch-bar}
\bar{\Rho} = - \frac{1}{2r}\dot{g}(r)
\end{equation}
(see \cite{juhl-ex}, Lemma 7.1). Hence
\begin{equation}\label{J-bar}
\bar{\J} = \tr_{\bar{g}}(\bar{\Rho}) = - \frac{1}{2r} \tr_{g(r)}
(\dot{g}(r)) = - \frac{1}{2r} \sum_{i,j} \dot{g}(r)_{ij} g(r)^{ij}.
\end{equation}
We combine these formulas with the relation
\begin{equation}\label{B-Sch-bar}
\delta_{\bar{g}}(\bar{\Rho}) = d \bar{\J}
\end{equation}
of $1$-forms on the product space $[0,\varepsilon) \times M$. We evaluate
the relation \eqref{B-Sch-bar} on tangential vectors of $M$. Let
$\{\partial_0 = \partial/\partial r, \partial_i\}$ be a local frame with
tangential vectors $\partial_i$, $i=1,\dots,n$ of $M$. Let $\nabla_i =
\nabla_{\partial_i}$ and $\nabla_0^{\bar{g}} =
\nabla^{\bar{g}}_{\partial_0}$. Then, using $\nabla^{\bar{g}}_0
(\partial_0) = 0$ and $g(r)(\partial_0,\cdot)=0$, we find
\begin{equation}\label{div}
\delta_{\bar{g}}\left(-\frac{1}{2r} \dot{g}(r)\right)(\cdot) =
-\frac{1}{2r} \sum_{i,j = 1}^n g(r)^{ij} \nabla_i^{\bar{g}}
(\dot{g}(r))(\partial_j,\cdot)
\end{equation}
on all tangent vectors. Now by
$$
\nabla_i^{\bar{g}}(\partial_j) = \nabla_i^{g(r)}(\partial_j) +
\bar{\Gamma}_{ij}^0 \partial_0,
$$
the right-hand side of \eqref{div} coincides with
$$
-\frac{1}{2r} \delta_{g(r)}(\dot{g}(r))(\cdot)
$$
on tangential vectors of $M$. Hence \eqref{Sch-bar}, \eqref{J-bar} and
\eqref{B-Sch-bar} imply that
$$
-\frac{1}{2r} \delta_{g(r)}(\dot{g}(r)) = \delta_{\bar{g}}(\bar{\Rho}) = d
\bar{\J} = - \frac{1}{2r} d \tr_{g(r)}(\dot{g}(r))
$$
on tangential vectors of $M$. This proves the asserted identity.
\end{proof}

\begin{rem} We evaluate \eqref{B-Sch-bar} on the normal vector field
$\partial_0 = \partial/\partial r$. First, \eqref{div} and
$$
\bar{\Gamma}_{i0}^k = \frac{1}{2} \sum_{k,l} \dot{g}(r)_{li} g(r)^{lk}
$$
imply
\begin{align}\label{eval-a}
\delta_{\bar{g}}\left(-\frac{1}{2r} \dot{g}(r) \right)(\partial_0) & =
-\frac{1}{2r} g(r)^{ij} \nabla_i^{\bar{g}}(\dot{g}(r))(\partial_j,\partial_0) \\
& = \frac{1}{2r} \sum_{i,j,k} g(r)^{ij} \bar{\Gamma}_{i0}^k \dot{g}(r)_{jk} \nonumber \\
& = \frac{1}{4r} \sum_{i,j,k,l} g(r)^{ij} \dot{g}(r)_{li} g(r)^{lk}
\dot{g}(r)_{jk} \nonumber \\
& = \frac{1}{4r} \sum_{j,l} \dot{g}(r)_l^j \dot{g}(r)_j^l.
\end{align}
Here we raised indices using $g(r)$. Next, a calculation using
\eqref{J-bar} shows that
\begin{align}\label{eval-b}
(d\bar{\J})(\partial_0) & = (\partial/\partial r)
\left(-\frac{1}{2r} \sum_{i,j} \dot{g}(r)_{ij} g(r)^{ij} \right) \nonumber \\
& = \frac{1}{2r^2} \sum_i \dot{g}(r)_i^i - \frac{1}{2r} \sum_i
\ddot{g}(r)_i^i + \frac{1}{2r} \sum_{i,j} \dot{g}(r)_i^j
\dot{g}(r)_j^i.
\end{align}
Hence the equality of \eqref{eval-a} and \eqref{eval-b} is
equivalent to the trace identity
\begin{equation}\label{eval-normal}
\sum_i \ddot{g}(r)_i^i = \frac{1}{r} \sum_i \dot{g}(r)_i^i +
\frac{1}{2} \sum_{i,j} \dot{g}(r)_i^j \dot{g}(r)_j^i,
\end{equation}
i.e.,
$$
\tr (\ddot{g}(r)) = \frac{1}{r} \tr (\dot{g}(r)) + \frac{1}{2} \tr
(\dot{g}(r)^2)
$$
where all traces are taken with respect to $g(r)$.
\end{rem}

In addition, we need the following two general identities.

\begin{lemm}\label{fund} In normal coordinates
$\left\{x_1, \dots, x_n\right\}$ for $g$ around $m$, any symmetric
bilinear form $h$ satisfies the relations
\begin{equation}\label{gen-id-1}
\sum_{a,b} \partial_a \partial_a (h_{bb}) = \Delta_{g} (\tr_{g}(h))
- \frac{2}{3} \sum_{a,b,c} h_{ab} R_{cabc}
\end{equation}
and
\begin{equation}\label{gen-id-2}
\sum_{a,b} \partial_a \partial_b (h_{ab}) =
\delta_{g} (\delta_{g}(h)) + \frac{1}{3} \sum_{a,b,c} h_{ab}
R_{cabc}
\end{equation}
at $m$.
\end{lemm}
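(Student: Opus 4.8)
The plan is to reduce both identities to a computation at the single point $m$ in the given $g$-normal coordinates $\{x_1,\dots,x_n\}$, using only $g_{ij}(m)=\delta_{ij}$, $\partial_k(g_{ij})(m)=0$ and the second-order jet \eqref{normal-der}. At $m$ all Christoffel symbols vanish, so each of the operators $\Delta_g(\tr_g\cdot)$ and $\delta_g\delta_g(\cdot)$ collapses to a constant-coefficient operator there, and the entire content of the lemma is the discrepancy between it and the naive coordinate expression $\sum_{a,b}\partial_a\partial_a(h_{bb})$, resp. $\sum_{a,b}\partial_a\partial_b(h_{ab})$. That discrepancy is produced solely by derivatives landing on the metric (in the first case) or on a Christoffel symbol (in the second), and is therefore curvature.

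For \eqref{gen-id-1} I would expand $\Delta_g(\tr_g h)=g^{kl}\big(\partial_k\partial_l-\Gamma^p_{kl}\partial_p\big)\big(g^{ij}h_{ij}\big)$ and evaluate at $m$: the Leibniz term in which both derivatives hit $h$ gives $\sum_{a,b}\partial_a\partial_a(h_{bb})$; the cross terms and the $\Gamma$-term vanish since $\partial g(m)=0$ and $\Gamma(m)=0$; and what survives is $\sum_{a,i,j}\partial_a\partial_a(g^{ij})(m)\,h_{ij}(m)$. Differentiating $g^{ik}g_{kj}=\delta^i_j$ twice gives $\partial_a\partial_a(g^{ij})(m)=-\partial_a\partial_a(g_{ij})(m)$, and \eqref{normal-der}, the symmetry of $h$, the antisymmetry of $R$ in its last two slots, and the contraction $\sum_c R_{cabc}=\Ric_{ab}$ turn this into $\tfrac23\sum_{a,b,c}h_{ab}R_{cabc}(m)=\tfrac23\langle h,\Ric\rangle_m$; rearranging yields \eqref{gen-id-1}. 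For \eqref{gen-id-2} I would write out the iterated divergence $\delta_g\delta_g h$, which in coordinates is $\nabla^j\nabla^i h_{ij}$, and evaluate at $m$; besides $\sum_{a,b}\partial_a\partial_b(h_{ab})$ this leaves exactly the two terms $\sum_{i,j,p}\partial_j\Gamma^p_{ii}(m)h_{pj}(m)$ and $\sum_{i,j,p}\partial_j\Gamma^p_{ij}(m)h_{ip}(m)$. Here the key input is the identity $\partial_a\Gamma^l_{bc}(m)=\tfrac13\big(R_{abcl}+R_{acbl}\big)(m)$, a consequence of $\Gamma^l_{bc}=\tfrac12 g^{lm}(\partial_b g_{mc}+\partial_c g_{mb}-\partial_m g_{bc})$, the vanishing of $\partial g(m)$, and three applications of \eqref{normal-der}, the six resulting Riemann terms collapsing to two by the last-pair antisymmetry. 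Substituting it and using the pair symmetry $R_{ijkl}=R_{klij}$ together with the antisymmetries of $R$, the two Christoffel terms reduce to $\tfrac23\langle h,\Ric\rangle_m$ and $-\tfrac13\langle h,\Ric\rangle_m$ respectively; their sum is $\tfrac13\langle h,\Ric\rangle_m=\tfrac13\sum_{a,b,c}h_{ab}R_{cabc}(m)$, which is \eqref{gen-id-2}.

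The only real obstacle is the index bookkeeping in \eqref{gen-id-2}: verifying the telescoping of the six Riemann terms inside $\partial\Gamma$ to the clean expression $\tfrac13(R_{abcl}+R_{acbl})$, and then tracking signs when the two distinct Christoffel contractions are each identified with $\langle h,\Ric\rangle$. A convenient internal check — and, in fact, precisely the combination in which the lemma is applied later — is $\sum_{a,b}\partial_a\partial_a(h_{bb})+2\sum_{a,b}\partial_a\partial_b(h_{ab})=\Delta_g(\tr_g h)+2\,\delta_g\delta_g h$, whose curvature terms cancel because $-\tfrac23+2\cdot\tfrac13=0$; this pins down both coefficients simultaneously. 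One can also sanity-check on $h=g$, where the left-hand sides equal $-\tfrac23\scal(m)$ and $\tfrac13\scal(m)$ while $\delta_g\delta_g g=0$ and $\Delta_g(\tr_g g)=0$. Both formulas are of the standard type recorded in Chapter~2 of \cite{BGV} and in \cite{G-book}, to which one could alternatively appeal directly.
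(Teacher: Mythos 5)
Your argument is correct and is essentially the paper's own proof: both identities are obtained by evaluating at $m$ in normal coordinates, with the curvature terms arising from $\partial_a\partial_b(g_{ij})(m)=\tfrac13(R_{aibj}+R_{ajbi})$ in the first case and from the derivative-of-Christoffel formula $\partial_a\Gamma^l_{bc}(m)=\tfrac13(R_{abcl}+R_{acbl})$ (equivalent, via the curvature symmetries, to the paper's $\partial_r\Gamma^s_{ij}=\tfrac13(R_{jrsi}+R_{jsri})$) in the second, and your coefficients $\tfrac23$, $-\tfrac13$ for the two Christoffel contractions match the computation in the text. The only difference is presentational — you write $\delta_g\delta_g h=\nabla^j\nabla^i h_{ij}$ directly in index notation rather than expanding the covariant derivatives term by term as the paper does — and your cross-checks (the combination used in Lemma \ref{sum-1} and the case $h=g$) correctly pin down the signs.
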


\begin{proof} The relation \eqref{gen-id-1} follows from the calculation
\begin{align*}
\Delta_g (\tr_g (h)) = \sum_a \partial_a \partial_a \left(\sum_{b,c}
h_{bc} g^{bc}\right) & = \sum_{a,b,c} \partial_a \partial_a (h_{bc})
\delta_{bc} + \sum_{a,b,c} h_{bc} \partial_a \partial_a (g^{bc}) \\
& = \sum_{a,b} \partial_a \partial_a(h_{bb}) - \frac{2}{3}
\sum_{a,b,c} h_{bc} R_{abac}
\end{align*}
at $m$. For the proof of the second relation, we recall that
\begin{align*}
\delta_g(h)(X) & = \sum_{i,j} g^{ij} \nabla_i^g (h)(\partial_j,X) \\
& = \sum_{i,j} g^{ij} (\partial_i (h(\partial_j,X)) - h(\nabla^g_i
(\partial_j), X) - h(\partial_j, \nabla^g_i (X)))
\end{align*}
and calculate
\begin{align*}
\delta_g (\delta_g(h)) & = \sum_{i,j} g^{ij} \nabla^g_{i}
(\delta_g(h))(\partial_j) \\
& = \sum_{i,j} g^{ij} (\partial_i (\delta_g(h)(\partial_j))
- \delta_g(h) (\nabla^g_{i} (\partial_j))) \\
& = \sum_{i,j} g^{ij} \partial_i \left( \sum_{r,s} g^{rs} (\partial_r
(h_{sj}) - h(\nabla^g_r (\partial_s),\partial_j) - h(\partial_s,\nabla^g_{r}(\partial_j))) \right)\\
& - \sum_{i,j} g^{ij} \left( \sum_{r,s} g^{rs} (\partial_r
(h(\partial_s,\nabla^g_i (\partial_j))) - h (\nabla^g_{r} (\partial_s),
\nabla^g_{i} (\partial_j)) - h(\partial_s, \nabla^g_r \nabla^g_i (
\partial_j)))\right).
\end{align*}
Since the first-order derivatives of the metric $g$ vanish at $m$, all
Christoffel symbols vanish at $m$, and the latter sum simplifies to
\begin{multline*}
\sum_{i,r} \partial_i \partial_r (h_{ri}) \\ - \sum_{i,r} \partial_i
(h(\nabla^g_r (\partial_r),\partial_i)) + \partial_i
(h(\partial_r,\nabla^g_{r} (\partial_i))) +
\partial_r(h(\partial_r,\nabla^g_i (\partial_i))) -
h(\partial_r,\nabla^g_{r} \nabla^g_{i} (\partial_i)).
\end{multline*}
Now the vanishing of the Christoffel symbols implies the relation
$$
\sum_i \partial_i (h(\partial_i,X)) = \delta(h)(X) + \sum_i
h(\partial_i,\nabla^g_{i}(X))
$$
at $m$. Hence we get
\begin{multline*}
\delta_g (\delta_g(h)) = \sum_{i,r} \partial_i \partial_r (h_{ir}) -
\sum_{i,r} \partial_i (h(\partial_r,\nabla^g_{r} (\partial_i))) \\
- \sum_{i,r} h(\nabla^g_i \nabla^g_r (\partial_r),\partial_i) - \sum_{i,r}
h(\partial_r, \nabla^g_{r} \nabla^g_i (\partial_i)) + \sum_{i,r}
h(\partial_r,\nabla^g_r \nabla^g_{i}(\partial_i)),
\end{multline*}
i.e.,
\begin{equation}\label{inter}
\delta_g (\delta_g(h)) = \sum_{i,r} \partial_i \partial_r (h_{ir})
-\sum_{i,r} \partial_i (h(\partial_r,\nabla^g_r (\partial_i))) -
\sum_{i,r} h(\partial_i,\nabla^g_i \nabla^g_r (\partial_r)) .
\end{equation}
Now
$$
\partial_r (\Gamma^s_{ij}) = \frac{1}{2} \left(\partial_i \partial_r (g_{js})
+ \partial_j \partial_r (g_{si}) - \partial_s \partial_r
(g_{ij})\right)
$$
and \eqref{normal-der} imply that the derivatives of Christoffel
symbols are given by
$$
\partial_r (\Gamma^s_{ij}) = \frac{1}{3} \left( R_{jrsi} + R_{jsri} \right)
$$
at $m$. Hence
$$
\sum_r \nabla_i^g \nabla_r^g (\partial_r) = \sum_{r,s}
\partial_i (\Gamma_{rr}^s) \partial_s = -\frac{2}{3} \sum_{r,s} R_{rirs}
\partial_s
$$
and \eqref{inter} simplifies to
\begin{align*}
\delta_g (\delta_g(h)) & = \sum_{i,r} \partial_i
\partial_r (h_{ir}) - \sum_{i,r,s} \partial_i (\Gamma^s_{ri}) h_{rs}
+ \frac{2}{3} \sum_{i,r,s} R_{rirs} h_{is} \\
& = \sum_{i,r} \partial_i \partial_r (h_{ir}) - \frac{1}{3}
\sum_{i,r,s} R_{isir} h_{rs} + \frac{2}{3} \sum_{i,r,s} R_{rirs}
h_{is} \\
& = \sum_{i,r} \partial_i \partial_r (h_{ir}) - \frac{1}{3}
\sum_{a,b,c} R_{cabc} h_{ab}.
\end{align*}
This proves \eqref{gen-id-2}.
\end{proof}

Applying Lemma \ref{fund} for the metric $g=g(r)$ and the symmetric
bilinear form $h=\dot{g}(r)$, proves

\begin{lemm}\label{sum-1} In normal coordinates
$\left\{x_1, \dots, x_n\right\}$ for $g(r)$ around $m$, we have
\begin{equation}
\sum_{a,b} \partial_a \partial_a (\dot{g}(r)_{bb}) + 2 \sum_{a,b}
\partial_a \partial_b (\dot{g}(r)_{ab})
= \Delta_{g(r)} \left(\tr_{g(r)}(\dot{g}(r))\right)
+ 2 \delta_{g(r)} (\delta_{g(r)}(\dot{g}(r)))
\end{equation}
at $m$.
\end{lemm}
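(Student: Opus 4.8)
The plan is to obtain Lemma \ref{sum-1} by a direct specialization of Lemma \ref{fund}. First I would apply \eqref{gen-id-1} and \eqref{gen-id-2} with the metric taken to be $g=g(r)$ and the symmetric bilinear form taken to be $h=\dot{g}(r)$. For sufficiently small $r$ the family $g(r)$ is an honest Riemannian metric on $M$ and $\dot{g}(r)$ is a smooth symmetric $2$-tensor, so the hypotheses of Lemma \ref{fund} are met; working in normal coordinates $\{x_1,\dots,x_n\}$ for $g(r)$ about $m$, this yields
$$
\sum_{a,b}\partial_a\partial_a(\dot{g}(r)_{bb})=\Delta_{g(r)}\big(\tr_{g(r)}(\dot{g}(r))\big)-\frac{2}{3}\sum_{a,b,c}\dot{g}(r)_{ab}R(r)_{cabc}
$$
and
$$
\sum_{a,b}\partial_a\partial_b(\dot{g}(r)_{ab})=\delta_{g(r)}\big(\delta_{g(r)}(\dot{g}(r))\big)+\frac{1}{3}\sum_{a,b,c}\dot{g}(r)_{ab}R(r)_{cabc}
$$
at $m$, where $R(r)=R(g(r))$ is the curvature tensor of $g(r)$.

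Next I would add the first of these identities to twice the second. The curvature contractions on the right-hand side are $-\tfrac{2}{3}\sum_{a,b,c}\dot{g}(r)_{ab}R(r)_{cabc}$ and $2\cdot\tfrac{1}{3}\sum_{a,b,c}\dot{g}(r)_{ab}R(r)_{cabc}$, which cancel exactly, leaving $\Delta_{g(r)}(\tr_{g(r)}(\dot{g}(r)))+2\,\delta_{g(r)}(\delta_{g(r)}(\dot{g}(r)))$. On the left-hand side the same combination produces $\sum_{a,b}\partial_a\partial_a(\dot{g}(r)_{bb})+2\sum_{a,b}\partial_a\partial_b(\dot{g}(r)_{ab})$. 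Equating the two sides gives precisely the asserted formula.

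There is essentially no obstacle here: all the real content sits in Lemma \ref{fund}, and the only point to check is the bookkeeping of the single curvature term with its combinatorial weights $-\tfrac{2}{3}$ and $+\tfrac{1}{3}$, so that the precise combination ``first identity plus twice the second'' annihilates it. The one subtlety worth flagging is that the normal coordinates must be chosen for the metric $g(r)$ at the point $m$ — not for $g=g(0)$ — so that the vanishing of the Christoffel symbols of $g(r)$ at $m$ invoked in the proof of Lemma \ref{fund} is legitimate; this is exactly the coordinate setup already fixed in the computation of Section \ref{proof-B}, and since the claimed identity is coordinate-free on the two sides being compared, it is enough to verify it in such coordinates at an arbitrary point.
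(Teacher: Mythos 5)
Your proposal is correct and is exactly the paper's argument: the paper obtains Lemma \ref{sum-1} by applying Lemma \ref{fund} with $g=g(r)$ and $h=\dot{g}(r)$ and adding \eqref{gen-id-1} to twice \eqref{gen-id-2}, so that the curvature contractions with weights $-\tfrac{2}{3}$ and $+\tfrac{1}{3}$ cancel. Your remark about taking normal coordinates for $g(r)$ (not $g(0)$) at $m$ is the right point of care and matches the setup already fixed in Section \ref{proof-B}.
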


Now Lemma \ref{sum-1} yields

\begin{lemm}\label{sum-2} In normal coordinates
$\left\{x_1, \dots, x_n\right\}$ for $g(r)$ around $m$, we have
\begin{equation*}
\sum_{a,b} \partial_a \partial_a (\dot{g}(r)^{-1}_{bb})
+ 2 \sum_{a,b} \partial_a \partial_b (\dot{g}(r)^{-1}_{ab})
= -\Delta_{g(r)} \left(\tr_{g(r)}(\dot{g}(r))\right) -
2 \delta_{g(r)} (\delta_{g(r)}(\dot{g}(r)))
\end{equation*}
at $m$.
\end{lemm}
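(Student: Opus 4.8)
The plan is to deduce Lemma~\ref{sum-2} from Lemma~\ref{sum-1} by expressing the second spatial derivatives of the matrix $\dot{g}(r)^{-1}$ at the base point in terms of those of $\dot{g}(r)$, and by checking that the curvature discrepancy between the two drops out of the particular combination appearing in the statement.

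First I would fix $m\in M$ and work in normal coordinates for $g(r)$ around $m$, so that $g(r)_{ij}(m)=\delta_{ij}$, all $\partial_c(g(r)_{ij})(m)=0$, and, by \eqref{normal-der}, $\partial_p\partial_q(g(r)_{ij})(m)=\tfrac13\big(R_{piqj}+R_{pjqi}\big)(m)$ with $R=R(g(r))$. Differentiating the identity $\sum_k g(r)^{-1}_{ik}g(r)_{kj}=\delta_{ij}$ once and twice and evaluating at $m$ then gives $\partial_c(g(r)^{-1}_{ij})(m)=0$ and $\partial_p\partial_q(g(r)^{-1}_{ij})(m)=-\tfrac13\big(R_{piqj}+R_{pjqi}\big)(m)$.

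Next I would use the elementary formula $\dot{g}(r)^{-1}_{ij}=-\sum_{k,l}g(r)^{-1}_{ik}\dot{g}(r)_{kl}g(r)^{-1}_{lj}$ for the $r$-derivative of the inverse matrix. Applying $\partial_p\partial_q$ and evaluating at $m$: since the first spatial derivatives of $g(r)^{-1}$ vanish at $m$, every term in which exactly one derivative falls on a factor $g(r)^{-1}$ drops out, leaving only the term with both derivatives on $\dot g(r)$ and the two terms with both derivatives on one $g(r)^{-1}$ factor, so that
\begin{multline*}
\partial_p\partial_q(\dot{g}(r)^{-1}_{ij})(m)=-\,\partial_p\partial_q(\dot{g}(r)_{ij})(m)\\
+\tfrac13\sum_k\big(R_{piqk}+R_{pkqi}\big)\dot{g}(r)_{kj}+\tfrac13\sum_l\dot{g}(r)_{il}\big(R_{plqj}+R_{pjql}\big),
\end{multline*}
all curvature components and components of $\dot g(r)$ being evaluated at $m$. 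Substituting this into $\sum_{a,b}\partial_a\partial_a(\dot{g}(r)^{-1}_{bb})$ (the case $p=q=a$, $i=j=b$) and into $\sum_{a,b}\partial_a\partial_b(\dot{g}(r)^{-1}_{ab})$ (the case $p=a$, $q=b$, $i=a$, $j=b$), the contractions $R_{aabk}$ and $R_{albb}$ occurring in the latter vanish by the antisymmetry of $R$ in a pair of slots; and in both cases, using that antisymmetry, the pair-interchange symmetry of $R$, and the symmetry of $\dot g(r)$, each of the surviving curvature contractions reduces to a fixed multiple of the pointwise inner product $\langle\Ric(g(r)),\dot g(r)\rangle$ at $m$. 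A short computation shows that the total curvature contribution to $\sum_{a,b}\partial_a\partial_a(\dot{g}(r)^{-1}_{bb})$ is precisely the negative of the total curvature contribution to $2\sum_{a,b}\partial_a\partial_b(\dot{g}(r)^{-1}_{ab})$. Hence the left-hand side of the asserted identity equals $-1$ times the same combination formed from $\dot g(r)$ in place of $\dot g(r)^{-1}$, and Lemma~\ref{sum-1} rewrites the latter as $-\Delta_{g(r)}(\tr_{g(r)}(\dot g(r)))-2\delta_{g(r)}(\delta_{g(r)}(\dot g(r)))$. Since $m$ was arbitrary, this is exactly the claim.

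The only delicate point is the index bookkeeping in the substitution step: one has to identify each of the four curvature contractions with $\langle\Ric(g(r)),\dot g(r)\rangle$ (with the correct sign) via the symmetries of the Riemann tensor, so as to see that the contributions from $\sum_{a,b}\partial_a\partial_a$ and from $2\sum_{a,b}\partial_a\partial_b$ are equal and opposite. The normal-coordinate identities used in the first step and the derivative-of-inverse formula used in the second are routine.
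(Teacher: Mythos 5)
Your proposal is correct and follows essentially the same route as the paper: the paper likewise differentiates $\dot{g}(r)^{-1}=-g(r)^{-1}\dot{g}(r)g(r)^{-1}$ twice at $m$, uses the normal-coordinate identity \eqref{normal-der} (equivalently $\partial_a\partial_b(g(r)_{cd})+\partial_a\partial_b(g(r)^{-1}_{cd})=0$ at $m$) to produce the curvature correction terms, observes that the Ricci-type contractions from the two sums cancel by the symmetries of $R(r)$, and then invokes Lemma \ref{sum-1}. Your packaging via a single general formula for $\partial_p\partial_q(\dot{g}(r)^{-1}_{ij})(m)$ is just a mild reorganization of the same computation.
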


\begin{proof} By Lemma \ref{sum-1}, the assertion follows from the relations
\begin{equation}\label{id-1}
\sum_{a,b} \partial_a \partial_a \left(\dot{g}(r)^{-1}_{bb}\right) = -
\sum_{a,b} \partial_a \partial_a (\dot{g}(r)_{bb}) + \frac{4}{3}
\sum_{a,b,c} R_{a b a c}(r) \dot{g}(r)_{bc}
\end{equation}
and
\begin{equation}\label{id-2}
\sum_{a,b} \partial_a \partial_b \left(\dot{g}(r)^{-1}_{ab}\right) = -
\sum_{a,b} \partial_a \partial_b (\dot{g}(r)_{ab}) + \frac{2}{3}
\sum_{a,b,c} R_{acba}(r) \dot{g}(r)_{cb}
\end{equation}
at $m$. Here $R(r)$ denotes the curvature of $g(r)$. We start by noting
that
\begin{equation}\label{diff-norm}
\partial_a \partial_b (g(r)_{cd}) + \partial_a \partial_b \left(g(r)^{-1}_{cd}\right) = 0
\end{equation}
at $m$ and
\begin{equation}\label{diff-r}
\dot{g}(r)^{-1} + g^{-1}(r) \dot{g}(r) g^{-1}(r) = 0.
\end{equation}
Hence
\begin{align*}
\partial_a \partial_a  \left(\dot{g}(r)^{-1}_{bb}\right)
& = -\partial_a \partial_a \left( \sum_{\alpha, \beta} g(r)^{-1}_{b
\alpha} \dot{g}(r)_{\alpha \beta} g(r)^{-1}_{\beta b} \right) \\ & =
\sum_{\alpha,\beta} \partial_a \partial_a (g(r)_{b\alpha})
\dot{g}(r)_{\alpha \beta} \delta_{\beta b} + \sum_{\alpha,\beta}
\delta_{b\alpha} \dot{g}(r)_{\alpha \beta}
\partial_a \partial_a (g(r)_{\beta b}) \\
& - \sum_{\alpha, \beta} \delta_{b \alpha} \partial_a \partial_a
\left(\dot{g}(r)_{\alpha \beta}\right) \delta_{\beta b} \\
& = \sum_\alpha \partial_a \partial_a \left(g(r)_{b \alpha}\right) \dot{g}(r)_{\alpha b}
+ \sum_\beta \dot{g}(r)_{b \beta} \partial_a \partial_a \left(g(r)_{\beta b}\right) \\
& - \partial_a \partial_a \left(\dot{g}(r)_{bb}\right).
\end{align*}
Now \eqref{normal-der} implies
\begin{equation}
\partial_a \partial_a \left(\dot{g}(r)^{-1}_{bb}\right)
= \frac{4}{3} \sum_c R_{abac}(r) \dot{g}(r)_{bc} - \partial_a \partial_a
(\dot{g}(r)_{bb}).
\end{equation}
This proves \eqref{id-1}. Similarly, we find
\begin{align*}
\partial_a \partial_b \left(\dot{g}(r)^{-1}_{ab}\right) & = \sum_{c,d}
\partial_a \partial_b (g(r)_{ac}) \dot{g}(r)_{cd} \delta_{db}
+ \sum_{c,d} \delta_{ac} \dot{g}(r)_{cd} \partial_a \partial_b (g(r)_{db}) \\
& - \sum_{c,d} \delta_{ac} \partial_a \partial_b (\dot{g}(r)_{cd}) \delta_{db} \\
& = \sum_c \partial_a \partial_b (g(r)_{ac}) \dot{g}(r)_{cb} + \sum_d \dot{g}(r)_{ad}
\partial_a \partial_b (g(r)_{db}) - \partial_a \partial_b (\dot{g}(r)_{ab})
\end{align*}
and \eqref{normal-der} implies
\begin{multline*}
\partial_a \partial_b \left(\dot{g}(r)^{-1}_{ab}\right) \\
= \frac{1}{3} \sum_c (R_{aabc}(r) + R_{acba}(r)) \dot{g}(r)_{cb} +
\frac{1}{3} \sum_d (R_{adbb}(r) + R_{abbd}(r)) \dot{g}(r)_{ad} -\partial_a
\partial_b  (\dot{g}(r)_{ab}).
\end{multline*}
Hence
$$
\sum_{a,b} \partial_a \partial_b \left(\dot{g}(r)^{-1}_{ab}\right) =
\frac{1}{3} \sum_{a,b,c} (R_{abca}(r) + R_{baac}(r)) \dot{g}(r)_{bc} -
\sum_{a,b} \partial_a \partial_b (\dot{g}(r)_{ab}).
$$
This proves \eqref{id-2}. The proof is complete.
\end{proof}

With these preparations we are able to complete the {\bf proof of
Theorem \ref{B}}.

It remains to determine the Gaussian integral of
\begin{align*}
& \sum_{a,b,c,d} \sum_{\alpha,\beta,\gamma,\delta} \delta_{cd}
\delta_{\alpha \beta} \xi_\gamma \xi_\delta \frac{\partial}{\partial
\xi_b}(\xi_c \xi_d) \frac{\partial}{\partial \xi_a}(\xi_\alpha
\xi_\beta) \left(\frac{\partial^2}{\partial x_a \partial
x_b}(\dot{g}(r)^{-1}_{\gamma \delta}\right) \\
& = \sum_{a,b} \sum_{\gamma,\delta} \xi_\gamma \xi_\delta
\frac{\partial}{\partial \xi_b}(|\xi|^2) \frac{\partial}{\partial
\xi_a}(|\xi|^2) \left(\frac{\partial^2}{\partial x_a
\partial x_b}(\dot{g}(r)^{-1}_{\gamma\delta})\right) \\
& = 4 \sum_{a,b} \sum_{\gamma,\delta} \xi_a \xi_b \xi_\gamma
\xi_\delta \left(\frac{\partial^2}{\partial x_a
\partial x_b}(\dot{g}(r)^{-1}_{\gamma\delta})\right).
\end{align*}
Now integration using \eqref{quartic} yields
\begin{multline*}
\sum_{a,b} \sum_{\gamma,\delta} (\delta_{ab} \delta _{\gamma \delta}
+ \delta_{a\gamma} \delta_{b\delta} + \delta_{a\delta}
\delta_{b\gamma}) \frac{\partial^2}{\partial x_a
\partial x_b}(\dot{g}(r)^{-1}_{\gamma\delta}) \\
= \sum_{a,\gamma} \frac{\partial^2}{\partial x_a \partial
x_a}(\dot{g}(r)^{-1}_{\gamma\gamma}) + \sum_{a,b}
\frac{\partial^2}{\partial x_a \partial x_b} (\dot{g}(r)^{-1}_{ab})
+ \sum_{a,b} \frac{\partial^2}{\partial x_a \partial x_b}
(\dot{g}(r)^{-1}_{ba}).
\end{multline*}
By Lemma \ref{sum-2}, the latter sum equals
$$
-\left(\Delta_{g(r)} \left(\tr_{g(r)}(\dot{g}(r))\right) + 2
\delta_{g(r)}(\delta_{g(r)}(\dot{g}(r)))\right).
$$
But, by Lemma \ref{d-div}, this sum further simplifies to
$$
-3 \Delta_{g(r)} \left(\tr_{g(r)}(\dot{g}(r))\right).
$$
This yields the assertion by using
$$
\tr_{g(r)}(\dot{g}(r)) = 2 (\dot{v}/v) (r).
$$
The proof is complete. \hfill $\square$

\section{A conformal primitive of the Gaussian integral}\label{prime}

In the present section we prove Theorem \ref{var}.

The arguments rest on a local variational formula for the
renormalized volume coefficients. This formula was first stated in
the equations (2.4) and (3.8) of \cite{ISTY} and was rederived in
Theorem 1.5 of \cite{G-ext}. The following result is equivalent to
equation (3.5) in \cite{G-ext}.\footnote{The present conventions
concerning the coefficients $v_{2N}$ differ from those used in
\cite{G-ext} and are closer to those of \cite{ISTY}. In particular,
we replace formulations in terms of ambient space coordinates by
formulations in terms of coordinates on Poincar\'e-Einstein spaces.}

\begin{thm}[\cite{G-ext}]\label{RVC-CV} For any $\varphi \in C^\infty(M)$,
\begin{equation}\label{V-CV}
v(r)^\bullet[\varphi] = - r \dot{v}(r) \varphi - \delta (L(r) d
\varphi),
\end{equation}
where the symmetric bilinear form
\begin{equation}\label{L}
L(r) \st v(r) \int_0^r s g(s)^{-1} ds
\end{equation}
acts on $1$-forms (using $g$).
\end{thm}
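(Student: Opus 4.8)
The plan is to derive \eqref{V-CV} directly from the way the Fefferman--Graham normal form transforms under a conformal change of the boundary representative, reducing everything to a first-order computation of the associated collar diffeomorphism. As recalled just before the statement, the Poincaré--Einstein metric $g_+$ is intrinsic, and the change $g\mapsto\hat g=e^{2\varepsilon\varphi}g$ merely replaces its normal form relative to $g$ by its normal form relative to $\hat g$, the two being related by a diffeomorphism $\kappa$ of the collar restricting to the identity at $r=0$. I would therefore fix the interior metric $g_+$, write it in the old normal form $g_+=s^{-2}(ds^2+g(s))$ with boundary value $g$, and express the very same $g_+$ in new normal coordinates $(r,y)$ as $g_+=r^{-2}(dr^2+\hat g(r))$ with $\hat g(0)=\hat g$. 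Writing the old coordinates $(s,x)$ as functions of the new ones $(r,y)$ and expanding $\kappa$ to first order as $s=r(1-\varepsilon\varphi)+O(\varepsilon^2)$, $x^i=y^i+\varepsilon b^i(r,y)+O(\varepsilon^2)$, the goal is to compute $\delta g(r):=(d/d\varepsilon)|_0\hat g(r)$ and then pass to $v$.

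The two normal-form conditions pin down $\kappa$ to the needed order. First, requiring $[r^2g_+](\partial_r,\partial_r)=1$ forces the radial rescaling $s/r$ to be independent of the radial variable to first order, so that $s=r(1-\varepsilon\varphi(y))+O(\varepsilon^2)$ with $\varphi$ the boundary datum. Second, the vanishing of the cross terms $[r^2g_+](\partial_r,\partial_{y^k})=0$ yields, at first order, the transport equation $g(r)_{ik}\partial_r b^i=r\,\partial_k\varphi$, i.e. $\partial_r b^i=r\,g(r)^{ij}\partial_j\varphi$; integrating with the initial condition $b(0,y)=0$ produces exactly the integral kernel of $L$,
\begin{equation*}
b^i(r,y)=\Big(\int_0^r s\,g(s)^{-1}\,ds\Big)^{ij}\partial_j\varphi .
\end{equation*}
Assembling the $dy^idy^j$-component of $r^2g_+$, and being careful to evaluate the old metric components $g(s)$ at the shifted base point $x=y+\varepsilon b$ (this is precisely what upgrades the tangential terms into a Lie derivative), I expect
\begin{equation*}
\delta g(r)=2\varphi\,g(r)-r\varphi\,\dot g(r)+\mathcal L_{b}\,g(r).
\end{equation*}

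It then remains to convert this into a formula for $v$. Since $v(r)=\sqrt{\det g(r)}/\sqrt{\det g}$, differentiating $\log v$ gives $v(r)^\bullet[\varphi]/v(r)=\tfrac12\tr_{g(r)}\delta g(r)-n\varphi$; using $\tr_{g(r)}\dot g(r)=2\dot v/v$ (as in the proof of Theorem \ref{B}) together with $\tr_{g(r)}\mathcal L_b g(r)=2\operatorname{div}_{g(r)}(b)$, the $2\varphi\,g(r)$ and $-n\varphi$ contributions cancel and leave
\begin{equation*}
v(r)^\bullet[\varphi]=-r\dot v(r)\varphi+v(r)\operatorname{div}_{g(r)}(b).
\end{equation*}
Finally $\sqrt{\det g(r)}=v(r)\sqrt{\det g}$ gives the identity $v(r)\operatorname{div}_{g(r)}(b)=\operatorname{div}_{g}(v(r)b)$, and since $v(r)b=L(r)\,d\varphi$ while $\delta$ is the negative divergence with respect to $g$, this is exactly $-\delta(L(r)d\varphi)$, which yields \eqref{V-CV}.

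The main obstacle I anticipate is the correct and complete identification of the first-order gauge diffeomorphism: one must check that the eikonal condition really forces the $r$-independence of the rescaling, capture the base-point shift in $g(s)$ that promotes the tangential variation to a full Lie derivative (omitting it spoils the final divergence identity), and verify the initial condition $b(0)=0$ that makes $L(r)$ an honest $\int_0^r$. I would emphasize that no Einstein equation enters the computation itself — only the intrinsic uniqueness of the normal form encoded in $\hat g_+=\kappa^*g_+$ — and that matching the resulting expression with Graham's ambient-coordinate formula (3.5) in \cite{G-ext} is then a routine change-of-variables bookkeeping via ${\bf g}(-r^2/2)=g(r)$.
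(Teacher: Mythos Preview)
Your argument is correct and is essentially the classical approach of Chang--Fang and Graham: compute the first-order gauge diffeomorphism $\kappa$ relating the two normal forms, read off $\delta g(r)$, and take traces. The paper actually does \emph{not} supply a proof at the location of Theorem~\ref{RVC-CV}; it simply cites \cite{G-ext} (and \cite{ISTY}) and remarks that the result is equivalent to Graham's equation~(3.5). Your write-up is a clean Poincar\'e-picture version of precisely that argument.

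Where the paper differs is in Section~\ref{CV-v-local}, which provides a genuinely independent \emph{heat-kernel} proof of the same formula. There the starting point is Theorem~\ref{conform-H} (the conformal variation of $\H(r;g)$), which implies a variational identity for the restriction to the diagonal of the kernel of $e^{t\H(r)}$. Extracting the coefficient of $(4\pi t)^{-n/2}$ and using $a_0(r)=v(r)$ yields \eqref{leading-conform}; the only nontrivial step is to show that the double-commutator term $[\H(r),[\K(r),\varphi]]$ contributes precisely $\delta_g\big(v(r)\int_0^r s\,g(s)^{-1}\,ds\,d\varphi\big)$, which is done via the off-diagonal heat-kernel expansion and the Hessian identity \eqref{Hess}. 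So the two proofs arrive at $L(r)$ from opposite ends: yours produces $\int_0^r s\,g(s)^{-1}\,ds$ as the integrated tangential displacement $b$ of the collar diffeomorphism, while the paper obtains it as the endomorphism $E(r)$ sitting inside $\K(r;g)$, ultimately traceable to the combinatorics of GJMS building blocks. Your route is shorter and purely differential-geometric; the paper's route is the point --- it shows that the local variational law for $v(r)$ is already encoded in the operator identity of Theorem~\ref{conform-H}, without ever invoking the bulk diffeomorphism.

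One small remark: your claim that ``no Einstein equation enters'' is correct in the sense you intend (the computation uses only the normal-form gauge and the intrinsicness of $g_+$), but of course the Einstein condition is what makes $g(r)$, and hence $v(r)$, a well-defined functional of $g$ in the first place.
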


In the following, the first term on the right-hand side of
\eqref{V-CV} will be referred to as the {\em scaling} term.

Some comments are in order. By expansion into power series of $r$,
\eqref{V-CV} is equivalent to
\begin{equation}\label{CT-v}
v_{2N}^\bullet[\varphi] = -2N v_{2N} \varphi - \delta \left(
\frac{1}{2} v_{2N-2} g^{-1}_{(0)} + \cdots + \frac{1}{2N} v_0
g^{-1}_{(2N-2)} \right) d \varphi,
\end{equation}
where $g^{-1}_{(2k)}$ are the power series coefficients of
$g(r)^{-1}$. Formula \eqref{CT-v} refines the variational formula
$$
\left( \int_{M^n} v_{2N}(g) \dvol_g \right)^\bullet[\varphi] =
(n\!-\!2N) \int_{M^n} \varphi v_{2N}(g) \dvol_g
$$
for closed $M$. Theorem \ref{RVC-CV} implies that the conformal
variation of $v(r)$ at the round metric on $\s^n$ is given by
\begin{equation}
v(r)^\bullet[\varphi] = \frac{r^2}{2} (1-r^2/4)^{n-1} (n
-\Delta)(\varphi).
\end{equation}

We continue with the {\bf proof of Theorem \ref{var}}.

We start by noting that
$$
(\dot{w})^2 = \frac{1}{4} \frac{\dot{v}^2}{v}.
$$
Hence
$$
((\dot{w})^2)^\bullet[\varphi] = \frac{1}{4} \left( \frac{
\dot{v}^2}{v} \right)^\bullet[\varphi].
$$
Moreover, we have
\begin{equation}\label{red}
\left(\frac{\dot{v}^2}{v} \right)^\bullet[\varphi] = 2 \left(\frac{
\dot{v}}{v}\right) (\dot{v})^\bullet[\varphi] -
\left(\frac{\dot{v}}{v} \right)^2 v^\bullet[\varphi].
\end{equation}
By Theorem \ref{RVC-CV}, the right-hand side of \eqref{red} equals
\begin{multline*}
- 2 \left(\frac{\dot{v}}{v}\right) \delta_g (\dot{L}(r) d\varphi) +
\left(\frac{\dot{v}}{v}\right)^2 \delta_g (L(r) d\varphi) \\
= - 2 \left(\frac{\dot{v}}{v}\right) \delta_g \left( \dot{v}
\int_0^r s g(s)^{-1} ds + r v g(r)^{-1} \right) d\varphi +
\left(\frac{\dot{v}}{v}\right)^2 \delta_g (L(r) d\varphi),
\end{multline*}
up to a contribution by the scaling term. Now partial integration
implies
\begin{multline*}
\left(\int_M \left( \frac{\dot{v}^2}{v} \right) \dvol_g
\right)^\bullet[\varphi] = n \int_M \varphi
\left(\frac{\dot{v}^2}{v} \right) \dvol_g \\ - 2 \int_M \varphi
\delta_g \left( \left(\frac{\dot{v}}{v}\right) L(r) + r v g(r)^{-1}
\right) d \left( \frac{\dot{v}}{v} \right) \dvol_g \\ + 2 \int_M
\varphi \delta_g \left( L(r) \left(\frac{\dot{v}}{v}\right) \right)
d \left(\frac{\dot{v}}{v}\right) \dvol_g,
\end{multline*}
up to a contribution by the scaling term. The contributions by $L$
cancel and we obtain
\begin{equation*}
\left(\int_M \left( \frac{\dot{v}^2}{v} \right) \dvol_g
\right)^\bullet[\varphi] = n \int_M \varphi
\left(\frac{\dot{v}^2}{v} \right) \dvol_g - 2 r \int_M \varphi
\delta_g (v g(r)^{-1}) d \left( \frac{\dot{v}}{v} \right) \dvol_g.
\end{equation*}
Now Lemma \ref{conjugate} shows that the last term on the right-hand
side of this relation can be rewritten as
$$
2 r \int_M \varphi v \Delta_{g(r)} \left( \frac{\dot{v}}{v} \right)
\dvol_g.
$$
Therefore, it only remains to determine the contribution by the scaling
term. By \eqref{red}, this term is given by
\begin{multline*}
\int_M \left( 2 \left(\frac{\dot{v}}{v}\right) \frac{\partial}{\partial r}
(-r \dot{v}) \varphi + \left(\frac{\dot{v}}{v} \right)^2 r \dot{v} \varphi \right) \dvol_g \\
= \int_M \varphi \left( - 2 \left( \frac{\dot{v}}{v} \right) r \ddot{v} -
2 \left(\frac{\dot{v}}{v}\right) \dot{v} + \left(\frac{\dot{v}}{v}
\right)^2 r \dot{v} \right) \dvol_g  = -(2 + r \partial/\partial r) \int_M
\varphi \left(\frac{\dot{v}^2}{v}\right) \dvol_g.
\end{multline*}
The proof is complete. \hfill $\square$
\medskip

The above proof made use of the following conjugation formula.

\begin{lemm}\label{conjugate} We have
\begin{equation}\label{c-div}
v(r) \circ \delta_{g(r)} = \delta_g \circ v(r) \circ g(r)^{-1}
\end{equation}
as an identity of operators on $1$-forms.
\end{lemm}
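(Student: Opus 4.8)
The plan is to derive the identity from the characterization of the codifferential of a metric as the formal adjoint of $d$; this gives a short coordinate-free argument, and a direct computation in local coordinates is available as a cross-check.

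First I recall that, for any metric $h$ on $M$, the operator $\delta_h\colon\Omega^1(M)\to C^\infty(M)$ is uniquely determined, as a differential operator, by the adjointness relation $\int_M u\,(\delta_h\omega)\,\dvol_h=\int_M (du,\omega)_h\,\dvol_h$ valid for all $u\in C_c^\infty(M)$ and $\omega\in\Omega^1(M)$. Two elementary observations turn Lemma \ref{conjugate} into a one-line consequence of this. On the one hand, $\dvol_{g(r)}=v(r)\,\dvol_g$ by the definition \eqref{volume} of $v(r)$. On the other hand, since $g(r)^{-1}$ is by construction the endomorphism of $\Omega^1(M)$ obtained by raising an index with $g$ and lowering it with $g(r)$, one has $(\alpha,\omega)_{g(r)}=(\alpha,\,g(r)^{-1}\omega)_g$ for all one-forms $\alpha,\omega$.

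Combining these, for $u\in C_c^\infty(M)$ and $\omega\in\Omega^1(M)$ I would compute
\begin{multline*}
\int_M u\,\bigl(v(r)\,\delta_{g(r)}\omega\bigr)\,\dvol_g
=\int_M u\,(\delta_{g(r)}\omega)\,\dvol_{g(r)} \\
=\int_M (du,\omega)_{g(r)}\,\dvol_{g(r)}
=\int_M \bigl(du,\,v(r)\,g(r)^{-1}\omega\bigr)_g\,\dvol_g,
\end{multline*}
where in the last step the scalar $v(r)$ was absorbed into the second slot of the $g$-inner product. By the adjointness property of $\delta_g$ the right-hand side equals $\int_M u\,\delta_g\bigl(v(r)\,g(r)^{-1}\omega\bigr)\,\dvol_g$. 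Since $u$ is arbitrary, the integrands agree pointwise, which is precisely \eqref{c-div}.

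There is no substantial obstacle; the only point requiring a moment of care is to keep the two roles of $g$ apart — it both defines $\delta_g$ and is used to interpret $g(r)^{-1}$ as an endomorphism of one-forms — and to check that the bilinear-pairing manipulation in the displayed chain is the correct one. For readers who prefer an explicit verification, the same identity drops out of the coordinate formula $\delta_h\omega=-(\det h)^{-1/2}\partial_i\bigl((\det h)^{1/2}h^{ij}\omega_j\bigr)$: the factor $v(r)=(\det g(r))^{1/2}/(\det g)^{1/2}$ cancels the weight $(\det g(r))^{-1/2}$ produced by $\delta_{g(r)}$, and the contraction $g^{lm}g_{mk}=\delta^l_k$ inside $\delta_g\bigl(v(r)\,g(r)^{-1}\omega\bigr)$ yields exactly the same expression $-(\det g)^{-1/2}\partial_i\bigl((\det g(r))^{1/2}g(r)^{ij}\omega_j\bigr)$, so the two sides coincide.
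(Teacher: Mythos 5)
Your argument is correct, and its main line is genuinely different from the paper's. The paper proves \eqref{c-div} by writing both sides out in local coordinates: it fixes the convention \eqref{op-g-inverse} for how $g(r)^{-1}$ acts on one-forms and checks that the contraction $g^{-1}_{ab}g_{bc}$ collapses, so that both sides become $(\sqrt{\det g})^{-1}\sum_{a,d}\partial_a\bigl(\sqrt{\det g(r)}\,g(r)^{-1}_{da}\omega_d\bigr)$ — essentially the computation you relegate to your ``cross-check'' paragraph. Your primary route instead characterizes $\delta_h$ as the formal adjoint of $d$ and deduces \eqref{c-div} from the two facts $\dvol_{g(r)}=v(r)\dvol_g$ and $(\alpha,\omega)_{g(r)}=(\alpha,g(r)^{-1}\omega)_g$, followed by a density argument against test functions $u$; this is coordinate-free, explains conceptually why the factor $v(r)$ (relative volume) and the endomorphism $g(r)^{-1}$ (change of fiber metric) appear, and is exactly in the spirit of the paper's definition of $\delta$ as the adjoint of $d$. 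What the paper's computation buys in exchange is that it makes the convention \eqref{op-g}--\eqref{op-g-inverse} completely explicit, which is reused elsewhere. One small wording slip: you describe $g(r)^{-1}$ as the endomorphism ``obtained by raising an index with $g$ and lowering it with $g(r)$'' — that phrase describes the endomorphism $g(r)$ of \eqref{op-g}; its inverse $g(r)^{-1}$ raises with $g(r)^{-1}$ and lowers with $g$, as in \eqref{op-g-inverse}. The identity you actually use, $(\alpha,\omega)_{g(r)}=(\alpha,g(r)^{-1}\omega)_g$, is the correct one for that convention, so the proof stands; just fix the parenthetical description.
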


\begin{proof} We recall that a symmetric bilinear form $b$ on $TM$
acts by
$$
\langle b(\omega),X \rangle = b(\omega^\#,X), \; X \in \X(M)
$$
on $\Omega^1(M)$, where the vector-field $\omega^\#$ is dual to
$\omega$ in the sense that $g(\omega^\#,X) = \langle \omega,X
\rangle$ for all $X \in \X(M)$. In particular, the one-parameter
family $g(r)$ of metrics acts on $\Omega^1(M)$ using $g$. As before,
the latter endomorphism of $\Omega^1(M)$ will also be denoted by
$g(r)$. In local coordinates, it is given by
\begin{equation}\label{op-g}
g(r): \sum_a \omega_a dx^a \mapsto \sum_{a,b,c} g(r)_{bc}
g^{-1}_{ac} \omega_a dx^b.
\end{equation}
Similarly, the inverse of the endomorphism $g(r)$ is given by
\begin{equation}\label{op-g-inverse}
g(r)^{-1}: \sum_a \omega_a dx^a \mapsto \sum_{a,b,c} g_{bc}
g(r)^{-1}_{ac} \omega_a dx^b.
\end{equation}
Now the left-hand side of \eqref{c-div} acts on $\omega = \sum_a
\omega_a dx^a$ by
\begin{multline}\label{left-a}
v(r) (\sqrt{\det g(r)})^{-1} \sum_{a,b} \partial_a (\sqrt{\det g(r)}
g(r)^{-1}_{ab} \omega_b) \\[-3mm] = (\sqrt{\det g})^{-1} \sum_{a,b}
\partial_a (\sqrt{\det g(r)} g(r)^{-1}_{ab} \omega_b).
\end{multline}
On the other hand, the right-hand side of \eqref{c-div} acts on
$\omega$ by
$$
(\sqrt{\det g})^{-1} \sum_{a,b} \partial_a (\sqrt{\det g} v(r)
g^{-1}_{ab} g(r)^{-1} (\omega)_b).
$$
By \eqref{op-g-inverse}, the latter sum equals
\begin{multline}\label{right-a}
(\sqrt{\det g})^{-1} \sum_{a,b,c,d} \partial_a (\sqrt{\det g(r)}
g^{-1}_{ab} g_{bc} g(r)^{-1}_{dc} \omega_d) \\[-3mm]
= (\sqrt{\det g})^{-1} \sum_{a,d} \partial_a (\sqrt{\det g(r)}
g(r)^{-1}_{da} \omega_d).
\end{multline}
Comparing \eqref{left-a} and \eqref{right-a}, completes the proof.
\end{proof}


\section{Proof of Theorem \ref{A}}\label{proof-AA}

In the present section we complete the proof of Theorem \ref{A}.

We first observe that \eqref{conf-hc} implies
\begin{equation*}
\left(\int_M a_2(r) \dvol \right)^\bullet[\varphi] = (n\!-\!2\!-\!r
\partial/\partial r) \int_M \varphi a_2(r) \dvol - r \frac{1}{2}
\int_M \varphi c_2(r) \dvol.
\end{equation*}
Now by \eqref{c2-inter} the latter formula is equivalent to
\begin{multline}\label{a2-var}
\left(\int_M a_2(r) \dvol \right)^\bullet[\varphi] = (n\!-\!2\!-\!r
\partial/\partial r) \int_M \varphi a_2(r) \dvol \\
- r \pi^{-\f} \frac{1}{2} \frac{1}{3!} \int_M \varphi
\left(\int_{\r^n} \{\H(r),\{\H(r), \dot{\H}(r)\}\} e^{-\H(r)} d\xi
\right) dx.
\end{multline}
Hence, using Theorem \ref{B} and \eqref{gaussian}, we find
\begin{multline*}
\left(\int_M a_2(r) \dvol \right)^\bullet[\varphi] \\ =
(n\!-\!2\!-\!r \partial/\partial r) \int_M \varphi a_2(r) \dvol + r
\frac{1}{2} \int_M \varphi v(r) \Delta_{g(r)} ((\dot{v}/v) (r))
\dvol.
\end{multline*}
Combining this relation with Theorem \ref{var}, yields
$$
\left(\int_M \Lambda(r) \dvol \right)^\bullet[\varphi] =
(n\!-\!2\!-\!r \partial/\partial r) \int_M \varphi \Lambda(r) \dvol.
$$
The assertion follows by expansion in $r$. \hfill $\square$

\section{Proof of Theorem \ref{A-fine}}\label{fine}

The following proof of Theorem \ref{A-fine} utilizes the algorithm
of Section \ref{AE}. An alternative proof will be given in Section
\ref{structure-gen}.

For $j=2$, the relation \eqref{heat-algo} is a formula for $a_2(r)$
in terms of $r_{-4}(\lambda;r)$. For any second-order operator with
complete symbol $p_2(x,\xi) + p_1(x,\xi) + p_0(x)$, the algorithm in
the proof of Theorem \ref{hk-expansion} yields
$$
r_{-2}(\lambda) = (p_2 \!-\! \lambda)^{-1}
$$
and
\begin{align*}
r_{-3}(\lambda) & = -(p_2 \!-\! \lambda)^{-1} \left(p_1
r_{-2}(\lambda) + \sum_i \partial_{\xi_i}(p_2) D_{x_i} (r_{-2}(\lambda))\right) \\
& = - (p_2 \!-\! \lambda)^{-2} p_1 + (p_2 \!-\! \lambda)^{-3} \sum_i
\partial_{\xi_i}(p_2) D_{x_i} (p_2).
\end{align*}
Moreover, we find
\begin{align*}
r_{-4}(\lambda) = - r_{-2}(\lambda) & \Big[ p_1 r_{-3}(\lambda) + p_0 r_{-2}(\lambda) \\
& + \sum_i \left(\partial_{\xi_i}(p_2) D_{x_i} (r_{-3}(\lambda)) +
\partial_{\xi_i}(p_1) D_{x_i}(r_{-2}(\lambda))\right) \\
& + \frac{1}{2} \sum_{i,j} \partial^2_{\xi_i \xi_j} (p_2) D^2_{x_i
x_j} (r_{-2}(\lambda)) \Big].
\end{align*}
These formulas imply that $r_{-4}(\lambda)$ is given by the sum
\begin{align}
& - (p_2\!-\!\lambda)^{-4} p_1 \sum_i
\partial_{\xi_i}(p_2) D_{x_i}(p_2) + (p_2\!-\!\lambda)^{-3} \left(p_1^2
+ \sum_i \partial_{\xi_i} (p_1) D_{x_i}(p_2)\right) \\
& - (p_2\!-\!\lambda)^{-2} p_0 \label{pot} \\
& - (p_2\!-\!\lambda)^{-1} \sum_i \partial_{\xi_i}(p_2)
D_{x_i}(r_{-3}(\lambda)) - (p_2\!-\!\lambda)^{-1} \frac{1}{2}
\sum_{i,j} \partial^2_{\xi_i \xi_j}(p_2) D^2_{x_i
x_j}(r_{-2}(\lambda)). \nonumber
\end{align}

First, we use the latter expression for $r_{-4}(\lambda)$ to
determine the contribution of the potential $\U(r)$ to $a_2(r) =
a_2(\H(r))$. For the operator $-\H(r)$, we have $p_0 = -\U(r)$, and
the term in \eqref{pot} yields
\begin{align*}
\pi^\f a_2(x;r) \sqrt{\det g} & = \U(x;r) \frac{1}{2\pi i}
\int_{\r^n} \int_\Gamma (\H(r)(x,\xi)\!-\!\lambda)^{-2} e^{-\lambda} d\lambda d\xi \\
& = \U(x;r) \int_{\r^n} e^{-\H(r)(x,\xi)} d\xi \\
& = \pi^\f \U(x;r) \sqrt{\det g(r)}
\end{align*}
using \eqref{gaussian}. Hence $\U(r)$ contributes to $a_2(r)$ by
$\U(r) v(r)$. In other words, we have the decomposition
\begin{equation}\label{a2-deco}
a_2(r) = a_2(\D(r)) + \U(r) v(r),
\end{equation}
where $a_2(\D(r))$ denotes the sub-leading heat kernel coefficient
of the operator
\begin{equation}\label{D(r)}
\D(r) = -\delta (g(r)^{-1} d).
\end{equation}
By the definition of the potential $\U(r)$, it follows that
\begin{align*}
\Lambda(r) & = a_2(r) - (\dot{w}(r))^2 \\
& = a_2(\D(r)) + \delta (g(r)^{-1}d)(w)w  -\left(
\frac{\partial^2}{\partial r^2} (w) - (n\!-\!1) r^{-1}
\frac{\partial}{\partial r}(w)\right) w - (\dot{w}(r))^2.
\end{align*}
Simplification yields the formula
\begin{equation}\label{L-simple}
\Lambda(r) = a_2(\D(r)) + \delta (g(r)^{-1}d)(w(r)) w(r) -
\frac{1}{2} \left( \ddot{v}(r) - (n\!-\!1) r^{-1} \dot{v}(r)
\right).
\end{equation}
Note that the last term in \eqref{L-simple} is {\em linear} in
$v(r)$, i.e., in $a_0(r)$. By expansion in $r$, \eqref{L-simple}
yields
$$
\Lambda_{2k-2} = \left( a_2(\D(r)) + \delta (g(r)^{-1}d)(w(r))
w(r)\right)[2k\!-\!2] + (n\!-\!2k) k v_{2k};
$$
here $u(r)[k]$ denotes the coefficient of $r^k$ in the formal power
series expansion of $u$. In particular, we have
\begin{equation}\label{L-reduced}
\Lambda_{n-2} = \left( a_2(\D(r)) + \delta (g(r)^{-1}d)(w(r))
w(r)\right)[n\!-\!2].
\end{equation}
Thus, it remains to make explicit the sub-leading heat kernel
coefficient of the operator $\D(r)$. For this purpose, we observe
that
\begin{equation}\label{diff}
-\D(r)(u) + \Delta_{g(r)}(u) = (d \log v(r),du)_{g(r)}.
\end{equation}
Then we apply the above formula for $r_{-4}$ to express the
difference
$$
a_2(\D(r)) - a_2(\Delta_{g(r)})
$$
in terms of a certain Gaussian integral and calculate this integral.
For the following considerations, we fix $m \in M$ and choose normal
coordinates for $g(r)$ around $m$. We compare the contributions to
$r_{-4}(x,\xi)$ for the operators $\D(r) = -\delta (g(r)^{-1} d)$
and $\Delta_{g(r)}$. These two operators are related by
\eqref{diff}. Note that $r_{-4}$ depends quadratically on $p_1$. By
$$
\Delta_{g(r)}(u)(m) = \sum_i \partial^2_{x_i x_i}(u)(m),
$$
the linear part $p_1^{\Delta}(x,\xi)$ of the complete symbol of
$-\Delta_{g(r)}$ vanishes at $m$. Hence the linear part
$p_1^\D(x,\xi)$ of the complete symbol of
$$
-\D(r) = \delta(g(r)^{-1}d) = (d\log v(r),d)_{g(r)} - \Delta_{g(r)}
$$
equals
$$
p_1^\D (m,\xi) = i \sum_j \partial_{x_j}(\log v)(m) \xi_j
$$
at $m$. Let
$$
d_1(x,\xi) \st p_1^\D (x,\xi) - p_1^{\Delta}(x,\xi).
$$
Now the above formula for $r_{-4}(\lambda)$ shows that the
difference
$$
r_{-4}^\D(m,\xi,\lambda) - r_{-4}^{\Delta}(m,\xi,\lambda)
$$
is given by the sum
\begin{align*}
& -(\H(r)\!-\!\lambda)^{-4} d_1 \sum_j \partial_{\xi_j}(\H(r)) D_{x_j}(\H(r)) \\
& + (\H(r)\!-\!\lambda)^{-3} ((p_1^\D)^2 + \sum_j \partial_{\xi_j}(d_1) D_{x_j}(\H(r))) \\
& + (\H(r)\!-\!\lambda)^{-1} \sum_j \partial_{\xi_j}(\H(r)) D_{x_j}
((\H(r)\!-\!\lambda)^{-2} d_1)
\end{align*}
at $m$. By $\H(r)(m,\xi)=|\xi|^2$ and the vanishing of all
derivatives $\partial_{x_j}(\H(r))(m,\xi)$, the latter sum equals
\begin{multline}\label{d1-part}
(|\xi|^2\!-\!\lambda)^{-3} (p_1^\D)^2(m,\xi) + 2
(|\xi|^2\!-\!\lambda)^{-3} \sum_{j,k} \xi_j \xi_k
\partial^2_{x_j x_k} (\log v)(m) \\
= (|\xi|^2\!-\!\lambda)^{-3} \left( -\left( \sum_j \partial_{x_j}
(\log v)(m) \xi_j \right)^2 + 2 \sum_{j,k}
\partial^2_{x_j x_k}(\log v)(m)\xi_j \xi_k \right).
\end{multline}
Now we calculate the integral
\begin{equation}\label{int-d1}
I_{-4}(m) = \frac{1}{2\pi i} \int_{\r^n} \int_\Gamma \left(
r_{-4}^\D(m,\xi,\lambda) - r_{-4}^{\Delta}(m,\xi,\lambda)\right)
e^{-\lambda} d\lambda d\xi.
\end{equation}
First, by \eqref{d1-part} and Cauchy's formula, it simplifies to the
sum of
\begin{equation}\label{int-d1-1}
-\frac{1}{2!} \int_{\r^n} \left( \sum_j \partial_{x_j}(\log v)(m)
\xi_j \right)^2 e^{-|\xi|^2} d\xi
\end{equation}
and
\begin{equation}\label{int-d1-2}
\int_{\r^n} \left( \sum_{j,k}
\partial^2_{x_j x_k}(\log v)(m) \xi_j \xi_k \right) e^{-|\xi|^2} d\xi.
\end{equation}
Next, we evaluate the integrals \eqref{int-d1-1} and
\eqref{int-d1-2} using the identities
$$
\int_{\r^n} \xi_j \xi_k e^{-|\xi|^2} d\xi \big/ \int_{\r^n}
e^{-|\xi|^2} d\xi = \frac{1}{2} \delta_{jk} \quad \mbox{and} \quad
\int_{\r^n} e^{-|\xi|^2} d\xi = \pi^\f.
$$
It follows that
$$
I_{-4}(m) = -\frac{1}{4} \pi^\f |d\log v|^2(m) + \frac{1}{2} \pi^\f
\Delta_{g(r)}(\log v)(m).
$$
Therefore, we obtain
\begin{align*}
\pi^\f a_2(\D) \sqrt{\det g} & = \pi^\f a_2(\Delta) \sqrt{\det g} + I_{-4}(m) \\
& = \pi^\f a_2(\Delta) \sqrt{\det g} + \frac{1}{2} \pi^\f
\Delta_{g(r)}(\log v) - \frac{1}{4} \pi^\f |d\log v|^2.
\end{align*}
Hence we get the relation
\begin{equation}\label{a2-compare}
a_2(\D) = a_2(\Delta_{g(r)})  + \frac{1}{2} \Delta_{g(r)}(\log v) v
- \frac{1}{4} |d\log v|^2 v.
\end{equation}
We combine \eqref{a2-compare} with the well-known formula for $a_2$
of the Laplacian (see Section \ref{coeff-L}). We find
\begin{equation}\label{a2-r}
a_2(\Delta_{g(r)}) = \frac{\scal(g(r))}{6} v(r),
\end{equation}
where the additional factor $v(r)$ comes from the fact that we
regard $\Delta_{g(r)}$ as an operator on $L^2(M,g)$. Now
simplification yields the formula
\begin{equation}\label{a2-almost}
a_2(\D) = \frac{\scal(g(r))}{6} v(r) + \frac{1}{2} \Delta_{g(r)}(v)
- \frac{3}{4} \frac{|d v|^2}{v}.
\end{equation}
Thus, by \eqref{L-simple}, we find
\begin{multline}\label{L-simp}
\Lambda(r) = \frac{\scal (g(r))}{6} v(r) - \frac{1}{2} \left(
\ddot{v}(r) - (n\!-\!1) r^{-1} \dot{v}(r)\right)\\
+ \frac{1}{2} \Delta_{g(r)}(v) - \frac{3}{4} \frac{|d v|^2}{v} +
\delta (g(r)^{-1}d)(w(r)) w(r).
\end{multline}
Now \eqref{diff} implies
$$
\delta(g(r)^{-1} d w) w = - \Delta_{g(r)}(w) w + (d \log v,dw) w.
$$
Some calculations show that
$$
\Delta_{g(r)}(w) w = \frac{1}{2} \Delta_{g(r)}(v) - \frac{1}{4}
\frac{|dv|^2}{v}
$$
and
$$
(d \log v,dw) w = \frac{1}{2} \frac{|dv|^2}{v}.
$$
Hence we find the relation
$$
\delta(g(r)^{-1}dw)w = -\frac{1}{2} \Delta_{g(r)}(v) + \frac{3}{4}
\frac{|dv|^2}{v}.
$$
It follows that the last three terms in \eqref{L-simp} cancel.
Therefore, we conclude that
\begin{equation}\label{L-inter}
\Lambda(r) = \frac{\scal (g(r))}{6} v(r) - \frac{1}{2} \left(
\ddot{v}(r) - (n\!-\!1) r^{-1} \dot{v}(r)\right).
\end{equation}
In order to calculate the scalar curvature of $g(r)$, we recall the
construction of the Poincar\'e-Einstein metric. In terms of the
coordinates $r^2 = \rho$, the metric $g_+$ reads
\begin{equation}\label{new-coo}
g_+ = \frac{d\rho^2}{4\rho^2} + \frac{1}{\rho} h(\rho), \quad
h(\rho) = g_{(0)} + \rho g_{(2)} + \rho^2 g_{(4)} + \cdots.
\end{equation}
We evaluate the vanishing of the tangential part of $\Ric(g_+) +
ng_+$. By Equation (3.17) in \cite{FG-final}, this leads to the
identity
\begin{equation}\label{Ricci}
- \rho \left[2 \ddot{h} - 2 \dot{h} h^{-1} \dot{h} +
\tr(h^{-1}\dot{h}) \dot{h} \right] + (n\!-\!2) \dot{h} + \tr(h^{-1}
\dot{h}) h + \Ric(h) = 0
\end{equation}
(see also the discussion on page 241 of \cite{juhl-book}), where $h
= h(\rho)$ and the dots denote derivatives with respect to $\rho$.
The relation \eqref{Ricci} implies
\begin{equation}\label{R}
\scal(h) = -(2n-2) \tr (h^{-1}\dot{h}) + \rho \left( 2 \tr (h^{-1}
\ddot{h}) - 2 \tr (h^{-1} \dot{h} h^{-1} \dot{h}) + \tr
(h^{-1}\dot{h})^2 \right).
\end{equation}
Now
$$
\tr (h^{-1} \dot{h}) = \partial_\rho \tr \log h = \partial_\rho \log
\det h = 2 \partial_\rho \log V = 2 \dot{V}/V,
$$
where $V$ is defined by $V(r^2) = v(r)$. In turn, the latter
equation gives
$$
2 \partial_\rho (\dot{V}/V) = \tr (h^{-1} \ddot{h}) - \tr (h^{-1}
\dot{h} h^{-1} \dot{h}).
$$
Therefore, \eqref{R} can be rewritten in the form
$$
\scal(h) = -4(n\!-\!1) \dot{V}/V + 4 \rho \partial_\rho(\dot{V}/V) +
4 \rho (\dot{V}/V)^2.
$$
Hence
$$
\scal(h) V = -4(n-1) \dot{V} + 4 \rho \ddot{V}.
$$
In other words,
\begin{equation}\label{scal}
\scal(g(r)) v(r) = \ddot{v}(r) - (2n\!-\!1) r^{-1} \dot{v}(r).
\end{equation}
Together with \eqref{L-inter}, we find
$$
\Lambda(r) = - \frac{1}{3} \ddot{v}(r) + \frac{n\!-\!2}{6} r^{-1}
\dot{v}(r).
$$
This completes the proof of Theorem \ref{A-fine}. \hfill $\square$
\medskip

Finally, we use Theorem \ref{A-fine} to give a {\bf second proof of
Theorem \ref{B}}.

On the one hand, Theorem \ref{conform-heat} leads to the variational
formula \eqref{a2-var}. On the other hand, Theorem \ref{conf-reno}
and Theorem \ref{var} together with Theorem \ref{A-fine} show that
\begin{multline*}
\left(\int_M a_2(r) \dvol \right)^\bullet[\varphi] = (n\!-\!2\!-\!r
\partial/\partial r) \int_M \varphi a_2(r) \dvol + r \frac{1}{2}
\int_M \varphi v(r) \Delta_{g(r)} ((\dot{v}/v)(r)) \dvol.
\end{multline*}
Since both identities hold true for arbitrary $\varphi \in
C^\infty(M)$, the assertion follows by comparison.

\section{Proof of Theorem \ref{structure}}\label{structure-gen}

We start with an alternative proof of Theorem \ref{A-fine}.

First, we note that the arguments in Section \ref{fine} proved the
relation
$$
a_2(r) = \left(\frac{\scal(g(r))}{6} + \frac{1}{2}
\Delta_{g(r)}(\log v) - \frac{1}{4} |d\log v|^2 + \U(r)\right) v(r)
$$
(see \eqref{a2-deco}, \eqref{a2-compare} and \eqref{a2-r}). The
latter formula also may be seen as a special case of Gilkey's
formula for the sub-leading heat kernel coefficient of a general
second-order operator. In fact, from that perspective, it takes the
form
\begin{equation}\label{a2-E}
a_2(r) = \left(\frac{\scal(g(r))}{6} + E(r) \right) v(r)
\end{equation}
with
\begin{equation}\label{E}
E(r) = \Delta_{g(r)}(\eta(r)) - |d\eta(r)|^2 + \U(r), \quad \eta(r)
= 1/2 \log v(r).
\end{equation}

In order to recognize \eqref{a2-E} and \eqref{E} as a special case
of Gilkey's formula, we recall this formula for the heat kernel
coefficient $a_2$ of a second-order operator of the form
\begin{equation}\label{L-gen}
L = - \left( \sum_{i,j} g^{ij} \partial^2/\partial x_i
\partial x_j + \sum_k A^k \partial/\partial x_k + B  \right)
\end{equation}
defined by a metric $g$ and coefficients $A^k, B \in C^\infty(M)$.
Let
\begin{equation}\label{one-form-gen}
\omega_i = \frac{1}{2} g_{ij} (A^j + g^{kl} \Gamma_{kl}^j)
\end{equation}
and
\begin{equation}\label{E-gen}
E = B - g^{ij} (\partial \omega_i/\partial x_j + \omega_i \omega_j -
\omega_k \Gamma_{ij}^k),
\end{equation}
where $\Gamma_{ij}^k$ are the Christoffel symbols of the Levi-Civita
connection of $g$. Then the operator $L$ can be identified with the
operator
$$
-\tr_g (\nabla \circ \nabla) - E
$$
on the sections of the trivial line bundle $\L = M \times \c$
equipped with the connection $\nabla$ defined by the connection
one-form $\omega = \sum_k \omega_k dx^k$. Here we use the notation
$\nabla$ also for the coupled connection on $TM \otimes \L$ defined
by $\nabla$ on $\L$ and the Levi-Civita connection on $TM$. For the
proof see Lemma 4.8.1 and Corollary 4.8.2 in \cite{G-book}. In these
terms, we have the following result.

\begin{lemm}[\cite{G-book}, Theorem 4.8.16/(b)]\label{a2-gen}
$$
a_2 (L) = \frac{\scal(g)}{6} + E.
$$
\end{lemm}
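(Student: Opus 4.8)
The plan is to compute $a_2(L)$ pointwise with Gilkey's pseudodifferential algorithm recalled in Section \ref{AE} (Corollary \ref{lead} and formula \eqref{heat-algo}), reusing the same bookkeeping that was carried out for the operator $\D(r)$ in Section \ref{fine}. Fix $m\in M$. Since $a_{2}(L)$ is a universal polynomial in the jets at $m$ of the symbol of $L$, and a change of trivialization of $\L$ conjugates $L$ by a nowhere-vanishing function and hence does not affect its heat kernel coefficients, I am free to work in geodesic normal coordinates $\{x_1,\dots,x_n\}$ for $g$ centered at $m$ and, simultaneously, in the radial (Gauss) gauge for $\nabla$, i.e.\ the trivialization in which $\sum_i x_i\,\omega_i\equiv 0$. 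Differentiating this relation once and twice gives $\omega_i(m)=0$ and $(\partial_j\omega_i)(m)+(\partial_i\omega_j)(m)=0$; in particular the trace $\sum_i(\partial_i\omega_i)(m)$ vanishes, while the antisymmetric part of $(\partial_j\omega_i)(m)$ equals $\tfrac12\Omega_{ji}(m)$. Combined with the vanishing at $m$ of the Christoffel symbols, formulas \eqref{one-form-gen}--\eqref{E-gen} then yield $A^k(m)=0$ and $E(m)=B(m)$.

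Next I would write the complete symbol of $-L$ as $p_2+p_1+p_0$ with $p_2(x,\xi)=\sum_{i,j}g^{ij}(x)\xi_i\xi_j$, $p_1(x,\xi)=i\sum_k A^k(x)\xi_k$ and $p_0(x)=B(x)$, so that at $m$ we have $p_2(m,\xi)=|\xi|^2$, $p_1(m,\xi)=0$ and $\partial_{x_j}(p_2)(m,\xi)=0$. Inserting these into the explicit expression for $r_{-4}(\lambda)$ derived in Section \ref{fine}, every term containing a factor $p_1$, $p_1^2$, or $D_{x_j}(p_2)$ evaluated at $m$ drops out. What survives at $m$ is $-(p_2-\lambda)^{-2}p_0$ together with the two metric terms arising from $\partial_{\xi_i}(p_2)D_{x_i}(r_{-3}(\lambda))$ and $\tfrac12\sum_{i,j}\partial^2_{\xi_i\xi_j}(p_2)D^2_{x_ix_j}(r_{-2}(\lambda))$, into which the second derivatives $\partial_a\partial_b(g^{ij})(m)$ enter through the curvature identity \eqref{normal-der}. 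The connection enters only through $\omega_i(m)=0$ and the vanishing trace $\sum_i(\partial_i\omega_i)(m)=0$; the remaining appearance of $\Omega$ would be through combinations antisymmetric in a pair of $\xi$-indices, which integrate to zero against the symmetric Gaussian density, so $\Omega$ does not contribute at this order.

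The first surviving term, after the $\lambda$-integral by Cauchy's formula and the Gaussian $\xi$-integral \eqref{gaussian}, produces $B(m)=E(m)$ --- exactly as the $\U(r)$-contribution to $a_2(r)$ was extracted in Section \ref{fine}. The purely metric terms are, by construction, identical to those occurring in the computation of $a_2(\Delta_g)$; using the quartic Gaussian moment formula \eqref{quartic} and \eqref{normal-der} they assemble into $\scal(g)(m)/6$, which is the classical outcome already recorded in \eqref{a2-r}. Adding the two contributions gives $a_2(L)(m)=\scal(g)(m)/6+E(m)$, and since $m$ is arbitrary this proves the lemma.

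The one genuinely delicate step is the verification that the metric terms reproduce precisely the constant $\tfrac16$: this requires performing the $\lambda$- and $\xi$-integrals in $(p_2-\lambda)^{-3}\,\partial_{\xi_i}(p_2)D_{x_i}(r_{-3})$ and in $\tfrac12\partial^2_{\xi_i\xi_j}(p_2)D^2_{x_ix_j}(r_{-2})$ while tracking the contraction of $R_{aibj}(m)+R_{ajbi}(m)$ against $\delta_{ab}\delta_{cd}+\delta_{ac}\delta_{bd}+\delta_{ad}\delta_{bc}$ and invoking the pair and Bianchi symmetries of $R$. As this part is independent of $E$ and of $\omega$, it may either be done directly or imported verbatim from the reduction to the scalar Laplacian in Section \ref{fine} (equation \eqref{a2-r}); in any case it is the sole place where the normalization could go astray. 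A subsidiary, purely formal point is the existence of the radial gauge and the gauge-independence invoked at the outset, which is immediate from the fact that $a_2(L)$ is a universal symbol invariant and is unaffected by conjugation of $L$ by a nowhere-zero function.
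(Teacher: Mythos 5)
Your proposal is correct in substance, but it takes a genuinely different route from the paper for the simple reason that the paper does not prove this statement at all: Lemma \ref{a2-gen} is quoted verbatim from Gilkey (\cite{G-book}, Theorem 4.8.16/(b)), and only the subsequent Corollary \ref{a2-L-geom} is proved, by computing $\omega$ and $E$ from \eqref{one-form-gen}--\eqref{E-gen}. What you supply instead is a self-contained derivation inside the machinery the paper has already set up: the parametrix algorithm of Section \ref{AE} and the explicit $r_{-4}(\lambda)$ bookkeeping of Section \ref{fine}, combined with normal coordinates and the radial (Fock--Schwinger) gauge to kill $\omega_i(m)$, the symmetrized $\partial\omega(m)$, and hence to reduce the general line-bundle operator at the point $m$ to the scalar Laplacian plus the zeroth-order term, which contributes $E(m)$ exactly as the $\U(r)$-term was extracted in \eqref{a2-deco}. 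This buys self-containedness and a proof that visibly parallels \eqref{a2-compare}--\eqref{a2-r}; citing Gilkey buys brevity and the general vector-bundle statement (with the $\Omega$-terms that first matter in $a_4$), which the paper also invokes later for Theorem \ref{structure}.

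Three small points to tighten. First, a sign/bookkeeping slip: with the paper's conventions the resolvent algorithm is applied to the \emph{positive} operator, whose zeroth-order symbol is $-B$ (compare ``$p_0=-\U(r)$'' in Section \ref{fine}), not $+B$ as written for ``$-L$''; the final sign of the $E$-contribution comes out right only after this is fixed consistently with the orientation of $\Gamma$. Second, your sentence that all terms containing $D_{x_j}(p_2)$ drop out, with only ``metric terms'' surviving, glosses over the fact that $D_{x_i}(r_{-3}(\lambda))$ contains $D_{x_i}(p_1)$, i.e.\ first derivatives of $A^k$ at $m$; these do survive, and one must observe that their gauge part is antisymmetric (radial gauge) and dies under the Gaussian integral, while their Christoffel part coincides with the corresponding term for $\Delta_g$ --- you do say this implicitly, but it is exactly the step where the reduction to the Laplacian could silently fail, so it deserves an explicit display. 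Third, importing the normalization $a_2(\Delta_g)=\scal(g)/6$ rather than rederiving it is acceptable here, since the paper itself quotes this classical formula in Section \ref{coeff-L} and uses it as \eqref{a2-r}; just state that this is what you are doing, so the lemma is proved modulo a fact the paper already assumes.
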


Hence for an operator $L$ of the form
\begin{equation}\label{L-geom}
L = - (\Delta_g  + g(d\eta,d) + b)
\end{equation}
with $b, \eta \in C^\infty(M)$, we obtain the following formula.

\begin{corr}\label{a2-L-geom} The second heat kernel coefficient of
the operator $L$ as in \eqref{L-geom} is given by
$$
a_2(L) = \frac{\scal(g)}{6} - \frac{1}{2} \Delta_g (\eta) -
\frac{1}{4} |d \eta|_g^2 + b.
$$
\end{corr}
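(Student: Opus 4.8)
The plan is to specialize Lemma \ref{a2-gen} to the operator $L$ written in the form \eqref{L-geom}, i.e.\ to read off the coefficients $A^k$ and $B$ and then compute the connection one-form $\omega$ and the endomorphism $E$ via \eqref{one-form-gen} and \eqref{E-gen}. Writing $\Delta_g = g^{ij}(\partial_i\partial_j - \Gamma_{ij}^k\partial_k)$ and $g(d\eta,d) = g^{ij}(\partial_i\eta)\partial_j$, comparison with \eqref{L-gen} gives
\begin{equation*}
A^k = -g^{ij}\Gamma_{ij}^k + g^{kj}\partial_j\eta, \qquad B = b.
\end{equation*}
Substituting into \eqref{one-form-gen} yields $\omega_i = \tfrac12 g_{ij}(A^j + g^{kl}\Gamma_{kl}^j) = \tfrac12 \partial_i\eta$, so that $\omega = \tfrac12 d\eta$.

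Next I would plug $\omega = \tfrac12 d\eta$ and $B = b$ into \eqref{E-gen}:
\begin{equation*}
E = b - g^{ij}\bigl(\partial_j\omega_i + \omega_i\omega_j - \omega_k\Gamma_{ij}^k\bigr)
= b - \tfrac12 g^{ij}\bigl(\partial_j\partial_i\eta - \Gamma_{ij}^k\partial_k\eta\bigr) - \tfrac14 g^{ij}\partial_i\eta\,\partial_j\eta.
\end{equation*}
The middle term is exactly $-\tfrac12\Delta_g(\eta)$ in the convention $\Delta_g = g^{ij}(\partial_i\partial_j - \Gamma_{ij}^k\partial_k)$, and the last term is $-\tfrac14|d\eta|_g^2$. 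Hence $E = b - \tfrac12\Delta_g(\eta) - \tfrac14|d\eta|_g^2$. Lemma \ref{a2-gen} then gives $a_2(L) = \scal(g)/6 + E$, which is the claimed formula.

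The computation is entirely routine once the comparison of symbols is set up correctly, so there is no real obstacle; the only point requiring a little care is the bookkeeping of signs and of the sign convention for $\Delta_g$ and for $|d\eta|_g^2$ (the paper uses $-\Delta = \delta d \ge 0$), so that the Christoffel contraction appearing in \eqref{E-gen} combines with the plain second derivatives to reproduce $\Delta_g(\eta)$ rather than something off by a sign or a Christoffel term. I would double-check this by evaluating at a point in normal coordinates for $g$, where all $\Gamma_{ij}^k$ vanish and the identities collapse to $\omega_i = \tfrac12\partial_i\eta$, $E = b - \tfrac12\sum_i\partial_i^2\eta - \tfrac14\sum_i(\partial_i\eta)^2$, matching the stated expression.
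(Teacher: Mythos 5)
Your proof is correct and follows essentially the same route as the paper: specialize Lemma \ref{a2-gen} by reading off $A^k$ and $B$ from \eqref{L-geom}, compute $\omega=\tfrac12 d\eta$ from \eqref{one-form-gen}, and then evaluate $E$ via \eqref{E-gen}. The only cosmetic difference is that you write $\Delta_g$ in the Christoffel form $g^{ij}(\partial_i\partial_j-\Gamma_{ij}^k\partial_k)$ so the cancellation in $\omega$ is immediate, whereas the paper expands $\Delta_g$ in divergence form and verifies the same cancellation using the explicit formula for $\Gamma_{ij}^k$.
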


\begin{proof} By Lemma \ref{a2-gen}, it suffices to determine the
potential $E$. First, we observe that for $L$ as in \eqref{L-geom}
the connection form $\omega$ is given by
\begin{equation}\label{c-form}
\omega = \frac{1}{2} d\eta.
\end{equation}
Indeed, \eqref{one-form-gen} implies
$$
\omega_r = \frac{1}{2} g_{rj} \left( (g^{ij})_i + \frac{1}{2} (\log
\det(g))_i g^{ij} + g^{kl} \Gamma_{kl}^j + A^j \right)
$$
with $A^i = g^{ij} \eta_j$. We simplify the latter sum using
\begin{equation}\label{christ}
\Gamma_{ij}^k = \frac{1}{2} g^{kl} ((g_{il})_j + (g_{jl})_i -
(g_{ij})_l).
\end{equation}
Then
\begin{align*}
\omega_r & = \frac{1}{2} g_{rj} \left( (g^{ij})_i + \frac{1}{2}
(g_{ab})_i g^{ab} g^{ij} + \frac{1}{2} g^{kl} g^{js} ((g_{ks})_l +
(g_{ls})_k - (g_{kl})_s) + A^j \right) \\
& = \frac{1}{2} g_{rj} \left( (g^{ij})_i + g^{kl} g^{js} (g_{ks})_l
+ A^j \right) \\
& = \frac{1}{2} g_{rj} A^j.
\end{align*}
This proves \eqref{c-form}. Next, \eqref{christ} yields
$$
g^{ij} \Gamma_{ij}^k = - (g^{kr})_r - g^{kl} \frac{1}{2} (\log \det
(g))_l.
$$
Hence \eqref{E-gen} gives
\begin{align}\label{E-g}
E & = b - \frac{1}{2} g^{ij} \partial^2 \eta/\partial x_i
\partial x_j - \frac{1}{4} g^{ij} \eta_i \eta_j - \frac{1}{2}
\left((g^{kr})_r + g^{kl} \frac{1}{2} (\log \det (g))_l \right) \eta_k  \nonumber \\
& = b - \frac{1}{4} |d\eta|_g^2 - \frac{1}{2} \Delta_g (\eta).
\end{align}
The proof is complete.
\end{proof}

In order to apply Corollary \ref{a2-L-geom} to the operator
$\H(r;g)$, we write $\H(r;g)$ in the form
\begin{equation}\label{key}
\H(r;g) = \Delta_{g(r)} - (d\log v(r),d)_{g(r)} + \U(r;g)
\end{equation}
(for details see the proof of Lemma 8.1 in \cite{juhl-ex}). This
yields the formula \eqref{a2-E} with $E(r)$ as given in \eqref{E};
the coefficient $v(r)$ comes from the fact that the heat kernel of
$\H(r;g)$ is defined by integration against the volume form of $g$
(instead of $g(r)$).

The calculations in Section \ref{fine} already proved the following
remarkable formula for the new potential $E(r)$. It shows that
$E(r)$ does not involve differentiations along $M$.

\begin{lemm}\label{E-final} The function $E(r)$ as defined in
\eqref{E} can be written in the form
\begin{equation}\label{E-closed}
E(r) = - \frac{1}{2} \frac{\ddot{v}}{v} +
\frac{1}{2}\frac{n\!-\!1}{r} \frac{\dot{v}}{v} + \frac{1}{4}
\frac{\dot{v}^2}{v^2} = - w^{-1} \left( \frac{\partial^2}{\partial
r^2} - \frac{n\!-\!1}{r} \frac{\partial}{\partial r} \right)(w).
\end{equation}
\end{lemm}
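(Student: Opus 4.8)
The plan is to reduce everything to the function $w(r)=\sqrt{v(r)}$, using the key observation that $\eta(r)=\tfrac12\log v(r)=\log w(r)$. First I would express the three building blocks of $E(r)=\Delta_{g(r)}(\eta(r))-|d\eta(r)|_{g(r)}^2+\U(r;g)$ in terms of $w(r)$: from $d\eta=dw/w$ one gets $|d\eta(r)|_{g(r)}^2=w^{-2}|dw|_{g(r)}^2$, and from the elementary pointwise identity $\Delta(\log f)=f^{-1}\Delta f-f^{-2}|df|^2$ one gets $\Delta_{g(r)}(\eta)=w^{-1}\Delta_{g(r)}(w)-w^{-2}|dw|_{g(r)}^2$.

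Next I would unfold the potential from its definition \eqref{potential}, writing $\U(r)=-w^{-1}\big(\ddot w-(n-1)r^{-1}\dot w\big)+w^{-1}\delta\big(g(r)^{-1}dw\big)$, and then invoke the relation \eqref{diff} between $\delta(g(r)^{-1}d)$ and $\Delta_{g(r)}$ to rewrite the last term as $-w^{-1}\Delta_{g(r)}(w)+w^{-1}(d\log v,dw)_{g(r)}$. Since $d\log v=2\,d\log w$, we have $(d\log v,dw)_{g(r)}=2w^{-1}|dw|_{g(r)}^2$, hence $w^{-1}(d\log v,dw)_{g(r)}=2w^{-2}|dw|_{g(r)}^2$.

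Assembling the three pieces, the terms $w^{-1}\Delta_{g(r)}(w)$ cancel and the coefficients of $w^{-2}|dw|_{g(r)}^2$ sum to $-1-1+2=0$. This cancellation of \emph{all} derivatives along $M$ — which is precisely why $\log w$, rather than $\tfrac12\log v$, is the right variable — leaves
$$E(r)=-w^{-1}\!\left(\frac{\partial^2}{\partial r^2}-\frac{n-1}{r}\frac{\partial}{\partial r}\right)(w),$$
the second asserted equality. Finally, substituting $w=v^{1/2}$, so that $w^{-1}\dot w=\tfrac12\dot v/v$ and $w^{-1}\ddot w=\tfrac12\ddot v/v-\tfrac14\dot v^2/v^2$, turns this into $E(r)=-\tfrac12\ddot v/v+\tfrac12\tfrac{n-1}{r}\dot v/v+\tfrac14\dot v^2/v^2$, the first equality. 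I expect no genuine difficulty here beyond careful bookkeeping of signs; the only conceptual input is recognizing the substitution $\eta=\log w$ that forces the tangential derivatives to disappear.
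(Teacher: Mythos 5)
Your proof is correct and follows essentially the same route as the paper: unfold $\U(r)$ from \eqref{potential}, convert $\delta_g(g(r)^{-1}dw)$ into $-\Delta_{g(r)}(w)+(d\log v,dw)_{g(r)}$ via \eqref{diff}, and observe that all derivatives along $M$ cancel, leaving only the $r$-derivatives of $w$. The only difference is bookkeeping — you organize the cancellation in terms of $w=e^{\eta}$ while the paper expands $\Delta_{g(r)}(\log v)$ and $|d\log v|^2$ in terms of $v$ — which is immaterial.
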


\begin{proof} We repeat the arguments (in a slightly different form).
By \eqref{E}, we have
\begin{equation*}
E(r) = \frac{1}{2} \Delta_{g(r)}(\log v) - \frac{1}{4} |d\log v|^2 +
\U(r).
\end{equation*}
Now we observe that for any metric $g$ and any non-negative function
$u \in C^\infty(M)$,
$$
\Delta_g (\log u) = \delta_g (du/u) = \Delta_g (u) /u - |du|^2_g
/u^2.
$$
Therefore, by the definition of $\U(r)$, $E(r)$ is the sum of
$$
- w^{-1} \left( \frac{\partial^2}{\partial r^2} - \frac{n\!-\!1}{r}
\frac{\partial}{\partial r} \right)(w)
$$
and
\begin{equation}\label{cancel}
\frac{1}{2} (\Delta_{g(r)}(v)/v - |dv|^2_{g(r)}/v^2) - \frac{1}{4}
|dv|^2_{g(r)}/v^2  + \delta_g(g(r)^{-1}d)(w)/w.
\end{equation}
But
\begin{align*}
\delta_g(g(r)^{-1}d)(w)/w & = - \Delta_{g(r)}(w)/w + (d\log v,dw)_{g(r)}/w \\
& = - \frac{1}{2} \Delta_{g(r)}(v) / v + \frac{1}{4} |dv|_{g(r)}^2
/v^2 + \frac{1}{2} |dv|_{g(r)}^2 /v^2.
\end{align*}
Hence the sum \eqref{cancel} vanishes. This completes the proof.
\end{proof}

Finally, we give a {\bf proof of Theorem \ref{structure}}.

The above arguments prove that $\H(r;g)$ can be identified with the
operator
$$
\tr_{g(r)} (\nabla^{v(r)} \circ \nabla^{v(r)}) + E(r)
$$
on the sections of $\L$ equipped with the connection $\nabla^{v(r)}$
with the connection one-form $\omega = -\frac{1}{2} d\log v(r)$.
Here the potential $E(r)$ (see \eqref{E-closed}) is regarded as an
endomorphism of $\L$. In particular, $\H(r;g)$ is a Laplace-type
operator for the metric $g(r)$. Now we apply Lemma 4.8.6 in
\cite{G-book}. It follows that $a_{2k}(r)$ is an invariant
polynomial of homogeneity $2k$ in the non-commuting variables
$$
\{\nabla^{g(r)}_{i_1} \cdots \nabla^{g(r)}_{i_r} R_{ijkl}(r),
\nabla^{v(r)}_{i_1} \cdots \nabla^{v(r)}_{i_r} E(r)\}
$$
of the covariant derivatives of the curvature of the Levi-Civita
connection of $g(r)$ and the covariant derivatives of the
endomorphism $E(r)$ of $\L$. Here we use the fact that the curvature
of $\nabla^{v(r)}$ vanishes. Finally, by definition, the variables
$R_{ijkl}(r)$ and $E(r)$ have homogeneity $2$, and each derivative
increases the homogeneity by $1$. This completes the proof of
Theorem \ref{structure}. \hfill $\square$
\medskip

Since the coefficients in the Taylor expansion of $g(r)$ are
universal functionals of $g$, Theorem \ref{structure} implies that
the coefficients $a_{2k}(r) = a_{2k}(g(r))$ may be regarded also as
functions in $r$ with coefficients given by scalar local Riemannian
invariants of the metric $g$.

\section{Extremal properties of some curvature functionals}\label{extremal}

In the present section, we study extremal properties of the critical
heat kernel coefficients $a_{(2,n-2)}$ and some related functionals.

In view of $a_0(r) = v(r)$, the following result on extremal
properties of renormalized volume coefficients may be regarded as a
result on the first heat kernel coefficient of $\H(r)$.

\begin{thm}[\cite{CFG}]\label{extremal-RVC} Let $(M^n,g)$ be a
closed unit volume Einstein manifold of dimension $n \ge 3$. Assume that
$g$ has non-zero scalar curvature. Then the restriction of the functional
$$
\F_{2k}(g) = \int_M v_{2k}(g) \dvol_g, \; 2k < n
$$
to the set $c_1=[g]_1$ of unit volume metrics conformal to $g$ has a
local extremum at $g$. More precisely, $(-1)^k \F_{2k}$ has a local
minimum at $g$ if $\scal(g)>0$, and $\F_{2k}$ has a local maximum at
$g$ if $\scal(g)<0$. In the case $\scal(g)>0$, the local minimum is
strict unless $(M^n,g)$ is isometric to a (rescaled) round sphere.
\end{thm}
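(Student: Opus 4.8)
The plan is to verify that a unit-volume Einstein metric $g$ as in the statement is a critical point of $\F_{2k}$ restricted to $[g]_1$, and then to read off the sign of the Hessian of this restriction from the \emph{local} conformal variation formula for $v(r)$ (Theorem \ref{RVC-CV}), specialized to Einstein metrics, combined with the sharp Lichnerowicz--Obata eigenvalue estimate. Criticality is immediate: by Theorem \ref{conf-reno} the first variation is $\F_{2k}(g)^\bullet[\varphi]=(n-2k)\int_M\varphi\,v_{2k}(g)\,\dvol_g$, and for an Einstein metric $g(r)=(1-\lambda r^2)^2 g$ with $\lambda=\scal(g)/(4n(n-1))$, so $v(r)=(1-\lambda r^2)^n$ and $v_{2k}(g)=\binom nk(-\lambda)^k$ is \emph{constant} on $M$; hence $\F_{2k}(g)^\bullet[\varphi]=(n-2k)v_{2k}(g)\int_M\varphi\,\dvol_g$ vanishes on the tangent space $\{\varphi:\int_M\varphi\,\dvol_g=0\}$ of $[g]_1$ at $g$.

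For the second variation I would specialize Theorem \ref{RVC-CV}. For an Einstein metric $g(s)^{-1}$ is the $M$-constant scalar endomorphism $(1-\lambda s^2)^{-2}\id$, so the tensor $L(s)$ in \eqref{L} reduces to the $M$-constant scalar $\frac{s^2}{2}(1-\lambda s^2)^{n-1}$ and \eqref{V-CV} becomes $v(r)^\bullet[\varphi]=-r\dot v(r)\varphi+\frac{r^2}{2}(1-\lambda r^2)^{n-1}\Delta\varphi$, i.e. $v_{2k}^\bullet[\varphi]=-2k\,v_{2k}\varphi+\ell_{2k}\Delta\varphi$ with $\ell_{2k}=\frac12\binom{n-1}{k-1}(-\lambda)^{k-1}$. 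Using this, the scaling homogeneity $\F_{2k}(\mu^2g)=\mu^{n-2k}\F_{2k}(g)$ (which lets one restrict to variations with $\int_M\varphi\,\dvol_g=0$ and absorb the volume change $\vol(e^{2\varepsilon\varphi}g)=1+\frac{n^2\varepsilon^2}{2}\int_M\varphi^2\dvol_g+O(\varepsilon^3)$), together with $\int_M\varphi\Delta\varphi\,\dvol_g=-\int_M|d\varphi|_g^2\dvol_g$, a short order-$\varepsilon^2$ expansion of $\F_{2k}\big(\vol(e^{2\varepsilon\varphi}g)^{-2/n}e^{2\varepsilon\varphi}g\big)$ should give that the second variation of $\F_{2k}\big|_{[g]_1}$ at $g$ in a direction $\varphi$ with $\int_M\varphi\,\dvol_g=0$ equals
$$
(n-2k)\Big(-2k\,v_{2k}\!\int_M\!\varphi^2\,\dvol_g-\ell_{2k}\!\int_M\!|d\varphi|_g^2\,\dvol_g\Big).
$$

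If $\scal(g)<0$ then $\lambda<0$, so $v_{2k}=\binom nk(-\lambda)^k>0$ and $\ell_{2k}=\frac12\binom{n-1}{k-1}(-\lambda)^{k-1}>0$; since $n-2k>0$, the expression above is $<0$ for $\varphi\ne0$, so $\F_{2k}$ has a (strict) local maximum at $g$. If $\scal(g)>0$ then $\lambda>0$ and the two terms have opposite signs; multiplying by $(-1)^k$ and factoring out $\frac12\binom{n-1}{k-1}\lambda^{k-1}>0$, the numerical identity $k\binom nk=n\binom{n-1}{k-1}$ turns the second variation into a positive multiple of $\int_M|d\varphi|_g^2\dvol_g-4n\lambda\int_M\varphi^2\dvol_g$. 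Since $\Ric(g)=4(n-1)\lambda\,g$, Lichnerowicz's estimate gives $\int_M|d\varphi|_g^2\dvol_g\ge 4n\lambda\int_M\varphi^2\dvol_g$ for every $\varphi$ with $\int_M\varphi\,\dvol_g=0$, with equality only when $\varphi$ is a first eigenfunction of $-\Delta$, which by Obata's rigidity theorem \cite{obata} forces $(M^n,g)$ to be a round sphere. Hence $(-1)^k\F_{2k}$ has a local minimum at $g$, strict unless $(M^n,g)$ is a (rescaled) round sphere.

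The step I expect to be the main obstacle is the second-variation bookkeeping: one has to combine correctly the variation of $v_{2k}$ (Theorem \ref{RVC-CV}), the variation of the volume form, and the unit-volume normalization, and it is crucial that the algebra produce exactly the coefficient $4n\lambda$ — it is precisely the identity $k\binom nk=n\binom{n-1}{k-1}$ that makes the resulting quadratic form match the \emph{sharp} Lichnerowicz--Obata inequality, with its correct rigidity case, uniformly in $k$ for all $2k<n$.
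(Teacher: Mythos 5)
Your proposal is correct and follows essentially the same route as the paper: criticality from the first-variation formula together with the constancy of $v_{2k}$ at Einstein metrics, the Hessian computed from Theorem \ref{RVC-CV} specialized to $v(r)=(1-\lambda r^2)^n$ and $L(r)=\tfrac{r^2}{2}(1-\lambda r^2)^{n-1}$, and the sign analysis via the Lichnerowicz--Obata estimate; your coefficients $-2k\,v_{2k}$ and $\ell_{2k}$ agree with the paper's formula \eqref{second-F2k} precisely through the identity $k\binom nk=n\binom{n-1}{k-1}$. The only point the paper adds is the degenerate round-sphere case, where it observes that the kernel of the Hessian consists of first eigenfunctions, i.e.\ infinitesimal conformal factors of conformal diffeomorphisms, which is what secures the (non-strict) local minimum there.
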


We also recall that $\F_n$ is conformally invariant. For $2k>n$,
there is an analogous result with maxima and minima interchanged. In
the locally conformally flat case, we have $v_{2k} = (-1/2)^k \tr
(\wedge^k\Rho)$ and the study of the corresponding functionals
$\F_{2k}$ was initiated by Viaclovsky \cite{viac}.

For later reference, it will be convenient to recall the main steps
of the proof of Theorem \ref{extremal-RVC}.

\begin{proof} First, Theorem \ref{RVC-CV} implies that
$$
(\F_{2k})^\bullet[\varphi] = (n-2k) \int_M \varphi v_{2k} \dvol, \;
\varphi \in C^\infty(M).
$$
It follows that the restriction of $\F_{2k}$ ($2k < n$) to the
subset $c_1 = [g]_1 \subset c$ is critical at $g$ iff $v_{2k}$ is
constant at $g$. In fact, the method of Lagrange multipliers shows
that $g$ is critical iff
$$
\left(\F_{2k} - c \int_M \dvol\right)^\bullet[\varphi] = 0
$$
at $g$ for some constant $c$ and all $\varphi \in C^\infty(M)$.
Hence
$$
\int_M \varphi ((n-2k) v_{2k} - n c) \dvol = 0, \; \varphi \in
C^\infty(M)
$$
at $g$. This proves the claim. The assertions on extremal values are
consequences of a formula for the second conformal variation of the
restrictions of the functionals $\F_{2k}$ to $c_1$. In order to
derive this formula, let $\gamma(t) = e^{2\varphi(t)}g$ be a curve
in the conformal class $c$ of $g$ with $\varphi(0)=0$, $\varphi'(0)
= \varphi$ and $\varphi''(0) = \psi$; here $'$ denotes the
derivative with respect to $t$. Let
$$
(\F_{2k}(g))^{\bullet \bullet}[\gamma] \st (\partial^2/\partial
t^2)|_0 (\F_{2k}(\gamma(t))).
$$
We also set
$$
\F(r;g) \st \int_M v(r;g) \dvol_g \quad \mbox{and} \quad
(\F(r;g))^{\bullet \bullet}[\gamma] = (\partial^2/\partial t^2)|_0
(\F(r;\gamma(t))).
$$
Now Theorem \ref{RVC-CV} implies
\begin{align*}
(\F(r;g))^{\bullet \bullet}[\gamma] & = (\partial/\partial t)|_0
((\partial/\partial t) (\F(r;\gamma(t)))) \\
& = (\partial/\partial t)|_0 ((\partial/\partial s)|_0 (\F(r;\gamma(t+s)))) \\
& = (\partial/\partial t)|_0 \left((n\!-\!r\partial/\partial r)
\int_M \varphi'(t) v(r;\gamma(t)) \dvol_{\gamma(t)} \right).
\end{align*}
A second application of Theorem \ref{RVC-CV} yields
\begin{equation}\label{F-second-var}
(\F(r;g))^{\bullet \bullet}[\gamma]  = (n\!-\!r\partial/\partial r)
\left(\int_M [-\varphi (\varphi r \dot{v}(r) + \delta(L(r) d\varphi))+
(\psi \!+\! n \varphi^2) v(r)] \dvol_g \right).
\end{equation}
If the curve $\gamma(t)$ is in $c_1$, i.e., preserves volumes, the
integral $\int_M e^{n \varphi(t)} \dvol_g$ does not depend on $t$.
Hence for such curves we have
$$
\int_M \varphi \dvol_g = 0 \quad \mbox{and} \quad \int_M (\psi + n
\varphi^2) \dvol_g = 0.
$$
In particular, this has the consequence that, if $v(r)$ is constant
on $M$, the last two terms in the second variational formula
\eqref{F-second-var} vanish for such curves $\gamma(t)$. In this
case, the second variation does not depend on the second derivative
of $\varphi(t)$ at $t=0$ and we simply write $(\F(r;g))^{\bullet
\bullet}[\varphi]$ for the second variation.

Now we evaluate the formula \eqref{F-second-var} for a curve
$\gamma(t)$ of unit volume metrics conformal to an Einstein metric
$g$. As we have just seen, it suffices to evaluate the first two
terms at an Einstein metric. We recall from Section \ref{basics}
that for an Einstein metric $g$,
$$
g(r) = (1\!-\!cr^2)^2 g \;\; \mbox{with} \;\; c =
\scal(g)/(4n(n\!-\!1)).
$$
Hence
\begin{equation}\label{v-Einstein}
v(r) = (1\!-\!cr^2)^n.
\end{equation}
In particular, $v(r)$ is constant on $M$. Moreover, we find
\begin{equation}\label{L-Einstein}
L(r) = (1\!-\!cr^2)^n \int_0^r s (1\!-\!cs^2)^{-2} ds =
\frac{r^2}{2} (1\!-\!cr^2)^{n-1}.
\end{equation}
Therefore, \eqref{F-second-var} implies the variational formula
\begin{equation}\label{second-GF}
(\F(r;g))^{\bullet \bullet}[\varphi] = (n\!-\!r\partial/\partial r)
\left((1\!-\!cr^2)^{n-1} \frac{r^2}{2} \int_M \varphi (4nc \varphi +
\Delta \varphi) \dvol_g \right),
\end{equation}
where $\Delta = -\delta d$. Note that $4nc = \scal(g)/(n\!-\!1)$. By
expansion of \eqref{second-GF} into power series in $r$, we obtain
\begin{equation}\label{second-F2k}
(\F_{2k}(g))^{\bullet \bullet}[\varphi] = (n\!-\!2k) \frac{1}{2}
\binom{n-1}{k-1} (-c)^{k-1} \int_M \varphi (4nc \varphi + \Delta
\varphi) \dvol_g.
\end{equation}

Next we recall some results of Obata \cite{obata}. The smallest
possible non-zero eigenvalue $\lambda_1$ of $-\Delta$ on a closed
Einstein manifold $(M^n,g)$ of positive scalar curvature equals
$\scal(g)/(n\!-\!1)$. Moreover, this value is an eigenvalue iff
$(M^n,g)$ is isometric to a (rescaled) round sphere.

Thus, if $(M^n,g)$ is not isometric to a (rescaled) round sphere,
\eqref{second-GF} has the following consequences for the signs of
the quadratic forms $(\F_{2k})^{\bullet \bullet}[\varphi]$:
\begin{itemize}
\item For $2k<n$ and $\scal(g)>0$ the sign of
$(-1)^k(\F_{2k})^{\bullet \bullet}[\varphi]$ is positive.
\item For $2k<n$ and $\scal(g)<0$ the sign of
$(\F_{2k})^{\bullet \bullet}[\varphi]$ is negative.
\end{itemize}
This proves the assertions in this case.

On the other hand, if $(M^n,g)$ is isometric to a round sphere, then
$\F_{2k}(g)^{\bullet \bullet}[\varphi]$ vanishes iff $\varphi$ is an
eigenfunction for $\lambda_1=n$. By the conformal transformation law
of scalar curvature, these eigenfunctions correspond to
infinitesimal conformal factors of conformal
diffeomorphisms.\footnote{In fact, let $g_0$ be the round metric on
$\s^n$ and assume that $\varphi_t^*(g_0) = e^{2\Phi_t} g_0$. Then
$\J(e^{2\Phi_t} g_0) = \J(\varphi_t^*(g_0)) = \varphi_t^* (\J(g_0))
= \J(g_0)$. Combining this with the relation $e^{2\Phi_t}
\J(e^{2\Phi_t} g_0) = \J(g_0) - \Delta (\Phi_t) + \dots$, yields
$e^{2\Phi_t} \J(g_0) = \J(g_0) - \Delta(\Phi_t) + \dots$. Now
differentiation at $t=0$ implies
$$
2\psi \J(g_0) = - \Delta(\psi), \; \psi = (d/dt)|_0(\Phi_t).
$$
But $\J(g_0)=n/2$.} Hence the assertion on local minima also follows
in this case.
\end{proof}

By expansion of \eqref{F-second-var} into power series in $r$, we
obtain second conformal variational formulas for the functionals
$\F_{2k}$ on the full conformal class. These formulas admit a more
natural formulation in terms of the second conformal variations of
the {\em rescaled} functionals
$$
\tilde{\F}_{2k}(g) \st \F_{2k}(g)/V(g)^{(n-2k)/n} \quad \mbox{with}
\quad V(g) = \int_M \dvol_g
$$
at its critical points. Note that $\tilde{\F}_{2k}(\lambda g) =
\tilde{\F}_{2k}(g)$ for any $\lambda > 0$. In the following, it will
be convenient to use the notation
$$
\bar{\varphi} \st \varphi - \left(\int_M \varphi \dvol\right) / V.
$$
Let $L_{2k}$ be the coefficient of $r^{2k}$ in the $r$-expansion of
$L(r)$.

\begin{prop}\label{tilde-F-cv} Let $\gamma(t)$ be a curve in the conformal
class of $g$ with $\gamma(0)=g$ and $(\partial/\partial t)|_0
(\gamma(t)) = 2 \varphi g$. Let $2k < n$. Then $\tilde{\F}_{2k}$ is
critical at $g$, i.e.,
$$
(\partial/\partial t)|_0 (\tilde{\F}_{2k}(\gamma(t))) = 0,
$$
iff $v_{2k}(g)$ is constant. Moreover, we have
$$
(\partial^2/\partial t^2)|_0 (\tilde{\F}_{2k}(\gamma(t))) = -
V^{-(n-2k)/n} (n\!-\!2k) \int_M (2k v_{2k} \bar{\varphi}^2 +
\bar{\varphi} \delta(L_{2k} d \bar{\varphi})) \dvol
$$
at a critical metric $g$.
\end{prop}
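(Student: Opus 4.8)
The plan is to feed both the first variation (Theorem~\ref{conf-reno}) and the second variation formula \eqref{F-second-var} through the product rule for the scale-normalized functional $\tilde{\F}_{2k}=\F_{2k}\,V^{-(n-2k)/n}$, and then to specialize to a critical metric. Throughout I write $\alpha=(n-2k)/n$, $P=\int_M\varphi\,\dvol_g$, $Q=\int_M\varphi^2\,\dvol_g$, $\Psi=\int_M\psi\,\dvol_g$ with $\psi=\varphi''(0)$; a direct computation of $(\partial^2/\partial t^2)|_0\int_M e^{n\varphi(t)}\,\dvol_g$ gives $V'(0)=nP$ and $V''(0)=n\Psi+n^2Q$.

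First I would treat criticality. By the chain rule, $(\partial/\partial t)|_0\tilde{\F}_{2k}(\gamma(t))=V^{-\alpha}\F_{2k}^\bullet[\varphi]-\alpha V^{-\alpha-1}\F_{2k}\,nP$, and Theorem~\ref{conf-reno} gives $\F_{2k}^\bullet[\varphi]=(n-2k)\int_M\varphi v_{2k}\,\dvol_g$. Since $\alpha n=n-2k$ and $\F_{2k}/V$ is the mean value of $v_{2k}$, this equals $V^{-\alpha}(n-2k)\bigl(\int_M\varphi v_{2k}\,\dvol_g-\tfrac{\F_{2k}}{V}P\bigr)=V^{-\alpha}(n-2k)\int_M\bar\varphi\,v_{2k}\,\dvol_g$, using $\int_M\bar\varphi\,\dvol_g=0$. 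As $2k<n$, this vanishes for all $\varphi$ iff $\int_M\bar\varphi\,v_{2k}\,\dvol_g=0$ for all $\varphi$; choosing $\varphi=v_{2k}$, so that $\bar\varphi=v_{2k}-\F_{2k}/V$, turns this into $\int_M(v_{2k}-\F_{2k}/V)^2\,\dvol_g=0$, hence $v_{2k}$ is constant, and the converse is clear. This is the first assertion.

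For the second variation I would first extract the power-series form of $\F_{2k}^{\bullet\bullet}$. Since $\F(r;g)=\int_M v(r;g)\,\dvol_g=\sum_{k\ge0}r^{2k}\F_{2k}(g)$, and since $r\dot v(r)=\sum_k 2k\,v_{2k}r^{2k}$, $L(r)=\sum_k L_{2k}r^{2k}$, $v(r)=\sum_k v_{2k}r^{2k}$, while $(n-r\partial/\partial r)$ multiplies the coefficient of $r^{2k}$ by $(n-2k)$, expanding \eqref{F-second-var} in powers of $r$ yields
\[
\F_{2k}(g)^{\bullet\bullet}[\gamma]=(n-2k)\int_M\Bigl[\bigl(\psi+(n-2k)\varphi^2\bigr)v_{2k}-\varphi\,\delta(L_{2k}\,d\varphi)\Bigr]\dvol_g .
\]
Now I apply $\tilde{\F}''=\F''V^{-\alpha}+2\F'(V^{-\alpha})'+\F(V^{-\alpha})''$ and evaluate at a critical $g$, where $v_{2k}=\F_{2k}/V=:\bar v_{2k}$ is constant, so that $\F'=(n-2k)\bar v_{2k}P$ and the bracket above simplifies. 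Collecting the resulting terms by type and using $\alpha n=n-2k$: the coefficient of $\bar v_{2k}V^{-\alpha}\Psi$ is $(n-2k)-\alpha n=0$; the coefficient of $\bar v_{2k}V^{-\alpha}Q$ is $(n-2k)^2-\alpha n^2=-2k(n-2k)$; the coefficient of $\bar v_{2k}V^{-\alpha-1}P^2$ is $-2\alpha n(n-2k)+\alpha(\alpha+1)n^2=2k(n-2k)$; and the divergence term contributes $-(n-2k)V^{-\alpha}\int_M\varphi\,\delta(L_{2k}d\varphi)\,\dvol_g$. Hence
\[
(\partial^2/\partial t^2)|_0\tilde{\F}_{2k}(\gamma(t))=-(n-2k)V^{-\alpha}\Bigl[2k\,\bar v_{2k}\bigl(Q-V^{-1}P^2\bigr)+\int_M\varphi\,\delta(L_{2k}d\varphi)\,\dvol_g\Bigr].
\]
Using the variance identity $Q-V^{-1}P^2=\int_M\bar\varphi^2\,\dvol_g$, the vanishing of $\int_M\delta(L_{2k}d\varphi)\,\dvol_g$ (integral of a divergence on a closed manifold) to replace $\varphi$ by $\bar\varphi$ in the last term, together with $d\varphi=d\bar\varphi$ and $\bar v_{2k}=v_{2k}$, one obtains the stated formula.

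The main obstacle is precisely the bookkeeping in the last step: keeping straight the three powers $V^{-\alpha},V^{-\alpha-1},V^{-\alpha-2}$ arising from differentiating $V^{-\alpha}$ twice and verifying the numerical coefficients, so that the $\psi$-terms cancel (forcing the second variation to be independent of $\varphi''(0)$, as it must be for a scale-invariant functional at a critical point) and the $\varphi^2$ and $P^2$ terms assemble exactly into $\int_M\bar\varphi^2\,\dvol_g$. This cancellation uses the precise normalization exponent $(n-2k)/n$, which is exactly what makes $\alpha n=n-2k$; all other ingredients are direct applications of Theorem~\ref{conf-reno}, formula \eqref{F-second-var}, and elementary integration by parts.
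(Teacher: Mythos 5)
Your proposal is correct and follows essentially the same route as the paper: first variation via Theorem \ref{conf-reno} to show criticality forces $v_{2k}$ constant, then expansion of \eqref{F-second-var} in powers of $r$ combined with the product rule for $\F_{2k}V^{-(n-2k)/n}$, with the same cancellation of the $\psi$-terms and the same variance identity assembling the $\varphi^2$- and $(\int\varphi)^2$-terms into $\int_M\bar{\varphi}^2\,\dvol$. Your coefficient bookkeeping (including the replacement of $\varphi$ by $\bar{\varphi}$ in the divergence term) checks out.
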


\begin{proof} We first prove that critical points of $\tilde{\F}_{2k}$
are metrics with constant $v_{2k}$. The assertion follows from the
following general argument. Consider a functional $\F(g) = \int_M
F(g) \dvol_g$ so that
$$
\F(g)^\bullet[\varphi] = m \int_M \varphi F(g) \dvol_g, \; m \ne 0.
$$
Then conformal variation of $ \tilde{\F} \st \F / V^{m/n}$ at $g$
yields
$$
m \frac{\int_M \varphi F(g) \dvol_g}{V(g)^{m/n}} - \frac{\int_M F(g)
\dvol_g}{V(g)^{m/n+1}} \frac{m}{n} n \int_M \varphi \dvol_g.
$$
Hence $\tilde{\F}$ is critical at $g$ iff
$$
\int_M \varphi F(g) \dvol_g = \frac{\int_M F(g) \dvol_g}{V(g)}
\int_M \varphi \dvol_g,
$$
i.e.,
$$
\int_M \varphi \left(F(g) - \frac{\int_M F(g) \dvol_g}{V(g)}\right)
\dvol_g = 0.
$$
Since this relation is valid for all $\varphi \in C^\infty(M)$, it
follows that $F(g)$ is constant. Now we apply the result to the
functional $\F_{2k}$ (with $m=n-2k)$. In order to derive the second
variational formula for $\tilde{\F}_{2k}$, we use
\eqref{F-second-var} and the fact that $v_{2k}$ is constant at a
critical point $g$ of $\tilde{\F}_{2k}$. First, we have
$$
(\tilde{\F}_{2k})^{\bullet \bullet}[\gamma] = (\F_{2k})^{\bullet
\bullet}[\gamma] V^{-\frac{n-2k}{n}} + 2 (\F_{2k})^\bullet[\varphi]
(V^{-\frac{n-2k}{n}})^\bullet[\varphi] + \F_{2k}
(V^{-\frac{n-2k}{n}})^{\bullet \bullet}[\gamma].
$$
Now a simple calculation shows that
$$
(V^{-\frac{n-2k}{n}})^\bullet[\varphi] = -(n-2k)
V^{-\frac{2n-2k}{n}} \int_M \varphi dv
$$
and
\begin{multline*}
(V^{-\frac{n-2k}{n}})^{\bullet \bullet}[\gamma] = -(n-2k)
V^{-\frac{2n-2k}{n}} \int_M (\psi + n \varphi^2) \dvol \\ +
(n-2k)(2n-2k) V^{-\frac{3n-2k}{n}} \left(\int_M \varphi \dvol
\right)^2.
\end{multline*}
Hence $(\F_{2k})^\bullet[\varphi] = (n\!-\!2k) \int_M \varphi v_{2k}
\dvol$ yields
\begin{multline*}
(\tilde{\F}_{2k})^{\bullet \bullet}[\gamma] = V^{-\frac{n-2k}{n}}
\Big[(\F_{2k})^{\bullet\bullet}[\gamma] \\ + 2k(n\!-\!2k) V^{-1}
v_{2k} \left(\int_M \varphi \dvol \right)^2 - n(n\!-\!2k) v_{2k}
\int_M \varphi^2 \dvol - (n\!-\!2k) v_{2k} \int_M \psi \dvol\Big];
\end{multline*}
here we utilized the fact that $v_{2k}$ is constant at $g$. Now
combining this formula with the consequence
$$
(\F_{2k})^{\bullet\bullet}[\gamma] = (n\!-\!2k) \left(-2k v_{2k}
\int_M \varphi^2 \dvol - \int_M \varphi \delta(L_{2k} d\varphi)
\dvol + v_{2k} \int_M (\psi\!+\!n\varphi^2) \dvol\right)
$$
of \eqref{F-second-var} gives
\begin{multline*}
(\tilde{\F}_{2k})^{\bullet \bullet}[\gamma] = - V^{-\frac{n-2k}{n}}(n\!-\!2k) \\
\left(2k v_{2k} \int_M \varphi^2 \dvol - 2k v_{2k} V^{-1}
\left(\int_M \varphi \dvol_g \right)^2 + \int_M \varphi
\delta(L_{2k} d\varphi) \dvol \right).
\end{multline*}
Hence the relation
$$
\int_M \left(\varphi - V^{-1} \int_M \varphi \dvol \right)^2 \dvol =
\int_M \varphi^2 \dvol - V^{-1} \left(\int_M \varphi \dvol\right)^2
$$
completes the proof.
\end{proof}

The arguments in the latter proof generalize those in \cite{GuLi}
(in the case $k=3$). Now a similar reasoning as above yields the
following result.

\begin{corr}\label{critical-revo} Let $g$ be an Einstein metric with
non-zero scalar curvature. Let $\gamma(t)$ be a curve in the
conformal class of $g$ with $\gamma(0)=g$ and $(\partial/\partial
t)|_0(\gamma(t)) = 2 \varphi g$. Then
\begin{multline}\label{second-tilde-F}
(\partial^2/\partial t^2)|_0 (\tilde{\F}_{2k}(\gamma(t))) \\
= V(g)^{-(n-2k)/n} (n\!-\!2k) \binom{n-1}{k-1} \frac{1}{2}
(-c)^{k-1} \int_M \bar{\varphi} (4n c \bar{\varphi} + \Delta
\bar{\varphi}) \dvol_g,
\end{multline}
where $c = \scal(g)/(4n(n\!-\!1))$. In particular, if $2k < n$ and
$\scal(g) > 0$, then $(-1)^k \tilde {\F}_{2k}$ has a local minimum
at $g$.
\end{corr}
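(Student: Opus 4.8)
The plan is to derive Corollary~\ref{critical-revo} by specializing the second conformal variational formula of Proposition~\ref{tilde-F-cv} to an Einstein metric, using the explicit description of $v(r)$ and $L(r)$ that already entered the proof of Theorem~\ref{extremal-RVC}. For an Einstein metric $g$ with $c=\scal(g)/(4n(n\!-\!1))\ne 0$ one has $g(r)=(1\!-\!cr^2)^2 g$, hence $v(r)=(1\!-\!cr^2)^n$ and $L(r)=\tfrac{r^2}{2}(1\!-\!cr^2)^{n-1}$ (acting on $1$-forms as this scalar times the identity, since $g(r)^{-1}$ is scalar). In particular $v_{2k}=\binom{n}{k}(-c)^k$ is constant on $M$, so by Proposition~\ref{tilde-F-cv} the metric $g$ is a critical point of $\tilde{\F}_{2k}$ whenever $2k<n$, and the second variational formula of that proposition is available.

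The next step is a short computation. Expanding $L(r)$ in $r$ gives $L_{2k}=\tfrac12\binom{n-1}{k-1}(-c)^{k-1}$, so $\delta(L_{2k}\,d\bar\varphi)=-\tfrac12\binom{n-1}{k-1}(-c)^{k-1}\Delta\bar\varphi$ because $\delta d=-\Delta$. Feeding $v_{2k}$ and $L_{2k}$ into the formula of Proposition~\ref{tilde-F-cv} and using the elementary identity $k\binom{n}{k}=n\binom{n-1}{k-1}$ (so that $2k\,v_{2k}=-2nc\binom{n-1}{k-1}(-c)^{k-1}$), the two terms $2k\,v_{2k}\bar\varphi^2$ and $\bar\varphi\,\delta(L_{2k}\,d\bar\varphi)$ combine into $-\tfrac12\binom{n-1}{k-1}(-c)^{k-1}\,\bar\varphi\,(4nc\,\bar\varphi+\Delta\bar\varphi)$, and \eqref{second-tilde-F} follows at once.

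For the final assertion, suppose $2k<n$ and $\scal(g)>0$, so $c>0$. Since $(-1)^k(-c)^{k-1}=-c^{k-1}<0$ while the remaining factors $V(g)^{-(n-2k)/n}$, $n-2k$ and $\binom{n-1}{k-1}$ are positive, formula \eqref{second-tilde-F} shows that $(-1)^k(\tilde{\F}_{2k})^{\bullet\bullet}[\gamma]$ has the same sign as $\int_M(|d\bar\varphi|^2-4nc\,\bar\varphi^2)\,\dvol_g$. Since $\bar\varphi$ has zero mean and $4nc=\scal(g)/(n\!-\!1)$ is, by Obata's theorem \cite{obata}, a lower bound for the first nonzero eigenvalue of $-\Delta$, this integral is $\ge 0$, so the Hessian of $(-1)^k\tilde{\F}_{2k}$ at the critical point $g$ is positive semidefinite on functions of zero mean. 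One then argues exactly as in the proof of Theorem~\ref{extremal-RVC}: if $(M^n,g)$ is not a round sphere the inequality is strict for every nonconstant $\bar\varphi$ and $g$ is a strict local minimum, while on the round sphere the only degenerate directions are the infinitesimal conformal factors of conformal diffeomorphisms, along which $\tilde{\F}_{2k}$ is constant. I expect the main obstacle to be precisely this last passage from ``positive semidefinite second variation'' to ``local minimum'' — isolating the trivial conformal-diffeomorphism directions and controlling the behaviour transverse to them — together with the careful bookkeeping of binomial factors and signs in the middle computation.
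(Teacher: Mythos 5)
Your proposal is correct and follows essentially the same route as the paper: the paper's own (very brief) argument is precisely to specialize Proposition \ref{tilde-F-cv} to an Einstein metric using $v(r)=(1\!-\!cr^2)^n$ and $L(r)=\tfrac{r^2}{2}(1\!-\!cr^2)^{n-1}$ (formulas \eqref{v-Einstein}, \eqref{L-Einstein}), and then invoke Obata's eigenvalue bound as in the proof of Theorem \ref{extremal-RVC}. Your bookkeeping of the binomial identity $k\binom{n}{k}=n\binom{n-1}{k-1}$, the sign conventions for $\Delta=-\delta d$, and the treatment of the kernel (constants, and on the round sphere the first eigenfunctions coming from conformal diffeomorphisms) all match the paper's reasoning.
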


If $\scal(g)>0$, $2k<n$ and $(M^n,g)$ is not isometric to a (rescaled)
round sphere $\s^n$, the right-hand side of \eqref{second-tilde-F}
vanishes iff $\bar{\varphi}=0$, i.e., iff $\varphi$ is
constant. Therefore, in this case, the functional
$(-1)^k\tilde{\F}_{2k}$ has a strict minimum (modulo rescalings). The
special case $k=3$ of this result is due to Guo and Li \cite{GL}.

Next, we outline the arguments which prove an analogous result for
the renormalized volume functional $\V_n(g_+;g)$.

We start by recalling the definition of $\V_n(g_+;g)$ following
\cite{G-vol}. Assume that $(M,c)$ is the conformal infinity of a
conformally compact Einstein manifold $(X,g_+)$. Then for any choice
of $g \in c$ there is a boundary defining function $0 < r \in
C^\infty(X)$ so that $g_+ = r^{-2}(dr^2+g(r))$ near the boundary $M$
and $g(0)=g$. Let
\begin{equation}\label{RV-def}
\V(g_+;g) \st \FP_{\varepsilon=0} \int_{r > \varepsilon}
\dvol_{g_+}.
\end{equation}
For more details see \cite{G-vol}, \cite{BJ}. As noted in
\cite{albin}, this Hadamard regularization of the volume $\int_X
\dvol_{g_+}$ coincides with the Riesz regularization
$$
\FP_{\lambda=0} \int_X r^\lambda \dvol_{g_+}.
$$

\begin{thm}[\cite{CFG}]\label{extremal-RV} Let $(M^n,g)$ be a
closed unit volume Einstein manifold of even dimension $n \ge 4$.
Assume that $(M^n,[g])$ is the conformal infinity of a
Poincar\'e-Einstein manifold $(X^{n+1},g_+)$. Then the restriction
of the renormalized volume functional $\V_n(g_+;\cdot)$ to the set
$[g]_1$ is critical at $g$ and the second variation at $g$ is given
by
\begin{equation}
\V_n^{\bullet\bullet}[\varphi] = (-c)^{\f-1} \frac{1}{2}
\binom{n-1}{\f-1} \int_M \varphi (4nc \varphi + \Delta \varphi) dv.
\end{equation}
\end{thm}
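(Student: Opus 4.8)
The plan is to reduce the statement entirely to the already-established local variational formula for the renormalized volume coefficients (Theorem \ref{RVC-CV}) together with its integrated second-variation consequence, exactly as was done for the functionals $\F_{2k}$ in the proof of Theorem \ref{extremal-RVC}. The key structural input is the holographic formula for the renormalized volume: in even dimension $n$, up to a universal constant and additive terms that are independent of the representative $g$ (or that integrate to conformally invariant quantities), one has
\begin{equation*}
\V_n(g_+;g) = c_n \int_M v_n(g)\,\dvol_g + (\text{$g$-independent terms}),
\end{equation*}
where the nonconstant ``bulk'' part of $\int_{r>\varepsilon}\dvol_{g_+}$ expands in $\varepsilon$ with coefficients given by the integrals $\int_M v_{2k}\,\dvol_g$ for $2k<n$ and the finite part picks out $\int_M v_n\,\dvol_g$ (see \cite{G-vol}, \cite{BJ}). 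Since $\int_M v_n\,\dvol_g$ is conformally invariant (Theorem \ref{conf-reno}), its \emph{first} conformal variation vanishes on the whole conformal class, so in particular $\V_n(g_+;\cdot)$ is critical at $g$ along $[g]_1$; no Einstein hypothesis is needed for criticality. The entire content of the theorem is therefore the computation of the second variation.

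First I would set up the second variation of $\int_M v(r;g)\,\dvol_g$ as a formal power series in $r$, repeating verbatim the derivation of \eqref{F-second-var}: applying Theorem \ref{RVC-CV} twice along a curve $\gamma(t)=e^{2\varphi(t)}g$ gives
\begin{equation*}
(\F(r;g))^{\bullet\bullet}[\gamma] = (n-r\partial/\partial r)\left(\int_M\left[-\varphi(\varphi r\dot v(r)+\delta(L(r)d\varphi))+(\psi+n\varphi^2)v(r)\right]\dvol_g\right).
\end{equation*}
Then I would specialize to an Einstein metric $g$, where $v(r)=(1-cr^2)^n$ with $c=\scal(g)/(4n(n-1))$ and $L(r)=\tfrac{r^2}{2}(1-cr^2)^{n-1}$, exactly as in \eqref{v-Einstein}--\eqref{L-Einstein}. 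Because $v(r)$ is constant on $M$ and the curve lies in $[g]_1$, the $\psi$- and $\varphi^2$-terms drop out and one is left with \eqref{second-GF}. The new step beyond the proof of Theorem \ref{extremal-RVC} is to extract from this generating identity not the coefficient of a fixed $r^{2k}$ but the \emph{finite part} relevant to $\V_n$: one takes the coefficient of $r^n$ in $(n-r\partial/\partial r)\big((1-cr^2)^{n-1}\tfrac{r^2}{2}\int_M\varphi(4nc\varphi+\Delta\varphi)\dvol_g\big)$. Since $(n-r\partial/\partial r)$ acting on $r^n$ gives $0$, the operator $(n-r\partial/\partial r)$ contributes nothing to the $r^n$-coefficient and we are simply reading off the coefficient of $r^n$ in $(1-cr^2)^{n-1}\tfrac{r^2}{2}$, namely $\binom{n-1}{n/2-1}(-c)^{n/2-1}\tfrac12$, multiplied by $\int_M\varphi(4nc\varphi+\Delta\varphi)\dvol_g$. (The factor $c_n$ from the holographic formula must be normalized so that it equals $1$; this bookkeeping of constants is where the statement's precise normalization of $\V_n$ enters.) This yields $\V_n^{\bullet\bullet}[\varphi]=(-c)^{n/2-1}\tfrac12\binom{n-1}{n/2-1}\int_M\varphi(4nc\varphi+\Delta\varphi)\dvol$, as claimed; here I would suppress the $\bar\varphi$ normalization since $\varphi$ may be taken with zero mean on the unit-volume slice, or equivalently observe that the constant part of $\varphi$ contributes $0$ because $\int_M\varphi\,\Delta\varphi\,\dvol$ and the $\varphi^2$-term combine into the mean-zero projection as in Proposition \ref{tilde-F-cv}.

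The main obstacle, and the only genuinely delicate point, is justifying the identification of the finite part of $\int_{r>\varepsilon}\dvol_{g_+}$ with (a constant times) $\int_M v_n\,\dvol_g$ \emph{up to $g$-independent terms}, and keeping careful track of the resulting overall constant so that it matches the normalization in the theorem; this requires the expansion $\dvol_{g_+}=r^{-n-1}v(r)\,dr\,\dvol_g$ near the boundary and the standard extraction of the $\varepsilon^0$-coefficient, together with the fact that the $\varepsilon^{-2k}$ divergences ($2k<n$) and the conformally invariant leftover do not interfere with the second variation on $[g]_1$ at an Einstein metric (they are either fixed or have vanishing second variation by the same mean-zero argument). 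Once this is in place, everything else is a direct specialization of formulas already proved in the excerpt. As a sanity check one can note that the $k=n/2$ instance of \eqref{second-GF} formally agrees with the claimed formula, which is consistent with $\int_M v_n\,\dvol_g$ being (a multiple of) the critical renormalized volume anomaly.
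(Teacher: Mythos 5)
There is a genuine gap, and it lies at the very foundation of your reduction. Your structural claim that $\V_n(g_+;g)=c_n\int_M v_n(g)\,\dvol_g+(\text{$g$-independent terms})$ is false in even dimension $n$: in the expansion of $\int_{r>\varepsilon}\dvol_{g_+}$ the integral $\int_M v_n\,\dvol_g$ appears as the coefficient of the $\log(1/\varepsilon)$ term, while the finite part \eqref{RV-def} is a genuinely nonlocal functional of $g$ whose conformal variation is Graham's anomaly formula $\V_n^\bullet[\varphi]=\int_M\varphi\, v_n\,\dvol_g$ (this is exactly the analogy with $\log\det(-P_2)$ versus $a_n$ stressed in Section \ref{CV-zeta}). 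If your premise were true, then by Theorem \ref{conf-reno} the first \emph{and} second conformal variations of $\V_n$ would vanish identically, contradicting the nonzero Hessian asserted in the theorem; so the conclusion cannot be reached from that starting point. Two further consequences of the same error: criticality on $[g]_1$ is \emph{not} automatic for all metrics -- by the anomaly formula and a Lagrange-multiplier argument it holds precisely when $v_n$ is constant, which is where the Einstein hypothesis enters -- and your extraction of the $r^n$-coefficient from \eqref{second-GF} is not legitimate, since $(n-r\partial/\partial r)$ \emph{annihilates} the $r^n$-term rather than acting trivially on it (consistently, the $r^n$-coefficient of $(\F(r))^{\bullet\bullet}$ is $0$, which is just the second-variation statement of the conformal invariance of $\int_M v_n\,\dvol$; your closing ``sanity check'' via the $k=n/2$ case of \eqref{second-F2k} likewise gives $0$ because of the prefactor $n-2k$).

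The correct route, which is the paper's, starts from the anomaly formula $\V_n^\bullet[\varphi]=\int_M\varphi\, v_n\,\dvol_g$ rather than from any local formula for $\V_n$ itself. Differentiating once more along a unit-volume curve and using that $v_n$ is constant at the Einstein metric (so the $\psi$- and $n\varphi^2$-terms drop out as in the proof of Theorem \ref{extremal-RVC}), one gets $\V_n^{\bullet\bullet}[\varphi]=\int_M\varphi\, v_n^\bullet[\varphi]\,\dvol_g$, and then Theorem \ref{RVC-CV} together with \eqref{v-Einstein} and \eqref{L-Einstein} gives $v(r)^\bullet[\varphi]=\tfrac{r^2}{2}(1-cr^2)^{n-1}(4nc\varphi+\Delta\varphi)$, whose $r^n$-coefficient is $(-c)^{\f-1}\tfrac12\binom{n-1}{\f-1}(4nc\varphi+\Delta\varphi)$. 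Your final numerical coefficient coincides with this, but only because the illegitimate dropping of $(n-r\partial/\partial r)$ happens to reproduce the expression $\int_M\varphi\, v(r)^\bullet[\varphi]\,\dvol_g$ inside \eqref{second-GF}; as written, neither the criticality claim nor the Hessian computation is justified.
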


\begin{proof} By the general variational formula
$$
\V_n^\bullet[\varphi] = \int_M \varphi v_n dv
$$
(see \cite{G-vol}) and the arguments in the proof of Theorem
\ref{extremal-RVC}, it suffices to prove that
$$
v_n^\bullet[\varphi] = (-c)^{\f-1} \frac{1}{2} \binom{n-1}{\f-1}
(4nc \varphi + \Delta \varphi)
$$
at the Einstein metric $g_0$. But this is a consequence of Theorem
\ref{RVC-CV}, \eqref{v-Einstein} and \eqref{L-Einstein}.
\end{proof}

Again, Theorem \ref{extremal-RV} implies that $(-1)^\f
\V_n(g_+;\cdot)$ has a local minimum at $g$ if $\scal(g)>0$ and
$\V_n(g_+;\cdot$ has a local maximum at $g$ if $\scal(g)<0$. The
latter results also appear in \cite{GMS} (Lemma
4.4).\footnote{However, the arguments in \cite{GMS} are flawed by
the fact that the Hessian of the restriction of $\V_n$ to $[g_0]_1$
is incorrectly calculated using paths of the form $e^{t\varphi}g_0$.
For the round sphere $\s^n$, this has the effect that the incorrect
Hessian does not have the infinitesimal conformal factors of
conformal diffeomorphisms in its kernel.}

We continue with an analogous discussion of the functional $\int_M
a_{(2,n-2)} \dvol$.

We first recall that the functional
\begin{equation}\label{W}
\W(r;g) \st \int_M (\dot{w}(r))^2 \dvol_g
\end{equation}
is critical at Einstein metrics. This follows from Theorem \ref{var}
and the fact that $v(r)$ is constant for Einstein metrics.

\begin{thm}\label{extremal-crit} Let $(M^n,g)$ be a closed
Einstein manifold of even dimension $n \ge 4$. Assume that $g$ has
non-zero scalar curvature. Then the restriction of the (integrated)
critical heat kernel coefficient $a_{(2,n-2)}$ to the full conformal
class $c=[g]$ has a local extremum at $g$. More precisely,
\begin{itemize}
\item $(-1)^{\f-1} \int_M a_{(2,n-2)} \dvol$ has a local maximum at $g$
if $\scal(g)>0$, and
\item $\int_M a_{(2,n-2)} \dvol$ has a local minimum at $g$ if
$\scal(g)<0$.
\end{itemize}
If $(M^n,g)$ is not isometric to a (rescaled) round sphere then the
extreme values are strict (modulo rescaling).
\end{thm}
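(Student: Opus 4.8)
The plan is to reduce Theorem~\ref{extremal-crit} to the already-established extremal properties of the integrated renormalized volume coefficient $\int_M v_n\,\dvol$ (Theorem~\ref{extremal-RVC}, case $2k=n$) together with the closed formula for $a_2(r)$ from Theorem~\ref{A-fine}. First I would recall that, by Theorem~\ref{A-fine}, the corrected coefficient satisfies $\Lambda_{n-2}(g) = \tfrac13\cdot\tfrac n2(n-2n)v_n(g) = -\tfrac{n^2}{6}v_n(g)$, and that $\Lambda(r)=a_2(r)-(\dot w(r))^2$. Hence on a closed manifold
\begin{equation*}
\int_M a_{(2,n-2)}\,\dvol_g = -\frac{n^2}{6}\int_M v_n(g)\,\dvol_g + \W_{n-2}(g),
\end{equation*}
where $\W_{n-2}(g)=\int_M \omega_{n-2}(g)\,\dvol_g$ is the critical Taylor coefficient of $\W(r)$ defined in \eqref{WF}. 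Now $\int_M v_n\,\dvol$ is conformally invariant, so on the conformal class $c=[g]$ the functional $\int_M a_{(2,n-2)}\,\dvol$ differs from $\W_{n-2}$ by a constant; therefore its critical points and the signature of its Hessian on $c$ coincide with those of $\W_{n-2}$.

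Next I would compute the second conformal variation of $\W(r;g)$ at an Einstein metric $g$ and extract the coefficient of $r^{n-2}$. Since $v(r)=(1-cr^2)^n$ is constant on $M$ for an Einstein metric (by \eqref{v-Einstein} with $c=\scal(g)/(4n(n-1))$), Theorem~\ref{var} shows that $\W(r;g)$ is critical at $g$ and that its second variation depends only on the first-variation data. The strategy is to differentiate the formula in Theorem~\ref{var} once more in the curve direction $\gamma(t)=e^{2\varphi(t)}g$, exactly as in the proof of Theorem~\ref{extremal-RVC}: the term $\int_M\varphi v(r)\Delta_{g(r)}((\dot v/v)(r))$ vanishes when $v(r)$ is constant, and one is left with a scaling term plus a term $-\,r\tfrac12\int_M\varphi\,\delta(\dot L(r)\,d\varphi)\,\dvol$ coming from Theorem~\ref{RVC-CV}, where $L(r)=\tfrac{r^2}{2}(1-cr^2)^{n-1}$ by \eqref{L-Einstein}. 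A direct computation along these lines — mirroring \eqref{second-GF} — should give
\begin{equation*}
(\W(r;g))^{\bullet\bullet}[\varphi] = (\text{explicit polynomial in }r)\cdot\int_M\bar\varphi\,(4nc\,\bar\varphi + \Delta\bar\varphi)\,\dvol_g,
\end{equation*}
and expanding in $r$ and reading off the $r^{n-2}$ coefficient yields $(\W_{n-2})^{\bullet\bullet}[\varphi]$ as a nonzero constant multiple of $\int_M\bar\varphi\,(4nc\,\bar\varphi+\Delta\bar\varphi)\,\dvol_g$. Tracking the sign of that constant as a function of the sign of $c$ (equivalently of $\scal(g)$) and of the parity of $\tfrac n2$ gives precisely the claimed signs $(-1)^{\f-1}$ for $\scal>0$ and the reverse for $\scal<0$.

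Finally, the sign/strictness step uses Obata's theorem exactly as in the proof of Theorem~\ref{extremal-RVC}: on a closed Einstein manifold with $\scal(g)>0$ the smallest nonzero eigenvalue of $-\Delta$ is $\scal(g)/(n-1)=4nc$, so the quadratic form $\varphi\mapsto\int_M\bar\varphi(4nc\,\bar\varphi+\Delta\bar\varphi)\,\dvol_g$ is $\le 0$, with equality only when $\bar\varphi=0$ unless $(M,g)$ is (rescaled) round $\s^n$; a parallel statement holds for $\scal(g)<0$. Combining this with the sign of the leading constant gives a local maximum of $(-1)^{\f-1}\int_M a_{(2,n-2)}\,\dvol$ when $\scal>0$ and a local minimum of $\int_M a_{(2,n-2)}\,\dvol$ when $\scal<0$, strict modulo rescaling off the round sphere. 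The main obstacle I anticipate is the bookkeeping in the second step: carefully extracting the $r^{n-2}$ Taylor coefficient of the second-variation formula for $\W(r;g)$ and confirming that its leading constant is nonzero with the asserted sign for all even $n\ge 4$ (the factor will involve a binomial coefficient times $(-c)^{\f-1}$, analogous to \eqref{second-tilde-F}, and one must check no cancellation occurs); everything else is a direct transcription of the arguments already used for $\F_{2k}$ and $\V_n$.
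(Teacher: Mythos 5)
Your opening reduction is exactly the paper's: by Theorem \ref{A-fine}, $\int_M a_{(2,n-2)}\,\dvol=-\tfrac{n^2}{6}\int_M v_n\,\dvol+\W_{n-2}$, the first summand is constant on $[g]$, and the Obata endgame is also the paper's. The genuine gap is in the middle step, the second variation of $\W(r;\cdot)$ at the Einstein metric. You assert that the correction term $r\tfrac12\int_M\varphi\,v(r)\,\Delta_{g(r)}((\dot v/v)(r))\,\dvol$ from Theorem \ref{var} can be discarded because $v(r)$ is constant, leaving only a scaling term plus $-r\tfrac12\int_M\varphi\,\delta(\dot L(r)\,d\varphi)\,\dvol$. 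This has it backwards: the term vanishes \emph{at} $g$, but its derivative along the curve does not; it equals $r\tfrac12\int_M\varphi\,v(r)\,\Delta_{g(r)}\bigl(((\dot v/v)(r))^\bullet[\varphi]\bigr)\,\dvol$, and together with $((\dot v/v)(r))^\bullet[\varphi]=r(1-cr^2)^{-2}(4nc+\Delta)(\varphi)$ it is the \emph{only} contribution to the coefficient of $r^{n-2}$, because the scaling-type term you keep carries the prefactor $(n\!-\!2\!-\!r\partial/\partial r)$, which annihilates the $r^{n-2}$ coefficient. This is precisely the content of Proposition \ref{second-hom} in the paper (see \eqref{second-var-w1} and \eqref{second-var-w2}); the terms you retain die at the critical order, and the term you drop is the one that survives.

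As a consequence your predicted Hessian is structurally wrong: the correct formula is $(\W_{n-2})^{\bullet\bullet}[\varphi]=\tfrac12\binom{n-4}{\f-2}(-c)^{\f-2}\int_M\varphi\,\Delta(4nc+\Delta)(\varphi)\,\dvol$, with an extra factor of $\Delta$ and prefactor exponent $\f-2$, not a multiple of $\int_M\bar\varphi\,(4nc\,\bar\varphi+\Delta\bar\varphi)\,\dvol$ with $(-c)^{\f-1}$ as you anticipate by analogy with \eqref{second-tilde-F}. On a nonconstant eigenfunction of $-\Delta$ with eigenvalue $\lambda$ the two quadratic forms differ by the factor $-\lambda$ and hence have opposite signs, so the sign you would extract from your ansatz can only agree with the theorem through two compensating discrepancies; an honest computation from your stated leftover terms would not produce the claimed extremal pattern. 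Once the surviving term is identified correctly, the rest of your plan (expansion in $r$, Obata's estimate $\lambda_1>4nc$ for $\scal(g)>0$, strictness off the round sphere, and the kernel of the extra $\Delta$ accounting for scale invariance) goes through exactly as in the paper.
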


We recall that the total integral of $a_{(2,n-2)}$ is invariant
under rescalings (see \eqref{hom-crit}).

\begin{proof} The proof rests on a formula for the second conformal
variation of the functional
$$
\int_M a_{(2,n-2)} \dvol
$$
at an Einstein metric. By Theorem \ref{A-fine}, this functional
equals the sum of
$$
\VF_n = -\frac{n^2}{6} \int_M v_n \dvol
$$
and
$$
\W_{n-2} = \int_M (\dot w(r))^2_{n-2} \dvol.
$$
Since the functional $\VF_n$ is conformally invariant, it only
remains to determine the second conformal variation of $\W_{n-2}$ at
Einstein metrics. For this purpose, we need the following result.

\begin{prop}\label{second-hom} Let $g$ be an Einstein metric and let
$\gamma(t) = e^{2\varphi(t)}g$ be a curve in the full conformal
class of $g$ so that $\varphi(0)=0$, $\varphi'(0) = \varphi$. Then
\begin{equation}\label{W-crit-eval}
(\partial^2/\partial t^2)|_0 (\W_{n-2}(\gamma(t))) = \frac{1}{2}
\binom{n-4}{\f-2} (-c)^{\f-2} \int_M \varphi \Delta (4cn \!+\!
\Delta)(\varphi) \dvol_g,
\end{equation}
where $c=\scal(g)/(4n(n\!-\!1))$ and $\Delta = -\delta d$.
\end{prop}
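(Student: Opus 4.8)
The plan is to differentiate the first conformal variation of $\W(r;g)$ furnished by Theorem \ref{var} once more along a curve $\gamma(t)=e^{2\varphi(t)}g$ with $\varphi(0)=0$, $\varphi'(0)=\varphi$, and then to specialise $g$ to an Einstein metric, where everything on the right of Theorem \ref{var} becomes explicit. First I would rewrite Theorem \ref{var} in the pointwise form
\[
\W(r;g)^\bullet[\varphi]=\int_M\varphi\,\mathcal{G}(r;g)\,\dvol_g,\qquad
\mathcal{G}(r;g)\st\bigl(n\!-\!2\!-\!r\tfrac{\partial}{\partial r}\bigr)\bigl((\dot w(r))^2\bigr)+\tfrac{r}{2}\,v(r)\,\Delta_{g(r)}\bigl((\dot v/v)(r)\bigr),
\]
which is legitimate because $\varphi$ and $\dvol_g$ carry no $r$-dependence and $(\dot v/v)(r)$ does not involve the test function. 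Then
\[
\W(r;g)^{\bullet\bullet}[\gamma]=\tfrac{\partial}{\partial t}\big|_0\!\int_M\!\varphi'(t)\,\mathcal{G}(r;\gamma(t))\,\dvol_{\gamma(t)}
=\int_M\varphi''(0)\,\mathcal{G}(r;g)\,\dvol_g+\int_M\varphi\,\bigl(\mathcal{G}(r;g)\dvol_g\bigr)^\bullet[\varphi],
\]
and the proof is to expand this in powers of $r$ and read off the coefficient of $r^{n-2}$.

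The computational input is the specialisation to an Einstein metric $g$. There $v(r)=(1-cr^2)^n$ is constant on $M$ (see \eqref{v-Einstein}), so $(\dot v/v)(r)$ and $(\dot w(r))^2=c^2n^2r^2(1-cr^2)^{n-2}$ are spatially constant and $\Delta_{g(r)}\bigl((\dot v/v)(r)\bigr)=0$; hence $\mathcal{G}(r;g)$ collapses to the function $(n\!-\!2\!-\!r\partial/\partial r)\bigl((\dot w(r))^2\bigr)$ of $r$ alone. For $\bigl(\mathcal{G}(r;g)\dvol_g\bigr)^\bullet[\varphi]$ I would use $\dvol_g^\bullet[\varphi]=n\varphi\,\dvol_g$, together with Theorem \ref{RVC-CV} and the explicit $L(r)=\tfrac{r^2}{2}(1-cr^2)^{n-1}$ from \eqref{L-Einstein}: this gives $(\log v(r))^\bullet[\varphi]=\tfrac{r^2}{2(1-cr^2)}(4nc+\Delta)(\varphi)$, and differentiating in $r$, $\bigl((\dot v/v)(r)\bigr)^\bullet[\varphi]=\tfrac{r}{(1-cr^2)^2}(4nc+\Delta)(\varphi)$, as well as $\bigl((\dot w(r))^2\bigr)^\bullet[\varphi]=-cnr^2(1-cr^2)^{n-3}\bigl(1-\tfrac{cn}{2}r^2\bigr)(4nc+\Delta)(\varphi)$. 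One further ingredient is that, since $(\dot v/v)(r)$ is constant on the Einstein metric, $\bigl(\Delta_{g(r)}((\dot v/v)(r))\bigr)^\bullet[\varphi]=\Delta_{g(r)}\bigl(((\dot v/v)(r))^\bullet[\varphi]\bigr)$ — the variation of the Laplace operator applied to a constant being zero because $\Delta_{g'}(\text{const})=0$ for every metric $g'$ — with $\Delta_{g(r)}=(1-cr^2)^{-2}\Delta_g$ on the Einstein family $g(r)=(1-cr^2)^2g$.

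The one point that needs care, and which I expect to be the only genuinely subtle step, is the structural observation that makes the $r^{n-2}$-coefficient collapse to a single term. After restricting to the Einstein metric, every contribution to $\W(r;g)^{\bullet\bullet}[\gamma]$ other than the one arising from varying the argument of the correction term $\tfrac{r}{2}v(r)\Delta_{g(r)}((\dot v/v)(r))$ is of the form $\bigl(n\!-\!2\!-\!r\partial/\partial r\bigr)(\,\cdot\,)$ — this applies to $\int_M\varphi''(0)\,\mathcal{G}(r;g)\dvol_g$, to the $n\varphi\,\mathcal{G}(r;g)$-piece inside $\bigl(\mathcal{G}(r;g)\dvol_g\bigr)^\bullet$, and to the $\bigl(n\!-\!2\!-\!r\partial/\partial r\bigr)\bigl(((\dot w(r))^2)^\bullet[\varphi]\bigr)$-piece, since $\varphi$, $\varphi''(0)$ and $\dvol_g$ are $r$-independent and $v(r)\Delta_{g(r)}((\dot v/v)(r))$ vanishes at the Einstein metric. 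The operator $n\!-\!2\!-\!r\partial/\partial r$ multiplies the coefficient of $r^k$ by $n\!-\!2\!-\!k$, hence annihilates the coefficient of $r^{n-2}$; thus all these terms drop out (and, in passing, $\W_{n-2}$ is seen to be critical at Einstein metrics in the full conformal class, so \eqref{W-crit-eval} is a well-posed quadratic form independent of the chosen path). What remains is
\[
\tfrac{r}{2}\,v(r)\,\Delta_{g(r)}\bigl(((\dot v/v)(r))^\bullet[\varphi]\bigr)=\tfrac{r^2}{2}(1-cr^2)^{n-4}\,\Delta_g(4nc+\Delta)(\varphi),
\]
whose coefficient of $r^{n-2}$ is $\tfrac12\binom{n-4}{\f-2}(-c)^{\f-2}\Delta_g(4nc+\Delta)(\varphi)$; pairing with $\varphi$ and integrating over $M$ yields precisely \eqref{W-crit-eval}. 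The remaining work — the differentiations of powers of $(1-cr^2)$ in $r$ and the binomial bookkeeping in extracting the $r^{n-2}$-coefficient — is routine.
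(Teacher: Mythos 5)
Your proposal is correct and follows essentially the same route as the paper: iterate the first variation from Theorem \ref{var}, note that at an Einstein metric all terms built on $(\dot w(r))^2$ sit under $n\!-\!2\!-\!r\partial/\partial r$ and hence drop from the $r^{n-2}$-coefficient, and reduce the surviving term to $((\dot v/v)(r))^\bullet[\varphi]=r(1-cr^2)^{-2}(4nc+\Delta)\varphi$ via Theorem \ref{RVC-CV}, \eqref{v-Einstein} and \eqref{L-Einstein}. Your derivation of that identity through $\partial_r$ of $(\log v(r))^\bullet[\varphi]$, and your use of $\Delta_{g(r)}=(1-cr^2)^{-2}\Delta_g$ in place of Lemma \ref{conjugate}, are only cosmetic variations of the paper's computation.
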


\begin{proof} We note that
\begin{multline*}\label{W-second-var}
(\partial^2/\partial t^2)|_0 (\W(r;\gamma(t))) \\ =
(\partial/\partial t)|_0 ((\partial/\partial t) (\W(r;\gamma(t)))) =
(\partial/\partial t)|_0 ((\partial/\partial s)|_0
(\W(r;\gamma(t+s)))).
\end{multline*}
We evaluate the latter derivative at an Einstein metric. By Theorem
\ref{var} and Lemma \ref{conjugate}, we find that
$(\partial^2/\partial t^2)|_0 (\W(r;\gamma(t)))$ is the sum of the
two terms
\begin{equation} \label{second-var-w1}
(n\!-\!2\!-\!r\partial/\partial r) (\partial/\partial t)|_0 \left(
\int_M \varphi'(t) \dot{w}^2(r;\gamma(t)) \dvol_{\gamma(t)}\right)
\end{equation}
and
\begin{equation}\label{second-var-w2}
-\frac{1}{2} r \int_M \varphi \delta_g (g^{-1}(r) v(r)) d
((\dot{v}/v(r))^\bullet[\varphi]) \dvol_g.
\end{equation}
Here we utilized the fact that, for an Einstein metric, $v(r)$ is
constant on $M$ and thus $(\dot{v}/v)(r)$ is annihilated by $d$. Now
the first term does not contribute to the homogeneous part
$\W_{n-2}$. Thus, it only remains to analyze the corresponding
contribution by the second term. But, for an Einstein metric $g$,
\eqref{second-var-w2} simplifies to
$$
\frac{1}{2} r (1\!-\!cr^2)^{n-2} \int_M \varphi \Delta_g
((\dot{v}/v(r))^\bullet[\varphi]) \dvol_g.
$$
Next we prove that
\begin{equation}\label{log-v-var}
((\dot{v}/v)(r))^\bullet[\varphi] = r (1-cr^2)^{-2} (4cn \!+\!
\Delta)(\varphi), \; \Delta = -\delta d.
\end{equation}
For that purpose, we combine Theorem \ref{RVC-CV} and its
consequence
$$
(\dot{v})^\bullet[\varphi] = - \varphi(\dot{v} + r \ddot{v}) -
\delta (\dot{L} d\varphi)
$$
with the formula
$$
\left(\frac{\dot{v}}{v}\right)^\bullet[\varphi] =
\frac{(\dot{v})^\bullet[\varphi]}{v} - \frac{\dot{v}}{v^2}
v^\bullet[\varphi].
$$
We find that
\begin{equation*}
\left(\frac{\dot{v}}{v}\right)^\bullet[\varphi]  = \varphi \left( r
\frac{\dot{v}^2}{v^2} - r \frac{\ddot{v}}{v} - \frac{\dot{v}}{v}
\right) - \frac{1}{r} \delta (\dot{L} d\varphi) +
\frac{\dot{v}}{v^2} \delta(L d\varphi).
\end{equation*}
Now \eqref{log-v-var} follows from this formula by a direct
calculation using \eqref{v-Einstein} and \eqref{L-Einstein}. The
assertion follows by combining these results.
\end{proof}

Now by combining Proposition \ref{second-hom} with the spectral
decomposition of $\Delta$ on $M$, we deduce the following facts.
\begin{itemize}
\item [1.] Let $\scal(g)<0$. Then $c<0$ and the integral on the right-hand side
of \eqref{W-crit-eval} is strictly positive except for constant
$\varphi$.
\item [2.] Let $\scal(g)>0$. Then $c>0$. If $(M^n,g)$ is not isometric
to a (rescaled) round sphere, then Obata's estimate $\lambda_1 >
\scal(g)/(n\!-\!1)$ for the smallest non-trivial eigenvalue
$\lambda_1$ of $-\Delta$ implies that the integral on the right-hand
side of \eqref{W-crit-eval} is strictly positive for non-constant
$\varphi$. If $(M^n,g)$ is isometric to a (rescaled) round sphere,
then the right-hand side of \eqref{W-crit-eval} vanishes for
constant $\varphi$ and $\varphi \in \ker (\Delta + \lambda_1)$.
\end{itemize}
In all cases, non-trivial kernels of the right-hand side of
\eqref{W-crit-eval} reflect obvious invariance properties of
$\W_{n-2}$ (under rescalings and conformal diffeomorphisms). This
proves the assertions.
\end{proof}

Finally, we discuss the extremal behaviour of the functionals
$$
\W_{2k}(g) = \int_M (\dot w(r))^2_{2k} \dvol_g, \; 2k \ne n-2
$$
at Einstein metrics.

\begin{thm}\label{ext-w-subcrit} Let $(M^n,g)$ be a closed
unit volume Einstein manifold of dimension $n \ge 3$. Assume that
$\scal(g) > 0$ and let $2k < n\!-\!2$. Then the restriction of the
functional $(-1)^k \W_{2k}$ to $c_1 = [g]_1$ has a local maximum at
$g$. Similarly, if $\scal(g) < 0$ and $2k > n\!-\!2$, the
restriction of the functional $\W_{2k}$ to $c_1$ has a local minimum
at $g$. In the case $\scal(g)>0$, the local maximum is strict unless
$(M^n,g)$ is isometric to a (rescaled) round sphere.
\end{thm}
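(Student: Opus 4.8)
The plan is to follow the scheme of Theorem \ref{extremal-RVC}, Proposition \ref{second-hom} and Theorem \ref{extremal-crit}: show that Einstein metrics are critical for the restriction of $\W_{2k}$ to $c_1$, compute the second conformal variation there, and determine its sign from Obata's eigenvalue estimate \cite{obata}. Criticality is immediate from Theorem \ref{var}, which writes $\W(r;g)^\bullet[\varphi]$ as $(n\!-\!2\!-\!r\partial/\partial r)\int_M \varphi (\dot{w}(r))^2\dvol_g$ plus $\frac12 r\int_M \varphi\, v(r)\Delta_{g(r)}((\dot{v}/v)(r))\dvol_g$; at an Einstein metric $g$ one has $g(r)=(1\!-\!cr^2)^2 g$ with $c=\scal(g)/(4n(n\!-\!1))$, so $v(r)=(1\!-\!cr^2)^n$ is constant on $M$, the second summand vanishes identically, and the first reduces to $(n\!-\!2\!-\!r\partial/\partial r)((\dot{w}(r))^2)\int_M\varphi\dvol_g$, whose coefficient of $r^{2k}$ is $(n\!-\!2\!-\!2k)\,\omega_{2k}\int_M\varphi\dvol_g$ and hence vanishes for every $\varphi$ tangent to $c_1$.

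For the Hessian I would differentiate the first-variation formula once more along a curve $\gamma(t)=e^{2\varphi(t)}g$ in $c_1$ with $\varphi(0)=0$, $\varphi'(0)=\varphi$, $\varphi''(0)=\psi$, inserting Theorem \ref{var} a second time. Exactly as in the proof of Proposition \ref{second-hom} this produces two contributions: the operator $(n\!-\!2\!-\!r\partial/\partial r)$ applied to $(\partial/\partial t)|_0\int_M\varphi'(t)(\dot{w}(r;\gamma(t)))^2\dvol_{\gamma(t)}$, and $\frac12 r\int_M\varphi\, v(r)\Delta_{g(r)}\big(((\dot{v}/v)(r))^\bullet[\varphi]\big)\dvol_g$. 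Since $(\dot{w}(r;g))^2$ is constant on $M$, the relation $\int_M(\psi+n\varphi^2)\dvol_g=0$ valid on $c_1$ cancels the $\psi$-term against the $n\varphi^2$-term in the first contribution, leaving $(n\!-\!2\!-\!r\partial/\partial r)\int_M\varphi\,((\dot{w}(r))^2)^\bullet[\varphi]\dvol_g$. Both pieces are then evaluated at the Einstein metric using \eqref{v-Einstein}, the closed form $L(r)=\frac{r^2}{2}(1\!-\!cr^2)^{n-1}\id$ from \eqref{L-Einstein}, the conjugation rule Lemma \ref{conjugate}, and the identity \eqref{log-v-var} (together with its $r$-derivative, which supplies $(\dot{v})^\bullet[\varphi]$); one finds in particular that at an Einstein metric $((\dot{w}(r))^2)^\bullet[\varphi]$ is a scalar function of $r$ times $(4nc+\Delta)\varphi$. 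Collecting terms, $(\W(r;\cdot))^{\bullet\bullet}|_{c_1}[\varphi]$ has the form $\int_M\varphi\,(A(r)+B(r)\Delta)(4nc+\Delta)(\varphi)\dvol_g$ with explicit scalar power series $A(r),B(r)$ in $r^2$, so that expanding in $r$
$$
(\W_{2k})^{\bullet\bullet}|_{c_1}[\varphi]=\int_M\varphi\,(\alpha_{2k}+\beta_{2k}\Delta)(4nc+\Delta)(\varphi)\,\dvol_g,
$$
where $\alpha_{2k},\beta_{2k}$ are explicit binomial-coefficient expressions in $c$ (proportional to $c^{k}$ and $c^{k-1}$ respectively), with $n\!-\!2\!-\!2k$ a factor of $\alpha_{2k}$.

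To read off the sign, decompose $\varphi$ — which is $L^2$-orthogonal to the constants on $c_1$ — into eigenfunctions of $-\Delta$ with eigenvalues $\mu_j\ge\mu_1>0$, so that $(\W_{2k})^{\bullet\bullet}|_{c_1}[\varphi]=\sum_j(4nc-\mu_j)(\alpha_{2k}-\beta_{2k}\mu_j)\|\varphi_j\|^2$. A direct inspection of the binomial coefficients shows that in the range $\scal>0$, $2k<n\!-\!2$ (so $c>0$ and $n\!-\!2\!-\!2k>0$) the affine function $\mu\mapsto\alpha_{2k}-\beta_{2k}\mu$ is of constant sign $(-1)^k$ for all $\mu>0$; by Obata's estimate $\mu_j\ge\mu_1\ge\scal(g)/(n\!-\!1)=4nc$, with equality only for (rescaled) round spheres, so $4nc-\mu_j\le0$ and the whole sum has sign $(-1)^{k+1}$. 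Hence $(-1)^k\W_{2k}|_{c_1}$ has a local maximum at $g$, strict unless $(M^n,g)$ is a round sphere, in which case the Hessian degenerates precisely on the first eigenspace, i.e. along conformal diffeomorphisms. In the range $\scal<0$, $2k>n\!-\!2$ one has $4nc<0<\mu_j$, so $4nc-\mu_j<0$, and again $\mu\mapsto\alpha_{2k}-\beta_{2k}\mu$ is of one sign for $\mu>0$, so the sum is strictly of one sign and $\W_{2k}|_{c_1}$ has a local minimum.

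The main obstacle is the middle step: one must execute the two nested applications of Theorem \ref{var} at the Einstein metric, keeping precise track of which terms survive the restriction to $c_1$, and then verify the combinatorial sign of $\alpha_{2k}-\beta_{2k}\mu$ on $\mu>0$. The delicacy is that the two quadratic forms $\int_M\varphi(4nc+\Delta)\varphi\,\dvol$ and $\int_M\varphi\Delta(4nc+\Delta)\varphi\,\dvol$ carry opposite signs under Obata's estimate, so one genuinely needs the hypotheses on $2k$ versus $n\!-\!2$ (controlling the sign of $n\!-\!2\!-\!2k$, hence of $\alpha_{2k}$) and on the sign of $\scal(g)$ to force their combination to be sign-definite.
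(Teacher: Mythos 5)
Your proposal is correct and follows essentially the same route as the paper: criticality from Theorem \ref{var} together with the constancy of $v(r)$ at Einstein metrics, a second differentiation of that variational formula along volume-preserving curves splitting the Hessian into the two contributions \eqref{second-var-w1} and \eqref{second-var-w2}, evaluation at the Einstein metric via \eqref{v-Einstein}, \eqref{L-Einstein}, Lemma \ref{conjugate} and \eqref{log-v-var}, and the sign analysis via Obata's estimate. Your coefficients $\alpha_{2k}$, $\beta_{2k}$ and the claimed sign pattern agree with the paper's explicit binomial expressions \eqref{var-h1} and \eqref{var-h2}, and your closing remark about the two quadratic forms carrying opposite signs under Obata's estimate is exactly the paper's observation that no analogous result holds for $\scal(g)<0$, $2k<n-2$.
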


\begin{proof} In the following, we shall use the notation of the
proof of Theorem \ref{second-hom}. That proof shows that the second
conformal variation
$$
(\partial^2/\partial t^2)|_0 (\W(r;\gamma(t)))
$$
is the sum of \eqref{second-var-w1} and \eqref{second-var-w2}. The
first of these two terms equals
\begin{equation*}
(n\!-\!2\!-\!r\partial/\partial r) \left( \int_M  [(\psi + n
\varphi^2) \dot{w}^2(r) + \varphi (\partial/\partial t)|_0
(\dot{w}^2(r;\gamma(t)))] \dvol_g \right).
\end{equation*}
Since the path $\gamma(t)$ consists of unit volume metrics and
$\dot{w}^2(r)$ is constant on $M$, this contribution simplifies to
\begin{equation}\label{w-sub-h}
(n\!-\!2\!-\!r\partial/\partial r) \left( \int_M \varphi
(\dot{w}^2(r))^\bullet[\varphi] \dvol_g \right).
\end{equation}
Now the proof of Theorem \ref{var} in Section \ref{prime} shows that
at general metrics
$$
4 (\dot{w}^2(r))^\bullet[\varphi] = (-2 \dot{v}/v \delta (\dot{L}
d\varphi) + (\dot{v}/v)^2 \delta (L d\varphi) - 2 \dot{v}/v
(\partial/\partial r)(r \dot{v})\varphi + (\dot{v}/v)^2 r \dot{v}
\varphi).
$$
But a calculation using \eqref{v-Einstein} and \eqref{L-Einstein}
shows that for an Einstein metric the latter sum simplifies to
$$
-2n (2cr^2 \!-\! n c^2 r^4) (1\!-\!cr^2)^{n-3} (4cn \!+\!
\Delta)(\varphi).
$$
For $k>1$, the coefficient of $r^{2k}$ in the $r$-expansion of this
sum equals
$$
(-c)^k 2n \left(2 \binom{n-3}{k-1} + n \binom{n-3}{k-2}\right)(4cn
\!+\! \Delta)(\varphi);
$$
the same formulas holds true also for $k=1$ with the convention that
the second binomial coefficient vanishes. Hence \eqref{w-sub-h}
contributes by
\begin{equation}\label{var-h1}
(n\!-\!2\!-\!2k) (-c)^k \frac{n}{2} \left(2 \binom{n-3}{k-1} + n
\binom{n-3}{k-2}\right) \int_M \varphi (4cn \!+\! \Delta)(\varphi)
\dvol_g
\end{equation}
to the second variation of $\W_{2k}$ at $g$. Next, by arguments as in the
proof of Proposition \ref{second-hom}, \eqref{second-var-w2} yields
$$
\frac{r^2}{2} (1\!-\!cr^2)^{n-4} \int_M \varphi \Delta (4cn \!+\!
\Delta)(\varphi) \dvol_g.
$$
This term contributes by
\begin{equation}\label{var-h2}
\frac{1}{2} \binom{n-4}{k-1} (-c)^{k-1} \int_M \varphi \Delta
(4cn\!+\!\Delta)(\varphi) \dvol_g
\end{equation}
to the second variation of $\W_{2k}$ at $g$.

Now assume that $\scal(g) > 0$ and that $(M^n,g)$ is not isometric to a
(rescaled) round sphere. Then Obata's estimate $\lambda_1 >
\scal(g)/(n\!-\!1) = 4cn$ implies that the integrals in \eqref{var-h1} and
\eqref{var-h2} are negative and non-negative, respectively. Thus, in the
case of positive scalar curvature, the second variation of $(-1)^k
\W_{2k}$ at $g$ is negative if $2k < n-2$. Similarly, if $\scal(g) < 0$,
the second variation of $\W_{2k}$ at $g$ is positive if $2k>n-2$. This
proves the assertions in this case. Finally, by the same arguments as in
the proof of Theorem \ref{extremal-RVC}, the assertions hold true also if
$(M^n,g)$ is isometric to a (rescaled) round sphere .
\end{proof}

Note that if $\scal(g) < 0$ and $2k<n-2$, the contributions
\eqref{var-h1} and \eqref{var-h2} have opposite signs, and there is
no analogous result.

For explicit formulas for the low-order scalar Riemannian invariants
$\omega_{2k} = (\dot w(r))^2_{2k}$ in terms of renormalized volume
coefficients we refer to Section \ref{corr-v}.

\section{Further results, comments and open problems}\label{open}

In the present section, we briefly discuss a number of further
results and point out some open problems.

\subsection{Global conformal invariants}\label{GCI}

Theorem \ref{A} yields a sequence of conformal variational
integrals. These are defined by integration of local Riemannian
invariants which are given by {\em correcting} the sub-leading heat
kernel coefficient $a_2(r)$ of $\H(r)$ by $(\dot{w}(r))^2$. In fact,
for even $n$ and $2k < n\!-\!2$, Theorem \ref{A} implies that the
functional
\begin{equation}\label{L-func}
g \mapsto \int_M \Lambda_{2k}(g) \dvol_g \Big/ \left(\int_M \dvol_g
\right)^{\frac{n-2-2k}{n}}
\end{equation}
is {\em critical} at $g$ in the conformal class of $g$ iff
$$
\Lambda_{2k}(g) = \mbox{constant}
$$
(see also the discussion in Section \ref{extremal}). In the same
way, $a_0(r) = v(r)$ and Theorem \ref{conf-reno} show that the
Taylor coefficients of $a_0(r)$ give rise to a sequence of conformal
variational integrals. In this case, no correction terms are needed.
Of course, Theorem \ref{A-fine} tells that the resulting two
sequences essentially coincide.

It is natural to ask for generalizations of these results to all
heat kernel coefficients of $\H(r)$. Similarly, as for $a_2(r)$,
this leads to the problem of finding local conformal primitives of
the coefficients in the asymptotic expansion (for $t \to 0$) of the
restriction $\C(x,x,t;r)$ to the diagonal of the smooth kernel of
the operator
\begin{equation*}
\C(t;r) = \frac{1}{2\pi i} \int_\Gamma
[\R(\lambda),[\R(\lambda),\dot{\H}(r)]] e^{-t\lambda} d\lambda.
\end{equation*}
More precisely, let
$$
\sum_{k \ge 0} a_{(2j,2k)} r^{2k} \quad \mbox{and} \quad \sum_{k \ge
1} c_{(2j,2k)} r^{2k-1}
$$
be the respective Taylor expansions of $a_{2j}(r)$ for $j \ge 0$ and
$c_{2j}(r)$ for $j \ge 1$; we recall that $c_0 = 0$ (see also
\eqref{taylor-c}). The relation \eqref{conf-hc} implies the
conformal variational formula
\begin{equation}\label{CV-ac}
\left(\int_M a_{(2j,2k)} \dvol \right)^\bullet[\varphi] \\ =
(n\!-\!2j\!-\!2k) \int_M \varphi a_{(2j,2k)} \dvol - \frac{1}{2}
\int_M \varphi c_{(2j,2k)} \dvol.
\end{equation}
We rephrase that relation by saying that $a_{(2j,2k)}$ is a local
conformal primitive of $-1/2 c_{(2j,2k)}$. It follows that
$$
\mbox{the difference of $a_{(2j,2k)}$ and any {\em other} local
conformal primitive of $-1/2 c_{(2j,2k)}$}
$$
(in the described sense) is a non-trivial Riemannian invariant
$\Lambda_{(2j,2k)}$ so that
\begin{equation}\label{conf-Lambda}
\left(\int_{M^n} \Lambda_{(2j,2k)} \dvol \right)^\bullet[\varphi] =
(n\!-\!2j\!-\!2k) \int_{M^n} \varphi \Lambda_{(2j,2k)} \dvol.
\end{equation}
In particular for even $n$ and $2 \le 2j \le n-2$ the integrals
$$
\int_{M^n} \Lambda_{(2j,n-2j)} \dvol
$$
are conformally invariant.


\subsection{Spectral zeta functions}\label{CV-zeta}

The sub-critical renormalized volume coefficients $v_{2k}$ (for $2k
\le n-2$) have local conformal primitives given by multiples of
$v_{2k}$, respectively. In contrast, for even $n$, the critical
renormalized volume coefficient $v_n$ is the conformal anomaly of
the non-local renormalized volume functional (see \eqref{RV-def}):
$$
(d/dt)|_0 (\V_n(g_+;e^{2t\varphi}g)) = \int_{M^n} \varphi v_n(g)
\dvol_g.
$$
This fact may be regarded as an analog of the fact that the
(logarithm of the) non-local functional determinant of the conformal
Laplacian is a conformal primitive of the critical heat kernel
coefficient $a_n$:
\begin{equation}\label{zeta-conform}
(\log {\det}(-P_2(g)))^\bullet[\varphi] = -2 \int_{M^n} \varphi a_n(g)
\dvol_g
\end{equation}
(see \cite{PR}, \cite{BO-index})\footnote{ In the present section we
absorb the coefficient $(4\pi)^{-\f}$ into the definition of the heat
kernel coefficients. In particular, with this convention, $a_n \in
C^\infty(M)$ denotes the constant term in the asymptotic expansion of the
restriction to the diagonal of the heat kernel of $P_2$.}; here we assume
that the kernel of $P_2(g)$ is trivial.

We briefly recall the definition of the functional determinant
$\det(-P_2)$. Assume that the spectrum of $-P_2$ consists only of positive
eigenvalues $\lambda_k$. Then the spectral zeta function
$$
\zeta(s) \st \sum_k \lambda_k^{-s}, \quad \Re(s) > n/2
$$
admits a meromorphic continuation to $\c$ with at most simple poles
off $s=0$. Since the continuation of $\zeta(s)$ is holomorphic at
$s=0$, it can be used to define the determinant of $-P_2$ by the
formula
\begin{equation}\label{det}
\det (-P_2) \st \exp(-\zeta'(0)).
\end{equation}
In general, $-P_2$ has a non-trivial kernel and finitely many negative
eigenvalues. The definition of the determinant extends to that case by
$$
\det (-P_2) = (-1)^{\# (negative \; eigenvalues)} \exp (-\zeta'(0))
$$
with the modified zeta function
$$
\zeta(s) \st \sum_{\lambda_k \ne 0} |\lambda_k|^{-s}, \quad \Re(s) > n/2.
$$

It is natural to look at the analogy between the determinant and the
renormalized volume from the perspective of the two-parameter spectral
zeta-function
\begin{equation}\label{zeta}
\zeta(r;s) \st \Tr ((-\H(r))^{-s}) = \frac{1}{\Gamma(s)}
\int_0^\infty t^{s-1} \Tr(e^{t\H(r)}) dt, \; \Re(s) > n/2;
\end{equation}
here we assume again that the spectrum of $-\H(r)$ consists only of
positive eigenvalues $\lambda_k(r)$ (for sufficiently small $r$).
Then
$$
\zeta(r;s) = \sum_k \frac{1}{\lambda_k(r)^s}, \quad \Re(s) > n/2.
$$
Again, by standard arguments, the function $\zeta(r;s)$ admits a
meromorphic continuation to $\c$ (in the variable $s$) with at most
simple poles $s \ne 0$. Obviously, $\zeta(0;s)$ coincides with the
spectral zeta-function of the conformal Laplacian $P_2$.

Now we consider the behaviour of $\zeta(s) = \zeta(0;s)$ near $s=0$.
The constant term in the Laurent series
$$
\zeta(s) = \zeta(0) + s \zeta'(0) + s^2 ( \cdot )
$$
near $s=0$ satisfies
$$
\zeta(0) = \int_M a_n \dvol.
$$
The conformal variational formula for the trace of the heat kernel
implies that
\begin{equation}\label{zeta-conform-0}
\zeta(s)^\bullet[\varphi] = 2 s \zeta(\varphi;s)
\end{equation}
with the local zeta function
$$
\zeta(\varphi;s) \st \frac{1}{\Gamma(s)} \int_0^\infty t^{s-1} \Tr(
\varphi e^{t\H(0)}) dt, \; \Re(s) > n/2
$$
of the conformal Laplacian $\H(0)=P_2$. Hence, by the conformal
invariance of $\zeta(0)$, we find that
$$
s (\zeta'(0))^\bullet[\varphi] + s^2 (\cdot) = 2 s \zeta(\varphi;s).
$$
Thus
\begin{equation}\label{CV-zeta-0}
\zeta'(0)^\bullet[\varphi] = 2 \int_M \varphi a_n \dvol.
\end{equation}
This is the basic argument which yields the infinitesimal Polyakov
formula \eqref{zeta-conform}.

Next, we consider the behaviour of $\zeta(r;s)$ near $s=n/2$. The
conformal variational formula in Theorem \ref{conform-heat} shows
that for $\Re(s) > \f$
$$
\zeta(r;s)^\bullet[\varphi] = \frac{1}{\Gamma(s)} \int_0^\infty
t^{s-1} \Tr(e^{t\H(r)})^\bullet[\varphi] dt
$$
equals
\begin{equation*}
\frac{1}{\Gamma(s)} \Big(-2 \int_0^\infty t^s (\partial/\partial t)
(\Tr (\varphi e^{t\H(r)})) dt - \frac{r}{2} \int_0^\infty t^s \Tr
(\varphi (\dot{\H}(r) e^{t\H(r)} + e^{t\H(r)} \dot{\H}(r))) dt
\Big).
\end{equation*}
By the arguments on page \pageref{refo-dc} (and partial integration
in the first integral), the latter sum can be rewritten as
\begin{equation}\label{variation-zeta}
\frac{2s}{\Gamma(s)} \int_0^\infty t^{s-1} \Tr(\varphi e^{t\H(r)})
dt - \frac{r}{\Gamma(s)} \int_0^\infty t^{s-1} (\partial/\partial r)
(\Tr(\varphi e^{t\H(r)})) dt,
\end{equation}
up to the double-commutator term
\begin{equation}\label{dct}
-\frac{1}{\Gamma(s)} \frac{r}{2} \int_0^\infty t^{s-1} \int_M
\varphi(x) \C(x,x,t;r) \dvol dt.
\end{equation}
We interpret the sum in \eqref{variation-zeta} in terms of local
zeta functions as
$$
(2s\!-\!r\partial/\partial r) \zeta(\varphi;r;s).
$$

Now we restrict the considerations to conformal variations at an
Einstein metric $g_0$. This has two effects: 1. the holographic
Laplacian $\H(r;g)$ is defined for all metrics $g$ in the conformal
class $[g_0]$ to {\em all} orders in $r$, and 2. the double
commutator term \eqref{dct} vanishes at $g_0$. It follows that the
conformal variation of the zeta function at an Einstein metric is
given by the formula
\begin{equation}\label{zeta-CVE}
\zeta(r;s)^\bullet[\varphi] = (2s\!-\!r\partial/\partial r)
\zeta(\varphi;r;s), \; \Re(s) > n/2.
\end{equation}
This relation generalizes \eqref{zeta-conform-0}.

Now let $n$ be even. We expand $\zeta(r;s)$ in the variable $r$:
\begin{equation}\label{zeta-expand}
\zeta(r;s) = \zeta_0(s) + r^2 \zeta_2(s) + \dots + r^n \zeta_n(s) +
\cdots.
\end{equation}
A similar expansion of the local zeta function $\zeta(\varphi;r;s)$
defines $\zeta_n(\varphi;s)$. Now \eqref{zeta-CVE} implies
\begin{equation}\label{zeta-n}
\zeta_n(s)^\bullet[\varphi] = (2s-n) \zeta_n(\varphi;s).
\end{equation}
The residue of $\zeta(r;s)$ at $s=n/2$ is proportional to the total
integral of $a_0(r)$, i.e., to the total integral of $v(r)$. Hence
the function $\zeta_n(s)$ has a Laurent series of the form
$$
\zeta_n(s) = \frac{\res_{\f}(\zeta_n)}{s-\f} + Z_{(0,n)} +
O(s\!-\!n/2)
$$
near $s=\f$ with a residue which is proportional to the total
integral of $v_n$. Similarly, the residue of $\zeta_n(\varphi;s)$ at
$s=n/2$ is proportional to $\int_M \varphi v_n \dvol$. We recall
that the total integral of $v_n$ is conformally invariant.
Therefore, the relation \eqref{zeta-n} implies the variational
formula
\begin{equation}\label{CV-zeta-n}
Z_{(0,n)}^\bullet[\varphi] = 2 c_n \int_{M^n} \varphi v_n \dvol,
\quad c_n^{-1} = \left(\f\!-\!1\right)! (4\pi)^{\f}
\end{equation}
at an Einstein metric. Since $v_n$ is constant at Einstein metrics,
\eqref{CV-zeta-n} implies that, under volume preserving conformal
variations, the functional $Z_{(0,n)}$ is {\em critical} at Einstein
metrics.

The above observation admits the following generalization to {\em
all} critical heat kernel coefficients $a_{(2j,n-2j)}$. For
$j=0,\dots,n/2-1$, the function $\zeta(r;s)$ has a Laurent series of
the form
$$
\zeta(r;s) = \frac{\res_{\f-j}(\zeta(r;\cdot))}{s-(\f-j)} + Z_j(r) +
O(s\!-\!n/2+j)
$$
near $s=n/2-j$. Let $Z_j(r) = Z_{(j,0)} + r^2 Z_{(j,2)} + \cdots +
r^n Z_{(j,n)} + \cdots$. In these terms, \eqref{zeta-CVE} implies
the variational formulas
\begin{equation}\label{CV-residue}
\res_{\f-j} (\zeta_{2k}(\cdot))^\bullet[\varphi] = (n\!-\!2j\!-\!2k)
\res_{\f-j}(\zeta_{2k}(\varphi;\cdot))
\end{equation}
and
\begin{equation}\label{CV-zeta-k}
Z_{(j,n-2j)}^\bullet[\varphi] = 2 \res_{\f-j}
(\zeta_{n-2j}(\varphi;\cdot))
\end{equation}
at Einstein metrics. Since the residue
$\res_{\f-j}(\zeta_{2k}(\varphi;\cdot))$ is proportional to
$$
\int_{M^n} \varphi a_{(2j,2k)} \dvol,
$$
\eqref{CV-residue} just restates \eqref{conf-hc} for Einstein
metrics.

Now let $n \ge 3$ be odd. In this case, the functions $s \mapsto
\zeta(r,s)$ and $s \mapsto \zeta(\varphi;r,s)$ are regular at
$$
s \in \{0,1,\dots,(n-1)/2,\dots\}.
$$
The relation \eqref{zeta-CVE} implies that the restriction of {\em
all} zeta-values
\begin{equation}\label{zeta-v}
\zeta_{2k}(k) \quad \mbox{for $k \in \{0,1,\dots,(n-1)/2,\dots\} =
\N_0$}
\end{equation}
to the conformal class of an Einstein metric $g$ are critical at
$g$. This result extends the well-known conformal invariance of the
zeta-value $\zeta(0) = \zeta_0(0)$ in odd dimensions. Note that the
function $\zeta_2(s)$ is given by the weighted Dirichlet series
$$
-s \sum_k \frac{\dot{\lambda}_k(0)}{\lambda_k(0)}
\frac{1}{\lambda_k(0)^s}, \quad \Re(s) > n/2,
$$
where $\dot{\lambda}_k = \partial \lambda_k /\partial(r^2)$. It is
an interesting open problem to study the {\em second} variation of
the zeta values $\zeta_{2k}(k)$ at Einstein metrics.

An extension of the above results to general metrics would require a
proper understanding of the term \eqref{dct} (being holomorphic at
$s=n/2$ in view of $c_0=0$). In this direction, it would be of
interest to characterize the metrics for which the zeta values
$\zeta_{2k}(k)$ (for odd $n$ and $k \in \N_0$) are critical in its
conformal class.

Finally, we emphasize that in odd dimensions the functionals
$$
g \mapsto \zeta_{2k}(k;g)
$$
are scale-invariant. In fact, \eqref{CCC} shows that
$$
\zeta(r;s;\lambda^2 g) = \lambda^{2s} \zeta(r/\lambda;s;g).
$$
Hence
$$
\zeta_{2k}(k;\lambda^2 g) = \zeta_{2k}(k;g).
$$
The latter observation should be regarded as a natural analog of the
scale-invariance of the total integrals of the critical heat kernel
coefficients $a_{(2j,n-2j)}$ in even dimensions (see
\eqref{hom-crit}).

\subsection{A heat kernel proof of the variational formula of $v(r)$}
\label{CV-v-local}

We have seen in Section \ref{proof-A} that the conformal variational
formulas for the integrated renormalized volume coefficients
(Theorem \ref{conf-reno}) are consequences of the variational
formula for the {\em trace} of the heat kernel of $\H(r;g)$ (Theorem
\ref{conform-heat}). The proof of Theorem \ref{conform-heat} itself
rests on Theorem \ref{conform-H}. But Theorem \ref{conform-H}
actually implies also a conformal variational formula for the local
(i.e., non-integrated) leading heat kernel coefficient $a_0(r;g) \in
C^\infty(M)$ of $\H(r;g)$. In fact, a version of Lemma
\ref{double-key} states that the variation
\begin{equation}\label{var-heat-l}
(\partial /\partial \varepsilon)|_0 (\exp(t \H(r;e^{2\varepsilon
\varphi} g)))
\end{equation}
equals the sum
\begin{multline}\label{var-heat-r}
\frac{1}{2} t ((\partial/\partial \varepsilon)|_0
(\H(r;e^{2\varepsilon \varphi}g)) e^{t\H(r;g)} + e^{t\H(r;g)}
(\partial/\partial \varepsilon)|_0 (\H(r;e^{2\varepsilon \varphi}g))) \\
- \frac{1}{2} \frac{1}{2\pi i} \int_\Gamma
[\R(\lambda),[\R(\lambda),(\partial/\partial \varepsilon)|_0
(\H(r;e^{2\varepsilon \varphi}g))]] e^{-t\lambda} d\lambda.
\end{multline}
We determine the coefficients of $(4 \pi t)^{-n/2}$ in the
asymptotic expansions for $t \to 0$ of the restriction to the
diagonal of the kernels of both sides of this identity. On the one
hand, for \eqref{var-heat-l} we obtain
$$
(\partial/\partial \varepsilon)|_0 (a_0(r;e^{2\varepsilon \varphi}
g)) + n \varphi a_0(r;g).
$$
On the other hand, the integral in \eqref{var-heat-r} has an
asymptotic expansion which starts with a multiple of $(4 \pi
t)^{-n/2} t$. Therefore, only the first two terms in
\eqref{var-heat-r} contribute. Since $\H(r;g)$ and
$(\partial/\partial \varepsilon)|_0 (\H(r;e^{2\varepsilon
\varphi}g))$ are self-adjoint, the contributions by these two terms
coincide with the contribution by
$$
t ((\partial/\partial \varepsilon)|_0 (\H(r;e^{2\varepsilon
\varphi}g)) e^{t\H(r;g)}.
$$
Hence, by Theorem \ref{conform-H}, it only remains to determine the
corresponding coefficient in the asymptotic expansion of the
restriction to the diagonal of the kernel of the operator
\begin{align}\label{sum-cont}
& - \frac{1}{2}rt (\varphi \dot{\H}(r;g) + \dot{\H}(r;g) \varphi)
e^{t\H(r;g)} - 2 t \varphi \H(r;g) e^{t\H(r;g)} \nonumber \\
& + \left(\frac{n}{2}-1\right) t [\H(r;g),\varphi] e^{t\H(r;g)} - t
[\H(r;g),[\K(r;g),\varphi]] e^{t\H(r;g)}.
\end{align}
By Lemma \ref{double-key}, the sum of the contributions by the
kernels of the operators in the first line of \eqref{sum-cont}
coincides with that of the kernel of
$$
-\varphi r (\partial/\partial r) (e^{t\H(r;g)}) - 2 t \varphi
(\partial/\partial t) (e^{t\H(r;g)}).
$$
Thus, the kernels of the operators in the first line of
\eqref{sum-cont} contribute by
$$
- \varphi r (\partial/\partial r) (a_0(r;g)) + n \varphi a_0(r;g)
$$
to the coefficient of $(4\pi t)^{-n/2}$. Now, since the first term
in the second line of \eqref{sum-cont} does not contribute to that
coefficient, we have proved the relation
\begin{equation}\label{leading-conform}
(\partial/\partial \varepsilon)|_0 (a_0(r;e^{2\varepsilon \varphi}
g)) = - \varphi r (\partial/\partial r) (a_0(r;g)) -
\kappa_0(r;g;\varphi),
\end{equation}
where $\kappa_0(r;g;\varphi)$ defines the {\em leading} coefficient
in the asymptotic expansion of the restriction to the diagonal of
the kernel of
$$
t[\H(r;g),[\K(r;g),\varphi]] (\exp (t\H(r;g))),
$$
i.e.,
\begin{equation}\label{DC-heat}
t [\H(r;g),[\K(r;g),\varphi]] (\exp (t\H(r;g)))|_{y=x} \sim (4\pi
t)^{-\f} (\kappa_0(r;g;\varphi) + O(t))
\end{equation}
for $t \to 0$. Hence by combining the relation $a_0(r) = v(r)$ with
the formula
\begin{equation}\label{alpha-form}
\kappa_0(r;g;\varphi) = \delta_g (v(r) \int_0^r s g(s)^{-1} ds
d\varphi)
\end{equation}
we complete the {\bf heat kernel proof} of Theorem \ref{RVC-CV}. In the
following, we shall outline the arguments which prove \eqref{alpha-form}.

We start with a discussion of the special case of conformal
variations of renormalized volume coefficients at the round metric
$g_0$ on $\s^n$. In that case, the double-commutator term
$[\H(r;g),[\K(r;g),\varphi]]$ reduces to
$$
\frac{r^2}{4} (1\!-\!r^2/4)^{-3} [\Delta_{g_0},
[\Delta_{g_0},\varphi]].
$$
(see the discussion in Section \ref{variation}). Moreover, the
off-diagonal expansion of the heat kernel
\begin{equation*}
\exp (t \H(r;g_0))(x,y) = \exp (t(1\!-\!r^2/4)^{-2}P_2(g_0))(x,y),
\; x \sim y
\end{equation*}
for $t \to 0$ starts with
$$
(4\pi t)^{-\f} (1\!-\!r^2/4)^n \exp
(-d(x,y)^2/(4t(1\!-\!r^2/4)^{-2})) a_0(x,y)
$$
with $a_0(x,x) = 1$; here $d(\cdot,\cdot)$ denotes the distance
function of $g_0$. Hence the identity
$$
[\Delta, [\Delta,\varphi]] (u) = 4 (\Hess(\varphi),\Hess(u)) +
\mbox{lower-order terms}
$$
and the general formula
\begin{equation}\label{Hess}
\Hess_x (\exp (-d^2(x,y)/4t))|_{y=x} = - 1/2 t g_x
\end{equation}
imply that the leading coefficient in the asymptotic expansion of
\eqref{DC-heat} equals
$$
-(4\pi t)^{-\f} \frac{r^2}{2} (1\!-\!r^2/4)^{n-1}
\tr(\Hess(\varphi)) = -(4\pi t)^{-\f} \frac{r^2}{2}
(1\!-\!r^2/4)^{n-1} \Delta(\varphi).
$$
Thus, we find
$$
\kappa_0(r;g;\varphi) = -\frac{r^2}{2} (1\!-\!r^2/4)^{n-1}
\Delta(\varphi).
$$
Now an easy calculation shows that the latter result coincides with
$$
\delta (v(r) \int_0^r s g(s)^{-1} ds d\varphi).
$$
Hence \eqref{leading-conform} reads
$$
v(r)^\bullet[\varphi] = - \varphi r \dot{v}(r) - \delta (v(r)
\int_0^r s g(s)^{-1} ds d\varphi).
$$
This is a special case of Theorem \ref{RVC-CV}.

Finally, we outline the arguments which prove \eqref{alpha-form} in
the general case. On general grounds, the differential operator
$[\H(r;g),[\K(r;g),\varphi]]$ is only second-order. It suffices to
determine its leading part. For that purpose, we use Lemma
\ref{conjugate} to write the operators $\H(r;g)$ and $\K(r;g)$ in
the form
$$
\H(r;g) = v(r) \delta_{g(r)}(v(r)^{-1} d) + \mbox{potential}
$$
and
$$
2 \K(r;g) = v(r)\delta_{g(r)}(v(r)^{-1} E(r) d) + \mbox{potential},
$$
where $E(r)$ is the endomorphism
$$
E(r) \st g(r) \int_0^r s g(s)^{-1} ds
$$
on $\Omega^1(M)$. Now a (long) calculation shows that
$$
[\H(r;g),[2\K(r;g),\varphi]](u) = 4 (\nabla^{g(r)}(E(r)
d\varphi),\Hess^{g(r)}(u))_{g(r)} + \mbox{lower-order terms}.
$$
Hence the relation \eqref{Hess} (for $g(r)$) implies that
$$
t[\H(r;g),[\K(r;g),\varphi]]_x(\exp (-d^2(r)(x,y)/4t))|_{y=x} =
\delta_{g(r)} (E(r) d \varphi),
$$
where $d(r)$ denotes the distance function of $g(r)$. By Lemma
\ref{conjugate} and the definition of $E(r)$, the latter result
coincides with
$$
v(r)^{-1} \delta_g (v(r) \int_0^r s g(s)^{-1} ds d \varphi).
$$
Since the off-diagonal asymptotic expansion of the heat kernel of
$\H(r;g)$ starts with
$$
(4\pi t)^{-\f} v(r) \exp (-d^2(r)(x,y)/4t) (a_0(r;g)(x,y) + O(t)),
\; t \to 0
$$
with $a_0(r;g)(x,x) = 1$,\footnote{Here we use that $\H(r;g)$ is a
Laplace-type operator for $g(r)$. We also recall that we define its
kernel by integration against the volume of $g$. This brings in the
additional factor $v(r)$.} the above results prove the formula
\eqref{alpha-form}. This completes the heat kernel proof of Theorem
\ref{RVC-CV}.

\subsection{Hessians}\label{hess}

Although the variational formulas for the integrated renormalized
volume coefficients and the integrated heat kernel coefficients have
a similar structure, the second conformal variations of these
functionals reveal important differences. On the one hand, we have
seen in Section \ref{extremal} that the (conformal) Hessian forms of
the functionals
$$
\F_{2k} = \int_M v_{2k} \dvol, \; k=1,\dots,n/2-1
$$
at Einstein metrics are quadratic forms which are given by a
second-order differential operator (see \eqref{second-F2k}). For
even $n$, a natural substitute for the conformally invariant
integral $\F_n$ is given by the renormalized volume $\V(g_+;\cdot)$.
The structure of the Hessian form of this functional at Einstein
metrics is similar to that of the Hessian forms of $\F_{2k}$
(Theorem \ref{extremal-RV}). On the other hand, for the functional
determinant of $P_2$, the following conjecture extrapolates from
results in low dimensions. For the definition of the determinant we
refer to Section \ref{CV-zeta}.

\begin{conj}\label{det-local-ext} Let $M^n$ be a closed manifold of
even dimension $n \ge 4$ with a locally conformally flat Einstein metric
$g$ of unit volume. Assume that $\ker (P_2(g))$ is trivial. Then the
restriction of the functional $\log \det (-P_2)$ to $[g]_1$ is critical at
$g$ and its second variation at $g$ is given by
\begin{equation}
(4\pi)^\f (\log \det (-P_2))^{\bullet\bullet}[\varphi] = (-1)^{\f+1}
\int_{M^n} \varphi (\Delta+4nc) \Pi_{n-2}(c;-\Delta)(\varphi) dv
\end{equation}
for some real polynomial $\lambda \mapsto \Pi_{n-2}(c;\lambda)$ of
degree $n/2-1$ so that
$$
\Pi_{n-2}(ac;a\lambda) = a^{\f-1} \Pi_{n-2}(c;\lambda), \; a \in \r.
$$
Here $\Delta = -\delta d$ is the non-negative Laplacian and $c=
\scal(g)/(4n(n-1))$. Moreover, if $c>0$, we have
\begin{equation}\label{positive}
\Pi_{n-2}(c;\lambda) > 0 \quad \mbox{for $0 < \lambda \in
\sigma(-\Delta)$}.
\end{equation}
\end{conj}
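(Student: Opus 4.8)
The plan is to treat $\log\det(-P_2)$ via the spectral zeta function of Section~\ref{CV-zeta}, to reduce its conformal Hessian at an Einstein metric to the \emph{local} conformal variation of the critical heat kernel coefficient $a_n$, to compute that variation by the local heat-equation machinery of Section~\ref{CV-v-local} in the explicit locally conformally flat setting, and to conclude by an Obata-type positivity estimate. First, by \eqref{zeta-conform} one has $(\log\det(-P_2))^\bullet[\varphi]=-2\int_M\varphi\,a_n\,\dvol$. For a locally conformally flat Einstein metric the full curvature tensor is parallel and an algebraic constant multiple of $g$, so every scalar local heat invariant, in particular $a_n$, is constant on $M$; hence, by the Lagrange-multiplier argument of the proof of Theorem~\ref{extremal-RVC}, the restriction of $\log\det(-P_2)$ to $[g]_1$ is critical at $g$. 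Differentiating the first-variation formula a second time along a volume-preserving curve $\gamma(t)=e^{2\varphi(t)}g$ and using $\int_M(\psi+n\varphi^2)\,\dvol=0$ together with the constancy of $a_n$, all $\psi$- and $\varphi^2$-terms cancel and the Hessian collapses to
\[
(\log\det(-P_2))^{\bullet\bullet}[\varphi]=-2\int_M\varphi\,(a_n)^\bullet[\varphi]\,\dvol .
\]
Thus the problem reduces to identifying $(a_n)^\bullet[\varphi]$ at a locally conformally flat Einstein metric and showing that, up to the normalizing constant in the conjecture, the associated quadratic form is $\int_M\varphi\,(\Delta+4nc)\,\Pi_{n-2}(c;-\Delta)(\varphi)\,\dvol$ with $\Pi_{n-2}$ of degree $\f-1$.

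To compute $(a_n)^\bullet[\varphi]$ I would establish a local refinement of \eqref{conf-hc} for the top coefficient, following Section~\ref{CV-v-local}: apply the pointwise form of Lemma~\ref{double-key} to the variational identity of Theorem~\ref{conform-H}, read off the relevant coefficient in the small-$t$ expansion of the kernels on the diagonal, and carry along the double-commutator operator $t[\H(r;g),[\K(r;g),\varphi]]e^{t\H(r;g)}$, whose leading heat coefficient produced $\kappa_0$ in \eqref{alpha-form}. In the locally conformally flat case $\H(r;g)$ is analytic in $r$, with $g(r)=g-r^2\Rho+\tfrac{1}{4}r^4\Rho^2$ and $v(r)=\det(1-\tfrac{r^2}{2}\Rho)$, so the resolvent $\R(\lambda)$ of $-\H(r)$, the operator $\dot{\H}(r)$, and the commutators with $\H(r)$ and $\K(r)$ all admit controlled expansions in $\Rho$; at the Einstein point $\Rho$ is a parallel constant multiple of $g$, $\Delta_{g(r)}$ is a scalar multiple of $\Delta$, and, after integration by parts on the space form (using the Hessian identities valid there), every contribution reduces to $\int_M\varphi\,Q(-\Delta)(\varphi)\,\dvol$ for a polynomial $Q$ of degree $\f$ with coefficients built from $c=\scal(g)/(4n(n-1))$. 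Evaluating the resulting Gaussian and resolvent integrals as in Sections~\ref{proof-B} and \ref{fine} produces $Q$; the scaling identity $\Pi_{n-2}(ac;a\lambda)=a^{\f-1}\Pi_{n-2}(c;\lambda)$ is then forced by the weights \eqref{hom-crit} and the resulting scale-invariance of $(\log\det)^{\bullet\bullet}$, so the non-leading factor of $Q$ has degree $\f-1$ in $-\Delta$.

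The distinguished factor $\Delta+4nc=\Delta+\scal(g)/(n\!-\!1)$ must divide $Q$: on the round sphere the Hessian has to annihilate the infinitesimal conformal factors of conformal diffeomorphisms, which form exactly $\ker(\Delta+4nc)$ (cf. the footnote after \eqref{rhs-sphere}), and this divisibility then propagates algebraically to all locally conformally flat Einstein metrics. Having produced $\Pi_{n-2}$, I would fix it completely by evaluating both sides on $\s^n$, where $\det(-P_2)$, its first and second variations, and the factorization of $P_2$ on spherical harmonics are all explicit (as in the discussion of $\s^n$ in Section~\ref{variation}), and verify that $\lambda\mapsto\Pi_{n-2}(c;\lambda)$ has no zero in $(0,\infty)$ for $c>0$ --- for $n=4,6$ this should be consistent with the known four- and six-dimensional determinant formulas. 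Combined with Obata's bound $\lambda_1>4nc$ for representatives not isometric to a rescaled round sphere, the positivity of $\Pi_{n-2}(c;\cdot)$ and of $\lambda-4nc$ on $\sigma(-\Delta)\setminus\{0\}$ yields that $(-1)^{\f+1}(\log\det(-P_2))^{\bullet\bullet}$ is a positive quadratic form modulo the obvious invariances, i.e. the asserted strict local extremum.

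The main obstacle is the second step. The \emph{local} conformal variation of the critical heat kernel coefficient $a_n$ is precisely the quantity that the present methods pin down only after integration, because the non-local contributions --- the divergence functions $c_{2j}(x;r)$ coming from $\C(t;r)$, and the leading heat coefficient of the double commutator --- have no known closed form for general metrics. Local conformal flatness should make all of them computable, everything reducing to algebra with $\Delta$ and the parallel tensor $\Rho$; but showing that the outcome factors through a single polynomial $\Pi_{n-2}(c;-\Delta)$ with the clean factor $\Delta+4nc$ and the stated positivity is the genuine content, and the reason the statement is so far only conjectural.
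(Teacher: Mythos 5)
You should first be clear that the statement you set out to prove is formulated in the paper as a \emph{conjecture}: the paper does not prove it for general even $n$, but only verifies it in dimensions $n=4$ and $n=6$ (Propositions \ref{det-4-extreme} and \ref{det-6-extreme}), where the explicit holographic formulas for $a_4$ and $a_6$ of Corollary \ref{a4-a6-c}, combined with the variational formulas \eqref{CT-v} for the renormalized volume coefficients evaluated at an Einstein metric, produce the polynomials $\Pi_2$ and $\Pi_4$ explicitly and exhibit the factor $\Delta+4nc$ and the positivity directly. Your reduction steps agree with that low-dimensional argument: criticality from \eqref{zeta-conform} together with the constancy of $a_n$ on a space form, and the collapse of the Hessian along volume-preserving curves to $-2\int_M\varphi\,(a_n)^\bullet[\varphi]\,\dvol$, are correct and are exactly how the paper proceeds when $n=4,6$.

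The genuine gap is your second step, and you partly concede it yourself. To produce $\Pi_{n-2}$ you must compute the local conformal variation $(a_n)^\bullet[\varphi]$ of the critical heat kernel coefficient at a locally conformally flat Einstein metric for arbitrary even $n$. Neither the double-commutator machinery of Sections \ref{dc-term} and \ref{CV-v-local} nor anything else in the paper yields a closed formula for this variation beyond the leading coefficient $a_0(r)=v(r)$ and (after correction by $(\dot w)^2$) the subleading $a_2(r)$; for $a_n$ with $n\ge 8$ there is no analogue of Corollary \ref{a4-a6-c}, and the claim that conformal flatness reduces everything to algebra in $\Delta$ and the parallel tensor $\Rho$ is not backed by any actual computation. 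Consequently the existence of a single polynomial $\Pi_{n-2}(c;-\Delta)$, the divisibility of the Hessian by $\Delta+4nc$, and above all the positivity \eqref{positive} are asserted rather than proved. Note moreover that your argument for propagating the factor $\Delta+4nc$ from the round sphere to all space forms presupposes that the Hessian is given by a universal polynomial in $c$ and $\Delta$ with the stated homogeneity, which is precisely the unestablished structural claim. So what you have is a plausible strategy outline (essentially the one the formulation of the conjecture suggests), but it does not close the statement: the hard content --- the structure and sign of $(a_n)^\bullet[\varphi]$ at space forms for all even $n$ --- remains open, which is why the paper presents the result only as a conjecture supported by the computations of Section \ref{holo-heat}.
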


Some comments are in order. The factor $\Delta+4nc$ reflects the
invariance of the functional under conformal diffeomorphism of the
round sphere. In low dimensions, Conjecture \ref{det-local-ext} is
supported by the results in Section \ref{holo-heat}.

The positivity \eqref{positive} for $\scal(g)>0$ would imply that
the restriction of $\log \det (-P_2)$ to $[g]_1$ has a local maximum
or local minimum at $g$ if $n \equiv 2 \!\! \mod{4}$ or $n \equiv 0
\! \! \mod{4}$, respectively. For the round sphere $\s^n$, the
latter max/min-pattern would correspond to the deeper (conjectural)
global behaviour of the determinant of $P_2$ on the conformal class
of the round metric \cite{spec-ineq}, \cite{sigma}.

In order to describe the relation of Conjecture \ref{det-local-ext}
to well-known conjectures, we recall that on a locally conformally
flat manifold $(M^n,g)$ of even dimension $n \ge 4$ and with
$\ker(P_2)=0$, it is expected that \cite{origin}
\begin{multline}\label{polyakov-g}
\log\left(\frac{\det(-P_2(\hat{g}))}{\det(-P_2(g)) }\right) = a
\int_{M^n} \left(\frac{1}{2} (-1)^\f \varphi P_n(g)(\varphi) +
Q_n(g) \varphi \right)\dvol_g \\ + \int_{M^n} \left(F(\hat{g})
\dvol_{\hat{g}} - F(g)\dvol_g \right).
\end{multline}
Here $\hat{g} = e^{2\varphi}g$, $a$ is a non-vanishing real
constant, $Q_n$ is the critical $Q$-curvature and $F$ is some local
scalar Riemannian invariant.\footnote{The sign $(-1)^\f$ in
\eqref{polyakov-g} is due to the convention $P_n = \Delta^\f +
\cdots$ with $\Delta=-\delta d$.} A basic feature of this {\em
global} Polyakov formula for $P_2$ is that it connects the
determinant with $Q$-curvature. In this connection, we also note
that the first integral on the right-hand side of \eqref{polyakov-g}
can be written in the form
$$
\frac{1}{2} \int_{M^n} \varphi (Q_n(\hat{g}) \dvol_{\hat{g}} +
Q_n(g) \dvol_g).
$$
Explicit formulas for $a$ and the invariant $F$ are only known for
$n \le 6$. In particular, in dimension $n=4$ we have
$$
a = (4\pi)^{-2} \frac{1}{45}  \quad \mbox{and} \quad F = (4\pi)^{-2}
\frac{1}{45} \J^2
$$
(see Section \ref{pol-det}). In the situation of Conjecture
\ref{det-local-ext}, the formula \eqref{polyakov-g} implies that,
for volume preserving conformal variations $\gamma_t =
e^{2\varphi_t}g$ with $\varphi_t = t\varphi + \dots$, the second
variational formula reads
$$
(\log \det (-P_2))^{\bullet\bullet}[\varphi] = a \H_Q (\varphi) +
\H_F (\varphi)
$$
with the quadratic form
$$
\H_Q (\varphi) = \int_{M^n} \varphi ((-1)^\f P_n - n Q_n)(\varphi)
\dvol.
$$
Thus, in order to study extremal values of the determinant of $P_2$
in the volume preserving conformal class of an Einstein metric $g$,
it is important to understand the definiteness of the quadratic
forms $\H_Q$ and $\H_F$. While we shall see below that for $\H_Q$
this is easy in general dimensions, the Hessian $\H_F$ is subtle and
its study requires additional structural insight.

The positive semi-definiteness of $\H_Q$ at an Einstein metric $g$
of {\em positive} scalar curvature can be seen as follows. For an
Einstein metric $g$, $P_n$ and $Q_n$ are given by
$$
P_n(g) = \prod_{j=\f}^{n-1} (\Delta_g - j(n\!-\!1\!-\!j)
\scal(g)/(n(n\!-\!1))), \; \Delta_g = -\delta_g d
$$
and
$$
Q_n(g) = (n\!-\!1)! \left(\scal(g)/(n(n\!-\!1))\right)^{\f}
$$
(see \cite{gover-product} and \cite{juhl-book}, Section 6.16). It
follows that the difference
$$
(-1)^\f P_n(g) - n Q_n(g) = (-1)^\f P_n(g) - (4c)^{\f} n!
$$
factors into the product of $-(\Delta+4nc)$ and a polynomial in
$-\Delta$ with {\em positive} coefficients. Now it suffices to apply
Obata's estimate $\lambda_1 \ge 4nc$ for the smallest non-trivial
eigenvalue of $-\Delta$.

On the sphere $\s^n$, the positive semi-definiteness of $\H_Q$
actually reflects the fact that the first integral on the right-hand
side of \eqref{polyakov-g} is non-negative. In fact, this follows
from an inequality of Beckner (see \cite{sharp}, section 2). For
more details see the proof of Corollary \ref{RV-global}.

In Section \ref{extremal}, we have seen that the extremal properties
of the total integral of $a_{(2,n-2)}$ at Einstein metrics (Theorem
\ref{extremal-crit}) are consequences of a beautiful second
variational formula for $\W_{n-2}$ (Proposition \ref{second-hom}).
The following conjecture extends this to the total integrals off all
critical heat kernel coefficients.

\begin{conj}\label{hess-inter} Let $M^n$ be a closed manifold of
even dimension $n \ge 4$ with a locally conformally flat Einstein
metric $g$. Then for $k=1,\dots,n/2-1$ the restriction of the
functional
$$
\int_{M^n} a_{(2k,n-2k)} \dvol
$$
to $[g]$ is critical at $g$ and its second variation at $g$ has the
form
\begin{equation}
c^{\f-k-1} (-1)^\f \int_{M^n} \varphi \Delta (\Delta+4nc) H_{2k-2}
(c;-\Delta) (\varphi) \dvol, \quad \Delta = -\delta d
\end{equation}
with a polynomial $\lambda \mapsto H_{2k-2}(c;\lambda)$ of degree
$k-1$ so that
$$
H_{2k-2}(ac;a\lambda) = a^{k-1} H_{2k-2}(c;\lambda), \; a \in \r.
$$
Moreover, if $c>0$, we have
$$
H_{2k-2}(c;\lambda) > 0 \quad \mbox{for $0 < \lambda \in
\sigma(-\Delta)$}.
$$
\end{conj}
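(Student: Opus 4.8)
The plan is to reduce the conjecture, exactly as in the proof of Theorem \ref{extremal-crit} for $k=1$, to an explicit computation of the first conformal variation of the divergence-type invariant $c_{(2k,n-2k)}$ at the given metric. By \eqref{CV-ac}, applied with the indices $(2k,n-2k)$ — so that the leading term vanishes because the weights cancel at the critical order — one has $\bigl(\int_M a_{(2k,n-2k)}\dvol\bigr)^\bullet[\varphi]=-\tfrac12\int_M\varphi\,c_{(2k,n-2k)}(g)\dvol_g$ for \emph{every} metric. At an Einstein metric the operator $\C(t;r)$ of \eqref{C-term} vanishes, hence $c_{2j}(x;r)\equiv0$ for all $j$; in particular $c_{(2k,n-2k)}(g)=0$, so the restriction of $\int_M a_{(2k,n-2k)}\dvol$ to $[g]$ is critical at $g$. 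Differentiating the first-variation identity once more along a path $\gamma(t)=e^{2\varphi(t)}g$ with $\varphi'(0)=\varphi$, the terms carrying $\varphi''(0)$ and $\partial_t\dvol_{\gamma(t)}|_0$ drop out because $c_{(2k,n-2k)}(g)\equiv0$ on $M$, and one is left with the Hessian formula
\begin{equation*}
\Bigl(\int_M a_{(2k,n-2k)}\dvol\Bigr)^{\bullet\bullet}[\varphi]=-\frac12\int_M\varphi\,\bigl(c_{(2k,n-2k)}\bigr)^\bullet[\varphi]\,\dvol_g
\end{equation*}
valid at every locally conformally flat Einstein metric $g$. For $k=1$ this, combined with Theorem \ref{B} in the form $c_2(x;r)=-v(r)\Delta_{g(r)}((\dot v/v)(r))$ and with \eqref{log-v-var}, reproduces Proposition \ref{second-hom} with $H_0=\tfrac12\binom{n-4}{\f-2}$. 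Thus everything reduces to computing $\bigl(c_{(2k,n-2k)}\bigr)^\bullet[\varphi]$ on a space form and recognising it as $2(-1)^{\f+1}c^{\f-k-1}\Delta(\Delta+4nc)H_{2k-2}(c;-\Delta)(\varphi)$.

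Second, I would establish the structure of $c_{(2k,n-2k)}$ itself. Using the reduction of $\H(r)$ to a Laplace-type operator for the metric $g(r)$ (Theorem \ref{structure}) together with Lemma \ref{DPB}, the kernel of $\C(t;r)=\frac1{2\pi i}\int_\Gamma\R(\lambda)^2[\H(r),[\H(r),\dot{\H}(r)]]\R(\lambda)^2 e^{-t\lambda}\,d\lambda$ is accessible to Gilkey's pseudodifferential algorithm of Section \ref{AE}; for a locally conformally flat $g$ one has the closed expressions $g(r)=g-r^2\Rho+\tfrac14 r^4\Rho^2$, $v(r)=\det(1-\tfrac12 r^2\Rho)$ and the explicit potential \eqref{E-potential}, so each coefficient $c_{2j}(x;r)$ is a polynomial in $r^2$ whose coefficients are universal local Riemannian invariants built from $\Rho$ and its covariant derivatives. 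The first conformal variation of such an invariant at $g$ involves only the first-order datum $\dot{\Rho}=-\Hess(\varphi)$ (and the first variations of $g^{-1}$ and of the Christoffel symbols), hence is \emph{linear} in $\varphi$; evaluating on the space form, where every curvature quantity is a constant multiple of $c$, it becomes a constant-coefficient differential operator in $\varphi$. Its total weight is fixed by the scaling law \eqref{hom-crit}; the conformal invariance of $\int_{M^n}a_{(2k,n-2k)}\dvol$ under the M\"obius action on $\s^n$ (equivalently, the intertwining relation exploited in Section \ref{variation}) forces the factor $\Delta+4nc$; and differentiating $\int_M c_{(2k,n-2k)}(g)\dvol_g\equiv0$ at $g$, where $c_{(2k,n-2k)}(g)=0$, shows that $\bigl(c_{(2k,n-2k)}\bigr)^\bullet[\varphi]$ integrates to zero on $M$, which forces the additional factor $\Delta$. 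What remains is a polynomial $\lambda\mapsto H_{2k-2}(c;\lambda)$ of degree $k-1$ with the claimed homogeneity $H_{2k-2}(ac;a\lambda)=a^{k-1}H_{2k-2}(c;\lambda)$.

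The hard part is the sign statement $H_{2k-2}(c;\lambda)>0$ for $0<\lambda\in\sigma(-\Delta)$ when $c>0$ — this is exactly what upgrades the above into a strict extremal result, and it is the precise analogue of the still-conjectural positivity \eqref{positive} in Conjecture \ref{det-local-ext}. The obstruction is that, unlike for $k=1$ where Theorem \ref{B} furnishes a closed formula for $c_2(x;r)$, no such formula is available for $j\ge2$ and the complexity of Gilkey's coefficients $a_{2j}$ grows rapidly, so there is no visible pattern for the coefficients of $H_{2k-2}$. I would try to bypass this by producing a generating function for the family $H_{2k-2}$: compute the conformal variation of $\C(t;r)$ itself at the Einstein metric, for all heat-kernel orders simultaneously, and read off all $k$ at once — in the spirit of the generating-function identity of Theorem \ref{base}; absent an exact formula, a positivity argument modelled on Beckner's sphere inequality (as invoked for $\H_Q$ in Section \ref{hess}) would be required. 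Without one of these inputs the case $k\ge2$ — and in particular the positivity, hence the extremal interpretation — must remain conjectural, although the skeleton (criticality, the reduction to $\bigl(c_{(2k,n-2k)}\bigr)^\bullet$, and the polynomial and homogeneity structure) appears reachable by the route above, at least in the locally conformally flat Einstein case where $\H(r)$ is analytic in $r$ and $g(r),v(r)$ are explicit.
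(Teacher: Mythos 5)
The statement you set out to prove is labelled a \emph{Conjecture} in the paper, and the paper contains no proof of it: the only support offered there is the case $k=1$ (Theorem \ref{extremal-crit} together with Proposition \ref{second-hom}, via the splitting $\int_M a_{(2,n-2)}\dvol = -\tfrac{n^2}{6}\int_M v_n\dvol + \W_{n-2}$ from Theorem \ref{A-fine}) and the special case $(n,k)=(6,2)$ (Proposition \ref{a42-extreme}, which rests on the closed formula for $a_{(4,2)}$ in Proposition \ref{a42-final}). Your proposal is correctly calibrated to this situation: the parts you actually argue — criticality at Einstein metrics from the vanishing of $\C(t;r)$, the reduction of the Hessian to $-\tfrac12\int_M\varphi\,(c_{(2k,n-2k)})^\bullet[\varphi]\dvol$, and the check that for $k=1$ this reproduces Proposition \ref{second-hom} with $H_0=\tfrac12\binom{n-4}{\f-2}$ — are sound and consistent with the paper, and you state explicitly that the structure for $k\ge 2$ and, above all, the positivity of $H_{2k-2}$ remain out of reach, which is exactly their status in the paper as well.

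Two comparative remarks. For the one higher case the paper does verify, $(n,k)=(6,2)$, it does not go through the double-commutator coefficient $c_{(4,2)}$ at all: it first derives a holographic formula for $a_{(4,2)}$ in terms of $v_2,v_4,v_6$ plus divergence terms (via the Laplace-type reduction of Theorem \ref{structure}, Gilkey's $a_4$ and Corollary \ref{a4r-gen}), and then differentiates that formula twice along the conformal class, exactly as for the functionals $\F_{2k}$ in Section \ref{extremal}. This ``closed formula first, Hessian second'' route is likely more tractable for general $k$ than your plan of extracting $(c_{(2k,n-2k)})^\bullet$ from the kernel of $\C(t;r)$, since the latter needs the subleading symbol calculus of the double commutator at order $2k$, for which the paper has a closed result only at order $2$ (Theorem \ref{B}). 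Also, your structural step (the factor $\Delta$ from $\int_M c_{(2k,n-2k)}\dvol\equiv 0$, the factor $\Delta+4nc$ from M\"obius invariance, the homogeneity from \eqref{hom-crit}) matches the paper's own remarks following the conjecture, but as stated it only forces the Hessian polynomial to vanish on the relevant sphere eigenvalues; to upgrade this to polynomial divisibility you must add that at a space form the Hessian is a universal constant-coefficient polynomial in $\Delta$ and $c$ and then vary $c$. None of this closes the genuine gap, which you name correctly: an explicit or generating-function description of the higher coefficients and the positivity of $H_{2k-2}$ for $c>0$ — the part carrying the extremal content — are open in your proposal and in the paper alike.
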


The results in Section \ref{a4-holo} confirm the special case
$(n,k)=(6,2)$. Again, the factors $\Delta$ and $\Delta+4nc$ reflect
the scale-invariance of the functional and on the round sphere its
invariance under conformal diffeomorphisms, respectively.

Finally, we emphasize that according to Theorem \ref{extremal-RV},
Conjecture \ref{det-local-ext} and Conjecture \ref{hess-inter} the
Hessian forms of the functionals $\V_n(g_+,\cdot)$, $\log \det
(-P_2)$ and of the total integrals of $a_{(2k,n-2k)}$ at positive
Einstein metrics are {\em all} positive or negative semi-definite if
$n \equiv 0 \!\! \mod{4}$ or $n \equiv 2 \! \! \mod{4}$,
respectively.

\subsection{Spectral geometry}\label{spectral-geometry}

Spectral geometry studies the relations between the spectrum of the
Laplace-Beltrami operator of a Riemannian manifold and the geometry
of the underlying metric. In particular, it is of central interest
to understand which geometric quantities are determined by the
spectrum and to describe the sets of metrics for which the
corresponding Laplace-Beltrami operators have the same spectrum (the
so-called {\em isospectral problem}). In this context, spectral
invariants like the heat kernel coefficients play an important role
\cite{GPS}, \cite{Z-inverse}.

Analogous questions may be asked for other geometric operators like
the conformal Laplacian and the Dirac operator. In addition, the
present paper suggests to study inverse spectral problems for the
holographic Laplacian $\H(r;g)$. In the latter case, the spectrum
consists of an infinite sequence $\lambda_k(r)$ of eigenvalues
depending on a small parameter $r$. The asymptotic expansion
$$
\sum_k e^{t\lambda_k(r)} \sim (4 \pi t)^{-\f} \sum_{j \ge 0} t^j
\int_M a_{2j}(r;g) \dvol_g, \; t \to 0
$$
shows that the (integrated) heat kernel coefficients of $\H(r;g)$
are spectral invariants.

From the perspective of spectral geometry, the presence of the
parameter $r$ gives rise to new effects. We illustrate the new
quality of the situation by some examples of spectrally determined
quantities. For that purpose, we recall that the integrated heat
kernel coefficient $a_4$ of the Laplace-Beltrami operator is a
multiple of
$$
\int_M (2 |R|^2 - 2 |\Ric|^2 + 5 \scal^2) \dvol
$$
(see Section \ref{coeff-L}). Although this integral is a spectral
invariant, it is well known that the three quantities
\begin{equation}\label{special-invariants}
\int_M \scal^2 \dvol, \; \int_M |\Ric|^2 \dvol \quad \mbox{and}
\quad \int_M |R|^2 \dvol
\end{equation}
are {\em not determined} by the spectrum of the Laplacian (see
Section 5.2 of \cite{GPS}). On the other hand, \eqref{base-0} and
Theorem \ref{A-fine} show that the first two of the integrated heat
kernel coefficients of $\H(r;g)$ are given by the total integrals of
polynomials in renormalized volume coefficients. In particular, the
respective coefficients of $r^4$ and $r^2$ are given by the
integrals
$$
\int_M v_4 \dvol \quad \mbox{and} \quad  \int_M (2/3 (n\!-\!8) v_4 +
v_2^2) \dvol.
$$
Now the explicit formulas $2v_2 = -\J$, $8v_4 = \J^2-|\Rho|^2$ and
Proposition \ref{a-holo} (for $a_4$) imply that all quantities in
\eqref{special-invariants} are {\em determined} by the spectrum of
$\H(r;g)$. Similar arguments using $a_0(r) = v(r)$, Theorem
\ref{A-fine}, Proposition \ref{a42-final} and Proposition
\ref{a-holo} (for $a_6$) show that for locally conformally flat
metrics in dimension $n \ne 6,8$ the quantities
$$
\int_M \J^3 \dvol, \; \int_M \J|\Rho|^2 \dvol, \; \int_M \tr
(\Rho^3) \dvol  \quad  \mbox{and} \quad \int_M |d\J|^2 \dvol
$$
are determined by the spectrum of $\H(r;g)$.

\subsection{Relation to $Q$-curvature}\label{Q-curvature}

We recall from \cite{sharp} that under conformal variations the
total integral of Branson's $Q$-curvature $Q_{2N}$ satisfies
$$
\left( \int_{M^n} Q_{2N} \dvol \right)^\bullet[\varphi]= (n\!-\!2N)
\int_{M^n}\varphi Q_{2N} \dvol.
$$
Therefore, the relations \eqref{conf-Lambda} show that under
conformal variations the total integrals of the local Riemannian
invariants
$$
v_{2N}, \; \Lambda_{(2,2N-2)}, \; \cdots, \; \Lambda_{(2N-2,2)} \;\;
\mbox{and} \;\; a_{2N}
$$
behave similarly as $Q_{2N}$. In particular, it is natural to ask
for relations among these invariants.

In the low-order cases $N=1$ and $N=2$, we find the following
(possibly accidental) results.

\begin{lemm}\label{Q-a-4} We have
$$
Q_2 = - \frac{6}{n\!-\!4} a_2
$$
for general metrics and
\begin{equation}\label{magic-4}
Q_4 = \frac{60 a_4}{n\!-\!6} - 4 \Lambda_2
\end{equation}
for locally conformally flat metrics.
\end{lemm}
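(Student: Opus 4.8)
\textbf{Proof proposal for Lemma \ref{Q-a-4}.}

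The plan is to prove the two displayed formulas by combining the known explicit expressions for the low-order heat kernel coefficients of the conformal Laplacian with the formulas for the low-order $Q$-curvatures and renormalized volume coefficients. For the first identity, I would start from the well-known formula for the sub-leading heat kernel coefficient of a Laplace-type operator (Lemma \ref{a2-gen}): for the conformal Laplacian $P_2 = \Delta_g - (\f-1)\J$ the endomorphism part is $E = -(\f-1)\J$ and the connection is flat, so
$$
a_2 = \frac{\scal}{6} - \left(\f-1\right)\J = \frac{\scal}{6} - \frac{n-2}{2}\J.
$$
Using $\scal = 2(n-1)\J$ this collapses to $a_2 = \left(\frac{n-1}{3} - \frac{n-2}{2}\right)\J = -\frac{n-4}{6}\J$. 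Since $Q_2 = \J$ (recorded in Section \ref{basics}), this gives $Q_2 = -\frac{6}{n-4}a_2$ for all metrics, as claimed. This part is essentially a two-line check and presents no difficulty.

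For the second identity, which is only asserted for locally conformally flat metrics, I would proceed as follows. First, specialize all the relevant invariants to the locally conformally flat case, where $W=0$, hence $g(r) = g - r^2\Rho + \tfrac14 r^4\Rho^2$ and $v(r) = \det(1 - \tfrac12 r^2\Rho)$, so the $v_{2k}$ are elementary symmetric functions of the eigenvalues of $\Rho$; in particular $2v_2 = -\J$ and $8v_4 = \J^2 - |\Rho|^2$, and $\Lambda_2$ can be read off from Theorem \ref{A-fine}, equation \eqref{L-v}: $\Lambda_2 = \tfrac13\cdot 2(n-8)v_4 + \omega_2$ with $\omega_2 = v_2^2 = \tfrac14\J^2$ (using the expansion of $a_2(r)$ displayed after Theorem \ref{A-fine}), so $\Lambda_2 = \tfrac{n-8}{12}(\J^2-|\Rho|^2) + \tfrac14\J^2$. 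Second, use $Q_4 = \f\J^2 - 2|\Rho|^2 - \Delta\J$ from \eqref{pan}. Third, invoke the known formula for $a_4$ of the conformal Laplacian in the locally conformally flat case (this is referenced as Proposition \ref{a-holo} in Section \ref{coeff-L} of the appendix and involves $\J^2$, $|\Rho|^2$ and $\Delta\J$ with explicit rational coefficients). Then the identity $Q_4 = \frac{60 a_4}{n-6} - 4\Lambda_2$ becomes a linear identity among the three scalars $\J^2$, $|\Rho|^2$, $\Delta\J$; I would verify it by matching the coefficient of each of these three monomials separately.

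The main obstacle is purely bookkeeping: one must have the correct normalization of $a_4$ for the conformal Laplacian in the locally conformally flat case — in particular the precise rational coefficients of $\J^2$, $|\Rho|^2$ and of the divergence term $\Delta\J$ — and then carry out the coefficient comparison cleanly, keeping track of the fact that the factor $\frac{1}{n-6}$ interacts with an $(n-6)$-dependent numerator so that the quotient $\frac{60a_4}{n-6}$ is a genuine polynomial expression in $n$. Once the three coefficient matches (for $\J^2$, for $|\Rho|^2$, and for $\Delta\J$) are checked, the lemma follows. I would present this as a direct computation, deferring the explicit form of $a_4$ to the appendix reference and only exhibiting the final comparison of the three coefficients. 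No deeper structural input is needed; the statement is, as the text says, possibly accidental, and the proof is a verification rather than a conceptual argument.
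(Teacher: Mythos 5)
Your first identity and your overall strategy for the second one are exactly the paper's: the paper also proves \eqref{magic-4} by substituting $Q_4=\f\J^2-2|\Rho|^2-\Delta\J$, $v_4=\tfrac18(\J^2-|\Rho|^2)$, $\Lambda_2$ from \eqref{L-v}, and the explicit formula \eqref{CL-a4} for $a_4$ (with $W=0$), and reducing everything to a polynomial identity in $\J^2$, $|\Rho|^2$, $\Delta\J$. However, your step identifying $\Lambda_2$ is wrong, and as written it breaks the verification. By definition \eqref{Lambda}, $\Lambda(r)=a_2(r)-(\dot w(r))^2$, and \eqref{L-v} gives directly $\Lambda_2=\tfrac{2}{3}(n\!-\!8)v_4=\tfrac{n-8}{12}(\J^2-|\Rho|^2)$, with \emph{no} $\omega_2$ term. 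The quantity you wrote down, $\tfrac{2}{3}(n\!-\!8)v_4+v_2^2$, is the coefficient $a_{(2,2)}$ of $r^2$ in the expansion of $a_2(r)$ displayed after Theorem \ref{A-fine}; the relation is $a_{(2,2)}=\Lambda_2+\omega_2$ (cf.\ Remark \ref{obs}), so you have added the correction $\omega_2=v_2^2$ that the definition of $\Lambda$ subtracts off.

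Concretely, with your value of $\Lambda_2$ the right-hand side of \eqref{magic-4} acquires an extra $-4v_2^2=-\J^2$, and the coefficient comparison for the $\J^2$ monomial fails: one finds $\tfrac{60a_4}{n-6}-4(\Lambda_2+v_2^2)=Q_4-\J^2$, not $Q_4$. With the correct $\Lambda_2=\tfrac{2}{3}(n\!-\!8)v_4$ the check goes through, since for $W=0$ equation \eqref{CL-a4} gives $\tfrac{360a_4}{n-6}=(5n\!-\!16)\J^2-2(n\!-\!2)|\Rho|^2-6\Delta\J$ (in the convention of \eqref{CL-a4}), and
$$
6\left(\f\J^2-2|\Rho|^2-\Delta\J\right)=\left((5n\!-\!16)\J^2-2(n\!-\!2)|\Rho|^2-6\Delta\J\right)-2(n\!-\!8)(\J^2-|\Rho|^2),
$$
which is the elementary identity the paper's proof reduces to. So the gap is not in the method but in the input: fix the value of $\Lambda_2$ (do not add $\omega_2$), keep the $\Delta\J$ sign conventions of \eqref{pan} and \eqref{CL-a4} consistent, and your coefficient-matching argument coincides with the paper's proof.
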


\begin{proof} The first assertion is obvious. By combining
$$
Q_4 = \f \J^2 - 2|\Rho|^2 - \Delta \J \quad \mbox{and} \quad v_4 =
\frac{1}{8} (\J^2-|\Rho|^2)  \quad \mbox{(see Section
\ref{holo-heat})}
$$
with
$$
\Lambda_2 = \frac{2}{3} (n\!-\!8) v_4 \quad \mbox{(see \eqref{L-v})}
$$
and the explicit formula for $a_4$ (displayed in Section
\ref{coeff-L}), it follows that \eqref{magic-4} is equivalent to the
obvious identity
$$
6\left(\f \J^2 - 2|\Rho|^2 - \Delta \J\right) = ((5n\!-\!16) \J^2 -
2(n\!-\!2)|\Rho|^2 - 6 \Delta \J) - 2 (n\!-\!8) (\J^2-|\Rho|^2).
$$
The proof is complete.
\end{proof}

Further results in this direction may help to clarify the relation
between heat kernel coefficients of the conformal Laplacian and
$Q$-curvatures.

\subsection{First-order perturbation}\label{perturbation-1}

From the perspective of perturbation theory, it seems natural to
perform a systematic study of the properties of the {\em
lowest-order} perturbation of the heat kernel coefficients $a_{2k}$
of the conformal Laplacian which are given by the respective
coefficients of $r^2$ in the power series expansions of the
coefficients $a_{2k}(r)$. By Duhamel's formula, the total integrals
of these coefficients are given by the coefficients in the expansion
of
$$
t \Tr (\M_4 e^{tP_2})
$$
for $t \to 0$. This expansions contains basic information on the
off-diagonal structure of the heat-kernel coefficients $a_{2k}(x,y)$
of $P_2$. In this connection, the results in \cite{GSZ} are
relevant.

\subsection{Quantization and Einstein condition}\label{quantization}

Theorem \ref{B} may be regarded as a relation of a quantity which is
defined in terms of classical mechanics of the Hamiltonian
$\H(r;g)(x,\xi)$ and a quantity the definition of which involves its
quantization $\Delta_{g(r)}$. It seems natural to ask whether one can {\em
characterize} the families $g(r)$ for which the integral formula
\eqref{e-value} holds true. What is the role of the Einstein condition for
$g_+$ in this context? We recall that in the given proof of Theorem
\ref{B} the relation \eqref{Sch-bar} plays an important role, and the
proof of this relation uses the Einstein property of $g_+$.

\subsection{Non-Laplace-type operators}\label{NLT}

We emphasize again that $\H(r;g)$ is a family of non-Laplace-type
operators for the metric $g$. The spectral theory of such operators is
much less understood than that of Laplace-type operators. For some
pioneering work on the asymptotic expansion of the trace of the heat
kernel of certain classes of elliptic self-adjoint non-Laplace-type
operators we refer to \cite{AB}, \cite{A-GG}. Although the most serious
problems arise for such operators on sections of vector bundles, already
the scalar-valued case offers substantial difficulties. The cited
references use Volterra series (see \cite{BGV}) to determine the
(integrated) leading and sub-leading heat kernel coefficients (partly
under additional conditions on the operators). Although these methods can
also be applied to the operator $\H(r;g)$, the corresponding consequences
remain to be explored.

\section{Appendix}\label{app}

In this section we collect basic material which illustrates and
complements the general theory. We start with explicit displays of the
first few terms in the $r$-expansion of the holographic Laplacian
$\H(r;g)$. Then we proceed with a detailed discussion of the low-order
heat kernel coefficients $a_2, a_4, a_6$ of the conformal Laplacian. Here
we review classical results and provide a new derivation of an explicit
formula for $a_6$ (for locally conformally flat metrics). In this
connection, we emphasize a holographic perspective, i.e., we describe heat
kernel coefficients in terms of renormalized volume coefficients. In a
sense, this point of view is dual to the description of renormalized
volume coefficients as heat kernel coefficients (as in Lemma \ref{top}).
The holographic perspective naturally leads to a discussion of the
relation between the functional determinant (of the conformal Laplacian)
and the renormalized volume (in low-order cases), and includes a
derivation of an analog for the renormalized volume of Branson's
conjectural Polyakov formula for the determinant of $P_2$. Finally, we
determine the coefficient of $r^2$ in the expansion of the coefficient
$a_4(r)$.

\subsection{The holographic Laplacian}\label{HL}

We display explicit formulas for the first three terms in the
Taylor-expansion of the holographic Laplacian $\H(r;g)$ in $r$. Its main
part is given by $-\delta_g (g(r)^{-1} d)$. Now the expansion of $g(r)$
starts with
\begin{equation}\label{PE-metric}
g(r)_{ij} = g_{ij} - r^2 \Rho_{ij} + r^4 \frac{1}{4}
\left(\Rho^2_{ij} \!-\! \frac{1}{n\!-\!4} \B_{ij} \right) + \cdots,
\end{equation}
where $\Rho^2_{ij} = \Rho_{ik} \Rho^k_j$ and $\B$ is the Bach tensor
(see \cite{FG-final}, \cite{juhl-book}); here we raised indices
using $g$. It follows that
\begin{equation}\label{inverse}
g(r)^{-1}_{ij} = g^{ij} + r^2 \Rho^{ij} + r^4 \frac{1}{4} \left(3
(\Rho^2)^{ij} \!+\! \frac{1}{n\!-\!4} \B^{ij} \right) + \cdots.
\end{equation}
Hence the Taylor-expansion of the main part of $\H(r;g)$ starts with
\begin{equation}\label{holo-main}
\Delta - r^2 \delta (\Rho d) - r^4 \frac{1}{4} \delta \left(3 \Rho^2
\!+\! \frac{1}{n\!-\!4} \B \right) d
\end{equation}
(for $n \ge 5$). Next, the potential of the holographic Laplacian
has the expansion
$$
\U(r) = \sum_{N \ge 1} \mu_{2N} \frac{1}{(N\!-\!1)!^2}
\left(\frac{r^2}{4}\right)^{N-1} \quad \mbox{with} \quad \mu_{2N} =
\M_{2N}(1).
$$
Calculations show that the first three non-trivial coefficients
$\mu_{2N}$ are given by the formulas
\begin{equation*}
\mu_2 = - \left(\f\!-\!1\right) \J, \quad  \mu_4 = -\J^2 \!-\!
(n\!-\!4)|\Rho|^2 \!+\! \Delta \J
\end{equation*}
and
\begin{equation*}
\mu_6 = - 8 \frac{n\!-\!6}{n\!-\!4} (\B,\Rho) - 8 (n\!-\!6) \tr
\Rho^3 - 16 \J |\Rho|^2 - 4 |d\J|^2 + 4 \Delta |\Rho|^2 - 16 \delta
(\Rho d\J)
\end{equation*}
(for $n \ge 5$); for details we refer to \cite{juhl-ex}. Hence the
expansion of the potential of $\H(r;g)$ starts with
\begin{align*}
& - r^0 \frac{1}{2} (n\!-\!2) \J \\
& + r^2 \frac{1}{4} \left( -\J^2 \!-\! (n\!-\!4)|\Rho|^2 \!+\!
\Delta \J \right) \\
& + r^4 \frac{1}{16} \left(-2 \frac{n\!-\!6}{n\!-\!4} (\B,\Rho) -
2(n\!-\!6) \tr \Rho^3 - 4 \J |\Rho|^2 - |d\J|^2 + \Delta |\Rho|^2 -
4 \delta (\Rho d\J) \right)
\end{align*}
in dimension $n \ge 5$. In particular, for general metrics in
dimension $n=4$, we define
$$
\H(r;g) = P_2 + r^2 \frac{1}{4} (-4\delta(\Rho d) \!-\! \J^2 \!+\!
\Delta \J).
$$

\subsection{The heat kernel coefficients $a_0$, $a_2$ and $a_4$}\label{coeff-L}

The first three local heat kernel coefficients of the Laplacian
$\Delta$ are given by $a_0=1$ and
\begin{align*}
6 a_2 & = \scal, \\
360 a_4 & = 2 |R|^2 - 2 |\Ric|^2 + 5 \scal^2 - 12 \Delta \scal,
\quad \Delta = \delta d
\end{align*}
Here $R$ and $\Ric$ are the curvature tensor and the Ricci tensor of the
given metric, respectively. For proofs of these formulas we refer to
\cite{G-book}. Detailed proofs of the integrated versions of the above
formulas also can be found in Chapter III of \cite{BGM}. Note that the
coefficients in these formulas are universal in the dimension $n$. This is
a consequence of the functoriality of heat kernel coefficients for direct
products. \footnote{These formulas are already stated in Section 17 of
\cite{dewitt}. Accordingly, in the physical literature, the heat kernel
coefficients are often referred to as the DeWitt coefficients.}

By $R = W - \Rho \owedge g$, the first three heat kernel
coefficients of the {\em Laplacian} can be written in terms of the
Weyl tensor $W$, the Schouten tensor $\Rho$ and its trace $\J$. Then
we obtain $a_0=1$ and
\begin{align*}
3 a_2 & = (n\!-\!1) \J, \\
180 a_4 & = -(n\!-\!2)(n\!-\!6) |\Rho|^2 + (10n^2\!-\!23n\!+\!18)
\J^2 - 12(n\!-\!1) \Delta \J + |W|^2.
\end{align*}

The analogous formulas for the first three heat kernel coefficients
of the {\em conformal Laplacian} read $a_0=1$ and
\begin{align}
6 a_2 & = -(n\!-\!4) \J, \label{CL-a2} \\
360 a_4 & = -(n\!-\!6) (2(n\!-\!2)|\Rho|^2 - (5n\!-\!16) \J^2 - 6
\Delta \J) + 2 |W|^2, \quad \Delta = \delta d \label{CL-a4}
\end{align}
(see \cite{PR}, \cite{BO-index} and the references therein). The result
for $a_4$ also follows from the calculations in Section \ref{a4-holo}.

In particular, these explicit formulas imply that for $n=4$ and $n=6$ the
respective coefficients $a_2$ and $a_4$ are local conformal invariants.
Moreover, they confirm that for $n=2$ and $n=4$ the respective total
integrals of $a_2$ and $a_4$ are global conformal invariants (conformal
indices). In fact, we have
$$
\int_{M^4} a_4 \dvol = - 1/45 \int_{M^4} (\J^2\!-\!|\Rho|^2) \dvol +
1/180 \int_{M^4} |W|^2 \dvol,
$$
and both of the latter integrals are conformally invariant. Using
$$
\chi(M^4) = \frac{1}{32\pi^2}
\int_{M^4}(8\J^2-8|\Rho|^2+|W|^2)\dvol,
$$
the latter formula can be written also in the form
$$
\frac{1}{(4\pi)^2} \int_{M^4} a_4 \dvol = - \frac{1}{180} \chi(M) +
\frac{1}{120} \frac{1}{(4\pi)^2} \int_{M^4} |W|^2 \dvol
$$
(see Proposition 4.3 in \cite{PR}).

\subsection{The heat kernel coefficient $a_6$}\label{CL-a6}

Explicit formulas for the local heat kernel coefficient $a_6$ of the
conformal Laplacian are substantially more involved. In the literature,
various formulas for $a_6$ and its total integral on closed manifolds can
be found. In the pioneering work \cite{G-spec} (see also Theorem 4.8.16 in
\cite{G-book}), Gilkey derived a formula for $a_6$ for general
Laplace-type operators on vector bundles. It effectively can be used to
deduce explicit formulas for the heat kernel coefficient $a_6$ of the
conformal Laplacian.

The complexity of the situation is illustrated by the fact that Gilkey's
general formula (in the scalar case) involves $17$ local Riemannian
invariants and a basis of $12$ invariants which are relevant for the
contributions of the potential term. Since there are no canonical bases of
invariants, the formulation of any resulting formula for $a_6$ depends on
the choice of such a basis. Gilkey uses a basis of Riemannian invariants
which is defined in terms of the curvature tensor $R$, the Ricci tensor
$\Ric$ and the scalar curvature $\scal$ (and their covariant derivatives).
Alternatively, one may use the Weyl tensor $W$, the trace-less Ricci
tensor $B$ and the scalar curvature $\scal$ (and their covariant
derivatives), or a basis in terms of the Weyl tensor, the Schouten tensor
$\Rho$ and its trace $\J$ (and their covariant derivatives). The actual
comparison of results stated in terms of different bases usually is a
non-trivial issue.

By specialization of Gilkey's formula, Parker and Rosenberg derived in
\cite{PR} a formula for $a_6$ in terms of $W$, $B$ and $\scal$ (and its
covariant derivatives) in general dimensions.\footnote{In \cite{PR}, the
convention for the signs of the components $R_{ijkl}$ is opposite to that
of Gilkey.} Unfortunately, the displayed formulas in \cite{PR} contain
some misprints even in the locally conformally flat case. We use the
chance to remove these inaccuracies (in Section \ref{a6-c-flat}).

For locally conformally flat metrics, the coefficient $a_6$ is somewhat
less complicated. For such metrics, Branson (see Lemma 6.9 in
\cite{sharp}) used an integrated version of Gilkey's formula to prove that
\begin{multline}\label{branson-a6}
7! a_6 = (n\!-\!8) \Big( -\frac{1}{9} (35n^2\!-\!266n\!+\!456) \J^3
+ \frac{2}{3} (n\!-\!1)(7n\!-\!30) \J|\Rho|^2 \\ - \frac{2}{9}
(5n^2\!-\!2n\!-\!48) \tr(\Rho^3) - 3 (n\!-\!6) |d\J|^2 \Big)
\end{multline}
modulo a divergence term. By the conformal covariance of the
conformal Laplacian, that results suffices to derive a formula for
the hidden divergence term (see the discussion in Section
\ref{holo-heat}). Equation \eqref{branson-a6} shows that the
coefficient $a_6$ vanishes in dimension $n=8$ in the locally
conformally flat case. This is yet another example of the general
fact that the local heat kernel coefficient $a_{n-2}$ is a local
conformal invariant.

For general metrics, comparing Gilkey's formula for $a_6$ with Lemma
\ref{FGI} identifies the local conformal invariant $a_6$ in dimension
$n=8$ as the element
\begin{equation}\label{CI-8}
\frac{1}{7! 9} (81 \Phi_8 + 64 I_1 + 352 I_2)
\end{equation}
in the three-dimensional space of local conformal invariants $I$ of weight
$-6$, i.e., $I$ satisfies $e^{6 \varphi} I(e^{2\varphi} g) = I(g)$ (see
Proposition 4.2 of \cite{PR}). Here, $\Phi_n \st |\tilde{\nabla}
\tilde{R}|^2$ (restricted to $\rho=0$ and $t=1$) is the Fefferman-Graham
invariant (see Proposition 3.4 in \cite{FG-Cartan}) being defined in terms
of the Levi-Civita connection $\tilde{\nabla}$ of the ambient metric
$\tilde{g}$ and its curvature $\tilde{R}$. Moreover,
$$
I_1 = W^{ij}{}_{kl} W^{kl}{}_{mn} W^{mn}{}_{ij} \quad \mbox{and}
\quad I_2 = W_{ijkl} W^j{}_m{}^l{}_n W^{imkn}.
$$
For more details of the relevant invariant theory see also Section 4
of \cite{B-spec}.

In dimension $n=6$, the total integral of $a_6$ can be written as a
linear combination of the Euler characteristic and the total
integrals of the local conformal invariants $I_1$, $I_2$ and $I_3 =
\Phi_6$ (see Proposition 4.3 in \cite{PR}), i.e.,
$$
\frac{1}{(4\pi)^3} \int_{M^6} a_6 \dvol = \frac{1}{7!} \frac{10}{3}
\chi(M) + \sum_{i=1}^3 c_i \int_{M^6} I_i \dvol.
$$
This is a special case of the Deser-Schwimmer decomposition of
global conformal invariants proved in \cite{alex}.

The following identity was already stated in \cite{PR} without a
proof. Although some arguments of the following proof are taken from
\cite{Erd}, the actual result differs.

\begin{lemm}\label{FGI} The Fefferman-Graham invariant $\Phi_n$ can be
written in the form
\begin{equation}\label{PR-FG}
\Phi_n = \Omega_n - \frac{8}{n\!-\!2} (I_1 + 4 I_2),
\end{equation}
where
$$
\Omega_n \st |\nabla W|^2 - 4(n\!-\!10) |\Co|^2 +
\frac{8}{n\!-\!2}(W,\Delta W) - \frac{16}{n\!-\!2} \J |W|^2
$$
and $\Co$ is the Cotton tensor.
\end{lemm}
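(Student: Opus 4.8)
The plan is to compute $\Phi_n = |\tilde{\nabla}\tilde{R}|^2$, restricted to $\rho=0$ and $t=1$, directly from the Fefferman--Graham ambient metric in normal form, and then to reorganize the answer in the natural basis of weight-$6$ Riemannian invariants. First I would recall the standard description of the ambient curvature at $\rho=0$ (Chapter 3 of \cite{FG-final}, and \cite{FG-Cartan}). With ambient coordinates $(t,x^i,\rho)$ and $\tilde{g} = 2\rho\,dt^2 + 2t\,dt\,d\rho + t^2\mathbf{g}(\rho)$, the nonzero components of $\tilde{R}$ at $\rho=0$ are the purely tangential ones, equal up to a power of $t$ to the Weyl tensor $W_{ijkl}$ of $g$, while the components carrying one index in the null $\partial_t$/$\partial_\rho$ directions are governed by the Cotton tensor $\Co_{ijk}=\nabla_k\Rho_{ij}-\nabla_j\Rho_{ik}$; the first transverse derivative $\tilde{\nabla}_0\tilde{R}$ is controlled by the coefficient $g_{(2)}=-\Rho$ in the ambient expansion, and this is the origin of the cubic-in-$W$ terms and of the $\J|W|^2$ term.

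Next I would assemble $\tilde{\nabla}\tilde{R}$ at $\rho=0$. The tangential--tangential components give the Levi-Civita derivative $\nabla_m W_{ijkl}$, while the mixed components are linear in $\Co$ and in $\Rho\ast W$ contractions; here it matters that the ambient Christoffel symbols at $\rho=0$ differ from the Levi-Civita ones of $g$ exactly by terms built from $\Rho$, which feed in precisely these $\Rho\ast W$ pieces. I would then contract with the inverse ambient metric --- which at $\rho=0$ interchanges $\partial_t$ and $\partial_\rho$ and acts as $t^{-2}g^{ij}$ on tangential indices --- and set $t=1$. The purely tangential contraction produces $|\nabla W|^2$; the contractions involving the two null directions produce a multiple of $|\Co|^2$, the constant $-4(n-10)$ emerging from the count of such index configurations together with the factor $2$ in $2t\,dt\,d\rho$ and the contracted Bianchi identity; and the remaining cross terms and $\tilde{\nabla}_0$-contributions produce contractions of $W$ with $\nabla^2\Rho$, with $\Rho\ast W$, and cubic-in-$W$ scalars.

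To reach the stated form I would then apply the standard identities for $W$: the algebraic Bianchi identity and trace-freeness of $W$ to reduce every $W\ast W\ast W$ scalar to the two generators $I_1$ and $I_2$; the Ricci identity to rewrite the double-derivative scalar coming from $\tilde{\nabla}_0\tilde{R}$ as $(W,\Delta W)$ plus contractions of $W$ with $\Rho\ast W$ and with $\J|W|^2$; and the contracted second Bianchi identity $\delta W = (3-n)\Co$ to express the surviving divergence contributions through $|\Co|^2$. Collecting all terms gives $\Phi_n = |\nabla W|^2 - 4(n-10)|\Co|^2 + \tfrac{8}{n-2}(W,\Delta W) - \tfrac{16}{n-2}\J|W|^2 - \tfrac{8}{n-2}(I_1+4I_2)$, which is the assertion, since $\Omega_n$ is the sum of the first four terms.

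The main obstacle will be the bookkeeping of the mixed components of $\tilde{\nabla}\tilde{R}$ and pinning down every numerical coefficient, in particular disentangling which $\Rho\ast W$ and cubic-$W$ contributions come from the ambient Christoffel symbols as opposed to from $\tilde{\nabla}_0\tilde{R}$, and verifying the dimension-dependent constants $-4(n-10)$, $\tfrac{8}{n-2}$ and $\tfrac{16}{n-2}$. Since part of this computation already appears in \cite{Erd} but the result there differs, I would cross-check against explicit low-dimensional data: in $n=4$ the cubic $W$-scalars degenerate and the formula must be consistent with the known Weyl-invariant content of $a_4$, and in $n=8$ the combination \eqref{CI-8} of $\Phi_8$, $I_1$, $I_2$ must reproduce Gilkey's formula for $a_6$; the most likely source of the discrepancy with \cite{Erd} is a sign or a misassigned cubic term, which these checks would isolate.
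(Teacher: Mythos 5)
Your overall strategy---recomputing $\Phi_n=|\tilde{\nabla}\tilde{R}|^2$ directly from the ambient metric and then reorganizing by intrinsic tensor algebra---could in principle be carried out, but as written the proposal has a genuine gap: the single identity that carries all the content of the lemma is never established, only announced as ``collecting all terms gives'' the answer. Note first that the raw ambient expansion cannot produce the terms you attribute to it: every component of $\tilde{\nabla}\tilde{R}$ at $\rho=0$ is (first derivative of curvature) plus (ambient Christoffel, which is algebraic in $g$ and $\Rho$) times curvature, so $|\tilde{\nabla}\tilde{R}|^2$ is at most \emph{quadratic} in $W,\Co$ with $\Rho$-coefficients. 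Hence neither the cubic invariants $I_1,I_2$ nor the second-derivative scalar $(W,\Delta W)$ can ``emerge'' at that stage, contrary to your description; what the ambient computation yields is (up to the coefficients you have not pinned down) precisely the boundary formula $\Phi_n=|V|^2+16\,W^{ijkl}(\nabla_i\Co_{jkl}+\Rho^m_i W_{mjkl})+16|\Co|^2$ with $V=\nabla W+g\ast\Co$, which the paper does not rederive but simply cites from \cite{FG-Cartan}, \cite{G-vol}. All of $I_1$, $I_2$, $(W,\Delta W)$, $\J|W|^2$ and the dimension-dependent constants arise only afterwards, from the purely intrinsic identity \eqref{myst} relating $W^{ijkl}\nabla_i\Co_{jkl}$ to $(W,\Delta W)$, $I_1$, $4I_2$, $\Rho^m_iW^{ijkl}W_{mjkl}$ and $\J|W|^2$ with specific coefficients; the paper proves this by evaluating the commutator $W^{ijkl}[\nabla_i,\nabla_m]W^m{}_{jkl}$ in two ways---once via the contracted second Bianchi identity $\nabla^mW_{mjkl}=(n\!-\!3)\Co_{jkl}$, once via the Ricci identity together with $R=W-\Rho\owedge g$ and the first Bianchi identity. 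Your proposal names the right tools (Ricci identity, Bianchi identities, trace-freeness of $W$) but never isolates this identity nor derives any coefficient, and it misassigns where these terms come from, which suggests the mechanism has not actually been worked out.

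Two further remarks. First, redoing the ambient bookkeeping from scratch is the riskiest and least necessary part of your plan: starting from the cited formula for $\Phi_n$ reduces the lemma to intrinsic algebra on $M$ (expansion of $|V|^2$, which with $\nabla^mW_{mjkl}=(n\!-\!3)\Co_{jkl}$ gives the coefficient $-4(n\!-\!10)$ of $|\Co|^2$ immediately, plus the identity \eqref{myst}), and this is where the whole proof lives. Second, your proposed cross-checks (consistency of \eqref{CI-8} with Gilkey's $a_6$ in $n=8$, degeneracies in $n=4$) are sensible sanity tests but cannot substitute for the derivation of the coefficients $\tfrac{8}{n-2}$, $\tfrac{16}{n-2}$ and of the combination $I_1+4I_2$ in \eqref{PR-FG}.
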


\begin{rem}\label{PR-Phi} The right-hand side of \eqref{PR-FG}
coincides with the invariant in Equation (4.1) of \cite{PR}. This
follows by a direct calculation using the relation
$$
\Rho = \frac{1}{n\!-\!2} B + \frac{1}{2n(n\!-\!1)} \scal.
$$
\end{rem}

Note that the identity $2 \Omega_{10} = (\Delta - 4 \J) (|W|^2) =
P_2 (|W|^2)$ immediately proves the conformal invariance of
$\Omega_{10}$. Lemma \ref{FGI} shows that \eqref{CI-8} equals
\begin{equation}
\frac{1}{7!} \left(9 \Omega_8 - \frac{44}{9} I_1 - \frac{80}{9}
I_2\right).
\end{equation}

\begin{proof} We first recall from \cite{FG-Cartan} (and \cite{G-vol})
an explicit formula for the conformal invariant $\Phi_n$. Let
\begin{equation}\label{cotton}
\Co_{ijk} = \nabla_k \Rho_{ij} - \nabla_j \Rho_{ik}
\end{equation}
be the Cotton tensor. Then $\Co_{ijk} = -\Co_{ikj}$ and $\Co_{ijk} +
\Co_{jki} + \Co_{kij} = 0$. Now set\footnote{Note that our sign
conventions for $R$ and $W$ coincide with those in \cite{FG-Cartan}
and are opposite to those of \cite{G-vol}.}
$$
V_{ijklm} = \nabla_m W_{ijkl} - g_{im} \Co_{jkl} + g_{jm} \Co_{ikl}
- g_{km} \Co_{lij} + g_{lm} \Co_{kij}.
$$
Then we have
$$
\Phi_n = |V|^2 + 16 W^{ijkl} (\nabla_i \Co_{jkl} + \Rho^m_i
W_{mjkl}) + 16 |\Co|^2.
$$
The second Bianchi identity implies
\begin{multline}\label{bianchi-2}
\nabla_m W_{ijkl} + \nabla_k W_{ijlm} + \nabla_l W_{ijmk} \\ =
\Co_{ikm} g_{jl} + \Co_{ilk} g_{jm} + \Co_{iml} g_{jk} - \Co_{jkm}
g_{il} - \Co_{jlk}g_{im} - \Co_{jml}g_{ik}.
\end{multline}
It follows that
\begin{equation}\label{div-W}
\nabla^m W_{mjkl} = (n\!-\!3) \Co_{jkl}.
\end{equation}
These results yield
$$
|V|^2 = |\nabla W|^2 - 8(n\!-\!3) |C|^2 + 4n |C|^2
$$
and we obtain the alternative formula
\begin{equation}\label{Phi-alt}
\Phi_n = |\nabla W|^2 - 4(n\!-\!10)|C|^2 + 16(W^{ijkl} \nabla_i
\Co_{jkl} + \Rho^m_i W^{ijkl}W_{mjkl}).
\end{equation}
Now since
$$
\Phi_n - \Omega_n = -\frac{8}{n\!-\!2}(W,\Delta W)  + 16(W^{ijkl}
\nabla_i \Co_{jkl} + \Rho^m_i W^{ijkl}W_{mjkl}) + \frac{16}{n\!-\!2}
\J |W|^2,
$$
it only remains to verify the identity
\begin{multline}\label{myst}
-\frac{1}{2} (W,\Delta W) + (n\!-\!2) W^{ijkl} \nabla_i \Co_{jkl} \\
= - \frac{1}{2} I_1 - 2 I_2 - (n\!-\!2) \Rho^m_i W^{ijkl}W_{mjkl} -
\J |W|^2.
\end{multline}
But \eqref{myst} is a consequence of two different calculations of
the commutator term
\begin{equation}\label{ct}
W^{ijkl} [\nabla_i,\nabla_m] W^m{}_{jkl}.
\end{equation}
On the one hand, the commutator term \eqref{ct} equals the left-hand
side of \eqref{myst}. In fact, \eqref{bianchi-2} implies
\begin{multline*}
\nabla_i W_{mjkl} + \nabla_j W_{imkl} \\ = -\nabla_m W_{jikl} +
(\Co_{kmi} g_{lj} + \Co_{kjm} g_{li} + \Co_{kij} g_{lm} - \Co_{lmi}
g_{kj} - \Co_{ljm} g_{ki} - \Co_{lij} g_{km}).
\end{multline*}
We apply $\nabla^m$ to that identity, multiply with  $W^{ijkl}$ and
sum over repeated indices. Since $W$ is trace-free, we obtain
$$
2 W^{ijkl} \nabla^m \nabla_i W_{mjkl} = W^{ijkl} \Delta W_{ijkl} + 2
W^{ijkl} \nabla_l \Co_{kij}
$$
using the symmetries. Together with \eqref{div-W} we find
\begin{align*}
W^{ijkl} [\nabla_i,\nabla_m] W^m{}_{jkl} & = (n\!-\!3) W^{ijkl}
\nabla_i \Co_{jkl} - W^{ijkl} \nabla_l \Co_{kij} - \frac{1}{2}
W^{ijkl}
\Delta W_{ijkl} \\
& = (n\!-\!2) W^{ijkl} \nabla_i \Co_{jkl} - \frac{1}{2} W^{ijkl}
\Delta W_{ijkl}.
\end{align*}
On the other hand, the commutator term \eqref{ct} can be written in
the form
\begin{equation*}
- W^{ijkl} R_{im}{}^m{}_a W^a{}_{jkl} - W^{ijkl} R_{imja}
W^{ma}{}_{kl} - W^{ijkl} R_{imka} W^m{}_j{}^a{}_l - W^{ijkl}
R_{imla} W^m{}_{jk}{}^a.
\end{equation*}
By $R = W - \Rho \owedge g$ (see \eqref{R-W}), this sum equals the
sum of
\begin{multline*}
- W^{ijkl} R_{im}{}^m{}_a W^a{}_{jkl} - W^{ijkl} W_{imja}
W^{ma}{}_{kl} - W^{ijkl} W_{imka} W^{m}{}_j{}^a{}_l - W^{ijkl}
W_{imla} W^{m}{}_{jk}{}^a \\
= - \Ric_{ia} W^{ijkl} W^{a}{}_{jkl} - W^{ijkl} W_{imja}
W^{ma}{}_{kl} - I_2 - I_2
\end{multline*}
and
\begin{multline*}
W^{ijkl} ((\Rho \owedge g)_{imja} W^{ma}{}_{kl} +  (\Rho \owedge
g)_{imka} W^m{}_j{}^a{}_l) + (\Rho \owedge g)_{imla} W^m{}_{jk}{}^a) \\
= - W^{ijkl} (\Rho_{mj} W^m{}_{ikl} + \Rho_{mk} W^m{}_{jil} +
\Rho_{ml} W^m{}_{jki}) - W^{ijkl} \Rho_{ia} (W_j{}^a{}_{kl} +
W_{kj}{}^a{}_l + W_{ljk}{}^a).
\end{multline*}
Now the first Bianchi identity (for $W$) shows that the latter sum
vanishes. In fact, it implies that the last three terms sum up to
$0$ and that the first three terms are given by
$$
(W^{ijkl} \Rho_{mj} W^m{}_{kli} + W^{ijkl} \Rho_{mj} W^m{}_{lik}) -
(W^{ijkl} \Rho_{mk} W^m{}_{jil} + W^{ijkl} \Rho_{ml} W^m{}_{jki}) =
0
$$
using the symmetries. Thus, the commutator term \eqref{ct} equals
$$
- \Ric_{ia} W^{ijkl} W^{a}{}_{jkl} - W^{ijkl} W_{imja} W^{ma}{}_{kl}
- 2 I_2.
$$
But the first Bianchi identity (for $W$) shows that $W_{imja} =
W_{ijma} + W_{iajm}$. Hence
$$
W^{ijkl} W_{imja} W^{ma}{}_{kl} = I_1 - W^{ijkl} W_{imja}
W^{ma}{}_{kl},
$$
i.e.,
$$
2 W^{ijkl} W_{imja} W^{ma}{}_{kl} = I_1.
$$
Therefore, the commutator term \eqref{ct} equals
$$
-(n\!-\!2) \Rho^a_i W^{ijkl} W_{ajkl} - \J |W|^2 - \frac{1}{2} I_1 -
2 I_2,
$$
i.e., coincides with the right-hand side of \eqref{myst}. This
proves \eqref{myst} and completes the proof.
\end{proof}

The following result relates the conformal invariant $a_6$ in dimension
$n=8$ to the coefficient $\tilde{a}_6$ which arises by substituting the
ambient curvature $\tilde{R}$ into the formula for the heat kernel
coefficient $a_6$ of the Laplacian of a Ricci flat metric. It is a special
case of Theorem 7.3 announced in \cite{log}.

\begin{lemm}\label{a66} For a Riemannian manifold $(M,g)$ of dimension
$n=8$, we have
$$
a_6 = \tilde{a}_6 |_{t=1,\rho=0}.
$$
\end{lemm}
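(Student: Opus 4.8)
The plan is to exploit the fact that, in dimension $n=8$, \emph{both} sides of the asserted identity are local conformal invariants of weight $-6$ (that is, functionals $I$ with $e^{6\varphi}I(e^{2\varphi}g)=I(g)$), and that the space of such invariants is only three-dimensional. For the left-hand side this is classical: $a_{n-2}$ is always a local conformal invariant, so for $n=8$ the coefficient $a_6$ is a weight $-6$ invariant; and by Lemma \ref{FGI} together with Proposition~4.2 of \cite{PR} the space of weight $-6$ conformal invariants in dimension $8$ is spanned by the Fefferman--Graham invariant $\Phi_8$, by $I_1=W^{ij}{}_{kl}W^{kl}{}_{mn}W^{mn}{}_{ij}$ and by $I_2=W_{ijkl}W^j{}_m{}^l{}_nW^{imkn}$, with $a_6=\tfrac{1}{7!\,9}(81\Phi_8+64I_1+352I_2)$ in that basis — this is exactly \eqref{CI-8} recorded in Section~\ref{CL-a6}. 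Thus the problem reduces to showing that $\tilde a_6|_{t=1,\rho=0}$ equals the same combination.

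Next I would argue that $\tilde a_6|_{t=1,\rho=0}$ is indeed a weight $-6$ conformal invariant, which is the standard Fefferman--Graham mechanism. By construction $\tilde a_6$ is a universal polynomial in the ambient curvature $\tilde R$ and its Levi-Civita covariant derivatives, hence a natural scalar of the ambient metric, homogeneous of degree $-6$ under the ambient dilations $\delta_s:(t,x,\rho)\mapsto(st,x,\rho)$; since the ambient metric of $e^{2\varphi}g$ is the pullback of the ambient metric of $g$ by a diffeomorphism that acts as $t\mapsto e^{\varphi(x)}t$ along $\rho=0$, restriction to $t=1,\rho=0$ produces a local invariant transforming with weight $-6$. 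The only point requiring care here is that, for $n=8$, the value of $\tilde a_6$ at $\rho=0$ should depend on $\mathbf g(\rho)$ only through the coefficients $g_{(2)},g_{(4)},g_{(6)}$, which are determined by $g$; one checks that the count of $\rho$-derivatives entering $a_6$ (at most the level of $|\tilde\nabla\tilde R|^2$ and $\tilde R^3$) never reaches the obstructed coefficient, so the obstruction tensor does not enter. Consequently both sides lie in $\operatorname{span}\{\Phi_8,I_1,I_2\}$ and everything comes down to matching three numbers.

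The computational heart is then the explicit evaluation of $\tilde a_6|_{t=1,\rho=0}$ in this basis. I would start from Gilkey's formula for the heat kernel coefficient $a_6$ of the scalar Laplacian (Theorem~4.8.16 of \cite{G-book}), which on a Ricci-flat metric loses every monomial containing the Ricci tensor or $\scal$; the remaining terms $|\nabla R|^2$, $\Delta|R|^2$ and $R_{ijkl}\Delta R^{ijkl}$ are reduced, using the second Bianchi identity, the contracted Bianchi identity (Ricci-flatness gives $\nabla^mR_{ijkm}=0$) and curvature commutation, to $|\nabla R|^2$ and the two independent cubic curvature scalars $A=R_{ij}{}^{kl}R_{kl}{}^{mn}R_{mn}{}^{ij}$ and $B=R_{ikjl}R^i{}_m{}^j{}_nR^{kmln}$. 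Substituting $R\mapsto\tilde R$ and restricting to $t=1,\rho=0$, one uses $|\tilde\nabla\tilde R|^2|_{t=1,\rho=0}=\Phi_8$ by the very definition of $\Phi_n$, together with the standard description of the restricted ambient curvature (fully tangential block equal to the Weyl tensor $W$, one normal index producing the Cotton tensor $\Co$, two normal indices producing the Schouten tensor $\Rho$) to express $\tilde A|_{t=1,\rho=0}$ and $\tilde B|_{t=1,\rho=0}$ as explicit combinations of $W$-, $\Co$- and $\Rho$-monomials; Lemma \ref{FGI} and the usual Bianchi/$\delta W=(n-3)\Co$ reductions then bring these back into $\operatorname{span}\{\Phi_8,I_1,I_2\}$, and one verifies the total equals $\tfrac{1}{7!\,9}(81\Phi_8+64I_1+352I_2)=a_6$. (A cheaper route in practice is to fix the three unknown coefficients of $\tilde a_6|_{t=1,\rho=0}$ by testing on three families of metrics in dimension $8$ whose ambient metrics are explicit — for instance a locally conformally flat metric, a product of two Einstein spaces, and a metric with $W=0$ but $\Co\neq0$ — and comparing with the known heat coefficients there.)

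\textbf{Main obstacle.} All the difficulty is in the last step: the bookkeeping that tracks the Cotton- and Schouten-tensor contributions arising from the mixed-index components of $\tilde R$ at $\rho=0$, and then reduces the resulting weight $-6$ Riemannian monomials, modulo the Bianchi identities and $\delta W=(n-3)\Co$, into the three-element basis $\{\Phi_8,I_1,I_2\}$. This is precisely the invariant-theory computation that already underlies Lemma \ref{FGI} and Proposition~4.2 of \cite{PR}; the conceptual part — that both sides are weight $-6$ conformal invariants, so the comparison collapses to three scalars — is immediate.
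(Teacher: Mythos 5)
Your plan is correct and, at its computational core, it is the same argument the paper uses: specialize Gilkey's formula for $a_6$ to Ricci-flat metrics, substitute the ambient curvature, identify $|\tilde{\nabla}\Rt|^2|_{t=1,\rho=0}$ with $\Phi_8$, and compare coefficients with \eqref{CI-8}. Two points of difference are worth recording. First, your conformal-invariance framing (both sides lie in the three-dimensional span of $\Phi_8,I_1,I_2$, so one only matches three numbers) is dispensable: once \eqref{CI-8} is granted, one simply compares the two explicit linear combinations, which is what the paper does. Second, and more substantively, the ``main obstacle'' you single out is a phantom: no Cotton- or Schouten-bookkeeping is needed when restricting the cubic ambient scalars, because every component of $\Rt$ with a lowered $t$-index vanishes (the Euler field inserts trivially into $\Rt$) while the inverse ambient metric at $\rho=0$ satisfies $\tilde{g}^{tt}=\tilde{g}^{\rho\rho}=0$ and pairs $t$ only with $\rho$; hence all contractions in $\tilde{I}_1$ and $\tilde{I}_2$ are forced onto tangential indices and these invariants restrict directly to $I_1$ and $I_2$. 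The genuine crux, which your sketch mentions only in passing as ``Bianchi identity and curvature commutation,'' is the identity $(R,\Delta R)=R^{ijkl}R_{ij}{}^{mn}R_{mnkl}+4R^{ijkl}R_{jmln}R_i{}^m{}_k{}^n$ for Ricci-flat metrics (the paper's \eqref{bianchi-flat}), applied to the ambient metric to convert the term $(\Rt,\tilde{\Delta}\Rt)$ in Gilkey's formula into $\tilde{I}_1+4\tilde{I}_2$. Note also that this step, not a naive derivative count, is what settles your parenthetical worry about the obstructed coefficient: $(\Rt,\tilde{\Delta}\Rt)$ taken at face value involves four $\rho$-derivatives of ${\bf g}(\rho)$ and so could a priori see $g_{(8)}$ in dimension $8$, whereas after the identity only $\Rt$ and $\tilde{\nabla}\Rt$ (at most three $\rho$-derivatives) enter, which are determined by $g$.
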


\begin{proof} By Gilkey's formula for the heat kernel coefficient
$a_6$ of the Laplacian (Theorem 4.8.16 in \cite{G-book}), we have
$$
7! \tilde{a}_6 = 9 (\tilde{\nabla}(\Rt),\tilde{\nabla}(\Rt)) + 12
(\Rt,\tilde{\Delta} \Rt) - \frac{44}{9} \tilde{I}_1 - \frac{80}{9}
\tilde{I}_2
$$
with
$$
\tilde{I}_1 = \Rt^{ij}{}_{kl} \Rt^{kl}{}_{mn} \Rt^{mn}{}_{ij} \quad
\mbox{and} \quad \tilde{I}_2 = \Rt^{ijkl} \Rt_{jmln}
\Rt_i{}^m{}_k{}^n.
$$
Here we have used the Ricci flatness of $\tilde{g}$. The
restrictions of $\tilde{I}_1$ and $\tilde{I}_2$ to $t=1$ and
$\rho=0$ are given by $I_1$ and $I_2$, respectively. Therefore, by
\eqref{CI-8}, the assertion is equivalent to
$$
(9 (\tilde{\nabla}(\Rt),\tilde{\nabla}(\Rt)) + 12
(\Rt,\tilde{\Delta} \Rt))|_{t=1,\rho=0} - \frac{44}{9} I_1 -
\frac{80}{9} I_2 = 9 \Phi_8 + \frac{64}{9} I_1 + \frac{352}{9} I_2,
$$
i.e.,
\begin{equation}\label{cubic}
(\Rt,\tilde{\Delta}\Rt)|_{t=1,\rho=0} = I_1 + 4 I_2.
\end{equation}
In order to prove this relation, we show that any Ricci flat metric
$g$ satisfies the relation
\begin{equation}\label{bianchi-flat}
(R,\Delta R) = R^{ijkl} R_{ij}{}^{mn} R_{mnkl} + 4 R^{ijkl} R_{jmln}
R_i{}^m{}_k{}^n.
\end{equation}
In fact, the second Bianchi identity implies
\begin{equation}\label{bf2}
\nabla_i R_{mjkl} + \nabla_j R_{imkl} = - \nabla_m R_{jikl}.
\end{equation}
We apply $\nabla^m$ to that equation, multiply with $R^{ijkl}$ and sum
over repeated indices. This yields
$$
2 R^{ijkl} \nabla^m \nabla_i R_{mjkl} = R^{ijkl} \Delta R_{ijkl}
$$
using the symmetries. Next we commute derivatives
$$
\nabla^m \nabla_i R_{mjkl} = \nabla_i \nabla^m R_{mjkl} +
\left[\nabla^m,\nabla_i\right] R_{mjkl}
$$
and observe that $\nabla^m R_{mjkl} = \nabla_l \Ric_{kj} - \nabla_k
\Ric_{lj} = 0$ (by \eqref{bf2} and the Ricci flatness). Hence
\begin{multline*}
R^{ijkl} \Delta R_{ijkl} = 2 R^{ijkl} \left[\nabla^m,\nabla_i\right] R_{mjkl}\\
= - 2 R^{ijkl} (R^{m}{}_{im}{}^n R_{njkl} + R^m{}_{ij}{}^n R_{mnkl}
+ R^m{}_{ik}{}^n R_{mjnl} + R^{m}{}_{il}{}^n R_{mjkn}).
\end{multline*}
Now the Ricci flatness and the symmetries yield
$$
R^{ijkl} \Delta R_{ijkl} = -2 R^{ijkl} R^m{}_{ij}{}^n R_{mnkl} + 4
R^{ijkl} R_i{}^m{}_k{}^n R_{jmln}.
$$
Finally, an application of the first Bianchi identity (similarly as
in the proof of Lemma \ref{FGI}) proves \eqref{bianchi-flat}. This
completes the proof.
\end{proof}

The arguments in the proof of Lemma \ref{a66} also show that the
elements in Gilkey's basis of Riemannian invariants of weight $6$
are linearly dependent.

\subsection{$a_6$ for locally conformally flat metrics}\label{a6-c-flat}

In the present section we discuss $a_6$ for locally conformally flat
metrics in general dimensions. In particular, we correct some formulas in
\cite{PR} and verify that the result is consistent with Branson's formula
\eqref{branson-a6}.\footnote{These corrections may be of independent
interest in connection with future explicit
calculations.}\,\footnote{\cite{sharp} does not contain \cite{PR} among
its references and accordingly does not discuss the relations to the
results in \cite{PR}.}

For locally conformally flat metrics, Table \ref{heat-6} displays the
local heat kernel coefficient $7! a_6$ as a linear combination of $10$
Riemannian invariants. These results follow by a reformulation of Gilkey's
formula for $a_6$ in terms of $W$, $B$, $\scal$ and are basically taken
from \cite{PR}, up to corrections of the coefficients of $B_6$ and
$B_{13}$ in general dimensions. The last two columns display the weights
of the contributions in the special dimensions $n=6$ and $n=8$.

Two comments are in order. The coefficient of $B_1 = \Delta^2
(\scal)$ alternatively follows by combining Lemma 1.1 and Theorem
1.5 of \cite{BGO}. Moreover, for the round spheres $\s^n$, the only
non-trivial contribution in Table \ref{heat-6} is $B_{10} =
\scal^3$. The corresponding coefficient coincides with that given in
Section \ref{heat-spheres}.

\begin{table}[ht]
\centerline{
\begin{tabular}{l|c|c|c}
invariant & coefficient & $n=6$ & $n=8$ \\
\hline & & \\[-3mm]
$B_1 = \Delta^2 (\scal)$ & $-\frac{3(n-8)}{(n-1)}$ & $\frac{6}{5}$ & $0$ \\[2mm]
$B_2 = |\nabla \scal|^2$ &
$\frac{5n^4-76n^3+288n^2-128n+16}{4n^2(n-1)^2}$ & $-\frac{4}{45}$ & $-\frac{9}{112}$\\[2mm]
$B_3 = |\nabla B|^2$ & $-\frac{2(n-20)}{(n-2)}$ & $7$ & $4$ \\[2mm]
$B_4 = \nabla_i B_{jk} \nabla_k B_{ij}$ & $-4$ & $-4$ & $-4$ \\[2mm]
$B_6 = \scal \Delta (\scal)$ & $\frac{(n-8)(7n^2-34n+12)}{2n(n-1)^2}$ & $-\frac{2}{5}$ & $0$ \\[2mm]
$B_7 = (B,\Delta B)$ & $-\frac{8(n-8)}{(n-2)}$ & $4$ & $0$ \\[2mm]
$B_8 = B_{ij} \nabla_k \nabla_j B_{ik}$ & $-\frac{4n(n-8)}{(n-1)(n-2)}$ & $\frac{12}{5}$ & $0$ \\[2mm]
$B_{10} = \scal^3$ & $-\frac{(n-8)(35n^4-308n^3+688n^2-184n-96)}{72n^2(n-1)^3}$ & $\frac{2}{135}$ & $0$ \\[2mm]
$B_{11} = \scal |B|^2$ & $\frac{(n-8)(7n^3-17n^2-2n+24)}{3n(n-1)^2(n-2)}$ & $-\frac{76}{75}$ & $0$ \\[2mm]
$B_{13} = \tr (B^3)$ & $ \frac{4(n-8) (11n^3-28n^2+32n-24)} {9(n-1)
(n-2)^3}$ & $-\frac{64}{15}$ & $0$
\end{tabular}} \vspace{0.5cm}
\caption{The heat kernel coefficient $7! a_6$ for the conformal
Laplacian for locally conformally flat metrics}
\label{heat-6}
\end{table}

We continue by proving that the results in Table \ref{heat-6} imply
the formula \eqref{branson-a6} for the integrated heat kernel
coefficient
\begin{equation}\label{int-a6}
\int_{M^n} a_6 \dvol
\end{equation}
for closed and locally conformally flat manifold $(M,g)$ of
dimension $n \ge 3$. We start by establishing the following
identity.

\begin{lemm}\label{base-int} For a closed and locally conformally
flat manifold $(M^n,g)$ ($n \ge 3$), we have the relation
$$
\int_M |\nabla \Rho|^2 \dvol = \int_M (|\nabla \J|^2 + \J |\Rho|^2 -
n \tr (\Rho^3)) \dvol.
$$
\end{lemm}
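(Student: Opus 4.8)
The plan is to exploit the vanishing of the Weyl tensor twice and then integrate by parts on the closed manifold $M$. Since $g$ is locally conformally flat the Cotton tensor vanishes (this is the defining condition when $n=3$, and follows from $W=0$ via \eqref{div-W} when $n\ge 4$), so by \eqref{cotton} we have $\nabla_k\Rho_{ij}=\nabla_j\Rho_{ik}$; combined with the symmetry of $\Rho$ this shows that the $3$-tensor $T_{ijk}\st\nabla_k\Rho_{ij}$ is \emph{totally symmetric}. Moreover $W=0$ gives $R=-\Rho\owedge g$ by \eqref{R-W}, whence $\Ric=(n\!-\!2)\Rho+\J g$ and $\scal=2(n\!-\!1)\J$; and the contracted second Bianchi identity, written for the Schouten tensor, yields $\delta_g\Rho=d\J$, i.e. $\nabla^i\Rho_{ij}=\nabla_j\J$, and in particular also $\nabla_k\Rho^{jk}=\nabla^j\J$.

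Using the total symmetry of $T$, I would rewrite $|\nabla\Rho|^2=T_{ijk}T^{ijk}=T_{ijk}\nabla^i\Rho^{jk}$ and integrate by parts, obtaining
$$
\int_M|\nabla\Rho|^2\dvol=-\int_M\big(\nabla^i\nabla_k\Rho_{ij}\big)\Rho^{jk}\dvol.
$$
Commuting the two covariant derivatives gives $\nabla^i\nabla_k\Rho_{ij}=\nabla_k\nabla^i\Rho_{ij}+[\nabla^i,\nabla_k]\Rho_{ij}=\nabla_k\nabla_j\J+[\nabla^i,\nabla_k]\Rho_{ij}$, by $\nabla^i\Rho_{ij}=\nabla_j\J$. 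The Hessian term, after one further integration by parts and another use of $\nabla_k\Rho^{jk}=\nabla^j\J$, contributes
$$
-\int_M(\nabla_k\nabla_j\J)\,\Rho^{jk}\dvol=\int_M(\nabla_j\J)(\nabla_k\Rho^{jk})\dvol=\int_M|\nabla\J|^2\dvol .
$$

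It then remains to evaluate $-\int_M[\nabla^i,\nabla_k]\Rho_{ij}\,\Rho^{jk}\dvol$. Here the Ricci identity expresses $[\nabla^i,\nabla_k]\Rho_{ij}$ as a contraction of the curvature tensor with $\Rho$; substituting $R=-\Rho\owedge g$ together with $\Ric=(n\!-\!2)\Rho+\J g$ and expanding the Kulkarni--Nomizu product via \eqref{KN-product} reduces this piece to a linear combination of the scalar contractions $\J|\Rho|^2$ and $\tr(\Rho^3)$ (the remaining $|\Rho|^2$-type traces cancel). Collecting coefficients should produce $\int_M\big(\J|\Rho|^2-n\tr(\Rho^3)\big)\dvol$ for this term, and adding the $\int_M|\nabla\J|^2\dvol$ from the previous step yields the claimed identity.

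The substantive difficulty is entirely computational: the last step demands careful bookkeeping of index contractions and of the sign conventions for $R_{ijk}{}^l$ and $\Ric$ fixed in Section \ref{proof-B}. As a consistency check I would test the final coefficients on a space form; for the round sphere $\s^n$ one has $\Rho=\tfrac12 g$, so $\nabla\Rho=0$ while $|\nabla\J|^2+\J|\Rho|^2-n\tr(\Rho^3)=0+\tfrac{n^2}{8}-\tfrac{n^2}{8}=0$, confirming both sides vanish.
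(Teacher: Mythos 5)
Your argument is essentially the paper's own proof: vanishing of the Cotton tensor gives the symmetry $\nabla_k\Rho_{ij}=\nabla_j\Rho_{ik}$, one integration by parts plus commutation of derivatives (with $\nabla^i\Rho_{ij}=\nabla_j\J$) yields the $|\nabla\J|^2$ term, and the curvature commutator is evaluated using $R=-\Rho\owedge g$. The contraction you leave as ``should produce'' does indeed work out as claimed, exactly as in the paper: $-\Ric_{jl}(\Rho^2)^{jl}+(\Rho\owedge g)_{jkil}\Rho^{lk}\Rho^{ij}=-(n\!-\!2)\tr(\Rho^3)-\J|\Rho|^2+2\J|\Rho|^2-2\tr(\Rho^3)=\J|\Rho|^2-n\tr(\Rho^3)$.
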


\begin{proof} By conformal flatness, the Cotton tensor vanishes, i.e.,
$\nabla_k(\Rho)_{ij} = \nabla_j(\Rho)_{ik}$. Hence, by partial
integration, we have
$$
\int_M |\nabla \Rho|^2 \dvol = \int_M \nabla_k (\Rho)_{ij} \nabla^k
(\Rho)^{ij} \dvol = - \int_M \nabla^k \nabla_j (\Rho)_{ik} \Rho^{ij}
\dvol.
$$
Now we interchange the derivatives. The relation
$$
\nabla^k \nabla_j (\Rho)_{ik} = \Hess_{ij}(\J) + \Ric_{jl} \Rho^l_i
- (\Rho \owedge g)_{jkil} \Rho^{lk}
$$
(see Equation (6.9.17) in \cite{juhl-book}) implies
\begin{align*}
- \nabla^k \nabla_j (\Rho)_{ik} \Rho^{ij} & = - \Hess_{ij}(\J)
\Rho^{ij} - \Ric_{jl} \Rho^l_i \Rho^{ij} + (\Rho \owedge g)_{jkil}
\Rho^{lk} \Rho^{ij} \\ & = \delta (\Rho d\J) + |\nabla \J|^2 -
\Ric_{jl} (\Rho^2)^{jl} + (\Rho \owedge g)_{jkil} \Rho^{lk}
\Rho^{ij}.
\end{align*}
Therefore, we obtain
\begin{multline*}
\int_M |\nabla \Rho|^2 \dvol = \int_M |\nabla \J|^2 \dvol - \int_M
((n-2) \Rho_{jl} + \J g_{jl}) (\Rho^2)^{jl} \dvol + \int_M (\Rho
\owedge g)_{jkil} \Rho^{lk} \Rho^{ij} \dvol
\end{multline*}
and the assertion follows by simplification.
\end{proof}

Lemma \ref{base-int} implies the following analogous results for $|\nabla
\Ric|^2$ and $|\nabla B|^2$.

\begin{corr} Under the assumptions of Lemma \ref{base-int}, we have
\begin{equation}\label{B-iden}
\int_M |\nabla \Ric|^2 \dvol = \int_M (n(n\!-\!1) |\nabla \J|^2 +
(n\!-\!2) \J |\Rho|^2 - n(n\!-\!2)^2 \tr (\Rho^3)) \dvol
\end{equation}
and
\begin{multline}\label{PR-iden}
\int_M |\nabla B|^2 \dvol = \int_M \left(\frac{n\!-\!2}{4n(n\!-\!1)^2}
|\nabla \scal|^2 - \frac{1}{n\!-\!1} \scal |B|^2 - \frac{n}{n\!-\!2}
\tr(B^3) \right) \dvol.
\end{multline}
\end{corr}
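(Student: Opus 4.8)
The plan is to derive both identities from Lemma~\ref{base-int} by a straightforward algebraic substitution, writing $\Ric$ and the trace-free Ricci tensor $B$ in terms of the Schouten tensor $\Rho$ and its trace $\J$, and using only $\nabla g = 0$ together with the assumptions of Lemma~\ref{base-int} (closedness and local conformal flatness).

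First I would record the elementary relations $\Ric = (n\!-\!2)\Rho + \J g$ and, since $B = \Ric - \tfrac{\scal}{n}g$ and $\J = \tfrac{\scal}{2(n-1)}$,
$$
B = (n\!-\!2)\bigl(\Rho - \tfrac1n \J g\bigr),
$$
together with the pointwise contraction identities $g^{ij}\nabla_k\Rho_{ij} = \nabla_k\J$, $|(\nabla\J)\,g|^2 = n|\nabla\J|^2$ and $\langle \nabla\Rho,\,(\nabla\J)\,g\rangle = |\nabla\J|^2$, all immediate from $\nabla g = 0$; here $(\nabla\J)\,g$ denotes the $3$-tensor $(\nabla_k\J)g_{ij}$.

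For \eqref{B-iden} I would differentiate, obtaining $\nabla\Ric = (n\!-\!2)\nabla\Rho + (\nabla\J)\,g$, square it to get the pointwise expression $|\nabla\Ric|^2 = (n\!-\!2)^2|\nabla\Rho|^2 + (3n\!-\!4)|\nabla\J|^2$, integrate over $M$, and substitute Lemma~\ref{base-int} for $\int_M |\nabla\Rho|^2\,\dvol$; using $(n\!-\!2)^2 + 3n - 4 = n(n\!-\!1)$ and collecting the remaining terms produces the right-hand side of \eqref{B-iden}. For \eqref{PR-iden} I would argue the same way, starting from $\nabla B = (n\!-\!2)\bigl(\nabla\Rho - \tfrac1n(\nabla\J)\,g\bigr)$, hence $|\nabla B|^2 = (n\!-\!2)^2\bigl(|\nabla\Rho|^2 - \tfrac1n|\nabla\J|^2\bigr)$, and, after Lemma~\ref{base-int},
$$
\int_M|\nabla B|^2\,\dvol = (n\!-\!2)^2\int_M\Bigl(\tfrac{n-1}{n}|\nabla\J|^2 + \J|\Rho|^2 - n\tr(\Rho^3)\Bigr)\dvol.
$$
To match this with the stated right-hand side I would re-expand $\scal = 2(n\!-\!1)\J$ (so $|\nabla\scal|^2 = 4(n\!-\!1)^2|\nabla\J|^2$), $|B|^2 = (n\!-\!2)^2\bigl(|\Rho|^2 - \tfrac1n\J^2\bigr)$ and $\tr(B^3) = (n\!-\!2)^3\bigl(\tr(\Rho^3) - \tfrac3n\J|\Rho|^2 + \tfrac{2}{n^2}\J^3\bigr)$, and collect.

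The only step that needs care — and hence the main obstacle — is the bookkeeping in this last reduction for \eqref{PR-iden}: one must check that the cubic curvature terms produced by $\scal|B|^2$ and $\tr(B^3)$ recombine so that the $\J^3$-contributions cancel identically and the $\J|\Rho|^2$- and $\tr(\Rho^3)$-contributions agree with those coming from Lemma~\ref{base-int}, while simultaneously tracking the rational coefficients attached to $|\nabla\scal|^2$. There is no further geometric content: local conformal flatness enters only through Lemma~\ref{base-int}, whose proof rests on the vanishing of the Cotton tensor.
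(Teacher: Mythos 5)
Your outline is exactly the paper's (omitted) proof: substitute $\Ric=(n\!-\!2)\Rho+\J g$ and $B=(n\!-\!2)\bigl(\Rho-\tfrac1n\J g\bigr)$, use $\nabla g=0$, and feed in $\int_M|\nabla\Rho|^2\,\dvol$ from Lemma \ref{base-int}; your intermediate identities $|\nabla\Ric|^2=(n\!-\!2)^2|\nabla\Rho|^2+(3n\!-\!4)|\nabla\J|^2$ and $|\nabla B|^2=(n\!-\!2)^2\bigl(|\nabla\Rho|^2-\tfrac1n|\nabla\J|^2\bigr)$ are correct, and local conformal flatness indeed enters only through the lemma. One caveat about your final ``collect'' step: carried out honestly it does \emph{not} land on the printed right-hand sides, because the displayed coefficients are misprinted --- the substitution yields $(n\!-\!2)^2\,\J|\Rho|^2$ in \eqref{B-iden} and $\tfrac{(n-2)^2}{4n(n-1)}|\nabla\scal|^2$ in \eqref{PR-iden} (check \eqref{B-iden} on the round sphere, where the left-hand side vanishes; and note that the paper itself uses the coefficient $\tfrac{(n-2)^2}{4n(n-1)}$ of $B_2$ when it invokes \eqref{S0} in the $a_6$ computation). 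So your method is the right one and matches the paper; just record the corrected coefficients instead of asserting agreement with the display as printed.
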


\begin{proof} The assertions follow by direct calculations using
\begin{equation}\label{refo}
\Ric = (n\!-\!2) \Rho + \J g, \; \J = \frac{\scal}{2(n\!-\!1)} \quad
\mbox{and} \quad B = (n\!-\!2) \left(\Rho - \frac{1}{n} \J g
\right).
\end{equation}
We omit the details.
\end{proof}

Note that \eqref{PR-iden} is equivalent to
\begin{equation}\label{S0}
\int_M \left(B_3 - \frac{n\!-\!2}{4n(n\!-\!1)^2} B_2 +
\frac{1}{n\!-\!1} B_{11} + \frac{n}{n\!-\!2} B_{13} \right) \dvol =
0.
\end{equation}

\begin{lemm}\label{PR-iden-2} We have
$$
B_3 - B_4 = \frac{(n\!-\!2)^2}{4n^2(n\!-\!1)} |\nabla \scal|^2.
$$
\end{lemm}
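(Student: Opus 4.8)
The plan is to derive the identity directly from the vanishing of the Cotton tensor on a locally conformally flat manifold together with the contracted second Bianchi identity, without invoking any curvature commutators (in contrast to the proofs of Lemma \ref{FGI} and Lemma \ref{a66}).

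First I would translate conformal flatness, i.e.\ $\Co_{ijk}=0$, into an identity for the trace-free Ricci tensor $B$. By \eqref{refo} one has $\Rho=\frac{1}{n-2}B+\frac{\scal}{2n(n-1)}g$, so \eqref{cotton} becomes
\[
\nabla_k B_{ij}-\nabla_j B_{ik}=-\frac{n-2}{2n(n-1)}\bigl(g_{ij}\nabla_k\scal-g_{ik}\nabla_j\scal\bigr).
\]
Renaming the indices (the antisymmetry sits in the derivative slot and one tensor slot, with the third slot held fixed) and raising, this is equivalent to
\[
\nabla^k B^{ij}=\nabla^i B^{jk}-\frac{n-2}{2n(n-1)}\bigl(g^{ij}\nabla^k\scal-g^{jk}\nabla^i\scal\bigr),
\]
where I use the symmetry $B_{ij}=B_{ji}$.

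Next I would contract this relation with $\nabla_i B_{jk}$. The first term on the right reproduces $|\nabla B|^2=B_3$. In the two remaining terms, the one containing $g^{jk}\nabla_i B_{jk}=\nabla_i(\tr_g B)$ vanishes because $B$ is trace-free, while the one containing $g^{ij}\nabla_i B_{jk}=\nabla^j B_{jk}$ is evaluated using $B=\Ric-\frac{\scal}{n}g$ and the contracted Bianchi identity $\nabla^j\Ric_{jk}=\tfrac12\nabla_k\scal$, which together give $\nabla^j B_{jk}=\frac{n-2}{2n}\nabla_k\scal$. Substituting,
\[
B_4=\nabla_i B_{jk}\,\nabla^k B^{ij}=B_3-\frac{n-2}{2n(n-1)}\cdot\frac{n-2}{2n}\,|\nabla\scal|^2=B_3-\frac{(n-2)^2}{4n^2(n-1)}\,|\nabla\scal|^2,
\]
which is the assertion (recall $B_2=|\nabla\scal|^2$).

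The computation is short and I expect no genuine obstacle; the only point requiring care is the index bookkeeping in passing from the Cotton identity to the swapped-derivative form above, so that the coefficient $\frac{(n-2)^2}{4n^2(n-1)}$ emerges correctly rather than a misplaced power of $n$. As a consistency check one can verify that the resulting relation is compatible with \eqref{PR-iden} (equivalently \eqref{S0}).
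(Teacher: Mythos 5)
Your argument is correct: the identity you derive from $\Co=0$ for the trace-free Ricci tensor, the contraction against $\nabla_i B_{jk}$, and the evaluation $\nabla^j B_{jk}=\frac{n-2}{2n}\nabla_k\scal$ via the contracted Bianchi identity all check out, and the coefficient $\frac{(n-2)^2}{4n^2(n-1)}$ comes out right. The paper's proof rests on the same two inputs (vanishing of the Cotton tensor and the contracted Bianchi identity, the latter entering as $\delta\Rho=d\J$), but organizes the computation differently: it substitutes $B=(n-2)\bigl(\Rho-\tfrac{1}{n}\J g\bigr)$ and evaluates $B_3$ and $B_4$ \emph{separately} as $(n-2)^2\bigl(|\nabla\Rho|^2-\tfrac{1}{n}|\nabla\J|^2\bigr)$ and $(n-2)^2\bigl(|\nabla\Rho|^2+(\tfrac{1}{n^2}-\tfrac{2}{n})|\nabla\J|^2\bigr)$, then subtracts. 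Your route stays in the $(B,\scal)$-variables, never introduces $|\nabla\Rho|^2$, and produces the difference $B_3-B_4$ in a single contraction; the paper's route yields explicit expressions for each of $B_3$ and $B_4$ individually, at the cost of the change of variables to $(\Rho,\J)$. Both are short pointwise computations of essentially the same depth, and your consistency check against \eqref{S0} is a sensible safeguard on the index bookkeeping.
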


\begin{proof} A calculation using the formula for $B$ in \eqref{refo}
yields
$$
B_3 = |\nabla B|^2 = (n\!-\!2)^2 \left(|\nabla \Rho|^2 - \frac{1}{n}
|\nabla \J|^2\right).
$$
A similar calculation using $\nabla_k(\Rho)_{ij} =
\nabla_j(\Rho)_{ik}$ gives
$$
B_4 = \nabla_i B_{jk} \nabla_k B_{ij} = (n\!-\!2)^2 \left( |\nabla
\Rho|^2 + \left(\frac{1}{n^2} - \frac{2}{n} \right) |\nabla \J|^2
\right).
$$
The assertion follows by combining these two results.
\end{proof}

Now, in order to calculate the integral \eqref{int-a6}, we combine
the results in Table \ref{heat-6} with the fact that
\begin{equation*}
B_4 - B_3 + \frac{(n\!-\!2)^2}{4n^2(n\!-\!1)} B_2 \quad \mbox{(by
Lemma \ref{PR-iden-2})}
\end{equation*}
and
$$
B_6 + B_2, \quad B_7 + B_3, \quad B_8 + B_4  \quad \mbox{(by partial
integration)}
$$
are total derivatives, i.e., integrate to $0$. It follows that the
total integral of  $7!a_6$ equals the sum of the total integrals of
\begin{align*}
& \frac{5n^4-76n^3+288n^2-128n+16}{4n^2(n-1)^2} B_2, \\
& -\frac{2(n-20)}{(n-2)} B_3, \\
& -4 B_3 + \frac{(n\!-\!2)^2}{n^2(n\!-\!1)} B_2, \\
& -\frac{(n-8)(7n^2-34n+12)}{2n(n-1)^2} B_2, \\
& \frac{8(n-8)}{(n-2)} B_3, \\
& \frac{4n(n-8)}{(n-1)(n-2)} \left( B_3 -
\frac{(n\!-\!2)^2}{4n^2(n\!-\!1)} B_2 \right)
\end{align*}
and the total integral of some linear combination of $B_{10}$,
$B_{11}$ and $B_{13}$. A direct calculation shows that the latter
three contributions can be written as the sum of
\begin{multline}\label{S1}
-\frac{(n\!-\!8)(35n^4-308n^3+688n^2-184n-96)}{72n^2(n\!-\!1)^3} B_{10} \\
+ \frac{(n\!-\!8)(7n^2\!-\!28n\!-\!24)}{3n(n\!-\!1)(n\!-\!2)} B_{11}
- \frac{(n\!-\!8)(10n^2\!-\!4n\!-\!96)}{9 (n\!-\!2)^2} B_{13}
\end{multline}
and
\begin{equation}\label{S2}
2 \frac{(n\!-\!8)(3n\!-\!1)}{(n\!-\!2)(n\!-\!1)} \left(
\frac{1}{n\!-\!1} B_{11} + \frac{n}{n\!-\!2} B_{13} \right).
\end{equation}
By \eqref{S0}, the total integral of \eqref{S2} coincides with the
total integral of
$$
2 \frac{(n\!-\!8)(3n\!-\!1)}{(n\!-\!2)(n\!-\!1)} \left(-B_3 +
\frac{(n\!-\!2)^2}{4n(n\!-\!1)} B_2 \right).
$$
Therefore, by summing up all terms, we find that $B_2$ contributes
with the coefficient
$$
-\frac{3}{4} \frac{(n\!-\!8)(n\!-\!6)}{(n\!-\!1)^2}
$$
and that the contribution of $B_3$ vanishes. Now since the sum
\eqref{S1} can be rewritten as
\begin{equation*}
(n\!-\!8) \left(-\frac{1}{9} (35n^2\!-\!260n\!+\!456) \J^3 +
\frac{2}{3} (n\!-\!1)(7n\!-\!30) \J |\Rho|^2 - \frac{2}{9}
(5n^2\!-\!2n\!-\!48) \tr(\Rho^3) \right),
\end{equation*}
we have proved that $7! \times \mbox{\eqref{int-a6}}$ coincides with
the sum of the total integrals of
$$
-\frac{3}{4} \frac{(n\!-\!8)(n\!-\!6)}{(n\!-\!1)^2} |\nabla \scal| =
- 3 (n\!-\!8)(n\!-\!6) |\nabla \J|^2
$$
and of the latter sum. This completes the proof of
\eqref{branson-a6}.

\subsection{Holographic formulas for heat kernel coefficients and applications}
\label{holo-heat}

In the present section, we express the heat kernel coefficients
$a_2,\dots,a_6$ for the conformal Laplacian in terms of the
renormalized volume coefficients $v_2,\dots,v_6$. The discussion of
$a_6$ will be restricted to locally conformally flat metrics. For
this purpose, we shall use the relations
$$
v_2 = -\frac{1}{2} \tr(\Rho) = -\frac{1}{2} \J, \quad v_4 =
\frac{1}{4} \tr (\wedge^2 \Rho) = \frac{1}{8} (\J^2 \!-\! |\Rho|^2)
$$
and
\begin{equation}\label{v6-flat}
v_6 = -\frac{1}{8} \tr (\wedge^3 \Rho) = -\frac{1}{48} (\J^3 \!-\!
3\J|\Rho|^2 \!+\! 2 \tr (\Rho^3))
\end{equation}
(in the locally conformally flat case). The resulting formulas will be
simpler than formulas in other bases and have applications to renormalized
volumes and functional determinants.

\begin{prop}\label{a-holo} We have
\begin{align*}
3 a_2 & = (n\!-\!4) v_2, \\
180 a_4 & = (n\!-\!6) \left(8(n\!-\!2) v_4 + 6(n\!-\!4) v_2^2 - 6
\Delta v_2 \right) + |W|^2, \quad \Delta = \delta d
\end{align*}
for $n \ge 3$ and, in the locally conformally flat case,
\begin{multline}\label{av-6}
7! a_6 = (n\!-\!8) \frac{8}{3} \\
\times \left((10 n^2\!-\!4n\!-\!96) v_6 + (n\!-\!6) \left(18
(n\!-\!2) v_2 v_4 + (n\!-\!10) v_2^3 - 9/2 |d v_2|^2 \right)\right)
\end{multline}
modulo a divergence.
\end{prop}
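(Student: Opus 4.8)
The plan is to translate the known explicit formulas for the heat kernel coefficients $a_2$, $a_4$, $a_6$ of the conformal Laplacian, which are stated in Section~\ref{coeff-L} and Section~\ref{a6-c-flat} in terms of the Weyl tensor $W$, the Schouten tensor $\Rho$ and its trace $\J$, into the ``holographic'' basis given by the renormalized volume coefficients $v_2,v_4,v_6$ together with $|W|^2$ and the divergence terms $\Delta v_2 = -\tfrac12\Delta\J$ and $|dv_2|^2=\tfrac14|d\J|^2$. The necessary dictionary is already recorded just before the statement:
$$
v_2 = -\tfrac12\J,\qquad v_4 = \tfrac18(\J^2-|\Rho|^2),\qquad v_6 = -\tfrac1{48}(\J^3-3\J|\Rho|^2+2\tr(\Rho^3)),
$$
the last two being valid in the locally conformally flat case (and $v_6$ being used only there). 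So the entire proof is a substitution-and-bookkeeping argument: invert these relations to express $\J$, $|\Rho|^2$, $\tr(\Rho^3)$ and $|d\J|^2$ in terms of $v_2,v_4,v_6,|dv_2|^2$, plug them into \eqref{CL-a2}, \eqref{CL-a4} and into Branson's formula \eqref{branson-a6} (or equivalently the integrated result proved in Section~\ref{a6-c-flat}), and check that the coefficients reorganize into the asserted form.

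First I would handle $a_2$: from $6a_2 = -(n-4)\J = 2(n-4)v_2$ we get $3a_2=(n-4)v_2$ immediately. Next, for $a_4$ I would start from \eqref{CL-a4}, namely $360a_4 = -(n-6)(2(n-2)|\Rho|^2 - (5n-16)\J^2 - 6\Delta\J) + 2|W|^2$, and substitute $\J = -2v_2$, $|\Rho|^2 = \J^2 - 8v_4 = 4v_2^2 - 8v_4$, $\Delta\J = -2\Delta v_2$. Collecting the $v_4$, $v_2^2$ and $\Delta v_2$ terms and dividing by $2$ should produce exactly $180 a_4 = (n-6)(8(n-2)v_4 + 6(n-4)v_2^2 - 6\Delta v_2) + |W|^2$; the only thing to verify is that the coefficient of $v_2^2$ comes out as $6(n-4)$, i.e.\ that $-2(n-2)\cdot 4 + (5n-16)\cdot 4 = 12(n-4)$, which is a one-line check, and likewise that the $v_4$-coefficient is $-2(n-2)\cdot(-8)=16(n-2)$, halving to $8(n-2)$.

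The main work is $a_6$ in the locally conformally flat case. Here I would use Branson's formula \eqref{branson-a6},
$$
7!a_6 = (n-8)\Big(-\tfrac19(35n^2-266n+456)\J^3 + \tfrac23(n-1)(7n-30)\J|\Rho|^2 - \tfrac29(5n^2-2n-48)\tr(\Rho^3) - 3(n-6)|d\J|^2\Big),
$$
which is an identity modulo a divergence, and invert the cubic dictionary. From the displayed formulas one solves for the three cubic scalars: $\J^3 = -8v_2^3$, then $\J|\Rho|^2$ in terms of $v_2v_4$ and $v_2^3$ using $|\Rho|^2 = 4v_2^2-8v_4$, so $\J|\Rho|^2 = -2v_2(4v_2^2-8v_4) = 16v_2v_4 - 8v_2^3$, and finally $\tr(\Rho^3)$ from $v_6 = -\tfrac1{48}(\J^3 - 3\J|\Rho|^2 + 2\tr(\Rho^3))$, giving $2\tr(\Rho^3) = -48v_6 - \J^3 + 3\J|\Rho|^2 = -48v_6 + 8v_2^3 + 3(16v_2v_4-8v_2^3) = -48v_6 + 48v_2v_4 - 16v_2^3$, i.e.\ $\tr(\Rho^3) = -24v_6 + 24v_2v_4 - 8v_2^3$. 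Also $|d\J|^2 = 4|dv_2|^2$. Substituting these four expressions into Branson's formula and collecting the coefficients of $v_6$, $v_2v_4$, $v_2^3$ and $|dv_2|^2$ should reproduce \eqref{av-6}; the coefficient of $v_6$ arises solely from the $\tr(\Rho^3)$ term, namely $(n-8)\cdot(-\tfrac29(5n^2-2n-48))\cdot(-24) = (n-8)\cdot\tfrac{16}{3}(5n^2-2n-48) = (n-8)\cdot\tfrac83(10n^2-4n-96)$, which matches. The remaining checks —that the $v_2v_4$, $v_2^3$ and $|dv_2|^2$ coefficients assemble into $(n-8)\tfrac83(n-6)\big(18(n-2)v_2v_4 + (n-10)v_2^3 - \tfrac92|dv_2|^2\big)$ — are again just polynomial identities in $n$ that one confirms by expanding; for instance the $|dv_2|^2$ term comes only from $-3(n-6)|d\J|^2 = -12(n-6)|dv_2|^2$, and one verifies $(n-8)\tfrac83(n-6)\cdot(-\tfrac92) = -12(n-8)(n-6)$.

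\textbf{Main obstacle.} There is no conceptual obstacle; the only real risk is arithmetic. The delicate point is that the $v_2v_4$ and $v_2^3$ coefficients receive contributions from all three cubic terms of Branson's formula (since both $\J^3$ and $\J|\Rho|^2$ and $\tr(\Rho^3)$ feed into $v_2^3$, and $\J|\Rho|^2$ and $\tr(\Rho^3)$ feed into $v_2v_4$), so one must be careful to collect them correctly and to track the factor $(n-6)$, which must factor out of the combined $v_2v_4$, $v_2^3$, $|dv_2|^2$ block but \emph{not} out of the $v_6$ term. A useful consistency check at the end is to set $n=8$ (where $a_6$ must vanish by \eqref{branson-a6}) and verify that the right-hand side of \eqref{av-6} indeed vanishes, and to cross-check the $|W|^2$ coefficients in $a_4$ against the Einstein and round-sphere special cases already discussed in the paper.
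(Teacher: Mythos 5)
Your proposal is correct and is essentially the paper's own proof: Proposition \ref{a-holo} is obtained there precisely by direct substitution of the dictionary $v_2=-\tfrac12\J$, $v_4=\tfrac18(\J^2-|\Rho|^2)$, $v_6=-\tfrac1{48}(\J^3-3\J|\Rho|^2+2\tr(\Rho^3))$ into \eqref{CL-a2}, \eqref{CL-a4} and Branson's formula \eqref{branson-a6}, with the calculational details omitted. Your coefficient bookkeeping (e.g.\ the $v_6$ coefficient $\tfrac{16}{3}(5n^2-2n-48)=\tfrac83(10n^2-4n-96)$ and the factoring out of $(n-6)$ from the $v_2v_4$, $v_2^3$, $|dv_2|^2$ block) checks out, and your suggested cross-checks at $n=8$ and on the round sphere are consistent with the paper's sphere values.
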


\begin{proof} The assertions follow from \eqref{CL-a2}, \eqref{CL-a4} and
\eqref{branson-a6}. We omit the details of the direct calculations.
\end{proof}

In view of Theorem \ref{RVC-CV}, the formula \eqref{av-6} is
particularly convenient to make explicit the divergence terms in
$a_6$. The idea of the proof of the following result goes back to
Branson and {\O}rsted (see Section 3 in \cite{BO-def} and Remark 5.12
in \cite{sharp}).

\begin{prop}\label{a6-div} For locally conformally flat metrics, the
divergence term of $7! a_6$ is given by the sum
$$
(n\!-\!8) \left( 12 \Delta^2(v_2) -12(n\!-\!2) \delta (\Rho dv_2) -
24 (n\!-\!2) \Delta (v_4) - 8 (2n\!-\!11) \Delta (v_2^2)\right).
$$
Here we use the convention $\Delta = \delta d$.
\end{prop}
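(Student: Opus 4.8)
The goal is to extract the divergence part of the locally conformally flat formula for $7!\,a_6$ in Proposition \ref{a-holo}. The plan is to start from the closed expression
$$
7!\,a_6 \;=\; (n\!-\!8)\,\frac{8}{3}\Big((10n^2\!-\!4n\!-\!96)\,v_6 + (n\!-\!6)\big(18(n\!-\!2) v_2 v_4 + (n\!-\!10) v_2^3 - \tfrac{9}{2}|dv_2|^2\big)\Big)
$$
(valid modulo a divergence) and to identify, term by term, which pieces are already divergences and which must be rewritten. Since the divergence term is itself only defined modulo an \emph{actual} divergence and modulo scalar conformal invariants, what is really being computed is a specific representative. The key structural input is Theorem \ref{RVC-CV}: the conformal transformation law of each $v_{2k}$ forces the divergence part of $a_6$ to be built from $\Delta$, compositions $\delta(\cdot\, d\cdot)$ of renormalized volume coefficients with the Schouten tensor, and the like, with weights dictated by the requirement that $\int_M a_6\,dv$ be a conformal index in dimension $n=6$ — but here we are in arbitrary $n$, and the honest computation proceeds directly.

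First I would clear the non-divergence skeleton: by Branson's formula \eqref{branson-a6}, and the identities $2v_2=-\J$, $8v_4=\J^2-|\Rho|^2$, $48v_6=-(\J^3-3\J|\Rho|^2+2\tr\Rho^3)$ from \eqref{v6-flat}, the part of $7!a_6$ that is genuinely \emph{not} a divergence is exactly the Branson cubic
$$
(n\!-\!8)\Big(-\tfrac19(35n^2\!-\!266n\!+\!456)\J^3 + \tfrac23(n\!-\!1)(7n\!-\!30)\J|\Rho|^2 - \tfrac29(5n^2\!-\!2n\!-\!48)\tr\Rho^3\Big).
$$
So the divergence term $D$ is $7!a_6$ minus this quantity. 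Converting the $v$-monomials in the displayed holographic formula back to $\J,|\Rho|^2,\tr\Rho^3$ and $|d\J|^2$, one finds that $D$ collects precisely the terms that Branson's formula dropped, namely a multiple of $(n\!-\!8)|d\J|^2$ together with whatever is needed to reconcile the two representations. The second step is to recognize these as genuine divergences: using conformal flatness (the vanishing of the Cotton tensor, $\nabla_k\Rho_{ij}=\nabla_j\Rho_{ik}$), one has, modulo true divergences, the standard reductions as in Lemma \ref{base-int}, e.g. $\int|d\J|^2 = \int(|\nabla\Rho|^2 - \J|\Rho|^2 + n\tr\Rho^3)$, $\delta(\Rho\, d v_2)$, $\Delta(v_4)$, $\Delta(v_2^2)$, and $\Delta^2(v_2)$, all of which integrate to zero. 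The third step is bookkeeping: write the candidate divergence as a linear combination
$$
D = (n\!-\!8)\big(\alpha\,\Delta^2(v_2) + \beta\,\delta(\Rho\, dv_2) + \gamma\,\Delta(v_4) + \varepsilon\,\Delta(v_2^2)\big)
$$
with undetermined coefficients, expand everything in $\J,|\Rho|^2,\tr\Rho^3,|d\J|^2,\Delta|\Rho|^2,\delta(\Rho d\J),\Delta^2\J$ using the conformally flat contracted Bianchi/Lichnerowicz identities, and match against $7!a_6$ minus the Branson cubic. This pins down $\alpha=12$, $\beta=-12(n\!-\!2)$, $\gamma=-24(n\!-\!2)$, $\varepsilon=-8(2n\!-\!11)$, which is the asserted formula.

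The main obstacle is the third step: the identities needed to normalize second covariant derivatives of $\Rho$ and of its contractions in the conformally flat setting (the analogue of Lemma \ref{base-int} at the \emph{local}, not integrated, level) produce curvature-commutator terms, and one must carefully track which of those are themselves divergences and which are pointwise cubic curvature terms to be absorbed into the Branson cubic. Getting the coefficients of $\tr\Rho^3$ and $\J|\Rho|^2$ exactly right after these substitutions is where a sign or combinatorial slip is easiest; the cleanest way to guard against it is the Branson–Ørsted device alluded to in the text — namely, first establish the integrated identity (which is already \eqref{branson-a6} together with the holographic rewriting), then use the conformal covariance of $P_2$ to promote it to the local statement, since the difference between two local conformal primitives of the same quantity is forced to be a divergence of the stated shape with the stated weights. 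I expect the verification of the numerical coefficients, rather than any conceptual point, to be the bulk of the work.
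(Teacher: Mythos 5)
Your central matching step cannot be carried out as described. The two formulas you work from --- Branson's \eqref{branson-a6} and the holographic formula \eqref{av-6} --- are each valid only \emph{modulo} a divergence, so the statement ``the divergence term $D$ is $7!\,a_6$ minus the Branson cubic'' merely \emph{defines} $D$; it gives you nothing exact to match your ansatz $\alpha\,\Delta^2(v_2)+\beta\,\delta(\Rho\, dv_2)+\gamma\,\Delta(v_4)+\varepsilon\,\Delta(v_2^2)$ against. To pin down $\alpha,\beta,\gamma,\varepsilon$ by direct expansion you would need an exact pointwise formula for $a_6$ (e.g.\ Gilkey's formula, as recorded in Table \ref{heat-6}), which you never invoke, and the announced reduction via local conformally flat identities would then still have to be performed; as written, your third step is circular. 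You do name the right device at the end (the Branson--{\O}rsted argument), but only as a safeguard against sign slips, and you do not set it up --- in the paper that device is not a coefficient check, it is the entire proof.

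Concretely, the paper's argument is: start from \eqref{cv-a6}, i.e.\ $(\int_M a_6\,\dvol)^\bullet[\varphi]=(n\!-\!6)\int_M\varphi\, a_6\,\dvol$, compute the conformal variation of the integral of the right-hand side of \eqref{av-6} term by term using the local formulas \eqref{CT-v} for $v_2^\bullet[\varphi]$ and $v_4^\bullet[\varphi]$, the variational statement for $\int_M v_6\,\dvol$, and a separately verified formula \eqref{norm-cv} for $(\int_M|dv_2|^2\,\dvol)^\bullet[\varphi]$ (this ingredient is absent from your outline), and then compare with \eqref{cv-a6}: the discrepancy from pure $(n\!-\!6)$-scaling comes with an overall factor $(n\!-\!6)$, and since all coefficients are polynomial in $n$ one can divide by $(n\!-\!6)$ and conclude the pointwise identity for arbitrary $\varphi$, reading off the divergence term directly --- no normal-coordinate or Lichnerowicz-type identities, no undetermined coefficients, and no exact input from Gilkey's formula are needed. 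Either replace ``match against $7!\,a_6$ minus the Branson cubic'' by an actual exact source for $a_6$ (Table \ref{heat-6}) and carry out that reduction, or carry out the variational computation you only allude to; at present neither is done.
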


\begin{proof} Let $M$ be closed. We first recall that
\begin{equation}\label{cv-a6}
\left(\int_{M^n} a_6 \dvol\right)^\bullet[\varphi] = (n\!-\!6)
\int_{M^n} \varphi a_6 \dvol.
\end{equation}
Moreover, we have the variational formulas (see \eqref{CT-v})
\begin{align*}
v_2^\bullet[\varphi] & = -2 \varphi v_2 - 1/2 \delta d \varphi = -2
\varphi v_2 - 1/2 \Delta \varphi, \\
v_4^\bullet[\varphi] & = -4 \varphi v_4 - 1/2 \delta (v_2 + 1/2
\Rho) d\varphi, \\
\left(\int_M v_6 \dvol\right)^\bullet[\varphi] & = (n\!-\!6) \int_M
\varphi v_6 \dvol
\end{align*}
and
\begin{equation}\label{norm-cv}
\left(\int_M |dv_2|^2 \dvol \right)^\bullet[\varphi] = (n\!-\!6)
\int_M \varphi |dv_2|^2 \dvol - \int_M \varphi (2 \Delta v_2^2 +
\Delta^2 v_2) \dvol.
\end{equation}
In order to verify \eqref{norm-cv}, we note that the transformation
rule $v_2^\bullet[\varphi] = -2 \varphi v_2 - 1/2 \Delta \varphi$
implies
\begin{equation*}
\left(\int_M |dv_2|^2 \dvol \right)^\bullet[\varphi] = (n\!-\!6)
\int_M \varphi |dv_2|^2 \dvol - 4 \int_M (d\varphi,v_2 dv_2) \dvol -
\int_M (d \Delta \varphi,dv_2) dv.
\end{equation*}
Then partial integration proves the claim. Now we determine the
variation on the left-hand side of \eqref{cv-a6} by combining these
formulas with the explicit formula \eqref{av-6}. Partial integration
shows that the conformal variation
$$
7! \left(\int_{M^n} a_6 \dvol\right)^\bullet[\varphi]
$$
coincides with
\begin{multline*}
(n\!-\!6) \int_M \varphi \; \mbox{(right-hand side of \eqref{av-6})} \; \dvol \\
+ (n\!-\!8)(n\!-\!6) \frac{8}{3} \int_M \varphi
\big[ 18 (n\!-\!2) (-1/2 \delta (v_2 + 1/2 \Rho) dv_2 - 1/2 \Delta v_4) \\
- 3/2 (n\!-\!10) \Delta (v_2^2) + 9/2 (2 \Delta (v_2^2) + \Delta^2
(v_2)) \big] \dvol.
\end{multline*}
By comparing this result with \eqref{cv-a6}, we conclude that
$7!a_6$ equals the sum of the right-hand side of \eqref{av-6} and
\begin{multline*}
(n\!-\!8) \frac{8}{3} \big[ 18 (n\!-\!2) (-1/2 \delta (v_2 + 1/2
\Rho) dv_2 - 1/2 \Delta v_4) \\ - 3/2 (n\!-\!10) \Delta (v_2^2) +
9/2 (2 \Delta (v_2^2) + \Delta^2 (v_2))\big].
\end{multline*}
Now simplification yields the assertion.
\end{proof}

An analogous argument can be used to recover the divergence term in
the holographic formula for $a_4$ in Proposition \ref{a-holo}. We
omit the details.

In particular, we obtain the following holographic formulas for
$a_4$ and $a_6$ in the respective critical dimensions.

\begin{corr}\label{a4-a6-c} For locally conformally flat metrics,
we have
$$
180 a_4 = - 32 v_4 + 12 \Delta (v_2)
$$
in dimension $n=4$ and
$$
7! a_6 = - 1280 v_6 - 24 \Delta^2(v_2) + 96 \delta(\Rho dv_2) + 192
\Delta (v_4) + 16 \Delta(v_2^2)
$$
in dimension $n=6$. Here we use the convention $\Delta = \delta d$.
\end{corr}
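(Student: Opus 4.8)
The plan is to obtain both identities by specializing the holographic formulas of Proposition~\ref{a-holo} together with the explicit divergence term of Proposition~\ref{a6-div} to the critical dimensions $n=4$ and $n=6$. No new geometric input is needed; the work has already been done in establishing those two results, and what remains is to track the dimension-dependent prefactors.

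First I would treat $a_4$ in dimension $n=4$. The formula for $a_4$ in Proposition~\ref{a-holo}, namely $180 a_4 = (n-6)\bigl(8(n-2) v_4 + 6(n-4) v_2^2 - 6\Delta v_2\bigr) + |W|^2$, is a complete pointwise identity, not merely one modulo a divergence; and for locally conformally flat metrics $W=0$. Substituting $n=4$ collapses the $v_2^2$-term and yields $180 a_4 = (4-6)\bigl(8\cdot 2\, v_4 - 6\Delta v_2\bigr) = -32 v_4 + 12\Delta v_2$, which is the first assertion.

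Next I would treat $a_6$ in dimension $n=6$. Here I would use that the argument in the proof of Proposition~\ref{a6-div} in fact establishes, for locally conformally flat metrics, the full pointwise identity stating that $7! a_6$ equals the right-hand side of \eqref{av-6} plus the divergence term displayed in Proposition~\ref{a6-div}. In the right-hand side of \eqref{av-6} every term except the $v_6$-term carries an overall factor $(n-6)$, so at $n=6$ only $(n-8)\tfrac{8}{3}(10n^2-4n-96) v_6$ survives; evaluating $10\cdot 36 - 24 - 96 = 240$ and $(6-8)\tfrac{8}{3}\cdot 240 = -1280$ gives the contribution $-1280 v_6$. Substituting $n=6$ into the divergence term $(n-8)\bigl(12\Delta^2 v_2 - 12(n-2)\delta(\Rho\, dv_2) - 24(n-2)\Delta v_4 - 8(2n-11)\Delta(v_2^2)\bigr)$ gives $-2\bigl(12\Delta^2 v_2 - 48\,\delta(\Rho\, dv_2) - 96\Delta v_4 - 8\Delta(v_2^2)\bigr) = -24\Delta^2 v_2 + 96\,\delta(\Rho\, dv_2) + 192\Delta v_4 + 16\Delta(v_2^2)$. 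Adding the two contributions yields the second asserted formula.

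The proof is thus essentially bookkeeping. The only point requiring any care is to confirm that the divergence terms inherited from Proposition~\ref{a6-div} do not all vanish at $n=6$ (indeed $2n-11 = 1 \neq 0$ there), and to keep the convention $\Delta = \delta d$ consistent between \eqref{av-6} (where $\Delta$ does not appear, only $|dv_2|^2$), Proposition~\ref{a6-div}, and the statement; so I do not anticipate any genuine obstacle beyond this arithmetic verification.
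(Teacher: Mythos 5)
Your proposal is correct and coincides with the paper's own (implicit) derivation: the corollary is obtained exactly by specializing the pointwise formula for $a_4$ in Proposition~\ref{a-holo} (with $W=0$) to $n=4$, and by adding the divergence term of Proposition~\ref{a6-div} to the right-hand side of \eqref{av-6} and setting $n=6$. Your arithmetic and the bookkeeping of the $(n-6)$ and $(n-8)$ prefactors check out, so nothing further is needed.
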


Next, we outline an alternative proof of \eqref{av-6} which is {\em
independent} of Gilkey's formula for the heat kernel coefficient
$a_6$ of general Laplace-type operators. In fact, the following
arguments will basically utilize only structural properties of the
heat kernel coefficient $a_6$ of the conformal Laplacian which are
known by the general theory.

\begin{proof} In a basis of scalar Riemannian invariants defined in
terms of $R$, $\Ric$ and $\scal$, the coefficients of $a_6$ do not
depend on the dimension $n$. It follows that, in a basis using $W$,
$\Rho$ and $\J$, the coefficients are cubic polynomials in $n$. Now,
in dimension $n=8$, $a_6$ is a local conformal invariant. Hence, in
this dimension, the functions $a_6$ vanishes for locally conformally
flat metrics. Therefore, by Lemma 6.1 in \cite{sharp}, we can write
$$
7! a_6 = (n\!-\!8) (\pi_3(n) \J^3 + \pi_2(n) \J |\Rho|^2 + \pi_1(n)
\tr(\Rho^3) + \pi_0(n) |d\J|^2),
$$
up to a divergence term, with unknown quadratic polynomials $\pi_i$.
Equivalently, it can be written in the form
$$
7! a_6 = (n\!-\!8) [\pi_3(n) v_6 + \pi_2(n) v_2 v_4 + \pi_1(n) v_2^3
+ \pi_0(n) |d v_2|^2],
$$
up to a divergence term and with other unknown quadratic polynomials
$\pi_i$. Now we use the fact that for $n=6$ the total integral of
$a_6$ is a conformal invariant. It implies that we can refine the
above ansatz into
\begin{equation}\label{refined}
7! a_6 = (n\!-\!8) [\pi_3(n) v_6 + (n\!-\!6)(\pi_2(n) v_2 v_4 +
\pi_1(n) v_2^3 + \pi_0(n) |dv_2|^2)]
\end{equation}
with an unknown quadratic polynomial $\pi_3$ and unknown monomials
$\pi_0, \pi_1, \pi_2$. In order to determine the polynomials $\pi_i$
for $i=1,2,3$, we consider locally conformally flat product spaces
of the form $\s^p \times \h^q$ (with $p+q=n$). On these spaces, the
identity $P_2(g_1 + g_2) = P_2(g_1) + P_2(g_2)$ (see
\eqref{hol-spec} for $r=0$) implies the relation
\begin{equation}\label{func-6}
a_6(g_1 \!+\! g_2) = a_6 (g_1) + a_4 (g_1) a_2 (g_2) + a_2 (g_1) a_4
(g_2) + a_6 (g_2) \in C^\infty(\s^p \times \h^q)
\end{equation}
of local heat kernel coefficients of conformal Laplacians. Using
this identity and the known heat kernel coefficients $a_2,\dots,a_6$
on round spheres $\s^n$ and hyperbolic spaces $\h^n$ (see Section
\ref{heat-spheres}), we determine $a_6$ for the product metrics. The
condition that the resulting polynomial in the variables $n$ and $p$
can be written in the form \eqref{refined} is a system of linear
relations for the unknown coefficients of the polynomials $\pi_i$;
the contribution by $|dv_2|^2$ vanishes in this case. Now a
calculation shows that these conditions are satisfied exactly by the
one-parameter family
\begin{multline}\label{fam}
(n\!-\!8) \frac{1}{3} \frac{1}{7!} \Big((224\!-\!3\beta) n^2 \!-\!
(896\!-\!18\beta)n \!-\! 768) v_6 \\ + (n\!-\!6) (3\beta n
\!-\!(192\!+\!2\beta)) v_2 v_4 + (n\!-\!6)((56\!-\!\beta)n \!-\!
(128\!-\!\beta)) v_2^3 \Big)
\end{multline}
with $\beta \in \r$. For $n=6$, the latter sum specializes to
$-16/63 v_6$ (for any $\beta$). Next, we prove that $\beta = 48$. We
note that in Gilkey's universal formula for $a_6$ (of $P_2$) the
contribution by $|\nabla \Ric|^2$ has a coefficient of the form
$$
a + \frac{b}{n\!-\!2}
$$
with some numerical coefficients $a$, $b$. In fact, this contribution
comes from the contributions $|\nabla R|^2$ and $|\nabla \Ric|^2$ in
Gilkey's formula. But $W=0$ implies $R=-\Rho \owedge g$ and a simple
calculation shows that $|\nabla R|^2 = 4(n-2) |\nabla \Rho|^2 + \dots$.
This proves the claim. Now the arguments in the proof of Proposition
\ref{a6-div} show that the ansatz \eqref{fam} for $a_6$ leads to a
divergence contribution of the form
$$
c (n\!-\!8) (3\beta n \!-\!(192\!+\!2\beta)) \Delta (v_4).
$$
By $8 v_4 = \J^2 - |\Rho|^2$, this term contains a contribution of
the form
$$
c (n\!-\!8) \frac{3\beta n \!-\!(192\!+\!2\beta)}{(n\!-\!2)^2}
|\nabla \Ric|^2.
$$
But the condition that the latter expression does not have a
second-order pole at $n=2$ is equivalent to $\beta =48$. Now, for
$\beta = 48$, \eqref{fam} reads
$$
(n\!-\!8) \frac{1}{7!} \frac{8}{3} \left((10 n^2\!-\!4n\!-\!96) v_6
+ 18 (n\!-\!6)(n\!-\!2) v_2 v_4 + (n\!-\!6)(n\!-\!10) v_2^3 \right).
$$
The latter sum yields \eqref{av-6} modulo the contribution by
$|dv_2|^2$. It remains to determine the monomial $\pi_0$ in
\eqref{refined}. For this purpose, we combine some results of
\cite{BGO} with the observation that $|d\J|^2$ is the only term
which leads to a contribution of $\Delta^2(\J)$ in $a_6$ (see the
proof of Proposition \ref{a6-div}). In fact, as noted in Section
\ref{a6-c-flat}, results in \cite{BGO} imply that $\Delta^2 (\J)$
contributes to $7! a_6$ with the coefficient $-6(n-8)$. Hence the
arguments in the proof of Proposition \ref{a6-div} show that
$\pi_0(n)=-12$.
\end{proof}

It would be interesting to extend the above type of holographic
formulas for $a_6$ beyond the conformally flat case.

\begin{rem}\label{holo-inv} The local conformal invariant $|W|^2$
in the holographic formula for $a_4$ (Proposition \ref{a-holo}) can
be interpreted as the restriction of the coefficient
$$
\tilde{a}_4 = 180 |\tilde{R}|^2
$$
to $t=1$ and $\rho=0$. This coefficient arises by formally
substituting the ambient curvature $\tilde{R}$ into the heat kernel
coefficient of the Laplacian of a Ricci flat metric. Similarly,
Lemma \ref{a66} shows that the conformal invariant given by $a_6$ in
dimension $n=8$ coincides with the restriction to $t=1$ and $\rho=0$
of the coefficient $\tilde{a}_6$ obtained by substituting
$\tilde{R}$ into the formula for $a_6$ for a Ricci flat metric.
These are special cases of a result announced in \cite{log} (Theorem
7.3).
\end{rem}

A fully explicit formula for $a_6$ for general metrics in dimension
$n=6$ is an important ingredient in a test of a version of the
AdS/CFT duality in \cite{BFT}. In fact, in \cite{BFT} it was found
that, for closed spin manifold $M^6$, the total integral of the
linear combination
$$
10 a_6(P_2) - 2 a_6(\SD^2) + (a_6(\Omega^2) - 2 a_6(\Omega^1) + 3
a_6 (\Omega^0))
$$
is proportional to a linear combination of $\chi(M)$ and the total
integral of $v_6$. Here $a_6(\SD^2)$ and $a_6(\Omega^p)$ are the
respective heat kernel coefficients of the square of the Dirac
operator $\SD$ and the Hodge-Laplacian on $p$-forms. By the
Deser-Schwimmer decomposition (see \cite{alex}), all three
individual integrals are linear combinations of the Euler
characteristic and the total integrals of three local conformal
invariants. The remarkable aspect of the above linear combination is
that the resulting weights of all three local conformal invariants
coincide with the weights in the corresponding decomposition of a
multiple of $v_6$. We also note that the linear combination
$$
3 a_6(\Omega^0) - 2 a_6(\Omega^1) + a_6 (\Omega^2)
$$
of heat kernel coefficients of Hodge-Laplace operators naturally
appears in the conformal variational formula of the Cheeger {\em
half-torsion} of $M^6$ (see Section 3 of \cite{B-spec}).

We close this section with two applications of the above results to
the determinant of $P_2$. First, we relate the determinant of the
conformal Laplacian to the renormalized volume.

\begin{prop}\label{det-vol-6} Let $(M^6,g)$ be a closed manifold
with a locally conformally flat metric $g$ so that $\ker(P_2(g)) = 0$.
Assume that $[g]$ is the conformal infinity of a Poincar\'e-Einstein
manifold $(X,g_+)$. Let $\V_6 = \V_6(g_+;\cdot)$ be the renormalized
volume functional. Then
\begin{multline}\label{FD-V-6}
(4\pi)^3 (\log \det (-P_2))^\bullet[\varphi] \\ = \frac{32}{63}
\V_6^\bullet[\varphi] + \frac{1}{7!} \frac{16}{3} \left(\int_{M^6}
(144 v_2 v_4 - 8 v_2^3 - 9 |dv_2|^2) \dvol \right)^\bullet[\varphi].
\end{multline}
\end{prop}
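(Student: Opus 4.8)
The plan is to feed the holographic formula for the critical heat kernel coefficient $a_6$ in dimension $6$ into the infinitesimal Polyakov formula for $\det(-P_2)$, and then to recognize the resulting divergence terms as a conformal variation. By \eqref{pol-inf} (equivalently \eqref{zeta-conform}), under the hypothesis $\ker(P_2(g))=0$ we have
$$(4\pi)^3\big(\log\det(-P_2)\big)^\bullet[\varphi] = -2\int_{M^6}\varphi\, a_6\,\dvol ,$$
where $a_6\in C^\infty(M^6)$ is the (unnormalized) critical heat kernel coefficient of the conformal Laplacian; the factor $(4\pi)^3$ records the difference between the normalization conventions of Section \ref{CV-zeta} and of the appendix. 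Since $g$ is locally conformally flat, Corollary \ref{a4-a6-c} writes $7!\,a_6$ as $-1280\, v_6$ plus the divergence $-24\,\Delta^2(v_2)+96\,\delta(\Rho\,dv_2)+192\,\Delta(v_4)+16\,\Delta(v_2^2)$, with $\Delta=\delta d$. The $v_6$-term is handled at once by the variational formula $\V_6^\bullet[\varphi]=\int_{M^6}\varphi\, v_6\,\dvol$ of \cite{G-vol}: it contributes $-2\cdot(-1280)/7!=2560/5040=32/63$ times $\V_6^\bullet[\varphi]$, which is the first term on the right of \eqref{FD-V-6}.

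It then remains to identify
$$-\frac{2}{7!}\int_{M^6}\varphi\big(-24\,\Delta^2(v_2)+96\,\delta(\Rho\,dv_2)+192\,\Delta(v_4)+16\,\Delta(v_2^2)\big)\dvol$$
with $\frac{1}{7!}\frac{16}{3}\big(\int_{M^6}(144\,v_2v_4-8\,v_2^3-9\,|dv_2|^2)\,\dvol\big)^\bullet[\varphi]$. I would compute the three conformal variations on the right from the local formulas. In dimension $n=6$, \eqref{CT-v} gives $v_2^\bullet[\varphi]=-2\varphi v_2-\tfrac12\Delta\varphi$ and $v_4^\bullet[\varphi]=-4\varphi v_4-\tfrac12\delta((v_2+\tfrac12\Rho)d\varphi)$ (as already recorded in the proof of Proposition \ref{a6-div}), while $\dvol_{e^{2t\varphi}g}=e^{6t\varphi}\dvol_g$ and \eqref{norm-cv} gives $\big(\int_{M^6}|dv_2|^2\,\dvol\big)^\bullet[\varphi]=-\int_{M^6}\varphi\,(2\Delta(v_2^2)+\Delta^2(v_2))\,\dvol$. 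Expanding $\big(\int v_2v_4\,\dvol\big)^\bullet$ and $\big(\int v_2^3\,\dvol\big)^\bullet$, the pure multiplication (scaling) terms cancel against the volume variation because $v_2v_4$ and $v_2^3$ have weight $6=n$; integrating by parts and using $\delta(v_2\,dv_2)=\tfrac12\Delta(v_2^2)$ leaves $\big(\int v_2v_4\,\dvol\big)^\bullet[\varphi]=-\tfrac12\int\varphi(\Delta(v_4)+\tfrac12\Delta(v_2^2)+\tfrac12\delta(\Rho\,dv_2))\dvol$ and $\big(\int v_2^3\,\dvol\big)^\bullet[\varphi]=-\tfrac32\int\varphi\,\Delta(v_2^2)\,\dvol$. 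Forming $144$ times the first minus $8$ times the second minus $9$ times the third, multiplying by $\tfrac{16}{3\cdot 7!}$, and comparing term by term with the divergence contribution $\tfrac{1}{7!}\int\varphi(48\Delta^2(v_2)-192\delta(\Rho\,dv_2)-384\Delta(v_4)-32\Delta(v_2^2))\dvol$ coming from $-2a_6$ then closes the argument.

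The hard part here is not conceptual but combinatorial: it lies entirely in the bookkeeping of the last step — tracking signs through the several integrations by parts and checking that the particular coefficients $144$, $-8$, $-9$ are exactly those for which the four divergence terms of Corollary \ref{a4-a6-c} assemble into one conformal variation. The existence of \emph{some} conformal primitive for those divergences is, by contrast, automatic: any universal weight-$6$ divergence $\delta\omega$ satisfies that $\int_{M^6}\varphi\,\delta\omega\,\dvol$ is the conformal variation of an integral of weight-$6$ Riemannian invariants, by the same reasoning used in the proof of Proposition \ref{a6-div}. One should also keep careful track of the two normalization conventions for $a_n$, which is the source of the factor $(4\pi)^3$ in \eqref{FD-V-6}.
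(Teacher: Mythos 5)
Your proposal is correct and follows essentially the same route as the paper: the infinitesimal Polyakov formula $(4\pi)^3(\log\det(-P_2))^\bullet[\varphi]=-2\int\varphi\,a_6\,\dvol$, the holographic formula of Corollary \ref{a4-a6-c}, $\V_6^\bullet[\varphi]=\int\varphi\,v_6\,\dvol$, and the variational identities for $v_2$, $v_4$ and $|dv_2|^2$ from the proof of Proposition \ref{a6-div}. The only difference is that you rederive the conformal variation of $\int(144\,v_2v_4-8\,v_2^3-9\,|dv_2|^2)\,\dvol$ directly from \eqref{CT-v} and \eqref{norm-cv} rather than quoting the equivalent formula recorded there, and your stated coefficients indeed reproduce the required divergence combination $\frac{1}{7!}(48\Delta^2 v_2-192\,\delta(\Rho\,dv_2)-384\,\Delta v_4-32\,\Delta(v_2^2))$.
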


\begin{proof} The left-hand side of \eqref{FD-V-6} equals
$$
-2\int_{M^6} \varphi a_6  \dvol.
$$
Moreover, we have
$$
\V_6^\bullet[\varphi] = \int_{M^6} \varphi v_6 \dvol
$$
(by \cite{G-vol}). The proof of Proposition \ref{a6-div} contains
the variational formula
\begin{multline*}
\left(\int_{M^6} (72 v_2 v_4 - 4 v_2^3 - 9/2 |dv_2|^2) \dvol
\right)^\bullet[\varphi] \\ = -\frac{3}{16} \int_{M^6} \varphi (-24
\Delta^2(v_2) + 192 \Delta(v_4) + 96 \delta(\Rho dv_2) + 16
\Delta(v_2^2)) \dvol.
\end{multline*}
Hence the right-hand side of \eqref{FD-V-6} equals the integral
against $\varphi$ of
$$
\frac{32}{63} v_6 + \frac{1}{7!} (-2) (-24 \Delta^2(v_2) + 192
\Delta(v_4) + 96 \delta(\Rho dv_2) + 16 \Delta(v_2^2)).
$$
The assertion follows by combining this result with Corollary
\ref{a4-a6-c}.
\end{proof}

Now integration yields the following result.

\begin{corr}\label{det-vol-6c} In the situation of Proposition
\ref{det-vol-6}, the functional
$$
\det (-P_2) \exp \left(-\frac{1}{126 \pi^3} \V_6(g_+;\cdot) -
\frac{1}{7!} \frac{1}{12\pi^3} \int_{M^6} (144 v_2 v_4 - 8 v_2^3 - 9
|dv_2|^2) \dvol \right)
$$
is constant on $[g]$.
\end{corr}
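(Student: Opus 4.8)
The plan is to take the logarithm of the displayed functional, show that its conformal variation vanishes identically, and then integrate this along straight‑line paths in the conformal class. Write
\[
D(g') \st \det(-P_2(g')) \exp\left(-\frac{1}{126\pi^3}\V_6(g_+;g') - \frac{1}{7!}\frac{1}{12\pi^3}\int_{M^6}(144 v_2 v_4 - 8 v_2^3 - 9|dv_2|^2)\dvol_{g'}\right).
\]
Both ingredients make sense for every metric in $[g]$: local conformal flatness is a property of the conformal class, the Poincar\'e--Einstein filling $(X,g_+)$ is common to all representatives (so $\V_6(g_+;\cdot)$ is a functional on $[g]$), and the $\zeta$‑regularized determinant $\det(-P_2)$ is defined as in Section \ref{CV-zeta} on the subset of $[g]$ on which $\ker P_2$ is trivial; this subset is open in the contractible conformal class $[g]\cong C^\infty(M)$ and contains $g$ by hypothesis, and I would work on the connected component of $g$ in it, proving constancy of $D$ there.

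The core of the argument is the bookkeeping of constants. Proposition \ref{det-vol-6} applies to every locally conformally flat metric with trivial $P_2$‑kernel, hence to each metric occurring below, and gives
\[
(4\pi)^3 (\log\det(-P_2))^\bullet[\varphi] = \frac{32}{63}\,\V_6^\bullet[\varphi] + \frac{1}{7!}\frac{16}{3}\left(\int_{M^6}(144 v_2 v_4 - 8 v_2^3 - 9|dv_2|^2)\dvol\right)^\bullet[\varphi].
\]
Dividing by $(4\pi)^3=64\pi^3$ and using the identities $\tfrac{32}{63\cdot 64}=\tfrac{1}{126}$ and $\tfrac{16}{3\cdot 64}=\tfrac{1}{12}$, this becomes exactly
\[
(\log\det(-P_2))^\bullet[\varphi] = \frac{1}{126\pi^3}\,\V_6^\bullet[\varphi] + \frac{1}{7!}\frac{1}{12\pi^3}\left(\int_{M^6}(144 v_2 v_4 - 8 v_2^3 - 9|dv_2|^2)\dvol\right)^\bullet[\varphi],
\]
i.e.\ $(\log D)^\bullet[\varphi]=0$ for all $\varphi\in C^\infty(M)$ and all admissible base metrics.

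Finally I would integrate. For fixed $\varphi$ consider the path $t\mapsto g_t=e^{2t\varphi}g$, $t\in[0,1]$; since $\tfrac{d}{dt}g_t=2\varphi g_t$, the infinitesimal conformal factor of $g_t$ relative to itself is $\varphi$, so $\tfrac{d}{dt}\log D(g_t)=(\log D)^\bullet_{g_t}[\varphi]=0$ by the previous step applied at $g_t$ (which is again locally conformally flat with trivial $P_2$‑kernel as long as $t$ stays in the relevant component). Hence $\log D(e^{2\varphi}g)=\log D(g)$, which is the assertion. The only genuinely delicate point beyond matching coefficients is the justification that $\det(-P_2)$ is defined along the whole path and that the variational formula of Proposition \ref{det-vol-6} may be applied pointwise; this is handled by noting that the number of negative eigenvalues of $-P_2$ (and hence the sign in the definition \eqref{det} of the determinant) can change only where $\ker P_2\neq 0$, so on the connected component of $g$ in $\{g'\in[g]:\ker P_2(g')=0\}$ the function $g'\mapsto\zeta'(0)$ depends smoothly on $g'$ and the argument goes through as in Section \ref{CV-zeta}.
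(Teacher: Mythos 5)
Your proposal is correct and follows essentially the same route as the paper: one checks via Proposition \ref{det-vol-6} (with the coefficient bookkeeping $\tfrac{32}{63\cdot 64}=\tfrac{1}{126}$, $\tfrac{16}{3\cdot 64}=\tfrac{1}{12}$) that the conformal variation of the logarithm of the functional vanishes, and then integrates along the path $e^{2t\varphi}g$ using $\Phi(e^{2\varphi}g)-\Phi(g)=\int_0^1\Phi(e^{2t\varphi}g)^\bullet[\varphi]\,dt$. Your additional remark on the triviality of $\ker P_2$ along the path is a reasonable refinement of a point the paper leaves implicit.
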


\begin{proof} We apply the identity
$\Phi(e^{2\varphi}g)-\Phi(g)=\int_0^1\Phi(e^{2s\varphi}g)^\bullet[\varphi]
ds$ to the functional
$$
\Phi = (4\pi)^3 (\log \det(-P_2)) - \frac{32}{63} \V_6(g_+;\cdot) -
\frac{1}{7!} \frac{16}{3} \left(\int_{M^6} (144 v_2 v_4 - 8 v_2^3 -
9 |dv_2|^2) \dvol \right)
$$
and use \eqref{FD-V-6}. Simplification yields the claim.
\end{proof}

Similar arguments using Corollary \ref{a4-a6-c} prove the following
result. Let $(M^4,g)$ be a locally conformally flat closed $(M^4,g)$ with
$\ker (P_2(g)) = 0$. Assume that $[g]$ is the conformal infinity of a
Poincar\'e-Einstein manifold $(X,g_+)$. Then
\begin{equation}\label{FD-V-4}
(4\pi)^2(\log \det(-P_2))^\bullet[\varphi] \\ = \frac{16}{45}
\V_4^\bullet[\varphi] + \frac{2}{15} \left(\int_{M^4} v_2^2
\dvol\right)^\bullet[\varphi],
\end{equation}
and integration yields

\begin{corr}\label{det-vol-4c} In the above situation, the functional
$$
\det(-P_2) \exp \left(-\frac{1}{45 \pi^2} \V_4(g_+;\cdot) -
\frac{1}{120 \pi^2} \int_{M^4} v_2^2 \dvol \right)
$$
is constant on $[g]$.
\end{corr}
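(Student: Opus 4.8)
The plan is to mirror exactly the argument that yields Corollary~\ref{det-vol-6c}: first prove the infinitesimal identity \eqref{FD-V-4} relating the conformal variation of $\log\det(-P_2)$ to those of the renormalized volume $\V_4(g_+;\cdot)$ and of $\int_{M^4} v_2^2\,\dvol$, and then integrate it along the path $s\mapsto e^{2s\varphi}g$ in $[g]$. Before that, one records that the determinant is well defined throughout $[g]$: since $\ker(P_2(g))=0$ and $e^{(\f+1)\varphi}P_2(e^{2\varphi}g)=P_2(g)e^{(\f-1)\varphi}$ gives a linear isomorphism of kernels, every metric $e^{2\varphi}g$ also has $\ker(P_2)=0$, so $\det(-P_2)=\exp(-\zeta'(0))$ makes sense along the whole path.

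For the first step I would start from the infinitesimal Polyakov formula (\eqref{pol-inf}, \eqref{zeta-conform}), which in the normalization of Section~\ref{CV-zeta} reads $(4\pi)^2(\log\det(-P_2))^\bullet[\varphi]=-2\int_{M^4}\varphi\,a_4\,\dvol$ with $a_4$ the heat kernel coefficient in the convention of the appendix, and substitute the critical-dimension holographic formula from Corollary~\ref{a4-a6-c}, namely $180\,a_4=-32 v_4+12\,\Delta v_2$ with $\Delta=\delta d$. This produces
\begin{equation*}
(4\pi)^2(\log\det(-P_2))^\bullet[\varphi]=\frac{16}{45}\int_{M^4}\varphi\,v_4\,\dvol-\frac{2}{15}\int_{M^4}\varphi\,\Delta v_2\,\dvol.
\end{equation*}
The first term is $\frac{16}{45}\V_4^\bullet[\varphi]$ by Graham's variational formula $\V_4^\bullet[\varphi]=\int_{M^4}\varphi\,v_4\,\dvol$ (\cite{G-vol}). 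For the second, I would use the refined variation \eqref{CT-v} for $v_2$, i.e.\ $v_2^\bullet[\varphi]=-2\varphi v_2-\frac12\Delta\varphi$, together with $\dvol^\bullet[\varphi]=4\varphi\,\dvol$, to get
\begin{equation*}
\Bigl(\int_{M^4}v_2^2\,\dvol\Bigr)^\bullet[\varphi]=\int_{M^4}\bigl(2v_2\,v_2^\bullet[\varphi]+4\varphi v_2^2\bigr)\dvol=-\int_{M^4}v_2\,\Delta\varphi\,\dvol=-\int_{M^4}\varphi\,\Delta v_2\,\dvol,
\end{equation*}
by self-adjointness of $\Delta=\delta d$. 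Combining the two displays gives precisely \eqref{FD-V-4}.

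The second step is a routine integration. Put $\Phi(g)=(4\pi)^2\log\det(-P_2(g))-\frac{16}{45}\V_4(g_+;g)-\frac{2}{15}\int_{M^4}v_2^2(g)\,\dvol_g$. Since all metrics $e^{2\varphi}g$ lie in $[g]$ and therefore share the same Poincar\'e--Einstein filling $(X,g_+)$, the identity \eqref{FD-V-4} holds at every metric of $[g]$, so $\Phi^\bullet[\varphi]=0$ there. Applying $\Phi(e^{2\varphi}g)-\Phi(g)=\int_0^1\Phi(e^{2s\varphi}g)^\bullet[\varphi]\,ds$ (the fundamental theorem of calculus, using that $\frac{d}{ds}\Phi(e^{2s\varphi}g)$ is the conformal variation of $\Phi$ at $e^{2s\varphi}g$ in the direction $\varphi$) shows $\Phi$ is constant on $[g]$. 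Dividing by $(4\pi)^2=16\pi^2$ and exponentiating gives the constancy of $\det(-P_2)\exp(-\frac{1}{45\pi^2}\V_4(g_+;\cdot)-\frac{1}{120\pi^2}\int_{M^4}v_2^2\,\dvol)$, which is Corollary~\ref{det-vol-4c}.

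I do not expect a genuine obstacle: every ingredient---the Polyakov formula, the holographic expression for $a_4$ in dimension $4$, the variational formulas for $v_4$ and $v_2$, and the conformal invariance of $\ker P_2$---is already available. The only points demanding care are bookkeeping: tracking the $(4\pi)^{-\f}$ normalization of $a_n$ (cf.\ the footnotes around \eqref{pol-inf} and in Section~\ref{CV-zeta}) so that the constant in \eqref{FD-V-4} comes out exactly as $\frac{16}{45}$ and $\frac{2}{15}$; correctly including the variation of the volume form in differentiating $\int_{M^4}v_2^2\,\dvol$; and noting the smooth $s$-dependence of $\zeta'(0)$ and $\V_4(g_+;\cdot)$ along the path so that the last integration is legitimate.
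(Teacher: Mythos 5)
Your proposal is correct and follows essentially the same route as the paper: the paper obtains \eqref{FD-V-4} by the "similar arguments" it spells out for Proposition \ref{det-vol-6}---the infinitesimal Polyakov formula $-2\int_{M^4}\varphi\,a_4\,\dvol$, the critical-dimension holographic formula $180\,a_4=-32v_4+12\Delta v_2$ of Corollary \ref{a4-a6-c} (valid in the locally conformally flat setting assumed here), Graham's formula for $\V_4^\bullet[\varphi]$ and the variation of $\int v_2^2\,\dvol$---and then integrates $\Phi(e^{2\varphi}g)-\Phi(g)=\int_0^1\Phi(e^{2s\varphi}g)^\bullet[\varphi]\,ds$ exactly as in Corollary \ref{det-vol-6c}. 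Your bookkeeping of the constants ($\tfrac{16}{45}$, $\tfrac{2}{15}$, division by $(4\pi)^2$) and of the kernel condition along the conformal class is accurate.
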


\begin{rem}\label{det-vol-2c} Corollary \ref{det-vol-6c} and Corollary
\ref{det-vol-4c} generalize the following result for closed Riemann
surfaces $(M^2,g)$. Assume that $[g]$ is the conformal infinity of a
Poincar\'e-Einstein space $(X^3,g_+)$. Then the functional
$$
{\det}(-\Delta) \exp \left(-\frac{1}{3\pi} \V_2(g_+;\cdot) \right) /
\int_{M^2} \dvol
$$
is constant on $[g]$. In fact, the assertion follows by comparing the
Polyakov formula
$$
-\log \left( \frac{{\det}(-\Delta_{\hat{g}})}{{\det}(-\Delta_g)}\right) =
- \log \left(\frac{\int \dvol_{\hat{g}}}{\int \dvol_g}\right) +
\frac{1}{12\pi} \int_{M^2} \varphi(-\Delta_g \varphi + \scal(g)) \dvol_g
$$
(see \cite{sharp}, Theorem 5.8) with \cite{G-vol}
\begin{equation}\label{pol-v2}
\V_2(g_+;\hat{g}) - \V_2(g_+;g) = \int_{M^2} (\varphi v_2(g) - 1/4
|d\varphi|_g^2) \dvol_g.
\end{equation}
\end{rem}

Second, we consider {\em local} extremal properties of the determinant of
$P_2$ in dimensions $n=4$ and $n=6$.

\begin{prop}\label{det-6-extreme} Let $M^6$ be a closed
manifold with a locally conformally flat Einstein metric $g$.\footnote{In
other words, $M$ is a closed space form.} Assume that $P_2(g)$ has trivial
kernel. Then the restriction of the functional $\log \det (-P_2)$ to
$[g]_1$ is critical at $g$ and the second variation at $g$ is given by
\begin{equation}\label{det-6-scv}
(4\pi)^3 (\log \det (-P_2))^{\bullet\bullet}[\varphi] = 1/630
\int_{M^6} \varphi (\Delta+24c) (3\Delta^2 - 120 c\Delta + 1600 c^2)
(\varphi) dv.
\end{equation}
Here $\Delta = -\delta d$ is the non-positive Laplacian and
$c=\scal(g)/120$.
\end{prop}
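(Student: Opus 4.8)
The plan is to deduce both assertions from the first-variation formula for the functional determinant combined with the holographic formula for $a_6$ in the critical dimension $n=6$. First recall (see the proof of Proposition \ref{det-vol-6}, or \eqref{zeta-conform} with the normalization of the appendix) that
\[
(4\pi)^3 (\log\det(-P_2))^\bullet[\psi] = -2 \int_{M^6} \psi\, a_6(g)\, \dvol_g , \qquad \psi \in C^\infty(M).
\]
For a locally conformally flat metric in dimension $6$, Corollary \ref{a4-a6-c} gives $7!\, a_6 = -1280\, v_6 - 24\,\Delta^2(v_2) + 96\,\delta(\Rho\, dv_2) + 192\,\Delta(v_4) + 16\,\Delta(v_2^2)$. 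If $g$ is moreover Einstein, then by \eqref{v-Einstein} the renormalized volume coefficients $v_2, v_4, v_6$ are constant and $\Rho$ is parallel, so all four divergence terms vanish and $a_6(g)$ is a constant $A$. Hence $(\log\det(-P_2))^\bullet[\psi] = -2(4\pi)^{-3} A \int_M \psi\, \dvol_g$ vanishes on the directions $\psi$ tangent to $[g]_1$ (those with $\int_M \psi\, \dvol_g = 0$); by the Lagrange-multiplier argument used in the proof of Theorem \ref{extremal-RVC}, $\log\det(-P_2)|_{[g]_1}$ is critical at $g$.

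Next I would reduce the second variation to a pointwise computation. Along a curve $\gamma(t) = e^{2\varphi(t)}g$ with $\varphi(0)=0$, $\varphi'(0)=\varphi$, $\varphi''(0)=\psi$, differentiating the first-variation formula at $t=0$ and using $(\dvol)^\bullet = n\varphi\,\dvol$ and $a_6(g)=A$ constant yields
\[
(4\pi)^3 (\log\det(-P_2))^{\bullet\bullet}[\gamma] = -2 A \int_M \psi\, \dvol_g - 2 \int_M \varphi\, a_6^\bullet[\varphi]\, \dvol_g - 2n A \int_M \varphi^2\, \dvol_g .
\]
For a volume-preserving curve one has $\int_M \psi\, \dvol_g = -n \int_M \varphi^2\, \dvol_g$, so the two $A$-terms cancel and
\[
(4\pi)^3 (\log\det(-P_2))^{\bullet\bullet}[\varphi] = -2 \int_M \varphi\, a_6^\bullet[\varphi]\, \dvol_g ,
\]
a quadratic form in $\varphi$ on $\{\int_M \varphi\,\dvol_g = 0\}$.

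It then remains to compute the pointwise conformal variation $a_6^\bullet[\varphi]$ at the Einstein metric, for which I would differentiate the holographic formula term by term (this is legitimate since local conformal flatness is preserved under conformal change, so Corollary \ref{a4-a6-c} holds along the whole class $[g]$). The inputs are the variational formulas \eqref{CT-v} for $v_{2k}$: at the base metric $dv_{2k}=0$ and $\Rho$ is parallel, and $g(r)=(1-cr^2)^2 g$, so by \eqref{L-Einstein} the tensors $L_{2k}$ are scalar multiples of the identity, $L_2 = \tfrac12$, $L_4 = -\tfrac{5c}{2}$, $L_6 = 5c^2$, giving
\[
v_2^\bullet[\varphi] = -2v_2\varphi - \tfrac12\,\delta d\varphi , \qquad v_4^\bullet[\varphi] = -4v_4\varphi + \tfrac{5c}{2}\,\delta d\varphi , \qquad v_6^\bullet[\varphi] = -6v_6\varphi - 5c^2\,\delta d\varphi ,
\]
with $v_2 = -6c$, $v_4 = 15c^2$, $v_6 = -20c^3$. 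Because $dv_{2k}=0$ and $\Rho$ is parallel at $g$, the variation of each divergence term $\delta(\cdots)$ collapses to $\delta d$ (possibly iterated) applied to the corresponding $v_{2k}^\bullet[\varphi]$, so every contribution to $7!\, a_6^\bullet[\varphi]$ is a polynomial of degree $\le 3$ in $\delta d$ acting on $\varphi$. Collecting the coefficients of $(\delta d)^j \varphi$ for $j=0,1,2,3$ and writing $\Delta = -\delta d$, this should yield
\[
7!\, a_6^\bullet[\varphi] = -4(\Delta + 24c)(3\Delta^2 - 120 c\Delta + 1600 c^2)\varphi .
\]
Substituting into the second-variation formula and using $8/7! = 1/630$ gives the asserted identity.

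The main obstacle is this last bookkeeping step: differentiating the four divergence terms of $7!\, a_6$ and keeping all numerical coefficients exact, so that the resulting operator factors precisely as $(\Delta+24c)(3\Delta^2 - 120c\Delta + 1600c^2)$. Two structural checks guide and confirm the computation: the operator $a_6^\bullet$ must be formally self-adjoint (so that $\int_M \varphi\, a_6^\bullet[\varphi]\,\dvol_g$ is a genuine quadratic form), and it must be divisible by $\Delta + 24c = \Delta + \scal(g)/(n-1)$, since $\log\det(-P_2)$ is invariant under the conformal diffeomorphisms of the round sphere and $\scal(g)/(n-1)$ is the corresponding Obata eigenvalue of $-\Delta$.
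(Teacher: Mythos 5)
Your proposal is correct and follows essentially the same route as the paper's proof: the first variation via $(4\pi)^3(\log\det(-P_2))^\bullet[\psi]=-2\int_{M^6}\psi\,a_6\,\dvol$, criticality from the constancy of $a_6$ at a locally conformally flat Einstein metric, reduction of the second variation to $-2\int_{M^6}\varphi\,a_6^\bullet[\varphi]\,\dvol$ along volume-preserving curves, and term-by-term differentiation of the holographic formula of Corollary \ref{a4-a6-c} using \eqref{CT-v} with $v_2=-6c$, $v_4=15c^2$, $v_6=-20c^3$ and $\Rho=2cg$. The only hedged step, the final bookkeeping, does work out exactly as you predict: one finds $7!\,a_6^\bullet[\varphi]=-12\Delta^3\varphi+192c\,\Delta^2\varphi+5120c^2\Delta\varphi-153600c^3\varphi=-4(\Delta+24c)(3\Delta^2-120c\Delta+1600c^2)\varphi$, which is precisely the identity the paper records.
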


\begin{proof} Let $M^n$ be a closed manifold of even dimension $n$.
Assume that the kernel of $P_2(g)$ is trivial; this is a conformally
invariant condition. Then we recall the variational formula
$$
(4\pi)^\f (\log \det (-P_2))^{\bullet}[\varphi] = -2 \int_{M^n} \varphi
a_n dv
$$
at all metrics in $[g]$ (see \eqref{zeta-conform}). Thus Corollary
\ref{a4-a6-c} and the fact that the coefficients $v_{2k}$ are constant at
Einstein metrics imply the asserted criticality at $g$. Moreover, by
arguments as in the proof of Theorem \ref{extremal-RVC}, we obtain
$$
(4\pi)^3 (\log \det (-P_2))^{\bullet\bullet}[\varphi] = -2
\int_{M^6} \varphi a_6^\bullet[\varphi] dv
$$
at the Einstein metric $g$. These arguments again utilize the fact that
$a_6$ is constant at Einstein metrics. Now the formula for $a_6$ in
Corollary \ref{a4-a6-c} implies that
$$
7! a_6^\bullet[\varphi] = -1280 v_6^\bullet[\varphi] - 24 \Delta^2
(v_2^\bullet[\varphi]) - 192 \Delta (v_4^\bullet[\varphi]) + 96
\delta(\Rho d)(v_2^\bullet[\varphi]) - 32 v_2 \Delta
(v_2^\bullet[\varphi])
$$
at $g$; here we again use the fact that the coefficients $v_{2k}$ are
constant at Einstein metrics. Now $g(r)=(1-cr^2)^2g$ gives
$v(r)=(1-cr^2)^6$ and hence
$$
v_2=-6c, \quad v_4=15c^2, \quad v_6 = -20c^3.
$$
Therefore, \eqref{CT-v} yields the conformal variational formulas
\begin{equation*}
v_2^\bullet[\varphi] = - 2 v_2 \varphi + 1/2 \Delta \varphi, \quad
v_4^\bullet[\varphi] = - 4 v_4 \varphi - 5/2 c \Delta \varphi, \quad
v_6^\bullet[\varphi] = - 6 v_6 \varphi + 5 c^2 \Delta \varphi.
\end{equation*}
Then a calculation shows that
$$
7! a_6^\bullet[\varphi] = -4 (\Delta+24c) (3\Delta^2 - 120c \Delta +
1600c^2)(\varphi), \; \Delta = -\delta d.
$$
The proof is complete.
\end{proof}

For any $c \in \r$, the polynomial $3 \lambda^2 - 120 c \lambda + 1600
c^2$ is positive for all real $\lambda$. Since $\varphi$ is orthogonal to
$1$ (by preservation of volume), the quadratic form on the right-hand side
of \eqref{det-6-scv} is negative semi-definite. More precisely, it is
negative definite iff $\ker(\Delta+24c)=0$, i.e., if $(M^6,g)$ is not
isometric to a rescaled round sphere. Hence Proposition
\ref{det-6-extreme} shows that the restriction of $\log \det (-P_2)$ to
$[g]_1$ has a {\em local maximum} at $g$. This is a {\em local} version of
a global maximum result at the round metric on $\s^6$ due to Branson
\cite{sharp}.

Similar arguments yield the following result.

\begin{prop}\label{det-4-extreme} Let $(M^4,g)$ be a closed locally
conformally flat Einstein manifold. Assume that $P_2(g)$ has trivial
kernel. Then the restriction of the functional $\log \det (-P_2)$ to
$[g]_1$ is critical at $g$ and the second variation at $g$ is given by
\begin{equation}\label{det-4-scv}
(4\pi)^2 (\log \det (-P_2))^{\bullet\bullet}[\varphi] = \frac{1}{15}
\int_{M^4} \varphi (\Delta-8c)(\Delta+16c)(\varphi) dv.
\end{equation}
Here $\Delta = -\delta d$ is the non-positive Laplacian and
$c=\scal(g)/48$.
\end{prop}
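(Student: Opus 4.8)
The plan is to run the argument of Proposition \ref{det-6-extreme} in the (simpler) dimension $n=4$. The starting point is the conformal anomaly formula \eqref{zeta-conform}, which in dimension four reads $(4\pi)^2(\log\det(-P_2))^\bullet[\varphi] = -2\int_{M^4}\varphi\, a_4\,\dvol$ at every metric in $[g]$; the hypothesis $\ker P_2(g)=0$ is conformally invariant and guarantees that $\det(-P_2)$ is well defined along the whole class. Combining this with Corollary \ref{a4-a6-c}, which gives $180\,a_4 = -32 v_4 + 12\,\delta d(v_2)$ for locally conformally flat metrics in dimension four, and with the fact that for an Einstein metric $v_2=-\tfrac12\J$ and $v_4=\tfrac18(\J^2-|\Rho|^2)$ are constant (so that $\delta d(v_2)=0$ and $a_4=-\tfrac{8}{45}v_4$ is constant), the usual Lagrange multiplier argument from the proof of Theorem \ref{extremal-RVC} shows that $g$ is a critical point of the restriction of $\log\det(-P_2)$ to $[g]_1$.

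For the Hessian I would argue exactly as in the proofs of Theorem \ref{extremal-RVC} and Proposition \ref{det-6-extreme}: differentiating $(4\pi)^2(\log\det(-P_2))^\bullet[\varphi(t)] = -2\int_{M}\varphi'(t)\,a_4(e^{2\varphi(t)}g)\,\dvol_{e^{2\varphi(t)}g}$ a second time at $t=0$, using that $a_4$ is constant at the Einstein metric and that volume-preserving variations satisfy $\int_M\varphi\,\dvol = 0$ and $\int_M(\psi+4\varphi^2)\,\dvol=0$ with $\psi=\varphi''(0)$, all the extra terms cancel and one is left with
\[
(4\pi)^2(\log\det(-P_2))^{\bullet\bullet}[\varphi] = -2\int_{M^4}\varphi\, a_4^\bullet[\varphi]\,\dvol .
\]
Then I would compute $a_4^\bullet[\varphi]$ from the holographic formula: since $g(r)=(1-cr^2)^2 g$ with $c=\scal(g)/48$ (Section \ref{basics}) one has $v(r)=(1-cr^2)^4$, hence $v_2=-4c$, $v_4=6c^2$, and $\Rho=2c\,g$ acts as $2c\cdot\operatorname{id}$ on $1$-forms. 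Plugging $v_2,v_4$ into \eqref{CT-v} gives $v_2^\bullet[\varphi]=8c\varphi+\tfrac12\Delta\varphi$ and $v_4^\bullet[\varphi]=-24c^2\varphi-\tfrac32 c\,\Delta\varphi$ with $\Delta=-\delta d$ (the coefficient $-\tfrac32 c$ comes from $\tfrac12 v_2 g^{-1}+\tfrac14 g^{-1}_{(2)}=(-2c+\tfrac12 c)\operatorname{id}$, since $g^{-1}_{(2)}=\Rho$). Differentiating $180\,a_4=-32 v_4-12\,\Delta v_2$ at the Einstein metric, where $v_2,v_4$ are constant so the variation commutes with $\Delta$, yields $180\,a_4^\bullet[\varphi] = -6\,(\Delta^2+8c\Delta-128c^2)(\varphi)$.

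The remaining input is the algebraic identity $\Delta^2+8c\Delta-128c^2=(\Delta-8c)(\Delta+16c)$, which is immediate, so $a_4^\bullet[\varphi]=-\tfrac1{30}(\Delta-8c)(\Delta+16c)(\varphi)$ and therefore $(4\pi)^2(\log\det(-P_2))^{\bullet\bullet}[\varphi] = \tfrac{1}{15}\int_{M^4}\varphi\,(\Delta-8c)(\Delta+16c)(\varphi)\,\dvol$, as claimed. I do not expect a genuine obstacle: this $n=4$ computation parallels the $n=6$ case step by step and is strictly shorter, so the only real hazard is bookkeeping — keeping straight the two conventions for the Laplacian ($\Delta=\delta d$ in Corollary \ref{a4-a6-c} versus $\Delta=-\delta d$ in the statement) and correctly reading off $g^{-1}_{(2)}=\Rho$ in the sub-leading term of \eqref{CT-v}, which are exactly the places where a sign error would slip in.
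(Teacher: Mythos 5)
Your proposal is correct and follows essentially the same route as the paper: criticality via the anomaly formula \eqref{zeta-conform} together with the constancy of $v_2,v_4$ at Einstein metrics, and the Hessian computed from Corollary \ref{a4-a6-c} and \eqref{CT-v} with $v_2=-4c$, $v_4=6c^2$, yielding $180\,a_4^\bullet[\varphi]=-6(\Delta-8c)(\Delta+16c)(\varphi)$. Your sign bookkeeping (including $g^{-1}_{(2)}=\Rho=2c\,\mathrm{id}$ and the two Laplacian conventions) matches the paper's calculation exactly.
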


\begin{proof} By arguments as in the proof of Proposition
\ref{det-6-extreme}, it suffices to prove the variational formula
$$
180 a_4^\bullet[\varphi] = -6 (\Delta-8c)(\Delta+16c)(\varphi)
$$
at $g$. In order to prove this relation, we note that the explicit formula
for $a_4$ in Corollary \ref{a4-a6-c} yields
$$
180 a_4^\bullet[\varphi] = -32 v_4^\bullet[\varphi] - 12 \Delta
(v_2^\bullet[\varphi]).
$$
Now at the metric $g$ we have $v_2=-4c$, $v_4=6c^2$, and \eqref{CT-v}
shows that
$$
v_2^\bullet[\varphi] = -2 \varphi v_2 + 1/2 \Delta \varphi \quad
\mbox{and} \quad v_4^\bullet[\varphi] = -4 \varphi v_4 - 3/2 c
\Delta \varphi.
$$
Then the assertion follows by a simple calculation. \end{proof}

Proposition \ref{det-4-extreme} implies that, if $\scal(g)>0$, then the
restriction of $\log \det (-P_2)$ to $[g]_1$ has a {\em local minimum} at
$g$. This is a {\em local} version of a global minimum result at the round
metric on $\s^4$ (see \cite{sharp}, \cite{CY}). In contrast, if $\scal(g)
< 0$, then small eigenvalues of $\Delta$ may lead to a non-definite second
variation. In other words, there is no local extremal result on compact
quotients $\Gamma \backslash \h^4$.

Finally, in dimension $n=2$, we have $P_2 = \Delta$ and the analogous
relation
\begin{equation}\label{det-2-scv}
4\pi (\log \det (-\Delta))^{\bullet\bullet}[\varphi] = 2/3 \int_{M^2}
\varphi (\Delta+8c)(\varphi) dv, \quad c=\scal(g)/8
\end{equation}
at an Einstein metric $g$ shows that the restriction of the functional
$\log \det (-\Delta)$ to $[g]_1$ has a {\em local maximum} at $g$. In
particular, $\log \det (-\Delta)$ has a local maximum at the round metric
on $\s^2$. This is a {\em local} version of Onofri's global maximum result
at the round metric on $\s^2$ (see \cite{sharp} and the reference
therein). For general global results see \cite{OPS}.

\subsection{Polyakov formulas for functional determinants of $P_2$}\label{pol-det}

Here we recall the {\em global} Polyakov formulas for the determinant of
the conformal Laplacian in dimension $n=4$ and $n=6$. For the
two-dimensional case see \eqref{pol-det-2}. These results support
\eqref{polyakov-g}. For proofs of the following results we refer to
\cite{sharp}.

In the following, we use the notation $P_2 = P_2(g)$, $\hat{P}_2 =
P_2(\hat{g})$ and $Q_n = Q_n(g)$, $\hat{Q}_n = Q_n(\hat{g})$ for $\hat{g}
= e^{2\varphi} g$.

\begin{prop}\label{pol-det-4} For any locally conformally flat closed
manifold $(M^4,g)$ with trivial $\ker (P_2(g))$, we have
$$
(4\pi)^2 \log \left( \frac{\det (-\hat{P}_2)}{\det (-P_2)} \right) =
\frac{1}{90} \int_{M^4} \varphi (\hat{Q}_4 \dvol_{\hat{g}} + Q_4
\dvol_g) + \int_{M^4} (I_4(\hat{g}) \dvol_{\hat{g}} - I_4(g)
\dvol_g)
$$
with
$$
I_4 = \frac{1}{45} \J^2.
$$
\end{prop}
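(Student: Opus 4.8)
The plan is to derive Proposition \ref{pol-det-4} by integrating the infinitesimal Polyakov formula \eqref{zeta-conform} along the affine path of conformal factors. Fix $\varphi\in C^\infty(M^4)$, put $\hat g=e^{2\varphi}g$ and $g_s=e^{2s\varphi}g$ for $s\in[0,1]$. Since $\ker(P_2)$ is conformally invariant (by the conformal covariance of $P_2$), each $P_2(g_s)$ has trivial kernel, so the determinant is defined along the whole path. Using $g_{s+t}=e^{2t\varphi}g_s$ and \eqref{zeta-conform} (with the $(4\pi)^{-2}$ reinstated, so that $a_4$ denotes the geometric coefficient of Section \ref{coeff-L}) one gets
$$
(4\pi)^2\,\frac{d}{ds}\log\det(-P_2(g_s))=-2\int_{M^4}\varphi\,a_4(g_s)\,\dvol_{g_s},
$$
so that $(4\pi)^2\log\!\big(\det(-\hat P_2)/\det(-P_2)\big)=-2\int_0^1\!\!\int_{M^4}\varphi\,a_4(g_s)\,\dvol_{g_s}\,ds$, and the task is to evaluate this integral.

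The first step is an algebraic identity for $a_4$ on a locally conformally flat metric in dimension four. Starting from $180\,a_4=-32v_4+12\,\delta d\,v_2$ (Corollary \ref{a4-a6-c}), substituting $v_2=-\tfrac12\J$, $8v_4=\J^2-|\Rho|^2$, and using $Q_4=2\J^2-2|\Rho|^2+\delta d\J$ (the formula before \eqref{pan} with $n=4$ and $\Delta=-\delta d$) to eliminate $|\Rho|^2$, one obtains the clean form
$$
180\,a_4=-2Q_4-4\,\delta d\J,\qquad\text{i.e.}\qquad -2a_4=\tfrac1{45}Q_4+\tfrac2{45}\,\delta d\J .
$$
The disappearance of $|W|^2$ here is exactly where local conformal flatness is used; in the general case the extra term $\tfrac1{90}|W|^2$ would survive (and $|W|^2\,\dvol$ is pointwise conformally invariant in dimension four, which is why the general Polyakov formula has a different shape).

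Next I would integrate the two contributions separately. For the $Q_4$-part, the critical transformation law $e^{4\psi}Q_4(e^{2\psi}g)=Q_4(g)+P_4(g)\psi$ from Section \ref{basics}, applied with $\psi=s\varphi$, gives $Q_4(g_s)\,\dvol_{g_s}=\big(Q_4(g)+sP_4(g)\varphi\big)\,\dvol_g$, whence
$$
\tfrac1{45}\int_0^1\!\!\int_{M^4}\varphi\,Q_4(g_s)\,\dvol_{g_s}\,ds
=\tfrac1{45}\Big(\int_{M^4}\varphi\,Q_4\,\dvol_g+\tfrac12\int_{M^4}\varphi\,P_4(g)\varphi\,\dvol_g\Big)
=\tfrac1{90}\int_{M^4}\varphi\,(\hat Q_4\,\dvol_{\hat g}+Q_4\,\dvol_g),
$$
using self-adjointness of $P_4$ together with the transformation law once more; this is the Branson--{\O}rsted device producing a conformal primitive of $g\mapsto\int\varphi Q_4\,\dvol$. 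For the remaining term, I would recognise $\delta d\J$ as (half) the conformal gradient of $\int_M\J^2\,\dvol$ in dimension four: from $\J^\bullet[\varphi]=-2\varphi\J+\delta d\varphi$ and integration by parts, $(\int_{M^4}\J^2\,\dvol)^\bullet[\varphi]=2\int_{M^4}\varphi\,\delta d\J\,\dvol$; applying this at each $g_s$ and integrating in $s$ telescopes to
$$
\tfrac2{45}\int_0^1\!\!\int_{M^4}\varphi\,\delta_{g_s}d\,\J(g_s)\,\dvol_{g_s}\,ds
=\tfrac1{45}\Big(\int_{M^4}\J(\hat g)^2\,\dvol_{\hat g}-\int_{M^4}\J(g)^2\,\dvol_g\Big)
=\int_{M^4}\big(I_4(\hat g)\,\dvol_{\hat g}-I_4(g)\,\dvol_g\big)
$$
with $I_4=\tfrac1{45}\J^2$. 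Summing the two contributions gives the asserted identity.

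The main obstacle is not conceptual but bookkeeping: one must keep the $(4\pi)$-normalisation of the heat coefficient, the two sign conventions for the Laplacian ($\Delta=-\delta d$ versus $\delta d$), and the normalisation of the $v_{2k}$ mutually consistent, since the target is pinned to the exact rationals $\tfrac1{90}$ and $\tfrac1{45}$. The genuinely structural inputs are just the identity $180\,a_4=-2Q_4-4\,\delta d\J$ in the locally conformally flat case (isolating the $Q$-curvature and a single divergence) and the fact that this divergence admits the explicit local conformal primitive $\tfrac1{45}\J^2\,\dvol$; both are low-order computations, so the argument never needs Gilkey's full $a_4$ formula beyond what is already recorded in Section \ref{coeff-L}.
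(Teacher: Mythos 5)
Your argument is correct, and it supplies something the paper itself does not: for Proposition \ref{pol-det-4} the paper gives no proof at all, simply referring to Branson \cite{sharp}, whereas you derive the formula from ingredients already available in the text. Concretely, your chain checks out: from Corollary \ref{a4-a6-c} (equivalently \eqref{CL-a4} with $W=0$, $n=4$) one gets $180\,a_4=-4\J^2+4|\Rho|^2-6\,\delta d\J$, and with $Q_4=2\J^2-2|\Rho|^2+\delta d\J$ this is exactly $180\,a_4=-2Q_4-4\,\delta d\J$, so $-2a_4=\tfrac1{45}Q_4+\tfrac2{45}\,\delta d\J$; integrating \eqref{zeta-conform} along $g_s=e^{2s\varphi}g$, the $Q_4$-piece is handled by the critical transformation law $e^{4s\varphi}Q_4(g_s)=Q_4+sP_4\varphi$ (the Branson--{\O}rsted conformal-primitive device, which is also the engine of Branson's original proof and of the paper's own Theorem \ref{PV-holo}), and the divergence piece telescopes because $\bigl(\int_{M^4}\J^2\,\dvol\bigr)^\bullet[\varphi]=2\int_{M^4}\varphi\,\delta d\J\,\dvol$, yielding precisely the constants $\tfrac1{90}$ and $I_4=\tfrac1{45}\J^2$; this is consistent with the data $a=(4\pi)^{-2}\tfrac1{45}$, $F=(4\pi)^{-2}\tfrac1{45}\J^2$ recorded in Section \ref{hess} and with \eqref{FD-V-4}. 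Two minor remarks: self-adjointness of $P_4$ is not actually needed in your $Q_4$-step (direct substitution of the transformation law for $\hat Q_4$ already gives the symmetrized form), and the paper's derivation of \eqref{zeta-conform} in Section \ref{CV-zeta} is written under a positivity assumption on the spectrum, so under the stated hypothesis of trivial kernel only, one should note (as the paper implicitly does in Propositions \ref{det-4-extreme} and \ref{det-6-extreme}) that the finitely many negative eigenvalues stay bounded away from zero along the path, so the sign factor in the extended definition of $\det(-P_2)$ is constant and the same variational formula applies; neither point affects the validity of your proof.
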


\begin{prop}\label{pol-det-6} For any locally conformally flat closed
manifold $(M^6,g)$ with trivial $\ker (P_2(g))$, we have
\begin{equation*}
(4\pi)^3 \log \left( \frac{\det (-\hat{P}_2)}{\det (-P_2)} \right) = -
\frac{10}{7! \, 3} \int_{M^6} \varphi (\hat{Q}_6 \dvol_{\hat{g}} + Q_6
\dvol_g) - \int_{M^6} (I_6(\hat{g}) \dvol_{\hat{g}} - I_6(g) \dvol_g)
\end{equation*}
with
$$
I_6 = \frac{1}{7!} \frac{1}{3} (68 \J^3 - 64 \J |\Rho|^2 + 26 |d\J|^2).
$$
\end{prop}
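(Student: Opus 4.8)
The plan is to integrate the infinitesimal Polyakov formula along the conformal path $g_t = e^{2t\varphi}g$, $t\in[0,1]$. Since triviality of $\ker(P_2)$ is a conformal invariant, $P_2(g_t)$ has trivial kernel for all $t$, and the variational formula \eqref{zeta-conform} gives
\[
\left(\log{\det}(-P_2)\right)^\bullet[\varphi](g_t) = -2\,(4\pi)^{-3}\int_{M^6}\varphi\, a_6(g_t)\,\dvol_{g_t},
\]
so that $(4\pi)^3\log\big({\det}(-\hat P_2)/{\det}(-P_2)\big) = -2\int_0^1\!\int_{M^6}\varphi\, a_6(g_t)\,\dvol_{g_t}\,dt$. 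The task is therefore to recognise the $t$-integrand, evaluated along the path, as the $t$-derivative of the right-hand side of the asserted formula; this requires decomposing $a_6$ pointwise into a piece proportional to the critical $Q$-curvature $Q_6$ and a piece built from lower-order renormalized volume coefficients.

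First I would insert the holographic formula for $a_6$ valid for locally conformally flat metrics in dimension $n=6$ (Corollary \ref{a4-a6-c}),
\[
7!\,a_6 = -1280\,v_6 - 24\,\Delta^2(v_2) + 96\,\delta(\Rho\,dv_2) + 192\,\Delta(v_4) + 16\,\Delta(v_2^2),\qquad \Delta = \delta d .
\]
Next, using \eqref{Q-rec} for $N=3$, equivalently the holographic decomposition of the critical $Q$-curvature from \cite{GJ-holo}, one writes $Q_6$ (for conformally flat metrics) as a fixed multiple of $v_6$ plus a divergence that is itself a differential operator applied to $v_2$ and $v_4$; the $\tr(\Rho^3)$ contributions present in both $v_6$ (via \eqref{v6-flat}) and $Q_6$ cancel in this comparison, so $Q_6 - (\text{const})\,v_6$ is a genuine divergence in the lower coefficients. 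Substituting back, $-2\,a_6$ becomes $\alpha\,Q_6$ plus an explicit linear combination of $\Delta^2(v_2)$, $\delta(\Rho\,dv_2)$, $\Delta(v_4)$ and $\Delta(v_2^2)$.

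The two pieces are then integrated along $g_t$ separately. For the $Q_6$-piece, the critical transformation law $e^{6\varphi}Q_6(e^{2\varphi}g) = Q_6(g) - P_6(g)(\varphi)$ together with the self-adjointness of $P_6$ and $P_6(1)=0$ yields the standard $Q$-curvature cocycle identity
\[
\int_0^1\!\int_{M^6}\varphi\,Q_6(g_t)\,\dvol_{g_t}\,dt = \frac{1}{2}\int_{M^6}\varphi\left(Q_6(\hat g)\,\dvol_{\hat g} + Q_6(g)\,\dvol_g\right),
\]
which produces the $Q_6$-term of the Proposition. For the divergence-piece, each of $\Delta^2(v_2)$, $\delta(\Rho\,dv_2)$, $\Delta(v_4)$, $\Delta(v_2^2)$ is, up to its scaling contribution, the non-scaling part of the conformal variation of a product of lower renormalized volume coefficients; feeding in $v_2^\bullet[\varphi] = -2\varphi v_2 - \tfrac{1}{2}\Delta\varphi$ and $v_4^\bullet[\varphi] = -4\varphi v_4 - \tfrac{1}{2}\delta((v_2+\tfrac{1}{2}\Rho)d\varphi)$ (special cases of \eqref{CT-v}), together with the variation of $\int_{M^6}|dv_2|^2\dvol$ recorded in the proof of Proposition \ref{a6-div}, one obtains a local scalar invariant $I_6$, cubic in $\J$ and $\Rho$ and containing $|d\J|^2$, whose anomaly $\int_{M^6}(I_6(\hat g)\,\dvol_{\hat g} - I_6(g)\,\dvol_g)$ equals $\int_0^1\!\int_{M^6}\varphi\cdot(\text{divergence-piece})(g_t)\,\dvol_{g_t}\,dt$. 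Alternatively, when $[g]$ bounds a Poincar\'e--Einstein metric one may shortcut this step by invoking Corollary \ref{det-vol-6c} and trading $\V_6$ for $Q_6$ through the same $Q_6\leftrightarrow v_6$ comparison.

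The main obstacle is the bookkeeping of the numerical constants: one must verify that the residual local invariant is exactly $I_6 = \frac{1}{7!}\frac{1}{3}(68\J^3 - 64\J|\Rho|^2 + 26|d\J|^2)$ and that the $Q_6$-coefficient evaluates to $-\frac{10}{7!\,3}$. This is a finite but delicate computation relying on the conformally flat identities $v_2 = -\tfrac{1}{2}\J$, $8v_4 = \J^2 - |\Rho|^2$, $v_6 = -\tfrac{1}{48}(\J^3 - 3\J|\Rho|^2 + 2\tr(\Rho^3))$, the conformally flat formula for $Q_6$, and the variational formulas above; the normalization of the $I_6$-anomaly is pinned down by the fact that $\int_{M^6}a_6\,\dvol$ is a conformal invariant, so that the scaling terms assemble correctly and the pure divergence terms in $a_6$ contribute nothing to the total integral. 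As a consistency cross-check, specializing $g$ to an Einstein metric and differentiating once more must reproduce the second-variation formula of Proposition \ref{det-6-extreme}.
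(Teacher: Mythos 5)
Your proposal is correct, but it necessarily takes a different route from the paper, because the paper offers no proof of this proposition at all: it is quoted from Branson \cite{sharp}, whose argument runs through Gilkey's formula for $a_6$ and a direct search for a conformal primitive. What you propose is essentially the reproof that the paper only alludes to at the end of Section \ref{pol-volume} (combining \eqref{FD-V-6} with Lemma \ref{PV-6}), carried out directly: integrate \eqref{zeta-conform} along $g_t=e^{2t\varphi}g$ (local conformal flatness and triviality of $\ker P_2$ persist along the path, so the determinant ratio stays positive and Corollary \ref{a4-a6-c} applies at each $g_t$), trade $v_6$ for $Q_6$ through the critical holographic formula \eqref{hol-form-c}, use the $Q_6$-cocycle identity, and exhibit a local conformal primitive of the residual divergence terms. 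The constants do close up: with $c_6=-\tfrac{1}{384}$ one gets $\tfrac{2560}{7!}c_6=-\tfrac{20}{3\cdot 7!}$, hence $-\tfrac{10}{3\cdot 7!}$ after the cocycle identity, and the residual divergence combination (the divergence part of Corollary \ref{a4-a6-c} together with the $\T_{2j}^*(0)$-terms of \eqref{hol-form-c}) is precisely the conformal variation of $\int_{M^6} I_6\,\dvol$ with the stated $I_6$; this can be verified using $v_2=-\tfrac12\J$, $8v_4=\J^2-|\Rho|^2$, \eqref{CT-v} and \eqref{norm-cv}. What the route buys is a proof independent of Branson's heat-invariant computation, resting only on the holographic identities developed in this paper.

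Two cautions. First, your claim that \emph{each} of $\Delta^2 v_2$, $\delta(\Rho\,dv_2)$, $\Delta v_4$, $\Delta(v_2^2)$ is (up to scaling) the variation of a product of lower renormalized volume coefficients overstates the situation: the variations of $\int v_2^3$, $\int v_2v_4$, $\int|dv_2|^2$ span only a three-dimensional subspace of the four-dimensional space of these divergence terms. What is true, and what the argument needs, is that the specific combination produced by $a_6$ together with the $Q_6$-correction lies in that span; and the resulting $I_6$ is then unique, since no nontrivial combination of $\int\J^3$, $\int\J|\Rho|^2$, $\int|d\J|^2$ is conformally invariant in the conformally flat category (the only invariant of this weight, $\int v_6\,\dvol$, involves $\tr(\Rho^3)$). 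Second, if you implement the bookkeeping via the shortcut Corollary \ref{det-vol-6c} plus Lemma \ref{PV-6}, watch the Laplacian convention in the operators $\T_{2j}$: the $n=4$ case forces $\Delta=-\delta d$ there (so that \eqref{hol-form-g} returns $Q_4=2\J^2-2|\Rho|^2-\Delta\J$), and with that convention the direct route lands exactly on the coefficient $26\,|d\J|^2$ in $I_6$, whereas taking the printed sign of the $v_2P_2(v_2)$ term in Lemma \ref{PV-6} at face value shifts this coefficient. So carry out the final computation from \eqref{hol-form-c} directly rather than by quoting Lemma \ref{PV-6}.
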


\subsection{Polyakov type formulas for renormalized volumes}\label{pol-volume}

In the present section, we prove a global Polyakov type formula for the
renormalized volume of Poincar\'e-Einstein metrics with conformal
infinities of {\em even} dimension (see \eqref{RV-def}). We recall that
$\V(g_+;\cdot)$ is a conformal invariant in odd dimensions \cite{G-vol}.
The main result will be a consequence of the holographic formulas for
$Q$-curvatures \cite{GJ-holo}, \cite{holo-II}. It is an improved version
of the Polyakov formulas stated in \cite{GMS}.

We first recall the holographic formulas for $Q$-curvatures. For generic
$\lambda$, assume that
$$
u \sim \sum_{j \ge 0} r^{\lambda+2j} a_{2j}(\lambda) + \sum_{j \ge
0} r^{n-\lambda+2j} b_{2j}(\lambda), \quad a_{2j}, b_{2j} \in
C^\infty(M)
$$
is a formal approximate solution of the equation
$$
-\Delta_{g_+}u = \lambda(n-\lambda)u
$$
with $i^*(r^2 g_+) = g$. Then the coefficients $a_{2j}(g;\lambda)$ (for
$2j \le n$) are recursively determined by $a_0$ through the relations
$$
a_{2j}(g;\lambda) = \T_{2j}(g;\lambda)(a_0), \; a_0 \in C^\infty(M)
$$
with certain differential operators $\T_{2j}(g;\lambda)$ of respective
order $2j$ which are natural in $g$ and rational in $\lambda$. Let
$$
c_{2N} = (-1)^{N} (2^{2N-1} N!(N\!-\!1)!)^{-1}.
$$
In these terms, the holographic formula for $Q_{2N}$ states that
\begin{equation}\label{hol-form-g}
2N c_{2N} Q_{2N}(g) = 2N v_{2N}(g) + \sum_{j=1}^{N-1} (2N\!-\!2j)
\T_{2j}^* \left(g;\f\!-\!N \right) (v_{2N-2j}(g)).
\end{equation}
In particular, for the critical $Q$-curvature $Q_n$, we have the
relation
\begin{equation}\label{hol-form-c}
n c_n Q_n(g) = n v_n(g) + \sum_{j=1}^{\f-1} (n\!-\!2j)
\T_{2j}^*(g;0) (v_{2N-2j}(g)).
\end{equation}

\begin{thm}\label{PV-holo} Let $n$ be even. Assume that $(M^n,[g])$
is the conformal infinity of a Poincar\'e-Einstein space $(X^{n+1},g_+)$.
Then
\begin{multline}\label{PV-holo-f}
\V_n(g_+;\hat{g}) - \V_n(g_+;g) \\ = c_n \frac{1}{2} \int_{M^n}
\varphi (Q_n(g) \dvol_g + Q_n(\hat{g}) \dvol_{\hat{g}}) - \int_{M^n}
(\Phi_n(\hat{g}) \dvol_{\hat{g}} - \Phi_n(g) \dvol_g),
\end{multline}
where
\begin{equation}\label{def-Phi}
\Phi_n(g) \st \frac{1}{2n} \sum_{j=1}^{\f-1} (n\!-\!2j)
\dot{\T}_{2j}(g;0)(1) v_{2n-2j}(g).
\end{equation}
Here the dot denotes the derivative with respect to $\lambda$.
\end{thm}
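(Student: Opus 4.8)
The plan is to derive the Polyakov-type formula \eqref{PV-holo-f} by computing the conformal variation of both sides and then integrating. The starting point is the well-known conformal anomaly formula $\V_n(g_+;\cdot)^\bullet[\varphi] = \int_{M^n} \varphi\, v_n(g)\,\dvol_g$ from \cite{G-vol} (valid in even dimension $n$), together with the holographic formula \eqref{hol-form-c} expressing $v_n$ in terms of the critical $Q$-curvature $Q_n$ and the operators $\T_{2j}^*(g;0)$ acting on lower-order renormalized volume coefficients. First I would substitute \eqref{hol-form-c} into the anomaly formula to write
\begin{equation*}
\V_n(g_+;\cdot)^\bullet[\varphi] = c_n \int_{M^n} \varphi\, Q_n(g)\,\dvol_g - \frac{1}{n}\sum_{j=1}^{\f-1}(n\!-\!2j)\int_{M^n}\varphi\, \T_{2j}^*(g;0)(v_{n-2j}(g))\,\dvol_g.
\end{equation*}
The first term is handled by the conformal transformation law of the critical $Q$-curvature, $e^{n\varphi}Q_n(e^{2\varphi}g) = Q_n(g) + (-1)^\f P_n(g)(\varphi)$, recalled in Section \ref{basics}: the standard Polyakov symmetrization trick (using self-adjointness of $P_n$ and integration by parts) shows that $\varphi \mapsto \int_{M^n}\varphi\, Q_n(g)\,\dvol_g$ has conformal primitive $\frac{1}{2}\int_{M^n}\varphi(Q_n(g)\dvol_g + Q_n(\hat g)\dvol_{\hat g})$, which gives the first term on the right of \eqref{PV-holo-f}.

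The remaining sum is the heart of the matter: I must show that $-\frac{1}{n}\sum_j (n\!-\!2j)\int_{M^n}\varphi\,\T_{2j}^*(g;0)(v_{n-2j}(g))\,\dvol_g$ is, up to the scaling term already absorbed, the conformal variation of $-\int_{M^n}\Phi_n(g)\,\dvol_g$ with $\Phi_n$ as defined in \eqref{def-Phi}. The key algebraic input is that the operators $\T_{2j}(g;\lambda)$ satisfy $\T_{2j}(g;0)(1) = 0$ for $j \geq 1$ (since $a_0 = $ const solves the eigenvalue equation with $\lambda = 0$ to all orders, i.e. $u \equiv 1$ is exactly harmonic, so the subleading coefficients vanish — this is why $\Phi_n$ involves the $\lambda$-derivative $\dot\T_{2j}(g;0)$ rather than $\T_{2j}(g;0)$ itself). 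Using this vanishing, together with the conformal covariance / GJMS-type intertwining relations satisfied by the scattering operators $\T_{2j}(g;\lambda)$ (they descend from the Poincaré–Einstein Laplacian, hence transform covariantly under $g \mapsto e^{2\varphi}g$ with a shift in $\lambda$), one computes $(\int_{M^n}\Phi_n\,\dvol)^\bullet[\varphi]$ by differentiating \eqref{def-Phi}: the variation of $v_{n-2j}$ is governed by \eqref{CT-v}, the variation of $\dot\T_{2j}(g;0)(1)$ is governed by the $\lambda$-derivative of the covariance relation, and the scaling/weight bookkeeping must be tracked carefully. The main obstacle I anticipate is precisely this bookkeeping: matching the weight factors $(n\!-\!2j)$, the combinatorial constants, and checking that all the "internal" terms (those not of the form "const $\cdot$ variation of $v$") cancel pairwise against each other, exactly as in the proof of the holographic formula for $Q_n$ in \cite{GJ-holo} and its extensions in \cite{holo-II}.

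An alternative, and perhaps cleaner, route — which I would mention as a cross-check — is to bypass the local identity \eqref{def-Phi} and instead argue directly on the level of the regularized volume integral. One writes $\V_n(g_+;g) = \FP_{\varepsilon=0}\int_{r>\varepsilon}\dvol_{g_+}$, uses the Fefferman–Graham diffeomorphism $\hat g_+ = \kappa^*(g_+)$ relating the normal-form expansions for $g$ and $\hat g$ (as in Section \ref{variation}), and expands the difference $\V_n(g_+;\hat g) - \V_n(g_+;g)$ in terms of the boundary data; the holographic formulas then reorganize the resulting boundary integrals into the $Q_n$-term plus the $\Phi_n$-correction. Either way, the conclusion \eqref{PV-holo-f} follows by integrating the conformal variational formula along the path $e^{2s\varphi}g$, $s\in[0,1]$, using $\Phi(e^{2\varphi}g) - \Phi(g) = \int_0^1 \Phi(e^{2s\varphi}g)^\bullet[\varphi]\,ds$ as in the proof of Corollary \ref{det-vol-6c}. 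The improvement over \cite{GMS} lies in the explicit and manifestly local form of $\Phi_n$ in \eqref{def-Phi}, so the final verification step is to confirm that this $\Phi_n$ is indeed a local Riemannian invariant (clear from the definition, since each $\dot\T_{2j}(g;0)(1)$ and each $v_{n-2j}(g)$ is) and that it reduces to the expected expressions $I_4$, $I_6$ of Section \ref{pol-det} in low dimensions when $g$ is locally conformally flat.
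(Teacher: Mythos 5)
You reproduce the paper's skeleton correctly up to the point where the real work begins: writing $\V_n(g_+;\hat g)-\V_n(g_+;g)=\int_0^1\int_M\varphi\,v_n(e^{2t\varphi}g)\,\dvol_{e^{2t\varphi}g}\,dt$ via the anomaly formula, substituting the critical holographic formula \eqref{hol-form-c}, and treating the $Q_n$-part with the transformation law $e^{n\varphi}Q_n(e^{2\varphi}g)=Q_n(g)+(-1)^{\f}P_n(g)(\varphi)$ and the symmetrization trick is exactly how the paper produces the first term of \eqref{PV-holo-f}.

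The gap is in the second half, which is the actual content of the theorem: you must show that $\Psi_n:=\int_M\Phi_n\,\dvol$ is a conformal primitive of $\frac{1}{n}F_n$, where $F_n(g)=\sum_{j=1}^{\f-1}(n\!-\!2j)\,\T_{2j}^*(g;0)(v_{n-2j}(g))$ collects the hidden terms. You only sketch this, explicitly deferring the ``weight bookkeeping'' and the pairwise cancellations, so the identification of the hidden terms with $\int_M(\Phi_n(\hat g)\dvol_{\hat g}-\Phi_n(g)\dvol_g)$ is not established. Moreover, the mechanism you invoke is not the one that makes it work: the vanishing $\T_{2j}(g;0)(1)=0$ is true but does not by itself produce the identity, and the direct computation of $(\int_M\Phi_n\dvol)^\bullet[\varphi]$ from covariance properties of $\T_{2j}(g;\lambda)$ is never carried out. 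The paper obtains the needed identity by a dimensional-continuation argument from the \emph{subcritical} holographic formula \eqref{hol-form-g}: combining it with the variational formulas for $\int_M Q_{2N}\dvol$ and $\int_M v_{2N}\dvol$ gives, in general dimension,
\begin{equation*}
\left(\int_M F_{2N}\,\dvol\right)^\bullet[\varphi]=(n\!-\!2N)\int_M\varphi\,F_{2N}\,\dvol,
\qquad F_{2N}=\sum_{j=1}^{N-1}(2N\!-\!2j)\,\T_{2j}^*\!\left(g;\f\!-\!N\right)(v_{2N-2j});
\end{equation*}
dividing by $n-2N$ and setting $n=2N$ (using that $\T_{2j}(\lambda)$ is regular at $\lambda=0$) is precisely what brings out the $\lambda$-derivative and the factor $\tfrac12$, yielding $\tfrac12\bigl(\int_M\sum_j(n\!-\!2j)\dot{\T}_{2j}^*(0)(v_{n-2j})\dvol\bigr)^\bullet[\varphi]=\int_M\varphi\,F_n\,\dvol$, and a final integration by parts converts $\dot{\T}_{2j}^*(0)(v_{n-2j})$ into $\dot{\T}_{2j}(0)(1)\,v_{n-2j}$, i.e.\ into $2n\,\Phi_n$. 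Without this step (or a completed substitute along the route you propose), your argument proves only the $Q_n$-term of \eqref{PV-holo-f}; the alternative route via the Fefferman--Graham diffeomorphism that you mention is likewise only a sketch.
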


\begin{proof} We first note that
\begin{align*}
\V_n(g_+;\hat{g}) - \V_n(g_+;g) & = \int_0^1 (d/dt)
(\V_n(g_+;e^{2t\varphi}g)) dt = \int_0^1 (d/ds)|_0 (\V_n(g_+;e^{2s\varphi} e^{2t\varphi}g)) dt \\
& = \int_0^1 \left(\int_M \varphi v_n (e^{2t\varphi}g)
\dvol_{e^{2t\varphi}g} \right) dt.
\end{align*}
Now the holographic formula \eqref{hol-form-c} and the
transformation law
$$
e^{n\varphi} Q_n(e^{2\varphi}g) = Q_n(g) + (-1)^\f P_n(g)(\varphi)
$$
imply
\begin{align*}
\V_n(g_+;\hat{g}) - \V_n(g_+;g) & = c_n \int_0^1 \left(\int_M
\varphi Q_n(e^{2t\varphi}g) e^{nt\varphi} \dvol_g \right) dt + \cdots \\
& = c_n \int_0^1 \left( \int_M \varphi (Q_n(g) + (-1)^\f
P_n(g)(t\varphi)) \dvol_g \right) dt + \cdots \\
& = c_n \int_M \varphi \left(Q_n(g) + (-1)^\f \frac{1}{2}
P_n(g)(\varphi)\right) \dvol_g + \cdots \\
& = c_n \frac{1}{2} \int_M \varphi (Q_n(g) \dvol_g + Q_n(\hat{g})
\dvol_{\hat{g}}) + \cdots.
\end{align*}
This yields the $Q_n$-term in \eqref{PV-holo-f}. In the latter
calculation, the hidden terms are given by
$$
-\frac{1}{n} \int_0^1 \int_M \varphi F_n(e^{2t\varphi}g)
\dvol_{e^{2t\varphi}g} dt
$$
with the local scalar invariant
\begin{equation}\label{Fn}
F_n(g) = \sum_{j=1}^{\f-1} (n\!-\!2j) \T_{2j}^* (g;0)(v_{2N-2j}(g)).
\end{equation}
But, if $\Psi_n(g)$ is a conformal primitive of $F_n(g)$, i.e., if
$\Psi_n^\bullet(g)[\varphi] = \int_M \varphi F_n(g) \dvol_g$, then
we have
\begin{multline*}
\int_0^1 \int_M \varphi F_n(e^{2t\varphi}g) \dvol_{e^{2t\varphi}g} dt \\ =
\int_0^1 \Psi_n^\bullet(e^{2t\varphi}g)[\varphi] dt = \int_0^1 (d/dt)
(\Psi_n(e^{2t\varphi} g) dt = \Psi_n(e^{2\varphi}g) - \Psi_n(g).
\end{multline*}
Therefore, it remains to verify that the functional $\Psi_n \st
\int_M \Phi_n \dvol$ is a conformal primitive of $\frac{1}{n} F_n$.
But
$$
\left(\int_M (Q_{2N}-c_{2N}^{-1}v_{2N}) \dvol
\right)^\bullet[\varphi] = (n-2N) \int_M \varphi (Q_{2N} -
c_{2N}^{-1}v_{2N}) \dvol
$$
and the holographic formula \eqref{hol-form-g} imply
$$
\left(\int_M F_{2N} \dvol\right)^\bullet[\varphi] = (n-2N) \int_M
\varphi F_{2N} \dvol,
$$
where
$$
F_{2N}(g) = \sum_{j=1}^{N-1} (2N\!-\!2j) \T_{2j}^* \left(g;\f\!-\!N
\right) (v_{2N-2j}(g)).
$$
We divide the latterrelation by $n-2N$ and set $n=2N$; note that
$\T_{2j}(\lambda)$ does not have a pole at $\lambda=0$. This gives
$$
\frac{1}{2} \left(\int_M \sum_{j=1}^{\f-1} (n\!-\!2j)
\dot{\T}_{2j}^*(0) (v_{n-2j}) \dvol \right)^\bullet[\varphi] =
\int_M \varphi F_n \dvol.
$$
Now partial integration completes the proof.
\end{proof}

\begin{rem}\label{FG-T} There is a unique solution $U \in C^\infty(X)$
(modulo $O(r^n)$) of the equation
$$
-\Delta_{g_+} U = n
$$
being of the form
$$
U = \log r + A + B r^n \log r + O(r^n).
$$
In the latter expansion, the coefficient $A$ is completely determined by
$g$. In fact, the terms $\dot{\T}_{2j}(0)(1)$ for $j=1,\dots,n/2-1$ are
just the Taylor coefficients of $A$. Moreover, $B|_{r=0} = -c_n Q_n$. For
the details we refer to \cite{FG-P}.
\end{rem}

Theorem \ref{PV-holo} implies a global result on extremal values.

\begin{corr}\label{RV-global} Let $n$ be even and let
$g_+ = 4/(1-|x|^2)^2 \sum_i dx_i^2$ be the hyperbolic metric of sectional
curvature $-1$ on the unit ball. Then, for all metrics on $\s^n$ which are
conformal to the round metric $g_0$ and have the same volume, the
functional
$$
(-1)^\f \left(\V_n(g_+;\cdot) + \int_M \Phi_n \dvol\right)
$$
is minimal exactly at the pull-backs $\kappa^*(g_0)$ of $g_0$ under
conformal diffeomorphisms.
\end{corr}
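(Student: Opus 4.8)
The plan is to derive Corollary \ref{RV-global} from the Polyakov-type formula of Theorem \ref{PV-holo} together with Beckner's sharp inequality on the round sphere (referenced in \cite{sharp}, Section 2). Throughout, let $g_0$ denote the round metric of sectional curvature $1$ on $\s^n$; then $(B^{n+1},g_+)$ is a Poincar\'e--Einstein space with conformal infinity $(\s^n,[g_0])$, and in normal form $g_+ = r^{-2}(dr^2 + (1-r^2/4)^2 g_0)$. Write $\hat g = e^{2\varphi}g_0$ for an arbitrary metric conformal to $g_0$, subject to the volume constraint $\vol(\hat g) = \vol(g_0)$, i.e.\ $\int_{\s^n} e^{n\varphi}\,\dvol_{g_0} = \vol(g_0)$.

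First I would specialize Theorem \ref{PV-holo} to the base metric $g = g_0$. Setting $F(g) \st \V_n(g_+;g) + \int_{\s^n}\Phi_n(g)\,\dvol_g$ and rearranging \eqref{PV-holo-f}, one gets
\[
F(\hat g) - F(g_0) = c_n\,\tfrac12 \int_{\s^n}\varphi\left(Q_n(g_0)\,\dvol_{g_0} + Q_n(\hat g)\,\dvol_{\hat g}\right),
\]
since by the very construction of $\Phi_n$ in the proof of Theorem \ref{PV-holo} (namely that $\int_{\s^n}\Phi_n\,\dvol$ is a conformal primitive of $\tfrac1n F_n$) the non-$Q$ part of the conformal variation is exactly cancelled. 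Using $\dvol_{\hat g} = e^{n\varphi}\dvol_{g_0}$, the transformation law $e^{n\varphi}Q_n(\hat g) = Q_n(g_0) + (-1)^{\f}P_n(g_0)(\varphi)$, and the constant value $Q_n(g_0) = (n\!-\!1)!$ (recall $\scal(g_0) = n(n\!-\!1)$), this becomes
\[
F(\hat g) - F(g_0) = c_n\left[(n\!-\!1)!\int_{\s^n}\varphi\,\dvol_{g_0} + \frac{(-1)^{\f}}{2}\int_{\s^n}\varphi\,P_n(g_0)(\varphi)\,\dvol_{g_0}\right].
\]

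Next I would invoke Beckner's inequality: for all $\varphi$ satisfying the volume constraint,
\[
(n\!-\!1)!\int_{\s^n}\varphi\,\dvol_{g_0} + \frac{(-1)^{\f}}{2}\int_{\s^n}\varphi\,P_n(g_0)(\varphi)\,\dvol_{g_0} \ge 0,
\]
with equality precisely when $e^{2\varphi}g_0$ is the pull-back $\kappa^*(g_0)$ under a conformal diffeomorphism $\kappa$ of $\s^n$ (the volume constraint kills the $\log$-of-volume-ratio term appearing on the right of the general form of Beckner's inequality). Since $c_n = (-1)^{\f}\bigl(2^{n-1}(n/2)!\,(n/2-1)!\bigr)^{-1}$ has sign $(-1)^{\f}$, so that $(-1)^{\f}c_n = |c_n| > 0$, multiplying the last display of the previous paragraph by $(-1)^{\f}$ gives
\[
(-1)^{\f}\bigl(F(\hat g) - F(g_0)\bigr) = |c_n|\left[(n\!-\!1)!\int_{\s^n}\varphi\,\dvol_{g_0} + \frac{(-1)^{\f}}{2}\int_{\s^n}\varphi\,P_n(g_0)(\varphi)\,\dvol_{g_0}\right] \ge 0,
\]
with equality exactly when $\hat g = \kappa^*(g_0)$. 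As equality holds at $\varphi = 0$, the infimum of the restriction of $(-1)^{\f}F$ to the volume-normalized conformal class of $g_0$ is attained; its set of minimizers is exactly $\{\kappa^*(g_0)\}$ (these metrics all lie in the constraint set, since a diffeomorphism preserves total volume, and $F(\kappa^*(g_0)) = F(g_0)$ follows from the equality case). This is the assertion of Corollary \ref{RV-global}.

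The main difficulty I anticipate is bookkeeping rather than conceptual. One must carefully match the sign and normalization conventions used here — in particular $P_n = \Delta^{\f} + \cdots$ with $\Delta = -\delta d$, the value $Q_n(g_0) = (n\!-\!1)!$, the $(-1)^{\f}$ in the transformation law, and the explicit constant $c_n$ — against the precise form of Beckner's inequality in \cite{sharp}, and one must verify that the $\Phi_n$-cancellation in the first step is exactly the primitive identity $\bigl(\int_{\s^n}\Phi_n\,\dvol\bigr)^\bullet[\varphi] = \tfrac1n\int_{\s^n}\varphi\,F_n\,\dvol$ that is established inside the proof of Theorem \ref{PV-holo}. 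The equality characterization in Beckner's theorem — that the conformal factor is that of a conformal diffeomorphism of $\s^n$, equivalently the boundary value of an isometry of $(B^{n+1},g_+)$ — is what pins down the minimizing set; the constancy of $F$ along that orbit also follows directly, since such $\kappa$ extend to isometries of $g_+$ and hence preserve $\V_n(g_+;\cdot)$.
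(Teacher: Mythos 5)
Your argument is correct and is essentially the paper's proof: the paper likewise combines Theorem \ref{PV-holo} (with $g=g_0$) with the sharp inequality of Branson--Beckner (Corollary 3.8 of \cite{sharp}), whose right-hand side vanishes under the equal-volume constraint, and uses the sign $(-1)^{\f}c_n=|c_n|>0$ together with the equality case $\hat g=\kappa^*(g_0)$. The only cosmetic difference is that you expand the symmetric $Q_n$-term via the transformation law $e^{n\varphi}Q_n(\hat g)=Q_n(g_0)+(-1)^{\f}P_n(g_0)(\varphi)$ and $Q_n(g_0)=(n\!-\!1)!$, whereas the paper cites the inequality directly in its symmetric form; also note the cancellation of the $\Phi_n$-terms follows already from the statement \eqref{PV-holo-f}, not only from its proof.
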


\begin{proof} By Corollary 3.8 in \cite{sharp}, we have the inequality
$$
\frac{n}{2} \int_{\s^n} \varphi (Q_n(g) \dvol_{g} + Q_n(g_0)
\dvol_{g_0}) \ge  Q_n(g_0) \log \left(\int_{\s^n} \dvol_g /
\int_{\s^n} \dvol_{g_0}\right)
$$
for $g = e^{2\varphi} g_0$ with equality exactly for $g = \kappa^*(g_0)$.
It suffices to combine this result with Theorem \ref{PV-holo}. Note that
$\V_n(g_+;g_0)$ can be defined by using the boundary defining function $2
\frac{1-|x|}{1+|x|}$.
\end{proof}

Finally, we make Theorem \ref{PV-holo} explicit in low-order cases.
In fact, calculations using
$$
\T_2(g;\lambda) = \frac{1}{2(n\!-\!2\!-\!2\lambda)} (\Delta \!-\!
\lambda \J)
$$
and
\begin{multline*}
\T_4(g;\lambda) =
\frac{1}{8(n\!-\!2\!-\!2\lambda)(n\!-\!4\!-\!2\lambda)}
[(\Delta-(\lambda\!+\!2)\J)(\Delta-\lambda\J) \\
- \lambda(n\!-\!2\!-\!2\lambda) |\Rho|^2 - 2(n\!-\!2\!-\!2\lambda)
\delta(\Rho d) - (n\!-\!2\!-\!2\lambda) (d\J,d)]
\end{multline*}
(see \cite{BJ}, \cite{juhl-book}) yield the following results. Let
$Q_n = Q_n(g)$, $\hat{Q}_n = Q_n(\hat{g})$ and $v_{2k} = v_{2k}(g)$,
$\hat{v}_{2k}=v_{2k}(\hat{g})$.

\begin{lemm}\label{PV-2}
$$
\V_2(g_+;\hat{g}) - \V_2(g_+;g) = -\frac{1}{4} \int_{M^2} \varphi
(\hat{Q}_2 \dvol_{\hat{g}} + Q_2 \dvol_g).
$$
\end{lemm}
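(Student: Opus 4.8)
The plan is to deduce Lemma \ref{PV-2} as the degenerate case $n=2$ of Theorem \ref{PV-holo}. First I would specialize the formula \eqref{PV-holo-f} to $n=2$. The crucial point is that the correction density $\Phi_n$ defined in \eqref{def-Phi} is a sum over $j=1,\dots,\f-1$, and for $n=2$ this index set is empty; hence $\Phi_2 \equiv 0$, and the second integral on the right-hand side of \eqref{PV-holo-f} drops out entirely. In other words, in dimension two there are simply no holographic correction terms, and the Polyakov-type formula for $\V_2$ is carried entirely by the $Q_2$-term.

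Next I would evaluate the constant. With $N=n/2=1$ one has $c_2 = (-1)^{1}(2^{1}\cdot 1!\cdot 0!)^{-1} = -\tfrac12$, so $c_2\cdot\tfrac12 = -\tfrac14$. Substituting $\Phi_2=0$ and $c_2=-\tfrac12$ into \eqref{PV-holo-f} gives at once
\[
\V_2(g_+;\hat{g}) - \V_2(g_+;g) = -\tfrac14 \int_{M^2}\varphi\bigl(Q_2(g)\,\dvol_g + Q_2(\hat{g})\,\dvol_{\hat{g}}\bigr),
\]
which is the assertion. I should also check---though this is essentially automatic---that the proof of Theorem \ref{PV-holo} remains valid at $n=2$: the ``hidden terms'' occurring there are governed by the density $F_n(g)=\sum_{j=1}^{\f-1}(\cdots)$, which is once more an empty sum, so the argument collapses to the computation using the transformation law $e^{2\varphi}Q_2(e^{2\varphi}g)=Q_2(g)-P_2(g)(\varphi)$ (with $P_2=\Delta=-\delta d$, since $\f-1=0$ when $n=2$) together with the holographic identity $2c_2 Q_2 = 2v_2$, that is $Q_2=-2v_2$, equivalently $Q_2=\J$.

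As a consistency check I would recover the classical formula \eqref{pol-v2}. Using $\dvol_{\hat{g}}=e^{2\varphi}\dvol_g$ in dimension two, $e^{2\varphi}Q_2(\hat{g})=Q_2(g)-\Delta_g\varphi$, and $Q_2=-2v_2$, the right-hand side of the lemma becomes $\int_{M^2}\varphi\,v_2(g)\,\dvol_g + \tfrac14\int_{M^2}\varphi\,\Delta_g\varphi\,\dvol_g$; an integration by parts ($\Delta_g=-\delta_g d$) turns the last term into $-\tfrac14\int_{M^2}|d\varphi|_g^2\,\dvol_g$, reproducing \eqref{pol-v2} exactly. There is no genuine obstacle in this proof; the only point worth stating explicitly is the vanishing of the empty sums defining $\Phi_n$ and $F_n$ at $n=2$, which is precisely the structural reason why the $\V_2$-type functionals pair with the two-dimensional functional determinant as in Remark \ref{det-vol-2c}.
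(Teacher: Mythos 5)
Your proposal is correct and is essentially the paper's own argument: the lemma is obtained by specializing Theorem \ref{PV-holo} to $n=2$, where the sum defining $\Phi_n$ (and likewise $F_n$ in the proof) is empty and $c_2=-\tfrac12$ gives the prefactor $-\tfrac14$. Your consistency check against \eqref{pol-v2} is a nice extra but not needed.
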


\begin{lemm}\label{PV-4}
$$
\V_4(g_+;\hat{g}) - \V_4(g_+;g) = \frac{1}{32} \int_{M^4} \varphi
(\hat{Q}_4 \dvol_{\hat{g}} + Q_4 \dvol_g) - \frac{1}{8} \int_{M^4}
\left(\hat{v}_2^2 \dvol_{\hat{g}} - v_2^2 \dvol_g\right).
$$
\end{lemm}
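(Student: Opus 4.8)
The statement to prove is Lemma~\ref{PV-4}, the order-$4$ explicit form of the general Polyakov-type formula for the renormalized volume (Theorem~\ref{PV-holo}). The plan is to specialize \eqref{PV-holo-f} to $n=4$, so that the only term to make explicit is the holographic correction $\Phi_4(g)$ defined in \eqref{def-Phi}. For $n=4$ the sum in \eqref{def-Phi} has a single term $j=1$, namely
\[
\Phi_4(g) = \frac{1}{8}\, (n-2)\big|_{n=4}\ \dot{\T}_2(g;0)(1)\ v_2(g) = \frac{1}{4}\,\dot{\T}_2(g;0)(1)\, v_2(g),
\]
so the first step is to compute $\dot{\T}_2(g;0)(1)$, the $\lambda$-derivative at $\lambda=0$ of the GJMS-type operator $\T_2(g;\lambda)$ applied to the constant function $1$.

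\textbf{Key computation.} I would use the explicit formula $\T_2(g;\lambda) = \frac{1}{2(n-2-2\lambda)}(\Delta - \lambda \J)$ recalled before Lemma~\ref{PV-2}. Applying this to $u=1$ gives $\T_2(g;\lambda)(1) = \frac{-\lambda \J}{2(n-2-2\lambda)}$, whose $\lambda$-derivative at $\lambda=0$ is $\frac{-\J}{2(n-2)}$. Setting $n=4$ yields $\dot{\T}_2(g;0)(1) = -\J/4 = v_2/2$ (using $v_2 = -\J/2$). Hence
\[
\Phi_4(g) = \frac{1}{4}\cdot \frac{v_2}{2}\cdot v_2 = \frac{1}{8}\,v_2^2,
\]
and the constant $c_n$ for $n=4$ is $c_4 = (-1)^2(2^{3}\cdot 2!\cdot 1!)^{-1} = 1/16$, so the $Q_n$-coefficient $c_n/2$ equals $1/32$. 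Substituting these into \eqref{PV-holo-f} gives exactly the claimed formula
\[
\V_4(g_+;\hat g) - \V_4(g_+;g) = \frac{1}{32}\int_{M^4}\varphi(\hat Q_4\,\dvol_{\hat g} + Q_4\,\dvol_g) - \frac{1}{8}\int_{M^4}(\hat v_2^2\,\dvol_{\hat g} - v_2^2\,\dvol_g).
\]

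\textbf{Main obstacle.} There is essentially no analytic obstacle here: the heavy lifting is already done in Theorem~\ref{PV-holo}, and the remaining work is the bookkeeping of two numerical constants ($c_4$ and the evaluation of $\dot{\T}_2(0)(1)$) together with the identity $v_2 = -\J/2$ already recorded in the text. The one point that requires a little care is consistency of sign and normalization conventions: one must check that the $\lambda$-derivative is taken before setting $n=4$ (as in \eqref{def-Phi}, where $\T_{2j}(\lambda)$ is regular at $\lambda=0$ so no pole cancellation is needed in this case), and that the overall sign in front of the $\Phi_n$-integral in \eqref{PV-holo-f} is reproduced. I would therefore present the computation of $\dot{\T}_2(g;0)(1)$ in one display, note $\Phi_4 = \tfrac18 v_2^2$, record $c_4 = 1/16$, and conclude by direct substitution; a brief remark that the formula is consistent (via $8v_4 = \J^2 - |\Rho|^2$ and the known $n=4$ heat-kernel relations) with the determinant Polyakov formula in Proposition~\ref{pol-det-4} could be added but is not needed for the proof.
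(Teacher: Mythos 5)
Your proposal is correct and follows essentially the same route as the paper, which obtains Lemma \ref{PV-4} precisely by specializing Theorem \ref{PV-holo} to $n=4$ and evaluating $\dot{\T}_2(g;0)(1)=-\J/(2(n-2))=v_2/2$ from the displayed formula for $\T_2(g;\lambda)$, giving $\Phi_4=\tfrac18 v_2^2$ and $c_4=1/16$. Your reading of the factor $v_{2n-2j}$ in \eqref{def-Phi} as $v_{n-2j}$ (so that $\Phi_4$ involves $v_2$) is the intended one, as the integration-by-parts step at the end of the proof of Theorem \ref{PV-holo} shows, and all constants check out.
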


\begin{lemm}\label{PV-6}
\begin{multline*}
\V_6(g_+;\hat{g}) - \V_6(g_+;g) = -\frac{1}{768} \int_{M^6} \varphi
(\hat{Q}_6 \dvol_{\hat{g}} + Q_6 \dvol_g) \\ + \frac{1}{192}
\int_{M^6} ((\hat{v}_2 \hat{P}_2 (\hat{v}_2) - 32 \hat{v}_2
\hat{v}_4) \dvol_{\hat{g}} - (v_2 P_2 (v_2) - 32 v_2 v_4) \dvol_g).
\end{multline*}
\end{lemm}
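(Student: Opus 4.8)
The statement to prove is Lemma \ref{PV-6}, the explicit form of the Polyakov-type formula for $\V_6(g_+;\cdot)$ in the critical dimension $n=6$. The plan is to specialize the general holographic Polyakov formula of Theorem \ref{PV-holo} to $n=6$ and then to identify the invariant $\Phi_6(g)$ explicitly. Recall that for $n=6$ the normalization constant is $c_6 = (-1)^3 (2^5 \cdot 3! \cdot 2!)^{-1} = -1/768$, so the $Q_6$-term in \eqref{PV-holo-f} immediately becomes $-\tfrac{1}{768} \cdot \tfrac12 \int_{M^6} \varphi (Q_6(g)\,\dvol_g + Q_6(\hat g)\,\dvol_{\hat g})$, which is exactly the first term claimed in Lemma \ref{PV-6}. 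So the real work is to compute $\Phi_6$.

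\textbf{Computing $\Phi_6$.} By \eqref{def-Phi}, with $n=6$ the sum over $j$ runs only over $j=1$ and $j=2$, giving
\begin{equation*}
\Phi_6(g) = \frac{1}{12}\left( 4\,\dot{\T}_2(g;0)(1)\, v_4(g) + 2\,\dot{\T}_4(g;0)(1)\, v_2(g)\right).
\end{equation*}
Here $\dot{}$ is $\partial/\partial\lambda$. First I would differentiate the displayed formula for $\T_2(g;\lambda) = \tfrac{1}{2(n-2-2\lambda)}(\Delta - \lambda\J)$ in $\lambda$, apply the result to the constant function $1$ (so the $\Delta$-part drops and only the $\J$-term and the $\lambda$-dependence of the prefactor survive), and then set $\lambda=0$ and $n=6$; this yields a constant multiple of $\J = -2v_2$ times $1$, i.e. $\dot{\T}_2(g;0)(1) = \alpha\, v_2$ for an explicit rational $\alpha$. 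Next I would do the analogous (longer) computation for $\T_4(g;\lambda)$ using the displayed formula in the excerpt: differentiate in $\lambda$, evaluate on $1$ — noting that the second-order-derivative operators $\delta(\Rho d)$, $(d\J,d)$ and one factor of $\Delta$ all annihilate $1$, so only the $(\Delta - (\lambda+2)\J)(\Delta-\lambda\J)$ term contributes through its $\J^2$ piece and the $|\Rho|^2$ term contributes directly — and then specialize to $\lambda=0$, $n=6$. This produces $\dot{\T}_4(g;0)(1)$ as an explicit linear combination of $\J^2$ and $|\Rho|^2$, which I would then rewrite in terms of $v_2^2$ and $v_4$ via $2v_2=-\J$ and $8v_4 = \J^2 - |\Rho|^2$ (valid here since $n=6$; cf. Section \ref{holo-heat}). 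Collecting everything, $\Phi_6$ becomes a linear combination of $v_2 v_4$ and $v_2^3$ (equivalently, of $v_2 v_4$ and $v_2\,\Delta v_2$ after using the conformally-flat identities and integration by parts under the integral sign). Matching coefficients against the target expression $\tfrac{1}{192}(v_2 P_2(v_2) - 32 v_2 v_4)$ — and using $P_2(v_2) = \Delta v_2 - 2\J v_2 = \Delta v_2 + 4v_2^2$ in $n=6$, together with $\int_M v_2\,\Delta v_2 = -\int_M |dv_2|^2$ if one prefers that form — completes the identification.

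\textbf{Main obstacle.} The conceptual content is already carried by Theorem \ref{PV-holo}; the remaining step is purely computational. The main source of friction will be bookkeeping in the $\dot{\T}_4(g;0)(1)$ computation: one must correctly differentiate the product $\tfrac{1}{8(n-2-2\lambda)(n-4-2\lambda)}[\cdots]$ in $\lambda$, keep track of which terms in the bracket survive after being applied to $1$ and after setting $\lambda=0$, and then convert consistently between the $\{\J^2, |\Rho|^2\}$ basis and the $\{v_2^2, v_4\}$ basis. A secondary subtlety is that $\Phi_6$ as produced by \eqref{def-Phi} is a specific local representative, whereas the integrand $v_2 P_2(v_2) - 32 v_2 v_4$ in Lemma \ref{PV-6} is an equivalent one modulo divergences; since $\Phi_6$ enters \eqref{PV-holo-f} only through $\int_M \Phi_n\,\dvol$, any two representatives differing by a total divergence give the same formula, so one is free to integrate by parts to reach the stated form. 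Once these are handled, assembling the final identity from the $Q_6$-term and the $\Phi_6$-term is immediate.
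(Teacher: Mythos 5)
Your proposal follows exactly the route the paper intends: Lemma \ref{PV-6} is obtained by specializing Theorem \ref{PV-holo} to $n=6$ and evaluating $\Phi_6$ from \eqref{def-Phi} via the displayed formulas for $\T_2(g;\lambda)$ and $\T_4(g;\lambda)$ (noting, as you implicitly did, that $v_{2n-2j}$ in \eqref{def-Phi} is to be read as $v_{n-2j}$), then passing to the $\{v_2,v_4\}$ basis and using the freedom to adjust $\Phi_6$ by a divergence since it only enters through $\int_M \Phi_6\,\dvol$.

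One arithmetic slip should be fixed: $2^5\cdot 3!\cdot 2! = 384$, so $c_6 = -1/384$, not $-1/768$; it is then the explicit factor $\tfrac12$ in \eqref{PV-holo-f} that produces the coefficient $-\tfrac{1}{768}$ of the $Q_6$-term in Lemma \ref{PV-6}. As written, your claim that $-\tfrac{1}{768}\cdot\tfrac12\int\varphi(\hat Q_6\,\dvol_{\hat g}+Q_6\,\dvol_g)$ matches the stated first term is off by a factor of $2$ and internally inconsistent with the lemma; with $c_6=-1/384$ everything matches. In the $\dot{\T}_4(g;0)(1)$ computation also keep careful track of the sign convention for $\Delta$ used in the $\T_{2j}$ formulas (it must be the one making $2(n-2-2\lambda)\T_2(\lambda)$ at $\lambda=\tfrac n2-1$ equal to $P_2$ as in \eqref{CL-def}), since the relative sign of the $v_2\Delta v_2$ term against $32\,v_2v_4$ and $v_2^3$ is exactly what the identity $v_2P_2(v_2)=\Delta v_2\,v_2+4v_2^3$ in $n=6$ is sensitive to; this is bookkeeping, not a gap in the method.
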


By combining these results with the integrated versions of \eqref{FD-V-6}
and \eqref{FD-V-4} for the determinants one may reprove the global
Polyakov formulas in Section \ref{pol-det}.

\subsection{Heat kernel coefficients of round spheres and hyperbolic spaces}
\label{heat-spheres}

One can use harmonic analysis on the spheres $\s^n$ of curvature $1$ to
derive the following description of heat kernel coefficients of the
Laplace operators. These results are closely related to analogous results
for the hyperbolic spaces $\h^n$ of curvature $-1$. For details we refer
to \cite{CW} and \cite{camp}.

\begin{prop}\label{heat-sphere} For even $n$ and $0 \le k \le n/2-1$,
we define the numbers $\beta_{k,n}$  by the relation
$$
\prod_{j=\frac{1}{2}}^{\frac{n-3}{2}} (j^2-t^2) = \sum_{k=0}^{\f-1}
\beta_{k,n} t^{2k}
$$
and set
$$
b_k = (-1)^{\f-1} \frac{1}{(\f-1) \cdots (\f-k)} \beta_{\f-1-k,n}.
$$
Similarly, for odd $n$ and $0 \le k \le (n-3)/2$, we define the
numbers $\beta_{k,n}$ by the relation
$$
\prod_{j=1}^{\frac{n-3}{2}} (j^2-t^2) = \sum_{k=0}^{\frac{n-3}{2}}
\beta_{k,n} t^{2k}
$$
and set
$$
b_k = (-1)^{\frac{n-3}{2}} \frac{1}{(\f-1) \cdots (\f-k)}
\beta_{\frac{n-3}{2}-k,n}.
$$
Then the heat kernel coefficients $a_{2k}$ with $0 \le 2k \le n-2$
of the Laplacian of $\s^n$ are given by the formula
\begin{equation}
a_{2k} = b_k + \left( \fr \right)^2 b_{k-1} + \cdots + \frac{1}{k!}
\left( \fr \right)^{2k} b_0.
\end{equation}
\end{prop}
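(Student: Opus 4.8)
The plan is to compute the heat trace of the Laplace--Beltrami operator on the round sphere $\s^n$ directly from its spectrum and to read off the coefficients; the combinatorial shape of the assertion is then explained by the elementary identity
$$
a_{2k}=[t^k]\Bigl(e^{t(\frac{n-1}{2})^2}\sum_{j\ge 0}b_jt^j\Bigr)=\sum_{m=0}^{k}\frac{1}{m!}\Bigl(\frac{n-1}{2}\Bigr)^{2m}b_{k-m},
$$
so it suffices to identify $\sum_j b_j t^j$ with the asymptotic expansion of a shifted heat trace. First I would recall that on $\s^n$ of curvature $1$ the operator $-\Delta$ has eigenvalues $\ell(\ell+n-1)$, $\ell\ge 0$, with multiplicity $m_\ell=\frac{(2\ell+n-1)(\ell+n-2)!}{\ell!(n-1)!}$, so $\Theta(t):=\Tr(e^{t\Delta})=\sum_{\ell\ge 0}m_\ell e^{-t\ell(\ell+n-1)}$. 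Introducing $\lambda=\ell+\tfrac{n-1}{2}$ one has $\ell(\ell+n-1)=\lambda^2-\bigl(\tfrac{n-1}{2}\bigr)^2$, while $m_\ell$ becomes a polynomial $P(\lambda)=\tfrac{2\lambda}{(n-1)!}\prod_j(\lambda^2-j^2)$ with $j$ running over $\tfrac12,\tfrac32,\dots,\tfrac{n-3}{2}$ when $n$ is even, and $P(\lambda)=\tfrac{2\lambda^2}{(n-1)!}\prod_{j=1}^{(n-3)/2}(\lambda^2-j^2)$ when $n$ is odd. Hence $\Theta(t)=e^{t(\frac{n-1}{2})^2}S(t)$ with $S(t)=\sum_{\ell\ge 0}P(\lambda)e^{-t\lambda^2}$, and since $P$ vanishes at the finitely many values of $\lambda$ lying between $\tfrac12$ (resp.\ $0$) and $\tfrac{n-1}{2}$ that are missing from the index set, $S(t)$ equals $\sum_{\lambda\in\frac12+\N_0}P(\lambda)e^{-t\lambda^2}$ for even $n$ and $\tfrac12\sum_{\lambda\in\mathbb{Z}}P(\lambda)e^{-t\lambda^2}$ for odd $n$ (using that $P$ is even and $P(0)=0$ in the odd case).

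The key step is the small-$t$ analysis of $S(t)$. For odd $n$, Poisson summation gives $S(t)=\tfrac12\int_{\r}P(x)e^{-tx^2}\,dx+O(t^\infty)$; writing $\prod_{j=1}^{(n-3)/2}(j^2-x^2)=\sum_k\beta_{k,n}x^{2k}$ and using $\int_{\r}x^{2k+2}e^{-tx^2}\,dx=\Gamma(k+\tfrac32)\,t^{-k-3/2}$ yields a finite expansion in half-integral powers of $t$. For even $n$, I would reduce to the single sum $G(t)=\sum_{m\ge 0}(m+\tfrac12)e^{-t(m+1/2)^2}$ via $\sum_\lambda\lambda^{2k+1}e^{-t\lambda^2}=(-\partial_t)^k G(t)$, and observe (Euler--Maclaurin on the half-integer grid, or Poisson summation on $\tfrac12+\mathbb{Z}$) that $G(t)=\tfrac1{2t}+(\text{power series in }t\text{ with non-negative exponents})+O(t^\infty)$; applying $(-\partial_t)^k$ then shows that the only negative power of $t$ occurring in $\sum_\lambda\lambda^{2k+1}e^{-t\lambda^2}$ is $\tfrac{k!}{2}t^{-k-1}$. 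In either case, for the powers of $t$ relevant to the range $0\le 2k\le n-2$ (all of which are negative), the coefficient of $t^{-n/2+j}$ in $S(t)$ is a fixed universal constant times $\beta_{\ast,n}\,\Gamma(n/2-j)$ (odd $n$) or $\beta_{\ast,n}(n/2-1-j)!$ (even $n$), with $\ast$ the appropriate index.

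It then remains to match this against $(4\pi t)^{-\f}\vol(\s^n)b_j$. Inserting $\vol(\s^n)=2\pi^{(n+1)/2}/\Gamma(\tfrac{n+1}{2})$ and using the $\Gamma$-functional equation $\Gamma(n/2)=(n/2-1)(n/2-2)\cdots(n/2-j)\,\Gamma(n/2-j)$ (and, for even $n$, the double-factorial identities relating $(n-1)!$, $(n-1)!!$ and $(n/2-1)!$) reduces the comparison to an elementary factorial identity, which forces $b_j$ to be exactly the number in the statement; in particular $b_0=1$, consistent with $a_0=1$. Finally, since the existence of the asymptotic expansion of $\Theta(t)$ is guaranteed (Theorem~\ref{hk-expansion} together with the classical heat-trace asymptotics), the factorization $\Theta(t)=e^{t(\frac{n-1}{2})^2}S(t)$ and the computed expansion of $S(t)$ yield the asserted formula for $a_{2k}$ by comparing coefficients of $t^{-n/2+k}$. (The hyperbolic versions alluded to in the statement follow from the same computation with curvature $-1$, where the shift is by an imaginary amount and the discrete sum is replaced by the Plancherel integral.)

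The main obstacle is the asymptotic analysis of $S(t)$ in the even-dimensional case---controlling the Euler--Maclaurin corrections to $G(t)$ well enough to be certain that they contribute no negative powers of $t$ beyond the Gaussian-integral term---together with the delicate normalization bookkeeping ($\vol(\s^n)$, factorials and double factorials, the $\Gamma$-functional equation) needed to recognize the resulting coefficients precisely as the prescribed $b_k$.
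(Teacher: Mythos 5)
Your argument is correct: computing the trace from the explicit spectrum, substituting $\lambda=\ell+\tfrac{n-1}{2}$ so that $\Theta(t)=e^{t(\frac{n-1}{2})^2}S(t)$, and extracting the negative powers of $t$ in $S(t)$ by Poisson summation (odd $n$) resp.\ Euler--Maclaurin/Hurwitz zeta on the half-integer grid (even $n$) does, after inserting $(4\pi)^{-n/2}\vol(\s^n)=\sqrt{\pi}\,2^{1-n}/\Gamma(\tfrac{n+1}{2})$ and the duplication formula, force precisely the stated $b_k$, and this is essentially the classical harmonic-analysis computation the paper itself relies on by citing Cahn--Wolf and Camporesi rather than giving a proof. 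The one step to make airtight is the term-by-term differentiation $(-\partial_t)^k G(t)$, which you can sidestep by applying the Euler--Maclaurin (or Mellin/Hurwitz-zeta) argument directly to $\sum_{\lambda\in\frac12+\N_0}\lambda^{2k+1}e^{-t\lambda^2}$, whose unique negative power is indeed $\tfrac{k!}{2}\,t^{-k-1}$.
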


For any $k$, Proposition \ref{heat-sphere} yields a formula for
$a_{2k}$ as a polynomial of degree $2k$ in $n$ which is valid for
sufficiently large $n$. The same formula continues to hold for small
$n$ although the analogs of the formulas in Proposition
\ref{heat-sphere} may look differently. For instance, on $\s^2$ we
have the classical formula
$$
b_k = (-1)^k \frac{1}{k!} (2^{1-2k}-1) B_{2k}
$$
with the Bernoulli numbers $B_{2k}$.

Now Proposition \ref{heat-sphere} can be used to derive explicit
formulas for the first few heat kernel coefficients of the Laplace
operators on the round spheres of curvature $1$. We find that
$a_0=1$ and
\begin{align*}
a_2 & = \frac{n(n\!-\!1)}{6}, \\
a_4 & = \frac{n(n\!-\!1)(5n^2-7n+6)}{3 \cdot 5!}, \\
a_6 & = \frac{n(n\!-\!1)(35n^4-112n^3+187n^2-110n+96)}{9 \cdot 7!}.
\end{align*}
By $P_2 = \Delta - n/2(n/2-1)$ and $e^{t(\Delta + c)} = e^{t\Delta}
e^{tc}$ (for any real constant $c$), these results imply that the
first four heat kernel coefficients of the conformal Laplacian on
the sphere $\s^n$ are given by $a_0=1$ and
\begin{align*}
a_2 & = -\frac{n(n\!-\!4)}{2 \cdot 3!}, \\
a_4 & = \frac{n(n\!-\!6)(5 n^2 - 18 n + 4)}{2 \cdot 6!}, \\
a_6 & = -\frac{n(n\!-\!8)(35 n^4 - 308 n^3 + 688n^2 - 184n
-96)}{9!}.
\end{align*}
In particular, in the respective critical dimensions, we find
\begin{equation*}
a_2 = 1/3, \quad a_4 = - 1/15 \quad \mbox{and} \quad a_6 = 5/63.
\end{equation*}
Hence we obtain
\begin{equation*}
\frac{1}{(4\pi)} \int_{S^2} a_2 \dvol = \frac{1}{6} \chi(S^2), \quad
\frac{1}{(4\pi)^2} \int_{S^4} a_4 \dvol = - \frac{1}{180} \chi(S^4)
\end{equation*}
and
$$
\frac{1}{(4\pi)^3} \int_{S^6} a_6 \dvol = \frac{1}{1512} \chi(S^6)
$$
using
$$
\int_{S^2} \dvol = 4\pi, \quad \int_{S^4} \dvol = \frac{8}{3} \pi^2
\quad \mbox{and} \quad \int_{S^6} \dvol = \frac{16}{15} \pi^3.
$$
These results for the round spheres $\s^2, \dots, \s^6$ are special
cases of Proposition 4.3 in \cite{PR} (up to a correction of the
sign for $\s^6$). Note also that the values
$$
v_2 = -1/2, \quad  v_4 = 3/8 \quad \mbox{and} \quad v_6 = -5/16
$$
(in the respective critical dimensions $n=2,4,6$) yield the
relations
\begin{equation}\label{a-v}
a_2 = -2/3 v_2, \quad a_4 = - 8/45 v_4 \quad \mbox{and} \quad a_6 =
-16/63 v_6.
\end{equation}
These relations may serve as a cross-check of the coefficient of
$\V^\bullet[\varphi]$ in \eqref{FD-V-4} and \eqref{FD-V-6}.

Next, we observe that the identity $\H(r) = (1-r^2/4)^{-2} P_2 $ on
$\s^n$ (see \eqref{hol-ein}) implies
$$
\Tr (e^{t\H(r)}) = \Tr (e^{t(1-r^2/4)^{-2}P_2}).
$$
Hence the heat kernel coefficients of $\H(r)$ for the round sphere
$\s^n$ are given by the closed formula
$$
a_{2N}(r) = (1\!-\!r^2/4)^{n-2N} a_{2N}(\s^n)
$$
in terms of the heat kernel coefficients of the conformal Laplacian
on $\s^n$. In particular, we obtain
\begin{equation}\label{hcc-sphere}
a_0(r) = (1\!-\!r^2/4)^n \quad \mbox{and} \quad a_2(r) =
-n(n\!-\!4)/12 (1\!-\!r^2/4)^{n-2}.
\end{equation}
The latter formulas can be used for a cross-check of Lemma \ref{top}
and Theorem \ref{A-fine}. Note also that, for even $n$, the critical
coefficient $a_n(r)$ does not depend on $r$.

Finally, the above results for the conformal Laplacian on the spheres
$\s^n$ of curvature $1$ imply analogous results for the conformal
Laplacian on the hyperbolic spaces $\h^n$ of curvature $-1$ by using the
well-known duality
\begin{equation}\label{dual}
a_{2N}(\s^n) = (-1)^N a_{2N}(\h^n).
\end{equation}

\subsection{The correction terms as polynomials in $v_{2k}$}\label{corr-v}

The coefficients $\omega_{2k}$ in the expansion of the correction term
$(\dot{w}(r))^2$ of the second heat-kernel coefficient $a_2(r)$ can be
written as non-linear polynomials in renormalized volume coefficients. The
first few of these formulas are
\begin{align*}
\omega_0 & = 0, \\
\omega_2 & = v_2^2, \\
\omega_4 & = 4 v_2 v_4 - v_2^3, \\
\omega_6 & = 6 v_2 v_6 + 4 v_4^2 - 5 v_4 v_2^2 + v_2^4, \\
\omega_8 & = 8 v_2 v_8 + 12 v_4 v_6 - 8 v_2 v_4^2 - 7 v_6 v_2^2 + 6
v_4 v_2^3 - v_2^5.
\end{align*}

\subsection{Product metrics and consequences}\label{special}

We describe an alternative method for the derivation of an explicit
formula for the second Taylor coefficient $a_{(2,2)}$ of $a_2(r)$. For
this purpose, we consider product manifolds $M^n = M_1^p \times M_2^q$
($n=p+q$ with $p, q \ge 3$) with the product metrics $g = g_1 + g_2$ given
by Einstein metrics $g_i$ on the factors so that
\begin{equation}\label{tractor}
\lambda \st \frac{\scal(g_1)}{4 p(p\!-\!1)} = - \frac{\scal(g_2)}{4
q(q\!-\!1)}.
\end{equation}
An important feature of these product metrics is that their Schouten
tensors decompose as the sum of the respective Schouten tensors of
the factors. In fact, we have
\begin{equation}\label{mixed-Schouten}
\Rho (g) = \begin{pmatrix} \Rho (g_1) & 0 \\ 0 & \Rho (g_2)
\end{pmatrix} = \begin{pmatrix} 2 \lambda g_1 & 0 \\ 0 & - 2 \lambda g_2
\end{pmatrix}.
\end{equation}
In particular, we find $\J(g) = 2\lambda(p-q)$. The following result
describes the associated Poincar\'e-Einstein metrics.

\begin{prop}[\cite{GL}]\label{GL} The metric
\begin{equation}\label{mixed-PM}
g_+ = r^{-2}\left(d r^2 + (1\!-\!\lambda r^2)^2 g_1 + (1\!+\!\lambda
r^2)^2 g_2 \right)
\end{equation}
satisfies $\Ric(g_+) = -n g_+$.
\end{prop}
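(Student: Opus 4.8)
The plan is to verify directly that the metric $g_+$ in \eqref{mixed-PM} satisfies $\Ric(g_+) = -n g_+$ by exploiting the warped-product structure. Write $g_+ = r^{-2}(dr^2 + a(r)^2 g_1 + b(r)^2 g_2)$ with $a(r) = 1-\lambda r^2$ and $b(r) = 1+\lambda r^2$. It is convenient to pass to the standard Poincar\'e form by setting $\rho = r^2$ so that $g_+$ becomes $\frac{d\rho^2}{4\rho^2} + \frac{1}{\rho} h(\rho)$ with $h(\rho) = a(\sqrt\rho)^2 g_1 + b(\sqrt\rho)^2 g_2 = (1-\lambda\rho)^2 g_1 + (1+\lambda\rho)^2 g_2$. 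Then I would invoke the Fefferman--Graham equation \eqref{Ricci}, which characterizes exactly when the tangential part of $\Ric(g_+) + n g_+$ vanishes, together with the fact that for a product of Einstein factors $h$ is block-diagonal and its inverse, derivatives, and Ricci tensor all decompose blockwise. This reduces the verification to a pair of scalar ODE identities in $\rho$, one for each factor.

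Concretely, on the first block the equation \eqref{Ricci} applied with $h = h(\rho)$, $\dot h = h'(\rho)$, using $\Ric(h)$ restricted to the first factor equal to $\Ric(g_1) = 2\lambda p(p-1)/(p-1) \cdot$ (appropriate normalization) $= (p-1)\cdot 4\lambda \cdot \tfrac{p}{?}$ — more precisely $\Ric(g_1) = \scal(g_1)/p \cdot g_1/1 = 4\lambda(p-1) g_1$ since $g_1$ is Einstein with $\Ric(g_1) = \frac{\scal(g_1)}{p} g_1$ and $\scal(g_1) = 4\lambda p(p-1)$ — becomes a polynomial identity in $\rho$ that I expect to hold identically because $a(\rho) = 1-\lambda\rho$ is affine in $\rho$, which makes $\ddot a = 0$ and collapses many terms. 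The analogous identity on the second block uses $b(\rho) = 1+\lambda\rho$ and $\Ric(g_2) = -4\lambda(q-1) g_2$, and the sign flip in \eqref{tractor} is precisely what makes both identities hold simultaneously. One also checks the $dr^2$ (normal) component and the vanishing of mixed components, which is automatic from the block structure since $g(r)$ has no cross terms.

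The main obstacle, such as it is, will be bookkeeping: correctly tracking the trace terms $\tr(h^{-1}\dot h)$, which for this product equals $\tr(a^{-1}\dot a \cdot I_p) + \tr(b^{-1}\dot b \cdot I_q) = p\,\dot a/a + q\,\dot b/b$ and does \emph{not} decompose as a sum over a single factor, so the coupling between the two blocks enters through this scalar. I would substitute $\dot a/a = -\lambda/(1-\lambda\rho)$ and $\dot b/b = \lambda/(1+\lambda\rho)$, clear denominators, and verify the resulting polynomial identities; the condition \eqref{tractor} enters as the statement that the same $\lambda$ governs both factors with opposite scalar-curvature signs, which is exactly what cancels the would-be obstruction. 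Alternatively — and this is the cleaner route — I would simply cite \cite{GL}, since the proposition is attributed there, and limit the exposition to indicating how \eqref{mixed-PM} fits the normal form $g_+ = r^{-2}(dr^2 + g(r))$ with the even family $g(r) = (1-\lambda r^2)^2 g_1 + (1+\lambda r^2)^2 g_2$, noting that $g(r)$ terminates at order $r^4$ and hence all renormalized volume coefficients $v_{2k}$ are well-defined, which is the use to which this result is put in the sequel.
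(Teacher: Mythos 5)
Your fallback route coincides with what the paper actually does: Proposition \ref{GL} is stated with the attribution to \cite{GL} and no proof is supplied, so citing that reference and simply noting that \eqref{mixed-PM} is in normal form with $g(r)=(1\!-\!\lambda r^2)^2g_1+(1\!+\!\lambda r^2)^2g_2$ is exactly the paper's treatment. Your direct verification via the Fefferman--Graham system is a sound, more self-contained alternative and does go through, but two points in your sketch should be tightened. First, \eqref{Ricci} encodes only the tangential part of $\Ric(g_+)+ng_+=0$; for the full claim you must also verify the normal--normal equation $\tr(h^{-1}\ddot h)-\tfrac12\tr\bigl((h^{-1}\dot h)^2\bigr)=0$, while the mixed components do vanish for the reason you give ($\dot h$ is parallel for the product connection and $\tr(h^{-1}\dot h)$ is constant on $M$). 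Second, the remark that $\ddot a=0$ ``collapses many terms'' is misplaced: on the first block $h=(1-\lambda\rho)^2g_1$, so $\ddot h=2\lambda^2 g_1\neq 0$; what actually simplifies is that $2\ddot h-2\dot h h^{-1}\dot h=-4\lambda^2 g_1$ is constant in $\rho$, and the affineness of $1\mp\lambda\rho$ (each block of $h$ being the square of an affine function) is precisely what makes the normal--normal identity $\ddot A/A=\dot A^2/(2A^2)$ hold. With these corrections the computation works as you describe: on the first block, with $\Ric(g_1)=4\lambda(p-1)g_1$ and $\tr(h^{-1}\dot h)=-2\lambda p/(1-\lambda\rho)+2\lambda q/(1+\lambda\rho)$, the coefficient of $g_1$ in \eqref{Ricci} reduces, using $n=p+q$, to an expression that vanishes identically; the second block follows by $\lambda\to-\lambda$, $p\leftrightarrow q$, and \eqref{tractor} enters exactly through the values of $\Ric(g_i)$, as you say.
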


Equation \eqref{mixed-PM} implies that
\begin{equation}\label{vw-pq}
v(r) = (1-\lambda r^2)^p (1+\lambda r^2)^q \quad \mbox{and} \quad
w(r) = (1-\lambda r^2)^{p/2} (1+\lambda r^2)^{q/2}.
\end{equation}

Now let $\H(r;g)$ be the corresponding holographic Laplacian. A
calculation yields the explicit formula
\begin{equation}\label{hol-spec}
\H(r;g) = (1\!-\!\lambda r^2)^{-2} P_2(g_1) + (1\!+\!\lambda
r^2)^{-2} P_2(g_2) = \H(r;g_1) + \H(r;g_2).
\end{equation}
Thus, by the functoriality of heat kernel coefficients (see Section
4.8 of \cite{G-book}), the coefficient $a_2(r)$ for $\H(r;g)$ equals
\begin{multline*}
a_0((1-\lambda r^2)^{-2} P_2(g_1)) a_2((1+\lambda r^2)^{-2}P_2(g_2))
\\ + a_2((1-\lambda r^2)^{-2}P_2(g_1)) a_0((1+\lambda
r^2)^{-2}P_2(g_2)),
\end{multline*}
i.e.,
\begin{multline*}
(1-\lambda r^2)^p (1+\lambda r^2)^{q-2} a_0(P_2(g_1)) a_2(P_2(g_2))
\\ + (1-\lambda r^2)^{p-2} (1+\lambda r^2)^q a_2(P_2(g_1))
a_0(P_2(g_2)).
\end{multline*}
Hence, using $a_0 = 1$ and $a_2 = -(n\!-\!4)/6 \J$ (for any manifold
of dimension $n$), we find
\begin{equation}\label{a-pq}
a_2(r) = \frac{q\!-\!4}{6} 2\lambda q (1\!-\!\lambda r^2)^p
(1\!+\!\lambda r^2)^{q-2} -\frac{p\!-\!4}{6} 2 \lambda p
(1\!-\!\lambda r^2)^{p-2} (1\!+\!\lambda r^2)^q.
\end{equation}

We apply the latter formula to prove the following result.

\begin{prop}\label{a22} The coefficient $a_{(2,2)}$ is given by
\begin{equation}\label{a-22}
a_{(2,2)} = \frac{n\!-\!5}{12} \J^2 - \frac{n\!-\!8}{12} |\Rho|^2.
\end{equation}
\end{prop}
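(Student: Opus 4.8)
The quickest proof uses the closed formula of Theorem~\ref{A-fine}. Writing $a_2(r)=\Lambda(r)+(\dot w(r))^2$ and comparing the coefficients of $r^2$ gives $a_{(2,2)}=\Lambda_2+\omega_2$, where $\Lambda_2=\tfrac{2(n-8)}{3}v_4$ by \eqref{L-v} (the case $k=2$) and $\omega_2=v_2^2$. Substituting the standard formulas $v_2=-\tfrac12\J$ and $v_4=\tfrac18(\J^2-|\Rho|^2)$ from Section~\ref{holo-heat} and simplifying,
\[
a_{(2,2)}=\frac{2(n-8)}{3}\cdot\frac18\bigl(\J^2-|\Rho|^2\bigr)+\frac14\J^2
=\frac{n-8}{12}\bigl(\J^2-|\Rho|^2\bigr)+\frac14\J^2
=\frac{n-5}{12}\J^2-\frac{n-8}{12}|\Rho|^2 .
\]
Note in passing that $a_{(2,2)}$ contains neither a Weyl contribution nor a divergence, precisely because $v_2^2$ and $v_4$ do not — the Bach tensor that enters $g_{(4)}$ is trace-free and hence drops out of $v_4$.

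Since Section~\ref{special} is organized around the product metrics $g=g_1+g_2$ of \eqref{tractor}, I would also give the following derivation which avoids quoting the explicit value of $v_4$. First, feed \eqref{hol-spec} together with the functoriality of heat kernel coefficients into the computation of $a_2(r)$ for such a product; this produces exactly \eqref{a-pq}. Reading off the coefficient of $r^2$ yields
\[
a_{(2,2)}=\frac{\lambda^2}{3}\Bigl[q(q-4)(q-p-2)-p(p-4)(q-p+2)\Bigr].
\]
On the other hand, by Theorem~\ref{A-fine} (or Theorem~\ref{structure}) the invariant $a_{(2,2)}$ is a universal linear combination $\alpha\J^2+\beta|\Rho|^2$ of the two weight-$4$ curvature monomials, with $\alpha,\beta$ linear functions of $n$ and with no $|W|^2$ term and no total divergence. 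For the product metric \eqref{mixed-Schouten} gives $\J=2\lambda(p-q)$ and $|\Rho|^2=4\lambda^2(p+q)=4\lambda^2 n$, so equating the two expressions (and dividing by $4\lambda^2$) gives
\[
\alpha(p-q)^2+\beta n=\frac{1}{12}\Bigl[q(q-4)(q-p-2)-p(p-4)(q-p+2)\Bigr].
\]
Because $(p-q)^2$ and $n=p+q$ vary independently over the admissible range of $(p,q)$, this forces $\alpha=\tfrac{n-5}{12}$ and $\beta=-\tfrac{n-8}{12}$; equivalently, one verifies the polynomial identity $q(q-4)(q-p-2)-p(p-4)(q-p+2)=(p+q-5)(p-q)^2-(p+q-8)(p+q)$, which is a short computation.

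The main obstacle in the second argument is not the algebra but the structural input: one must know a priori that $a_{(2,2)}$ is an $n$-dependent combination of $\J^2$ and $|\Rho|^2$ only. A $|W|^2$ term could not be detected on products of Einstein metrics at all (their Weyl tensor is generically nonzero), and a divergence term could not be detected either (those product metrics have constant scalar curvature); both must therefore be ruled out in advance. This is exactly what Theorem~\ref{A-fine} supplies, since it expresses $a_2(r)$ through the renormalized volume coefficients $v_2$ and $v_4$, and neither of these involves the Weyl tensor or a total divergence. Once this is granted, the remainder is the finite polynomial check above.
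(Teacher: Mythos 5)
Your first derivation is correct, and it is not circular (Theorem~\ref{A-fine} is proved twice in the paper, in Sections~\ref{fine} and~\ref{structure-gen}, without any appeal to this proposition), but it is a genuinely different route from the paper's. The paper proves Proposition~\ref{a22} deliberately as an \emph{alternative} method, independent of the closed formula for $a_2(r)$, so that the agreement recorded in Remark~\ref{obs} serves as a cross-check and so that the same product-metric technique can be pushed further in Proposition~\ref{a42}, where no closed formula is available. Concretely, the paper writes $a_{(2,2)}=a\J^2+b|\Rho|^2+c\Delta\J+d|W|^2$ with universal coefficients, computes $a_2(r)$ for the products $M_1^p\times M_2^q$ of Einstein metrics satisfying \eqref{tractor} via \eqref{hol-spec} and functoriality (this is \eqref{a-pq}), and reads off $a$, $b$ \emph{and} $d=0$ from the coefficient of $r^2$: since the factors are arbitrary Einstein metrics, $|W|^2=|W_1|^2+|W_2|^2$ varies freely while the computed coefficient is a polynomial in $(p,q,\lambda)$ alone, which kills $d$. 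The divergence coefficient $c$ is then eliminated not via Theorem~\ref{A-fine} but via the variational property of $\int_M\Lambda_2\,\dvol$ (Theorem~\ref{A}) combined with Lemma~\ref{structure-4}, which gives $2(a+b-1/4)+(n-4)c=0$, hence $c=0$. Your second argument parallels this but imports exactly the structural input ($d=0$, $c=0$) from Theorem~\ref{A-fine}, so as written it is subsumed by your first argument rather than constituting an independent derivation; what your first route buys is brevity, what the paper's route buys is independence from the explicit formula for $a_2(r)$.

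One parenthetical claim in your last paragraph should be corrected: you assert that a $|W|^2$ term ``could not be detected on products of Einstein metrics at all (their Weyl tensor is generically nonzero)''. The opposite is the case: precisely because $|W|^2$ is generically nonzero on these products and is not a function of $(p,q,\lambda)$, while the functorial computation \eqref{a-pq} yields a formula depending on $(p,q,\lambda)$ only, the product family does force $d=0$ --- this is how the paper rules out the Weyl term without invoking Theorem~\ref{A-fine}. Only the divergence term $\Delta\J$ is invisible on these products (they have constant scalar curvature), and that is the one place where the paper needs the additional variational input of Lemma~\ref{structure-4}. The algebra in both of your computations is correct, including the polynomial identity $q(q-4)(q-p-2)-p(p-4)(q-p+2)=(p+q-5)(p-q)^2-(p+q-8)(p+q)$ and the use of $v_4=\frac18(\J^2-|\Rho|^2)$ for general metrics (the Bach contribution to $g_{(4)}$ being trace-free).
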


\begin{proof} On the one hand, the coefficient $a_{(2,2)}$ is a
universal scalar Riemannian invariant of weight $4$. Hence it can be
written as a linear combination
$$
a \J^2 + b |\Rho|^2 + c \Delta \J + d |W|^2
$$
with universal coefficients depending on $n$. On the other hand, the
coefficient of $r^2$ in the Taylor expansion of \eqref{a-pq} is
given by
$$
\frac{1}{3} \lambda^2 ((n^3\!-\!6n^2\!+\!8n) - p(4n\!-\!20)n +
p^2(4n\!-\!20)).
$$
Now we have
$$
a \J^2 + b |\Rho|^2 = 4 \lambda^2 a (p-q)^2 + 4 \lambda^2 b n = 4
\lambda^2 ((an^2+bn) - 4 p a n + 4 p^2 a)
$$
and a comparison of the decompositions $R = W - \Rho \owedge g$ on the
product space and on the factors yields the relation $|W|^2 = |W_1|^2 +
|W_2|^2$ of the Weyl tensors.\footnote{In particular, the product space is
conformally flat if the factors (of opposite curvature) are. The conformal
flatness of the product $\s^p \times \h^q$ also follows directly from its
well-known realization as the complement of a sphere in a sphere.} But the
universal equality of
$$
4 \lambda^2 ((an^2+bn) - 4 p a n + 4 p^2 a) + d |W|^2
$$
and$$ \frac{1}{3} \lambda^2 ((n^3\!-\!6n^2\!+\!8n) - p(4n\!-\!20)n +
p^2(4n\!-\!20))
$$
for {\em all} $p$ holds precisely if $d=0$ and $a$ and $b$ take the
given values. In order to determine $c$, we use the fact that the
integrands
$$
\Lambda_2 = a_{(2,2)} - v_2^2 = a_{(2,2)} - \J^2/4
$$
define variational integrals. By Lemma \ref{structure-4}, this
property implies the relation
$$
2(a+b-1/4) + (n-4) c = 0.
$$
Hence $c=0$.
\end{proof}

\begin{lemm}\label{structure-4} The linear combination
$$
I_4 = a \J^2 + b |\Rho|^2 + c \Delta \J
$$
satisfies the conformal variational formula
$$
\left(\int_{M^n} I_4 \dvol \right)^\bullet [\varphi] = (n-4)
\int_{M^n} \varphi I_4 \dvol
$$
iff
\begin{equation}\label{rel-inv}
2(a+b) + (n-4) c = 0.
\end{equation}
\end{lemm}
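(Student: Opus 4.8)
The plan is to evaluate the conformal variation $\bigl(\int_M I_4\,\dvol\bigr)^\bullet[\varphi]$ explicitly and to compare it term by term with $(n-4)\int_M\varphi\,I_4\,\dvol$. The first point is that $\Delta\J$ is a divergence, so on a closed manifold $\int_M\Delta\J\,\dvol=0$ and hence
\[
\int_M I_4\,\dvol=\int_M\bigl(a\J^2+b|\Rho|^2\bigr)\,\dvol
\]
is independent of $c$. Thus the left-hand side of the asserted identity carries no $c$-dependence at all, while its right-hand side contributes the term $(n-4)c\int_M\varphi\,\Delta\J\,\dvol$; the lemma therefore amounts to identifying the obstruction to the ``expected'' weight-$(n-4)$ transformation law of $\int_M(a\J^2+b|\Rho|^2)\,\dvol$ as a specific multiple of $\int_M\varphi\,\Delta\J\,\dvol$.

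To carry this out I would differentiate at $\varepsilon=0$ along $g\mapsto e^{2\varepsilon\varphi}g$, using the standard infinitesimal transformation laws $(\dvol)^\bullet[\varphi]=n\varphi\,\dvol$, $\J^\bullet[\varphi]=-2\varphi\J-\Delta\varphi$, $\Rho^\bullet[\varphi]=-\Hess\varphi$, and $(g^{-1})^\bullet[\varphi]=-2\varphi\,g^{-1}$ for the metric contractions in $|\Rho|^2=\langle\Rho,\Rho\rangle_g$. This gives $(\J^2)^\bullet[\varphi]=-4\varphi\J^2-2\J\,\Delta\varphi$ and $(|\Rho|^2)^\bullet[\varphi]=-4\varphi|\Rho|^2-2\langle\Hess\varphi,\Rho\rangle$. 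After multiplying by $\dvol$ and adding the $n\varphi$-contribution from the volume form, integration over $M$ together with self-adjointness of $\Delta$ turns $\int_M\J\,\Delta\varphi\,\dvol$ into $\int_M\varphi\,\Delta\J\,\dvol$; every term is then already of the desired shape except for $\int_M\langle\Hess\varphi,\Rho\rangle\,\dvol$.

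The one genuinely geometric input is the handling of this last integral. Integrating by parts twice moves both derivatives onto $\Rho$ and produces $\int_M\varphi\,(\operatorname{div}\operatorname{div}\Rho)\,\dvol$; by the contracted second Bianchi identity for the Schouten tensor, $\operatorname{div}\Rho=d\J$, whence $\operatorname{div}\operatorname{div}\Rho=\Delta\J$, so this term too becomes a multiple of $\int_M\varphi\,\Delta\J\,\dvol$. Collecting everything yields
\[
\Bigl(\int_M I_4\,\dvol\Bigr)^\bullet[\varphi]=(n-4)\int_M\varphi\,(a\J^2+b|\Rho|^2)\,\dvol-2(a+b)\int_M\varphi\,\Delta\J\,\dvol,
\]
so the asserted identity is equivalent to $\bigl(2(a+b)+(n-4)c\bigr)\int_M\varphi\,\Delta\J\,\dvol=0$ for all $\varphi\in C^\infty(M)$ and all closed $(M^n,g)$. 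Since $\int_M\varphi\,\Delta\J\,\dvol=\int_M(\Delta\varphi)\,\J\,\dvol$ is not identically zero --- it is nonzero already for a suitable $\varphi$ on any metric with non-constant $\J$ --- this vanishing holds precisely when $2(a+b)+(n-4)c=0$, which gives both implications. The only real difficulty is bookkeeping: one must keep the sign conventions consistent (the Laplacian $\Delta=\operatorname{div}\nabla=-\delta d$, and the orientation of $\operatorname{div}\Rho=d\J$) so that the coefficient of $\int_M\varphi\,\Delta\J\,\dvol$ comes out as $-2(a+b)$ and not $+2(a+b)$; once that is pinned down the computation is routine.
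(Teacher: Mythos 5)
Your proof is correct and is exactly the computation the paper has in mind: the paper's proof simply invokes ``the usual conformal variational formulas for $\J$ and $\Rho$'' (citing Equation (5.4) of Branson--{\O}rsted), and your derivation---varying $\J$, $\Rho$ and $\dvol$, integrating by parts, and using the contracted Bianchi identity $\operatorname{div}\Rho=d\J$ to convert $\int_M\langle\Hess\varphi,\Rho\rangle\,\dvol$ into $\int_M\varphi\,\Delta\J\,\dvol$---fills in precisely those details, with signs consistent with the paper's convention $\Delta=-\delta d$. Your closing remark that the ``only if'' direction needs a metric with non-constant $\J$ (so that $\int_M\varphi\,\Delta\J\,\dvol\not\equiv 0$) correctly pins down the universality needed for the equivalence.
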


\begin{proof} The claim easily follows from the usual conformal variational
formulas for $\J$ and $\Rho$. Note that the assertion already can be found
as Equation (5.4) in \cite{BO-def}.
\end{proof}

Note that $8 v_4 = \J^2 -|\Rho|^2$ and $Q_4 = \f \J^2 - 2|\Rho|^2 -
\Delta \J$ both satisfy the relation \eqref{rel-inv}.

\begin{rem}\label{obs} By $-2 v_2 = \J$, the formula
$\Lambda_0 = -(n-4)/6 \J$ can be restated as
$$
\Lambda_0 = (n\!-\!4)/3 v_2.
$$
Similarly, using $8 v_4 = \J^2-|\Rho|^2$, Proposition \ref{a22}
implies that the difference
$$
\Lambda_2 = a_{(2,2)} - v_2^2 = a_{(2,2)} - \J^2/4
$$
equals
\begin{equation*}
\frac{n\!-\!8}{12} (\J^2 - |\Rho|^2) = (n\!-\!8) \frac{2}{3} v_4.
\end{equation*}
These are the special cases $k=1$ and $k=2$ of Theorem \ref{A-fine}.
\end{rem}

For the product metrics in Proposition \ref{GL}, the identity
\begin{equation*}
a_2(r) - (\dot{w}(r))^2 = - \frac{1}{3} \left(\ddot{v}(r) -
\frac{n\!-\!2}{2} r^{-1} \dot{v}(r)\right)
\end{equation*}
can be verified by a straightforward calculation using the explicit
formulas \eqref{vw-pq} and \eqref{a-pq}. This is a special case of Theorem
\ref{A-fine}.

It seems difficult to determine the higher-order Taylor coefficients of
$a_2(r)$ by a generalization of the above method. The main reason is that
special metrics with explicitly known Poincar\'e-Einstein metrics are
rare. In particular, considerations of the product spaces $\s^p \times
\h^q$ do not suffice to determine the coefficient of $r^4$ in $a_2(r)$.
Nevertheless, this method can be used to obtain the following information
on the coefficient of $r^2$ in the expansion of $a_4(r)$.

\begin{prop}\label{a42} For locally conformally flat metrics,
the coefficient of $r^2$ in the expansion of $a_4(r)$ has the form
\begin{equation*}
360 a_{(4,2)} = 48 (n\!-\!4)(n\!-\!6) v_6 + 48 (n\!-\!4)(n\!-\!12)
v_2 v_4 - 12 (n\!-\!4)(n\!-\!16) v_2^3,
\end{equation*}
up to terms which involve derivatives of curvature.
\end{prop}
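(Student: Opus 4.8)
The plan is to use the product-space ansatz of Proposition \ref{GL} exactly as in the proof of Proposition \ref{a22}, but now applied to the coefficient $a_4(r)$ instead of $a_2(r)$. The key structural input is the splitting \eqref{hol-spec} of the holographic Laplacian on $M_1^p \times M_2^q$ as $\H(r;g_1) + \H(r;g_2)$, together with the functoriality of heat kernel coefficients for direct products (Section 4.8 of \cite{G-book}), which gives
\begin{equation*}
a_4(r) = a_4(\H(r;g_1)) + a_2(\H(r;g_1)) a_2(\H(r;g_2)) + a_4(\H(r;g_2))
\end{equation*}
on the product. Using $\H(r;g_i) = (1\mp\lambda r^2)^{-2} P_2(g_i)$ and the scaling $e^{tc\Delta} = (e^{t\Delta})^c$ applied to the rescalings, each term becomes a known power of $(1-\lambda r^2)$ and $(1+\lambda r^2)$ times the heat kernel coefficients $a_0, a_2, a_4$ of the conformal Laplacian on the Einstein factors $\s^p$, $\h^q$. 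Those, in turn, are given by the explicit formulas in Section \ref{coeff-L} (e.g.\ \eqref{CL-a4}), which on an Einstein metric reduce to multiples of $\J^2$ since $|W|^2$ and $\Delta\J$ vanish; concretely $a_4 = -\tfrac{1}{360}(n-6)(5n-16)\J^2 \cdot(\text{sign factors})$ after using $\Rho = \tfrac{1}{2(n-1)}\scal\, g$ on each factor. Expanding the resulting explicit rational function of $r^2$ to first order in $r^2$ produces a polynomial in $p$ (with $q = n-p$) for $360\,a_{(4,2)}$ on these metrics.

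Next I would match this polynomial against the general form dictated by invariant theory. For locally conformally flat metrics, the weight-$6$ scalar Riemannian invariant $a_{(4,2)}$ must be a linear combination of $\J^3$, $\J|\Rho|^2$, $\tr(\Rho^3)$ together with divergence/derivative terms, with coefficients that are polynomials in $n$ (universality in $n$, as in the proof of Proposition \ref{a22}). Using \eqref{mixed-Schouten} one has on the product $\J = 2\lambda(p-q)$, $|\Rho|^2 = 4\lambda^2 n$, $\tr(\Rho^3) = 8\lambda^3(p-q)$, and $|W|^2 = |W_1|^2+|W_2|^2 = 0$ on $\s^p\times\h^q$ of opposite curvature. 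Since $p$ ranges over all admissible values, equating the explicitly computed polynomial in $p$ with $a\,\J^3 + b\,\J|\Rho|^2 + c\,\tr(\Rho^3)$ evaluated on the product determines the coefficients $a, b, c$ uniquely (the number of independent monomials in $p$ produced — namely $1, (p-q), (p-q)^3$ after the substitutions, i.e.\ odd powers $1, p, p^3$ up to $n$-dependent constants — is exactly what is needed, matching the parity $p \leftrightarrow q \leftrightarrow -$curvature symmetry). Then I translate back to renormalized volume coefficients using $2v_2 = -\J$, $8v_4 = \J^2 - |\Rho|^2$, $-48 v_6 = \J^3 - 3\J|\Rho|^2 + 2\tr(\Rho^3)$ from \eqref{v6-flat}, which rewrites $a\,\J^3 + b\,\J|\Rho|^2 + c\,\tr(\Rho^3)$ as a linear combination of $v_6$, $v_2 v_4$, $v_2^3$, yielding the stated formula.

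The main obstacle is the bookkeeping of the derivative (non-product) terms: the product-space computation is blind to all invariants that vanish on direct products of Einstein metrics, in particular the divergence terms and $|d\J|^2$-type contributions and $|W|^2$-type terms. This is exactly why the statement is qualified by ``up to terms which involve derivatives of curvature.'' So the argument does not need to pin those down, but one must be careful that the three ``bulk'' monomials $\J^3$, $\J|\Rho|^2$, $\tr(\Rho^3)$ are genuinely detected — i.e.\ that no linear combination of them vanishes identically on all the test products $\s^p\times\h^q$. This is the one genuine check: since $(p-q)$ and $(p-q)^3$ together with the constant $n$-polynomial give three linearly independent functions of $p$ as $n$ and $p$ vary, the three coefficients are determined, and the claimed formula follows. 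A secondary, purely mechanical hazard is keeping the signs straight in the rescaling factors $(1\mp\lambda r^2)^{-2}$ and in the duality $a_{2N}(\s^n) = (-1)^N a_{2N}(\h^n)$ of Section \ref{heat-spheres} when expanding to order $r^2$.
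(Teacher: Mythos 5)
Your strategy is the same as the paper's (product spaces $\s^p\times\h^q$, the splitting \eqref{hol-spec}, functoriality of heat coefficients, the explicit sphere/hyperbolic values, then matching against the general ansatz in the basis $\J^3$, $\J|\Rho|^2$, $\tr(\Rho^3)$ resp.\ $v_6$, $v_2v_4$, $v_2^3$ with $n$-dependent coefficients), but the one check you single out as "the genuine check" in fact fails, and this is exactly where the paper has to do more work. On the balanced products one has, by \eqref{mixed-Schouten}, $\J=2\lambda(p-q)$, $|\Rho|^2=4\lambda^2 n$ and $\tr(\Rho^3)=8\lambda^3(p-q)$, hence the identity $\J|\Rho|^2 = n\,\tr(\Rho^3)$ holds on \emph{every} test metric $\s^p\times\h^q$. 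Since your unknown coefficients $a(n),b(n),c(n)$ are themselves polynomials in $n$, the product data only sees the combination $n\,b(n)+c(n)$: you cannot separate $b$ from $c$ by "varying $n$ and $p$", because any modification $b\mapsto b+\epsilon(n)$, $c\mapsto c-n\,\epsilon(n)$ is invisible on all the test metrics. Equivalently, every multiple of $\J|\Rho|^2-n\,\tr(\Rho^3)$ by a linear polynomial in $n$ vanishes on all products, so the linear system you set up is underdetermined by a two-parameter family. This is precisely what the paper's proof records: the general solution consistent with the $\s^p\times\h^q$ data is a two-parameter family (the coefficients $c_1,c_2$ in the coefficient of $v_6$), and the solution becomes unique only "once the coefficient of $v_6$ is known."

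So your proposal as written cannot conclude; it needs one additional, independent input to break the degeneracy. The paper supplies it by importing the coefficient of $v_6$ (equivalently of $\tr(\Rho^3)$) from the direct computation of $a_{(4,2)}$ in Section \ref{a4-holo}, where $\H(r)$ is treated as a Laplace-type operator for $g(r)$ and Gilkey's formula for $a_4$ is expanded in $r$ (the term $-2(n\!-\!4)(n\!-\!6)\tr(\Rho^3)$ in \eqref{v-contribution}). Any argument along your lines must either do something equivalent, or enlarge the family of test metrics beyond $\s^p\times\h^q$ so that $\J|\Rho|^2$ and $n\,\tr(\Rho^3)$ are no longer proportional on all of them; note that general Einstein metrics do not help either, since there $\Rho$ is a multiple of $g$ and the same kind of degeneracy persists. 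The rest of your outline (the product formula for $a_4(r)$, the signs $(1\mp\lambda r^2)^{-2}$, the duality \eqref{dual}, and the translation $2v_2=-\J$, $8v_4=\J^2-|\Rho|^2$, \eqref{v6-flat}) matches the paper and is fine; the gap is solely the claimed uniqueness of the coefficient extraction.
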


\begin{proof} The coefficient $a_{(4,2)}$ is a universal linear
combination (with coefficients depending on $n$) of the three terms $v_6$,
$v_2 v_4$ and $v_2^3$ and of terms which involve derivatives of curvature:
$|dv_2|^2$, $\Delta(v_4)$, $\Delta(v_2^2)$, $\delta(\Rho dv_2)$ and
$\Delta^2(v_2)$. Now, for the spaces $\s^p \times \h^q$, we have
\begin{multline*}
a_4(r) = (1-\lambda r^2)^{p-4} (1+\lambda r^2)^q a_4(P_2(\s^p)) \\ +
(1-\lambda r^2)^{p-2} (1+\lambda r^2)^{q-2} a_2(P_2(\s^p))
a_2(P_2(\h^q)) \\ + (1-\lambda r^2)^p (1+\lambda r^2)^{q-4}
a_4(P_2(\s^p))
\end{multline*}
with $\lambda = 1/4$. Formulas for the heat kernel coefficient $a_2$ and
$a_4$ of the conformal Laplacian on round spheres and hyperbolic spaces
were given in Section \ref{heat-spheres}. We use the resulting formula for
$360 a_{(4,2)}$ to represent this quantity as a linear combination of
$v_6$, $v_2 v_4$ and $v_2^3$ with quadratic coefficients in $n$. A
calculation shows that the general solution has the form
\begin{multline*}
(c_2 n^2 + c_1 n + 1152) v_6 \\ +
\left((96-c_2)n^2+(-1280-c_1+2/3c_2)n+(2624+2/3c_1)\right) v_2 v_4 \\
+ \left((-28+1/3c_2)n^2+(416+1/3c_1-1/3c_2)n+(-928- 1/3c_1) \right)
v_2^3
\end{multline*}
with arbitrary coefficients $c_1, c_2$. In particular, the solution
is unique once the coefficient of $v_6$ is known. Therefore, it
suffices to combine the result with those arguments in Section
\ref{a4-holo} which yield the coefficient of $v_6$.
\end{proof}

In the following section, we shall use an alternative method to
derive a fully explicit version of Proposition \ref{a42} for general
metrics.

\subsection{On the coefficient $a_4(r)$}\label{a4-holo}

We apply the method of Section \ref{structure-gen} to determine the first
two terms in the asymptotic expansion of $a_4(r)$. We continue to use the
notation of Section \ref{structure-gen}. The following result is a
consequence of Gilkey's formula for the heat kernel coefficient $a_4$ of
second-order operators of the form \eqref{L-gen}.

\begin{prop} The fourth-order heat kernel coefficient of the operator
$$
L = -(\Delta_g + g(d \eta, d) + b), \; \eta, b \in C^\infty(M)
$$
is given by
\begin{equation*}
360 a_4(L) = 2 |R|_g^2 - 2 |\Ric|_g^2 + 5 \scal^2 + 12 \Delta_g (\scal + 5
E) + 60 \scal E + 180 E^2,
\end{equation*}
where
$$
E = b - \frac{1}{4} |d\eta|_g^2 - \frac{1}{2} \Delta_g (\eta).
$$
\end{prop}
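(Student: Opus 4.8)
The plan is to invoke Gilkey's general formula for the heat kernel coefficient $a_4$ of an arbitrary Laplace-type operator on a vector bundle, together with the explicit identification of the connection one-form and endomorphism associated to the operator $L = -(\Delta_g + g(d\eta,d) + b)$. This identification is precisely the content of Corollary \ref{a2-L-geom} and its proof: an operator of the form \eqref{L-gen} with $A^i = g^{ij}\eta_j$ and $B = b$ corresponds, via \eqref{one-form-gen} and \eqref{E-gen}, to $-\tr_g(\nabla\circ\nabla) - E$ on the trivial line bundle $\L = M\times\c$ with connection one-form $\omega = \tfrac12 d\eta$ (shown in \eqref{c-form}) and endomorphism $E = b - \tfrac14|d\eta|_g^2 - \tfrac12\Delta_g(\eta)$ (shown in \eqref{E-g}). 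In particular the curvature $\Omega = d\omega = \tfrac12 d(d\eta)$ of $\nabla$ vanishes, so all terms in Gilkey's formula involving $\Omega$ drop out.

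With $\Omega = 0$, Gilkey's formula for $a_4$ of a Laplace-type operator (Theorem 4.8.16 in \cite{G-book}) reduces to
$$
360\, a_4(L) = 12\,\Delta_g(\scal) + 5\scal^2 - 2|\Ric|_g^2 + 2|R|_g^2 + 60\,\scal\, E + 180\, E^2 + 60\,\Delta_g(E),
$$
where I have written $\Delta_g = \delta_g d = -\tr_g\Hess$ in the paper's convention (as used throughout Section \ref{coeff-L}), and $E$ is regarded as a scalar function since $\L$ is a line bundle. The first four terms are exactly $360\,a_4$ for the scalar Laplacian recorded in Section \ref{coeff-L}, namely $2|R|_g^2 - 2|\Ric|_g^2 + 5\scal^2 - 12\Delta_g(\scal)$ up to the sign convention for $\Delta$; I would simply quote that formula. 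Combining the two $\Delta_g$ terms gives $12\Delta_g(\scal) + 60\Delta_g(E) = 12\Delta_g(\scal + 5E)$, which is the form asserted in the statement. Substituting the expression for $E$ from Corollary \ref{a2-L-geom} then yields the claimed formula verbatim.

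The only genuine point requiring care is bookkeeping of sign conventions: the paper uses $-\Delta = \delta d$ as the non-negative Laplacian, whereas Gilkey's reference uses the opposite sign, so one must check that the coefficients of $\Delta_g(\scal)$ and $\Delta_g(E)$ come out with the stated signs after translation. This is routine but must be done explicitly by matching against the already-quoted formula for $360\,a_4$ of the scalar Laplacian in Section \ref{coeff-L}. There is no deep obstacle; the substance of the argument is entirely the reduction of Gilkey's universal formula to the line-bundle case with flat connection, which has already been carried out in the proof of Corollary \ref{a2-L-geom}, and I would reference that directly rather than repeat the computation of $\omega$ and $E$.
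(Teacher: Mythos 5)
Your proposal is correct and takes essentially the same route as the paper's own proof, which likewise quotes Gilkey's Theorem 4.8.16 for Laplace-type operators, uses the identification $\omega=\tfrac{1}{2}d\eta$ and $E=b-\tfrac{1}{4}|d\eta|_g^2-\tfrac{1}{2}\Delta_g(\eta)$ already established in Corollary \ref{a2-L-geom}, and discards the $\Omega$-terms because the connection is flat (together with the observation that $\tr_g(\nabla^2)(E)=\Delta_g(E)$ for the scalar endomorphism $E$). The only adjustment to your sign bookkeeping: in the Proposition, $\Delta_g=-\delta d$ is the trace of the Hessian (the paper's global convention), so Gilkey's terms $\scal_{;kk}$ and $E_{;kk}$ are literally $\Delta_g(\scal)$ and $\Delta_g(E)$ with the stated $+12$ and $+60$ coefficients and no translation is required; the $-12\Delta(\scal)$ in Section \ref{coeff-L} is written with the opposite convention $\Delta=\delta d$, as indicated there.
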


\begin{proof} By Theorem 4.18.16/c) in \cite{G-book}, the heat kernel
coefficient $a_4$ of an operator $L$ as in \eqref{L-gen} is given by
\begin{multline*}
360 a_4 = 2 |R|_g^2 - 2 |\Ric|_g^2 + 5 \scal^2 \\ + 12 \Delta_g (\scal) +
60 \tr_g(\nabla^2)(E) + 60 \scal E + 180 E^2 + 30 |\Omega|_g^2.
\end{multline*}
Here $\nabla$ and the endomorphism $E$ are given by \eqref{one-form-gen}
and \eqref{E-gen}. $\Omega$ is the curvature of $\nabla$. Now $\Omega$
vanishes in the present case, and for the endomorphism $E(r)$ of $\L$ we
have $\nabla (E(r)) = d E(r) \otimes \id$. Hence
$\tr_{g(r)}(\nabla^2)(E(r)) = \Delta_{g(r)}(E(r))$. This proves the claim.
\end{proof}

As a consequence, we obtain

\begin{corr}\label{a4r-gen} We have
\begin{multline*}
360 a_4(r) = \big(2 |R(r)|_{g(r)}^2 - 2 |\Ric(r)|_{g(r)}^2 + 5 \scal(r)^2
\\ + 12 \Delta_{g(r)}(\scal(r) + 5 (E(r)) + 60 \scal(r)
E(r) + 180 E(r)^2 \big) v(r)
\end{multline*}
with
$$
R(r) = R(g(r)), \quad \Ric(r) = \Ric(g(r)), \quad \scal(r) = \scal(g(r))
$$
and $E(r)$ as in Lemma \ref{E-final}.
\end{corr}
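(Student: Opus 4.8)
\textbf{Proof proposal for Corollary \ref{a4r-gen}.}
The plan is to read off the formula directly from the preceding Proposition, once $\H(r;g)$ has been put into the normal form to which that Proposition applies. First I would recall, as established in the proof of Theorem \ref{structure} (see \eqref{key} and \eqref{LT}), that the holographic Laplacian can be written as
\begin{equation*}
\H(r;g) = \Delta_{g(r)} - (d\log v(r),d)_{g(r)} + \U(r;g),
\end{equation*}
i.e.\ it is an operator of the form \eqref{L-geom} for the metric $g(r)$, with $\eta = \log v(r)$ and $b = \U(r;g)$. Hence the preceding Proposition (Gilkey's formula for $a_4$ of such operators, applied with $g$ replaced by $g(r)$) yields an expression for the fourth-order heat kernel coefficient of $\H(r;g)$ in terms of $R(g(r))$, $\Ric(g(r))$, $\scal(g(r))$, the Laplacian $\Delta_{g(r)}$ and the potential
\begin{equation*}
E = \U(r;g) - \tfrac14 |d\log v(r)|_{g(r)}^2 - \tfrac12 \Delta_{g(r)}(\log v(r)).
\end{equation*}

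The next step is to identify this $E$ with the function $E(r)$ of Lemma \ref{E-final}. This is exactly the content of that lemma (and of \eqref{E}), whose proof shows that the terms along $M$ cancel and
\begin{equation*}
E = E(r) = - w(r)^{-1}\left( \frac{\partial^2}{\partial r^2} - \frac{n-1}{r}\frac{\partial}{\partial r}\right)(w(r)).
\end{equation*}
So I may simply substitute $E = E(r)$ into the Proposition's output.

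The only bookkeeping point is the overall factor $v(r)$. Since $\H(r;g)$ is a Laplace-type operator for the metric $g(r)$, the Proposition computes its $a_4$-coefficient relative to the Riemannian density of $g(r)$; but throughout the paper the heat kernel of $\H(r;g)$ is defined by integration against $\dvol_g$ (cf.\ Theorem \ref{hk-expansion} and the remark preceding \eqref{a2-r}), which multiplies the coefficient by $v(r) = \sqrt{\det g(r)}/\sqrt{\det g}$. This is the same normalization issue already handled in Lemma \ref{top} (where it produced $a_0(r)=v(r)$) and in \eqref{a2-r}. Inserting this factor gives precisely the claimed formula
\begin{multline*}
360\, a_4(r) = \big(2|R(r)|_{g(r)}^2 - 2|\Ric(r)|_{g(r)}^2 + 5\scal(r)^2 \\ + 12\Delta_{g(r)}(\scal(r)+5E(r)) + 60\scal(r)E(r) + 180 E(r)^2\big) v(r),
\end{multline*}
with $R(r)=R(g(r))$, $\Ric(r)=\Ric(g(r))$, $\scal(r)=\scal(g(r))$. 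I do not anticipate a genuine obstacle here: the substance lies entirely in Theorem \ref{structure}, Lemma \ref{E-final} and Gilkey's $a_4$-formula, all of which are available; the present statement is just their combination, and the one place to be careful is the transfer between $L^2(M,g)$ and $L^2(M,g(r))$ that accounts for the factor $v(r)$.
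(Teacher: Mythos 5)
Your route is the paper's own: Corollary \ref{a4r-gen} is obtained there exactly as you propose, by applying the preceding Proposition (Gilkey's $a_4$-formula for operators of the form \eqref{L-geom}, with $g$ replaced by $g(r)$) to $\H(r;g)$ written as in \eqref{key}, identifying the resulting potential with $E(r)$ via \eqref{E} and Lemma \ref{E-final}, and inserting the factor $v(r)$ that comes from normalizing the heat kernel against $\dvol_g$ rather than $\dvol_{g(r)}$. One bookkeeping slip: to match the normal form $L=-(\Delta_{g(r)}+g(r)(d\eta,d)+b)$ you must take $\eta=-\log v(r)$ (not $\log v(r)$), so the induced potential is $E=\U(r;g)+\tfrac12\Delta_{g(r)}(\log v(r))-\tfrac14|d\log v(r)|^2_{g(r)}$, which is precisely \eqref{E}; with the sign you wrote, the tangential terms in the proof of Lemma \ref{E-final} would not cancel, although the final formula you state is the correct one.
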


The latter result can be used to derive formulas for the coefficients of
the expansion of $a_4(r)$ in terms of $g$. In the following, we shall make
explicit the coefficient of $r^2$.

We start with the discussion in the special case of locally
conformally flat metrics. As before, we define $h(\rho)$ so that
$h(r^2) = g(r)$.

\begin{lemm}\label{R-expansion} Assume that $g$ is locally conformally
flat, i.e., $W=0$. Then
$$
R(r) = h \owedge \dot{h} - \frac{1}{2}(\dot{h} \owedge \dot{h}) r^2
$$
and
$$
\Ric(r) = -(n\!-\!2) \dot{h} - h \tr (h^{-1}\dot{h}) + (\dot{h} \tr
(h^{-1}\dot{h}) - \dot{h} h^{-1} \dot{h}) r^2.
$$
In these formulas, $\owedge$ is the Kulkarni-Nomizu product
\eqref{KN-product}, the dot denotes differentiation with respect to
$\rho$ and the dependence of $h$ on $\rho = r^2$ has been
suppressed.
\end{lemm}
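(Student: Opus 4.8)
The plan is to read off the curvature of $g(r)$ from that of the associated Poincar\'e--Einstein metric. Write $\bar g = dr^2 + g(r)$ on $[0,\varepsilon)\times M$, so that $g_+ = r^{-2}\bar g$. Since $g$ is locally conformally flat, $g_+$ is locally isometric to hyperbolic space $\h^{n+1}$ (Chapter~7 of \cite{FG-final}; Sections~6.14, 6.16 of \cite{juhl-book}); in particular $g_+$ has constant sectional curvature, hence vanishing Weyl tensor, and by conformal invariance of the Weyl tensor the same holds for $\bar g$. Therefore \eqref{R-W} reduces to $R(\bar g) = -\bar\Rho(\bar g)\owedge\bar g$. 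Restricting all four indices to $TM$, only $\bar g|_{TM} = g(r) = h$ and $\bar\Rho(\bar g)|_{TM}$ enter; and by \eqref{Sch-bar} (Lemma~7.1 of \cite{juhl-ex}) we have $\bar\Rho(\bar g)|_{TM} = -\tfrac1{2r}\tfrac{d}{dr}g(r) = -\dot h$, since $g(r) = h(r^2)$ gives $\tfrac{d}{dr}g(r) = 2r\dot h$. Thus the purely tangential components of $R(\bar g)$ equal $(h\owedge\dot h)_{ijkl}$.

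Next I would pass from these tangential components of the ambient curvature to the intrinsic curvature $R(g(r)) = R(h)$ of the slice $\{r=\mathrm{const}\}$ via the Gauss equation. The unit normal of the slice in $(\,[0,\varepsilon)\times M,\bar g)$ is $\partial_r$, and its second fundamental form is $\mathrm{II}_{ij} = -\tfrac12\partial_r g(r)_{ij} = -r\,\dot h_{ij}$, so $\mathrm{II}\owedge\mathrm{II} = r^2\,\dot h\owedge\dot h$. In the curvature sign convention fixed by \eqref{normal}--\eqref{R-W} the Gauss equation reads $R(h)_{ijkl} = R(\bar g)_{ijkl} - \tfrac12(\mathrm{II}\owedge\mathrm{II})_{ijkl}$, the sign and the factor $\tfrac12$ being pinned down once and for all by the model case of the unit round sphere in $\r^{n+1}$. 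Combining this with the previous paragraph yields $R(r) = h\owedge\dot h - \tfrac{r^2}{2}\,\dot h\owedge\dot h$, which is the first assertion. The second follows by taking the $h$-trace: from $\operatorname{tr}_h(a\owedge h) = (n-2)a + (\operatorname{tr}_h a)\,h$ and $\operatorname{tr}_h(a\owedge a) = 2(\operatorname{tr}_h a)\,a - 2\,a\circ_h a$ for a symmetric bilinear form $a$ (here $a=\dot h$, and $a\circ_h a$ is $\dot h_i{}^k\dot h_{kj}$, i.e. $\dot h h^{-1}\dot h$) one gets $\operatorname{tr}_h R(r) = (n-2)\dot h + (\operatorname{tr}_h\dot h)h - r^2\big((\operatorname{tr}_h\dot h)\dot h - \dot h h^{-1}\dot h\big)$, and $\Ric(r)$ is minus this $h$-trace in the paper's convention, which is exactly the stated formula.

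As a consistency check I would record the identity $h(\rho) - \tfrac\rho2\dot h(\rho) = g - \tfrac\rho2\Rho$, valid in the conformally flat case because there $h(\rho) = g - \rho\Rho + \tfrac{\rho^2}{4}\Rho^2$; this lets one rewrite the first assertion compactly as $R(r) = \dot h\owedge(g - \tfrac\rho2\Rho)$. I would also test the whole statement on $g=g_0$ the round sphere, where $h(\rho) = (1-\lambda\rho)^2 g_0$ with $\lambda=\scal(g_0)/(4n(n-1))$ and both sides collapse to $-2\lambda(1-\lambda\rho)^2(g_0\owedge g_0)$, matching $R\big((1-\lambda\rho)^2 g_0\big) = (1-\lambda\rho)^2 R(g_0)$.

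The hard part here is not conceptual: the local hyperbolicity of $g_+$, the identity $\bar\Rho = -\tfrac1{2r}\dot g(r)$, and the Gauss equation are all available. The real care lies in keeping the sign conventions straight throughout --- the paper's curvature sign (for which positive sectional curvature produces a \emph{negative} multiple of $g\owedge g$), the normalization \eqref{KN-product} of the Kulkarni--Nomizu product, the sign and numerical factor in the Gauss equation, and the index placement in the passage from $R$ to $\Ric$. Each of these is decided by a single computation in a model space (flat $\r^{n+1}$ or a round sphere), so the argument is bookkeeping rather than genuine difficulty.
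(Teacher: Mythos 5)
Your argument is correct and reaches both formulas, but it is routed slightly differently from the paper's proof, so a comparison is worth recording. The paper stays with $g_+$ itself: $W(g)=0$ forces $W(g_+)=0$, and the Einstein condition gives $\Rho(g_+)=-\tfrac12 g_+$, hence $R(g_+)=\tfrac12\, g_+\owedge g_+$; the proof then computes the purely tangential components of $R(g_+)$ directly from the coordinate formula for the curvature tensor, which yields the Gauss-type identity $R(g_+)_{ijkl}=r^{-2}R(r)_{ijkl}+r^{-2}(\Gamma^0_{ik}\Gamma^0_{jl}-\Gamma^0_{il}\Gamma^0_{jk})$ with $\Gamma^0_{ij}=\tfrac1r g(r)_{ij}-\tfrac12\partial_r g(r)_{ij}$, and the lemma follows by comparing the two expressions, with $\Ric(r)$ obtained by contraction. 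You instead work with $\bar g=dr^2+g(r)$: conformal invariance of Weyl vanishing gives $R(\bar g)=-\bar\Rho\owedge\bar g$, the identity \eqref{Sch-bar} (Lemma 7.1 of \cite{juhl-ex}, already quoted in Lemma \ref{d-div}) supplies the tangential Schouten components $-\dot h$, and the abstract hypersurface Gauss equation with $\mathrm{II}=\pm\tfrac12\partial_r g(r)=\pm r\dot h$ produces the correction $-\tfrac{r^2}{2}\dot h\owedge\dot h$. The mechanism is the same in both proofs (Weyl-flatness upstairs plus a Gauss relation between bulk and slice curvature); what your version buys is that the explicit coordinate computation is replaced by a quotable Gauss equation whose sign and factor are legitimately pinned down by the model of the unit sphere in flat space, at the price of importing \eqref{Sch-bar}, which encodes the Einstein equation, whereas the paper needs only the immediate consequence $\Rho(g_+)=-\tfrac12 g_+$. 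Your trace identities are correct (in the paper's convention $\Ric=-\operatorname{tr}_{13}R$, consistent with $\Ric=(n\!-\!2)\Rho+\J g$ and $R=-\Rho\owedge g$), the sign ambiguity in $\mathrm{II}$ is indeed harmless since it enters quadratically, and the cross-check against $h(\rho)=(1-\lambda\rho)^2 g_0$, where both sides equal $-2\lambda(1-\lambda\rho)^2\,g_0\owedge g_0$, matches the paper's remark that the result is compatible with \eqref{Ricci} via $2\ddot h=\dot h h^{-1}\dot h$.
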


\begin{proof} On the one hand, we have
$$
R(g_+) = W(g_+) - \Rho(g_+) \owedge g_+ = W(g_+) + \frac{1}{2} (g_+
\owedge g_+).
$$
But it is well-known that $W(g)=0$ implies $W(g_+)=0$ (see Chapter 7
of \cite{FG-final} or Section 6.14 of \cite{juhl-book}). Therefore,
we obtain
$$
R(g_+) = \frac{1}{2} (g_+ \owedge g_+).
$$
On the other hand, we recall that the curvature tensor $R$ of a
metric $g$ is given by
$$
R_{ijkl} = \frac{1}{2} \left[g_{ik,jl} + g_{jl,ik} - g_{jk,il} -
g_{il,jk} \right] + g_{pq} (\Gamma^p_{ik} \Gamma^q_{jl} -
\Gamma^p_{il} \Gamma^q_{jk}),
$$
where
$$
\Gamma_{ij}^k = \frac{1}{2} g^{kl} (g_{il,j} + g_{jl,i} - g_{ij,l}).
$$
It follows that the tangential components of $R(g_+)$ are given by
$$
R(g_+)_{ijkl} = \frac{1}{r^2} R(r)_{ijkl} + \frac{1}{r^2}
(\Gamma^0_{ik} \Gamma^0_{jl} - \Gamma^0_{il} \Gamma^0_{jk});
$$
here the upper index $0$ refers to $\partial/\partial r$. But using
$$
\Gamma_{ij}^0 = \frac{1}{r} g(r)_{ij} - \frac{1}{2} \dot{g}(r)_{ij},
$$
we obtain
\begin{multline*}
R(g_+)_{ijkl} = \frac{1}{r^2} R(r)_{ijkl} + \frac{1}{r^2}
\Big(\left(\frac{1}{r} g(r)_{ik} - \frac{1}{2} \dot{g}(r)_{ik}
\right) \left(\frac{1}{r} g(r)_{jl} - \frac{1}{2}
\dot{g}(r)_{jl}\right) \\ - \left(\frac{1}{r} g(r)_{il} -
\frac{1}{2} \dot{g}(r)_{il}\right) \left(\frac{1}{r} g(r)_{jk} -
\frac{1}{2} \dot{g}(r)_{jk} \right)\Big).
\end{multline*}
Now the first assertion follows by comparing both formulas for the
components $R(g_+)_{ijkl}$. The result for the Ricci tensor follows
by contraction. We omit the details.
\end{proof}

\begin{rem} Under the assumption $W=0$, we have $2 \ddot{h} = \dot{h}
h^{-1} \dot{h}$. This is a consequence of the explicit formula
$h(\rho) = g - \rho \Rho + \rho^2/4 \Rho^2$. This shows that the
formula for $\Ric(r)$ in Lemma \ref{R-expansion} is compatible with
\eqref{Ricci}.
\end{rem}

Lemma \ref{R-expansion} has the following consequences.

\begin{lemm}\label{curv-term} For locally conformally flat metrics,
we have the expansions
$$
|R(r)|^2_{g(r)} = |R|_g^2 + (4(n\!-\!4) \tr (\Rho^3) + (12 \J
|\Rho|^2) r^2 + \cdots
$$
and
$$
|\Ric(r)|^2_{g(r)} = |\Ric|^2_g + ((n\!-\!2)(n\!-\!4) \tr(\Rho^3) +
5(n\!-\!2) \J|\Rho|^2 + 2 \J^3) r^2 + \cdots.
$$
Moreover,
$$
|R|_g^2 = 4(n\!-\!2)|\Rho|^2 + 4 \J^2 \quad \mbox{and} \quad
|\Ric|_g^2 = (n\!-\!2)^2 |\Rho|^2 + (3n\!-\!4) \J^2.
$$
\end{lemm}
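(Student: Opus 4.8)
The plan is to reduce everything to the Kulkarni--Nomizu calculus through Lemma~\ref{R-expansion}. Throughout I would fix a point $m\in M$, write $\rho=r^2$, and use the locally conformally flat expansion $h(\rho)=g-\rho\Rho+\tfrac14\rho^2\Rho^2$ (so $g(r)=h(\rho)$), which gives $h(0)=g$, $\dot h(0)=-\Rho$ and, by the Remark following Lemma~\ref{R-expansion}, $\ddot h(0)=\tfrac12\dot h(0)h^{-1}(0)\dot h(0)=\tfrac12\Rho^2$, where $(\Rho^2)_{ij}=\Rho_{ik}\Rho^k{}_j$. All traces and index operations are with respect to $g$ unless stated otherwise; in particular $\tr(h^{-1}\dot h)(0)=-\J$, and since $\tr(h^{-1}\dot h)=2\dot v/v$ with $v(r)=\det(1-\tfrac{r^2}{2}\Rho)$ one gets $\tfrac{d}{d\rho}\tr(h^{-1}\dot h)|_0=-\tfrac12|\Rho|^2$. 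Since the norms in the statement are taken with respect to $g(r)=h(\rho)$, I must also record that the metric used to raise indices varies, with $\tfrac{d}{d\rho}(g(r)^{ij})|_0=\Rho^{ij}$.

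First I would dispose of the two base identities, which are $\rho=0$ statements. Since $W=0$, \eqref{R-W} gives $R=-\Rho\owedge g$; expanding $|\Rho\owedge g|_g^2$ directly from the definition \eqref{KN-product} and using the symmetry of $\Rho$ produces $|R|_g^2=4(n-2)|\Rho|^2+4\J^2$. Contracting $R=-\Rho\owedge g$ once yields $\Ric=(n-2)\Rho+\J g$, and hence $|\Ric|_g^2=(n-2)^2|\Rho|^2+2(n-2)\J\langle\Rho,g\rangle+\J^2|g|_g^2=(n-2)^2|\Rho|^2+(3n-4)\J^2$. These also serve later as the $\rho=0$ consistency check for the two expansions.

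For the $r$-expansions I would start from Lemma~\ref{R-expansion}, i.e.\ $R(r)=h\owedge\dot h-\tfrac{\rho}{2}\,\dot h\owedge\dot h$ and $\Ric(r)=-(n-2)\dot h-h\tr(h^{-1}\dot h)+\rho(\dot h\tr(h^{-1}\dot h)-\dot h h^{-1}\dot h)$. At $\rho=0$ these reduce to $-\Rho\owedge g$ and $(n-2)\Rho+\J g$, consistent with the previous step. Differentiating in $\rho$ at $0$, the terms proportional to $\J\Rho$ cancel in the Ricci case and one finds
\[
\tfrac{d}{d\rho}R(r)\big|_0=\tfrac12\,\Rho\owedge\Rho+\tfrac12\,g\owedge\Rho^2,\qquad
\tfrac{d}{d\rho}\Ric(r)\big|_0=-\tfrac n2\,\Rho^2+\tfrac12|\Rho|^2 g.
\]
For a smooth family $T(\rho)$ of tensors one has $\tfrac{d}{d\rho}|T(\rho)|_{g(r)}^2\big|_0=2\langle T(0),\dot T(0)\rangle_g+(\text{terms from }\tfrac{d}{d\rho}g(r)^{ij}|_0=\Rho^{ij})$, and the algebraic symmetries of the curvature tensor (resp.\ the symmetry of $\Ric(0)$) collapse the metric-variation part to $4\,\Rho^{ia}R(0)_{i}{}^{jkl}R(0)_{ajkl}$ (resp.\ $2\,\Rho^{ia}\Ric(0)_{ij}\Ric(0)_{a}{}^{j}$).

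It then remains to evaluate these scalars. For the Ricci case $\Ric(0)=(n-2)\Rho+\J g$ reduces all inner products to $\tr(\Rho^3)$, $\J|\Rho|^2$ and $\J^3$, and collecting terms gives the coefficient $(n-2)(n-4)\tr(\Rho^3)+5(n-2)\J|\Rho|^2+2\J^3$ claimed for $|\Ric(r)|_{g(r)}^2$. For $|R(r)|_{g(r)}^2$ the input is the standard contraction calculus for $\Rho\owedge g$: the expression of the symmetric $2$-tensor $(\Rho\owedge g)_{ijkl}(\Rho\owedge g)_{a}{}^{jkl}$ as a combination of $\Rho^2$, $\J\Rho$, $|\Rho|^2 g$, $\J^2 g$, together with $\langle\Rho\owedge g,\Rho\owedge\Rho\rangle$ and $\langle\Rho\owedge g,g\owedge\Rho^2\rangle$; substituting these, the $\J^3$-contributions cancel and one is left with $4(n-4)\tr(\Rho^3)+12\J|\Rho|^2$. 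The only real difficulty here is purely mechanical: pinning down these Kulkarni--Nomizu contraction coefficients and keeping the index bookkeeping of the metric-variation term straight. Two built-in cross-checks keep this honest: the trace of the $2$-tensor appearing in the metric-variation term must reproduce $|R|_g^2=4(n-2)|\Rho|^2+4\J^2$, and each norm evaluated at $\rho=0$ must agree with the base-case formulas established above.
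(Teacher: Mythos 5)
Your proposal is correct and follows essentially the same route as the paper's proof: differentiate at $\rho=0$ using Lemma~\ref{R-expansion}, split the derivative of the norm into the term $2\,g(\dot{R}(0),R)$ (resp.\ $2\,g(\dot{\Ric}(0),\Ric)$) plus the inverse-metric variation term $4\,R_{ijkl}R_a{}^{jkl}\Rho^{ai}$ (resp.\ $2\,\Ric_{ij}\Ric_a{}^j\Rho^{ia}$), with $\dot{R}(0)=\tfrac12(g\owedge\Rho^2+\Rho\owedge\Rho)$ and $\dot{\Ric}(0)=\tfrac12(|\Rho|^2 g-n\Rho^2)$, and then evaluate the contractions via $R=-\Rho\owedge g$, $\Ric=(n\!-\!2)\Rho+\J g$. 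The stated Taylor data ($\dot h(0)=-\Rho$, $\ddot h(0)=\tfrac12\Rho^2$, $\tfrac{d}{d\rho}\tr(h^{-1}\dot h)|_0=-\tfrac12|\Rho|^2$) and all claimed coefficients check out, so the remaining work is exactly the mechanical Kulkarni--Nomizu bookkeeping the paper also performs.
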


\begin{proof} In view of
$$
|R(r)|^2_{g(r)} = R(r)_{ijkl} R(r)_{abcd} g(r)^{ai} g(r)^{bj}
g(r)^{ck} g(r)^{dl},
$$
the coefficient of $r^2$ in the expansion of $|R(r)|^2_{g(r)}$ is
given by the sum
$$
2 g(\dot{R}(0),R) + 4 R_{ijkl} R_a{}^{jkl} \Rho^{ai},
$$
where the dot denotes differentiation with respect to $\rho$. First,
using $R = - \Rho \owedge g$, we find
$$
R_{ijkl} R_a{}^{jkl} \Rho^{ai} = 2(n\!-\!4) \tr(\Rho^3) + 6 \J
|\Rho|^2.
$$
Next, Lemma \ref{R-expansion} and $h(\rho) = g - \rho \Rho +
\rho^2/4 \Rho^2$ imply that
\begin{equation}\label{R-exp}
\dot{R}(0) = \frac{1}{2} (g \owedge \Rho^2 + \Rho \owedge \Rho).
\end{equation}
Therefore, the results
\begin{align*}
g(g \owedge \Rho^2,R) & = -4(n\!-\!2) \tr(\Rho^3) - 4 \J |\Rho|^2, \\
g(\Rho \owedge \Rho,R) & = 8 \tr(\Rho^3) - 8\J |\Rho|^2
\end{align*}
yield
$$
g(\dot{R}(0),R) = -2(n\!-\!4) \tr(\Rho^3) - 6 \J |\Rho|^2.
$$
Now the first claim follows by combining these formulas. Similarly,
we observe that
$$
|\Ric(r)|^2_{g(r)} = |\Ric|_g^2 + (2 g(\dot{\Ric}(0),\Ric) + 2
\Ric_{ij} \Ric_a^j \Rho^{ia}) r^2 + \cdots.
$$
Lemma \ref{R-expansion} implies that
\begin{equation}\label{Ric-exp}
\Ric(r) = \Ric + \frac{1}{2}(|\Rho|^2 g - n \Rho^2) r^2 + \cdots,
\end{equation}
i.e., $\dot{\Ric}(0) = \frac{1}{2} (|\Rho|^2 g - n \Rho^2)$. Now the
second claim follows by a direct calculation using $\Rho =
(n\!-\!2)\Ric + \J g$.
\end{proof}

The extension of Lemma \ref{curv-term} to general metrics requires
formulas for the first two terms of the expansions of $R(r)$ and
$\Ric(r)$ in terms of $g$. The following result provides that
information.

\begin{lemm}\label{R-general} The expansions of $R(r)$ and $\Ric(r)$
start with
$$
(W - g \owedge \Rho) + T r^2 + \cdots
$$
with
$$
T_{ijkl} = \frac{1}{2} (\nabla_k (\Co)_{lij} - \nabla_l (\Co)_{kij})
- \frac{1}{2} (R_i{}^a{}_{kl} \Rho_{aj} + R^a{}_{jkl} \Rho_{ai})
$$
and
$$
\Ric + \frac{1}{2} (|\Rho|^2 g - n \Rho^2 + \B) r^2 + \cdots.
$$
Here $\B$ and $\Co$ are the respective Bach and Cotton tensors
\eqref{cotton}. These expansions generalize \eqref{R-exp} and
\eqref{Ric-exp}.
\end{lemm}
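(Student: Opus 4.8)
\textbf{Proof proposal for Lemma \ref{R-general}.}

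The plan is to compute the $O(r^2)$ Taylor coefficients of the full Riemann and Ricci tensors of the family $g(r)$ directly from the defining expansion $g(r) = g - r^2\Rho + r^4\tfrac14(\Rho^2 - \B/(n-4)) + \cdots$ (formula \eqref{g-exp-4}), rather than through the ambient/Poincaré-Einstein curvature as in the conformally flat case of Lemma \ref{R-expansion}. Write $h(\rho)$ with $h(r^2)=g(r)$, so $h(\rho) = g - \rho\Rho + \rho^2\tfrac14(\Rho^2 - \B/(n-4)) + \cdots$; in particular $\dot h(0) = -\Rho$. First I would recall the general variational formula for the Riemann tensor under a linear perturbation $h_\rho = g + \rho k + O(\rho^2)$ of a metric: the first $\rho$-derivative of $R(h_\rho)$ at $\rho=0$ is a universal expression in $\nabla^2 k$ and $R\cdot k$. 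Specializing to $k = \dot h(0) = -\Rho$ and using the Bianchi-type identity $\nabla_k\Rho_{ij} - \nabla_j\Rho_{ik} = \Co_{ijk}$ (the Cotton tensor, \eqref{cotton}) to rewrite the second-derivative-of-$\Rho$ terms, one should arrive at the stated
$$
T_{ijkl} = \tfrac12(\nabla_k(\Co)_{lij} - \nabla_l(\Co)_{kij}) - \tfrac12(R_i{}^a{}_{kl}\Rho_{aj} + R^a{}_{jkl}\Rho_{ai}).
$$
As a consistency check, when $W=0$ one has $\Co=0$ and $R = -g\owedge\Rho$, and the surviving curvature-times-$\Rho$ terms must reproduce \eqref{R-exp}, i.e. $\dot R(0) = \tfrac12(g\owedge\Rho^2 + \Rho\owedge\Rho)$; this is a good sanity test to perform.

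Next I would pass to the Ricci tensor by contracting. The contraction of $T$ produces a term linear in $\nabla\Co$ and a term quadratic in curvature-and-$\Rho$; the point is that the $\nabla\Co$ contribution assembles, via the contracted second Bianchi identity and the definition of the Bach tensor $\B_{ij} = \nabla^k\Co_{ikj} + \Rho^{kl}W_{kijl}$ (or the equivalent trace expression), into the combination $\tfrac12(|\Rho|^2 g - n\Rho^2 + \B)$. Concretely I would contract $T_{ijkl}g^{jl}$, handle the divergence-of-Cotton term first to extract $\B$, then collapse the remaining algebraic curvature terms using $R = W - \Rho\owedge g$ and the Kulkarni-Nomizu identities for $g^{jl}(\Rho\owedge g)_{ijkl}$ etc. When $W=0$ the Bach tensor vanishes in the relevant way and the formula must collapse to \eqref{Ric-exp}, $\dot{\Ric}(0) = \tfrac12(|\Rho|^2 g - n\Rho^2)$, which is the second cross-check.

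The main obstacle will be the bookkeeping in the curvature-times-$\Rho$ terms of $T$ and especially in the Ricci contraction: one must be scrupulous about index symmetries of $R$, about which Bianchi identity is being invoked, and about sign conventions for $R_{ijk}{}^l$ and for $\owedge$ as fixed in \eqref{normal} and \eqref{KN-product}, since the Bach tensor enters only after several cancellations and an error in any intermediate contraction will spoil the identification of the $\B$ term. A secondary subtlety is that the $r^4$-coefficient $\tfrac14(\Rho^2 - \B/(n-4))$ of $g(r)$ does \emph{not} enter the $O(r^2)$ coefficients of $R(r)$ and $\Ric(r)$ — only $\dot h(0) = -\Rho$ does — so the appearance of $\B$ in $\dot{\Ric}(0)$ comes purely from differentiating the Ricci tensor (which involves second derivatives of the metric, hence of $\Rho$, hence $\nabla\Co$) and not from the obstruction tensor in the metric expansion; I would make this explicit to avoid the tempting but wrong shortcut of reading $\B$ off the $r^4$ term. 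Once $T$ and $\dot{\Ric}(0)$ are in hand, the two displayed expansions follow by substituting $\rho = r^2$, and formulas \eqref{R-exp}, \eqref{Ric-exp} are recovered as the $W=0$ specializations.
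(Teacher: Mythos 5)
Your treatment of the $r^2$-coefficient $T$ of the Riemann tensor is exactly the paper's route: apply the standard variational formula (Besse, Theorem 1.174) to the linear perturbation $k=\dot h(0)=-\Rho$, commute covariant derivatives, and reorganize via the Cotton tensor and the first Bianchi identity; the consistency check against \eqref{R-exp} in the conformally flat case is fine. For the Ricci statement, however, you take a genuinely different route from the paper. You propose to contract the linearized curvature directly and to extract $\B$ from the divergence-of-Cotton terms. The paper instead avoids this computation entirely: it combines the order-$\rho$ consequence of the Einstein equation \eqref{Ricci} (equation (6.9.14) of \cite{juhl-book}), namely $4(n\!-\!4)g_{(4)}=|\Rho|^2 g-4\Rho^2-2\dot{\Ric}(0)$, with the explicit formula $4(n\!-\!4)g_{(4)}=(n\!-\!4)\Rho^2-\B$ from \eqref{g-exp-4}, and simply solves for $\dot{\Ric}(0)=\tfrac12(|\Rho|^2g-n\Rho^2+\B)$. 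Your direct contraction buys independence from the Fefferman--Graham machinery (it only uses the linearization at $g_{(2)}=-\Rho$), while the paper's argument buys brevity by letting the already-established structure of $g_{(4)}$ do the work.

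Two cautions. First, your dismissal of ``reading $\B$ off the $r^4$ term'' as a ``tempting but wrong shortcut'' overshoots: it would indeed be wrong to let $g_{(4)}$ contribute directly to the $r^2$-coefficient of $\Ric(r)$, but the paper's use of $g_{(4)}$ is legitimate, because the Einstein equation at order $\rho$ expresses $g_{(4)}$ through the linearized Ricci of $g_{(2)}$, so solving that relation for $\dot{\Ric}(0)$ is a valid (and in fact the paper's) argument. Second, in your contraction step you only mention $g^{jl}T_{ijkl}$; since $T$ is the variation of the $(0,4)$ curvature tensor, you must also include the term coming from the variation of the inverse metric in the trace, $\dot{\Ric}(0)_{ik}=\Rho^{jl}R_{ijkl}+g^{jl}T_{ijkl}$ (note $\dot{(h^{-1})}(0)=+\Rho$, cf.\ \eqref{inverse}). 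Omitting $\Rho^{jl}R_{ijkl}$ would spoil the algebraic part $\tfrac12(|\Rho|^2g-n\Rho^2)$ and the identification of $\B$, so make this term explicit before collapsing the curvature terms with $R=W-\Rho\owedge g$.
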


\begin{proof} We recall the variational formula
\begin{multline*}
\dot{R}_{ijkl}[h] = (d/dt)|_0 (R(g+th)_{ijkl}) \\
= \frac{1}{2} ( \nabla_i \nabla_k h_{jl} + \nabla_j \nabla_l h_{ik}
- \nabla_i \nabla_l h_{jk} - \nabla_j \nabla_k h_{il} +
R_{ijk}{}^ah_{al} - R_{ijl}{}^a h_{ka})
\end{multline*}
(\cite{besse} Theorem 1.174). For $g(\rho) = g - \rho \Rho$, it
yields
$$
\dot{R}_{ijkl} = \frac{1}{2} (-\nabla_i \nabla_k \Rho_{jl} -
\nabla_j \nabla_l \Rho_{ik} + \nabla_i \nabla_l \Rho_{jk} + \nabla_j
\nabla_k \Rho_{il} - R_{ijk}{}^a \Rho_{al} + R_{ijl}{}^a \Rho_{ka}).
$$
By interchanging covariant derivatives, we find
\begin{multline*}
\dot{R}_{ijkl} = \frac{1}{2} (\nabla_k (\nabla_j \Rho_{il} -
\nabla_i \Rho_{jl}) + \nabla_l (\nabla_i \Rho_{jk} - \nabla_j
\Rho_{ik}) - R_{ijk}{}^a \Rho_{al} + R_{ijl}{}^a \Rho_{ak} \\ -
[\nabla_i,\nabla_k] \Rho_{jl} - [\nabla_j,\nabla_l] \Rho_{ik} +
[\nabla_j,\nabla_k] \Rho_{il} + [\nabla_i,\nabla_l] \Rho_{jk}).
\end{multline*}
This identity can be written as the sum of
\begin{equation*}
\dot{R}_{ijkl} = \frac{1}{2} (\nabla_k (\Co)_{lij} - \nabla_l
(\Co)_{kij}) + \frac{1}{2} (-R_{ijk}{}^a \Rho_{al} + R_{ijl}{}^a
\Rho_{ak})
\end{equation*}
and
\begin{align*}
& \frac{1}{2} (R_{ikj}{}^a \Rho_{al} + R_{ikl}{}^a \Rho_{aj}
+ R_{jli}{}^a \Rho_{ak} + R_{jlk}{}^a \Rho_{ai}) \\
& - \frac{1}{2} (R_{jki}{}^a \Rho_{al} + R_{jkl}{}^a \Rho_{ai} +
R_{ilj}{}^a \Rho_{ak} + R_{ilk}{}^a \Rho_{aj}).
\end{align*}
By the first Bianchi identity, the latter sum equals
$$
-\frac{1}{2} (R_{jik}{}^a \Rho_{al} + R_{kli} {} ^a \Rho_{aj} +
R_{ijl}{}^a \Rho_{ak} + R_{lkj}{}^{a} \Rho_{ai}).
$$
Summarizing yields
$$
\dot{R}_{ijkl} =  \frac{1}{2} (\nabla_k (\Co)_{lij} - \nabla_l
(\Co)_{kij}) - \frac{1}{2} (R_i{}^a{}_{kl} \Rho_{aj} + R^a{}_{jkl}
\Rho_{ai}).
$$
This proves the first assertion. Let $g_{(4)}$ be the coefficient of
$r^4$ in the expansion of $g(r)$. Then
$$
4(n-4) g_{(4)} = |\Rho|^2 g - 4 \Rho^2 - 2 \dot{\Ric}(0)
$$
by (6.9.14) of \cite{juhl-book} and
$$
4(n-4) g_{(4)} = (n-4) \Rho^2 - \B
$$
by \eqref{g-exp-4}. This proves the second assertion.
\end{proof}

The following result extends Lemma \ref{curv-term} to general
metrics.

\begin{lemm}\label{curv-term-gen} For general metrics, the coefficients
of $r^2$ in the expansions of the quantities $|R(r)|_{g(r)}^2$ and
$|\Ric(r)|_{g(r)}^2$ are
$$
4(n\!-\!4) \tr (\Rho^3) + 12 \J |\Rho|^2 + 2 \nabla_k(\Co)_{lij}
R^{ijkl} + 2 W_{ijkl} W^{ajkl} \Rho_a^i - 8 W_{ijkl}
\Rho^{jl}\Rho^{ik}
$$
and
$$
(n\!-\!2)(n\!-\!4) \tr(\Rho^3) + 5(n\!-\!2) \J|\Rho|^2 + 2 \J^3 +
(n\!-\!2)(\B,\Rho),
$$
respectively.
\end{lemm}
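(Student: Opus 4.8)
The plan is to expand $|R(r)|^2_{g(r)}$ and $|\Ric(r)|^2_{g(r)}$ to first order in $r^2$, using the Taylor expansions of $R(r)$, $\Ric(r)$ and $g(r)^{-1}$ supplied by Lemma \ref{R-general}, and then to simplify the resulting contractions with the help of the algebraic identities for the Weyl, Schouten and Cotton tensors. First I would write $|R(r)|^2_{g(r)} = R(r)_{ijkl} R(r)_{abcd}\, g(r)^{ia} g(r)^{jb} g(r)^{kc} g(r)^{ld}$ and read off that the coefficient of $r^2$ is the sum of a term coming from the variation of $R(r)$ — namely $2\, g(\dot R(0), R)$ — and four equal terms coming from the variation of each inverse metric factor, i.e. $4\, R_{ijkl} R_a{}^{jkl}\Rho^{ai}$, using $g(r)^{-1} = g^{-1} + r^2 \Rho + \cdots$ (formula \eqref{inverse}) so that $\dot{(g^{-1})}(0) = \Rho$. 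The same structure applies to $|\Ric(r)|^2_{g(r)}$: its $r^2$-coefficient is $2\, g(\dot{\Ric}(0),\Ric) + 2\, \Ric_{ij}\Ric_a{}^j \Rho^{ia}$, where $\dot{\Ric}(0) = \tfrac12(|\Rho|^2 g - n\Rho^2 + \B)$ by Lemma \ref{R-general}.

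For the $R$-computation I would substitute $R = W - \Rho\owedge g$ and $\dot R(0)_{ijkl} = \tfrac12(\nabla_k(\Co)_{lij} - \nabla_l(\Co)_{kij}) - \tfrac12(R_i{}^a{}_{kl}\Rho_{aj} + R^a{}_{jkl}\Rho_{ai})$. The term $4\, R_{ijkl}R_a{}^{jkl}\Rho^{ai}$ was already evaluated in the conformally flat reduction of Lemma \ref{curv-term} when $R = -\Rho\owedge g$; in the general case it produces an extra Weyl contribution, so one needs $R_{ijkl}R_a{}^{jkl}\Rho^{ai} = |W\cdot\Rho|$-type terms plus the conformally flat value $2(n-4)\tr(\Rho^3) + 6\J|\Rho|^2$. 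Likewise $g(\dot R(0),R)$ splits into the Cotton-derivative piece $\nabla_k(\Co)_{lij}R^{ijkl}$ and curvature-Schouten contractions. Expanding everything in terms of $W$ and $\Rho$ using $R = W - \Rho\owedge g$, the Kulkarni-Nomizu identities \eqref{KN-product}, the tracelessness and symmetries of $W$, and the first Bianchi identity for $W$ (as used repeatedly in the proof of Lemma \ref{FGI}), one collects the purely-$\Rho$ terms, which must reproduce the conformally flat answer $4(n-4)\tr(\Rho^3) + 12\J|\Rho|^2$ of Lemma \ref{curv-term}, together with the $W$-dependent remainder $2\nabla_k(\Co)_{lij}R^{ijkl} + 2 W_{ijkl}W^{ajkl}\Rho_a^i - 8 W_{ijkl}\Rho^{jl}\Rho^{ik}$. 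The Ricci computation is cleaner: since $\Ric$ and its variation are expressed directly in $\Rho$, $\J$ and $\B$, one substitutes $\Rho = \tfrac{1}{n-2}(\Ric - \J g)$-inverse relations, i.e. $\Ric = (n-2)\Rho + \J g$, into $2g(\dot\Ric(0),\Ric) + 2\Ric_{ij}\Ric_a^j\Rho^{ia}$ and simplifies; the conformally flat part reproduces $(n-2)(n-4)\tr(\Rho^3) + 5(n-2)\J|\Rho|^2 + 2\J^3$ from Lemma \ref{curv-term}, and the new $\B$-term contributes exactly $(n-2)(\B,\Rho)$ because $\B$ is trace-free and $g(\B g - \text{stuff}, \Ric) $ collapses to $(n-2)(\B,\Rho)$.

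The main obstacle I expect is the bookkeeping in the $R$-term: correctly expanding the two quartic-in-curvature contractions $g(\dot R(0),R)$ and $R_{ijkl}R_a{}^{jkl}\Rho^{ai}$ into the $W$- and $\Rho$-sectors and verifying that the mixed $W$-$\Rho$ and pure-$W$ pieces assemble into precisely $2W_{ijkl}W^{ajkl}\Rho_a^i - 8 W_{ijkl}\Rho^{jl}\Rho^{ik}$ without residual terms. This requires careful use of the first Bianchi identity for $W$ to handle terms like $W_{ijkl}\Rho^{mj}W^m{}_{\cdot}$ — exactly the kind of manipulation carried out in the proof of Lemma \ref{FGI} — and attention to index symmetries so that the coefficients $2$ and $-8$ come out right. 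A useful consistency check is that setting $W=0$ must collapse the whole expression back to Lemma \ref{curv-term}, and a second check is that the $r^2$-coefficients, together with $\scal(r)$, $E(r)$ and $v(r)$, must feed correctly into Corollary \ref{a4r-gen} to yield a formula for $a_{(4,2)}$ consistent with Proposition \ref{a42} in the conformally flat case.
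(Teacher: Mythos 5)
Your plan is correct and follows essentially the same route as the paper: the $r^2$-coefficients are written as $2\,g(\dot R(0),R)+4\,R_{ijkl}R_a{}^{jkl}\Rho^{ai}$ (resp. $2\,g(\dot{\Ric}(0),\Ric)+2\,\Ric_{ij}\Ric_a{}^j\Rho^{ia}$), the variations are taken from Lemma \ref{R-general}, and the contractions are simplified via $R=W-\Rho\owedge g$, the Kulkarni--Nomizu identity and the Bianchi-type manipulations already used for Lemma \ref{curv-term} and Lemma \ref{FGI}. The paper's own proof is exactly this combination (stated tersely, plus the identity $(\Rho\owedge g)_{ijkl}(\Rho\owedge g)^{ajkl}\Rho_a^i=2(n\!-\!4)\tr(\Rho^3)+6\J|\Rho|^2$), so no further comment is needed.
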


\begin{proof} It suffices to combine Lemma \ref{R-general} with the
arguments in the proof of Lemma \ref{curv-term}. Note also that
$$
(\Rho \owedge g)_{ijkl} (\Rho \owedge g)^{ajkl} \Rho_a^i = 2
(n\!-\!4) \tr (\Rho^3) + 6 \J |\Rho|^2.
$$
The proof is complete.
\end{proof}

Lemma \ref{R-expansion} (or \eqref{scal}) implies that the expansion
of $\scal(r)$ starts with
\begin{equation}\label{scal-expansion}
\scal(r) = \scal + ((n\!-\!2) |\Rho|^2 + \J^2) r^2 + \cdots.
\end{equation}

Next, we expand the potential $E(r)$. Lemma \ref{E-final} yields

\begin{lemm}\label{E-expand}
$$
E(r) = (n\!-\!2) v_2 + (2(n\!-\!4) v_4 - (n\!-\!3) v_2^2) r^2 +
\cdots \st E_0 + E_2 r^2 + \cdots.
$$
\end{lemm}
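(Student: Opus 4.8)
The plan is to read off the expansion directly from the closed formula for $E(r)$ in Lemma \ref{E-final},
$$
E(r) = -\frac{1}{2}\frac{\ddot{v}}{v} + \frac{1}{2}\frac{n-1}{r}\frac{\dot{v}}{v} + \frac{1}{4}\frac{\dot{v}^2}{v^2},
$$
by substituting the even power series $v(r) = 1 + v_2 r^2 + v_4 r^4 + \cdots$ (legitimate since $v(0)=1$) and collecting the coefficients of $r^0$ and $r^2$.

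First I would record the elementary auxiliary expansions $\dot{v}(r) = 2 v_2 r + 4 v_4 r^3 + \cdots$, $\ddot{v}(r) = 2 v_2 + 12 v_4 r^2 + \cdots$, $r^{-1}\dot{v}(r) = 2 v_2 + 4 v_4 r^2 + \cdots$, together with the geometric-series inversions $1/v = 1 - v_2 r^2 + \cdots$ and $1/v^2 = 1 - 2 v_2 r^2 + \cdots$; note also that $\dot{v}^2 = 4 v_2^2 r^2 + O(r^4)$, so the third summand contributes nothing at order $r^0$ and only the single term $v_2^2 r^2$ at order $r^2$.

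Then I would multiply out the three summands up to order $r^2$: one finds $-\frac{1}{2}\ddot{v}/v = -v_2 + (v_2^2 - 6 v_4) r^2 + \cdots$, next $\frac{1}{2}(n-1) r^{-1}\dot{v}/v = (n-1) v_2 + (n-1)(2 v_4 - v_2^2) r^2 + \cdots$, and finally $\frac{1}{4}\dot{v}^2/v^2 = v_2^2 r^2 + \cdots$. Adding the three, the constant terms sum to $(n-2) v_2 = E_0$, and the coefficients of $r^2$ sum to $(v_2^2 - 6 v_4) + (n-1)(2 v_4 - v_2^2) + v_2^2 = 2(n-4) v_4 - (n-3) v_2^2 = E_2$, which is the assertion.

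There is no genuine obstacle here; the only point demanding (minor) care is keeping the $1/v$ and $1/v^2$ expansions to the order actually needed, so that no $r^2$-contribution is dropped, and observing that since $v$ is an even function the expansion of $E(r)$ produced this way is automatically even, consistent with writing $E(r) = E_0 + E_2 r^2 + \cdots$. As always, for even $n$ and general metrics the identity is to be understood up to order $r^{n-2}$.
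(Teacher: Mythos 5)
Your computation is correct and is exactly the argument the paper intends: Lemma \ref{E-final} gives the closed formula $E(r) = -\tfrac{1}{2}\ddot{v}/v + \tfrac{1}{2}(n-1)r^{-1}\dot{v}/v + \tfrac{1}{4}\dot{v}^2/v^2$, and the lemma is obtained by substituting $v(r)=1+v_2r^2+v_4r^4+\cdots$ and collecting the coefficients of $r^0$ and $r^2$, just as you do. Your expansions and the resulting values $E_0=(n-2)v_2$ and $E_2=2(n-4)v_4-(n-3)v_2^2$ check out.
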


Now, by Corollary \ref{a4r-gen}, $360 a_{(4,2)}$ equals the
coefficient of $r^2$ in the expansion of the sum
\begin{multline}\label{exp-sum}
\big(2 |R(r)|_{g(r)}^2 - 2 |\Ric(r)|_{g(r)}^2 + 5 \scal(r)^2 \\
+ 12 \Delta_{g(r)}(\scal(r) + 5 E(r)) + 60 \scal(r) E(r) + 180 E(r)^2
\big) v(r).
\end{multline}
We use the above results to make the latter sum explicit. First, in the
conformally flat case, we find that
\begin{multline*}
(2|R(r)|_{g(r)}^2 - 2 |\Ric(r)|_{g(r)}^2 + 5 \scal(r)^2) v(r) \\
= (-2(n\!-\!6)(n\!-\!2)|\Rho|^2 + 2(10n^2\!-\!23n\!+\!18) \J^2) \\ +
(-2(n\!-\!6)(n\!-\!4) \tr(\Rho^3) + 3(7n^2\!-\!26n\!+\!32) \J
|\Rho|^2 - (10n^2\!-\!43n\!+\!42)\J^3) r^2 + \cdots.
\end{multline*}
Similarly, we obtain
\begin{multline*}
(60 \scal(r) E(r) + 180 E(r)^2) v(r) \\
= -15 (n^2\!-\!4)\J^2 + \frac{15}{2} ((n^2\!-\!2n\!-\!4) \J^3 -
2(n^2\!-\!8) \J|\Rho|^2) r^2 + \cdots.
\end{multline*}
These results imply that the coefficient of $r^2$ in \eqref{exp-sum}
is the sum of
\begin{multline}\label{v-contribution}
-2(n\!-\!4)(n\!-\!6) \tr(\Rho^3) + 6 (n\!-\!4)(n\!-\!9) \J
|\Rho|^2 - \frac{1}{2} (n\!-\!4)(5n\!-\!36) \J^3 \\
= 48 (n\!-\!4)(n\!-\!6) v_6 + 48 (n\!-\!4)(n\!-\!12) v_2 v_4 - 12
(n\!-\!4)(n\!-\!16) v_2^3
\end{multline}
and the coefficient of $r^2$ in the expansion of
\begin{equation}\label{pot-terms}
12 \Delta_{g(r)}(\scal(r) + 5 E(r)) v(r).
\end{equation}
We continue with an analysis of the contribution \eqref{pot-terms}. Lemma
\ref{conjugate} and \eqref{scal-expansion} yield the expansion
\begin{align*}
\Delta_{g(r)}(\scal(r)) v(r) & = - \delta (g(r)^{-1} v(r)
d(\scal(r))) \nonumber \\ & = \Delta(\scal) + \left(
\Delta((n\!-\!2) |\Rho|^2 + \J^2) - \delta ((\Rho + v_2) d(\scal))
\right) r^2 + \cdots,
\end{align*}
where $\delta$ and $\Delta$ are defined by $g$. Note that
$$
\scal = -4(n\!-\!1) v_2 \quad \mbox{and} \quad (n\!-\!2) |\Rho|^2 +
\J^2 = -8(n\!-\!2) v_4 + 4(n\!-\!1) v_2^2.
$$
Similarly, Lemma \ref{conjugate} implies that the expansion of the term
$\Delta_{g(r)}(E(r)) v(r)$ starts with
$$
\Delta (E_0) + \left(\Delta (E_2) - \delta((\Rho + v_2) dE_0)
\right)r^2 + \cdots.
$$
Hence, by
$$
E_0 = (n\!-\!2)v_2 \quad \mbox{and} \quad E_2 = 2(n\!-\!4) v_4 -
(n\!-\!3)v_2^2
$$
(see Lemma \ref{E-expand}), the coefficient of $r^2$ can be written
in the form
$$
2(n\!-\!4) \Delta (v_4) - (n\!-\!3) \Delta (v_2^2) - (n\!-\!2)
\delta((\Rho + v_2) dv_2).
$$
These results show that the coefficient of $r^2$ in the expansion of
\eqref{pot-terms} is given by the sum
\begin{equation*}
24(n\!-\!12) \Delta (v_4) - 12(n\!-\!11) \Delta (v_2^2) -
12(n\!-\!6) \delta((\Rho + v_2)dv_2).
\end{equation*}
Simplifying and summarizing, we have proved the following result.

\begin{prop}\label{a42-final} For locally conformally flat $(M^n,g)$
of dimension $n \ge 3$, we have
\begin{multline}\label{a42-closed}
360 a_{(4,2)} = 48 (n\!-\!4)(n\!-\!6) v_6 + 48 (n\!-\!4)(n\!-\!12)
v_2 v_4 - 12 (n\!-\!4)(n\!-\!16) v_2^3 \\ + 24 (n\!-\!12)
\Delta(v_4) - 6(n\!-\!16) \Delta(v_2^2) - 12 (n\!-\!6) \delta (\Rho
d v_2).
\end{multline}
Here we use the convention $\Delta = -\delta d$.
\end{prop}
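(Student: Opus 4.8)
The plan is to establish Proposition~\ref{a42-final} by invoking Corollary~\ref{a4r-gen}, which expresses $a_4(r)$ for the holographic Laplacian as a Gilkey-type heat kernel coefficient of the Laplace-type operator $\H(r;g)$ on the metric $g(r)$, and then to extract the coefficient of $r^2$ in the resulting expansion. Concretely, by Corollary~\ref{a4r-gen} we have
\begin{multline*}
360\, a_4(r) = \big(2|R(r)|_{g(r)}^2 - 2|\Ric(r)|_{g(r)}^2 + 5\scal(r)^2 \\ + 12\Delta_{g(r)}(\scal(r)+5E(r)) + 60\scal(r)E(r) + 180 E(r)^2\big) v(r),
\end{multline*}
where $E(r)$ is the scalar potential of Lemma~\ref{E-final}. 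Since $v(r) = 1 + O(r^2)$ and all ingredients are known to order $r^2$, the task reduces to assembling the $r^2$-coefficient from three groups of terms: the purely curvature-quadratic group $(2|R(r)|^2 - 2|\Ric(r)|^2 + 5\scal(r)^2)v(r)$, the $E$-quadratic and $E$-scalar-product group $(60\scal(r)E(r)+180E(r)^2)v(r)$, and the divergence group $12\Delta_{g(r)}(\scal(r)+5E(r))v(r)$.

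First I would collect the needed expansions. For the curvature terms I would restrict to the locally conformally flat case and use Lemma~\ref{R-expansion}, Lemma~\ref{curv-term} (giving the $r^2$-coefficients of $|R(r)|_{g(r)}^2$ and $|\Ric(r)|_{g(r)}^2$ in terms of $\tr(\Rho^3)$, $\J|\Rho|^2$, $\J^3$), and \eqref{scal-expansion} for $\scal(r)$. For the potential I would use Lemma~\ref{E-final} together with Lemma~\ref{E-expand}, which give $E(r) = E_0 + E_2 r^2 + \cdots$ with $E_0 = (n-2)v_2$ and $E_2 = 2(n-4)v_4 - (n-3)v_2^2$. For the divergence group I would apply the conjugation formula of Lemma~\ref{conjugate}, namely $v(r)\circ\delta_{g(r)} = \delta_g\circ v(r)\circ g(r)^{-1}$, which converts $\Delta_{g(r)}(\cdot)v(r)$ into $-\delta_g(g(r)^{-1}v(r)\,d(\cdot))$, and then expand $g(r)^{-1} = g^{-1} + \Rho r^2 + \cdots$ (using the conformally flat formula $g(r) = g - r^2\Rho + r^4/4\,\Rho^2$) and $v(r) = 1 + 2v_2 r^2 + \cdots$. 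Substituting $\scal = -4(n-1)v_2$ and the analogous rewritings of $(n-2)|\Rho|^2 + \J^2$ in terms of $v_2, v_4$, the $r^2$-coefficient of the divergence group becomes a linear combination of $\Delta(v_4)$, $\Delta(v_2^2)$ and $\delta(\Rho\,dv_2)$. Adding the curvature-derived part, which produces the $v_6$, $v_2v_4$, $v_2^3$ monomials via the identities $v_2 = -\J/2$, $8v_4 = \J^2-|\Rho|^2$, $v_6 = -(1/48)(\J^3 - 3\J|\Rho|^2 + 2\tr\Rho^3)$, yields the stated formula.

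The main obstacle will be the bookkeeping in the curvature-quadratic group: one must carefully track the $r^2$-contributions coming both from differentiating the metric contractions (i.e. from $g(r)^{-1} = g^{-1} + \Rho r^2 + \cdots$ acting on the four indices of $R$ and two of $\Ric$) and from the explicit $r^2$-terms in $R(r)$ and $\Ric(r)$ given by Lemma~\ref{R-expansion}, and then convert the resulting polynomial in $\tr(\Rho^3), \J|\Rho|^2, \J^3$ into the $v_{2k}$-basis without sign errors. A secondary check worth performing is consistency with Proposition~\ref{a42}, which was derived independently via the product metrics $\s^p\times\h^q$ and fixes the $v_6$, $v_2v_4$, $v_2^3$ coefficients; matching those three coefficients is a strong cross-validation, and the divergence terms (which integrate away and hence cannot be seen by that method) are pinned down by the present direct computation. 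Once all pieces are summed and simplified — with the overall $v(r)$ contributing only through its leading $1$ at this order since every other factor already carries the weight — the formula \eqref{a42-closed} follows. \hfill$\square$
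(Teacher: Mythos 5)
Your route is the paper's own: Corollary~\ref{a4r-gen} (Gilkey's $a_4$ applied to the Laplace-type realization of $\H(r)$ with respect to $g(r)$), the expansions in Lemmas~\ref{R-expansion}, \ref{curv-term}, \ref{E-expand} and \eqref{scal-expansion}, and Lemma~\ref{conjugate} to turn $12\Delta_{g(r)}(\scal(r)+5E(r))v(r)$ into $g$-divergences, followed by conversion to the $v_{2k}$-basis. The structure is therefore sound, but your handling of the overall factor $v(r)$ contains a concrete error that would spoil the coefficients. The closing claim that ``the overall $v(r)$ contributes only through its leading $1$ at this order'' is false: the bracket in Corollary~\ref{a4r-gen} at $r=0$ equals $360\,a_4(g)\neq 0$, so the $v_2r^2$ term of $v(r)$ contributes $360\,v_2\,a_4$ to $360\,a_{(4,2)}$. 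In the actual computation this is where a substantial part of the $\J^3$ and $\J|\Rho|^2$ coefficients comes from, and inside the divergence group it is precisely the $v(r)$-expansion in $g(r)^{-1}v(r)$ that produces the operator $\delta((\Rho+v_2)\,d(\cdot))$; its $v_2$-part is what converts $-12(n\!-\!11)\Delta(v_2^2)$ into the stated $-6(n\!-\!16)\Delta(v_2^2)$. Dropping these contributions yields a formula that already fails the Einstein consistency check $a_{(4,2)}=-c(n\!-\!4)a_4$ of Remark~\ref{a42-Einstein}.

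A second slip points the same way: you write $v(r)=1+2v_2r^2+\cdots$, but by definition the $v_{2k}$ are the Taylor coefficients of $v(r)$, so $v(r)=1+v_2r^2+v_4r^4+\cdots$ with $v_2=-\J/2$ (for an Einstein metric $v(r)=(1-cr^2)^n$ gives $v_2=-nc$); carrying the spurious factor $2$ through Lemma~\ref{conjugate} would again distort the $\Delta(v_2^2)$ and $\delta(\Rho\,dv_2)$ coefficients. With these two points corrected, your plan is exactly the paper's computation. The proposed cross-check against Proposition~\ref{a42} is reasonable but only partial: in the paper the product-metric argument determines the three monomial coefficients only up to a two-parameter family and itself imports the $v_6$-coefficient from the present direct calculation, so it cannot by itself certify all of \eqref{a42-closed}.
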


Note that Proposition \ref{a42-final} improves Proposition
\ref{a42}. For general metrics, the corresponding formula for
$a_{(4,2)}$ contains a few more terms. In fact, we find

\begin{prop}\label{a42-gen} For general manifolds $(M^n,g)$ of dimension
$n \ge 3$, the formula for $a_{(4,2)}$ contains the additional terms
\begin{equation*}
4 W_{ijkl} W^{ajkl} \Rho_a^i - 16 W_{ijkl} \Rho^{ik} \Rho^{jl} -
\J |W|^2 + 4 \nabla_k (\Co)_{lij} R^{ijkl} - 8 (\B,\Rho).
\end{equation*}
\end{prop}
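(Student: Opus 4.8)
The plan is to follow verbatim the strategy used for Proposition~\ref{a42-final}, only now keeping the terms that drop out in the locally conformally flat case. The starting point is Corollary~\ref{a4r-gen}, which writes $360\,a_4(r)$ as the product of $v(r)$ with
$2|R(r)|_{g(r)}^2 - 2|\Ric(r)|_{g(r)}^2 + 5\scal(r)^2 + 12\Delta_{g(r)}(\scal(r)+5E(r)) + 60\scal(r)E(r) + 180E(r)^2$,
and I would extract the coefficient of $r^2$. Writing $[\,\cdot\,]_j$ for the $r^j$-Taylor coefficient of the bracket and using $v(r) = 1 + v_2 r^2 + \cdots$ with $v_2 = -\tfrac12\J$, one has $360\,a_{(4,2)} = [\,\cdot\,]_2 + v_2\,[\,\cdot\,]_0$, so I need the $r^0$- and $r^2$-coefficients of each ingredient of the bracket and of $\Delta_{g(r)}$ applied to $\scal(r)$ and $E(r)$.

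First I would record which ingredients are unchanged from the conformally flat computation once everything is written in terms of the renormalized volume coefficients $v_{2k}$ and of $\Rho,\J$. The expansion $\scal(r) = \scal + ((n-2)|\Rho|^2 + \J^2)r^2 + \cdots$ of \eqref{scal-expansion} is valid for general metrics: it follows from \eqref{scal}, or by tracing the expansion of $\Ric(r)$ in Lemma~\ref{R-general} and using that $\B$ is trace-free. The expansion of $E(r)$ (Lemma~\ref{E-final}, Lemma~\ref{E-expand}) is a function of $v(r)$ alone and hence identical, and $12\Delta_{g(r)}(\scal(r)+5E(r))\,v(r)$ has the same expansion by Lemma~\ref{conjugate} together with $g(r)^{-1} = g^{-1} + r^2\Rho + \cdots$. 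Thus the only new contributions enter through $|R(r)|_{g(r)}^2$ and $|\Ric(r)|_{g(r)}^2$: (i) the term $|W|^2$ in $|R|_g^2 = |W|^2 + 4(n-2)|\Rho|^2 + 4\J^2$, which sits in $[\,\cdot\,]_0$ and so contributes $2|W|^2\cdot v_2 = -\J|W|^2$ to $360\,a_{(4,2)}$; and (ii) the $W$-, $\Co$- and $\B$-dependent pieces of $[\,\cdot\,]_2$, obtained from the general-metric expansions $R(r) = (W - g\owedge\Rho) + Tr^2 + \cdots$, with $T$ built from $\nabla\Co$ and from $R$ contracted with $\Rho$, and $\Ric(r) = \Ric + \tfrac12(|\Rho|^2 g - n\Rho^2 + \B)r^2 + \cdots$ (Lemma~\ref{R-general}), whose effect on the squared norms is exactly what is recorded in Lemma~\ref{curv-term-gen}: the new $r^2$-terms are $2\nabla_k(\Co)_{lij}R^{ijkl} + 2W_{ijkl}W^{ajkl}\Rho_a^i - 8W_{ijkl}\Rho^{ik}\Rho^{jl}$ in $|R(r)|_{g(r)}^2$ and $(n-2)(\B,\Rho)$ in $|\Ric(r)|_{g(r)}^2$, entering $360\,a_{(4,2)}$ as $2$ times the first minus $2$ times the second.

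Assembling gives, over and above \eqref{a42-closed}, the terms $-\J|W|^2$, $4\nabla_k(\Co)_{lij}R^{ijkl}$, $4W_{ijkl}W^{ajkl}\Rho_a^i$, $-16W_{ijkl}\Rho^{ik}\Rho^{jl}$ and a Bach-tensor term. The hard part is fixing the coefficient of this last term, which is the one place where genuinely more care is needed than in the conformally flat case. Besides the direct contribution $-2(n-2)(\B,\Rho)$ from $|\Ric(r)|_{g(r)}^2$, I have to take into account that for general metrics the renormalized volume coefficient $v_6$ occurring in \eqref{a42-closed} is no longer the conformally flat expression \eqref{v6-flat}: already $g_{(4)} = \tfrac14(\Rho^2 - \B/(n-4))$, together with the trace of $g_{(6)}$, bring in $\B$-dependence, so re-expressing the purely $\Rho$-polynomial part of the $r^2$-expansion in terms of the true $v_6$ produces a further $(\B,\Rho)$ contribution. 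Combining this with $-2(n-2)(\B,\Rho)$, and using the second Bianchi identity and the Fefferman--Graham identity relating $\B$, $\Co$ and $W$ (the last step possibly only modulo a divergence, which is harmless for the stated conclusion), one is led to the coefficient $-8$ of $(\B,\Rho)$. The remaining steps are a mechanical extension of the bookkeeping already carried out for Proposition~\ref{a42-final}.
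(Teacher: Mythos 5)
Your proposal is correct and takes essentially the same route as the paper: the additional terms are read off from the $r^2$-coefficient of $\bigl(2|R(r)|^2_{g(r)}-2|\Ric(r)|^2_{g(r)}\bigr)v(r)$ via Lemma \ref{curv-term-gen} (together with $2|W|^2\,v_2=-\J|W|^2$ from the $r^0$-part), and the Bach coefficient arises by splitting the total $-2(n\!-\!2)(\B,\Rho)$ into the piece attributed to $48(n\!-\!4)(n\!-\!6)v_6$ for the general-metric $v_6$ and the leftover $-8(\B,\Rho)$. One small imprecision: the last step needs no Bianchi or Weyl--Cotton--Bach identity and no ``modulo a divergence'' allowance (the statement is pointwise) --- it only uses that the general $v_6$ differs from the conformally flat polynomial by the Bach term $-\tfrac{1}{24(n-4)}(\B,\Rho)$, so that the $v_6$-term accounts for exactly $-2(n\!-\!6)(\B,\Rho)$, leaving $-8(\B,\Rho)$.
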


\begin{proof} The additional terms are contained in the
contributions
$$
(2|R(r)|_{g(r)}^2 - 2|\Ric(r)|_{g(r)}^2) v(r).
$$
These are described in Lemma \ref{curv-term-gen}. We omit the
details of the calculation and note only that the resulting
contribution $-2(n\!-\!2)(\B,\Rho)$ splits as the sum of
$$
-2(n\!-\!6) (\B,\Rho) \quad \quad \mbox{(from $48(n\!-\!4)(n\!-\!6)
v_6$)}
$$
and $-8(\B,\Rho)$.
\end{proof}

\begin{rem}\label{a42-Einstein} Let $g$ be Einstein. Then $\Rho = 2 c g$
and $v(r) = (1-cr^2)^n$ for some constant $c$. In this case,
Proposition \ref{a42-final} yields
$$
360 a_{(4,2)} = - 4 c^3 n(n\!-\!4)(n\!-\!6) (5n^2\!-\!18n\!+\!4) -
2c(n\!-\!4)|W|^2 = -c(n\!-\!4) 360 a_4.
$$
This relation fits with the consequence $a_{(4,2)} = - c (n\!-\!4)
a_4$ of \eqref{hc-einstein}.
\end{rem}

\begin{rem}\label{byproduct} As a byproduct of the calculations, we find
that the coefficient $ 360 a_4 = 360 a_{(4,0)} $ equals the sum
\begin{multline*}
(-2(n\!-\!6)(n\!-\!2)|\Rho|^2 + 2(10n^2\!-\!23n\!+\!18) \J^2) - 15
(n^2\!-\!4) \J^2 \\ + 12 (5 \Delta (E_0) + \Delta (\scal)) + 2
|W|^2.
\end{multline*}
This formula implies the identity \eqref{CL-a4} for $a_4$ of the
conformal Laplacian.
\end{rem}

Next we use Proposition \ref{a42-final} to derive a closed formula
for the coefficient $c_{(4,2)}$ for locally conformally flat
metrics. We start by determining the conformal variation of the
total integral of $a_{(4,2)}$. Let
$$
360 a_{(4,2)}' \st 48(n\!-\!4)(n\!-\!6) v_6 + 48(n\!-\!4)(n\!-\!12)
v_2 v_4 - 12 (n\!-\!4)(n\!-\!16) v_2^3.
$$
Then
$$
360 \left(\int_M a_{(4,2)} dvol \right)^\bullet[\varphi] = 360
\left(\int_M a_{(4,2)}' dvol \right)^\bullet[\varphi]
$$
by Proposition \ref{a42-final}. Now partial integration and the
formulas in the proof of Proposition \ref{a6-div} show that the
latter variation is given by the sum of the scaling term
$$
360 (n\!-\!6) \int_M \varphi a_{(4,2)}' dvol
$$
and
\begin{multline*}
\int_M \varphi \Big( 24(n\!-\!4)(n\!-\!12) \Delta(v_4) - 24
(n\!-\!4)(n\!-\!12) \delta(v_2+1/2 \Rho)dv_2 \\ - 18
(n\!-\!4)(n\!-\!16) \Delta(v_2^2) \Big) dvol,
\end{multline*}
where $\Delta = -\delta d$. On the other hand, \eqref{CV-ac} implies
$$
360 \left(\int_M a_{(4,2)} dvol \right)^\bullet[\varphi] = 360
(n\!-\!6) \int_M \varphi a_{(4,2)} dvol - 180 \int_M \varphi
c_{(4,2)} dvol.
$$
These identities hold true for arbitrary $\varphi \in C^\infty(M)$.
Hence a comparison using Proposition \ref{a42-final} shows that
\begin{equation}\label{c42}
-180 c_{(4,2)} = 48 (n\!-\!12) \Delta (v_4) + 48 (n\!-\!3) \delta
(\Rho dv_2) + 36 n \Delta (v_2^2).
\end{equation}
But
$$
\tilde{c}_{(4,2)} = 240 v_2^3 - 576 v_2 v_4 - 48 \Delta v_4 - 36
\Delta v_2^2 - 48 \delta(\Rho d v_2)
$$
is a conformal primitive of $-180 c_{(4,2)}$ in the sense that
$$
\left(\int_M \tilde{c}_{(4,2)} dv \right)^\bullet[\varphi] = (n-6)
\int_M \varphi \tilde{c}_{(4,2)} dv - 180 \int_M \varphi c_{(4,2)}
dv.
$$
Hence the modified heat kernel coefficient
$$
a_{(4,2)}' \st 360 a_{(4,2)} - \tilde{c}_{(4,2)}
$$
satisfies
$$
\left(\int_M a_{(4,2)}' dv \right)^\bullet[\varphi] = (n-6) \int_M
\varphi a_{(4,2)}' dv.
$$
A calculation shows that
\begin{multline*}
a_{(4,2)}' = 48 (n\!-\!6)(n\!-\!4) v_6 + 48 (n\!-\!6)(n\!-\!10) v_2
v_4 - 12 (n\!-\!6) (n\!-\!14) v_2^3 \\ + 24 (n\!-\!10) \Delta v_4 -
6 (n\!-\!22) \Delta v_2^2 - 12 (n\!-\!10) \delta(\Rho d v_2).
\end{multline*}

Finally, we discuss extremal properties of the functional
\begin{equation}
\A_{(4,2)} \st \int_{M^6} a_{(4,2)} \dvol
\end{equation}
for locally conformally flat metrics in the critical dimension
$n=6$.

\begin{prop}\label{a42-extreme} Let $M^6$ be closed with a locally
conformally flat Einstein metric $g$.\footnote{Thus $g$ is a metric
of constant sectional curvature, i.e., $(M^6,g)$ is a closed space
form.} Then the restriction of the scale-invariant functional
$\A_{(4,2)}$ to the conformal class $c = [g]$ has a local maximum at
$g$. The local maximum is strict (modulo rescalings) unless
$(M^6,g)$ is isometric to a (rescaled) round sphere.
\end{prop}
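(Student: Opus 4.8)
The plan is to express $\A_{(4,2)}$ through renormalized volume coefficients, to compute its Hessian at $g$, and to conclude by Obata's inequality exactly as in Section~\ref{extremal}. First I would reduce to volume coefficients. Conformal rescalings preserve local conformal flatness, so Proposition~\ref{a42-final} applies along every path in $[g]$; in dimension $n=6$ the coefficients of $v_6$ and of $\delta(\Rho dv_2)$ in \eqref{a42-closed} vanish, while the remaining $\Delta(v_4)$ and $\Delta(v_2^2)$ terms are divergences. Hence
\[
\A_{(4,2)}(h) = \frac{1}{360}\int_{M^6}\bigl(240\, v_2(h)^3 - 576\, v_2(h)\, v_4(h)\bigr)\,\dvol_h \qquad (h\in[g]).
\]
Since the conformal variations of $v_2$ and $v_4$ are governed by \eqref{CT-v} (Theorem~\ref{RVC-CV}), the first variation $\A_{(4,2)}^\bullet[\varphi]$ is the integral of $\varphi$ against a divergence; at the Einstein metric $g$, where $v_2$ and $v_4$ are constant, this divergence vanishes, so $g$ is a critical point of $\A_{(4,2)}|_{[g]}$ and the second variation $\A_{(4,2)}^{\bullet\bullet}[\varphi]$ is independent of the second-order term of the path. (The criticality is also a special case of Proposition~\ref{critical-even}.)

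Next I would determine the Hessian. Along $\gamma(t)=e^{2t\varphi}g$, expanding $v_2(\gamma(t))$, $v_4(\gamma(t))$ and $\dvol_{\gamma(t)}$ to second order and using that $v_2,v_4$ are constant at $g$, the quadratic form $\A_{(4,2)}^{\bullet\bullet}[\varphi]$ becomes a \emph{universal} combination of $\int_{M^6}\varphi\Delta^2\varphi\,\dvol$, $c\int_{M^6}\varphi\Delta\varphi\,\dvol$ and $c^2\int_{M^6}\varphi^2\,\dvol$, where $c=\scal(g)/120$ and $\Delta=-\delta d$ (there is no $\Delta^3$ term, since each factor of $v_2,v_4$ carries at most two $\varphi$-derivatives under conformal variation). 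Two structural constraints then pin this combination down up to one scalar: scale invariance of $\A_{(4,2)}$ forces the $c^2\int\varphi^2$ term to vanish (a constant $\varphi$ gives zero), and diffeomorphism invariance forces the factor $\Delta+24c$, since on the round $\s^6$ the infinitesimal conformal factors of conformal diffeomorphisms are exactly the $\lambda_1=24c$ eigenfunctions of $-\Delta$ and must lie in the kernel of the Hessian. It then suffices to compute the coefficient of $\int_{M^6}(\Delta\varphi)^2\,\dvol$, which receives contributions only from $\tfrac{2}{3}\cdot 6\,v_2\int(v_2^\bullet[\varphi])^2\,\dvol$ and $-\tfrac{8}{5}\cdot 2\int v_2^\bullet[\varphi]\,v_4^\bullet[\varphi]\,\dvol$ — inside $\int v_4^{\bullet\bullet}[\varphi]\,\dvol$ the $(\Delta\varphi)^2$ contribution of the $\J^2$ part of $v_4$ cancels the $|\Hess\varphi|^2$ contribution of the $|\Rho|^2$ part after a Bochner identity. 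With $v_2^\bullet[\varphi]=-2v_2\varphi+\tfrac12\Delta\varphi$ and, at $g$, $v_4^\bullet[\varphi]=-4v_4\varphi-\tfrac{5c}{2}\Delta\varphi$ (from \eqref{CT-v} with $\Rho=2cg$, $v_2=-6c$, $v_4=15c^2$), this coefficient equals $-2c$, so
\[
\A_{(4,2)}^{\bullet\bullet}[\varphi] = -2c\int_{M^6}\varphi\,\Delta(\Delta+24c)(\varphi)\,\dvol_g = 2\,\W_4^{\bullet\bullet}[\varphi],
\]
the last equality by Proposition~\ref{second-hom} (for $n=6$, $k=2$).

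Finally, the definiteness follows exactly as in the proofs of Theorem~\ref{extremal-RVC} and Theorem~\ref{ext}. Writing $\varphi=\sum_j\varphi_j$ in eigenfunctions of $-\Delta$ with eigenvalues $\lambda_j\ge0$,
\[
\A_{(4,2)}^{\bullet\bullet}[\varphi] = -2c\sum_j\lambda_j(\lambda_j-24c)\int_{M^6}\varphi_j^2\,\dvol_g ;
\]
for $\scal(g)>0$ Obata's estimate $\lambda_1\ge\scal(g)/(n-1)=24c$ makes each summand nonnegative, so the Hessian is negative semidefinite, with kernel equal to the constants (the rescaling direction) together with the $\lambda_1=24c$ eigenspace, which by Obata is nontrivial only on the round sphere and then consists of infinitesimal conformal factors of conformal diffeomorphisms. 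The upgrade from a negative semidefinite Hessian at a critical point to an honest local maximum modulo the trivial directions is the standard argument used for Theorem~\ref{extremal-RVC} — alternatively one may simply note that $\A_{(4,2)}$ and $2\W_4$ have the same Hessian at $g$ and that $\W_4$ is already known to have a strict local maximum at such $g$ by Theorem~\ref{ext}. The main obstacle is the Hessian bookkeeping: carrying out the second conformal variation of $\int_{M^6}v_2v_4\,\dvol$ at the Einstein metric, isolating the top-order $(\Delta\varphi)^2$ part (which needs the second conformal variations of $\J$ and of $|\Rho|^2$, a Bochner identity, and the cancellation noted above) and extracting the single coefficient $-2c$; everything else is forced by scale and conformal invariance or is the Obata argument already used repeatedly in Section~\ref{extremal}.
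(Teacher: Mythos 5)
Your overall route is the paper's: reduce $\A_{(4,2)}$ via Proposition \ref{a42-final} to $\tfrac{1}{360}\int_{M^6}(240\,v_2^3-576\,v_2v_4)\,\dvol$, note criticality at the Einstein metric, compute the second conformal variation there, and finish with Obata. The substantive difference is the Hessian you obtain,
\[
\A_{(4,2)}^{\bullet\bullet}[\varphi]=-2c\int_{M^6}\varphi\,\Delta(\Delta+24c)(\varphi)\,\dvol,
\]
which differs from the paper's displayed formula \eqref{scv-a42} by the factor $c$. On this point your computation is the correct one: inserting $v_2^\bullet=12c\varphi+\tfrac12\Delta\varphi$, $v_4^\bullet=-60c^2\varphi-\tfrac{5c}{2}\Delta\varphi$ and $(v_2^2)^\bullet=-144c^2\varphi-6c\Delta\varphi$ into the paper's own intermediate expression $\int\varphi\,\Delta(-288\,v_4^\bullet+1440c\,v_2^\bullet+360\,(v_2^2)^\bullet)\,\dvol$ gives $-720c\int\varphi\,\Delta(\Delta+24c)(\varphi)\,\dvol$; moreover only the $c$-bearing version is scale-invariant, it vanishes at a flat metric as it must (the integrand $240v_2^3-576v_2v_4$ is cubic in curvature), and it agrees with $2\,\W_4^{\bullet\bullet}$ from Proposition \ref{second-hom}. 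A cosmetic slip in your structural argument: the admissible monomials are $c\int\varphi\Delta^2\varphi$, $c^2\int\varphi\Delta\varphi$, $c^3\int\varphi^2$ (each scale-invariant), not $\int\varphi\Delta^2\varphi$, $c\int\varphi\Delta\varphi$, $c^2\int\varphi^2$; with that correction your "kernel contains constants, kernel contains the $\lambda_1$-eigenspace on the sphere, compute one coefficient" scheme is fine, and your value $-2c$ for the top coefficient checks out.

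The genuine gap is coverage: you only argue the case $\scal(g)>0$, whereas the proposition is asserted for every closed space form. Precisely because your Hessian carries the factor $c$, your argument cannot deliver the stated conclusion in the remaining cases: for $\scal(g)<0$ the form $-2c\sum_j\lambda_j(\lambda_j-24c)\|\varphi_j\|^2$ is positive semi-definite with kernel the constants, which points to a strict local \emph{minimum} modulo rescalings, and for $\scal(g)=0$ the Hessian vanishes identically, so the second-order test says nothing. The paper's proof covers $c\le 0$ only via its $c$-free formula \eqref{scv-a42}, which is negative semi-definite for all signs; once the factor $c$ is restored the sign-dependence matches the pattern of Theorem \ref{extremal-crit} for $a_{(2,n-2)}$. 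So as written your proposal proves the proposition only under the additional hypothesis $\scal(g)>0$, and you must either treat the nonpositive-curvature space forms separately or state explicitly that the claim (and \eqref{scv-a42}) appears to require correction there; your fallback via Theorem \ref{ext} does not help, since that result is also only stated for $\scal(g)>0$. Finally, like the paper you pass from a negative semi-definite Hessian with the expected kernel to an honest local maximum without argument; that matches the paper's standard, but it is an asserted, not proved, step.
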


\begin{proof} We argue as in the proof of Theorem
\ref{extremal-RVC} using a formula for the second conformal
variation. Let $g$ be locally conformally flat Einstein and
$c=\scal(g)/(4n(n-1)) = \scal(g)/120$. Let $\gamma(t) =
e^{2\varphi(t)}$ be a curve in $c$ so that $\varphi(0) = 0$ and
$\varphi'(0) = \varphi$. Then the second conformal variation
$$
(\A_{(4,2)}(g))^{\bullet\bullet}[\gamma] = (\partial^2/\partial
t^2)|_0 (\A_{(4,2)}(\gamma(t)))
$$
of $\A_{(4,2)}$ at $g$ does not depend on $\psi = \varphi''(0)$ and
is given by the formula
\begin{equation}\label{scv-a42}
(\A_{(4,2)})^{\bullet \bullet}[\varphi] = -2 \int_{M^6} \varphi
\Delta (\Delta+24c) (\varphi) \dvol = -2 \int_{M^6} \varphi \Delta
(\Delta+4nc) (\varphi) \dvol
\end{equation}
at $g$. Now the spectrum of $\Delta = -\delta d$ consists only of
non-positive eigenvalues. If $c \le 0$, then it follows that the quadratic
form on the left-hand side of \eqref{scv-a42} is negative semi-definite
with kernel $\c$. On the other hand, if $c>0$, then the first non-trivial
eigenvalue $\lambda_1$ of $-\Delta$ satisfies $\lambda_1 \ge
\scal(g)/(n\!-\!1) = 24c$ \cite{obata}. In this case, the quadratic form
on the right-hand side of \eqref{scv-a42} is negative semi-definite with
kernel spanned by $\c$ and the eigenspace of the eigenvalue
$\scal(g)/(n-1)$ (being non-trivial only for rescaled spheres). In the
remaining part of the proof we establish the formula \eqref{scv-a42}.
First, we observe that, in general dimensions $n$ and for locally
conformally flat $g$, Proposition \ref{a42-final} shows that
$$
\int_{M^n} a_{(4,2)} \dvol = \int_{M^n} \bar{a}_{(4,2)} \dvol,
$$
where $360 \bar{a}_{(4,2)}$ is defined as the sum
$$
48(n\!-\!4)(n\!-\!6) v_6 + 48(n\!-\!4)(n\!-\!12) v_2 v_4 - 12
(n\!-\!4)(n\!-\!16) v_2^3.
$$
By partial integration and the formulas in the proof of Proposition
\ref{a6-div}, it follows that the conformal variation of $360 \A_{(4,2)}$
at a locally conformally flat metric $g$ is given by the sum of the
scaling term
$$
360 (n\!-\!6) \int_{M^n} \varphi \bar{a}_{(4,2)} \dvol
$$
and
\begin{multline*}
\int_{M^n} \varphi \Big( 24(n\!-\!4)(n\!-\!12) \Delta(v_4) - 24
(n\!-\!4)(n\!-\!12) \delta(v_2\!+\!1/2 \Rho)dv_2 \\ - 18
(n\!-\!4)(n\!-\!16) \Delta(v_2^2) \Big) \dvol
\end{multline*}
at $g$. Note that the latter integral vanishes at Einstein metrics. This
follows from the fact that, for Einstein metrics, $v_2$ and $v_4$ are
constant, and that $\Rho$ is a constant multiple of the metric. This
observation confirms that the functional $\A_{(4,2)}$ is critical at
Einstein metrics in dimension $n=6$. Now, for $n=6$ and for any locally
conformally flat metric $g$, the above results show that
\begin{multline}\label{fcv-a42}
360 \left(\int_{M^6} a_{(4,2)} \dvol \right)^\bullet[\varphi] \\
= \int_{M^6} \varphi (-288 \Delta (v_4) + 288 \delta (v_2\!+\!1/2 \Rho)
dv_2 + 360 \Delta(v_2^2) ) \dvol
\end{multline}
at $g$. We use this formula to prove \eqref{scv-a42} at the Einstein
metric $g$. Note that
$$
v_2=-6c, \quad v_4=15c^2, \quad v_2 g + 1/2 \Rho = -5gc \quad
\mbox{at $g$}.
$$
Now, by arguments as in the proof of Theorem \ref{extremal-RVC}, the
second conformal variation of $360 \A_{(4,2)}$ at the Einstein
metric $g$ equals
$$
\int_{M^6} \varphi \Delta (-288 v_4^\bullet[\varphi] + 1440 c
v_2^\bullet[\varphi] + 360 (v_2^2)^\bullet[\varphi]) \dvol;
$$
the independence on $\psi = \varphi''(0)$ follows from the vanishing of
the integrand in \eqref{fcv-a42} if $g$ is Einstein. A calculation using
the formulas for the conformal variation of $v_2$ and $v_4$ (displayed in
the proof of Proposition \ref{a6-div}) shows that the latter sum
simplifies to
$$
-720 \int_{M^6} \varphi \Delta (\Delta + 24c) (\varphi) \dvol.
$$
This completes the proof.
\end{proof}



\begin{thebibliography}{AJPS97}

\bibitem[AJPS97]{AJPS}
A.~Albeverio, J.~Jost, S.~Paycha and S.~Scarlatti, {\em A
Mathematical Introduction to String Theory}, London Mathematical
Society Lecture Note Series {\bf 225}, 1997.
\bibitem[A09]{albin}
P.~Albin, Renormalizing curvature integrals on Poincar\'e-Einstein
manifolds, {\em Advances in Math.} {\bf 221}, 1 (2009), 140--169.
\bibitem[A12]{alex}
S.~Alexakis, {\em The Decomposition of Global Conformal Invariants},
Annals of Mathematics Studies {\bf 182}, Princeton University Press,
2012, {x+449}.
\bibitem[A00]{A-book}
I.~Avramidi, {\em Heat Kernels and Quantum Gravity}, Lecture Notes
in Physics Monographs {\bf M64}, 2000.
\bibitem[A04]{A-GG}
I.~Avramidi, Gauged gravity via spectral asymptotics of non-Laplace
type operators, {\em J. High Energy Phys.} {\bf 7}, 030, 2004
(electronic). \url{arXiv:hep-th/0406026}
\bibitem[AB01]{AB}
I.~Avramidi and T.~Branson, Heat kernel asymptotics of operators
with non-Laplace principal part, {\em Rev. Math. Phys.} {\bf 13}, 7
(2001), 847--890. \url{arXiv:math-ph/9905001}
\bibitem[BJ10]{BJ}
H.~Baum and A.~Juhl, {\em Conformal Differential Geometry:
$Q$-curvature and Conformal Holonomy}. Oberwolfach Seminars {\bf
40}, 2010.
\bibitem[BFT00]{BFT}
F.~Bastianelli, S.~Frolov and A.~Tseytlin, Conformal anomaly of
{$(2,0)$} tensor multiplet in six dimensions and
AdS/CFT-correspondence, {\em J. High Energy Phys.} {\bf 2}, 13,
2000. \url{arXiv:hep-th/0001041}
\bibitem[BGM71]{BGM}
M.~Berger, P.~Gauduchon and E.~Mazet, {\em Le Spectre d'une
Variet\'e Riemannienne}. Lecture Notes in Mathematics {\bf 194}
(1971).
\bibitem[B87]{besse}
A.~Besse, {\em Einstein manifolds}, Ergebnisse der Mathematik und
ihrer Grenzgebiete {\bf 110}, Springer, 1987.
\bibitem[BGV92]{BGV}
N.~Berline, E.~Getzler and M.~Vergne, {\em Heat Kernels and Dirac
Operators}, Grundlehren {\bf 298}, Springer, 1992.
\bibitem[BD82]{BD}
N.~D.~Birrell and P.~C.~W.~Davies, {\em Quantum fields in curved
space}, Cambridge Monographs on Mathematical Physics {\bf 7},
Cambridge University Press, {1982}.
\bibitem[B95]{sharp}
T.~P.~Branson, Sharp inequalities, the functional determinant, and
the complementary series, {\em Trans. AMS} {\bf 347}, (1995),
3671--3742.
\bibitem[B96]{spec-ineq}
T.~Branson, Spectral theory of invariant operators, sharp
inequalities and representation theory. {\em Rend. Circ. Mat.
Palermo}, Serie II, Suppl. {\bf 46}, (1997), 29--54.
\bibitem[B04]{B-spec}
T.~Branson, $Q$-curvature and spectral invariants, {\em Rend. Circ.
Mat. Palermo}, Serie II, Suppl. {\bf 75}, (2005), 11--55.
\bibitem[B07]{sigma}
T.~Branson, $Q$-curvature, spectral invariants, and representation
theory. {\em SIGMA Symmetry Integrability  Geom. Methods Appl.} {\bf
3} (2007), paper 090, 31p.
\bibitem[BO86]{BO-index}
T.~Branson and B.~{\O}rsted, Conformal indices of Riemannian manifolds,
{\em Compositio Math.} {\bf 60}, 3, (1986), 261--293.
\bibitem[BO88]{BO-def}
T.~Branson and B.~{\O}rsted, Conformal deformation and the heat
operator, {\em Indiana University Math. Journal} {\bf 37}, 1,
(2008), 83--110.
\bibitem[BGO90]{BGO}
T.~Branson, P.~Gilkey and B.~{\O}rsted, Leading terms in the heat
invariants, {\em Proceedings AMS} {\bf 209}, 2, (1990), 437--450.
\bibitem[BG08]{origin}
T.~P.~Branson and A.~R.~Gover. Origins, applications and
generalisations of the {$Q$}-curvature. {\em Acta Appl. Math.} {\bf
102} no. 2-3 (2008), 131--146.
\bibitem[CW76]{CW}
R.~Cahn and J.~Wolf, Zeta functions and their asymptotic axpansions
for compact symmetric spaces of rank one, {\em Comment. Math.
Helvetici} {\bf 5}, (1976), 1--21.
\bibitem[C90]{camp}
R.~Camporesi, Harmonic analysis and propagators on homogeneous
spaces. {\em Physics Reports} {\bf 196}, No. 1-2, (1990), 1--134.
\bibitem[CEOY08]{what}
S.~-Y.~A.~Chang, M.~Eastwood, B.~{\O}rsted and P.~C.~Yang, What is
$Q$-curvature? {\em Acta Appl. Math.} {\bf 102}, (2008), 119–-125.
\bibitem[CF08]{CF}
S.-Y.~A.~Chang and H.~Fang, A class of variational functionals in
conformal geometry, {\em Intern. Math. Research Notices}, (2008),
008. \url{arXiv:0803.0333}
\bibitem[CFG12]{CFG}
S.-Y.~A.~Chang, H.~Fang and C.~R.~Graham, A note on renormalized volume
functionals. {\em Differential Geom. Appl.} (2014), {\bf 33}, Supplement,
246--258. \url{arXiv:1211.6422}
\bibitem[CY95]{CY}
S.-Y.~A.~Chang and P.~Yang, Extremal metrics of zeta function determinants
on 4-manifolds, {\em Annals of Mathematics} {\bf 142}, (1995), 171--212.
\bibitem[Ch84]{Ch}
I.~Chavel, {\em Eigenvalues in Riemannian Geometry}, Academic Press,
1984.
\bibitem[DW64]{dewitt}
B.~S.~DeWitt, Dynamical theory of groups and fields, in {\em
Relativit\'e, Groupes et Topologie (Lectures, Les Houches, 1963
Summer School of Theoret. Phys., Univ. Grenoble)}, {Gordon and
Breach}, {1964}, {585--820}.
\bibitem[DG75]{DG}
J.~J.~Duistermaat and V.~W.~Guillemin, The spectrum of positive
elliptic operators and periodic bicharacteristics, {\em Invent.
Math.} {\bf 29}, (1975), 39--79.
\bibitem[ES85]{ES}
M.~Eastwood and M.~Singer. A conformally invariant Maxwell gauge.
{\em Physics Letters} {\bf 107A}, 2, (1985), 73--74.
\bibitem[E97]{Erd}
J.~Erdmenger, Conformally covariant differential operators:
properties and applications. {\em Class. Quantum Grav.} {\bf 14}
(1997), 2061. \url{arXiv:hep-th/9704108}
\bibitem[FG85]{FG-Cartan}
C.~Fefferman and C.~R. Graham. Conformal Invariants. In 'Elie Cartan
et les Mathematiques d'Adjourd'hui.' {\em Ast\'erisque} Numero Hors
Serie (1985), 95--116.
\bibitem[FG02]{FG-P}
C.~Fefferman and C.~R.~Graham, $Q$-curvature and Poincar\'e metrics.
Math. Res. Lett. {\bf 9}, 2-3, (2002), 139--151.
\bibitem[FG12]{FG-final}
C.~Fefferman and C.~R.~Graham, {\em The Ambient Metric}. Annals of
Math. Studies {\bf 178}, Princeton University Press, 2012.
\url{arXiv:0710.0919}
\bibitem[FG12]{FG-J}
C.~Feffermann and C.~R.~Graham, Juhl's formulae for GJMS-operators
and $Q$-curvatures. {\em Journal Amer. Math. Soc.} {\bf 26}, 4,
(2013), 1191--1207. \url{arXiv:1203.0360}
\bibitem[Fo08]{Fo}
G.~B.~Folland, {\em Quantum Field Theory. A Tourist Guide for
Mathematicians}, Surveys and Monographs {\bf 149}, AMS, 2008.
\bibitem[FT82]{FT}
E.~S.~Fradkin and A.~A.~Tseytlin, Asymptotic freedom in extended
conformal supergravity. {\em Physics Letters} {\bf 110B}, 2, (1982),
117--121.
\bibitem[G75]{G-spec}
P.~Gilkey, The spectral geometry of a Riemannian manifold, {\em J.
Diff. Geom.} {\bf 10}, 4, (1975), 601--618.
\bibitem[G84]{G-book}
P.~Gilkey, {\em Invariance Theory, the Heat Equation, and the
Atiyah-Singer Index Theorem}, Publish of Perish Inc., 1984.
\bibitem[G01]{G-GP}
P.~Gilkey, {\em Geometric Properties of Natural Operators Defined by
the Riemann Curvature Tensor}. World Scientific, 2001.
\bibitem[GPS05]{GPS}
C.~Gordon, P.~Perry and D.~Sch\"{u}th, Isospectral and isoscattering
manifolds: a survey of techniques and examples, {\em Contemp.
Math.}, {\bf 387}, Amer. Math. Soc. (2005), 157--179.
\bibitem[G06]{gover-product}
A.~R.~Gover, Laplacian operators and $Q$-curvature on conformally
Einstein manifolds, {\em Math. Ann.} {\bf 336}, 2, (2006), 2,
311--334. \url{arXiv:math/0506037}
\bibitem[GL09]{GL}
A.~R.~Gover and P.~Leitner, A sub-product construction of
Poincar\'e-Einstein metrics, {\em Intern. J. Math.} {\bf 20},
(2009), 1263. \url{arXiv:math/0608044}
\bibitem[G00]{G-vol}
C.~R.~Graham, Volume and area renormalizations for conformally
compact Einstein metrics, {\em Rend. Circ. Mat. Palermo (2) Suppl.}
{\bf 63}, (2000), 31--42. \url{arXiv:math/9909042}
\bibitem[G09]{G-ext}
C.~R.~Graham, Extended obstruction tensors and renormalized volume
coefficients, {\em Advances in Math.} {\bf 220}, 6, (2009),
1956--1985. \url{arXiv:0810.4203}
\bibitem[GJMS92]{GJMS}
C.~R.~Graham, R.~Jenne, L.~J.~Mason and G.~A.~J.~Sparling,
Conformally invariant powers of the Laplacian. {I}. Existence. {\em
J. London Math. Soc.} {\bf 46}, 2, (1992), 557--565.
\bibitem[GJ07]{GJ-holo}
C.~R.~Graham and A.~Juhl, Holographic formula for $Q$-curvature,
{\em Advances in Math.} {\bf 216}, 2, (2007), 841--853.
\url{arXiv:0704.1673}
\bibitem[GZ03]{GZ}
C.~R.~Graham and M.~Zworski, Scattering matrix in conformal
geometry, {\em Invent. Math.} {\bf 152}, 1, (2003), 89--118.
\url{arXiv:math/0109089}
\bibitem[GSZ11]{GSZ}
K.~Groh, F.~Saueressig and and O.~Zanusso, Off-diagonal heat-kernel
expansion and its application to fields with differential
constraints. \url{arXiv:1112.4856}
\bibitem[GMS12]{GMS}
C.~Guillarmou, S.~Moroianu and J.-M.~Schlencker, The renormalized
volume and uniformization of conformal strctures.
\url{arXiv:1211.6705}
\bibitem[GL11]{GuLi}
B.~Guo and H.~Li, The second variational formula for the functional
$\int_M v^{(6)}(g) dV_g$, {\em Proc. Amer. Math. Soc.} {\bf 139},
(2011) 2911--2925.
\bibitem[H77]{H}
S.~Hawking, Zeta function regularization of path integrals in curved
spacetime, {\em Comm. Math. Phys.} {\bf 55}, (1977), 133--148.
\bibitem[HS98]{HS}
M.~Henningson and K.~Skenderis, The holographic Weyl anomaly. {\em
JHEP} {\bf 7} paper 23 (electronic), 1998.
\url{arXiv:hep-th/9806087}
\bibitem[ISTY00]{ISTY}
C.~Imbimbo, A.~Schwimmer, S.~Theisen and S.~Yankielowicz,
Diffeomorphisms and holographic anomalies, {\em Classical Quantum
Gravity} {\bf 17}, 5, (2000), 1129--1138. \url{arXiv:hep-th/9910267}
\bibitem[J09a]{juhl-book}
A.~Juhl, {\em Families of Conformally Covariant Differential
Operators, $Q$-Curvature and Holography}, volume {\bf 275} of
Progress in Mathematics. Birkh\"auser Verlag, 2009.
\bibitem[J09b]{juhl-power}
A.~Juhl, On conformally covariant powers of the Laplacian.
\url{arXiv:0905.3992}
\bibitem [J11]{holo-II}
A.~Juhl, Holographic formula for $Q$-curvature. II, {\em Advances in
Math.} {\bf 226}, 4, (2011), 3409--3425. \url{arXiv:1003.3989}
\bibitem[J13]{juhl-ex}
A.~Juhl, Explicit formulas for GJMS-operators and $Q$-curvatures.
{\em Geom. Funct. Anal.} {\bf 23}, (2013), 1278--1370.
\url{arXiv:1108.0273}
\bibitem [J14]{Q-recursive}
A.~Juhl, On the recursive structure of Branson's $Q$-curvatures,
{\em Math. Res. Lett.} {\bf 21}, 3, (2014), 1--13.
\url{arXiv:1004.1784}.
\bibitem [JK14]{JK}
A.~Juhl and C.~Krattentahler, Summation formulas for GJMS-operators
and $Q$-curvatures on the M\"{o}bius sphere, {\em J. Approx. Theory},
2014. \url{arXiv:0910.4840}
\bibitem[MP49]{MP}
S.~Minakshisundaram and A.~Pleijel, Some properties of the
eigenfunctions of the Laplace-operator on Riemannian manifolds, {\em
Canadian Journal of Mathematics} {\bf 1}, (1949), 242--256.
\bibitem[McS67]{MC-S}
H.P.~McKean and I.~M.~Singer, Curvature and the eigenvalues of the
Laplacian, {\em J. Diff. Geom.} {\bf 1}, (1967), 43--69.
\bibitem[MO09]{MO}
N.-M.~Moller and B.~{\O}rsted, Rigidity of conformal functionals on
spheres. {\em Int. Math. Res. Not.} {\bf 24} (2013).
\url{arXiv:0902.4067}
\bibitem[O62]{obata}
M.~Obata, Certain conditions for a Riemannian manifold to be
isometric to a sphere, {\em J. Math. Soc. Japan} {\bf 14}, 3,
(1962), 333--340.
\bibitem[OPS88]{OPS}
B.~Osgood, R.~Phillips and P.~Sarnak, Extremals of determinants of
Laplacians, {\em J. Funct. Anal.} {\bf 80}, 1, (1988), 148--211.
\bibitem[P08]{pan}
S.~Paneitz, A quartic conformally covariant differential operator
for arbitrary pseudo-Riemannian manifolds (summary), {\em SIGMA}
{\bf 4} (2008), paper 036, 3p. \url{arXiv:0803.4331}
\bibitem[PR87]{PR}
T.~Parker and S.~Rosenberg, Invariants of conformal {L}aplacians,
{\em J. Diff. Geom.} {\bf 25}, 2, (1987), 199--222.
\bibitem[P81]{Po}
A.~M. Polyakov. Quantum geometry of bosonic strings. {\em Phys.
Lett. B} {\bf 103}, 3, (1981), 207--210.
\bibitem[P13]{log}
R.~Ponge, The logarithmic singularities of the Green functions of
the conformal powers of the Laplacian. \url{arXiv:1306.3104}
\bibitem[RS71]{RS}
D.~B.~Ray and I.~M.~Singer, R-torsion and the Laplacian on
Riemannian manifolds, {\em Advances in Math.} {\bf 7}, (1971)
145--210.
\bibitem[R84]{Rieg}
R.~Riegert, A nonlocal action for the trace anomaly. {\em Phys.
Lett. B} {\bf 134}, no. 1--2, (1984), 56--60.
\bibitem[R97]{rosen-zeta}
S.~Rosenberg, The variation of the de Rham zeta function, {\em
Trans. AMS} {\bf 299}, 2, (1987), 535--557.
\bibitem[S01]{shubin}
M.~A.~Shubin, {\em Pseudodifferential operators and spectral
theory}, second edition, Springer 2001.
\bibitem[V03]{V}
D.~V.~Vassilevich, Heat kernel expansion: user's manual, {\em Phys.
Rep.} {\bf 388}, (2003), 279--360. \url{arXiv:hep-th/0306138}
\bibitem[V00]{viac}
J.~Viaclovsky, Conformal geometry, contact geometry, and the
calculus of variations, {\em Duke Math. J.} {\bf 101}, 2, {2000},
283--316.
\bibitem[W04]{Wein}
G.~Weingart, Combinatorics of heat kernel coefficients.
Habilitation, University Bonn, 2004.
\bibitem[W98]{witt}
E.~Witten. Anti de Sitter space and holography. {\em Adv. Theor.
Math. Phys.} {\bf 2}, 2, (1998), 253--291.\url{arXiv:hep-th/9802150}
\bibitem[Zee10]{zee}
A.~Zee, {\em Quantum Field Theory in a Nutshell}, Princeton
University Press, 2010.
\bibitem[Zeld07]{Z-inverse}
S.~Zelditch, The inverse spectral problem, Surveys in differential
geometry. 401--467. With an appendix by Johannes Sj\"ostrand and
Maciej Zworski. Int. Press, Somerville, MA, 2004.
\end{thebibliography}
\end{document}